\newtheorem{thm}{Theorem}[section]
\newtheorem{lem}[thm]{Lemma}
\newtheorem{prop}[thm]{Proposition}
\newtheorem{exam}[thm]{Example}
\newtheorem{ques}[thm]{Question}
\newtheorem{cor}[thm]{Corollary}
\newtheorem{defn}[thm]{Definition}
\newtheorem{rem}[thm]{Remark}
\newtheorem{claim}[thm]{Claim}
\def \N {\mathbb N}
\def \Z {\mathbb Z}
\def \R {\mathbb R}
\numberwithin{equation}{section}
\begin{document}

\title{local entropy theory of a random dynamical system}

\author{Anthony H. Dooley and Guohua Zhang}

\address{\vskip 2pt \hskip -12pt Anthony H. Dooley}

\address{\hskip -12pt School of Mathematics and Statistics, University of New South Wales, Sydney, NSW 2052, Australia}

\email{a.dooley@unsw.edu.au}

\address{\vskip 2pt \hskip -12pt Guohua Zhang}

\address{\hskip -12pt School of Mathematical Sciences and LMNS, Fudan University, Shanghai 200433, China and}

\address{\hskip -12pt School of Mathematics and Statistics, University of New South Wales, Sydney, NSW 2052, Australia}

\email{chiaths.zhang@gmail.com}

\subjclass[2010]{Primary 37A05, 37H99; Secondary 37A15, 37A35}

\keywords{discrete amenable groups, (tiling) F\o lner sequences, continuous bundle random dynamical systems, random open covers, random continuous functions, local fiber topological pressure, factor excellent and good covers, local variational principles, entropy tuples}

\begin{abstract}
In this paper we extend the notion of a continuous bundle random dynamical system to the setting where the action of $\R$ or $\N$ is replaced by  the action of an infinite countable discrete amenable group.

Given such a system, and a monotone sub-additive invariant family of random continuous functions, we introduce the concept of local fiber topological pressure and establish an associated variational principle, relating it to measure-theoretic entropy. We also discuss some variants of this variational principle.

We introduce both topological and measure-theoretic entropy tuples for continuous bundle random dynamical systems, and apply our variational principles to obtain a relationship between these of entropy tuples. Finally, we give applications of these results to general topological dynamical systems, recovering and extending many recent results in local entropy theory.
\end{abstract}

\maketitle

\markboth{A. H. Dooley and G. H. Zhang}{local entropy theory of a random dynamical system}

\tableofcontents

\newpage

\section{Introduction}

In the early 1990s Blanchard introduced the concept of entropy pairs to search for satisfactory topological
analogues of Kolmogorov systems \cite{B1, B2}. Stimulated by these two papers, local entropy theory for continuous actions of a
countable amenable group on compact metric spaces developed rapidly during the last
two decades, see
 \cite{BGH, BHMMR, BL, DYZ, G, GW4, HMRY, HY, HYZ1, HYZ2, HYZ, R, YZ}. It has been studied for countable sofic group actions in \cite{Zhangsofic} by the second author. For more details of the area, see for example  Glasner's book chapter \cite[Chapther 19]{G1} or the nice survey \cite{GY} by Glasner and Ye.
 Observe that, as shown by \cite[Chapter 19]{G1} and \cite{GY} (and references therein), a detailed analysis of the local properties of entropy provides
additional insight into the related global properties, and local properties of entropy can help us to draw conclusions for global properties.

The foundations of the theory of amenable group
actions were set up in the pioneering paper \cite{OW} by
Ornstein and Weiss, and further developed by Rudolph and Weiss \cite{RW} and Danilenko \cite{D}. See also Benjy Weiss' lovely survey article \cite{We}.
Global entropy theory for amenable group actions has also been discussed by Moulin Ollagnier \cite{MO}.
Other related aspects were discussed in \cite{DP, DZ, GTW, OP1, OP2, ST, WZ}. The connection between local entropy and combinatorial independence  across orbits of sets in dynamical systems was studied systematically by Kerr and Li in \cite{KL, KeLi1} for amenable group actions and in \cite{KLsofic} for sofic group actions, and has been discussed by Chung and Li in \cite{CHFL} for amenable group actions on compact groups by automorphisms.

\medskip

\emph{Our principal aim in this article is to extend the local theory of entropy to the setting of random dynamical systems of countable amenable group actions.}
To date, most discussions of random dynamical systems have concerned $\R$-actions, $\Z$-actions or $\Z_+$-actions. Furthermore, to the best of our knowledge, there has been little discussion of the local theory. In slightly more precise terms, we aim to make a systematic study of the local entropy theory of a continuous bundle random dynamical system over an infinite countable discrete amenable group.

In the setting of random dynamical systems, rather than considering iterations of just one map, we study the successive application of different transformations chosen at random. The basic framework was established by Ulam and von Neumann \cite{UN} and later by Kakutani \cite{Kaku} in proofs of the random ergodic theorem. Since the 1980s, mainly because of stochastic flows arising as solutions of stochastic differential equations, interest in the ergodic theory of random transformations has grown \cite{Arnold, Bog1, Bog2, BG, CDF, KK, K0, K1, KL1, KW, LL, Liu2, Liu, LQ, ZC}.
  It was shown in \cite{Bog1} that the cornerstone for the entropy theory of random transformations is the Abramov-Rokhlin mixed entropy of the fiber of a skew-product transformation (cf \cite{Ar}). Our main result, Theorem \ref{1007141414} establishes a variational principle for local topological pressure in this setting.

\medskip

In the local entropy theory of dynamical systems as studied in \cite[Chapter 19]{G1}, \cite{GY} (and references therein) and \cite{HYZ}, most significant results involving entropy pairs have been obtained using
measure-theoretic techniques and a local variational principle initiated by \cite{BGH}.

Let $G$ be an infinite countable discrete amenable group acting on a compact metric space $X$. Let $\mathcal{V}$ be a finite open cover of the space $X$, and $\nu$ a $G$-invariant Borel probability measure on $X$. Denote by $h_{\text{top}} (G, \mathcal{V})$ and $h_\nu (G, \mathcal{V})$ the topological entropy and measure-theoretic $\nu$-entropy of $\mathcal{V}$, respectively. In \cite{HYZ} Huang, Ye and the second author of the paper proved the following version of local variational principle \cite[Theorem 5.1]{HYZ}:
\begin{equation} \label{1208061906}
h_{\text{top}} (G, \mathcal{V})= \max_{\nu\in \mathcal{P} (X, G)} h_\nu (G, \mathcal{V}),
\end{equation}
where $\mathcal{P} (X, G)$ denotes the set of all $G$-invariant Borel probability measures $\nu$ on $X$. Subsequently, \eqref{1208061906} was generalized by Liang and Yan \cite[Corollary 1.2]{LY}, recovering the global variational principle \cite[Variational Principle 5.2.7]{MO} by Moulin Ollagnier. They showed that for each real-valued continuous function $f$ over $X$,
\begin{equation} \label{1208061916}
 P (f, \mathcal{V})= \max_{\nu\in \mathcal{P} (X, G)} [h_\nu (G, \mathcal{V})+ \int_X f (x) d \nu (x)],
\end{equation}
where $P (f, \mathcal{V})$ denotes the topological $\mathcal{V}$-pressure of $f$. We recover $h_{\text{top}} (G, \mathcal{V})$ when $f$ is the constant zero function.

Remark that, in the local theory of entropy of dynamical systems, many variants of \eqref{1208061906} and \eqref{1208061916} have been discussed by \cite{BGH, CFH, GW4, HY, HYZ1, HYi, R, Z-DCDS},
either for a $\Z$-action on compact metric spaces or for a factor map between topological $\Z$-actions.

\medskip

 Let the family $\mathbf{F}$, associated with $\mathcal{E}\in \mathcal{F}\times \mathcal{B}_X$, be a continuous bundle random dynamical system over a measure-preserving $G$-action $(\Omega,
\mathcal{F}, \mathbb{P}, G)$, where: $G$ is an infinite countable discrete amenable group, $(\Omega, \mathcal{F}, \mathbb{P})$ is a Lebesgue space, and $X$ is a compact metric space associated with Borel $\sigma$-algebra $\mathcal{B}_X$.

In our process of building local entropy theory for $\mathbf{F}$, the first and most important step is to prove a local variational principle similar to that given by equations \eqref{1208061906} and \eqref{1208061916}.

   More precisely, let $\mathcal{U}$ be a finite random open cover, $f$ a random continuous function and $\mu\in \mathcal{P}_\mathbb{P} (\mathcal{E}, G)$, where $\mathcal{P}_\mathbb{P} (\mathcal{E}, G)$ denotes the set of all $G$-invariant probability measures on $\mathcal{E}$ having the marginal $\mathbb{P}$ over $\Omega$.
    Denote by
$P_\mathcal{E} (f, \mathcal{U}, \mathbf{F})$ and $P_\mathcal{E} (f, \mathbf{F})$ the fiber topological $f$-pressure of $\mathbf{F}$ with respect to $\mathcal{U}$ and fiber topological $f$-pressure of $\mathbf{F}$, respectively. Denote by $h^{(r)}_\mu (\mathbf{F}, \mathcal{U})$ and $h^{(r)}_\mu (\mathbf{F})$
the $\mu$-fiber entropy of $\mathbf{F}$ with respect to $\mathcal{U}$ and $\mu$-fiber entropy of $\mathbf{F}$, respectively.

We introduce the property of \emph{factor good} for finite random open covers, and obtain a local variational principle which may be stated as follows:
\begin{equation} \label{1208062326}
P_\mathcal{E} (f, \mathcal{U}, \mathbf{F})= \max_{\mu\in \mathcal{P}_\mathbb{P} (\mathcal{E}, G)} [h_\mu^{(r)} (\mathbf{F}, \mathcal{U})+ \int_\mathcal{E} f (\omega, x) d \mu (\omega, x)]
\end{equation}
provided that $\mathcal{U}$ is factor good. We show in Theorem \ref{1007212202} and Theorem \ref{cover} that many interesting finite random open covers are factor good.

By taking the supremum over all finite random open covers which are factor good, and using \eqref{1208062326} one obtains:
\begin{equation} \label{1208062328}
P_\mathcal{E} (f, \mathbf{F})= \sup_{\mu\in \mathcal{P}_\mathbb{P} (\mathcal{E}, G)} [h_\mu^{(r)} (\mathbf{F})+ \int_\mathcal{E} f (\omega, x) d \mu (\omega, x)],
\end{equation}
which is exactly  Kifer's \cite[Proposition 2.2]{K1} in the special case where  $G= \Z$.
Note that by Remark \ref{1103031512}, if the underlying $G$-action $(\Omega,
\mathcal{F}, \mathbb{P}, G)$ is trivial, i.e. $\Omega$ is a singleton, then the equation \eqref{1208062326} becomes \eqref{1208061906} and \eqref{1208061916}, and the equation \eqref{1208062328} becomes \cite[Variational Principle 5.2.7]{MO}, respectively.

In fact, we prove our main result Theorem \ref{1007141414} in the more general setting given by a
monotone sub-additive invariant family $\mathbf{D}$ of random continuous functions.
   Denote by
$P_\mathcal{E} (\mathbf{D}, \mathcal{U}, \mathbf{F})$ and $P_\mathcal{E} (\mathbf{D}, \mathbf{F})$ the fiber topological $\mathbf{D}$-pressure of $\mathbf{F}$ with respect to $\mathcal{U}$ and fiber topological $\mathbf{D}$-pressure of $\mathbf{F}$, respectively. Theorem \ref{1007141414} states that: if, in addition, the family $\mathbf{D}$ satisfies the assumption $(\spadesuit)$ (cf \S \ref{variational principle concerning pressure}), then
\begin{equation} \label{1208071002}
P_\mathcal{E} (\mathbf{D}, \mathcal{U}, \mathbf{F})= \max_{\mu\in \mathcal{P}_\mathbb{P} (\mathcal{E}, G)} [h_\mu^{(r)} (\mathbf{F}, \mathcal{U})+ \mu (\mathbf{D})]
\end{equation}
for factor good $\mathcal{U}$, and finally
\begin{equation} \label{1208071003}
P_\mathcal{E} (\mathbf{D}, \mathbf{F})= \sup_{\mu\in \mathcal{P}_\mathbb{P} (\mathcal{E}, G)} [h_\mu^{(r)} (\mathbf{F})+ \mu (\mathbf{D})].
\end{equation}
As shown by \eqref{1208011732} and \eqref{1207292302}, equations \eqref{1208071002} and \eqref{1208071003} contain \eqref{1208062326} and \eqref{1208062328}, respectively. We explore further assumption $(\spadesuit)$ in \S \ref{assumption} and \S \ref{special}. It turns out to be quite natural for countable amenable groups in the following sense:
the assumption $(\spadesuit)$ always holds if, in addition, either the family $\mathbf{D}$ is strongly sub-additive (cf Proposition \ref{1008300004}) or the group $G$ is abelian (cf Proposition \ref{1008292115}).

With the above variational principles, we are able to introduce both topological and measure-theoretic entropy tuples for a continuous bundle random dynamical system, and build a variational relationship between these two kinds of entropy tuples.

It is known (\S \S \ref{1208042024}) that the setting of a factor map between topological dynamical systems is in fact equivalent to a special kind of continuous bundle random dynamical systems. Thus, we can apply the above results to study general topological dynamical systems.
For example, in \S \S
\ref{1208042011} we show that, using \eqref{1207291438} and \eqref{1207291439}, variants of Theorem \ref{1007141414}, one can obtain \cite[Theorem 2.1]{LW}, the main result of \cite{LW} by Ledrappier and Walters.

If \S \S \ref{inner}, we may apply Theorem \ref{1007141414} to generalize the Inner Variational Principle \cite[Theorem 4]{DS} of Downarowicz and Serafin to arbitrary amenable group actions and any finite open cover (cf Theorem \ref{1010201200}). Theorem \ref{1010201200}
has also been used to set up symbolic extension theory for amenable group actions by Downarowicz and the second author of the paper \cite{DownZ} .

Moreover, our results on entropy tuples of a continuous bundle random dynamical systems, enable us to study entropy tuples for a topological dynamical systems, recovering many recent results in the local entropy theory of $\Z$-actions (cf \cite{B2, BHMMR, G, G1, GY, HY, HYZ2}) and of infinite countable discrete amenable group actions (cf \cite{HYZ}).

The ideas in the proofs of Propositions \ref{1008300004} and \ref{1008292115} have been used by Golodets and the authors of the paper to obtain analogues of Kingman's sub-additive ergodic theorem for countable amenable groups (\cite{DGZ}).

\medskip

The paper consists of three parts and is organized as follows.

\medskip

The first part gives some preliminaries: on infinite countable discrete amenable groups following \cite{MO, OW, WZ, We}, on general measurable dynamical systems of amenable group actions,
 and on continuous bundle random dynamical systems of an amenable group action extending the case of $\Z$ by \cite{Bog1, K1, KL1, Liu}.
In addition to recalling known results, this part contains some new results: firstly, a convergence result (Proposition \ref{p1006172118}) for infinite countable discrete amenable groups extending \cite[Theorem 5.9]{We} (the difference between Moulin Ollagnier's Proposition \ref{1102111944} and our Proposition \ref{p1006172118} is seen in Example \ref{1104071212}); secondly, a relative Pinsker formula for a measurable dynamical system with an amenable group action (discussed in \cite{GTW} in the case where the state space is a Lebesgue space), see Theorem \ref{1007031233} and Remark \ref{1007-21545}; thirdly, an improved understanding of the local entropy theory of measurable dynamical systems, see Theorem \ref{1006301434} and Question \ref{1102041455}.

In the second part we present and prove our main results.
More precisely, in \S \ref{fourth}, we take a continuous bundle random dynamical system of an infinite countable discrete amenable group action and a monotone sub-additive invariant family of random continuous functions, and follow the ideas of \cite{CFH, HYi, R, Z-DCDS}  to introduce and discuss the local fiber topological pressure for a finite random open cover.  Then in \S \ref{factor good} we introduce and discuss the concept of \emph{factor excellent} and \emph{good} covers,
which assumptions are needed for our main result, Theorem \ref{1007141414}. We show in Theorem \ref{1007212202} and Theorem \ref{cover} that many interesting finite random open covers are factor good. In \S \ref{variational principle concerning pressure} we state Theorem \ref{1007141414}, and give some comments and direct applications.Then, in
\S \ref{seventh} we present the proof of Theorem \ref{1007141414} following the ideas of \cite{HYZ1, HYZ, Mis, Z-Thesis, Z-DCDS}.

For Theorem \ref{1007141414}, we need to assume a condition, which we call $(\spadesuit)$ on the family of random continuous functions: this is discussed in detail in \S \ref{assumption}.
In
\S \ref{special} we
discuss the special case of Theorem \ref{1007141414} for amenable groups admitting a tiling F\o lner sequence, and prove that assumption $(\spadesuit)$ always holds if the group is abelian.
 The proof of Theorem \ref{1007141414} is for finite random open covers. Inspired by Kifer's work \cite[\S 1]{K1}, in \S \ref{ninth} we generalize Theorem \ref{1007141414} to countable random open covers.

The last part of the paper is devoted to applications of the local variational principle established in Part \ref{skdj}.
 In
\S \ref{entropy tuple},
 following the line of local entropy theory (cf \cite[Chapter 19]{G1} or \cite{GY}), we introduce and discuss both topological and measure-theoretic entropy tuples for a continuous bundle random dynamical system, and establish a variational relationship between them.
Finally, in \S \ref{factor}
we apply the results obtained in the previous sections to the setting of a general topological dynamical system, incorporating and extending many recent results in the local entropy theory \cite{B2, BHMMR, G, G1, GY, HY, HYZ1, HYZ2, HYZ}, as well as establishing (Theorems \ref{1010201200} and \ref{1008301614}) some new variational principles concerning the entropy of a topological dynamical system.
We should emphasize that, by the results of \cite{DownZ},  Theorem \ref{1010201200} is important for building the symbolic extension theory of amenable group actions.

\section*{Acknowledgements}

The authors would like to thank Professors Wen Huang, Xiangdong Ye, Kening Lu, Hanfeng Li and Benjy Weiss for useful discussions during the preparation of this manuscript. We also thank the referee for many important comments that have resulted in substantial improvements to this paper.

\medskip

We gratefully acknowledge the support of the Australian Research
Council.

The second author was also supported by FANEDD (No. 201018), NSFC (No.
10801035 and No. 11271078) and a grant from Chinese Ministry of Education (No. 200802461004).

\newpage

\part{Preliminaries} \label{preli}

Denote by $\Z, \Z_+, \N, \R, \R_+$ and $\R_{> 0}$ the set of all integers, non-negative integers, positive integers, real numbers, non-negative real numbers and positive real numbers, respectively.

In this part, we give some preliminaries, including: infinite countable discrete amenable groups, measurable dynamical systems, and continuous bundle random dynamical systems.

\section{Infinite countable discrete amenable groups} \label{first}

In this section, we recall the principal results from \cite{MO, OW, WZ, We} and obtain a new convergence result Proposition \ref{p1006172118} for infinite countable discrete amenable groups. As shown by Remark \ref{1207221549}, Proposition \ref{p1006172118} strengthens \cite[Theorem 5.9]{We} proved by Benjy Weiss.
The difference between Moulin Ollagnier's Proposition \ref{1102111944} and Proposition \ref{p1006172118} is demonstrated by Example \ref{1104071212}; the two results are different even in the setting of an infinite countable discrete amenable group admitting a tiling F\o lner sequence.

The principal convergence results (Proposition \ref{1006122129}, Proposition \ref{1102111944} and Proposition \ref{p1006172118}) are crucial for the introduction and discussion of local fiber topological pressure of a continuous bundle random dynamical system in Part \ref{skdj}.

\medskip

Let $G$ be an infinite countable discrete group and denote by $e_G$ the identity of $G$. Denote by $\mathcal{F}_G$ the
set of all non-empty finite subsets of $G$.

$G$ is called \emph{amenable}, if
for each $K\in \mathcal{F}_G$ and any $\delta> 0$ there exists $F\in \mathcal{F}_G$
such that
$|F\Delta K F|< \delta |F|,$
where $|\bullet|$ is the counting measure of the set $\bullet$, $K F= \{k f: k\in K, f\in
F\}$ and $F\Delta K F= (F\setminus K F)\cup (K F\setminus F)$.
Let $K\in \mathcal{F}_G$ and $\delta> 0$. Set $K^{- 1}= \{k^{- 1}: k\in K\}$.
$A\in \mathcal{F}_G$ is called \emph{$(K, \delta)$-invariant}, if
$$|K^{- 1} A\cap K^{- 1} (G\setminus
A)|< \delta |A|.$$
A sequence $\{F_n: n\in \mathbb{N}\}$ in $\mathcal{F}_G$ is called
a
\emph{F\o lner sequence}, if
\begin{equation}
\lim_{n\rightarrow \infty} \frac{|g F_n\Delta F_n|}{|F_n|}= 0
\end{equation}
for each $g\in G$.
It is
not too hard to obtain the usual asymptotic invariance property from this, viz.:
$G$ is amenable if and only if  $G$ has a F\o lner sequence
$\{ F_n \}_{n\in \mathbb{N}}$. In the class of countable discrete groups, amenable groups include all solvable groups.

In the group  $G=\Z$, it is well known that $F_n=\{ 0,1,\cdots,n-1\}$ defines a F\o lner sequence, as, indeed, does  $\{ a_n,a_n+1,\cdots,a_n+n- 1 \}$ for any sequence $\{a_n\}_{n\in
\mathbb{N}}\subseteq \Z$.

\medskip

\medskip

{\noindent \bf Standard Assumption 1. \it Throughout the current paper, we will assume that $G$ is always an infinite countable discrete amenable group.}

\medskip

\medskip

 The following terminology and results are due to Ornstein and Weiss
\cite{OW} (see also \cite{RW, WZ}).

Let $A_1, \cdots,
A_k, A\in \mathcal{F}_G$ and $\epsilon\in (0, 1)$, $\alpha\in (0, 1]$.
\begin{enumerate}

\item
Subsets $A_1,
\cdots, A_k$ are \emph{$\epsilon$-disjoint} if there are $B_1,
\cdots, B_k\in \mathcal{F}_G$ such that
$$B_i\subseteq A_i, \frac{|B_i|}{|A_i|}> 1- \epsilon\ \text{and}\ B_i\cap B_j= \emptyset\ \text{whenever}\ 1\le i\neq j\le k.$$

\item $\{A_1, \cdots, A_k\}$ \emph{$\alpha$-covers} $A$ if
\begin{equation*}
\frac{|A\cap \bigcup\limits_{i= 1}^k A_i|}{|A|}\ge \alpha.
\end{equation*}

\item
$A_1, \cdots, A_k$ \emph{$\epsilon$-quasi-tile} $A$ if there exist $C_1, \cdots, C_k\in \mathcal{F}_G$ such that
\begin{enumerate}

\item for each $i= 1, \cdots, k$, $A_i C_i\subseteq A$ and $\{A_i c: c\in C_i\}$
forms an $\epsilon$-disjoint family,

\item $A_i C_i\cap A_j C_j= \emptyset$ if $1\le i\neq j\le k$ and

\item $\{A_i C_i: i= 1, \cdots, k\}$ forms a $(1- \epsilon)$-cover
of $A$.
\end{enumerate}
The subsets $C_1, \cdots, C_k$ are called the \emph{tiling centers}.
\end{enumerate}

We have (see for example \cite[Proposition 2.3]{HYZ}, \cite{OW} or
\cite[Theorem 2.6]{WZ}):

\begin{prop} \label{ow-prop}
Let $\{F_n: n\in \mathbb{N}\}$ and $\{F_n': n\in \mathbb{N}\}$ be two F\o
lner sequences of $G$. Assume that $e_G\in F_1\subseteq
F_2\subseteq \cdots$. Then for any $\epsilon\in (0, \frac{1}{4})$
and each $N\in \mathbb{N}$, there exist integers $n_1, \cdots, n_k$
with $N\le n_1< \cdots< n_k$ such that $F_{n_1}, \cdots, F_{n_k}$
$\epsilon$-quasi-tile $F_m'$ whenever $m$ is sufficiently large.
\end{prop}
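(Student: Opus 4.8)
The plan is to derive the statement from the basic covering machinery of Ornstein and Weiss, organized around a single greedy exhaustion argument. First I would fix $\epsilon \in (0,\tfrac14)$ and $N \in \mathbb{N}$, and observe that it suffices to produce a finite list $F_{n_1},\dots,F_{n_k}$ with $N \le n_1 < \cdots < n_k$ together with a uniform lower bound: there is a constant $\beta = \beta(\epsilon) \in (0,1)$ and a ``tolerance scale'' such that whenever $A \in \mathcal{F}_G$ is sufficiently invariant relative to $F_{n_k}$, some $\epsilon$-disjoint subfamily drawn from translates of the $F_{n_i}$ inside $A$ $\beta$-covers $A$. One then iterates: having $\beta$-covered a $\beta$-fraction of $A$, the uncovered remainder is again (essentially) as invariant as $A$, so repeat; after $\ell$ steps the uncovered part has relative size at most $(1-\beta)^\ell$, and choosing $\ell$ with $(1-\beta)^\ell < \epsilon$ gives a $(1-\epsilon)$-cover. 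The bookkeeping to keep the successive $\epsilon$-disjoint families globally $\epsilon$-disjoint (not merely stepwise) and pairwise non-overlapping across the different indices $i$ is the standard Ornstein--Weiss trick of passing to slightly shrunken ``cores'' $B_i \subseteq A_i c$ and discarding a controlled fraction at each stage.

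The key step — the one-step covering lemma — is where the amenability hypothesis does the real work, and I expect it to be the main obstacle. The claim to establish is: given $\epsilon$, there exist $n_1 < \cdots < n_k$ (chosen inductively, each $F_{n_{j+1}}$ far more invariant than $F_{n_j}$, and all $\ge N$) so that for any $A$ that is $(F_{n_k}^{-1}, \delta)$-invariant with $\delta$ small, one can select, for each $i$, a set $C_i \subseteq G$ of translation centers with $F_{n_i} C_i \subseteq A$, the family $\{F_{n_i} c : c \in C_i,\ 1 \le i \le k\}$ $\epsilon$-disjoint, and covering at least a $\beta$-fraction of $A$. I would build the $C_i$ from the largest index downward: first pack as many $\epsilon$-disjoint translates of $F_{n_k}$ into $A$ as possible (maximality plus invariance forces these to cover a definite fraction, else a translate avoiding the packed region could be added); then into the leftover region pack $\epsilon$-disjoint translates of $F_{n_{k-1}}$, and so on. The point of the rapid growth of invariance along the $n_j$ is that each smaller $F_{n_j}$ is negligible against the ``boundary'' created by the larger sets already placed, so the fractions telescope to a uniform $\beta$ independent of everything. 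This is exactly \cite[Proposition 2.3]{HYZ} / \cite{OW} / \cite[Theorem 2.6]{WZ}, and I would either cite it directly or reproduce the three or four estimates it rests on.

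Finally I would assemble the pieces: apply the iterated one-step lemma to $A = F_m'$. Since $\{F_m'\}$ is a F\o lner sequence, $F_m'$ is $(F_{n_k}^{-1}, \delta)$-invariant for every prescribed $\delta$ once $m$ is large enough, so the hypotheses of the covering lemma hold for all large $m$; this is the source of the ``whenever $m$ is large enough'' in the conclusion. Running the exhaustion produces tiling centers $C_1^{(m)}, \dots, C_k^{(m)}$ inside $F_m'$ realizing conditions (a)–(c) of the $\epsilon$-quasi-tiling definition. The only subtlety in this last assembly is making sure the disjointness requirement (b) — that $F_{n_i} C_i$ and $F_{n_j} C_j$ are genuinely disjoint for $i \ne j$, not just $\epsilon$-disjoint — is met; this comes for free from the ``largest-index-first into the leftover region'' construction, since each family is placed into the complement of all previously placed families. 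I do not expect any difficulty beyond this standard organization once the one-step lemma is in hand.
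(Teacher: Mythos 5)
The paper offers no proof of this proposition at all --- it simply cites \cite{OW}, \cite[Proposition 2.3]{HYZ} and \cite[Theorem 2.6]{WZ} --- and your sketch is exactly the standard Ornstein--Weiss packing argument contained in those sources (maximal $\epsilon$-disjoint packing with the most invariant tile first, then successively smaller tiles into the remainder, with the number of stages fixed in advance by $\epsilon$), so it is correct in outline and consistent with the paper's treatment. One caution: the claim in your first paragraph that the uncovered remainder is ``essentially as invariant as $A$'' is not literally true --- this is precisely why one cannot iterate with a single tile --- and the genuine mechanism is the one you state in your second paragraph: the remainder is sufficiently invariant only with respect to the next, much smaller $F_{n_j}$, because the boundary contributed by the already placed larger translates (controlled via $|C_i|\,|F_{n_i}|\lesssim |A|$ and the relative invariance of $F_{n_i}$ with respect to $F_{n_j}$) is negligible at that smaller scale.
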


It is a well-known fact in analysis that if $\{a_n: n\in \N\}\subseteq \R$ is a sequence satisfying that $a_{n+ m}\le a_n+ a_m$ for all $n, m\in \N$, then  the
sequence $\{\frac{a_n}{n}: n\in \N\}$ converges and
\begin{equation} \label{1207211730}
\lim_{n\rightarrow \infty} \frac{a_n}{n}= \inf_{n\in \N} \frac{a_n}{n}\ge - \infty.
\end{equation}
Similar facts can be proved in the setting of an amenable group as follows.

\medskip

 Let $f: \mathcal{F}_G\rightarrow \R$ be a function. Following \cite{HYZ}, we say that f is:
\begin{enumerate}

\item
\emph{monotone}, if $f (E)\le f (F)$ for any $E, F\in \mathcal{F}_G$ satisfying
$E\subseteq F$;

\item \emph{non-negative}, if $f (F)\ge 0$ for any $F\in \mathcal{F}_G$;

\item
\emph{$G$-invariant}, if $f (F g)= f (F)$ for any $F\in \mathcal{F}_G$ and
$g\in G$;

\item
\emph{sub-additive}, if $f (E\cup F)\le f (E)+ f (F)$ for any $E, F\in
\mathcal{F}_G$.
\end{enumerate}

The following convergence property is well known (see for example \cite[Lemma 2.4]{HYZ} or \cite[Theorem 6.1]{Lin-Wei}).

\begin{prop} \label{1006122129}
Let $f: \mathcal{F}_G\rightarrow \R$ be a monotone non-negative
$G$-invariant sub-additive function. Then
for any F\o lner sequence $\{F_n: n\in \mathbb{N}\}$ of $G$, the
sequence $\{\frac{f(F_n)}{|F_n|}: n\in \N\}$ converges and the
value of the limit is independent of the choice of the F\o lner
sequence $\{F_n: n\in \mathbb{N}\}$.
\end{prop}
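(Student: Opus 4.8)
The plan is to show that for every F\o lner sequence $\{F_n\}$ of $G$ one has $\lim_{n}\frac{f(F_n)}{|F_n|}=\lambda$, where $\lambda:=\inf_{F\in\mathcal{F}_G}\frac{f(F)}{|F|}$; since $\lambda$ does not depend on the chosen sequence, this yields both assertions simultaneously. First, $\lambda$ is finite: $f\ge0$ gives $\lambda\ge0$, while taking $|F|=1$ together with $G$-invariance gives $\lambda\le f(\{e_G\})$. One inequality is immediate: each $F_n$ lies in $\mathcal{F}_G$, so $\frac{f(F_n)}{|F_n|}\ge\lambda$ and hence $\liminf_{n}\frac{f(F_n)}{|F_n|}\ge\lambda$. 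Thus the whole content is the reverse inequality $\limsup_{n}\frac{f(F_n)}{|F_n|}\le\lambda$. Along the way I would record three elementary consequences of the hypotheses, to be used as bookkeeping: (a) $f(\{g_1,\dots,g_m\})\le m\,f(\{e_G\})$ for $g_1,\dots,g_m\in G$, by sub-additivity and $G$-invariance; (b) $f(AC)\le|C|\,f(A)$ for $A,C\in\mathcal{F}_G$, by writing $AC=\bigcup_{c\in C}Ac$ and using sub-additivity and $G$-invariance; (c) if $P\subseteq E$ in $\mathcal{F}_G$, then $f(E)\le f(P)+|E\setminus P|\,f(\{e_G\})$, by sub-additivity followed by (a) applied to $E\setminus P$.

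The heart of the proof --- and the step I expect to be the main obstacle --- is to exhibit, for each $\epsilon\in(0,\frac{1}{4})$, a nested F\o lner sequence $e_G\in B_1\subseteq B_2\subseteq\cdots$ along which $\frac{f(B_j)}{|B_j|}\to\lambda$; equivalently, that sufficiently invariant finite sets have $f$-ratio arbitrarily close to $\lambda$. This is precisely the Ornstein--Weiss packing construction underlying the quasi-tiling machinery: one builds the $B_j$ greedily, at each stage enlarging a set of small $f$-ratio to a more invariant one by $\epsilon$-disjointly packing it with translates of small-$f$-ratio sets and absorbing the (size-controlled) leftover, keeping the ratio below $\lambda+\epsilon$ throughout by means of (b) and (c). With such a $\{B_j\}$ in hand, fix any F\o lner sequence $\{F_n\}$ and apply Proposition~\ref{ow-prop} with first sequence $\{B_j\}$, second sequence $\{F_n\}$, and $N$ chosen so large that $\frac{f(B_n)}{|B_n|}<\lambda+\epsilon$ for all $n\ge N$: this produces indices $N\le n_1<\cdots<n_k$ such that $B_{n_1},\dots,B_{n_k}$ $\epsilon$-quasi-tile $F_m$ for every sufficiently large $m$, and each of them has $f$-ratio $<\lambda+\epsilon$.

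Granting this, the upper bound is a short computation. For $m$ large, let $C_1,\dots,C_k$ be tiling centers as in the definition of $\epsilon$-quasi-tiling: $B_{n_i}C_i\subseteq F_m$, each $\{B_{n_i}c:c\in C_i\}$ is $\epsilon$-disjoint, the sets $B_{n_i}C_i$ are pairwise disjoint, and $\bigl|\bigcup_{i}B_{n_i}C_i\bigr|\ge(1-\epsilon)|F_m|$. Using (c), then sub-additivity and (b), and then $\frac{f(B_{n_i})}{|B_{n_i}|}<\lambda+\epsilon$,
\begin{equation*}
f(F_m)\ \le\ f\Bigl(\bigcup_{i}B_{n_i}C_i\Bigr)+\epsilon|F_m|\,f(\{e_G\})\ \le\ \sum_{i=1}^{k}|C_i|\,f(B_{n_i})+\epsilon|F_m|\,f(\{e_G\})\ <\ (\lambda+\epsilon)\sum_{i=1}^{k}|C_i||B_{n_i}|+\epsilon|F_m|\,f(\{e_G\}).
\end{equation*}
The $\epsilon$-disjointness gives $|B_{n_i}C_i|\ge(1-\epsilon)|C_i||B_{n_i}|$, and since the $B_{n_i}C_i$ are disjoint subsets of $F_m$ we get $\sum_{i}|C_i||B_{n_i}|\le\frac{1}{1-\epsilon}|F_m|$. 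Therefore $\frac{f(F_m)}{|F_m|}<\frac{\lambda+\epsilon}{1-\epsilon}+\epsilon\,f(\{e_G\})$ for all large $m$, hence $\limsup_{m}\frac{f(F_m)}{|F_m|}\le\frac{\lambda+\epsilon}{1-\epsilon}+\epsilon\,f(\{e_G\})$; letting $\epsilon\to0$ gives $\limsup_{m}\frac{f(F_m)}{|F_m|}\le\lambda$, which completes the proof.

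In summary, modulo the standard Ornstein--Weiss fact that arbitrarily invariant finite sets realise $f$-ratios arbitrarily close to $\lambda=\inf_F f(F)/|F|$ --- the one genuinely non-formal ingredient --- everything reduces to Proposition~\ref{ow-prop} and routine estimates built from sub-additivity, $G$-invariance and monotonicity; the monotonicity is what is really needed to control the leftover term in (c) and to make the greedy enlargement step legitimate.
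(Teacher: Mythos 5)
Your proof hinges on the claim that the limit equals $\lambda=\inf_{F\in\mathcal{F}_G}\frac{f(F)}{|F|}$, and more specifically on the ``heart of the proof'': that one can build a F\o lner sequence $\{B_j\}$ (equivalently, that sufficiently invariant finite sets exist) with $\frac{f(B_j)}{|B_j|}\to\lambda$. Under the stated hypotheses (monotone, non-negative, $G$-invariant, sub-additive) this is false, and the paper itself exhibits the obstruction: Example \ref{1104071212} constructs such an $f$ on $\mathcal{F}_\Z$ with $\lim_n\frac{f(\{1,\dots,n\})}{n}=\frac34>\frac23=\inf_{E}\frac{f(E)}{|E|}$. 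Since (granting the proposition) every F\o lner sequence produces the same limit, no F\o lner sequence can have ratios tending to $\lambda$; the sets realizing ratios near the infimum (disjoint unions of translates of $S=\{1,2,4\}$ in that example) are far from invariant, and the greedy Ornstein--Weiss enlargement you invoke cannot keep the ratio near $\lambda$ while gaining invariance. The identification of the limit with the global infimum is exactly what requires the stronger hypothesis of strong sub-additivity, i.e.\ $f(E\cap F)+f(E\cup F)\le f(E)+f(F)$ (Proposition \ref{1102111944}), or, for tiling F\o lner sequences, the infimum restricted to $\mathcal{T}_G$ (Proposition \ref{p1006172118}); the whole point of Example \ref{1104071212} is to separate these statements from Proposition \ref{1006122129}.

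The quasi-tiling computation in your third paragraph is, however, essentially the right mechanism, and it can be salvaged by changing the target value. The standard argument (this proposition is quoted in the paper from \cite[Lemma 2.4]{HYZ} and \cite[Theorem 6.1]{Lin-Wei} rather than proved there) compares two F\o lner sequences directly: fix $\epsilon$, pass to a nested subsequence $\{E_{j_i}\}$ of the first sequence along which $\frac{f(E_{j_i})}{|E_{j_i}|}\le \liminf_j\frac{f(E_j)}{|E_j|}+\epsilon$ (a subsequence of a F\o lner sequence is still F\o lner, so Proposition \ref{ow-prop} applies to it), quasi-tile $F'_m$ by finitely many of these sets, and run exactly your estimate with facts (a)--(c). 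This yields $\limsup_m\frac{f(F'_m)}{|F'_m|}\le\frac{1}{1-\epsilon}\bigl(\liminf_j\frac{f(E_j)}{|E_j|}+\epsilon\bigr)+\epsilon f(\{e_G\})$, and letting $\epsilon\to0$ gives that the limsup along any F\o lner sequence is at most the liminf along any other; taking the two sequences equal gives convergence, and crossing them gives independence of the sequence. So replace $\lambda$ by $\liminf_j\frac{f(E_j)}{|E_j|}$ of an auxiliary (sub)sequence and drop the claim that the limit is $\inf_{F\in\mathcal{F}_G}\frac{f(F)}{|F|}$; as written, that claim is refuted by the paper's own example.
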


For a function $f$ as in Proposition \ref{1006122129}, in general we cannot conclude that the limit of the sequence $\{\frac{f(F_n)}{|F_n|}: n\in \N\}$ is its infimum. This is shown by Example \ref{1104071212} constructed at the end of this section (see also Remark \ref{1207221726} for more details).

In order to deduce properties analogous to those of \eqref{1207211730} for the sequence $\{\frac{f(F_n)}{|F_n|}: n\in \N\}$, some additional conditions must be added to the assumptions of Proposition \ref{1006122129}. This can be done in two different ways, both of which will be important for us.

The first extension is:

\begin{prop} \label{1102111944}
Let $f: \mathcal{F}_G\rightarrow \mathbb{R}$ be a function. Assume that $f (E g)= f (E)$ and $f (E\cap F)+ f (E\cup F)\le f (E)+ f (F)$ whenever $g\in G$ and $E, F\in \mathcal{F}_G$ (we set $f (\emptyset)= 0$ by convention). Then for any F\o lner sequence $\{F_n: n\in \mathbb{N}\}$ of $G$, the sequence $\{\frac{f (F_n)}{|F_n|}: n\in \mathbb{N}\}$ converges and the value of the limit is independent of the choice of the F\o lner sequence $\{F_n: n\in \mathbb{N}\}$.  More precisely,
\begin{equation*}
\lim_{n\rightarrow \infty} \frac{f (F_n)}{|F_n|}= \inf_{F\in \mathcal{F}_G} \frac{f (F)}{|F|}\ (\text{and so}\ = \inf_{n\in \mathbb{N}} \frac{f (F_n)}{|F_n|}).
\end{equation*}
\end{prop}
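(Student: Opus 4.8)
The plan is to run the Ornstein--Weiss quasi-tiling scheme (Proposition \ref{ow-prop}), using submodularity as a substitute for the monotonicity and non-negativity that were available in Proposition \ref{1006122129}. Throughout write $L:=\inf_{F\in\mathcal F_G}\frac{f(F)}{|F|}$.

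\emph{Reductions.} The two hypotheses say exactly that $f$ is right $G$-invariant and submodular (strongly subadditive), with $f(\emptyset)=0$. Decomposing a finite set into singletons and applying submodularity repeatedly gives $f(F)\le f(\{e_G\})\,|F|$, so $L\le f(\{e_G\})<\infty$. Replacing $f$ by $F\mapsto f(F)-f(\{e_G\})\,|F|$ (which preserves both hypotheses and subtracts the same constant from $L$ and from every $\frac{f(F_n)}{|F_n|}$) we may assume $f(\{e_G\})=0$; then $f\le0$, and for $E\subseteq F$ disjoint subadditivity gives $f(F)\le f(E)+f(F\setminus E)\le f(E)$, so $f$ is monotone \emph{decreasing}. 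If $L=0$ then $f\equiv0$ and there is nothing to prove, so assume $L<0$ (the case $L=-\infty$ is handled verbatim, reading ``$L+\epsilon$'' below as an arbitrary negative constant). Since $\frac{f(F_n)}{|F_n|}\ge L$ always, it remains only to prove $\limsup_n\frac{f(F_n)}{|F_n|}\le L$ for an arbitrary F\o lner sequence $\{F_n\}$; this also yields the asserted identity with $\inf_F$ and $\inf_n$.

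\emph{A submodular substitute for monotonicity, and the main step.} Where the monotone case uses ``$f(B)\le f(A)$ for $B\subseteq A$'' to replace tiles by their disjoint cores, here I would use the Lipschitz-type estimate $0\le f(W)-f(D)\le C(D)\,|D\setminus W|$ for $W\subseteq D$, with $C(D):=\max_{a\in D}\bigl(f(D\setminus\{a\})-f(D)\bigr)$ depending (by invariance) only on the shape of $D$: the left inequality is monotonicity, and the right one follows by deleting the points of $D\setminus W$ one at a time, each deletion raising $f$ by at most $C(D)$ because submodularity makes the marginal $X\mapsto f(X)-f(X\setminus\{a\})$ non-increasing in $X$. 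Now fix a small $\epsilon>0$ with $L+\epsilon<0$ and a set $A^*$ with $e_G\in A^*$ and $\frac{f(A^*)}{|A^*|}<L+\epsilon$. Using the Ornstein--Weiss construction I would produce a finite family $T_1,\dots,T_r$ of arbitrarily invariant sets that $\epsilon$-quasi-tiles every sufficiently invariant set and, moreover, satisfies $\frac{f(T_i)}{|T_i|}<L+2\epsilon$ for all $i$ --- obtained by starting the construction at $A^*$ and enlarging through a chain in which each new set is $\epsilon$-quasi-tiled by its predecessors, the density bound propagating along the chain by the estimate below. Taking $T_1,\dots,T_r$ as initial terms of an increasing F\o lner sequence and applying Proposition \ref{ow-prop}, for all large $n$ there are tiling centres $C_i$ with the blocks $T_iC_i$ pairwise disjoint inside $F_n$, each family $\{T_ic:c\in C_i\}$ $\epsilon$-disjoint, and $\bigsqcup_iT_iC_i$ a $(1-\epsilon)$-cover of $F_n$. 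Enumerate all translates $T_ic$ as $D_1,\dots,D_q$ and set $W_\ell:=D_\ell\setminus\bigcup_{s<\ell}D_s$, so the $W_\ell$ partition $\bigcup_{i,c}T_ic$. Using $f\le0$, disjoint subadditivity and the Lipschitz estimate,
\begin{equation*}
f(F_n)\le f\Bigl(\bigsqcup_\ell W_\ell\Bigr)\le \sum_\ell f(W_\ell)\le \sum_\ell f(D_\ell)+\max_i C(T_i)\cdot\sum_\ell|D_\ell\setminus W_\ell|.
\end{equation*}
Here $\sum_\ell f(D_\ell)=\sum_i|C_i|f(T_i)<(L+2\epsilon)\sum_i|C_i||T_i|$ with $\sum_i|C_i||T_i|\ge\sum_i|T_iC_i|\ge(1-\epsilon)|F_n|$, so (as $L+2\epsilon<0$) $\sum_\ell f(D_\ell)<(L+2\epsilon)(1-\epsilon)|F_n|$; and $\epsilon$-disjointness gives $\sum_\ell|D_\ell\setminus W_\ell|=\sum_i|C_i||T_i|-\bigl|\bigcup_{i,c}T_ic\bigr|\le\frac{\epsilon}{1-\epsilon}|F_n|$. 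Hence $\limsup_n\frac{f(F_n)}{|F_n|}\le(L+2\epsilon)(1-\epsilon)+O(\epsilon)$, and letting $\epsilon\to0$ completes the proof.

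\emph{Expected main obstacle.} The crux is the construction in the main step of a tiling family that is simultaneously invariant enough for Proposition \ref{ow-prop} and almost optimal for the density $\frac{f(\cdot)}{|\cdot|}$: this bootstrapping is forced by the absence of monotonicity in the ``increasing'' sense, and must be interwoven with the error estimate of that step --- in particular one must control $\max_i C(T_i)$ (equivalently, the growth of the tiles) as $\epsilon\to0$, or else bypass this by replacing the crude Lipschitz bound with the sharper estimate coming from concavity in $k$ of the average of $f$ over the $k$-element subsets of a tile. The remaining ingredients --- the reductions, the telescoping bookkeeping and the $\epsilon$-disjointness counting --- are routine.
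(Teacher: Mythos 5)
Your reductions, the Lipschitz-type estimate $0\le f(W)-f(D)\le C(D)\,|D\setminus W|$ (which does follow correctly from submodularity), and the bookkeeping in the main display are all fine \emph{provided} the tiles exist as claimed. The genuine gap is precisely at the central step you flag as the ``expected main obstacle'', and the bootstrap you sketch does not close it. To invoke Proposition \ref{ow-prop} the tiles must be drawn from a F\o lner sequence, i.e.\ they must be very invariant sets --- but the density $\frac{f(T)}{|T|}$ of very invariant sets is exactly the quantity whose relation to $L=\inf_F\frac{f(F)}{|F|}$ you are trying to establish, so you may not assume it is $<L+2\epsilon$. Conversely, the near-optimizer $A^*$ is an arbitrary finite set, and arbitrary finite sets need not $\epsilon$-quasi-tile anything: for instance $\{1,2,4\}\subseteq\Z$ (the very set $S$ of Example \ref{1104071212}) has maximal packing density $3/4$ by disjoint translates, so for $\epsilon<1/4$ no sufficiently long interval is $\epsilon$-quasi-tiled by $\{A^*\}$ alone; hence the proposed chain ``each new set $\epsilon$-quasi-tiled by its predecessors, starting at $A^*$'' cannot even take its first step, and if you admit auxiliary filler tiles their densities are uncontrolled, so the bound does not propagate. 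On top of this, the error term $\max_i C(T_i)\cdot\frac{\epsilon}{1-\epsilon}|F_n|$ is not under control: submodularity only gives $C(T)\le -f(T)\le |L|\,|T|$, so $C(T_i)$ may grow with the tiles, which themselves must grow as $\epsilon\to0$, and the ``concavity in $k$'' alternative is only gestured at, not carried out.

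The missing idea --- and the paper's actual route --- is to abandon genuine quasi-tilings in favour of \emph{fractional} covers, which exist trivially for any fixed $T\in\mathcal F_G$: with $E_n=F_n\cap\bigcap_{g\in T}g^{-1}F_n$ one has the exact identity $1_{F_n}=\frac1{|T|}\sum_{g\in E_n}1_{Tg}+\sum_j a_j1_{E_j'}$ with rational $a_j>0$ (Lemma \ref{1008300008}), and the key Lemma \ref{1102111949} states that a submodular $f$ with $f(\emptyset)=0$ is subadditive with respect to such weighted decompositions, so $f'(F_n)\le\frac1{|T|}\sum_{g\in E_n}f'(Tg)\le\frac{|E_n|}{|T|}f'(T)$ after your same normalization $f'\le0$. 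This makes an arbitrary near-optimal $T$ usable directly, with no invariance requirement on $T$, no Ornstein--Weiss machinery, and no Lipschitz constant to control; dividing by $|F_n|$ and letting $n\to\infty$ gives $\limsup_n\frac{f'(F_n)}{|F_n|}\le\frac{f'(T)}{|T|}$ for every $T$, which is the desired upper bound. If you want to salvage your outline, the way to do it is to prove (or import) this weighted subadditivity for submodular functions and replace the quasi-tiling step by the fractional cover; as written, the proposal does not constitute a proof.
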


\begin{rem} \label{1106292205}
The above proposition was proved by Moulin Ollagnier (cf \cite[Lemma 2.2.16, Definition 3.1.5, Remark 3.1.7 and Proposition 3.1.9]{MO}).
We are grateful to Hanfeng Li, Benjy Weiss and the referee for pointing this out to us.
\end{rem}

Now we introduce our second extension of Proposition \ref{1006122129}.

Let $\emptyset\neq T\subseteq G$. We say that \emph{$T$ tiles $G$} if there exists $\emptyset\neq G_T\subseteq G$ such that $\{T c: c\in G_T\}$ forms a partition of $G$, that is, $T c_1\cap T c_2= \emptyset$ if $c_1$ and $c_2$ are different elements from $G_T$ and
 $\bigcup\limits_{c\in G_T} T c= G$.
Denote by $\mathcal{T}_G$ the set of all non-empty finite subsets of $G$ which tile $G$. Observe that $\mathcal{T}_G\neq \emptyset$, as $\mathcal{T}_G\supseteq \{\{g\}: g\in G\}$.

As shown by \cite[Theorem 3.3 and Proposition 3.6]{We}, tiling sets play a key role in establishing a counterpart of Rokhlin's Lemma for infinite countable discrete amenable group actions.

The class of countable amenable groups admitting a \emph{tiling F\o lner sequence} (i.e. a F\o lner sequence consisting of tiling subsets of the group) is large, and includes all countable amenable linear groups and all countable residually finite amenable groups \cite{W0}. Recall that a \emph{linear group} is a group isomorphic to a matrix group over a field $K$ (i.e. a group consisting of invertible matrices over $K$); a group is \emph{residually finite} if the intersection of all its normal subgroups of finite index is trivial. Note that any finitely generated nilpotent group is residually finite.

If the group $G$ admits a tiling F\o lner sequence, we may state our second generalization of Proposition \ref{1006122129} as follows:

\begin{prop} \label{p1006172118}
Let $f: \mathcal{F}_G\rightarrow \mathbb{R}$ be a function. Assume that $f (E g)= f (E)$ and $f (E\cup F)\le f (E)+ f (F)$ whenever $g\in G$ and $E, F\in \mathcal{F}_G$ satisfy $E\cap F= \emptyset$. Then for any tiling F\o lner sequence $\{F_n: n\in \mathbb{N}\}$ of $G$, the sequence $\{\frac{f (F_n)}{|F_n|}: n\in \mathbb{N}\}$ converges and the limit is independent of the choice of the tiling F\o lner sequence $\{F_n: n\in \mathbb{N}\}$.  Furthermore,
\begin{equation*}
\lim_{n\rightarrow \infty} \frac{f (F_n)}{|F_n|}= \inf_{F\in \mathcal{T}_G} \frac{f (F)}{|F|}\ (\text{and so}\ = \inf_{n\in \mathbb{N}} \frac{f (F_n)}{|F_n|}).
\end{equation*}
\end{prop}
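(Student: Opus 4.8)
The plan is to run the argument directly with \emph{exact} tilings; this is cleaner here than the quasi-tiling route used for Proposition \ref{1102111944}, because we now only have subadditivity across \emph{disjoint} sets. First I would normalize exactly as in the proof of Proposition \ref{1102111944}: by $G$-invariance there is a constant $M\in \R$ with $f(\{g\})= M$ for every $g\in G$, and $f'(E):= f(E)- |E|M$ still satisfies $f'(Eg)= f'(E)$ and $f'(E\cup F)\le f'(E)+ f'(F)$ for disjoint $E, F\in \mathcal{F}_G$, while in addition $f'(E)\le 0$ for every $E$ (write $E$ as a disjoint union of singletons and iterate). It therefore suffices to prove the displayed formula for $f'$ and, at the very end, undo the normalization via $f= f'+ |\bullet|\,M$.

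Next, fix an arbitrary $T\in \mathcal{T}_G$, with tiling centers $G_T$ so that $\{Tc: c\in G_T\}$ partitions $G$. For each F\o lner set $F_n$ put $C_n:= \{c\in G_T: Tc\subseteq F_n\}$ and $R_n:= F_n\setminus \bigcup_{c\in C_n} Tc$. The cells $\{Tc: c\in C_n\}$ are pairwise disjoint and contained in $F_n$, so iterating disjoint subadditivity and using $G$-invariance gives
$$f'(F_n)\le \sum_{c\in C_n} f'(Tc)+ f'(R_n)= |C_n|\, f'(T)+ f'(R_n)\le |C_n|\, f'(T),$$
where the term $f'(R_n)\le 0$ is dropped (and simply omitted when $R_n= \emptyset$). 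Since $|C_n|\,|T|= |F_n|- |R_n|$, this yields $f'(F_n)/|F_n|\le \frac{|F_n|- |R_n|}{|T|\,|F_n|}\, f'(T)$.

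The one genuinely substantive point, and what I expect to be the main obstacle, is the boundary estimate $|R_n|/|F_n|\to 0$. I would argue as follows: if $x\in R_n$, then $x$ lies in a unique cell $Tc$ with $c\in G_T$ and $Tc\not\subseteq F_n$; writing $x= tc$ with $t\in T$ and choosing $t'\in T$ with $t'c\notin F_n$, we get $c= t^{-1}x$, hence $x\notin t(t')^{-1}F_n$, so $x\in F_n\setminus sF_n$ with $s:= t(t')^{-1}\in TT^{-1}$. Thus $R_n\subseteq \bigcup_{s\in TT^{-1}}(F_n\setminus sF_n)$, and since $TT^{-1}$ is a fixed finite set and $|F_n\setminus sF_n|\le |sF_n\Delta F_n|= o(|F_n|)$ by the F\o lner property, we obtain $|R_n|= o(|F_n|)$. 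Passing to the limit in the inequality above gives $\limsup_n f'(F_n)/|F_n|\le f'(T)/|T|$, and as $T\in \mathcal{T}_G$ was arbitrary, $\limsup_n f'(F_n)/|F_n|\le \inf_{T\in \mathcal{T}_G} f'(T)/|T|$.

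For the reverse inequality I would use exactly the tiling hypothesis on the sequence: each $F_n$ itself lies in $\mathcal{T}_G$, so $f'(F_n)/|F_n|\ge \inf_{T\in \mathcal{T}_G} f'(T)/|T|$ for every $n$, whence $\liminf_n f'(F_n)/|F_n|\ge \inf_{T\in \mathcal{T}_G} f'(T)/|T|$. The two bounds give convergence with $\lim_n f'(F_n)/|F_n|= \inf_{T\in \mathcal{T}_G} f'(T)/|T|$, a quantity manifestly independent of the chosen tiling F\o lner sequence; and the chain $\inf_n f'(F_n)/|F_n|\le \lim_n f'(F_n)/|F_n|= \inf_{T\in \mathcal{T}_G} f'(T)/|T|\le \inf_n f'(F_n)/|F_n|$ (the last step because every $F_n\in \mathcal{T}_G$) forces all three quantities to coincide. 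Undoing the normalization transfers every one of these identities back to $f$. In short, once the combinatorial estimate on $|R_n|$ is in place the rest is routine bookkeeping with the disjoint-subadditivity inequality, and the word ``tiling'' in ``tiling F\o lner sequence'' enters only to supply the easy lower bound.
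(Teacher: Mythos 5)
Your proof is correct and is essentially the paper's argument: after the same normalization (you work with $f'=f-|\bullet|M\le 0$ where the paper uses its negative $h=|\bullet|M-f\ge 0$), both proofs tile $F_n$ by the cells $Tc$ fully contained in $F_n$, apply disjoint sub/super-additivity plus $G$-invariance, get the easy bound from $F_n\in\mathcal{T}_G$, and control the leftover set by the F\o lner property. The only cosmetic difference is the boundary estimate: you bound $R_n$ by $\bigcup_{s\in TT^{-1}}(F_n\setminus sF_n)$, while the paper bounds the same quantity through the $(F,\epsilon)$-invariance of $F_n$ via $E_n\setminus E_n'\subseteq F^{-1}F_n\cap F^{-1}(G\setminus F_n)$; these are equivalent.
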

\begin{proof}
Let $\{F_n: n\in \mathbb{N}\}$ be a tiling F\o lner sequence for $G$. Then there exists $M\in \mathbb{R}$ such that $f (\{g\})= M$ for each $g\in G$. Set
$$h: \mathcal{F}_G\rightarrow \mathbb{R}, E\mapsto |E| M- f (E)$$
 for each $E\in \mathcal{F}_G$. The function $h: \mathcal{F}_G\rightarrow \mathbb{R}_+$ satisfies $h (E g)= h (E)$ and $h (E\cup F)\ge h (E)+ h (F)$ whenever $g\in G$ and $E, F\in \mathcal{F}_G$ satisfy $E\cap F= \emptyset$. Thus, we only need show that the sequence $\{\frac{h (F_n)}{|F_n|}: n\in \mathbb{N}\}$ converges and
\begin{equation} \label{1006172118}
\lim_{n\rightarrow \infty} \frac{h (F_n)}{|F_n|}= \sup_{F\in \mathcal{T}_G} \frac{h (F)}{|F|}.
\end{equation}
It is clear that
\begin{equation} \label{1006172119}
\limsup_{n\rightarrow \infty} \frac{h (F_n)}{|F_n|}\le \sup_{F\in \mathcal{T}_G} \frac{h (F)}{|F|}.
\end{equation}

For the other direction, first let $\epsilon> 0$ and $F\in \mathcal{T}_G$ be fixed: then  $G_F$ is a subset of $G$ such that $\{F g: g\in G_F\}$ forms a partition of $G$. As $\{F_n: n\in \mathbb{N}\}$ is a tiling F\o lner sequence of $G$,
$F_n$ is $(F, \epsilon)$-invariant whenever $n\in \mathbb{N}$ is large enough.
Now for each $n\in \mathbb{N}$ set $E_n'= \{g\in G_F: F g\subseteq F_n\}$ and $E_n= \{g\in G_F: F g\cap F_n\neq \emptyset\}$, one has $$E_n\setminus E_n'\subseteq F^{- 1} F_n\cap F^{- 1} (G\setminus F_n).$$
Thus if $n\in \mathbb{N}$ is sufficiently large,
\begin{equation*}
\frac{|F_n|}{|F|}\le |E_n|\le |E_n'|+ \epsilon |F_n|,\ \text{i.e.}\ |E_n'|\ge (\frac{1}{|F|}- \epsilon) |F_n|,
\end{equation*}
and thus
\begin{equation*}
\frac{h (F_n)}{|F_n|}\ge \frac{h (F E_n')}{|F_n|}\ge \frac{h (F) |E_n'|}{|F_n|}\ge (\frac{1}{|F|}- \epsilon) h (F).
\end{equation*}
This implies
\begin{equation*}
\liminf_{n\rightarrow \infty} \frac{h (F_n)}{|F_n|}\ge (\frac{1}{|F|}- \epsilon) h (F).
\end{equation*}
Since both $\epsilon> 0$ and $F\in \mathcal{T}_G$ are arbitrary, one may conclude
\begin{equation} \label{1006172120}
\liminf_{n\rightarrow \infty} \frac{h (F_n)}{|F_n|}\ge \sup_{F\in \mathcal{T}_G} \frac{h (F)}{|F|}.
\end{equation}
Now \eqref{1006172118} follows directly from \eqref{1006172119} and \eqref{1006172120}. This completes the proof.
\end{proof}

\begin{rem} \label{1207221549}
In \cite[Theorem 5.9]{We}, Weiss proved the same conclusion under the additional assumptions that $0\le f (E)\le f (F)$ for all $E, F\in \mathcal{F}_G$ with $E\subseteq F$. The trivial example satisfying the assumptions of Proposition \ref{p1006172118} is the function $f$ given by $f (E)= - |E|^2$ for all $E\in \mathcal{F}_G$, to which \cite[Theorem 5.9]{We} does not apply.
\end{rem}

The following example highlights the difference between Proposition \ref{1102111944} and Proposition \ref{p1006172118} in the setting of $G=\Z$.

\begin{exam} \label{1104071212}
There exists a monotone non-negative
$\Z$-invariant sub-additive function $f: \mathcal{F}_\Z\rightarrow \R$ (in particular, $f$ satisfies the assumption of Proposition \ref{p1006172118} and so the sequence $\{\frac{f (\{1, \cdots, n\})}{n}: n\in \N\}$ converges) such that
\begin{equation} \label{1104071303}
\lim_{n\rightarrow \infty} \frac{f (\{1, \cdots, n\})}{n}> \inf_{E\in \mathcal{F}_\Z} \frac{f (E)}{|E|}.
\end{equation}
Thus, $f$ does not satisfy the assumption of Proposition \ref{1102111944}.
\end{exam}
\begin{proof}[Construction of Example \ref{1104071212}]
The function $f$ is constructed as follows: let $E\in \mathcal{F}_\Z$,
$$f (E)= \min\{|E|- |F|: \{p+ S: p\in F\}\ \text{is a disjoint family of subsets of}\ E\},$$
here $S= \{1, 2, 4\}$ and $F$ may be empty. For example, $f (S)= 2, f (\{1, 2, 3, 4\})= 3$.

Now we claim that $f$ has the required property.

First, we aim to prove that $f$ is a monotone non-negative $\Z$-invariant sub-additive function by claiming $f (E)\le f (E\cup \{a\})$ with $E\in \mathcal{F}_\Z, a\in \Z\setminus E$ and $f (E_1\cup E_2)\le f (E_1)+ f (E_2)$ with $E_1, E_2\in \mathcal{F}_\Z, E_1\cap E_2= \emptyset$.

We can select $F$ such that $f (E\cup \{a\})= |E|+ 1- |F|$ and $\{p+ S: p\in F\}$ is a disjoint family of subsets of $E\cup \{a\}$. If $a\notin \cup \{p+ S: p\in F\}$ then $\{p+ S: p\in F\}$ is also a disjoint family of subsets of $E$ and so $f (E)\le |E|- |F|$. If
$a\in p_0+ S$ for some $p_0\in F$ then $\{p+ S: p\in F\setminus \{p_0\}\}$ is a disjoint family of subsets of $E$ and so $f (E)\le |E|- |F\setminus \{p_0\}|$. Summing up, $f (E)\le f (E\cup \{a\})$.

Now let $F_i$ be such that $f (E_i)= |E_i|- |F_i|$ and $\{p+ S: p\in F_i\}$ is a disjoint family of subsets of $E_i, i= 1, 2$. As $E_1\cap E_2= \emptyset$ It is easy to see that $F_1\cap F_2= \emptyset$ and $\{p+ S: p\in F_1\cup F_2\}$ is a disjoint family of subsets of $E_1\cup E_2$, and so $f (E_1\cup E_2)\le |E_1\cup E_2|- |F_1\cup F_2|= f (E_1)+ f (E_2)$.

Secondly, let $n\in \N$. We prove that $f (\{1, \cdots, 4 n\})= 3 n$. It is easy to check that $f (\{1, \cdots, 4 n\})\le 3 n$. Assume that $f (\{1, \cdots, 4 n\})< 3 n$: in particular, there exists $F\in \mathcal{F}_\Z$ such that $\{p+ S: p\in F\}$ is a disjoint family of subsets of $\{1, \cdots, 4 n\}$ and $|F|> n$. Observe that there exists at least one $k$ such that $\{4k -3, 4k- 2, 4k- 1, 4k\}\cap F$ contains at least two different elements. In particular, there exists $i', j'\in \{4k- 3, 4k- 2, 4k- 1, 4k\}$ such that $i'+ S$ and $j'+ S$ are disjoint, a contradiction to the fact that $(i+ S)\cap (j+ S)\neq \emptyset$ whenever $i, j\in \{1, 2, 3, 4\}$ (this can be verified directly). Thus, $f (\{1, \cdots, 4 n\})= 3 n$.

Finally, we finish the proof of the strict inequality \eqref{1104071303} by observing that $\inf\limits_{E\in \mathcal{F}_\Z} \frac{f (E)}{|E|}= \frac{2}{3}$. This finishes the construction.
\end{proof}

Obviously, by standard modifications, we could obtain such an example with
$$\lim_{n\rightarrow \infty} \frac{f (\{1, \cdots, n\})}{n}> 0= \inf_{E\in \mathcal{F}_\Z} \frac{f (E)}{|E|}.$$

\begin{rem} \label{1207221726}
It is direct to check that $\{F_n: n\in \mathbb{N}\}$ is a F\o lner sequence of $\Z$, where $F_n= \{1, \cdots, 4 n- 3, 4 n- 2, 4 n\}$ for each $n\in \N$. From the construction, it is easy to see $3 (n- 1)\le f (F_n)\le 3 n- 1$ and then
$$\inf_{n\ge m} \frac{f (F_n)}{|F_n|}\le \frac{3 m- 1}{4 m- 1} < \frac{3}{4} = \lim_{n\rightarrow \infty} \frac{f (F_n)}{|F_n|}$$
for each $m\in \N$. This shows that: in general we can not conclude that the limit of the sequence $\{\frac{f(F_n)}{|F_n|}: n\in \N\}$
in Proposition \ref{1006122129} will be the infimum of the sequence (even neglecting finitely many elements of the sequence).
\end{rem}

Based on the convergence results Proposition \ref{1006122129} and Proposition \ref{1102111944}, we end this section with the following assumption throughout the remainder of the paper.

\medskip

\medskip

{\noindent \bf Standard Assumption 2. \it
From now on, fix $\{F_n: n\in \mathbb{N}\}$, a F\o lner sequence of $G$ with the property that $e_G\subseteq F_1\subsetneq F_2\subsetneq \cdots$, and hence $|F_n|\ge n$ for each $n\in \N$ (it is easy to see that such a F\o lner sequence of $G$ must exist).}

\section{Measurable dynamical systems} \label{1006291551}

By a \emph{measurable dynamical $G$-system} (MDS) $(Y, \mathcal{D}, \nu, G)$ we mean a probability space $(Y, \mathcal{D}, \nu)$  and a group $G$ of invertible measure-preserving transformations of $(Y, \mathcal{D}, \nu)$ with $e_G$ acting as the identity transformation.

In this section we give some background on measurable dynamical systems used in our discussion of a continuous bundle random dynamical system. We also obtain the relative Pinsker formula of an MDS for an infinite countable discrete amenable group action. This was obtained in \cite{GTW} in the case of a Lebesgue space.

We believe that Theorem \ref{1006301434} is an interesting new result. The related Question \ref{1102041455} is a step towards understanding the entropy theory of an MDS. Theorem \ref{1006301434} leads to our discussions of entropy tuples for a continuous bundle random dynamical system in \S \ref{entropy tuple}.

\medskip

Let $(Y, \mathcal{D}, \nu)$ be a probability space.
A \emph{cover of $(Y, \mathcal{D}, \nu)$} is a family $\mathcal{W}\subseteq \mathcal{D}$ satisfying $\bigcup\limits_{W\in \mathcal{W}} W= Y$; if all elements of a cover $\mathcal{W}$ are disjoint, then $\mathcal{W}$ is called a \emph{partition of $(Y, \mathcal{D}, \nu)$}. Denote by $\mathbf{C}_Y$ and $\mathbf{P}_Y$ the set of all finite covers and finite partitions of $(Y, \mathcal{D}, \nu)$, respectively. Let $\alpha$ be a partition of $(Y, \mathcal{D}, \nu)$ and $y\in Y$. Denote by $\alpha (y)$ the atom of $\alpha$ containing $y$.
Let $\mathcal{W}_1, \mathcal{W}_2\in \mathbf{C}_Y$.
If each element of $\mathcal{W}_1$ is contained in some element of $\mathcal{W}_2$ then we say that $\mathcal{W}_1$ is \emph{finer than $\mathcal{W}_2$} (denote by $\mathcal{W}_1\succeq \mathcal{W}_2$ or $\mathcal{W}_2\preceq \mathcal{W}_1$).
The join $\mathcal{W}_1\vee \mathcal{W}_2$ of $\mathcal{W}_1$ and $\mathcal{W}_2$ is given by
$$\mathcal{W}_1\vee \mathcal{W}_2= \{W_1\cap W_2: W_1\in \mathcal{W}_1, W_2\in \mathcal{W}_2\}.$$
The definition extends naturally to a finite collection of covers.

 Fix $\mathcal{W}_1\in \mathbf{C}_Y$ and denote by $\mathcal{P} (\mathcal{W}_1)\in \mathbf{P}_Y$ the finite partition generated by $\mathcal{W}_1$: that is, if we say $\mathcal{W}_1= \{W_1^1, \cdots, W_1^m\}, m\in \mathbb{N}$ then
\begin{equation*}
\mathcal{P} (\mathcal{W}_1)= \{\bigcap\limits_{i= 1}^m A_i: A_i\in \{W_1^i, (W_1^i)^c\}, 1\le i\le m\}.
\end{equation*}
We introduce a finite collection of partitions which we will use in the sequel. Let
\begin{equation*}
\mathbf{P} (\mathcal{W}_1)= \{\alpha\in \mathbf{P}_Y: \mathcal{P} (\mathcal{W}_1)\succeq \alpha\succeq \mathcal{W}_1\}.
\end{equation*}

 Now let $\mathcal{C}$ be a sub-$\sigma$-algebra  of $\mathcal{D}$ and $\mathcal{W}_1\in \mathbf{P}_Y$. We set
\begin{equation*}
H_\nu (\mathcal{W}_1| \mathcal{C})= - \sum_{W_1\in \mathcal{W}_1} \int_Y \nu (W_1| \mathcal{C}) (y) \log \nu (W_1| \mathcal{C}) (y) d \nu (y),
\end{equation*}
(by convention, we set $0 \log 0= 0$).
Here, $\nu (W_1| \mathcal{C})$ denotes the conditional expectation with respect to $\nu$ of the function $1_{W_1}$ relative to $\mathcal{C}$. It is a standard fact that $H_\nu (\mathcal{W}_1| \mathcal{C})$ increases with $\mathcal{W}_1$ (ordered by $\succeq$ ) and decreases as $\mathcal{C}$ increases (ordered by $\subseteq$). In fact, if the sequence of sub-$\sigma$-algebras $\{\mathcal{C}_n: n\in \mathbb{N}\}$ increases or decreases to $\mathcal{C}$ then the sequence $\{H_\nu (\mathcal{W}_1| \mathcal{C}_n): n\in \N\}$ decreases or increases to $H_\nu (\mathcal{W}_1| \mathcal{C})$, respectively (see for example \cite[Theorem 14.28]{G1}).

If $\mathcal{N}_Y\doteq \{\emptyset, Y\}$ is the trivial $\sigma$-algebra, one has
\begin{equation*}
H_\nu (\mathcal{W}_1| \mathcal{N}_Y)= - \sum_{W_1\in \mathcal{W}_1} \nu (W_1)\log \nu (W_1)\ge H_\nu (\mathcal{W}_1| \mathcal{C}).
\end{equation*}
We will write for short $H_\nu (\mathcal{W}_1)= H_\nu (\mathcal{W}_1| \mathcal{N}_Y)$.

Let $\mathcal{W}_2\in \mathbf{P}_Y$. Then $\mathcal{W}_2$ naturally generates a sub-$\sigma$-algebra of $\mathcal{D}$ (also denoted by $\mathcal{W}_2$ if there is no ambiguity). It is easy to see that
\begin{equation*}
H_\nu (\mathcal{W}_1| \mathcal{W}_2)= H_\nu (\mathcal{W}_1\vee \mathcal{W}_2)- H_\nu (\mathcal{W}_2).
\end{equation*}
In fact, more generally,
\begin{equation} \label{1007021626}
H_\nu (\mathcal{W}_1| \mathcal{C}\vee \mathcal{W}_2)= H_\nu (\mathcal{W}_1\vee \mathcal{W}_2| \mathcal{C})- H_\nu (\mathcal{W}_2| \mathcal{C}),
\end{equation}
here, $\mathcal{C}\vee \mathcal{W}_2$ denotes the sub-$\sigma$-algebra of $\mathcal{D}$ generated by sub-$\sigma$-algebras $\mathcal{C}$ and $\mathcal{W}_2$ (the notation works similarly for any given family of sub-$\sigma$-algebras of $\mathcal{D}$).

Now let $\mathcal{W}_1\in \mathbf{C}_Y$. Following the ideas of Romagnoli \cite{R} we set
\begin{equation*}
H_\nu (\mathcal{W}_1| \mathcal{C})= \inf_{\alpha\in \mathbf{P}_Y, \alpha\succeq \mathcal{W}_1} H_\nu (\alpha| \mathcal{C}).
\end{equation*}
It is clear that there is no ambiguity in this notation. Moreover, it remains true that $H_\nu (\mathcal{W}_1| \mathcal{C})$ increases with $\mathcal{W}_1$ and decreases as $\mathcal{C}$ increases.

Similarly, we can introduce $H_\nu (\mathcal{W}_1)$. Note that (see for example \cite[Proposition 6]{R})
\begin{equation} \label{1006291640}
H_\nu (\mathcal{W}_1)= \min_{\alpha\in \mathbf{P} (\mathcal{W}_1)} H_\nu (\alpha).
\end{equation}

\medskip

Let $(Y, \mathcal{D}, \nu, G)$ be an MDS, $\mathcal{W}\in \mathbf{C}_X$ and $\mathcal{C}\subseteq \mathcal{D}$ a sub-$\sigma$-algebra. For each $F\in \mathcal{F}_G$, set $\mathcal{W}_F= \bigvee\limits_{g\in F} g^{- 1} \mathcal{W}$. If $\mathcal{C}$ is $G$-invariant, i.e. $g^{- 1} \mathcal{C}= \mathcal{C}$ (up to $\nu$ null sets) for each $g\in G$,
then it is easy to check that
$$H_\nu (\mathcal{W}_\bullet| \mathcal{C}): \mathcal{F}_G\rightarrow \R, F\mapsto H_\nu (\mathcal{W}_F| \mathcal{C})$$
 is a monotone non-negative
$G$-invariant sub-additive function. Now, following ideas in \cite{R} by Romagnoli,
we may define
the {\it measure-theoretic
$\nu$-entropy of $\mathcal{W}$ with respect to $\mathcal{C}$} and the {\it measure-theoretic
$\nu, +$-entropy of $\mathcal{W}$ with respect to $\mathcal{C}$} by
\begin{equation*}
h_\nu (G, \mathcal{W}| \mathcal{C})= \lim_{n\rightarrow \infty}
\frac{1}{|F_n|} H_\nu (\mathcal{W}_{F_n}| \mathcal{C})
\end{equation*}
and
\begin{equation*}
h_{\nu, +} (G, \mathcal{W}| \mathcal{C})= \inf_{\alpha\in \mathbf{P}_Y, \alpha\succeq \mathcal{W}} h_\nu (G, \alpha| \mathcal{C})\ge h_\nu (G, \mathcal{W}| \mathcal{C}),
\end{equation*}
respectively. By Proposition \ref{1006122129}, $h_\nu (G, \mathcal{W}| \mathcal{C})$ and thus $h_{\nu, +} (G, \mathcal{W}| \mathcal{C})$ are well defined. Observe that if $\alpha\in \mathbf{P}_Y$ then $h_\nu (G, \alpha| \mathcal{C})= h_{\nu, +} (G, \alpha| \mathcal{C})$ and
\begin{equation} \label{1006272232}
h_\nu (G, \alpha| \mathcal{C})= \inf_{F\in \mathcal{F}_G} \frac{1}{|F|} H_\nu (\alpha_F| \mathcal{C})\le H_\nu (\alpha| \mathcal{C}),
\end{equation}
which is a direct corollary of Proposition \ref{1102111944}, see also \cite[(2)]{DZ}. Then the \emph{measure-theoretic $\nu$-entropy of $(Y, \mathcal{D}, \nu, G)$ with respect to $\mathcal{C}$} is defined as
\begin{equation*}
h_\nu (G, Y| \mathcal{C})= \sup_{\alpha\in \mathbf{P}_Y} h_\nu (G, \alpha| \mathcal{C}).
\end{equation*}
By Proposition \ref{1006122129}, all values of these invariants are independent of the selection of the F\o lner sequence $\{F_n: n\in \mathbb{N}\}$.

To simplify notation, when $\mathcal{C}= \mathcal{N}_Y$ we will omit the qualification ``with respect to $\mathcal{C}$" or ``$| \mathcal{C}$". When $T$ is an invertible measure-preserving transformation of $(Y, \mathcal{D}, \nu)$ and we consider the group action of $\{T^n: n\in \mathbb{Z}\}$, we will replace  ``$\{T^n: n\in \mathbb{Z}\}$" by ``$T$".

\medskip

It is not hard to obtain:

\begin{prop} \label{0911192237}
Let $(Y, \mathcal{D}, \nu, G)$ be an MDS, $\mathcal{W}_1, \mathcal{W}_2\in \mathbf{C}_Y, \alpha_1, \alpha_2\in \mathbf{P}_Y, F\in \mathcal{F}_G$ and $\mathcal{C}\subseteq \mathcal{D}$ a $G$-invariant sub-$\sigma$-algebra. Then
\begin{enumerate}

\item \label{1102032251} $h_\nu (G, \mathcal{W}_1| \mathcal{C})\le h_\nu (G, \mathcal{W}_2| \mathcal{C})$ and $h_{\nu, +} (G, \mathcal{W}_1| \mathcal{C})\le h_{\nu, +} (G, \mathcal{W}_2| \mathcal{C})$ if $\mathcal{W}_1\preceq \mathcal{W}_2$.

\item \label{1102032252} $h_\nu (G, \mathcal{W}_1\vee \mathcal{W}_2| \mathcal{C})\le h_\nu (G, \mathcal{W}_1| \mathcal{C})+ h_\nu (G, \mathcal{W}_2| \mathcal{C})$ and $h_{\nu, +} (G, \mathcal{W}_1\vee \mathcal{W}_2| \mathcal{C})\le h_{\nu, +} (G, \mathcal{W}_1| \mathcal{C})+ h_{\nu, +} (G, \mathcal{W}_2| \mathcal{C})$.

\item \label{1006301514} $h_\nu (G, (\mathcal{W}_1)_F| \mathcal{C})= h_\nu (G, \mathcal{W}_1| \mathcal{C})\le h_{\nu, +} (G, \mathcal{W}_1| \mathcal{C})\le H_\nu (\mathcal{W}_1| \mathcal{C})\le \log |\mathcal{W}_1|$.

\item \label{1007041247}
$h_\nu (G, \alpha_1\vee \alpha_2| \mathcal{C})\le h_\nu (G, \alpha_2| \mathcal{C})+ H_\nu (\alpha_1| \mathcal{C}\vee \alpha_2)\le h_\nu (G, \alpha_2| \mathcal{C})+ H_\nu (\alpha_1| \alpha_2)$.


\item \label{1102032253} $h_\nu (G, Y| \mathcal{C})= \sup\limits_{\mathcal{W}\in \mathbf{C}_Y} h_\nu (G, \mathcal{W}| \mathcal{C})= \sup\limits_{\mathcal{W}\in \mathbf{C}_Y} h_{\nu, +} (G, \mathcal{W}| \mathcal{C})$.
\end{enumerate}
\end{prop}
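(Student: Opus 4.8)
The plan is to prove each of the five items in Proposition \ref{0911192237} by reducing everything to the already-established properties of the entropy $h_\nu(G,\alpha|\mathcal{C})$ for \emph{partitions} $\alpha$, together with the infimum definitions of $h_\nu(G,\mathcal{W}|\mathcal{C})$ and $h_{\nu,+}(G,\mathcal{W}|\mathcal{C})$ over refining partitions. The one genuinely substantive point is item \eqref{1007041247}, which will then be used to derive \eqref{1102032252} and \eqref{1102032253}; so I would organize the proof so that \eqref{1007041247} comes early.

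First I would dispose of \eqref{1102032251}: if $\mathcal{W}_1\preceq\mathcal{W}_2$ then every partition $\alpha\succeq\mathcal{W}_2$ also satisfies $\alpha\succeq\mathcal{W}_1$, so the infimum defining $h_\nu(G,\mathcal{W}_1|\mathcal{C})$ is taken over a larger set and is therefore no larger; the same argument applies verbatim to $h_{\nu,+}$. Next, \eqref{1006301514}: the equality $h_\nu(G,(\mathcal{W}_1)_F|\mathcal{C})=h_\nu(G,\mathcal{W}_1|\mathcal{C})$ follows because $\bigvee_{g\in F'}g^{-1}(\mathcal{W}_1)_F$ and $(\mathcal{W}_1)_{F'F}$ generate the same cover up to the bounded cocycle correction that washes out in the F\o lner limit (alternatively, use $G$-invariance of $H_\nu(\cdot|\mathcal{C})$ and Proposition \ref{1006122129} applied to both functions, noting they agree up to a $|F'|$-independent additive constant per coordinate); the chain of inequalities $h_\nu\le h_{\nu,+}$ is by definition, $h_{\nu,+}(G,\mathcal{W}_1|\mathcal{C})\le H_\nu(\mathcal{W}_1|\mathcal{C})$ follows by taking $F=\{e_G\}$ inside the F\o lner limit and then the infimum over $\alpha\succeq\mathcal{W}_1$, using \eqref{1006272232}, and finally $H_\nu(\mathcal{W}_1|\mathcal{C})\le H_\nu(\mathcal{W}_1)\le\log|\mathcal{W}_1|$ by the standard entropy bounds recalled just before \eqref{1007021626} and by \eqref{1006291640}.

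For \eqref{1007041247} — the main obstacle — I would work directly with the F\o lner averages. For each $n$ one has $(\alpha_1\vee\alpha_2)_{F_n}=(\alpha_1)_{F_n}\vee(\alpha_2)_{F_n}$, and by the identity \eqref{1007021626},
\begin{equation*}
H_\nu\big((\alpha_1\vee\alpha_2)_{F_n}\,\big|\,\mathcal{C}\big)=H_\nu\big((\alpha_2)_{F_n}\,\big|\,\mathcal{C}\big)+H_\nu\big((\alpha_1)_{F_n}\,\big|\,\mathcal{C}\vee(\alpha_2)_{F_n}\big).
\end{equation*}
Then I would bound the second term by subadditivity of $H_\nu(\cdot|\cdot)$ in its first argument together with the fact that conditioning on a larger $\sigma$-algebra decreases conditional entropy: $H_\nu((\alpha_1)_{F_n}|\mathcal{C}\vee(\alpha_2)_{F_n})\le\sum_{g\in F_n}H_\nu(g^{-1}\alpha_1|\mathcal{C}\vee g^{-1}\alpha_2)$, and each summand equals $H_\nu(\alpha_1|\mathcal{C}\vee\alpha_2)$ because $\mathcal{C}$ is $G$-invariant and $\nu$ is $G$-invariant. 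Dividing by $|F_n|$ and letting $n\to\infty$ gives the first inequality; the second inequality in \eqref{1007041247} is immediate since $\mathcal{C}\vee\alpha_2\supseteq\alpha_2$ only decreases the conditional entropy, i.e. $H_\nu(\alpha_1|\mathcal{C}\vee\alpha_2)\le H_\nu(\alpha_1|\alpha_2)$. The delicate point to get right is that the per-coordinate splitting of $H_\nu((\alpha_1)_{F_n}|\mathcal{C}\vee(\alpha_2)_{F_n})$ really does reduce to $|F_n|$ copies of $H_\nu(\alpha_1|\mathcal{C}\vee\alpha_2)$ after using $G$-invariance, and that one is allowed to enlarge the conditioning $\sigma$-algebra from $\mathcal{C}\vee(\alpha_2)_{F_n}$ to $\mathcal{C}\vee g^{-1}\alpha_2$ coordinate-by-coordinate.

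Finally, \eqref{1102032252} follows from \eqref{1007041247}: choose partitions $\beta_i\succeq\mathcal{W}_i$ nearly realizing the infima defining $h_\nu(G,\mathcal{W}_i|\mathcal{C})$; then $\beta_1\vee\beta_2\succeq\mathcal{W}_1\vee\mathcal{W}_2$, so $h_\nu(G,\mathcal{W}_1\vee\mathcal{W}_2|\mathcal{C})\le h_\nu(G,\beta_1\vee\beta_2|\mathcal{C})\le h_\nu(G,\beta_1|\mathcal{C})+H_\nu(\beta_2|\mathcal{C}\vee\beta_1)$, and a symmetric estimate together with $H_\nu(\beta_2|\mathcal{C}\vee\beta_1)\le H_\nu(\beta_2|\mathcal{C})$ and passing to the infimum gives the claim (the $h_{\nu,+}$ version is even easier, directly from the infimum definition and subadditivity of $h_\nu$ on partitions). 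For \eqref{1102032253}, the inequalities $\sup_{\mathcal{W}}h_\nu\le\sup_{\mathcal{W}}h_{\nu,+}$ and $h_\nu(G,Y|\mathcal{C})\le\sup_{\mathcal{W}\in\mathbf{C}_Y}h_{\nu,+}(G,\mathcal{W}|\mathcal{C})$ hold because $\mathbf{P}_Y\subseteq\mathbf{C}_Y$ and $h_\nu=h_{\nu,+}$ on partitions; the reverse inequality $\sup_{\mathcal{W}\in\mathbf{C}_Y}h_{\nu,+}(G,\mathcal{W}|\mathcal{C})\le h_\nu(G,Y|\mathcal{C})$ holds because for any cover $\mathcal{W}$ and any $\alpha\succeq\mathcal{W}$ with $\alpha\in\mathbf{P}_Y$ one has $h_{\nu,+}(G,\mathcal{W}|\mathcal{C})\le h_\nu(G,\alpha|\mathcal{C})\le h_\nu(G,Y|\mathcal{C})$. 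This closes all five items.
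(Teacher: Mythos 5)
Most of what you do coincides with the paper's own proof: your argument for \eqref{1007041247} is exactly the inequality $H_\nu ((\alpha_1\vee \alpha_2)_E| \mathcal{C})\le H_\nu ((\alpha_2)_E| \mathcal{C})+ |E|\, H_\nu (\alpha_1| \alpha_2\vee \mathcal{C})$ that the paper invokes, your treatment of $h_\nu (G, (\mathcal{W}_1)_F| \mathcal{C})$ in \eqref{1006301514} is the paper's observation that $((\mathcal{W}_1)_F)_{F_n}= (\mathcal{W}_1)_{F F_n}$ (these covers are in fact literally equal, no ``cocycle correction'' is needed) together with $\{F F_n\}$ being again a F\o lner sequence with $|F F_n|/|F_n|\rightarrow 1$, and \eqref{1102032251}, \eqref{1102032253} and the $h_{\nu,+}$ half of \eqref{1102032252} are handled as in the paper.

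There is, however, a genuine gap in your derivation of the $h_\nu$ (cover) half of \eqref{1102032252}. You propose to ``choose partitions $\beta_i\succeq \mathcal{W}_i$ nearly realizing the infima defining $h_\nu (G, \mathcal{W}_i| \mathcal{C})$'', but $h_\nu (G, \mathcal{W}| \mathcal{C})$ for a cover is \emph{not} defined as $\inf_{\beta\in \mathbf{P}_Y,\, \beta\succeq \mathcal{W}} h_\nu (G, \beta| \mathcal{C})$; that quantity is $h_{\nu, +} (G, \mathcal{W}| \mathcal{C})$, and the identity $h_\nu= h_{\nu, +}$ for covers is precisely Theorem \ref{1006272300}, which needs $(Y, \mathcal{D}, \nu)$ to be a Lebesgue space and is not available in the generality of this proposition (the infimum over partitions in the definition of $h_\nu (G, \mathcal{W}| \mathcal{C})$ sits inside each $H_\nu (\mathcal{W}_{F_n}| \mathcal{C})$, and the near-optimal partition depends on $n$). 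Moreover, even granting such $\beta_i$, routing the estimate through \eqref{1007041247} leaves the \emph{static} term $H_\nu (\beta_2| \mathcal{C}\vee \beta_1)$, and after optimizing you only obtain $h_\nu (G, \mathcal{W}_1\vee \mathcal{W}_2| \mathcal{C})\le h_{\nu, +} (G, \mathcal{W}_1| \mathcal{C})+ H_\nu (\mathcal{W}_2| \mathcal{C})$, which is strictly weaker than the claim since the static entropy $H_\nu (\mathcal{W}_2| \mathcal{C})$ generally exceeds the dynamical entropy $h_\nu (G, \mathcal{W}_2| \mathcal{C})$. The correct (and simpler) route, which the paper takes, is cover-level subadditivity at each finite set: if $\alpha\succeq (\mathcal{W}_1)_{F_n}$ and $\beta\succeq (\mathcal{W}_2)_{F_n}$ are partitions then $\alpha\vee \beta\succeq (\mathcal{W}_1\vee \mathcal{W}_2)_{F_n}$, whence $H_\nu ((\mathcal{W}_1\vee \mathcal{W}_2)_{F_n}| \mathcal{C})\le H_\nu ((\mathcal{W}_1)_{F_n}| \mathcal{C})+ H_\nu ((\mathcal{W}_2)_{F_n}| \mathcal{C})$; dividing by $|F_n|$ and letting $n\rightarrow \infty$ gives the $h_\nu$ half of \eqref{1102032252}, and the $h_{\nu, +}$ half then follows as you indicate.
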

\begin{proof}
Equations \eqref{1102032251} and \eqref{1102032253} are easy to verify.

Equations \eqref{1102032252} and \eqref{1007041247} follow directly from
\begin{equation*}
H_\nu ((\mathcal{W}_1\vee \mathcal{W}_2)_E| \mathcal{C})\le H_\nu ((\mathcal{W}_1)_E| \mathcal{C})+ H_\nu ((\mathcal{W}_2)_E| \mathcal{C})
\end{equation*}
and
\begin{equation*}
H_\nu ((\alpha_1\vee \alpha_2)_E| \mathcal{C})\le H_\nu ((\alpha_2)_E| \mathcal{C})+ |E| H_\nu (\alpha_1| \alpha_2\vee \mathcal{C})
\end{equation*}
for each $E\in \mathcal{F}_G$, respectively, neither of which is hard to obtain.

Thus, we only need prove \eqref{1006301514}.
Note that if $\alpha\in \mathbf{P}_Y$ satisfies $\alpha\succeq \mathcal{W}_1$ then
$$h_{\nu, +} (G, \mathcal{W}_1| \mathcal{C})\le h_\nu (G, \alpha| \mathcal{C})\le H_\nu (\alpha| \mathcal{C})$$
by \eqref{1006272232}, which implies that
$$h_{\nu, +} (G, \mathcal{W}_1| \mathcal{C})\le H_\nu (\mathcal{W}_1| \mathcal{C})\le H_\nu (\mathcal{W}_1)\le \log |\mathcal{W}_1|.$$
 It remains to prove that
 $$h_\nu (G, (\mathcal{W}_1)_F| \mathcal{C})= h_\nu (G, \mathcal{W}_1| \mathcal{C}).$$
 We should point out that if $\{F_n: n\in \mathbb{N}\}$ is a F\o lner sequence of $G$ then
  $\{F F_n: n\in \mathbb{N}\}$ is also a F\o lner sequence of $G$
 and
 $\lim\limits_{n\rightarrow \infty} \frac{|F F_n|}{|F_n|}= 1$, which implies that
\begin{eqnarray*}
& &\hskip -26pt h_\nu (G, (\mathcal{W}_1)_F| \mathcal{C}) \\
&= & \lim_{n\rightarrow \infty} \frac{1}{|F_n|} H_\nu (((\mathcal{W}_1)_F)_{F_n}| \mathcal{C}) \\
&= & \lim_{n\rightarrow \infty} \frac{1}{|F_n|} H_\nu ((\mathcal{W}_1)_{F F_n}| \mathcal{C}) \\
&= & \lim_{n\rightarrow \infty} \frac{1}{|F F_n|} H_\nu ((\mathcal{W}_1)_{F F_n}| \mathcal{C})\ (\text{as $\lim\limits_{n\rightarrow \infty} \frac{|F F_n|}{|F_n|}= 1$}) \\
&= & h_\nu (G, \mathcal{W}_1| \mathcal{C})\ (\text{as $\{F F_n: n\in \mathbb{N}\}$ is also a F\o lner sequence of $G$}).
\end{eqnarray*}
This proves \eqref{1006301514} and so completes our proof.
\end{proof}

The following result will be used subsequently.

\begin{thm} \label{1006272300}
Let $(Y, \mathcal{D}, \nu, G)$ be an MDS, $\mathcal{W}\in \mathbf{C}_Y$ and $\mathcal{C}\subseteq \mathcal{D}$ a $G$-invariant sub-$\sigma$-algebra. Assume that $(Y, \mathcal{D}, \nu)$ is a Lebesgue space. Then
$$h_\nu (G, \mathcal{W}| \mathcal{C})= h_{\nu, +} (G, \mathcal{W}| \mathcal{C}).$$
 Thus, using \eqref{1006272232} we have an alternative expression for $h_\nu (G, \mathcal{W}| \mathcal{C})$:
\begin{equation} \label{1006301447}
h_\nu (G, \mathcal{W}| \mathcal{C})= \inf_{F\in \mathcal{F}_G} \frac{1}{|F|} \inf_{\alpha\in \mathbf{P}_Y, \alpha\succeq \mathcal{W}} H_\nu (\alpha_F| \mathcal{C}).
\end{equation}
\end{thm}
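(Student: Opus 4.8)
The plan is to establish the non-trivial inequality $h_{\nu,+}(G,\mathcal{W}|\mathcal{C})\le h_\nu(G,\mathcal{W}|\mathcal{C})$; the reverse inequality is built into the definition of $h_{\nu,+}$, and once equality is known the alternative formula \eqref{1006301447} follows at once by substituting \eqref{1006272232} into $h_\nu(G,\mathcal{W}|\mathcal{C})=h_{\nu,+}(G,\mathcal{W}|\mathcal{C})=\inf_{\alpha\in\mathbf{P}_Y,\,\alpha\succeq\mathcal{W}}h_\nu(G,\alpha|\mathcal{C})$ and interchanging the two infima. Write $h=h_\nu(G,\mathcal{W}|\mathcal{C})=\lim_n\frac1{|F_n|}H_\nu(\mathcal{W}_{F_n}|\mathcal{C})$ and fix $\epsilon>0$. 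Because $h_\nu(G,\alpha|\mathcal{C})=\inf_{F\in\mathcal{F}_G}\frac1{|F|}H_\nu(\alpha_F|\mathcal{C})$ by \eqref{1006272232}, it suffices to exhibit a single partition $\alpha\succeq\mathcal{W}$ with $h_\nu(G,\alpha|\mathcal{C})\le h+\epsilon$; then $h_{\nu,+}(G,\mathcal{W}|\mathcal{C})\le h+\epsilon$, and $\epsilon\downarrow 0$ finishes the proof.

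First I would fix a scale $n$ with $\frac1{|F_n|}H_\nu(\mathcal{W}_{F_n}|\mathcal{C})<h+\frac{\epsilon}{2}$, and choose a finite partition $\beta\succeq\mathcal{W}_{F_n}$ that nearly realizes the infimum defining $H_\nu(\mathcal{W}_{F_n}|\mathcal{C})$ (by \eqref{1006291640} one may even take $\beta\in\mathbf{P}(\mathcal{W}_{F_n})$ with exact equality). Writing $\mathcal{W}=\{W_1,\dots,W_k\}$: since $\beta\succeq\mathcal{W}_{F_n}=\bigvee_{g\in F_n}g^{-1}\mathcal{W}$, every atom $B$ of $\beta$ lies inside a single element $\bigcap_{g\in F_n}g^{-1}W_{\sigma_B(g)}$ of the cover $\mathcal{W}_{F_n}$, so $\beta$ gives rise to a measurable map $x\mapsto\sigma_x:=\sigma_{\beta(x)}\in\{1,\dots,k\}^{F_n}$ with $gx\in W_{\sigma_x(g)}$ for all $g\in F_n$ and $\nu$-a.e.\ $x$.

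The crux, and the place where the hypothesis that $(Y,\mathcal{D},\nu)$ is a Lebesgue space is used in an essential way, is to convert $\beta$ into a partition that is generated over $G$ by a single partition refining $\mathcal{W}$. For a prescribed $\eta>0$ I would invoke the Rokhlin lemma for aperiodic actions of amenable groups (Ornstein--Weiss \cite{OW}; compare Proposition \ref{1008272250}) to obtain a measurable $R$ with $\{gR:g\in F_n\}$ pairwise disjoint and $\nu(\bigcup_{g\in F_n}gR)\ge 1-\eta$ — the periodic part of $Y$, on which $\frac1{|F_n|}H_\nu(\mathcal{W}_{F_n}|\mathcal{C})$ is bounded and contributes zero entropy rate, being disposed of separately. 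Define $\alpha=\{A_1,\dots,A_k\}$ with $A_i\subseteq W_i$ by declaring $c(gx)=\sigma_x(g)$ for $x\in R,\ g\in F_n$ (legitimate since $gx\in W_{\sigma_x(g)}$) and letting $c$ take any admissible value off the tower; then $\alpha\succeq\mathcal{W}$. The point of this construction is that the $\alpha$-name of an entire tower-column $\{gx:g\in F_n\}$ ($x\in R$) is the single word $\sigma_x$, a function of $\beta(x)$; hence, computing $\frac1{|F_m|}H_\nu(\alpha_{F_m}|\mathcal{C})$ for large $m$ by the standard decomposition of $\alpha_{F_m}$ along the Rokhlin tower — the bulk of $Y$ splitting into full columns, on each of which $\alpha_{F_m}$ refines one translated copy of $\beta$ (of the same conditional entropy as $\beta$, by $G$-invariance of $\nu$ and $\mathcal{C}$), together with a remainder of density $O(\eta)$ handled by the crude bound $\log k$ per coordinate and Proposition \ref{0911192237} — yields $h_\nu(G,\alpha|\mathcal{C})\le\frac1{|F_n|}H_\nu(\beta|\mathcal{C})+c(\eta)$ with $c(\eta)\to 0$ as $\eta\to 0$, uniformly in $n$. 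Taking $\eta$ small enough gives $h_\nu(G,\alpha|\mathcal{C})\le\frac1{|F_n|}H_\nu(\mathcal{W}_{F_n}|\mathcal{C})+\frac{\epsilon}{2}\le h+\epsilon$, as required.

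I expect the main obstacle to be precisely this conversion step: replacing an arbitrary partition that refines the dynamical cover $\mathcal{W}_{F_n}$ by one invariantly generated by a partition refining $\mathcal{W}$, at the cost of an arbitrarily small loss of entropy rate. On a non-Lebesgue space this can genuinely fail (so the hypothesis is necessary), and the delicate points are (i) the availability of a good Rokhlin tower of shape $F_n$, which rests on the Lebesgue-space structure together with the amenability of $G$, and (ii) the bookkeeping of boundary and remainder terms in the entropy computation, which must be organized so that the excess over $\frac1{|F_n|}H_\nu(\mathcal{W}_{F_n}|\mathcal{C})$ is $O(\eta)$ with a constant not depending on $n$. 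The remaining ingredients — the description of $H_\nu(\mathcal{W}_{F_n}|\mathcal{C})$ as an infimum over finite partitions, equations \eqref{1006291640} and \eqref{1006272232}, the monotonicity and sub-additivity of conditional entropy from Proposition \ref{0911192237}, the continuity estimate Proposition \ref{1007141926}, and the final interchange of infima — are routine.
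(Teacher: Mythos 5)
Your reduction of the problem to producing, for each $\epsilon>0$, a single partition $\alpha\succeq\mathcal{W}$ with $h_\nu(G,\alpha|\mathcal{C})\le h_\nu(G,\mathcal{W}|\mathcal{C})+\epsilon$ is correct, and the tower-coding scheme you describe is the classical Glasner--Weiss/Huang--Maass--Romagnoli--Ye route for $\Z$-actions. The gap is the step you build everything on: a Rokhlin tower of the \emph{single} shape $F_n$, with the levels $\{gR: g\in F_n\}$ pairwise disjoint and filling all but $\eta$ of $Y$. For a general infinite countable discrete amenable $G$ and a general F\o lner sequence this does not exist. Indeed the paper's Proposition \ref{1008272250}, which you cite as support, says the opposite of what you need: the availability of such towers for every free system characterizes the sets that \emph{tile} $G$, and F\o lner sets need not tile; the groups known to admit tiling F\o lner sequences form a special class treated separately in \S \ref{special}. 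What Ornstein--Weiss gives in general (Proposition \ref{ow-prop} and the Rokhlin-type lemmas of \cite{OW}) is an $\epsilon$-quasi-tiling by \emph{several} shapes $F_{n_1},\dots,F_{n_k}$ whose towers are only $\epsilon$-disjoint. With overlapping levels your assignment $c(gx)=\sigma_x(g)$ is ambiguous, and, more seriously, the $\alpha$-name of a column is no longer a function of the $\beta$-class of its base point, which is exactly what your entropy estimate uses. In addition, any Rokhlin-type statement requires an (essentially) free action, whereas the theorem is asserted for an arbitrary MDS; for general $G$ the non-free part is not a ``periodic part of zero entropy rate'' that can be discarded in one line, and even in the tiling, free case the tower-position bookkeeping produces an error of order $H(1/|F_n|)$ that is small because $n$ is taken large first --- it is not uniform in $n$ as you assert.

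So, as written, your argument proves at best the special case of a free action of a group admitting a tiling F\o lner sequence, not the stated theorem. The paper avoids this obstruction by a different mechanism altogether: as Remark \ref{1006282157} indicates, the intended proof follows Danilenko's orbital approach \cite{D}, rewriting \cite[\S 4]{HYZ} in the relative setting; there the Lebesgue-space hypothesis enters through the orbit-equivalence machinery (hyperfiniteness of the orbit relation of an amenable action, Connes--Feldman--Weiss \cite{CFW}), which reduces the comparison of $h_\nu$ and $h_{\nu,+}$ along orbits to the known $\Z$-case instead of constructing towers of F\o lner shape. To salvage your route you would have to redo the coding with $\epsilon$-quasi-tilings by several shapes and control all overlap and conflict-resolution errors (and treat non-free actions); that is the substantial missing piece, not routine bookkeeping.
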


\begin{rem} \label{1006282157} As shown by \cite{HYZ}, Theorem \ref{1006272300} plays an important role in the establishment of the local theory of entropy for a topological $G$-action.

In order to prove Theorem \ref{1006272300}, we shall use Danilenko's orbital approach to the entropy theory of an MDS as a crucial tool.
In fact, notice that using the arguments of Danilenko \cite{D} one can re-write the whole process carried out in \cite[\S 4]{HYZ}.  In other words, we can extend the whole \cite[\S 4]{HYZ} to the relative case, where we are given a $G$-invariant sub-$\sigma$-algebra $\mathcal{C}\subseteq \mathcal{D}$. As this is a straightforward re-writing of the arguments of \cite[\S 4]{HYZ}, we omit the details and leave their verification to the interested reader. We only remark that, based on the results from \cite{GW4, HMRY, R}, the equivalence of these two kinds of entropy for finite measurable covers was first pointed out by Huang, Ye and Zhang in \cite{HYZ1} for $\Z$-actions.
\end{rem}

As in the case of a measurable dynamical $\Z$-system, one can define a relative Pinsker formula in our setting.

\begin{thm} \label{1007031233}
Let $(Y, \mathcal{D}, \nu, G)$ be an MDS, $\mathcal{C}\subseteq \mathcal{D}$ a $G$-invariant sub-$\sigma$-algebra and $\alpha, \beta\in \mathbf{P}_Y$. Then, for $\beta_G$, the sub-$\sigma$-algebra of $\mathcal{D}$ generated by $g^{- 1} \beta, g\in G$,
\begin{equation} \label{1007021621}
\lim_{n\rightarrow \infty} \frac{1}{|F_n|} H_\nu (\alpha_{F_n}| \beta_{F_n}\vee \mathcal{C})= h_\nu (G, \alpha| \beta_G\vee \mathcal{C})
\end{equation}
and so
$$h_\nu (G, \alpha\vee \beta| \mathcal{C})= h_\nu (G, \beta| \mathcal{C})+ h_\nu (G, \alpha| \beta_G\vee \mathcal{C}).$$
\end{thm}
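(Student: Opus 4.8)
The key is equation \eqref{1007021621}, since the displayed formula then follows immediately: apply \eqref{1007021626} at the level of the finite sets $F_n$ to write $H_\nu((\alpha\vee\beta)_{F_n}\mid\mathcal{C}) = H_\nu(\beta_{F_n}\mid\mathcal{C}) + H_\nu(\alpha_{F_n}\mid\beta_{F_n}\vee\mathcal{C})$, divide by $|F_n|$, let $n\to\infty$, and invoke \eqref{1007021621} together with the definitions of $h_\nu(G,\alpha\vee\beta\mid\mathcal{C})$ and $h_\nu(G,\beta\mid\mathcal{C})$. So the whole content is in \eqref{1007021621}.

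**Proving \eqref{1007021621}.** First I would observe that $\beta_G\vee\mathcal{C}$ is a $G$-invariant sub-$\sigma$-algebra of $\mathcal{D}$, so the right-hand side $h_\nu(G,\alpha\mid\beta_G\vee\mathcal{C})$ is well-defined by Proposition \ref{1006122129}; explicitly it equals $\lim_n \frac{1}{|F_n|}H_\nu(\alpha_{F_n}\mid\beta_G\vee\mathcal{C})$. Thus \eqref{1007021621} amounts to showing that replacing $\beta_G$ by the truncation $\beta_{F_n}$ inside the conditional entropy does not change the limit. One inequality is free: since $\beta_{F_n}\vee\mathcal{C}\subseteq\beta_G\vee\mathcal{C}$, conditional entropy decreases as the conditioning $\sigma$-algebra grows, so $\frac{1}{|F_n|}H_\nu(\alpha_{F_n}\mid\beta_{F_n}\vee\mathcal{C}) \ge \frac{1}{|F_n|}H_\nu(\alpha_{F_n}\mid\beta_G\vee\mathcal{C}) \to h_\nu(G,\alpha\mid\beta_G\vee\mathcal{C})$, giving $\liminf \ge$ the right side. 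For the reverse inequality I would fix $K\in\mathcal{F}_G$ and compare $\beta_{F_n}$ with $\beta_{KF_n}$: the extra coordinates $g^{-1}\beta$ for $g\in KF_n\setminus F_n$ number at most $|KF_n\setminus F_n| = o(|F_n|)$ by the F\o lner property, so by subadditivity of conditional entropy in the refining variable,
\begin{equation*}
H_\nu(\alpha_{F_n}\mid\beta_{F_n}\vee\mathcal{C}) \le H_\nu(\alpha_{F_n}\mid\beta_{KF_n}\vee\mathcal{C}) + \sum_{g\in KF_n\setminus F_n} H_\nu(g^{-1}\beta) \le H_\nu(\alpha_{F_n}\mid(\beta_K)_{F_n}\vee\mathcal{C}) + o(|F_n|)\cdot H_\nu(\beta).
\end{equation*}
Now $(\beta_K)_{F_n}$ is the $F_n$-th ``dynamical join'' of the single partition $\beta_K$ with respect to the $G$-invariant $\sigma$-algebra $\mathcal{C}$, so dividing by $|F_n|$ and letting $n\to\infty$ gives $\limsup_n \frac{1}{|F_n|}H_\nu(\alpha_{F_n}\mid\beta_{F_n}\vee\mathcal{C}) \le h_\nu(G,\alpha\mid (\beta_K)_G\vee\mathcal{C})$ — but $(\beta_K)_G = \beta_G$, hence the right side is $h_\nu(G,\alpha\mid\beta_G\vee\mathcal{C})$, exactly as needed. (One must check the intermediate step $\frac{1}{|F_n|}H_\nu(\alpha_{F_n}\mid(\beta_K)_{F_n}\vee\mathcal{C})\to h_\nu(G,\alpha\mid(\beta_K)_G\vee\mathcal{C})$; this is again a consequence of decreasing monotonicity in the conditioning $\sigma$-algebra together with a second F\o lner-truncation estimate, exactly of the same shape, run with $(\beta_K)_{F_n}$ in place of $\beta_{F_n}$ and letting $K$ exhaust $G$ afterwards — i.e. one combines the two truncations in a single $\varepsilon$–$\delta$ argument.)

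**The main obstacle.** The delicate point is the bookkeeping in the comparison between $\beta_{F_n}$ and $\beta_G$: one needs the two F\o lner-type error terms (from truncating $\alpha$'s orbit and from truncating $\beta$'s orbit) to be controlled uniformly, and to interchange the limit in $n$ with the limit in $K\nearrow G$. The cleanest way is to fix $\varepsilon>0$, choose $K$ so that $h_\nu(G,\alpha\mid(\beta_K)_G\vee\mathcal{C})$ is within $\varepsilon$ of $h_\nu(G,\alpha\mid\beta_G\vee\mathcal{C})$ (possible since the conditional entropies decrease to the limit as $K$ grows, by the martingale convergence property of $H_\nu(\cdot\mid\mathcal{C}_n)$ recalled in the excerpt), then for that fixed $K$ run the F\o lner estimate above to absorb the remaining $o(|F_n|)$ term. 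Everything else — the additivity identity \eqref{1007021626}, subadditivity of conditional entropy, the convergence in Proposition \ref{1006122129} — is quoted from the preliminaries, so no genuinely new idea beyond this truncation argument is required; this is the standard Pinsker-formula argument adapted to amenable $G$ with a $G$-invariant $\mathcal{C}$ carried along passively throughout.
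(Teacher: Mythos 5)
Your reduction to \eqref{1007021621} via the identity $H_\nu((\alpha\vee\beta)_{F_n}|\mathcal{C})=H_\nu(\beta_{F_n}|\mathcal{C})+H_\nu(\alpha_{F_n}|\beta_{F_n}\vee\mathcal{C})$, and the easy bound $\liminf\ge h_\nu(G,\alpha|\beta_G\vee\mathcal{C})$, agree with the paper. But your upper bound has a genuine gap: it is circular. The truncation inequality $H_\nu(\alpha_{F_n}|\beta_{F_n}\vee\mathcal{C})\le H_\nu(\alpha_{F_n}|\beta_{KF_n}\vee\mathcal{C})+|KF_n\setminus F_n|\,H_\nu(\beta)$ is fine (chain rule plus the F\o lner property), but since $(\beta_K)_{F_n}=\beta_{KF_n}$, the convergence you then invoke, $\frac{1}{|F_n|}H_\nu(\alpha_{F_n}|(\beta_K)_{F_n}\vee\mathcal{C})\to h_\nu(G,\alpha|(\beta_K)_G\vee\mathcal{C})$, is precisely the statement being proved, with $\beta$ replaced by the finer partition $\beta_K$; and since $(\beta_K)_G=\beta_G$, ``choosing $K$ so that $h_\nu(G,\alpha|(\beta_K)_G\vee\mathcal{C})$ is within $\varepsilon$ of $h_\nu(G,\alpha|\beta_G\vee\mathcal{C})$'' chooses nothing --- the two numbers coincide for every $K$. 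Worse, running your truncation in both directions shows that for every finite $K$ (with $e_G\in K$) the limit of $\frac{1}{|F_n|}H_\nu(\alpha_{F_n}|\beta_{KF_n}\vee\mathcal{C})$ equals the original limit $c$, so the scheme, and any iteration of it (``letting $K$ exhaust $G$ afterwards''), only ever conditions on finitely many translates of $\beta$ and collapses to $c\le c$. The entire difficulty of \eqref{1007021621} is the interchange of the $n$-limit with the martingale limit $\beta_B\vee\mathcal{C}\nearrow\beta_G\vee\mathcal{C}$, and boundary bookkeeping along a single F\o lner scale cannot achieve it.

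The paper's proof (following Ward--Zhang) supplies the missing mechanism: quasi-tiling. Fix $\epsilon$, pick finitely many F\o lner sets $F_{n_1},\dots,F_{n_k}$ whose normalized entropies are within $\epsilon$ of their limits and which $\epsilon$-quasi-tile every sufficiently invariant $F_m$ (Proposition \ref{ow-prop}). Because only the finitely many fixed numerators $\alpha_{F_{n_i}}$ occur, martingale convergence gives one finite set $B$ with $H_\nu(\alpha_{F_{n_i}}|\beta_B\vee\mathcal{C})\le H_\nu(\alpha_{F_{n_i}}|\beta_G\vee\mathcal{C})+\epsilon$ for all $i$. Sub-additivity over the tiles $F_{n_i}c$, together with $H_\nu(\alpha_{F_{n_i}c}|\beta_{F_m}\vee\mathcal{C})=H_\nu(\alpha_{F_{n_i}}|\beta_{F_mc^{-1}}\vee\mathcal{C})\le H_\nu(\alpha_{F_{n_i}}|\beta_B\vee\mathcal{C})$ for the overwhelming majority of centres $c$ (those with $Bc\subseteq F_m$), then bounds $\frac{1}{|F_m|}H_\nu(\alpha_{F_m}|\beta_{F_m}\vee\mathcal{C})$ by $h_\nu(G,\alpha|\beta_G\vee\mathcal{C})$ up to errors of order $\epsilon$. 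Some device of this kind --- freezing the size of the numerator before approximating $\beta_G$ by a finite $\beta_B$, and then transporting the estimate to large sets by tiling --- is indispensable; your single-scale truncation does not provide it, so the proposal as written does not prove the theorem.
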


Before establishing \eqref{1007021621}, we first make a remark.

\begin{rem} \label{1007-21545}
Under the assumptions of Theorem \ref{1007031233}, we cannot deduce the convergence of the sequence
$\{\frac{1}{|F_n|} H_\nu (\alpha_{F_n}| \beta_{F_n}\vee \mathcal{C}): n\in \N\}$ from Proposition \ref{1006122129}.

In fact, it is not hard to check that
 $$H_\nu (\alpha_\bullet| \beta_\bullet\vee \mathcal{C}): \mathcal{F}_G\rightarrow \mathbb{R}, F\mapsto H_\nu (\alpha_F| \beta_F\vee \mathcal{C})$$
  is a non-negative $G$-invariant function. By  (\eqref{1007021626}), it is also sub-additive:
 \begin{eqnarray*}
H_\nu (\alpha_{E\cup F}| \beta_{E\cup F}\vee \mathcal{C})&\le & H_\nu (\alpha_E| \beta_{E\cup F}\vee \mathcal{C})+ H_\nu (\alpha_F| \beta_{E\cup F}\vee \mathcal{C}) \\
&\le & H_\nu (\alpha_E| \beta_E\vee \mathcal{C})+ H_\nu (\alpha_F| \beta_F\vee \mathcal{C})
 \end{eqnarray*}
  whenever $E, F\in \mathcal{F}_G$.
In general, this function is not monotone: we give an example of this.

Let $G= \Z_2\times \Z$. Then $G$ is an infinite countable discrete amenable group, and with unit $(0, 0)$. Consider the MDS $$(\{a, b\}^G, \mathcal{B}_{\{a, b\}^G}, \bigotimes\limits_{g\in G} \{\frac{1}{2}, \frac{1}{2}\}, G),$$
  where $\mathcal{B}_{\{a, b\}^G}$ denotes the Borel $\sigma$-algebra of the compact metric space $\{a, b\}^G$. Now $G$ acts naturally on $(\{a, b\}^G, \mathcal{B}_{\{a, b\}^G}, \bigotimes\limits_{g\in G} \{\frac{1}{2}, \frac{1}{2}\})$ and preserves the measure. Set
  $$\alpha= \{[a]_{(0, 0)}, [b]_{(0, 0)}\}\ \text{and}\ \beta= (1, 0)^{- 1} \alpha$$
   with
  $[i]_{(0, 0)}= \{(x_g)_{g\in G}: x_{(0, 0)}= i\}$, $i\in \{a, b\}$.
 Let $S\in \mathcal{F}_\Z$ and set
  $$E= \{(0, s): s\in S\}\in \mathcal{F}_G\ \text{and}\ F= \{(1, s): s\in S\}= (1, 0)\cdot E\in \mathcal{F}_G.$$
  Using \eqref{1007021626}, it is straightforward to check
  $$H_\nu (\alpha_F| \beta_F\vee \mathcal{N}_{\{a, b\}^G})= H_\nu (\alpha_F\vee \beta_F| \mathcal{N}_{\{a, b\}^G})- H_\nu (\beta_F| \mathcal{N}_{\{a, b\}^G})= |S| \log 2;$$
whereas,
 $$\alpha_F= \alpha_{(1, 0)\cdot E}= ((1, 0)^{- 1} \alpha)_E= \beta_E\ \text{and similarly}\ \alpha_E= \beta_F.$$
It follows that
$$H_\nu (\alpha_{E\cup F}| \beta_{E\cup F}\vee \mathcal{N}_{\{a, b\}^G})= 0< |S| \log 2= H_\nu (\alpha_F| \beta_F\vee \mathcal{N}_{\{a, b\}^G}).$$
\end{rem}

Now we prove Theorem \ref{1007031233}.

\begin{proof}[Proof of Theorem \ref{1007031233}]
For each $n\in \N$ one has by \eqref{1007021626},
\begin{equation} \label{star}
H_\nu ((\alpha\vee \beta)_{F_n}| \mathcal{C})= H_\nu (\beta_{F_n}| \mathcal{C})+ H_\nu (\alpha_{F_n}| \beta_{F_n}\vee \mathcal{C}).
\end{equation}
To finish the proof it is sufficient to prove \eqref{1007021621}.

As a sub-$\sigma$-algebra of $\mathcal{D}$, $\beta_G$ (and likewise $\beta_G\vee \mathcal{C}$) is $G$-invariant, and hence
\begin{equation*}
h_\nu (G, \alpha| \beta_G\vee \mathcal{C})= \lim_{n\rightarrow \infty} \frac{1}{|F_n|} H_\nu (\alpha_{F_n}| \beta_G\vee \mathcal{C}).
\end{equation*}
Set $M= H_\nu (\alpha| \beta\vee \mathcal{C})$ (and so $M= H_\nu (\alpha_{\{g\}}| \beta_{\{g\}}\vee \mathcal{C})$ for each $g\in G$) and
\begin{equation*}
c= \lim_{n\rightarrow \infty} \frac{1}{|F_n|} H_\nu (\alpha_{F_n}| \beta_{F_n}\vee \mathcal{C}).
\end{equation*}
Observe that by Proposition \ref{1006122129} the limit $c$ must exist (using \eqref{star}).

Obviously, $c\ge h_\nu (G, \alpha| \beta_G\vee \mathcal{C})$.
To complete the proof, we only need show that $c\le h_\nu (G, \alpha| \beta_G\vee \mathcal{C})$.
The proof follows from the methods of \cite[Proposition 4.3]{WZ}.

Let $\epsilon\in (0, \frac{1}{4})$. Clearly, there exists $N\in \mathbb{N}$ such that if $n> N$ then
\begin{equation} \label{1007021902}
|\frac{1}{|F_n|} H_\nu (\alpha_{F_n}| \beta_{F_n}\vee \mathcal{C})- c|< \epsilon\ \text{and}\ |\frac{1}{|F_n|} H_\nu (\alpha_{F_n}| \beta_G\vee \mathcal{C})- h_\nu (\alpha| \beta_G\vee \mathcal{C})|< \epsilon.
\end{equation}
By Proposition \ref{ow-prop}, there exist integers $n_1, \cdots, n_k$ such that $N\le n_1< \cdots< n_k$ and $F_{n_1}, \cdots, F_{n_k}$ $\epsilon$-quasi-tile $F_m$ whenever $m$ is sufficiently large. Note that there must exist $B\in \mathcal{F}_G$ such that
\begin{equation} \label{1007021907}
H_\nu (\alpha_{F_{n_i}}| \beta_B\vee \mathcal{C})\le H_\nu (\alpha_{F_{n_i}}| \beta_G\vee \mathcal{C})+ \epsilon
\end{equation}
 for each $i= 1, \cdots, k$. Now let $m\in \mathbb{N}, m> N$ be large enough such that $F_m$ is $(B\cup \{e_G\}, \frac{\epsilon}{\sum\limits_{i= 1}^k |F_{n_i}|})$-invariant and $F_{n_1}, \cdots, F_{n_k}$ $\epsilon$-quasi-tile $F_m$ with tiling centers $C_1^m, \cdots,$ $C_k^m$. Then, by the selection of $C_1^m, \cdots, C_k^m$, one has
 \begin{enumerate}

\item \label{1007022102} for $A_m\doteq \{g\in F_m: B g\subseteq F_m\}= F_m\setminus B^{- 1} (G\setminus F_m)$, as $F_m\setminus A_m\subseteq F_m\cap B^{- 1} (G\setminus F_m)$ and $F_m$ is $(B\cup \{e_G\}, \frac{\epsilon}{\sum\limits_{i= 1}^k |F_{n_i}|})$-invariant, then
 \begin{equation*}
  |F_m\setminus A_m|< \frac{\epsilon |F_m|}{\sum\limits_{i= 1}^k |F_{n_i}|};
 \end{equation*}

\item \label{1007022105}
 $C_i^m\subseteq F_m, i= 1, \cdots, k$ (as $e_G\subseteq F_1\subseteq F_2\subseteq \cdots$) and

 \item
 \label{1007021916}
 $F_m\supseteq \bigcup\limits_{i= 1}^k F_{n_i} C_i^m\ \text{and}\ |\bigcup\limits_{i= 1}^k F_{n_i} C_i^m|\ge \max \{(1- \epsilon) |F_m|, (1- \epsilon) \sum\limits_{i= 1}^k |C_i^m| |F_{n_i}|\}$.
 \end{enumerate}
 Moreover, we have
\begin{eqnarray} \label{1007022115}
& &\hskip -36pt  \frac{1}{|F_m|} H_\nu (\alpha_{F_m}| \beta_{F_m}\vee \mathcal{C})\nonumber \\
&\le & \frac{1}{|F_m|} \{H_\nu (\alpha_{\bigcup\limits_{i= 1}^k F_{n_i} C_i^m}| \beta_{F_m}\vee \mathcal{C})+ H_\nu (\alpha_{F_m\setminus \bigcup\limits_{i= 1}^k F_{n_i} C_i^m}| \mathcal{C})\}\nonumber \\
&\le & \frac{1}{(1- \epsilon) \sum\limits_{i= 1}^k |C_i^m| |F_{n_i}|} \sum_{i= 1}^k H_\nu (\alpha_{F_{n_i} C_i^m}| \beta_{F_m}\vee \mathcal{C})+ \epsilon \log |\alpha|,
\end{eqnarray}
where the last inequality follows from the above \eqref{1007021916},
moreover, for each $i= 1, \cdots, k$,
\begin{eqnarray} \label{1007022116}
& & \hskip -36 pt \frac{1}{|C_i^m| |F_{n_i}|} H_\nu (\alpha_{F_{n_i} C_i^m}| \beta_{F_m}\vee \mathcal{C})\nonumber \\
&\le & \frac{1}{|C_i^m|} \sum_{c\in C_i^m} \frac{1}{|F_{n_i}|} H_\nu (\alpha_{F_{n_i} c}| \beta_{F_m}\vee \mathcal{C})\nonumber \\
&= & \frac{1}{|C_i^m|} \sum_{c\in C_i^m} \frac{1}{|F_{n_i}|} H_\nu (\alpha_{F_{n_i}}| \beta_{F_m c^{- 1}}\vee \mathcal{C})\nonumber \\
&\le & \frac{1}{|C_i^m|} \{\sum_{c\in C_i^m\cap A_m} \frac{1}{|F_{n_i}|} H_\nu (\alpha_{F_{n_i}}| \beta_{F_m c^{- 1}}\vee \mathcal{C})+\nonumber \\
& & \hskip 66pt \sum_{c\in C_i^m\setminus A_m} \frac{1}{|F_{n_i}|} H_\nu (\alpha_{F_{n_i}}| \beta_{F_m c^{- 1}}\vee \mathcal{C})\}\nonumber \\
&\le & \frac{1}{|F_{n_i}|} H_\nu (\alpha_{F_{n_i}}| \beta_B\vee \mathcal{C})+ \frac{1}{|C_i^m|} \sum_{c\in F_m\setminus A_m} \frac{1}{|F_{n_i}|} H_\nu (\alpha_{F_{n_i}}| \beta_{F_m c^{- 1}}\vee \mathcal{C})\nonumber \\
& & \hskip 66pt (\text{by the selection of $A_m$ and the above \eqref{1007022105}})\nonumber \\
&\le & \frac{1}{|F_{n_i}|} H_\nu (\alpha_{F_{n_i}}| \beta_G\vee \mathcal{C})+ \epsilon+ \frac{|F_m\setminus A_m|}{|C_i^m|} \log |\alpha|\ (\text{using \eqref{1007021907}}).
\end{eqnarray}
Combining \eqref{1007022115} and \eqref{1007022116}, we obtain
\begin{eqnarray*}
& &\hskip -50pt \frac{1}{|F_m|} H_\nu (\alpha_{F_m}| \beta_{F_m}\vee \mathcal{C}) \\
&\le & \frac{1}{1- \epsilon} \sum_{i= 1}^k \frac{|C_i^m| |F_{n_i}|}{\sum\limits_{j= 1}^k |C_j^m| |F_{n_j}|} \{\frac{1}{|F_{n_i}|} H_\nu (\alpha_{F_{n_i}}| \beta_G\vee \mathcal{C})+ \nonumber \\
& & \hskip 66pt \epsilon+ \frac{|F_m\setminus A_m|}{|C_i^m|} \log |\alpha|\}+ \epsilon \log |\alpha| \\
&\le & \frac{1}{1- \epsilon} \{\max_{1\le i\le k} \frac{1}{|F_{n_i}|} H_\nu (\alpha_{F_{n_i}}| \beta_G\vee \mathcal{C})+ \nonumber \\
& & \hskip 66pt \epsilon+ \frac{\epsilon |F_m|}{\sum\limits_{i= 1}^k |C_i^m| |F_{n_i}|} \log |\alpha|\}+\epsilon \log |\alpha|\ (\text{using \eqref{1007022102}}) \\
&\le & \frac{1}{1- \epsilon} \max_{1\le i\le k} \frac{1}{|F_{n_i}|} H_\nu (\alpha_{F_{n_i}}| \beta_G\vee \mathcal{C})+ \nonumber \\
& & \hskip 66pt\frac{1}{1- \epsilon} (\epsilon+ \frac{\epsilon}{1- \epsilon} \log |\alpha|)+ \epsilon \log |\alpha|\ (\text{using \eqref{1007021916}}).
\end{eqnarray*}
Combining this with \eqref{1007021902}, one has
\begin{equation*}
c< \frac{1}{1- \epsilon} h_\nu (G, \alpha| \beta_G\vee \mathcal{C})+ \frac{1}{1- \epsilon} (2 \epsilon+ \frac{\epsilon}{1- \epsilon} \log |\alpha|)+ \epsilon (1+ \log |\alpha|).
\end{equation*}
Finally, $c\le h_\nu (G, \alpha| \beta_G\vee \mathcal{C})$ follows by letting $\epsilon\rightarrow 0$. This finishes our proof.
\end{proof}

\begin{rem} \label{1010241458}
Remark that the case where $(Y, \mathcal{D}, \nu)$ is a Lebesgue space was proved by Glasner, Thouvenot and Weiss \cite[Lemma 1.1]{GTW}. The relative Pinsker formula for a measurable dynamical $\Z$-system is proved as \cite[Theorem 3.3]{ZJLMS}.
\end{rem}

As a direct corollary of Theorem \ref{1007031233}, we can obtain the well-known Abramov-Rokhlin entropy addition formula (see for example \cite[Theorem 0.2]{D} or \cite{WZ}), which will be used in our discussions of continuous bundle random dynamical systems.

\begin{prop} \label{1006291603}
Let $(Y, \mathcal{D}, \nu, G)$ be an MDS and $\mathcal{C}_1\subseteq \mathcal{C}_2\subseteq \mathcal{D}$ two $G$-invariant sub-$\sigma$-algebras. Assume that $(Y, \mathcal{D}, \nu)$ is a Lebesgue space. Then
$$h_\nu (G, Y| \mathcal{C}_1)= h_\nu (G, Y| \mathcal{C}_2)+ h_\nu (G, Y, \mathcal{C}_2| \mathcal{C}_1).$$
Here, $h_\nu (G, Y, \mathcal{C}_2| \mathcal{C}_1)$ denotes the measure-theoretic $\nu$-entropy of the MDS $(Y, \mathcal{C}_2, \nu,$ $G)$ with respect to $\mathcal{C}_1$.
\end{prop}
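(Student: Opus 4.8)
The plan is to deduce the formula from the relative Pinsker formula (Theorem \ref{1007031233}), along exactly the lines used in the proof of Proposition \ref{1007012034}. First recall that, by definition, $h_\nu (G, Y, \mathcal{C}_2| \mathcal{C}_1)= \sup \{h_\nu (G, \beta| \mathcal{C}_1): \beta\in \mathbf{P}_Y,\ \beta\subseteq \mathcal{C}_2\}$, since a finite partition of the MDS $(Y, \mathcal{C}_2, \nu, G)$ is precisely a $\beta\in \mathbf{P}_Y$ all of whose atoms lie in $\mathcal{C}_2$. I would then prove the two inequalities separately.

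For ``$\ge$'', fix $\alpha\in \mathbf{P}_Y$ and $\beta\in \mathbf{P}_Y$ with $\beta\subseteq \mathcal{C}_2$. Theorem \ref{1007031233} gives
$$h_\nu (G, \alpha\vee \beta| \mathcal{C}_1)= h_\nu (G, \beta| \mathcal{C}_1)+ h_\nu (G, \alpha| \beta_G\vee \mathcal{C}_1).$$
Since $\mathcal{C}_2$ is $G$-invariant and $\mathcal{C}_1\subseteq \mathcal{C}_2$, we have $\beta_G\vee \mathcal{C}_1\subseteq \mathcal{C}_2$, so $h_\nu (G, \alpha| \beta_G\vee \mathcal{C}_1)\ge h_\nu (G, \alpha| \mathcal{C}_2)$ by monotonicity of the conditional entropy in the conditioning $\sigma$-algebra. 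Together with $h_\nu (G, \alpha\vee \beta| \mathcal{C}_1)\le h_\nu (G, Y| \mathcal{C}_1)$ this yields $h_\nu (G, Y| \mathcal{C}_1)\ge h_\nu (G, \beta| \mathcal{C}_1)+ h_\nu (G, \alpha| \mathcal{C}_2)$; taking the supremum over $\alpha\in \mathbf{P}_Y$ and then over all such $\beta$ gives $h_\nu (G, Y| \mathcal{C}_1)\ge h_\nu (G, Y, \mathcal{C}_2| \mathcal{C}_1)+ h_\nu (G, Y| \mathcal{C}_2)$.

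For ``$\le$'', I would use the Lebesgue space hypothesis to choose an increasing sequence $\{\beta_n: n\in \mathbb{N}\}\subseteq \mathbf{P}_Y$ with $\beta_n\subseteq \mathcal{C}_2$ and $\bigvee_n \beta_n= \mathcal{C}_2$ (mod $\nu$); then the sub-$\sigma$-algebras $(\beta_n)_G\vee \mathcal{C}_1$ are $G$-invariant and increase to $\mathcal{C}_2$. Fix $\gamma\in \mathbf{P}_Y$. Applying Theorem \ref{1007031233} to $\gamma$, $\beta_n$ and $\mathcal{C}_1$, and using $h_\nu (G, \gamma\vee \beta_n| \mathcal{C}_1)\ge h_\nu (G, \gamma| \mathcal{C}_1)$ together with $h_\nu (G, \beta_n| \mathcal{C}_1)\le h_\nu (G, Y, \mathcal{C}_2| \mathcal{C}_1)$, one obtains $h_\nu (G, \gamma| \mathcal{C}_1)\le h_\nu (G, Y, \mathcal{C}_2| \mathcal{C}_1)+ h_\nu (G, \gamma| (\beta_n)_G\vee \mathcal{C}_1)$ for every $n$. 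Letting $n\to \infty$ and arguing as in \eqref{1007031711}–\eqref{need}: write $h_\nu (G, \gamma| (\beta_n)_G\vee \mathcal{C}_1)= \inf_{F\in \mathcal{F}_G} \frac{1}{|F|} H_\nu (\gamma_F| (\beta_n)_G\vee \mathcal{C}_1)$ by \eqref{1006272232}, interchange $\inf_n$ with $\inf_F$, and use $H_\nu (\gamma_F| (\beta_n)_G\vee \mathcal{C}_1)\downarrow H_\nu (\gamma_F| \mathcal{C}_2)$ to conclude $\lim_n h_\nu (G, \gamma| (\beta_n)_G\vee \mathcal{C}_1)= h_\nu (G, \gamma| \mathcal{C}_2)$. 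Hence $h_\nu (G, \gamma| \mathcal{C}_1)\le h_\nu (G, Y, \mathcal{C}_2| \mathcal{C}_1)+ h_\nu (G, \gamma| \mathcal{C}_2)\le h_\nu (G, Y, \mathcal{C}_2| \mathcal{C}_1)+ h_\nu (G, Y| \mathcal{C}_2)$, and taking the supremum over $\gamma$ finishes this direction.

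Combining the two inequalities gives the claimed formula. The only delicate step is the convergence $h_\nu (G, \gamma| (\beta_n)_G\vee \mathcal{C}_1)\to h_\nu (G, \gamma| \mathcal{C}_2)$ in the second inequality; but this is precisely the interchange-of-infima argument already carried out in the proof of Proposition \ref{1007012034}, resting on \eqref{1006272232} and on the standard fact that $H_\nu (\cdot| \mathcal{C}_n)$ decreases to $H_\nu (\cdot| \mathcal{C})$ when $\mathcal{C}_n\nearrow \mathcal{C}$. Everything else reduces to monotonicity of (conditional) entropy and routine manipulation of suprema, so the write-up should be short.
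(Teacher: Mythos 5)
Your proposal is correct and rests on the same machinery as the paper's proof: the relative Pinsker formula (Theorem \ref{1007031233}) combined with the interchange-of-infima argument of \eqref{need}, using \eqref{1006272232} and the monotone convergence of $H_\nu(\,\cdot\,|\,\cdot\,)$ along an increasing sequence of sub-$\sigma$-algebras. The only difference is organizational: the paper applies the Pinsker formula along generating sequences for both $\mathcal{C}_2$ and $\mathcal{D}$ and passes to the limit in all three terms simultaneously, whereas you split the identity into two inequalities (the lower bound needing no limits at all, the upper bound needing a generating sequence for $\mathcal{C}_2$ only), an equivalent and, if anything, slightly cleaner arrangement.
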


Let $(Y, \mathcal{D}, \nu, G)$ be an MDS and $\mathcal{C}\subseteq \mathcal{D}$ a $G$-invariant sub-$\sigma$-algebra. Define the \emph{Pinsker $\sigma$-algebra of $(Y, \mathcal{D}, \nu, G)$ with respect to $\mathcal{C}$}, $\mathcal{P}^\mathcal{C} (Y, \mathcal{D}, \nu, G)$, to be the sub-$\sigma$-algebra of $\mathcal{D}$ generated by $\{\alpha\in \mathbf{P}_Y: h_\nu (G, \alpha| \mathcal{C})= 0\}$. In the case of $\mathcal{C}= \mathcal{N}_Y$ we will write $\mathcal{P} (Y, \mathcal{D}, \nu, G)= \mathcal{P}^{\mathcal{N}_Y} (Y, \mathcal{D}, \nu, G)$ and call it the \emph{Pinsker $\sigma$-algebra of $(Y, \mathcal{D}, \nu, G)$}. Obviously $\mathcal{P}^\mathcal{C} (Y, \mathcal{D}, \nu, G)\subseteq \mathcal{D}$ is a $G$-invariant sub-$\sigma$-algebra and $\mathcal{C}\cup \mathcal{P} (Y, \mathcal{D}, \nu, G)\subseteq \mathcal{P}^\mathcal{C} (Y, \mathcal{D}, \nu, G)$.

We say that $(Y, \mathcal{D}, \nu, G)$ has \emph{$\mathcal{C}$-relative c.p.e.} if $\mathcal{P}^\mathcal{C} (Y, \mathcal{D}, \nu, G)= \mathcal{C}$ (in the sense of mod $\nu$), and has \emph{c.p.e.} if it has $\mathcal{N}_Y$-relative c.p.e.

The following result was proved in \cite{D, RW}.

\begin{prop} \label{1007012030}
Let $(Y, \mathcal{D}, \nu, G)$ be an MDS and $\mathcal{C}\subseteq \mathcal{D}$ a $G$-invariant sub-$\sigma$-algebra. Assume that $(Y, \mathcal{D}, \nu)$ is a Lebesgue space. Then $(Y, \mathcal{D}, \nu, G)$ has $\mathcal{C}$-relative c.p.e. if and only if for each $\alpha\in \mathbf{P}_Y$ and any $\epsilon> 0$ there exists $K\in \mathcal{F}_G$ such that if $F\in \mathcal{F}_G$ satisfies $F F^{- 1}\cap (K\setminus \{e_G\})= \emptyset$ then
\begin{equation*}
|\frac{1}{|F|} H_\nu (\alpha_F| \mathcal{C})- H_\nu (\alpha| \mathcal{C})|< \epsilon.
\end{equation*}
\end{prop}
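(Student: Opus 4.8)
The plan is to prove both implications by quantifying how far the averaged conditional entropy $\frac{1}{|F|}H_\nu(\alpha_F|\mathcal{C})$ can drop below its ``independent'' value $H_\nu(\alpha|\mathcal{C})$, and to relate this drop to the positivity of relative entropy. The starting observation is that, since $\mathcal{C}$ is $G$-invariant, the function $F\mapsto H_\nu(\alpha_F|\mathcal{C})$ is monotone non-negative $G$-invariant and sub-additive, so by Proposition \ref{1006122129} (or the sharper Proposition \ref{1102111944}) we have $h_\nu(G,\alpha|\mathcal{C})=\inf_{F\in\mathcal{F}_G}\frac{1}{|F|}H_\nu(\alpha_F|\mathcal{C})$; moreover $\frac{1}{|F|}H_\nu(\alpha_F|\mathcal{C})\le H_\nu(\alpha|\mathcal{C})$ always. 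Thus the condition in the statement says precisely that, for $F$ with $FF^{-1}\cap(K\setminus\{e_G\})=\emptyset$, the average cannot be much less than $H_\nu(\alpha|\mathcal{C})$; and since it is always $\ge h_\nu(G,\alpha|\mathcal{C})$, one direction of the chain of equivalences will come down to showing $h_\nu(G,\alpha|\mathcal{C})=H_\nu(\alpha|\mathcal{C})$ for all $\alpha$ iff $\mathcal{P}^\mathcal{C}(Y,\mathcal{D},\nu,G)=\mathcal{C}$.

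For the direction $(\Leftarrow)$ (c.p.e.\ implies the entropy condition), I would argue by contradiction: if the condition fails for some $\alpha$ and some $\epsilon>0$, there are $F_j\in\mathcal{F}_G$ with $F_jF_j^{-1}$ avoiding larger and larger finite sets $K_j\setminus\{e_G\}$ (so in particular $\{F_j\}$ behaves like an increasingly ``spread out'' family) yet $\frac{1}{|F_j|}H_\nu(\alpha_{F_j}|\mathcal{C})\le H_\nu(\alpha|\mathcal{C})-\epsilon$. Using the tiling/quasi-tiling machinery of Proposition \ref{ow-prop} one can tile a F\o lner set $F_n$ by translates of these $F_j$ and, via the relative Pinsker formula (Theorem \ref{1007031233}) and the subadditivity identity \eqref{1007021626}, deduce that $h_\nu(G,\alpha|\mathcal{C})\le H_\nu(\alpha|\mathcal{C})-\epsilon'$ for some $\epsilon'>0$ (the loss from the $\epsilon$-disjointness being absorbable as in the proof of Theorem \ref{1007031233}). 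But $h_\nu(G,\alpha|\mathcal{C})<H_\nu(\alpha|\mathcal{C})$ forces the existence of a nontrivial partition refining $\alpha$ relative to $\mathcal{C}$ with some positive-entropy component, hence a partition $\beta$ with $0<h_\nu(G,\beta|\mathcal{C})$ ``beyond'' $\mathcal{C}$; combined with the relative Pinsker decomposition this yields an atom in $\mathcal{P}^\mathcal{C}(Y,\mathcal{D},\nu,G)$ strictly larger than $\mathcal{C}$, contradicting $\mathcal{C}$-relative c.p.e.

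For $(\Rightarrow)$, assuming the entropy condition holds for every $\alpha$ and $\epsilon$, I would show every $\alpha\in\mathbf{P}_Y$ with $h_\nu(G,\alpha|\mathcal{C})=0$ must satisfy $\alpha\subseteq\mathcal{C}$ mod $\nu$. Indeed, applying the condition, for each $\epsilon$ pick the corresponding $K$ and then (by amenability) a set $F$ with $FF^{-1}\cap(K\setminus\{e_G\})=\emptyset$ and $|F|$ large; then $H_\nu(\alpha|\mathcal{C})<\frac{1}{|F|}H_\nu(\alpha_F|\mathcal{C})+\epsilon$, while $\frac{1}{|F|}H_\nu(\alpha_F|\mathcal{C})$ is squeezed between $h_\nu(G,\alpha|\mathcal{C})=0$ and $H_\nu(\alpha|\mathcal{C})$ along a F\o lner sequence of such $F$'s — giving $H_\nu(\alpha|\mathcal{C})\le\epsilon$, hence $H_\nu(\alpha|\mathcal{C})=0$, i.e.\ $\alpha$ is $\mathcal{C}$-measurable. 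Since $\mathcal{P}^\mathcal{C}(Y,\mathcal{D},\nu,G)$ is generated by exactly such partitions, this gives $\mathcal{P}^\mathcal{C}(Y,\mathcal{D},\nu,G)=\mathcal{C}$. The main obstacle will be the implication $(\Leftarrow)$: producing, from a uniform gap $\frac{1}{|F|}H_\nu(\alpha_F|\mathcal{C})\le H_\nu(\alpha|\mathcal{C})-\epsilon$ over sufficiently ``independent'' $F$, an actual positive-entropy subalgebra over $\mathcal{C}$ — this requires carefully combining the quasi-tiling estimate with the relative Pinsker formula in the (possibly non-Lebesgue) setting and keeping track of the error terms, exactly along the lines of the proof of Theorem \ref{1007031233} and \cite[Theorem 3.1]{DZ}.
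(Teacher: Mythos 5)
Your argument for ``c.p.e.\ implies the entropy condition'' (your $(\Leftarrow)$) does not reach a contradiction. From the failing sets $F_j$ you aim to conclude $h_\nu(G,\alpha|\mathcal{C})\le H_\nu(\alpha|\mathcal{C})-\epsilon'$, but such a gap for a single partition is perfectly compatible with $\mathcal{C}$-relative c.p.e.: for the Bernoulli $(\frac{1}{2},\frac{1}{2})$ $G$-shift (which has c.p.e.), taking $\mathcal{C}$ trivial, $\beta$ the coordinate partition at $e_G$ and $\alpha=\beta\vee g^{-1}\beta$ with $g\neq e_G$, one has $\alpha_F=\beta_{F\cup gF}$, hence $h_\nu(G,\alpha)=\log 2<2\log 2=H_\nu(\alpha)$. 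The same example refutes your framing that c.p.e.\ should reduce to $h_\nu(G,\alpha|\mathcal{C})=H_\nu(\alpha|\mathcal{C})$ for all $\alpha$. To contradict $\mathcal{P}^{\mathcal{C}}(Y,\mathcal{D},\nu,G)=\mathcal{C}$ you must exhibit a partition with \emph{zero} relative entropy that is not $\mathcal{C}$-measurable; producing ``a $\beta$ with $h_\nu(G,\beta|\mathcal{C})>0$ beyond $\mathcal{C}$'' is exactly what c.p.e.\ asserts, not what refutes it. Moreover, Proposition \ref{ow-prop} quasi-tiles only by members of a F\o lner sequence, whereas your $F_j$ are spread sets, which are never F\o lner (see below), so the tiling step is also unjustified. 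This implication is the real substance of the proposition---it is the relative form of the Rudolph--Weiss uniform-mixing theorem for c.p.e.\ actions---and the paper does not prove it but quotes it as \cite[Theorem 3.1]{DZ}; your sketch offers no viable route to it.

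The converse direction (your $(\Rightarrow)$) also has a gap, though a repairable one. Your squeeze invokes ``a F\o lner sequence of such $F$'s'', but no $K$-spread set is close to invariant: if $FF^{-1}\cap(K\setminus\{e_G\})=\emptyset$ and $g\in K\setminus\{e_G\}$, then $gF\cap F=\emptyset$, so $|gF\Delta F|=2|F|$; hence the convergence $\frac{1}{|F|}H_\nu(\alpha_F|\mathcal{C})\rightarrow h_\nu(G,\alpha|\mathcal{C})$, which is guaranteed only along F\o lner sequences, cannot be used for spread sets, and there is no reason a given spread $F$ has small normalized entropy just because $h_\nu(G,\alpha|\mathcal{C})=0$. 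The fix is the counting trick the paper itself uses in the proof of Theorem \ref{1006301434}: if $h_\nu(G,\alpha|\mathcal{C})=0$, fix $\epsilon>0$, take $K$ from the hypothesis, and inside each $F_n$ choose a maximal subset $S_n$ with $S_nS_n^{-1}\cap(K\setminus\{e_G\})=\emptyset$; maximality gives $F_n\subseteq(K\cup K^{-1}\cup\{e_G\})S_n$, so $|F_n|\le(2|K|+1)|S_n|$, while monotonicity and the hypothesis give $H_\nu(\alpha_{F_n}|\mathcal{C})\ge H_\nu(\alpha_{S_n}|\mathcal{C})\ge |S_n|\,(H_\nu(\alpha|\mathcal{C})-\epsilon)$. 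Dividing by $|F_n|$ and letting $n\rightarrow\infty$ yields $0\ge (H_\nu(\alpha|\mathcal{C})-\epsilon)/(2|K|+1)$, hence $H_\nu(\alpha|\mathcal{C})\le\epsilon$, and since $\epsilon$ was arbitrary, $\alpha\subseteq\mathcal{C}$ mod $\nu$; as $\mathcal{P}^{\mathcal{C}}(Y,\mathcal{D},\nu,G)$ is generated by such $\alpha$, this gives $\mathcal{C}$-relative c.p.e.
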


In the proof of Theorem \ref{1006301434}, we will use the following well-known result.

\begin{prop} \label{1007012034}
Let $(Y, \mathcal{D}, \nu, G)$ be an MDS, $\mathcal{C}\subseteq \mathcal{D}$ a $G$-invariant sub-$\sigma$-algebra and $\alpha\in \mathbf{P}_Y$. Assume that $(Y, \mathcal{D}, \nu)$ is a Lebesgue space. Then
\begin{equation} \label{1007012036}
h_\nu (G, \alpha| \mathcal{C})= h_\nu (G, \alpha| \mathcal{P}^\mathcal{C} (Y, \mathcal{D}, \nu, G)).
\end{equation}
In particular,
$(Y, \mathcal{D}, \nu, G)$ has $\mathcal{P}^\mathcal{C} (Y, \mathcal{D}, \nu, G)$-relative c.p.e.
\end{prop}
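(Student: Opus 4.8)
The plan is to establish the non-trivial inequality $h_\nu(G,\alpha|\mathcal{C})\le h_\nu(G,\alpha|\mathcal{P}^\mathcal{C}(Y,\mathcal{D},\nu,G))$, the reverse being immediate from the monotonicity of $h_\nu(G,\alpha|\cdot)$ in the conditioning $\sigma$-algebra (inherited from the same property of $H_\nu$ via the limit defining $h_\nu$), since $\mathcal{C}\subseteq\mathcal{P}^\mathcal{C}(Y,\mathcal{D},\nu,G)$. Abbreviate $\mathcal{P}=\mathcal{P}^\mathcal{C}(Y,\mathcal{D},\nu,G)$ and $\mathcal{Z}=\{\beta\in\mathbf{P}_Y:h_\nu(G,\beta|\mathcal{C})=0\}$. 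First I would show that $\mathcal{Z}$ is directed under $\preceq$: for $\beta_1,\beta_2\in\mathcal{Z}$ the relative Pinsker formula (Theorem \ref{1007031233}) gives $h_\nu(G,\beta_1\vee\beta_2|\mathcal{C})=h_\nu(G,\beta_2|\mathcal{C})+h_\nu(G,\beta_1|(\beta_2)_G\vee\mathcal{C})$, and the second summand is $\le h_\nu(G,\beta_1|\mathcal{C})=0$ by monotonicity in the conditioning $\sigma$-algebra; hence $\beta_1\vee\beta_2\in\mathcal{Z}$. Because $(Y,\mathcal{D},\nu)$ is a Lebesgue space, $\mathcal{P}$ is countably generated mod $\nu$, and since $\mathcal{P}$ is by definition generated by $\mathcal{Z}$, a standard measure-theoretic argument exploiting the directedness of $\mathcal{Z}$ produces an increasing sequence $\gamma_1\preceq\gamma_2\preceq\cdots$ in $\mathcal{Z}$ with $\bigvee_{n}\gamma_n=\mathcal{P}$ mod $\nu$.

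Next, for each fixed $n$, monotonicity under refinement together with the relative Pinsker formula (using $h_\nu(G,\gamma_n|\mathcal{C})=0$) gives
\[
h_\nu(G,\alpha|\mathcal{C})\le h_\nu(G,\alpha\vee\gamma_n|\mathcal{C})=h_\nu(G,\gamma_n|\mathcal{C})+h_\nu(G,\alpha|(\gamma_n)_G\vee\mathcal{C})=h_\nu(G,\alpha|(\gamma_n)_G\vee\mathcal{C}).
\]
The sub-$\sigma$-algebras $(\gamma_n)_G\vee\mathcal{C}$ are $G$-invariant, increase with $n$, and their join equals $\bigl(\bigvee_n\gamma_n\bigr)_G\vee\mathcal{C}=\mathcal{P}\vee\mathcal{C}=\mathcal{P}$, since $\mathcal{P}$ is $G$-invariant and contains $\mathcal{C}$.

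To pass to the limit I would invoke the infimum representation \eqref{1006272232}, valid for any $G$-invariant conditioning $\sigma$-algebra, namely $h_\nu(G,\alpha|\mathcal{C}')=\inf_{F\in\mathcal{F}_G}\frac{1}{|F|}H_\nu(\alpha_F|\mathcal{C}')$, together with the monotone continuity of conditional entropy recalled earlier in this section: for each fixed $F$, $H_\nu(\alpha_F|(\gamma_n)_G\vee\mathcal{C})$ decreases to $H_\nu(\alpha_F|\mathcal{P})$ as $n\to\infty$. Since, for each $F$, the quantity $\frac{1}{|F|}H_\nu(\alpha_F|(\gamma_n)_G\vee\mathcal{C})$ is non-increasing in $n$, the limit in $n$ and the infimum over $F$ commute, so $\lim_{n}h_\nu(G,\alpha|(\gamma_n)_G\vee\mathcal{C})=\inf_F\frac{1}{|F|}H_\nu(\alpha_F|\mathcal{P})=h_\nu(G,\alpha|\mathcal{P})$. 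Feeding this into the displayed inequality yields $h_\nu(G,\alpha|\mathcal{C})\le h_\nu(G,\alpha|\mathcal{P})$, hence \eqref{1007012036}. The final assertion then drops out: $\mathcal{P}\subseteq\mathcal{P}^{\mathcal{P}}(Y,\mathcal{D},\nu,G)$ is the general containment $\mathcal{C}'\subseteq\mathcal{P}^{\mathcal{C}'}$ applied to the $G$-invariant $\sigma$-algebra $\mathcal{C}'=\mathcal{P}$, whereas if $\beta\in\mathbf{P}_Y$ satisfies $h_\nu(G,\beta|\mathcal{P})=0$ then $h_\nu(G,\beta|\mathcal{C})=0$ by \eqref{1007012036}, so $\beta$ is $\mathcal{P}$-measurable mod $\nu$; since such $\beta$ generate $\mathcal{P}^{\mathcal{P}}(Y,\mathcal{D},\nu,G)$, we get $\mathcal{P}^{\mathcal{P}}(Y,\mathcal{D},\nu,G)=\mathcal{P}$ mod $\nu$, i.e. $\mathcal{P}^\mathcal{C}$-relative c.p.e.

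I expect the work here to be organisational rather than deep, the decisive input being the relative Pinsker formula. The points requiring the most care are the extraction of the increasing generating sequence $\{\gamma_n\}$ from the directed family $\mathcal{Z}$ and the verification that the associated $G$-invariant $\sigma$-algebras $(\gamma_n)_G\vee\mathcal{C}$ genuinely increase to $\mathcal{P}$, together with the interchange of limit and infimum in the last step, which relies on having the infimum formula for $h_\nu$ available for conditioning $\sigma$-algebras that need not themselves be standard.
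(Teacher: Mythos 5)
Your proposal is correct and follows essentially the same route as the paper: apply the relative Pinsker formula (Theorem \ref{1007031233}) to an increasing sequence of finite partitions generating $\mathcal{P}^\mathcal{C} (Y, \mathcal{D}, \nu, G)$ to get $h_\nu (G, \alpha| \mathcal{C})\le h_\nu (G, \alpha| (\gamma_n)_G\vee \mathcal{C})$, then pass to the limit via the infimum formula \eqref{1006272232}, the interchange of the two infima, and martingale continuity of conditional entropy, exactly as in the paper's chain \eqref{1007031711}--\eqref{need}. The only (cosmetic) difference is that the paper takes an arbitrary increasing sequence $\beta_n\nearrow \mathcal{P}^\mathcal{C}$ and uses $\beta_n\subseteq \mathcal{P}^\mathcal{C}$ to conclude $h_\nu (G, \beta_n| \mathcal{C})= 0$, whereas you build the generating sequence inside the family of zero-entropy partitions directly, using the Pinsker formula for directedness.
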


Let $(Y, \mathcal{D}, \nu, G)$ be an MDS and $\mathcal{C}\subseteq \mathcal{D}$ a $G$-invariant sub-$\sigma$-algebra. For each $n\in \mathbb{N}\setminus \{1\}$, following ideas from \cite{G, HY, HYZ2, HYZ}, we introduce a probability measure $\lambda_n^\mathcal{C} (\nu)$ over $(Y^n, \mathcal{D}^n)$ as follows:
\begin{equation} \label{1208032307}
\lambda_n^\mathcal{C} (\nu) (\prod_{i= 1}^n A_i)= \int_Y \prod_{i= 1}^n \nu (A_i| \mathcal{P}^\mathcal{C} (Y, \mathcal{D}, \nu, G)) d \nu,
\end{equation}
whenever $A_1, \cdots, A_n\in \mathcal{D}$.
Here, $Y^n= Y\times \cdots\times Y$ ($n$-times) and $\mathcal{D}^n= \mathcal{D}\times \cdots\times \mathcal{D}$ ($n$-times).
 As $G$ acts naturally on $(Y^n, \mathcal{D}^n)$, it is not hard to check that the measure $\lambda_n^\mathcal{C} (\nu)$ is $G$-invariant (recall that the sub-$\sigma$-algebra $\mathcal{P}^\mathcal{C} (Y, \mathcal{D}, \nu, G)\subseteq \mathcal{D}$ is $G$-invariant) and so $(Y^n, \mathcal{D}^n, \lambda_n^\mathcal{C} (\nu), G)$ forms an MDS.

Following the method of proof of \cite[Lemma 6.8 and Theorem 6.11]{HYZ}, it is not hard to obtain:

\begin{lem} \label{1007032027}
Let $(Y, \mathcal{D}, \nu, G)$ be an MDS, $\mathcal{C}\subseteq \mathcal{D}$ a $G$-invariant sub-$\sigma$-algebra and $\mathcal{W}= \{W_1, \cdots,$ $W_n\}\in \mathbf{C}_Y$ with $n>1$. Then
\begin{enumerate}

\item \label{1007032038n} $\lambda_n^\mathcal{C} (\nu) (\prod\limits_{i= 1}^n W_i^c)> 0$ if and only if $h_\nu (G, \beta| \mathcal{C})> 0$ whenever $\beta\in \mathbf{P}_Y$ satisfies $\beta\succeq \mathcal{W}$.

\item \label{1007032038} if $\lambda_n^\mathcal{C} (\nu) (\prod\limits_{i= 1}^n W_i^c)> 0$ then there exist $\epsilon> 0$ and $\alpha\in \mathbf{P}_Y$ such that $\alpha\succeq \mathcal{W}$ and, whenever $\beta\in \mathbf{P}_Y$ satisfies $\beta\succeq \mathcal{W}$,
    $$H_\nu (\alpha| \beta\vee \mathcal{P}^\mathcal{C} (Y, \mathcal{D}, \nu, G))\le H_\nu (\alpha| \mathcal{P}^\mathcal{C} (Y, \mathcal{D}, \nu, G))- \epsilon.$$
\end{enumerate}
\end{lem}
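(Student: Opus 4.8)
The plan is to reduce the whole statement to elementary manipulations of the conditional expectations $\nu(\cdot| \mathcal{P})$, where $\mathcal{P}=\mathcal{P}^{\mathcal{C}}(Y,\mathcal{D},\nu,G)$ denotes the relative Pinsker algebra; this is how the analogous statements are obtained in \cite{HYZ}. I would first record two facts. \emph{(a)} For $\gamma\in\mathbf{P}_Y$ one has $h_\nu(G,\gamma| \mathcal{C})=0$ if and only if $\gamma\subseteq\mathcal{P}$ (mod $\nu$): the forward implication is the definition of $\mathcal{P}$, and for the converse, $\gamma\subseteq\mathcal{P}$ gives $H_\nu(\gamma| \mathcal{P})=0$, hence $h_\nu(G,\gamma| \mathcal{P})=0$ by \eqref{1006272232}, hence $h_\nu(G,\gamma| \mathcal{C})=h_\nu(G,\gamma| \mathcal{P})=0$ by Proposition~\ref{1007012034}. \emph{(b)} For $E\in\mathcal{P}$ and $A\in\mathcal{D}$ one has $E\subseteq A$ (mod $\nu$) if and only if $\nu(A| \mathcal{P})=1$ on $E$ (mod $\nu$), since $\int_E\bigl(1-\nu(A| \mathcal{P})\bigr)\,d\nu=\nu(E\setminus A)$; in particular $\{y:\nu(A| \mathcal{P})(y)=1\}$ is the largest $\mathcal{P}$-measurable subset of $A$ (mod $\nu$).

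For the first assertion, \emph{(a)} shows it is equivalent to the claim that $\lambda_n^{\mathcal{C}}(\nu)\bigl(\prod_{i=1}^nW_i^c\bigr)=0$ if and only if there is a $\mathcal{P}$-measurable partition $\beta\succeq\mathcal{W}$. If such a $\beta$ exists, each atom $B$ of $\beta$ lies in some $W_{i(B)}$ and is $\mathcal{P}$-measurable, so $\nu(W_{i(B)}| \mathcal{P})=1$ on $B$ by \emph{(b)}; thus $\prod_{i=1}^n\nu(W_i^c| \mathcal{P})=0$ $\nu$-a.e., and integrating gives $\lambda_n^{\mathcal{C}}(\nu)\bigl(\prod_{i=1}^nW_i^c\bigr)=0$. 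Conversely, if this integral vanishes, then for $\nu$-a.e.\ $y$ there is some $i$ with $\nu(W_i| \mathcal{P})(y)=1$; putting $D_i=\{y:\nu(W_i| \mathcal{P})(y)=1\}\in\mathcal{P}$ we get $D_i\subseteq W_i$ (mod $\nu$) and $\bigcup_{i=1}^nD_i=Y$ (mod $\nu$), so disjointifying the $D_i$ yields a $\mathcal{P}$-measurable $\beta\succeq\mathcal{W}$.

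For the second assertion, set $b=\lambda_n^{\mathcal{C}}(\nu)\bigl(\prod_{i=1}^nW_i^c\bigr)>0$ and take $\alpha=\mathcal{P}(\mathcal{W})\in\mathbf{P}(\mathcal{W})$, the partition generated by $\mathcal{W}$, so that each $W_i$ — and hence each $W_i^c$ — is a union of atoms of $\alpha$. Since $\prod_{i=1}^n\nu(W_i^c| \mathcal{P})\le\min_{1\le i\le n}\nu(W_i^c| \mathcal{P})$ and the integral of the left-hand side equals $b>0$, there is $\eta>0$ with $\rho:=\nu(E_\eta)>0$, where $E_\eta=\{y:\nu(W_i^c| \mathcal{P})(y)>\eta\text{ for every }i\}$. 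Given any $\beta\succeq\mathcal{W}$, attach to each atom $B$ of $\beta$ an index $i(B)$ with $B\subseteq W_{i(B)}$ and put $C_i=\bigcup_{i(B)=i}B$; then $\gamma:=\{C_1,\dots,C_n\}\succeq\mathcal{W}$ and $\gamma\preceq\beta$, so $H_\nu(\alpha| \beta\vee\mathcal{P})\le H_\nu(\alpha| \gamma\vee\mathcal{P})$ by monotonicity of conditional entropy, and it is enough to bound $H_\nu(\alpha| \mathcal{P})-H_\nu(\alpha| \gamma\vee\mathcal{P})$ from below. By \eqref{1007021626} this difference equals $\int D\,d\nu$, where
\[
D(y)=\sum_{A\in\alpha}\sum_{i=1}^{n}\nu(A\cap C_i| \mathcal{P})(y)\log\frac{\nu(A\cap C_i| \mathcal{P})(y)}{\nu(A| \mathcal{P})(y)\,\nu(C_i| \mathcal{P})(y)}\ \ge\ 0
\]
is the pointwise conditional mutual information of $\alpha$ and $\gamma$ (a conditional Kullback--Leibler divergence, which gives the nonnegativity). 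Because $C_i\cap W_i^c=\emptyset$ while $W_i^c$ is a union of atoms of $\alpha$, summing over the atoms $A\subseteq W_i^c$ and invoking Pinsker's inequality in the form $\sum_{A\in\alpha}\sum_{i=1}^{n}\bigl|\nu(A\cap C_i| \mathcal{P})(y)-\nu(A| \mathcal{P})(y)\,\nu(C_i| \mathcal{P})(y)\bigr|\le\sqrt{2D(y)}$ gives $\nu(C_i| \mathcal{P})(y)\,\nu(W_i^c| \mathcal{P})(y)\le\sqrt{2D(y)}$; on $E_\eta$ this forces $\nu(C_i| \mathcal{P})(y)\le\sqrt{2D(y)}/\eta$ for each $i$, and summing over $i$ (using $\sum_{i=1}^n\nu(C_i| \mathcal{P})(y)=1$) yields $D(y)\ge\eta^2/(2n^2)$ for $\nu$-a.e.\ $y\in E_\eta$. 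Hence $H_\nu(\alpha| \mathcal{P})-H_\nu(\alpha| \beta\vee\mathcal{P})\ge\int_{E_\eta}D\,d\nu\ge\rho\eta^2/(2n^2)=:\epsilon>0$, which is exactly the asserted inequality, valid with $\alpha=\mathcal{P}(\mathcal{W})$ and this $\epsilon$ for every $\beta\succeq\mathcal{W}$.

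The steps I expect to need genuine care are in the second assertion: justifying the identity $H_\nu(\alpha| \mathcal{P})-H_\nu(\alpha| \gamma\vee\mathcal{P})=\int D\,d\nu$ and the pointwise Pinsker estimate when the conditioning $\sigma$-algebra $\mathcal{P}$ is not atomic, so that one must argue with conditional expectations rather than with atoms of a partition, and verifying that the coarsening $\gamma$ of a general $\beta$ indeed satisfies $\gamma\preceq\beta$ and $\gamma\succeq\mathcal{W}$; the continuity of $h_\nu(G,\cdot| \mathcal{C})$ in the cover supplied by Proposition~\ref{1007141926} is convenient but not strictly needed. Otherwise this is a routine transcription of \cite[Lemma 6.8 and Theorem 6.11]{HYZ} into the $\mathcal{C}$-relative setting.
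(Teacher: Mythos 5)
Your proposal is correct in substance and, for part (2), it takes a genuinely different route from the one the paper points to. The paper gives no written proof of Lemma \ref{1007032027}: it refers to the method of \cite[Lemma 6.8 and Theorem 6.11]{HYZ}, which (in the Lebesgue setting of \cite{HYZ}) runs through the disintegration of $\nu$ over the Pinsker algebra $\mathcal{P}=\mathcal{P}^{\mathcal{C}}(Y,\mathcal{D},\nu,G)$ and fibrewise entropy estimates. You instead work directly with the conditional expectations $\nu(\cdot|\mathcal{P})$: the identity $H_\nu(\alpha|\mathcal{P})-H_\nu(\alpha|\gamma\vee\mathcal{P})=\int D\,d\nu$ follows from \eqref{1007021626}, the coarsening $\gamma\preceq\beta$ with $\gamma\succeq\mathcal{W}$ is legitimate, and the key estimate $\nu(C_i|\mathcal{P})\,\nu(W_i^c|\mathcal{P})\le\sqrt{2D}$ is exactly right because $C_i\cap W_i^c=\emptyset$ while $W_i^c$ is $\alpha$-measurable; note that your choice $\alpha=\mathcal{P}(\mathcal{W})$ is precisely the partition singled out in Remark \ref{1007032047}. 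What this buys is an explicit uniform $\epsilon=\rho\eta^2/(2n^2)$, independent of $\beta$, and a proof of part (2) that needs no disintegration and hence no regularity hypothesis on $(Y,\mathcal{D},\nu)$. Part (1) is also handled in the natural way (reduce to $\mathcal{P}$-measurability of some refinement of $\mathcal{W}$ via the sets $D_i=\{\nu(W_i|\mathcal{P})=1\}$), modulo the harmless mod-$\nu$ adjustment needed so that the disjointified $D_i$ are genuinely finer than $\mathcal{W}$.

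One step does need repair. In your fact \emph{(a)} you justify the implication ``$\gamma\subseteq\mathcal{P}$ (mod $\nu$) $\Rightarrow h_\nu(G,\gamma|\mathcal{C})=0$'' by Proposition \ref{1007012034}, which is stated under the hypothesis that $(Y,\mathcal{D},\nu)$ is a Lebesgue space, a hypothesis that Lemma \ref{1007032027} does not carry (it is only imposed later, in Theorem \ref{1006301434}); monotonicity alone goes the wrong way here, since $\mathcal{C}\subseteq\mathcal{P}$ only gives $h_\nu(G,\gamma|\mathcal{P})\le h_\nu(G,\gamma|\mathcal{C})$. Either add the Lebesgue hypothesis to your statement (which is how the lemma is applied in the paper), or replace the citation by the standard approximation argument, valid in general: given $\gamma\subseteq\mathcal{P}$ (mod $\nu$) and $\delta>0$, choose finitely many partitions $\alpha_1,\dots,\alpha_k$ with $h_\nu(G,\alpha_i|\mathcal{C})=0$ and a partition $\gamma'$ measurable with respect to $\alpha_1\vee\cdots\vee\alpha_k$ whose atoms are $\delta$-close to those of $\gamma$; then $h_\nu(G,\gamma'|\mathcal{C})=0$ by Proposition \ref{0911192237}\,(1)--(2), and $h_\nu(G,\gamma|\mathcal{C})\le h_\nu(G,\gamma'|\mathcal{C})+H_\nu(\gamma|\gamma')$ by Proposition \ref{0911192237}\,(4), which is small by Proposition \ref{1007141926} (or the Rokhlin-metric continuity of $H_\nu$). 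With that substitution your proof establishes the lemma in the generality in which it is stated.
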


Lemma \ref{1007032027} \eqref{1007032038n} can be strengthened as follows. We will use this version in our discussions of entropy tuples for a continuous bundle random dynamical system in \S \ref{entropy tuple}.

\begin{thm} \label{1006301434}
Let $(Y, \mathcal{D}, \nu, G)$ be an MDS, $\mathcal{C}\subseteq \mathcal{D}$ a $G$-invariant sub-$\sigma$-algebra and $\mathcal{W}= \{W_1, \cdots, W_n\}\in \mathbf{C}_Y$ with $n\in \mathbb{N}\setminus \{1\}$. Assume that $(Y, \mathcal{D}, \nu)$ is a Lebesgue space. Then the following statements are equivalent:
\begin{enumerate}

\item $h_\nu (G, \beta| \mathcal{C})> 0$ whenever $\beta\in \mathbf{P}_Y$ satisfies $\beta\succeq \mathcal{W}$.

\item \label{1007032101} $\lambda_n^\mathcal{C} (\nu) (\prod\limits_{i= 1}^n W_i^c)> 0$.

\item \label{1007032102} $\inf\limits_{F\in \mathcal{F}_G} \frac{1}{|F|} H_\nu (\mathcal{W}_F| \mathcal{C})> 0$.

\item $h_\nu (G, \mathcal{W}| \mathcal{C})> 0$.
\end{enumerate}
\end{thm}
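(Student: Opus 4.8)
The plan is to prove the cycle $(1)\Rightarrow(2)\Rightarrow(3)\Rightarrow(4)\Rightarrow(1)$; three of these arrows are immediate and the real content is $(2)\Rightarrow(3)$. First, $(1)\Rightarrow(2)$ is exactly one direction of Lemma \ref{1007032027}\eqref{1007032038n}. For $(3)\Rightarrow(4)$: since $H_\nu(\mathcal{W}_\bullet|\mathcal{C})$ is monotone, non‑negative, $G$‑invariant and sub‑additive, Proposition \ref{1006122129} shows that $h_\nu(G,\mathcal{W}|\mathcal{C})=\lim_n\frac{1}{|F_n|}H_\nu(\mathcal{W}_{F_n}|\mathcal{C})$ exists, and each term of this sequence is at least $\inf_{F\in\mathcal{F}_G}\frac{1}{|F|}H_\nu(\mathcal{W}_F|\mathcal{C})$, which is positive by $(3)$. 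For $(4)\Rightarrow(1)$: if $\beta\in\mathbf{P}_Y$ and $\beta\succeq\mathcal{W}$ then $h_\nu(G,\beta|\mathcal{C})\ge h_\nu(G,\mathcal{W}|\mathcal{C})>0$ by Proposition \ref{0911192237}\eqref{1102032251}.

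For $(2)\Rightarrow(3)$, write $\mathcal{P}=\mathcal{P}^\mathcal{C}(Y,\mathcal{D},\nu,G)$, so that $\mathcal{C}\subseteq\mathcal{P}$, $\mathcal{P}$ is $G$‑invariant, and, since $(Y,\mathcal{D},\nu)$ is a Lebesgue space, $(Y,\mathcal{D},\nu,G)$ has $\mathcal{P}$‑relative c.p.e.\ by Proposition \ref{1007012034}. Assuming $(2)$, Lemma \ref{1007032027}\eqref{1007032038} gives $\epsilon>0$ and $\alpha\in\mathbf{P}_Y$ with $\alpha\succeq\mathcal{W}$ such that $H_\nu(\alpha|\beta\vee\mathcal{P})\le H_\nu(\alpha|\mathcal{P})-\epsilon$ for every $\beta\in\mathbf{P}_Y$ with $\beta\succeq\mathcal{W}$. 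Applying Proposition \ref{1007012030} to the (finite) partition $\alpha$ and to $\epsilon/2$ produces $K\in\mathcal{F}_G$ with
\[
H_\nu(\alpha_{F'}\,|\,\mathcal{P})>|F'|\bigl(H_\nu(\alpha|\mathcal{P})-\tfrac{\epsilon}{2}\bigr)\quad\text{whenever}\quad F'(F')^{-1}\cap(K\setminus\{e_G\})=\emptyset.
\]
Fix such an $F'$ and any $\gamma\in\mathbf{P}_Y$ with $\gamma\succeq\mathcal{W}_{F'}$. For each $g\in F'$ one has $\gamma\succeq g^{-1}\mathcal{W}$, hence $g\gamma\succeq\mathcal{W}$; using the $G$‑invariance of $\mathcal{P}$, sub‑additivity of conditional entropy, and the gap above with $\beta=g\gamma$,
\[
H_\nu(\alpha_{F'}\,|\,\gamma\vee\mathcal{P})\le\sum_{g\in F'}H_\nu(g^{-1}\alpha\,|\,\gamma\vee\mathcal{P})=\sum_{g\in F'}H_\nu(\alpha\,|\,g\gamma\vee\mathcal{P})\le|F'|\bigl(H_\nu(\alpha|\mathcal{P})-\epsilon\bigr).
\]
On the other hand, by \eqref{1007021626}, $H_\nu(\gamma|\mathcal{P})=H_\nu(\gamma\vee\alpha_{F'}|\mathcal{P})-H_\nu(\alpha_{F'}|\gamma\vee\mathcal{P})\ge H_\nu(\alpha_{F'}|\mathcal{P})-H_\nu(\alpha_{F'}|\gamma\vee\mathcal{P})$. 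Combining the three estimates yields $H_\nu(\gamma|\mathcal{P})>\tfrac{\epsilon}{2}|F'|$; since $\gamma$ was arbitrary, $H_\nu(\mathcal{W}_{F'}|\mathcal{P})\ge\tfrac{\epsilon}{2}|F'|$.

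It remains to remove the separation hypothesis. Given any $F\in\mathcal{F}_G$, a greedy choice yields $F'\subseteq F$ with $F'(F')^{-1}\cap(K\setminus\{e_G\})=\emptyset$ and $F\subseteq(K\cup K^{-1}\cup\{e_G\})F'$, hence $|F'|\ge|F|/(2|K|+1)$. Then, using monotonicity of $H_\nu(\,\cdot\,|\mathcal{C})$ in the cover together with $\mathcal{C}\subseteq\mathcal{P}$,
\[
H_\nu(\mathcal{W}_F|\mathcal{C})\ge H_\nu(\mathcal{W}_{F'}|\mathcal{C})\ge H_\nu(\mathcal{W}_{F'}|\mathcal{P})\ge\tfrac{\epsilon}{2}|F'|\ge\frac{\epsilon}{2(2|K|+1)}\,|F|,
\]
so $\inf_{F\in\mathcal{F}_G}\frac{1}{|F|}H_\nu(\mathcal{W}_F|\mathcal{C})\ge\frac{\epsilon}{2(2|K|+1)}>0$, which is $(3)$. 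The one genuinely delicate point is this last passage: as Example \ref{1104071212} shows, a limit along a F\o lner sequence can be strictly larger than the infimum over all finite subsets, so $(3)$ cannot be obtained from $(4)$ by soft arguments — it is the relative c.p.e.\ property (via Proposition \ref{1007012030}) together with the spreading of an arbitrary $F$ onto a $K$‑separated subset that forces the infimum to be positive, everything else being routine manipulation of \eqref{1007021626} and of the results already established.
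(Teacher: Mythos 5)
Your proposal is correct and follows essentially the same route as the paper: the easy arrows via Lemma \ref{1007032027} and the definitions, and for \eqref{1007032101}$\Rightarrow$\eqref{1007032102} the combination of the entropy gap from Lemma \ref{1007032027}\eqref{1007032038}, the $\mathcal{P}^\mathcal{C}$-relative c.p.e.\ statement (Propositions \ref{1007012030} and \ref{1007012034}), and the extraction of a maximal $K$-separated subset $F'\subseteq F$ with $|F|\le(2|K|+1)|F'|$. The only cosmetic difference is that you first prove the bound $H_\nu(\mathcal{W}_{F'}|\mathcal{P}^\mathcal{C})\ge\tfrac{\epsilon}{2}|F'|$ for separated sets and then pass to general $F$, whereas the paper fixes $E$ and works with the maximal separated $S\subseteq E$ throughout; the estimates are identical.
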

\begin{proof}
The equivalence $(1)\Longleftrightarrow$\eqref{1007032101} is established by Lemma \ref{1007032027} and the implications \eqref{1007032102}$\Longrightarrow (4)\Longrightarrow (1)$ follow directly from the definitions.

Thus, it suffices to prove \eqref{1007032101}$\Longrightarrow$\eqref{1007032102}.

Now assume that $\lambda_n^\mathcal{C} (\nu) (\prod\limits_{i= 1}^n W_i^c)> 0$. Using Lemma \ref{1007032027} again, there exist $\alpha\in \mathbf{P}_Y$ and $\epsilon> 0$ such that
\begin{equation} \label{1007032137}
H_\nu (\alpha| \beta\vee \mathcal{P}^\mathcal{C} (Y, \mathcal{D}, \nu, G))\le H_\nu (\alpha| \mathcal{P}^\mathcal{C} (Y, \mathcal{D}, \nu, G))- \epsilon
\end{equation}
whenever $\beta\in \mathbf{P}_Y$ satisfies $\beta\succeq \mathcal{W}$.
By Proposition \ref{1007012030} and Proposition \ref{1007012034}, we can choose $K\in
\mathcal{F}_G$ such that if $F\in \mathcal{F}_G$ satisfies $F F^{-1}\cap (K\setminus \{e_G\})= \emptyset$ then
\begin{equation} \label{0911151253}
|\frac{1}{|F|} H_\nu (\alpha_F| \mathcal{P}^\mathcal{C} (Y, \mathcal{D}, \nu, G))-
H_\nu (\alpha| \mathcal{P}^\mathcal{C} (Y, \mathcal{D}, \nu, G))|< \frac{\epsilon}{2}.
\end{equation}
For $E\in \mathcal{F}_G$ and $g\in E$, there exists $S\in \mathcal{F}_G$ such that $S S^{- 1}\cap (K\setminus \{e_G\})= \emptyset, g\in S\subseteq E$ and
$(S\cup \{g'\}) (S\cup \{g'\})^{- 1}\cap (K\setminus \{e_G\})\neq \emptyset$ for any $g'\in E\setminus S$.
Thus,
\begin{equation} \label{0911151315}
|\frac{1}{|S|} H_\nu (\alpha_S| \mathcal{P}^\mathcal{C} (Y, \mathcal{D}, \nu, G))-
H_\nu (\alpha| \mathcal{P}^\mathcal{C} (Y, \mathcal{D}, \nu, G))|< \frac{\epsilon}{2}\ (\text{using \eqref{0911151253}}).
\end{equation}
It is now not hard to check that
$$E\setminus S\subseteq (K\setminus \{e_G\}) S\cup (K\setminus \{e_G\})^{- 1} S= (K\cup K^{- 1}\setminus \{e_G\}) S,$$
 hence $S\subseteq E\subseteq (K\cup K^{- 1}\cup \{e_G\}) S$, one has $(2 |K|+ 1) |S|\ge |E|$.
So, if $\beta\in \mathbf{P}_Y$ satisfies $\beta\succeq \mathcal{W}_S$ then $g \beta\succeq \mathcal{W}$ for each $g\in S$, hence
\begin{eqnarray*}
& & H_\nu (\beta| \mathcal{C})\nonumber \\
& &\ge H_\nu (\beta| \mathcal{P}^\mathcal{C} (Y, \mathcal{D}, \nu, G))\nonumber \\
& &= H_\nu (\beta\vee \alpha_S| \mathcal{P}^\mathcal{C} (Y, \mathcal{D}, \nu, G))-
H_\nu (\alpha_S| \beta\vee \mathcal{P}^\mathcal{C} (Y, \mathcal{D}, \nu, G))\nonumber \\
& &\ge H_\nu (\alpha_S| \mathcal{P}^\mathcal{C} (Y, \mathcal{D}, \nu, G))- \sum_{g\in S}
H_\nu (\alpha| g \beta\vee \mathcal{P}^\mathcal{C} (Y, \mathcal{D}, \nu, G)) \nonumber \\
& &\ge H_\nu (\alpha_S| \mathcal{P}^\mathcal{C} (Y, \mathcal{D}, \nu, G))-
|S| (H_\nu (\alpha| \mathcal{P}^\mathcal{C} (Y, \mathcal{D}, \nu, G))- \epsilon)\ (\text{using \eqref{1007032137}})\nonumber \\
& &\ge
\frac{|S| \epsilon}{2}\
(\text{using \eqref{0911151315}}).
\end{eqnarray*}
Since $\beta$ is arbitrary,
\begin{equation} \label{1207311055}
H_\nu (\mathcal{W}_E)\ge H_\nu (\mathcal{W}_S)\ge \frac{|S| \epsilon}{2}.
\end{equation}
Recall that $(2 |K|+ 1) |S|\ge |E|$. In \eqref{1207311055} letting $E$ vary over all elements from $\mathcal{F}_G$ we obtain \eqref{1007032102}. This completes the proof.
\end{proof}

\begin{ques} \label{1102041455}
Let $(Y, \mathcal{D}, \nu, G)$ be an MDS, $\mathcal{C}\subseteq \mathcal{D}$ a $G$-invariant sub-$\sigma$-algebra and $\mathcal{W}\in \mathbf{C}_Y$. We conjecture that the following equation holds:
\begin{equation*}
h_\nu (G, \mathcal{W}| \mathcal{C})= \inf\limits_{F\in \mathcal{F}_G} \frac{1}{|F|} H_\nu (\mathcal{W}_F| \mathcal{C}).
\end{equation*}
\begin{enumerate}

\item Unfortunately, the proof of \eqref{1006272232} does not work in this case, since if $\alpha\in \mathbf{P}_Y$,  \eqref{1007021626} easily implies strong sub-additivity of
  \begin{equation} \label{1102241455}
  H_\nu (\alpha_{E\cap F}| \mathcal{C})+ H_\nu (\alpha_{E\cup F}| \mathcal{C})\le H_\nu (\alpha_E| \mathcal{C})+ H_\nu (\alpha_F| \mathcal{C})
  \end{equation}
    whenever $E, F\in \mathcal{F}_G$ (setting $\alpha_\emptyset= \mathcal{N}_Y$). However, we do not know whether \eqref{1102241455} holds for a general cover $\mathcal{W}\in \mathbf{C}_Y$.

\item
From the definitions, the inequality $\ge$ holds directly. Moreover, by Theorem \ref{1006301434}, if $(Y, \mathcal{D}, \nu)$ is a Lebesgue space then
$$\inf\limits_{F\in \mathcal{F}_G} \frac{1}{|F|} H_\nu (\mathcal{W}_F| \mathcal{C})> 0\ \text{if and only if}\ h_\nu (G, \mathcal{W}| \mathcal{C})> 0.$$

\item The conjecture should be compared with Proposition \ref{1102111944}, Proposition \ref{p1006172118} and Example \ref{1104071212}.

\end{enumerate}
Observe that in  the topological setting, we have a similar result \cite[Lemma 6.1]{DZ}, and a similar conjecture can be made.
\end{ques}

Let $(Y, \mathcal{D}, \nu)$ be a Lebesgue space. If $\{\alpha_i: i\in I\}$ is a countable family in $\mathbf{P}_Y$, the
partition $\alpha= \bigvee\limits_{i\in I} \alpha_i\doteq \{\bigcap\limits_{i\in
I} A_i: A_i\in \alpha_i, i\in I\}$ is called a \emph{measurable
partition}. Note that the sets $C\in \mathcal{D}$, which are
unions of atoms of $\alpha$, form a sub-$\sigma$-algebra of
$\mathcal{D}$. In fact, every
sub-$\sigma$-algebra of $\mathcal{D}$ coincides with a
$\sigma$-algebra constructed in this way modulo $\nu$-null sets
(cf \cite{Rokhlin}).

Now let $\mathcal{C}\subseteq \mathcal{D}$ be a sub-$\sigma$-algebra. Then we may disintegrate $\nu$ over $\mathcal{C}$, i.e. we write $\nu= \int_Y \nu_y d \nu (y)$, where $\nu_y$ is a probability measure over $(Y, \mathcal{D})$ for $\nu$-a.e. $y\in Y$. In fact, if $\alpha$ is a measurable partition of $(Y, \mathcal{D}, \nu)$ which generates $\mathcal{C}$, then, for $\nu$-a.e. $y\in Y$, $\nu_y$ is supported on $\alpha (y)$ (i.e. $\nu_y (\alpha (y))= 1$) and $\nu_{y_1}= \nu_{y_2}$ for $\nu_y$-a.e. $y_1, y_2\in \alpha (y)$. The disintegration can be characterized as follows. For each $f\in L^1 (Y, \mathcal{D}, \nu)$, if we denote by $\nu (f| \mathcal{C})$ the conditional expectation with respect to $\nu$ of the function $f$ relative to $\mathcal{C}$, then: $f\in L^1 (Y, \mathcal{D}, \nu_y)$ for $\nu$-a.e. $y\in Y$,
 the function $y\mapsto \int_Y f d \nu_y$ is in $L^1 (Y, \mathcal{C}, \nu)$, and
 $\nu (f| \mathcal{C}) (y)= \int_Y f d \nu_y$ for $\nu$-a.e. $y\in Y$.
From this, it follows that if $f\in L^1 (Y, \mathcal{D}, \nu)$ then
\begin{equation} \label{1006281756}
\int_Y (\int_Y f d \nu_y) d \nu (y)= \int_Y f d \nu,
\end{equation}
and so it is simple to check that if $\beta\in \mathbf{P}_Y$ then
\begin{equation} \label{1006281842}
H_\nu (\beta| \mathcal{C})= \int_Y H_{\nu_y} (\beta) d \nu (y).
\end{equation}
Note that the disintegration is unique in the sense that if
$\nu= \int_Y \nu_y d \nu (y)$ and $\nu= \int_Y \nu'_y d \nu (y)$
are both the disintegrations of $\nu$ over $\mathcal{C}$, then $\nu_y=
\nu'_y$ for $\nu$-a.e. $y\in Y$. For details see for example \cite{Fu, Rokhlin}.

Now let $(Y, \mathcal{D}, \nu, G)$ be an MDS and $\mathcal{C}\subseteq \mathcal{D}$ a $G$-invariant sub-$\sigma$-algebra.
Assume that $(Y, \mathcal{D}, \nu)$ is a Lebesgue space and $\nu= \int_Y \nu_y d \nu (y)$ is the disintegration of $\nu$ over $\mathcal{P}^\mathcal{C} (Y, \mathcal{D}, \nu, G)$. Then, for each $n\in \mathbb{N}\setminus \{1\}$, by the construction of $\lambda_n^\mathcal{C} (\nu)$ one has:
\begin{equation*}
\lambda_n^\mathcal{C} (\nu)= \int_Y \nu_y\times \cdots\times \nu_y\ (\text{$n$-times})\ d \nu (y).
\end{equation*}

We need the following result in next section which is similar to \cite[Lemma 3.8]{HYZ2}.

\begin{lem} \label{1006291630}
Let $(Y, \mathcal{D}, \nu)$ be a Lebesgue space and $\mathcal{W}\in \mathbf{C}_Y$. Let  $\mathcal{C}\subseteq \mathcal{D}$ be a sub-$\sigma$-algebra and $\nu= \int_Y \nu_y d \nu (y)$ the disintegration of $\nu$ over $\mathcal{C}$. Then
$$H_\nu (\mathcal{W}| \mathcal{C})= \int_Y H_{\nu_y} (\mathcal{W}) d \nu (y).$$
\end{lem}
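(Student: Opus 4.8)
Here is how I would approach Lemma \ref{1006291630}. The plan is to prove the two inequalities separately. The direction $H_\nu(\mathcal{W}| \mathcal{C})\ge \int_Y H_{\nu_y}(\mathcal{W})\, d\nu(y)$ should be essentially immediate from the definitions: by definition $H_\nu(\mathcal{W}| \mathcal{C})= \inf\{H_\nu(\alpha| \mathcal{C}): \alpha\in \mathbf{P}_Y,\ \alpha\succeq \mathcal{W}\}$, for any such $\alpha$ equation \eqref{1006281842} gives $H_\nu(\alpha| \mathcal{C})= \int_Y H_{\nu_y}(\alpha)\, d\nu(y)$, and $\alpha\succeq \mathcal{W}$ forces $H_{\nu_y}(\alpha)\ge H_{\nu_y}(\mathcal{W})$ for every $y$ (the entropy of a cover being an infimum over refining partitions). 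Taking the infimum over $\alpha$ then yields the inequality. Along the way one should note that $y\mapsto H_{\nu_y}(\mathcal{W})$ is $\mathcal{C}$-measurable, so that the right-hand integral makes sense: by \eqref{1006291640} applied to $\nu_y$ it equals $\min_{\alpha\in \mathbf{P}(\mathcal{W})} H_{\nu_y}(\alpha)$, a minimum of finitely many $\mathcal{C}$-measurable functions, since $\nu_y(A)= \nu(A| \mathcal{C})(y)$.

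For the reverse inequality, the obstruction is that one cannot interchange the integral with the minimum, because the partition $\alpha\in \mathbf{P}(\mathcal{W})$ realizing $H_{\nu_y}(\mathcal{W})= \min_{\alpha\in \mathbf{P}(\mathcal{W})} H_{\nu_y}(\alpha)$ genuinely depends on $y$; I would get around this by a measurable selection followed by a gluing. Since $\mathbf{P}(\mathcal{W})$ is a \emph{finite} set of partitions depending only on $\mathcal{W}$, and each $y\mapsto H_{\nu_y}(\alpha)$ (for $\alpha\in \mathbf{P}(\mathcal{W})$) is $\mathcal{C}$-measurable, one can split $Y$ into finitely many pairwise disjoint $\mathcal{C}$-measurable sets $\{D_\alpha: \alpha\in \mathbf{P}(\mathcal{W})\}$, resolving ties by a fixed enumeration of $\mathbf{P}(\mathcal{W})$, so that $H_{\nu_y}(\alpha)= \min_{\alpha'\in \mathbf{P}(\mathcal{W})} H_{\nu_y}(\alpha')$ for $\nu$-a.e. $y\in D_\alpha$. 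I would then define $\widehat{\alpha}\in \mathbf{P}_Y$ to be the partition whose atoms are the sets $D_\alpha\cap A$ with $\alpha\in \mathbf{P}(\mathcal{W})$ and $A$ an atom of $\alpha$. One checks immediately that $\widehat{\alpha}\succeq \mathcal{W}$, because each atom $D_\alpha\cap A$ is contained in the member of $\mathcal{W}$ that contains $A$; hence $\widehat{\alpha}$ is admissible in the infimum defining $H_\nu(\mathcal{W}| \mathcal{C})$, so that $H_\nu(\mathcal{W}| \mathcal{C})\le H_\nu(\widehat{\alpha}| \mathcal{C})$.

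It then remains to evaluate $H_\nu(\widehat{\alpha}| \mathcal{C})= \int_Y H_{\nu_y}(\widehat{\alpha})\, d\nu(y)$ using \eqref{1006281842} once more. Because each $D_\alpha$ is $\mathcal{C}$-measurable, $\nu_y(D_\alpha)= \nu(D_\alpha| \mathcal{C})(y)= 1_{D_\alpha}(y)$ for $\nu$-a.e. $y$; writing $\alpha(y)$ for the unique element of $\mathbf{P}(\mathcal{W})$ with $y\in D_{\alpha(y)}$, the measure $\nu_y$ is carried by $D_{\alpha(y)}$ for $\nu$-a.e. $y$, so the trace of $\widehat{\alpha}$ on $D_{\alpha(y)}$ is exactly $\alpha(y)$ and therefore $H_{\nu_y}(\widehat{\alpha})= H_{\nu_y}(\alpha(y))= H_{\nu_y}(\mathcal{W})$. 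Integrating gives $H_\nu(\widehat{\alpha}| \mathcal{C})= \int_Y H_{\nu_y}(\mathcal{W})\, d\nu(y)$, whence $H_\nu(\mathcal{W}| \mathcal{C})\le \int_Y H_{\nu_y}(\mathcal{W})\, d\nu(y)$, and combining with the first inequality completes the argument. The hard part is this $\le$ direction: everything else is routine bookkeeping with the disintegration, and it is precisely the finiteness of $\mathbf{P}(\mathcal{W})$ (via \eqref{1006291640}) that makes both the $\mathcal{C}$-measurable selection of optimal partitions and their gluing into a single honest finite partition possible.
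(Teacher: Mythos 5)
Your proposal is correct and follows essentially the same route as the paper's argument: reduce to the finite family $\mathbf{P}(\mathcal{W})$ via \eqref{1006291640}, measurably select (with a fixed enumeration to break ties) $\mathcal{C}$-measurable sets on which each candidate partition attains the fiberwise minimum, glue them into a single partition $\beta^*\succeq \mathcal{W}$, and evaluate $H_\nu(\beta^*|\mathcal{C})$ via \eqref{1006281842} using that $\nu_y$ lives on the $\mathcal{C}$-measurable cell containing $y$. The easy inequality is handled in the paper by the same closing chain $\int_Y H_{\nu_y}(\mathcal{W})\,d\nu \le \inf_{\beta\succeq\mathcal{W}} H_\nu(\beta|\mathcal{C})= H_\nu(\mathcal{W}|\mathcal{C})$, so there is no substantive difference.
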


A probability space $(Y, \mathcal{D}, \nu)$ is \emph{purely atomic} if there exists a countably family $\{D_i: i\in I\}\subseteq \mathcal{D}$ such that $\nu (\bigcup\limits_{i\in I} D_i)= 1$ and for each
$i\in I$, $\nu (D_i)> 0$ and if $D_i'\subseteq D_i$ is measurable then $\nu (D_i')$ is either $0$ or
$\nu (D_i)$.

We will also need the following result in Part \ref{skdj}.

\begin{prop} \label{0811192316}
Let $(Y, \mathcal{D}, \nu, G)$ be an MDS and $\mathcal{C}\subseteq \mathcal{D}$ a $G$-invariant sub-$\sigma$-algebra. Assume that $(Y, \mathcal{D}, \nu)$ is a Lebesgue space and
$\nu= \int_Y \nu_y d \nu (y)$ is the disintegration of $\nu$ over
$\mathcal{C}$. If $\nu_y$ is purely atomic for $\nu$-a.e. $y\in Y$ then
$h_\nu (G, Y| \mathcal{C})= 0$.
 Conversely, if $h_\nu (G, Y| \mathcal{C})> 0$ then there is $A\in \mathcal{D}$ such that $\nu (A)> 0$ and $\nu_y$
is not purely atomic for each $y\in A$.
\end{prop}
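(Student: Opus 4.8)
The plan is to prove only the first assertion, since the ``converse'' is just its contrapositive: the negation of ``$\nu_y$ is purely atomic for $\nu$-a.e.\ $y$'' is precisely the existence of $A\in\mathcal{D}$ with $\nu(A)>0$ on which $\nu_y$ is not purely atomic. So assume $\nu_y$ is purely atomic for $\nu$-a.e.\ $y$; I must show $h_\nu(G,Y\mid\mathcal{C})=0$. First I would set up a countable section decomposition of the fibres of $\mathcal{C}$. Realise $(Y,\mathcal{D},\nu)$ and the factor $(Z,\mathcal{E},\mu)$ determined by $\mathcal{C}$ as standard Borel spaces with a $G$-equivariant Borel factor map $\pi\colon Y\to Z$, so that $\nu_z$ is supported on $\pi^{-1}(z)$ for a.e.\ $z$. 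Discarding a $\nu$-null set, let $Y'$ be the union of the atoms of the disintegration; then $\nu_z(Y')=1$ and $\pi^{-1}(z)\cap Y'$ is countable for a.e.\ $z$. By the Lusin--Novikov uniformization theorem the set $\{(z,y):y\in\pi^{-1}(z)\cap Y'\}$ splits into countably many disjoint Borel graphs of partial maps $s_n\colon D_n\to Y$ with $\pi\circ s_n=\mathrm{id}$; putting $B_n=s_n(D_n)$ gives a countable partition $\eta=\{B_n:n\in\mathbb{N}\}$ of $Y$ (mod $\nu$) with $\pi^{-1}(z)\cap Y'=\{s_n(z):z\in D_n\}$. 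Since $\eta$ separates points inside each fibre while $\mathcal{C}$ separates fibres, $\eta\vee\mathcal{C}=\mathcal{D}$ (mod $\nu$), so the finite partitions $\eta^{(N)}=\{B_1,\dots,B_N,\bigcup_{n>N}B_n\}\in\mathbf{P}_Y$ satisfy $\eta^{(N)}\vee\mathcal{C}\nearrow\mathcal{D}$.

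The key estimate is that $h_\nu(G,\eta^{(N)}\mid\mathcal{C})=0$ for every $N$. Fix $F\in\mathcal{F}_G$ and a.e.\ $z$. For each $g\in F$, invertibility of $g$ yields an injection $n\mapsto\sigma_g(z,n)$ defined by $g\cdot s_n(z)=s_{\sigma_g(z,n)}(gz)$; hence $s_n(z)\in g^{-1}B_j\Leftrightarrow\sigma_g(z,n)=j$, and $s_n(z)\in g^{-1}\!\big(\bigcup_{m>N}B_m\big)\Leftrightarrow\sigma_g(z,n)>N$. Consequently the atom of $\eta^{(N)}_F=\bigvee_{g\in F}g^{-1}\eta^{(N)}$ containing $s_n(z)$ is determined by the pattern $\big(\min\{\sigma_g(z,n),N+1\}\big)_{g\in F}$. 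All indices $n$ with $\sigma_g(z,n)>N$ for every $g\in F$ fall in one common atom; every other index has $\sigma_g(z,n)\le N$ for some $g\in F$, and for each fixed $g$ there are at most $N$ such $n$ (injectivity of $\sigma_g(z,\cdot)$), hence at most $N|F|$ in all. Thus $\eta^{(N)}_F$ has at most $N|F|+1$ atoms meeting $\pi^{-1}(z)\cap Y'$, so $H_{\nu_z}(\eta^{(N)}_F)\le\log(N|F|+1)$; integrating over $z$ and using \eqref{1006281842} gives $H_\nu(\eta^{(N)}_F\mid\mathcal{C})\le\log(N|F|+1)$. Dividing by $|F_n|$ and letting $n\to\infty$ (so $|F_n|\to\infty$) yields $h_\nu(G,\eta^{(N)}\mid\mathcal{C})=0$.

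To finish, let $\alpha\in\mathbf{P}_Y$ and $\delta>0$. Because $\eta^{(N)}\vee\mathcal{C}\nearrow\mathcal{D}$, the conditional entropies $H_\nu(\alpha\mid\eta^{(N)}\vee\mathcal{C})$ decrease to $H_\nu(\alpha\mid\mathcal{D})=0$, so fix $N$ with $H_\nu(\alpha\mid\eta^{(N)}\vee\mathcal{C})<\delta$. Then, using monotonicity (Proposition \ref{0911192237}) and the relative Pinsker formula (Theorem \ref{1007031233}),
\begin{equation*}
h_\nu(G,\alpha\mid\mathcal{C})\le h_\nu\big(G,\alpha\vee\eta^{(N)}\mid\mathcal{C}\big)=h_\nu\big(G,\eta^{(N)}\mid\mathcal{C}\big)+h_\nu\big(G,\alpha\mid(\eta^{(N)})_G\vee\mathcal{C}\big);
\end{equation*}
the first summand vanishes by the key estimate, while by \eqref{1006272232} together with $(\eta^{(N)})_G\vee\mathcal{C}\supseteq\eta^{(N)}\vee\mathcal{C}$ the second is at most $H_\nu(\alpha\mid(\eta^{(N)})_G\vee\mathcal{C})\le H_\nu(\alpha\mid\eta^{(N)}\vee\mathcal{C})<\delta$. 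Since $\delta$ and $\alpha$ are arbitrary, $h_\nu(G,Y\mid\mathcal{C})=\sup_{\alpha\in\mathbf{P}_Y}h_\nu(G,\alpha\mid\mathcal{C})=0$. (One may equally avoid Theorem \ref{1007031233} by estimating $\tfrac{1}{|F_n|}H_\nu\big((\alpha\vee\eta^{(N)})_{F_n}\mid\mathcal{C}\big)$ directly, via sub-additivity of $H_\nu(\cdot\mid\mathcal{C})$ and $G$-invariance of $\nu$ and $\mathcal{C}$.)

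The main obstacle is the atom count in the key estimate: pure atomicity enters exactly through the countability of the fibres together with the injectivity of the $G$-action on each fibre, which is what forces the number of fibrewise atoms of $\eta^{(N)}_F$ to grow only linearly — rather than exponentially — in $|F|$. The other non-formal ingredient is producing the Borel section decomposition of the fibres (Lusin--Novikov); everything else is routine conditional-entropy bookkeeping and F\o lner averaging.
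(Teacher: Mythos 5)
Your argument is correct, but it follows a genuinely different route from the paper's. The paper never constructs a fibrewise generator: it proves the elementary Lemma \ref{0805282212} (for a purely atomic measure, $\tfrac1n H_\mu(\bigvee_{j=1}^n\alpha_j)\to 0$ when the $\alpha_j$ have uniformly bounded cardinality), applies it fibrewise to the partitions $g^{-1}\alpha$, $g\in F_n$, and then uses $H_\nu(\alpha_{F_n}\mid\mathcal{C})=\int_Y H_{\nu_y}(\alpha_{F_n})\,d\nu(y)$ together with the bounded convergence theorem to get $h_\nu(G,\alpha\mid\mathcal{C})\le\int_Y\limsup_n\tfrac1{|F_n|}H_{\nu_y}(\alpha_{F_n})\,d\nu(y)=0$; the converse then falls out of the same integral formula, with the measurable set $A=\{y:\limsup_n\tfrac1{|F_n|}H_{\nu_y}(\alpha_{F_n})>0\}$. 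That approach needs no Borel model, no Lusin--Novikov uniformization and no Pinsker formula. What your route buys is a more structural statement: an explicit countable partition $\eta$ refining to points over $\mathcal{C}$ whose truncations satisfy $h_\nu(G,\eta^{(N)}\mid\mathcal{C})=0$ because the fibrewise atom count of $\eta^{(N)}_F$ grows only linearly in $|F|$ --- a fact of independent interest --- at the price of heavier machinery. Two small points you should patch: (i) for $\sigma_g(z,\cdot)$ to be well defined you need $g\cdot s_n(z)\in Y'$, i.e.\ $G$-invariance of $Y'$ mod $\nu$; this follows from the equivariance $g\nu_z=\nu_{gz}$ (a.e., for each of the countably many $g$), which is a consequence of uniqueness of the disintegration and should be said; (ii) reducing the converse to the contrapositive tacitly uses that $\{y:\nu_y\ \text{is purely atomic}\}$ is measurable (true in a Lebesgue space, since $y\mapsto$ the atomic mass of $\nu_y$ is measurable, but it deserves a line --- the paper's formulation of the converse avoids this). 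Finally, you do not need the relative Pinsker formula at all: Proposition \ref{0911192237}\eqref{1007041247} already gives $h_\nu(G,\alpha\mid\mathcal{C})\le h_\nu(G,\alpha\vee\eta^{(N)}\mid\mathcal{C})\le h_\nu(G,\eta^{(N)}\mid\mathcal{C})+H_\nu(\alpha\mid\mathcal{C}\vee\eta^{(N)})<\delta$, which is the cleaner version of the alternative you sketch in parentheses.
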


The case where $\nu$ is ergodic in Proposition \ref{0811192316} is well known (see \cite[Theorem 4.1.15]{Dbook} for a stronger version), and then it is not hard to obtain Proposition \ref{0811192316} in the general case by applying ergodic decomposition.

\section{Continuous bundle random dynamical systems} \label{third}

In this section we define and establish basic properties of a continuous bundle random dynamical system associated to an infinite countable discrete amenable group action, given some known results for the special case of $\Z$ from \cite{Bog1, K1, KL1, Liu}.

\medskip

For convenience we restrict our setting as follows.

\medskip

\medskip

{\noindent \bf Standard Assumption 3. \it
Let $(\Omega, \mathcal{F}, \mathbb{P}, G)$ denote an MDS, where $(\Omega, \mathcal{F}, \mathbb{P})$ is a Lebesgue space. In particular, $(\Omega, \mathcal{F}, \mathbb{P})$ is complete and countably separated.}

\medskip

\medskip

Now let $(X, \mathcal{B})$ be a measurable space and $\mathcal{E}\in \mathcal{F}\times
\mathcal{B}$. Then $(\mathcal{E}, (\mathcal{F}\times
\mathcal{B})_\mathcal{E})$ forms naturally a measurable space with $(\mathcal{F}\times
\mathcal{B})_\mathcal{E}$ the $\sigma$-algebra of $\mathcal{E}$ given by restricting $\mathcal{F}\times \mathcal{B}$ over $\mathcal{E}$. Set $\mathcal{E}_\omega= \{x\in X: (\omega, x)\in \mathcal{E}\}$ for each $\omega\in \Omega$. A \emph{bundle random dynamical system} or \emph{random dynamical system} (RDS) \emph{associated to $(\Omega,
\mathcal{F}, \mathbb{P}, G)$} is a family $\mathbf{F}= \{F_{g, \omega}: \mathcal{E}_\omega\rightarrow \mathcal{E}_{g \omega}| g\in G, \omega\in \Omega\}$ satisfying:
\begin{enumerate}

\item for each $\omega\in \Omega$, the transformation $F_{e_G, \omega}$ is  the identity over $\mathcal{E}_\omega$,

\item for each $g\in G$, the map $(\mathcal{E}, (\mathcal{F}\times
\mathcal{B})_\mathcal{E})\rightarrow (X, \mathcal{B})$, given by $(\omega, x)\mapsto F_{g, \omega} (x)$, is measurable and

\item for each $\omega\in \Omega$ and all $g_1, g_2\in G$, $F_{g_2, g_1 \omega}\circ F_{g_1, \omega}= F_{g_2 g_1, \omega}$ (and so $F_{g^{- 1}, \omega}= (F_{g, g^{- 1} \omega})^{- 1}$ for each $g\in G$).
\end{enumerate}
In this case, $G$ has a natural measurable action on $\mathcal{E}$ with $(\omega, x)\rightarrow (g \omega, F_{g, \omega} x)$ for each $g\in G$, called the corresponding \emph{skew product transformation}.

Let the family $\mathbf{F}= \{F_{g, \omega}: \mathcal{E}_\omega\rightarrow \mathcal{E}_{g \omega}| g\in G, \omega\in \Omega\}$ be an RDS over $(\Omega,
\mathcal{F}, \mathbb{P}, G)$, where $X$ is a compact metric space with metric $d$ and equipped with the Borel $\sigma$-algebra $\mathcal{B}_X$. If for $\mathbb{P}$-a.e. $\omega\in \Omega$, $\emptyset\neq
\mathcal{E}_\omega\subseteq X$ is a compact subset and $F_{g, \omega}$ is
a continuous map for each $g\in G$ (and so $F_{g, \omega}: \mathcal{E}_\omega\rightarrow \mathcal{E}_{g \omega}$ is a homeomorphism for $\mathbb{P}$-a.e. $\omega\in \Omega$ and each $g\in G$), then it is called a \emph{continuous bundle RDS}.

By \cite[Chapter III]{CV}, the mapping
$\omega\mapsto \mathcal{E}_\omega$ is measurable with respect to
the Borel $\sigma$-algebra induced by the Hausdorff
topology on the hyperspace $2^X$ of all non-empty compact subsets of $X$, and
the distance function $d (x, \mathcal{E}_\omega)$ is measurable in
$\omega\in \Omega$ for each $x\in X$.

\medskip

These concepts generalize the classical concepts of dynamical systems as follows. We say that $(\Omega, \mathcal{F}, \mathbb{P}, G)$ is a \emph{trivial MDS}, if $\Omega$ is a singleton.

For a measurable space $(X, \mathcal{B})$, a bundle RDS associated to a trivial MDS is one where the group $G$ acts on a measurable subset $E\in \mathcal{B}$: that is, there exists a family of invertible measurable transformations $\{F_g: E\rightarrow E| g\in G\}$ such that $F_{g_2}\circ F_{g_1}= F_{g_2 g_1}$ for all $g_1, g_2\in G$ and $F_{e_G}$ acts as the identity over $E$.

For a compact metric space $X$, a continuous bundle RDS associated to a trivial MDS means that there is a \emph{topological $G$-action} $(K, G)$ for some non-empty compact subset $K\subseteq X$, that is, the group $G$ acts on $K$ in the sense that there exists a family of homeomorphisms $\{F_g: g\in G\}$ of $K$ such that   $F_{g_2}\circ F_{g_1}= F_{g_2 g_1}$ for all $g_1, g_2\in G$ and $F_{e_G}$ acts as the identity transformation over $K$. The pair $(K, G)$ is also called a \emph{topological dynamical $G$-system} (TDS).

\medskip

Among interesting examples of continuous bundle RDS's are random
sub-shifts.

In the case where $G= \Z$, these are treated in detail in \cite{BG, KK, K1}. We present a brief recall of some of their properties.

Let $(\Omega, \mathcal{F}, \mathbb{P})$ be a Lebesgue space and $\vartheta: (\Omega, \mathcal{F}, \mathbb{P})\rightarrow (\Omega, \mathcal{F}, \mathbb{P})$ an invertible measure-preserving transformation. Set $X=\{(x_i: i\in \Z): x_i\in \N\cup \{+ \infty\}, i\in \Z\}$, a compact metric space equipped with the metric
\begin{equation*}
d ((x_i: i\in \Z), (y_i: i\in \Z))= \sum_{i\in \Z} \frac{1}{2^{|i|}} |x_{i}^{-
1}- y_{i}^{- 1}|,
\end{equation*}
and let $F: X\rightarrow X$ be the translation $(x_i: i\in \Z)\mapsto (x_{i+ 1}: i\in \Z)$. Then the integer group $\Z$ acts on $(\Omega\times X, \mathcal{F}\times \mathcal{B}_X)$ measurably with $(\omega, x)\mapsto (\vartheta^i \omega, F^i x)$ for each $i\in \Z$, where $\mathcal{B}_X$ denotes the Borel $\sigma$-algebra of the space $X$.
Now let $\mathcal{E}\in \mathcal{F}\times \mathcal{B}_X$ be an invariant subset of $\Omega\times X$ (under the $\Z$-action) such that $\emptyset\neq \mathcal{E}_\omega\subseteq X$ is compact for
$\mathbb{P}$-a.e. $\omega\in \Omega$.
This defines a continuous bundle RDS where, for $\mathbb{P}$-a.e. $\omega\in \Omega$, $F_{i, \omega}$ is just the restriction of $F^i$ over $\mathcal{E}_\omega$ for each $i\in \Z$.

A very special case is when the subset $\mathcal{E}$ is given as follows. Let $k$ be a random
$\N$-valued random variable satisfying
\begin{equation*}
0< \int_\Omega \log k (\omega) d \mathbb{P} (\omega)< + \infty,
\end{equation*}
and, for $\mathbb{P}$-a.e. $\omega\in \Omega$, and
 let $M (\omega)$ be a random matrix $(m_{i, j} (\omega): i= 1, \cdots, k
(\omega), j$ $= 1, \cdots, k (\vartheta \omega))$ with entries $0$ and
$1$. Then the random variable $k$ and the random matrix $M$ generate a random sub-shift of finite type, where
\begin{equation*}
\mathcal{E}= \{(\omega, (x_i: i\in \mathbb{Z})): \omega\in \Omega, 1\le x_i\le k (\vartheta^i
\omega), m_{x_i, x_{i+ 1}} (\vartheta^i \omega)= 1, i\in \mathbb{Z}\}.
\end{equation*}
It is not hard to see that this is a continuous bundle RDS.

\medskip

There are many other interesting examples of continuous bundle RDS's coming from smooth ergodic theory, see for example \cite{LL, Liu2}, where one considers not only the action of $\Z$ on a compact metric state space but also $\Z_+$ on a Polish state space. (Recall that a \emph{Polish space} is a complete separable metric space.)

 Let $M$ be a $C^\infty$ compact connected Riemannian manifold without boundary and $C^r (M, M), r\in \Z_+\cup \{+ \infty\}$
the space of all $C^r$ maps from $M$ into itself endowed with the usual $C^r$ topology and the Borel $\sigma$-algebra. As above, $(\Omega, \mathcal{F}, \mathbb{P})$ is a Lebesgue space and $\vartheta: (\Omega, \mathcal{F}, \mathbb{P})\rightarrow (\Omega, \mathcal{F}, \mathbb{P})$ is an invertible measure-preserving transformation.
Now let $F: \Omega\rightarrow C^r (M, M)$ be a measurable map and define the family of the randomly composed maps $F_{n, \omega}, n\in \Z$ or $\Z_+$, $\omega\in \Omega$ as follows:
$$
F_{n, \omega}=
\begin{cases}
F (\vartheta^{n- 1} \omega)\circ \cdots\circ F (\vartheta \omega)\circ F (\omega),& \text{if}\ n> 0\\
id,& \text{if}\ n= 0\\
F (\vartheta^n \omega)^{- 1}\circ \cdots\circ F (\vartheta^{- 2} \omega)^{- 1}\circ F (\vartheta^{- 1} \omega)^{- 1}, & \text{if}\ n< 0
\end{cases}.
$$
Here $F_{n, \omega}, n< 0$ is defined when $F (\omega)\in \text{Diff}^r (M)$ for $\mathbb{P}$-a.e. $\omega\in \Omega$. In the case of $r= 0$ we may replace $M$ with a compact metric space.

\medskip

\medskip

{\noindent \bf Standard Assumption 4. \it
Henceforth, we will fix the family $\mathbf{F}= \{F_{g, \omega}: \mathcal{E}_\omega\rightarrow \mathcal{E}_{g \omega}| g\in G, \omega\in \Omega\}$ to be a continuous bundle RDS over $(\Omega,
\mathcal{F}, \mathbb{P}, G)$ with a compact metric space $(X, d)$ as its state space.}

\medskip

\medskip

As discussed in \S \ref{1006291551}, one can introduce $\mathbf{C}_\mathcal{E}, \mathbf{P}_\mathcal{E}$
and other related notations. Moreover, for $S\subseteq \mathcal{E}$, if for $\mathbb{P}$-a.e. $\omega\in \Omega$ all fibers $S_\omega\subseteq \mathcal{E}_\omega$ are open or closed,
then $S$ is called an \emph{open} or a \emph{closed random set}.
Denote by $\mathbf{C}^o_\mathcal{E}$ the set of all elements from $\mathbf{C}_\mathcal{E}$ consisting of subsets of open random sets.
Similarly, we can introduce $\mathbf{C}_X$, $\mathbf{P}_X$, $\mathbf{C}^o_X$ and other related notations. Moreover, for $\xi\in \mathbf{C}_\Omega$ and $\mathcal{W}\in \mathbf{C}_X$, we introduce the notation
$$(\xi\times \mathcal{W})_\mathcal{E}= \{(C\times W)\cap \mathcal{E}: C\in \xi, W\in \mathcal{W}\}\in \mathbf{C}_\mathcal{E}.$$
In special cases, we will write $(\Omega\times \mathcal{W})_\mathcal{E}= (\{\Omega\}\times \mathcal{W})_\mathcal{E}$ and $(\xi\times X)_\mathcal{E}= (\xi\times \{X\})_\mathcal{E}$.

In the sequel we will need the following result, which is a re-statement of \cite[Theorem III.23]{CV} and \cite[Theorem 1]{EV}.

Recall that by Standard Assumption 3,  $(\Omega, \mathcal{F}, \mathbb{P})$ is a Lebesgue space; in particular, it is a complete probability space, and by Standard Assumption 4, $X$ is a compact metric space. Thus, we can apply \cite[Theorem III.23]{CV} and \cite[Theorem 1]{EV} to $(\Omega, \mathcal{F}, \mathbb{P})$ and $X$, and obtain:

\begin{lem} \label{1007152201}
Let $\pi: \Omega\times X\rightarrow \Omega$ be the natural projection and $A\in \mathcal{F}\times \mathcal{B}_X$. Then
 $\pi (A)\in \mathcal{F}$, and there exists a measurable map $p: (\Omega, \mathcal{F})\rightarrow (X, \mathcal{B}_X)$ such that $(\omega, p (\omega))\in A$ for each $\omega\in \pi (A)$.
\end{lem}

 Denote by $\mathcal{P}_\mathbb{P} (\Omega\times X)$
the space of all probability measures on $\Omega\times X$ having marginal
$\mathbb{P}$ on $\Omega$. Every such a probability measure $\mu$ has the property that $\mu (A\times X)= \mathbb{P} (A)$ for each $A\in \mathcal{F}$. Put
$\mathcal{P}_\mathbb{P} (\mathcal{E})=
\{\mu\in \mathcal{P}_\mathbb{P} (\Omega\times X): \mu (\mathcal{E})= 1\}$. It is well known that $\mathcal{P}_\mathbb{P} (\mathcal{E})\neq \emptyset$ under our standard assumptions.

Let $\mathcal{F}_\mathcal{E}$ be the $\sigma$-algebra of all sets of the form $(A\times X)\cap \mathcal{E}, A\in \mathcal{F}$.
Note that each $\mu\in \mathcal{P}_\mathbb{P} (\mathcal{E})$ can be disintegrated
as
$$d \mu (\omega, x)= d \mu_\omega (x) d \mathbb{P} (\omega),$$
where $\mu_\omega, \omega\in \Omega$ are
regular conditional probability measures with respect to the $\sigma$-algebra
$\mathcal{F}_\mathcal{E}$, that is, for $\mathbb{P}$-a.e. $\omega\in \Omega$, $\mu_\omega$ is a Borel probability measure on $\mathcal{E}_\omega$
 and, for any measurable subset $R\subseteq \mathcal{E}$,
\begin{equation} \label{1006271735}
\mu_\omega (R_\omega)= \mu (R| \mathcal{F}_\mathcal{E}) (\omega)
\end{equation}
 where $R_\omega= \{x\in X: (\omega, x)\in R\}$. It follows that
\begin{equation*} \label{1006271736}
\mu (R)=
\int_\Omega \mu_\omega (R_\omega) d \mathbb{P} (\omega).
\end{equation*}
Since $X$ is a compact metric space, such a disintegration of $\mu$ exists (\cite[Proposition 10.2.8]{Du}).

Now let $\mu\in \mathcal{P}_\mathbb{P} (\mathcal{E})$ with disintegration $d \mu (\omega, x)= d \mu_\omega (x) d \mathbb{P} (\omega)$ as above. Assume that $\alpha\in \mathbf{P}_\mathcal{E}$ and $\mathcal{U}\in \mathbf{C}_\mathcal{E}$.
Then
\begin{eqnarray}
\hskip 16pt H_\mu (\alpha| \mathcal{F}_\mathcal{E})&= & -\int_\Omega \sum_{A\in \alpha}
\mu (A| \mathcal{F}_\mathcal{E}) (\omega)\log \mu (A|
\mathcal{F}_\mathcal{E}) (\omega) d \mathbb{P} (\omega)\nonumber \label{1006291626} \\
&= & \int_\Omega H_{\mu_\omega}
 (\alpha_\omega) d \mathbb{P} (\omega)\ (\text{using \eqref{1006271735}}), \label{0906282007}
\end{eqnarray}
here, $\alpha_\omega= \{A_\omega: A\in \alpha\}$ is a partition of
$\mathcal{E}_\omega$. In fact, by Lemma \ref{1006291630} we have
\begin{equation} \label{1007041151}
H_\mu (\mathcal{U}| \mathcal{F}_\mathcal{E})= \int_\Omega H_{\mu_\omega} (\mathcal{U}_\omega) d \mathbb{P} (\omega).
\end{equation}
Note that for each $F\in \mathcal{F}_G$ and for any $\omega\in \Omega$, one has
\begin{equation} \label{1006131722}
(\mathcal{U}_F)_\omega= \bigvee_{g\in F} (g^{- 1} \mathcal{U})_\omega= \bigvee_{g\in F} (F_{g, \omega})^{- 1} \mathcal{U}_{g \omega}= \bigvee_{g\in F} F_{g^{- 1}, g \omega} \mathcal{U}_{g \omega},
\end{equation}
and so, in view of \eqref{1007041151},
\begin{equation} \label{0906282008}
H_\mu (\mathcal{U}_F|
\mathcal{F}_\mathcal{E})= \int_\Omega H_{\mu_\omega} \left(\bigvee_{g\in F} F_{g^{- 1}, g \omega} \mathcal{U}_{g \omega}\right) d \mathbb{P} (\omega).
\end{equation}
Moreover, for any $\omega\in \Omega$, denote by $N (\mathcal{U}, \omega)$ the minimal cardinality of a sub-family of $\mathcal{U}_\omega$ covering $\mathcal{E}_\omega$ (equivalently, the minimal cardinality of a sub-family of $\mathcal{U}$ covering $\mathcal{E}_\omega$), it is easy to check $H_{\mu_\omega} (\mathcal{U}_\omega)\le \log N (\mathcal{U}, \omega)$.

Then we have:

\begin{prop} \label{1101311055}
Let $\mathcal{U}\in \mathbf{C}_\mathcal{E}$. Then $N (\mathcal{U}, \omega)$ is measurable in $\omega\in \Omega$, and
\begin{equation} \label{1007121506}
H_\mu (\mathcal{U}| \mathcal{F}_\mathcal{E})\le \int_\Omega \log N (\mathcal{U}, \omega) d \mathbb{P} (\omega).
\end{equation}
\end{prop}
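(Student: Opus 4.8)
The plan is to prove the two assertions in turn: first the measurability of $\omega\mapsto N(\mathcal U,\omega)$, and then, using it together with the identity \eqref{1007041151} and a routine fiberwise estimate, the inequality \eqref{1007121506}.

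Write $\mathcal U=\{U_1,\dots,U_k\}$; since $\mathcal E\in\mathcal F\times\mathcal B_X$, each $U_i$ lies in $\mathcal F\times\mathcal B_X$. For $I\subseteq\{1,\dots,k\}$ set $A_I=\{\omega\in\Omega:\mathcal E_\omega\subseteq\bigcup_{i\in I}(U_i)_\omega\}$. As $\mathcal U$ covers $\mathcal E$ we have $\bigcup_{i=1}^k(U_i)_\omega=\mathcal E_\omega$ for \emph{every} $\omega$, so $A_{\{1,\dots,k\}}=\Omega$ and
\[
N(\mathcal U,\omega)=\min\{|I|:\omega\in A_I\}\le k ,
\]
whence $\{\omega:N(\mathcal U,\omega)\le m\}=\bigcup_{|I|\le m}A_I$ for each $m\in\{1,\dots,k\}$; thus it suffices to show $A_I\in\mathcal F$ for every $I$. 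Put $R_I=\mathcal E\setminus\bigcup_{i\in I}U_i\in\mathcal F\times\mathcal B_X$; then $(R_I)_\omega=\mathcal E_\omega\setminus\bigcup_{i\in I}(U_i)_\omega$, so $\omega\notin A_I$ exactly when $(R_I)_\omega\neq\emptyset$, i.e.\ when $\omega$ lies in the projection of $R_I$ onto $\Omega$. Since $X$ is a compact metric space, hence Polish, Lemma \ref{1007151422} shows that this projection belongs to $\mathcal F$; therefore $A_I\in\mathcal F$. Hence $N(\mathcal U,\cdot)$ is measurable, and since $1\le N(\mathcal U,\omega)\le|\mathcal U|$ for $\mathbb P$-a.e.\ $\omega$, the function $\omega\mapsto\log N(\mathcal U,\omega)$ is bounded, non-negative and measurable, so the right-hand side of \eqref{1007121506} is a well-defined finite number.

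For the inequality, recall the fiberwise bound $H_{\mu_\omega}(\mathcal U_\omega)\le\log N(\mathcal U,\omega)$ noted just before the statement: given $\omega$, pick $I$ with $|I|=N(\mathcal U,\omega)$ and $\bigcup_{i\in I}(U_i)_\omega=\mathcal E_\omega$, and let $\alpha\in\mathbf P(\mathcal U_\omega)$ be the partition of $\mathcal E_\omega$ obtained by making the subfamily $\{(U_i)_\omega:i\in I\}$ disjoint, so $|\alpha|\le|I|$; then \eqref{1006291640} gives
\[
H_{\mu_\omega}(\mathcal U_\omega)\le H_{\mu_\omega}(\alpha)\le\log|\alpha|\le\log N(\mathcal U,\omega).
\]
Integrating this inequality over $\Omega$ with respect to $\mathbb P$ and invoking \eqref{1007041151} yields
\[
H_\mu(\mathcal U|\mathcal F_\mathcal E)=\int_\Omega H_{\mu_\omega}(\mathcal U_\omega)\,d\mathbb P(\omega)\le\int_\Omega\log N(\mathcal U,\omega)\,d\mathbb P(\omega),
\]
which is \eqref{1007121506}.

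The only point requiring any care is the measurability of the sets $A_I$, and this reduces entirely to the assertion that the $\Omega$-projection of a member of $\mathcal F\times\mathcal B_X$ is again in $\mathcal F$, which is exactly Lemma \ref{1007151422} (applicable here as $(\Omega,\mathcal F,\mathbb P)$ is complete and $X$ is Polish). The remaining ingredients — the elementary fiberwise entropy estimate and the passage from $H_\mu(\mathcal U|\mathcal F_\mathcal E)$ to the fiber integral — are either immediate or already recorded in \eqref{1007041151}.
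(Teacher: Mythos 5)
Your proof is correct and follows essentially the same route as the paper: measurability of $\omega\mapsto N(\mathcal U,\omega)$ is reduced to the measurability of projections $\pi(\mathcal E\setminus\bigcup_{i\in I}U_i)$ via Lemma \ref{1007151422}, and the inequality then comes from integrating the fiberwise bound $H_{\mu_\omega}(\mathcal U_\omega)\le\log N(\mathcal U,\omega)$ using \eqref{1007041151}. You merely spell out the fiberwise estimate that the paper records as immediate just before the statement, so there is nothing to correct.
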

\begin{proof}
We will call $\pi: \mathcal{E}\rightarrow \Omega$ the \emph{natural projection}.

Let $n\in \N$. Then $N (\mathcal{U}, \omega)\le n$ if and only if there exists $U_1, \cdots, U_n$ from $\mathcal{U}$ such that $\mathcal{E}_\omega\subseteq \bigcup\limits_{i= 1}^n U_i$. Equivalently, $\omega\notin \pi (\mathcal{E}\setminus \bigcup\limits_{i= 1}^n U_i)$. Observe that for given $U_1, \cdots, U_n$ the subset $\pi (\mathcal{E}\setminus \bigcup\limits_{i= 1}^n U_i)$ is measurable from Lemma \ref{1007152201}. From this it is easy to see that $N (\mathcal{U}, \omega)$ is measurable in $\omega\in \Omega$, and hence we obtain the inequality \eqref{1007121506}.
\end{proof}

Recall that $\mathcal{F}_\mathcal{E}= \{(A\times X)\cap \mathcal{E}: A\in \mathcal{F}\}$, in particular, $\mathcal{F}_\mathcal{E}\subseteq (\mathcal{F}\times \mathcal{B}_X)\cap \mathcal{E}$ is a $G$-invariant sub-$\sigma$-algebra with respect to the skew product transformation $(\mathcal{E}, (\mathcal{F}\times \mathcal{B}_X)\cap \mathcal{E}, G)$.
 It is not hard to check that, for
 $\mu\in \mathcal{P}_\mathbb{P} (\mathcal{E})$ with $d \mu (\omega, x)= d \mu_\omega (x) d \mathbb{P} (\omega)$ as its disintegration, $\mu$ is $G$-invariant
 if and only if $F_{g, \omega} \mu_\omega= \mu_{g
\omega}$ for $\mathbb{P}$-a.e. $\omega\in \Omega$ and each $g\in G$, here $F_{g, \omega} \mu_\omega (\bullet)$ is given by $\mu_\omega (F_{g, \omega}^{- 1} \bullet)$.

 Denote by $\mathcal{P}_\mathbb{P}
(\mathcal{E}, G)$ the set of all $G$-invariant elements from $\mathcal{P}_\mathbb{P}
(\mathcal{E})$. Observe that if $\mu\in \mathcal{P}_\mathbb{P} (\mathcal{E}, G)$ is ergodic then $(\Omega, \mathcal{F}, \mathbb{P}, G)$ is also ergodic. Conversely, if $(\Omega, \mathcal{F}, \mathbb{P}, G)$ is not ergodic then each element from $\mathcal{P}_\mathbb{P} (\mathcal{E}, G)$ is also not ergodic.

Note that, as $G$ is amenable, any topological $G$-action admits a $G$-invariant Borel probability measure on the compact metric state space. Hence by the observation made at the beginning of this section, one has $\mathcal{P}_\mathbb{P} (\mathcal{E}, G)\neq \emptyset$ when $(\Omega, \mathcal{F}, \mathbb{P}, G)$ is a trivial MDS. In fact, $\mathcal{P}_\mathbb{P} (\mathcal{E}, G)\neq \emptyset$ still holds even if the MDS $(\Omega, \mathcal{F}, \mathbb{P}, G)$ is non-trivial. The argument may be made as follows.

Remark that by Standard Assumptions 3 and 4, $(\Omega, \mathcal{F}, \mathbb{P}, G)$ is an MDS with $(\Omega, \mathcal{F}, \mathbb{P})$ a Lebesgue space, $X$ is a compact metric space, and $\mathcal{E}\in \mathcal{F}\times \mathcal{B}_X$ satisfies that $\emptyset\neq
\mathcal{E}_\omega\subseteq X$ is a compact subset for $\mathbb{P}$-a.e. $\omega\in \Omega$.
For each real-valued function
 $f$ on $\mathcal{E}$ which is measurable in $(\omega, x)\in \mathcal{E}$
 and continuous
in $x\in \mathcal{E}_\omega$ (for each fixed $\omega\in \Omega$), we set
 \begin{equation*}
 ||f||_1= \int_\Omega ||f (\omega)||_\infty d \mathbb{P} (\omega),\ \text{where}\
 ||f (\omega)||_\infty= \sup_{x\in \mathcal{E}_\omega} | f (\omega, x)|.
 \end{equation*}
Denote by $\mathbf{L}_\mathcal{E}^1 (\Omega, C (X))$ the space of
all such functions with $||f||_1< + \infty$, where we will identify two such functions $f$ and $g$ provided $||f- g||_1= 0$.
It is easy to check that $(\mathbf{L}_\mathcal{E}^1 (\Omega, C (X)), ||\bullet||_1)$
 becomes a Banach space.

 As we will see, the role of $\mathbf{L}_\mathcal{E}^1 (\Omega, C (X))$ in the set-up of a continuous bundle RDS is just that of $C (Y)$, is played by the set of all real-valued continuous functions over $Y$, when we consider a topological $G$-action $(Y, G)$.

We will introduce the weak* topology in $\mathcal{P}_\mathbb{P} (\mathcal{E})$ as follows. Let $\mu, \mu_n\in \mathcal{P}_\mathbb{P} (\mathcal{E}), n\in \mathbb{N}$. We will say that the sequence $\{\mu_n: n\in \mathbb{N}\}$ converges to $\mu$ in $\mathcal{P}_\mathbb{P} (\mathcal{E})$ if the sequence $\{\int_\mathcal{E} f d \mu_n: n\in \mathbb{N}\}$ converges to $\int_\mathcal{E} f d \mu$ for each $f\in \mathbf{L}_\mathcal{E}^1 (\Omega, C (X))$ (obviously, $\int_\mathcal{E} f d \mu_n$ and $\int_\mathcal{E} f d \mu$ are well defined from the above definitions).

It is well known that $\mathcal{P}_\mathbb{P} (\mathcal{E})$ is a non-empty compact space in the weak* topology, see for example \cite[Lemma 2.1 (i)]{K1}. Remark again that $(\Omega, \mathcal{F}, \mathbb{P})$ is a Lebesgue space by Standard Assumption 3, and so additionally, by \cite[Theorem 5.6]{CR}, one sees that $\mathcal{P}_\mathbb{P} (\mathcal{E})$ is also a metric space.

Recall that a non-empty subset of a topological space is \emph{clopen} if it is simultaneously open and closed.

The following result (cf \cite[Lemma 2.1]{K1}) will be useful in the proof of Theorem \ref{1007141414}. Observe that Proposition \ref{1007192023}
\eqref{1208011203} follows directly from the aforementioned compactness of the space $\mathcal{P}_\mathbb{P} (\mathcal{E})$.

\begin{prop} \label{1007192023}
Let $\mathcal{P}_\mathbb{P} (\mathcal{E})$ be equipped with the  weak* topology introduced above.
\begin{enumerate}

\item \label{1208011203} Let $\{\nu_n: n\in \mathbb{N}\}\subseteq \mathcal{P}_\mathbb{P} (\mathcal{E})$. The set of limit points of the sequence $$\{\mu_n\doteq \frac{1}{|F_n|} \sum\limits_{g\in F_n} g \nu_n: n\in \mathbb{N}\}$$ is non-empty and is contained in $\mathcal{P}_\mathbb{P} (\mathcal{E}, G)$.

\item \label{1207301347} Let $\{\mu_n: n\in \mathbb{N}\}$ be a sequence in $\mathcal{P}_\mathbb{P} (\mathcal{E})$ converging to $\mu\in \mathcal{P}_\mathbb{P} (\mathcal{E})$ with $d \mu (\omega, x)= d \mu_\omega (x) d \mathbb{P} (\omega)$ the disintegration of $\mu$ over $\mathcal{F}_\mathcal{E}$. If $\alpha\in \mathbf{P}_\mathcal{E}$ satisfies that $\alpha_\omega$ is a clopen partition of $\mathcal{E}_\omega$ (i.e. each element of $\alpha_\omega$ is clopen) for $\mathbb{P}$-a.e. $\omega\in \Omega$, then
$$\limsup\limits_{n\rightarrow \infty} H_{\mu_n} (\alpha| \mathcal{F}_\mathcal{E})\le H_\mu (\alpha| \mathcal{F}_\mathcal{E}).$$
\end{enumerate}
\end{prop}

 From now on, the topological space $\mathcal{P}_\mathbb{P} (\mathcal{E})$ (as well as its closed non-empty subspace $\mathcal{P}_\mathbb{P} (\mathcal{E}, G)$) is assumed to be equipped with the weak* topology.

\medskip

Now let $\mu\in \mathcal{P}_\mathbb{P} (\mathcal{E}, G)$ and $\mathcal{U}\in \mathbf{C}_\mathcal{E}$. As $\mathcal{F}_\mathcal{E}\subseteq (\mathcal{F}\times \mathcal{B}_X)\cap \mathcal{E}$ is a $G$-invariant sub-$\sigma$-algebra, we can introduce
 the
 \emph{$\mu$-fiber entropy of $\mathbf{F}$ with respect to $\mathcal{U}$}
 by
 \begin{equation*}
h_\mu^{(r)} (\mathbf{F}, \mathcal{U})= h_\mu (G, \mathcal{U}| \mathcal{F}_\mathcal{E}).
 \end{equation*}
And then we define the \emph{$\mu$-fiber entropy of $\mathbf{F}$} as
\begin{equation*}
h_\mu^{(r)} (\mathbf{F})= \sup_{\alpha\in \mathbf{P}_\mathcal{E}}
h_\mu^{(r)} (\mathbf{F}, \alpha).
\end{equation*}
From the definitions we have immediately $h_\mu^{(r)} (\mathbf{F})= h_\mu (G, \mathcal{E}| \mathcal{F}_\mathcal{E})$.

Recall that $(\Omega, \mathcal{F}, \mathbb{P})$ is a Lebesgue space by Standard Assumption 3, one has that $(\mathcal{E}, (\mathcal{F}\times \mathcal{B}_X)\cap \mathcal{E}, \mu)$ is also a Lebesgue space. Then, using Theorem \ref{1006272300} and Proposition \ref{1006291603}, respectively, we have
\begin{equation} \label{1208011804}
h_\mu^{(r)} (\mathbf{F}, \mathcal{U})= h_{\mu, +} (G, \mathcal{U}| \mathcal{F}_\mathcal{E})
\end{equation}
and
\begin{equation*} \label{1208011805}
h_\mu (G, \mathcal{E})= h_\mu^{(r)} (\mathbf{F})+ h_\mathbb{P} (G, \Omega).
\end{equation*}

The following observation will be used below. Its proof is standard.

\begin{lem} \label{1007261204}
Let $\mu\in \mathcal{P}_\mathbb{P} (\mathcal{E}, G)$ and $\alpha_1, \alpha_2\in \mathbf{P}_\mathcal{E}, \mathcal{U}_1, \mathcal{U}_2\in \mathbf{C}_\mathcal{E}$.
\begin{enumerate}

\item \label{1007261236} If $(\alpha_1)_\omega\succeq (\alpha_2)_\omega$ for $\mathbb{P}$-a.e. $\omega\in \Omega$ then $H_\mu (\alpha_1| \mathcal{F}_\mathcal{E})\ge H_\mu (\alpha_2| \mathcal{F}_\mathcal{E})$ and $h_\mu^{(r)} (\mathbf{F}, \alpha_1)\ge h_\mu^{(r)} (\mathbf{F}, \alpha_2)$.

\item \label{1007261237} If $\alpha\in \mathbf{P}_\mathcal{E}$ and $\mathcal{U}\in \mathbf{C}_\mathcal{E}$ satisfy $\alpha_\omega\succeq \mathcal{U}_\omega$ for $\mathbb{P}$-a.e. $\omega\in \Omega$ then there exists $\alpha'\in \mathbf{P}_\mathcal{E}$ such that $\alpha'\succeq \mathcal{U}$ and $\alpha'_\omega= \alpha_\omega$ for $\mathbb{P}$-a.e. $\omega\in \Omega$, and so $H_\mu (\alpha| \mathcal{F}_\mathcal{E})= H_\mu (\alpha'| \mathcal{F}_\mathcal{E})\ge H_\mu (\mathcal{U}| \mathcal{F}_\mathcal{E})$.

\item \label{1007261239} If $(\mathcal{U}_1)_\omega\succeq (\mathcal{U}_2)_\omega$ for $\mathbb{P}$-a.e. $\omega\in \Omega$ then $h_\mu^{(r)} (\mathbf{F}, \mathcal{U}_1)\ge h_\mu^{(r)} (\mathbf{F}, \mathcal{U}_2)$.
 And so, if $(\mathcal{U}_1)_\omega= (\mathcal{U}_2)_\omega$ for $\mathbb{P}$-a.e. $\omega\in \Omega$ then $h_\mu^{(r)} (\mathbf{F}, \mathcal{U}_1)= h_\mu^{(r)} (\mathbf{F}, \mathcal{U}_2)$.
\end{enumerate}
\end{lem}

As a direct corollary, we have:

\begin{prop} \label{1007241716}
Let $\mu\in \mathcal{P}_\mathbb{P} (\mathcal{E}, G)$.
\begin{enumerate}

\item \label{1007241734}
If $\mathcal{W}\in \mathbf{C}_\Omega$ then $h_{\mu}^{(r)} (\mathbf{F}, (\mathcal{W}\times X)_\mathcal{E})= h_{\mu}^{(r)} (\mathbf{F}, (\{\Omega\}\times X)_\mathcal{E})= 0$.

\item \label{1207261253} If $\xi\in \mathbf{P}_\Omega$ and $\mathcal{V}\in \mathbf{C}_X$ then
\begin{equation*}
\inf_{\beta\in \mathbf{P}_X, \beta\succeq \mathcal{V}} h_\mu^{(r)} (\mathbf{F}, (\Omega\times \beta)_\mathcal{E})\ge h_{\mu}^{(r)} (\mathbf{F}, (\Omega\times \mathcal{V})_\mathcal{E})= h_{\mu}^{(r)} (\mathbf{F}, (\xi\times \mathcal{V})_\mathcal{E}).
\end{equation*}

\item \label{1007241735}
Assume that $\mathcal{U}\in \mathbf{C}_\mathcal{E}$ is in the form of $\mathcal{U}= \{(\Omega_i\times B_i)^c: i= 1, \cdots, n\}, n\in \mathbb{N}\setminus \{1\}$, where $\Omega_i\in \mathcal{F}$ and $B_i\in \mathcal{B}_X$ for each $i= 1, \cdots, n$. If $\mathbb{P} (\bigcap\limits_{i= 1}^n \Omega_i)= 0$ then $h_{\mu}^{(r)} (\mathbf{F}, \mathcal{U})= 0$.
\end{enumerate}
\end{prop}
\begin{proof}
\eqref{1007241734} and \eqref{1207261253} follow directly from Lemma \ref{1007261204}.

Now we check \eqref{1007241735}.
By the assumption, $\mathcal{U}= \{(\Omega_i\times B_i)^c: i= 1, \cdots, n\}\in \mathbf{C}_\mathcal{E}$, where $\Omega_i\in \mathcal{F}$ and $B_i\in \mathcal{B}_X$ for each $i= 1, \cdots, n$, and $\mathbb{P} (\bigcap\limits_{i= 1}^n \Omega_i)= 0$. Obviously, $\mathcal{W}^*\doteq (\{\Omega_1^c, \cdots, \Omega_n^c\}\times X)_\mathcal{E}\in \mathbf{C}_\mathcal{E}$ satisfies $\mathcal{W}^*\succeq \mathcal{U}$ (in the sense of $\mu$). Thus, by \eqref{1007241734},
$0\le h_{\mu}^{(r)} (\mathbf{F}, \mathcal{U})\le h_{\mu}^{(r)} (\mathbf{F}, \mathcal{W}^*)= 0$.
This completes the proof of \eqref{1007241735}.
\end{proof}

We now come to the main result of this section. It will be very important in Part \ref{skdj}.

\begin{thm} \label{1006122212}
Let $\mu\in \mathcal{P}_\mathbb{P} (\mathcal{E}, G)$. Then
\begin{eqnarray*}
h_\mu^{(r)} (\mathbf{F})&= & \sup_{\mathcal{U}\in \mathbf{C}_\mathcal{E}}
h_\mu^{(r)} (\mathbf{F}, \mathcal{U})= \sup_{\mathcal{U}\in \mathbf{C}^o_\mathcal{E}}
h_\mu^{(r)} (\mathbf{F}, \mathcal{U}) \\
&= & \sup_{\mathcal{V}\in \mathbf{C}_X}
h_\mu^{(r)} (\mathbf{F}, (\Omega\times \mathcal{V})_\mathcal{E})= \sup_{\mathcal{V}\in \mathbf{C}^o_X}
h_\mu^{(r)} (\mathbf{F}, (\Omega\times \mathcal{V})_\mathcal{E}).
\end{eqnarray*}
\end{thm}
\begin{proof}
By the definitions, we only need prove
\begin{equation} \label{1006271113}
h_\mu^{(r)} (\mathbf{F})= \sup_{\alpha\in \mathbf{P}_X}
h_\mu^{(r)} (\mathbf{F}, (\Omega\times \alpha)_\mathcal{E})
\end{equation}
and, for each $\beta\in \mathbf{P}_X$,
\begin{equation} \label{1006271227}
h_\mu^{(r)} (\mathbf{F}, (\Omega\times \beta)_\mathcal{E})\le \sup_{\mathcal{V}\in \mathbf{C}^o_X}
h_\mu^{(r)} (\mathbf{F}, (\Omega\times \mathcal{V})_\mathcal{E}).
\end{equation}

 Observe that, for convenience, $\mu$ may be viewed as a probability measure over  $(\Omega\times X, \mathcal{F}\times \mathcal{B}_X)$ and so $(\Omega\times X, \mathcal{F}\times \mathcal{B}_X, \mu, G)$ may be viewed as an MDS defined up to $\mu$-null sets. Let $\mathbf{P}_{\Omega\times X}$ be the set of all finite partitions of $(\Omega\times X, \mathcal{F}\times \mathcal{B}_X, \mu)$.

 \medskip

Let us first prove \eqref{1006271113}.

Let $\gamma\in \mathbf{P}_{\Omega\times X}$.
 Recall that $\mathcal{F}\times \mathcal{B}_X$ is the sub-$\sigma$-algebra generated by $A\times B, A\in \mathcal{F}$ and $B\in \mathcal{B}_X$, then there exist $\xi\in \mathbf{P}_\Omega$ and $\alpha\in \mathbf{P}_X$ such that $H_\mu (\gamma| \xi\times \alpha)$ is sufficiently small, and so by Proposition \ref{0911192237} \eqref{1007041247} we estimate $h_\mu (G, \gamma| \mathcal{F}\times \{X\})$  arbitrarily closely from above by $h_\mu (G, \xi\times \alpha| \mathcal{F}\times \{X\})$,
as $\mathcal{F}\times \{X\}\subseteq \mathcal{F}\times \mathcal{B}_X$ is a $G$-invariant sub-$\sigma$-algebra. We conclude that
\begin{equation} \label{1007041429}
h_\mu (G, \Omega\times X| \mathcal{F}\times \{X\})= \sup_{\xi\in \mathbf{P}_\Omega} \sup_{\alpha\in \mathbf{P}_X} h_\mu (G, \xi\times \alpha| \mathcal{F}\times \{X\}).
\end{equation}

Now $d \mu (\omega, x)= d \mu_\omega (x) d \mathbb{P} (\omega)$, the disintegration of $\mu\in \mathcal{P}_\mathbb{P} (\mathcal{E}, G)$ introduced following Lemma \ref{1007152201}, may also be viewed as the disintegration of $\mu$ over $\mathcal{F}\times \{X\}$. Hence, whenever $\xi\in \mathbf{P}_\Omega, \alpha\in \mathbf{P}_X$, using the argument of \eqref{0906282008}, one has
\begin{eqnarray} \label{1007041440}
h_\mu (G, \xi\times \alpha| \mathcal{F}\times \{X\})&= & \lim_{n\rightarrow \infty} \frac{1}{|F_n|} H_\mu ((\xi\times \alpha)_{F_n}| \mathcal{F}\times \{X\})\nonumber \\
&= & \lim_{n\rightarrow \infty} \frac{1}{|F_n|} \int_\Omega H_{\mu_\omega} \left(\bigvee_{g\in F_n} F_{g^{- 1}, g \omega} \alpha\right) d \mathbb{P} (\omega)\nonumber \\
&= & \lim_{n\rightarrow \infty} \frac{1}{|F_n|} H_\mu (((\Omega\times \alpha)_\mathcal{E})_{F_n}| \mathcal{F}_\mathcal{E})= h_\mu^{(r)} (\mathbf{F}, (\Omega\times \alpha)_\mathcal{E}).
\end{eqnarray}

Furthermore, it is easy to check that
\begin{equation} \label{1007041439}
h_\mu (G, \Omega\times X| \mathcal{F}\times \{X\})= h_\mu^{(r)} (\mathbf{F}).
\end{equation}
Then \eqref{1006271113} follows from \eqref{1007041429}, \eqref{1007041440} and \eqref{1007041439}.

\medskip

Now we turn to the proof of \eqref{1006271227}.

 Recall that as $(\Omega, \mathcal{F}, \mathbb{P})$ is a Lebesgue space by Standard Assumption 3, we can view $(\Omega, \mathcal{F}, \mathbb{P})$ as a Borel subset of the unit interval $[0, 1]$ equipped with a Borel probability measure. Furthermore, by Standard Assumption 4, $X$ is a compact metric space. Thus $\mu$ can be viewed as a Borel probability measure on the compact metric space $[0, 1]\times X$. In particular, it is regular.

Let $\beta\in \mathbf{P}_X, \epsilon> 0$ and say $\beta= \{B_1, \cdots, B_n\}, n\in \mathbb{N}$.
Observe that there exists $\delta> 0$ such that if $\xi= \{C_1, \cdots, C_n\}\in \mathbf{P}_\mathcal{E}$
 satisfies $\sum\limits_{i= 1}^n \mu ((\Omega\times B_i)\cap \mathcal{E}\Delta C_i)< \delta$ then
 $$H_\mu ((\Omega\times \beta)_\mathcal{E}| \xi)+ H_\mu (\xi| (\Omega\times \beta)_\mathcal{E})< \epsilon.$$
 For each $i= 1, \cdots, n$, by the regularity of $\mu$, it is not hard to choose a compact subset $K_i\subseteq B_i$ with $\mu (\Omega\times (B_i\setminus K_i))< \frac{\delta}{n ^2}$. Set
 $$\mathcal{U}= \{K_i\cup U: i= 1, \cdots, n\},\ \text{where}\ U= X\setminus (K_1\cup \cdots\cup K_n).$$
  Then $\mathcal{U}\in \mathbf{C}^o_X$ and $\mu (\Omega\times U)< \frac{\delta}{n}$.

Hence, once $\gamma\in \mathbf{P}_\mathcal{E}$ satisfies $\gamma\succeq (\Omega\times \mathcal{U})_\mathcal{E}$, there exists $\eta= \{A_1, \cdots, A_n\}\in \mathbf{P}_\mathcal{E}$ such that $\gamma\succeq \eta$ and $A_i\subseteq \Omega\times (K_i\cup U)$ for each $i= 1, \cdots, n$. By the choice of $\eta$ one has: $\Omega\times K_i\subseteq A_i$ (up to $\mu$-null sets) and $K_i\subseteq B_i\subseteq K_i\cup U$ for each $i= 1, \cdots, n$, which implies
\begin{equation*}
\sum_{i= 1}^n \mu (A_i\Delta (\Omega\times B_i))< n \mu (\Omega\times U)< \delta,
\end{equation*}
thus, by the choice of $\delta$,
\begin{equation} \label{1207311214}
H_\mu ((\Omega\times \beta)_\mathcal{E}| \gamma)\le H_\mu ((\Omega\times \beta)_\mathcal{E}| \eta)< \epsilon.
\end{equation}
Now, for each $F\in \mathcal{F}_G$, if $\zeta\in \mathbf{P}_\mathcal{E}$ satisfies $\zeta\succeq ((\Omega\times \mathcal{U})_\mathcal{E})_F$.  Thus, for each $g\in F$, $g \zeta\succeq (\Omega\times \mathcal{U})_\mathcal{E}$ and, using \eqref{1207311214},
\begin{equation} \label{1207311215}
H_\mu ((\Omega\times \beta)_\mathcal{E}| g \zeta)< \epsilon.
\end{equation}
It follows that
\begin{eqnarray*}
H_\mu (((\Omega\times \beta)_\mathcal{E})_F| \mathcal{F}_\mathcal{E})&\le & H_\mu (\zeta| \mathcal{F}_\mathcal{E})+ H_\mu (((\Omega\times \beta)_\mathcal{E})_F| \zeta) \\
&\le & H_\mu (\zeta| \mathcal{F}_\mathcal{E})+ \sum_{g\in F} H_\mu ((\Omega\times \beta)_\mathcal{E}| g \zeta) \\
&< & H_\mu (\zeta| \mathcal{F}_\mathcal{E})+ |F| \epsilon\ (\text{using \eqref{1207311215}}),
\end{eqnarray*}
which implies
$$H_\mu (((\Omega\times \beta)_\mathcal{E})_F| \mathcal{F}_\mathcal{E})\le H_\mu (((\Omega\times \mathcal{U})_\mathcal{E})_F| \mathcal{F}_\mathcal{E})+ |F| \epsilon.$$

Lastly, for each $m\in \mathbb{N}$, substituting $F$ by $F_m$, dividing both sides by $|F_m|$ and letting $m$ tend to infinity, we obtain
$$h_\mu^{(r)} (\mathbf{F}, (\Omega\times \beta)_\mathcal{E})\le h_\mu^{(r)} (\mathbf{F}, (\Omega\times \mathcal{U})_\mathcal{E})+ \epsilon,$$ from which \eqref{1006271227} follows easily. This completes the proof.
\end{proof}

\newpage

\part{ A Local Variational Principle for Fiber Topological Pressure} \label{skdj}

In this part we present and prove our main results. More precisely,
given the continuous bundle random dynamical system associated to an infinite countable discrete amenable group action and a monotone sub-additive invariant family of random continuous functions,
we introduce and discuss local fiber topological pressure for a finite measurable cover and establish an associated variational principle.   This relates the local fiber topological pressure to measure-theoretic entropy, under some assumptions.

Before proceeding, we reminder the reader that, by Standard Assumptions 1, 2, 3 and 4 made in Part \ref{preli}, the family $\mathbf{F}= \{F_{g, \omega}: \mathcal{E}_\omega\rightarrow \mathcal{E}_{g \omega}| g\in G, \omega\in \Omega\}$ will always be a continuous bundle RDS over the MDS $(\Omega,
\mathcal{F}, \mathbb{P}, G)$, where:
\begin{enumerate}

\item
$G$ is an infinite countable discrete amenable group with a F\o lner sequence $\{F_n: n\in \mathbb{N}\}$ satisfying $e_G\subseteq F_1\subsetneq F_2\subsetneq \cdots$,

\item
 $(\Omega,
\mathcal{F}, \mathbb{P})$ is a Lebesgue space and

\item
the state space of $\mathbf{F}$ is a compact metric space $(X, d)$.
\end{enumerate}

\section{Local fiber topological pressure} \label{fourth}

In this section, given a continuous bundle random dynamical system associated to an infinite countable discrete amenable group action and a monotone sub-additive invariant family of random continuous functions, we introduce the concept of the local fiber topological pressure for a finite measurable cover and discuss some basic properties. Our discussion follows the ideas of \cite{CFH, HYi, R, Z-DCDS}.

\medskip

Let us first introduce the concept of a monotone sub-additive invariant family of random continuous functions.

 We say that $f\in \mathbf{L}_\mathcal{E}^1 (\Omega, C (X))$ is \emph{non-negative} if for
 $\mathbb{P}$-a.e. $\omega\in \Omega$, $f (\omega, x)$ is a non-negative function on $\mathcal{E}_\omega$.

 Let $\mathbf{D}= \{d_F: F\in \mathcal{F}_G\}$ be a family in $\mathbf{L}_\mathcal{E}^1 (\Omega, C (X))$.
We say that $\mathbf{D}$ is
 \begin{enumerate}

   \item
 \emph{non-negative} if each element of $\mathbf{D}$ is non-negative;

   \item
\emph{sub-additive} if for
 $\mathbb{P}$-a.e. $\omega\in \Omega$, $d_{E\cup F g} (\omega, x)\le d_E (\omega, x)+ d_F
 (g (\omega, x))$ whenever $E, F\in \mathcal{F}_G$ and $g\in G$ satisfy $E\cap F g= \emptyset$ and $x\in \mathcal{E}_\omega$;

   \item \emph{$G$-invariant} if for $\mathbb{P}$-a.e. $\omega\in \Omega$, $d_{F g} (\omega, x)= d_F (g (\omega, x))$ whenever $F\in \mathcal{F}_G, g\in G$ and $x\in \mathcal{E}_\omega$;

   \item \emph{monotone} if for $\mathbb{P}$-a.e. $\omega\in \Omega$, $d_E (\omega, x)\le d_F (\omega, x)$ whenever $E, F\in \mathcal{F}_G$ satisfy $E\subseteq F$ and $x\in \mathcal{E}_\omega$.
 \end{enumerate}
 For example, for each $f\in
 \mathbf{L}_\mathcal{E}^1 (\Omega, C (X))$, it is easy to check that
 $$\mathbf{D}^f\doteq \{d_F^f (\omega, x)\doteq \sum\limits_{g\in F} f (g (\omega, x)): F\in \mathcal{F}_G\}$$
  is a sub-additive $G$-invariant family in $\mathbf{L}_\mathcal{E}^1 (\Omega, C (X))$. If $f$ is non-negative it is also monotone. Observe that in $\mathbf{L}_\mathcal{E}^1 (\Omega, C (X))$ not every sub-additive $G$-invariant family is of this form. In fact, if $f\in
 \mathbf{L}_\mathcal{E}^1 (\Omega, C (X))$ then the family
  $$\{d_F (\omega, x)\doteq \sum\limits_{g\in F} f (g (\omega, x))+ \sqrt{|F|}: F\in \mathcal{F}_G\}\subseteq \mathbf{L}_\mathcal{E}^1 (\Omega, C (X))$$
is also sub-additive and $G$-invariant.

  We can introduce analogues of these families in $L^1 (\Omega, \mathcal{F}, \mathbb{P})$.

 It is easy to check that:

 \begin{prop} \label{1006151230}
 Let $\mathbf{D}= \{d_F: F\in \mathcal{F}_G\}\subseteq \mathbf{L}_\mathcal{E}^1 (\Omega, C (X))$ be a sub-additive $G$-invariant family and $\mu\in \mathcal{P}_\mathbb{P} (\mathcal{E}, G)$. Then, for the function
 $$f: \mathcal{F}_G\rightarrow \mathbb{R}, F\mapsto \int_{\mathcal{E}} d_F (\omega, x) d \mu (\omega, x),$$
 $f (E g)= f (E)$ and $f (E\cup F)\le f (E)+ f (F)$ whenever $g\in G$ and $E, F\in \mathcal{F}_G$ satisfy $E\cap F= \emptyset$. Moreover, if $\mathbf{D}$ is monotone then $\mathbf{D}$ is non-negative, and so $f$ is a monotone non-negative sub-additive $G$-invariant function.

A similar conclusion also holds if the family belongs to $L^1 (\Omega, \mathcal{F}, \mathbb{P})$.
 \end{prop}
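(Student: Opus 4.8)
The plan is to obtain every asserted property of $f$ by integrating, against $\mu$, the corresponding fiberwise property of the family $\mathbf{D}$, exploiting that membership of $\mu$ in $\mathcal{P}_\mathbb{P}(\mathcal{E},G)$ makes each skew product transformation of $\mathcal{E}$ measure preserving. First I would check that $f$ is well defined and real-valued: since $d_F\in\mathbf{L}_\mathcal{E}^1(\Omega,C(X))$, the fiberwise integral $\int_{\mathcal{E}_\omega}d_F(\omega,x)\,d\mu_\omega(x)$ is bounded in absolute value by $\|d_F(\omega)\|_\infty$, so $|f(F)|\le\|d_F\|_1<+\infty$. Next I would fix, once and for all, a $G$-invariant set $\Omega_0\subseteq\Omega$ with $\mathbb{P}(\Omega_0)=1$ on which, simultaneously for all $E,F\in\mathcal{F}_G$ and all $g\in G$, the fibers $\mathcal{E}_\omega$ are non-empty compact, the maps $F_{g,\omega}$ are continuous, and the sub-additivity and $G$-invariance relations defining $\mathbf{D}$ hold on $\mathcal{E}_\omega$ (and monotonicity too, when it is assumed); this is legitimate because $\mathcal{F}_G$ and $G$ are countable, so we are only intersecting countably many full-measure sets together with their (again full-measure) $G$-translates.

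For the invariance $f(Eg)=f(E)$, let $\Theta_g\colon\mathcal{E}\to\mathcal{E}$ denote the skew product $(\omega,x)\mapsto g(\omega,x)$. The $G$-invariance of $\mathbf{D}$ says precisely that $d_{Eg}=d_E\circ\Theta_g$ on $\mathcal{E}$ (for $\omega\in\Omega_0$), and since $\mu$ is $G$-invariant, $\Theta_g$ preserves $\mu$; hence $f(Eg)=\int_\mathcal{E}d_E\circ\Theta_g\,d\mu=\int_\mathcal{E}d_E\,d\mu=f(E)$. For the inequality $f(E\cup F)\le f(E)+f(F)$ when $E\cap F=\emptyset$, I would apply the defining sub-additivity of $\mathbf{D}$ with the group element equal to $e_G$ (so that $F_{e_G,\omega}$ is the identity on $\mathcal{E}_\omega$): this yields $d_{E\cup F}(\omega,x)\le d_E(\omega,x)+d_F(\omega,x)$ for $\omega\in\Omega_0$, $x\in\mathcal{E}_\omega$, and integrating against $\mu$ gives the claim.

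For the final statement, assume $\mathbf{D}$ is monotone. The point I want to make is that monotonicity and sub-additivity already force non-negativity, with no appeal to $G$-invariance: given $F\in\mathcal{F}_G$, since $G$ is infinite we may pick $g\in G\setminus F$, so $F\cap\{g\}=\emptyset$; sub-additivity (again with group element $e_G$) gives $d_{F\cup\{g\}}(\omega,x)\le d_F(\omega,x)+d_{\{g\}}(\omega,x)$, while monotonicity applied to $\{g\}\subseteq F\cup\{g\}$ gives $d_{\{g\}}(\omega,x)\le d_{F\cup\{g\}}(\omega,x)$, for all $\omega\in\Omega_0$ and $x\in\mathcal{E}_\omega$. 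As $d_{\{g\}}(\omega,\cdot)$ is continuous on the compact fiber $\mathcal{E}_\omega$, hence finite-valued, subtracting it yields $0\le d_F(\omega,x)$; thus every $d_F$ is non-negative, i.e. $\mathbf{D}$ is non-negative. Integrating the monotonicity and non-negativity of $\mathbf{D}$ against $\mu$ then shows that $f$ is monotone and non-negative, and together with the $G$-invariance and sub-additivity already proved, $f$ is a monotone non-negative sub-additive $G$-invariant function (one even gets sub-additivity for all, not merely disjoint, $E,F$, via $f(E\cup F)=f(E\cup(F\setminus E))\le f(E)+f(F\setminus E)\le f(E)+f(F)$, but this is not needed). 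The statement for a sub-additive $G$-invariant (and, when monotone, also monotone) family inside $L^1(\Omega,\mathcal{F},\mathbb{P})$ follows by the identical argument with the variable $x$ and the integration over $\mathcal{E}$ suppressed.

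None of the steps is genuinely hard: the content is just a Fubini-type interchange of integration with fiberwise inequalities. The only places warranting a moment's care are the selection of the single $G$-invariant full-measure set on which all the countably many fiberwise conditions hold at once, and the small observation that non-negativity of $\mathbf{D}$ is automatic from monotonicity plus sub-additivity once one uses that $G$ is infinite.
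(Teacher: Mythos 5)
Your proposal is correct and follows essentially the same route as the paper: the invariance and sub-additivity of $f$ are obtained by integrating the fiberwise identities against the $G$-invariant measure $\mu$, and non-negativity of a monotone family is extracted from the same sandwich $d_E\le d_{E\cup F}\le d_E+d_F$ for some $E$ disjoint from $F$ (the paper takes an arbitrary such $E$, you take a singleton $\{g\}$, which is an immaterial difference). The extra care you take (the countable intersection giving $\Omega_0$, finiteness of $f$, finiteness of $d_{\{g\}}$ on the compact fiber) is sound and only makes explicit what the paper leaves implicit.
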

 \begin{proof}
 We only need check that if $\mathbf{D}$ is monotone, then $\mathbf{D}$ is non-negative. In fact, this follows directly from the assumptions of sub-additivity and monotonicity.

 Let $F\in \mathcal{F}_G$. Then for each $E\in \mathcal{F}_G$ satisfying $E\cap F= \emptyset$, by the assumptions of sub-additivity and monotonicity over $\mathbf{D}$ we have: for $\mathbb{P}$-a.e. $\omega\in \Omega$,
 \begin{equation*}
 d_E (\omega, x)\le d_{E\cup F} (\omega, x)\le d_E (\omega, x)+ d_F (\omega, x),
 \end{equation*}
and so $d_F (\omega, x)\ge 0$ for each $x\in \mathcal{E}_\omega$. This finishes our proof.
 \end{proof}

Let $\mathbf{D}= \{d_F: F\in \mathcal{F}_G\}\subseteq \mathbf{L}_\mathcal{E}^1 (\Omega, C (X))$ be a sub-additive $G$-invariant family and $\mathcal{U}\in \mathbf{C}_\mathcal{E}$.
For each $F\in \mathcal{F}_G$ and any $\omega\in \Omega$ we set
\begin{eqnarray}
& &\hskip -36pt P_\mathcal{E} (\omega, \mathbf{D}, F, \mathcal{U}, \mathbf{F})\nonumber \\
&= & \inf \left\{\sum_{A
(\omega)\in \alpha
(\omega)} \sup_{x\in A (\omega)} e^{d_F (\omega, x)}: \alpha (\omega)\in
\mathbf{P}_{\mathcal{E}_\omega}, \alpha (\omega)\succeq
(\mathcal{U}_F)_\omega\right\}\nonumber \\
&= & \inf \left\{\sum_{A
(\omega)\in \alpha
(\omega)} \sup_{x\in A (\omega)} e^{d_F (\omega, x)}: \alpha (\omega)\in
\mathbf{P}_{\mathcal{E}_\omega}, \alpha (\omega)\succeq
 \bigvee_{g\in F} F_{g^{- 1}, g \omega} \mathcal{U}_{g \omega}\right\}, \label{1007051535}
\end{eqnarray}
where $\mathbf{P}_{\mathcal{E}_\omega}$ is introduced as in previous sections and \eqref{1007051535} follows from \eqref{1006131722}.

In fact, it is easy to obtain an alternative expression for $P_\mathcal{E} (\omega, \mathbf{D}, F, \mathcal{U}, \mathbf{F})$ viz.:
\begin{equation} \label{1007111222}
P_\mathcal{E} (\omega, \mathbf{D}, F, \mathcal{U}, \mathbf{F})= \inf \left\{\sum_{A\in \alpha} \sup_{x\in A_\omega} e^{d_F (\omega, x)}: \alpha\succeq
\mathcal{U}_F\right\}.
\end{equation}
To see this, for $\alpha (\omega)\in
\mathbf{P}_{\mathcal{E}_\omega}$ with $\alpha (\omega)\succeq
(\mathcal{U}_F)_\omega$, define
$$\beta= \{\{\omega\}\times A: A\in \alpha (\omega)\}\cup \{U\setminus (\{\omega\}\times \mathcal{E}_\omega): U\in \mathcal{P} (\mathcal{U}_F)\}.$$
As $(\Omega, \mathcal{F}, \mathbb{P})$ is a Lebesgue space by Standard Assumption 3, then $\{\omega\}\in \mathcal{F}$ and so by the construction it is clear to see that $\beta\in \mathbf{P}_\mathcal{E}$. Further, $\beta_\omega= \alpha (\omega), \beta\succeq \mathcal{U}_F$ (as $\alpha (\omega)\succeq (\mathcal{U}_F)_\omega$ and $\mathcal{P} (\mathcal{U}_F)\succeq \mathcal{U}_F$), and hence
$$\sum_{A
(\omega)\in \alpha
(\omega)} \sup_{x\in A (\omega)} e^{d_F (\omega, x)}= \sum_{B\in \beta} \sup_{x\in B_\omega} e^{d_F (\omega, x)}.$$

\medskip

Before proceeding, we need the following observation, whose proof is obvious.

\begin{lem} \label{1007051758}
Let $\mathcal{U}\in \mathbf{C}_\mathcal{E}$ and $\omega\in \Omega$. Then $\mathbf{P} (\mathcal{U}_\omega)= \{\alpha_\omega: \alpha\in \mathbf{P} (\mathcal{U})\}$.
\end{lem}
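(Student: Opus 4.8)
The plan is to prove the two inclusions $\mathbf{P}(\mathcal{U}_\omega)\supseteq\{\alpha_\omega:\alpha\in\mathbf{P}(\mathcal{U})\}$ and $\mathbf{P}(\mathcal{U}_\omega)\subseteq\{\alpha_\omega:\alpha\in\mathbf{P}(\mathcal{U})\}$ separately, working directly from the definition of $\mathbf{P}(\bullet)$ as the set of partitions $\beta$ with $\mathcal{P}(\bullet)\succeq\beta\succeq\bullet$. Fix $\mathcal{U}=\{U_1,\dots,U_m\}\in\mathbf{C}_\mathcal{E}$, so that $\mathcal{U}_\omega=\{(U_1)_\omega,\dots,(U_m)_\omega\}$ is the induced cover of $\mathcal{E}_\omega$, and note that taking $\omega$-fibers is a Boolean-algebra homomorphism: $(A\cap B)_\omega=A_\omega\cap B_\omega$, $(A^c)_\omega=(A_\omega)^c$ (complement inside $\mathcal{E}_\omega$), and unions likewise. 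Consequently the partition $\mathcal{P}(\mathcal{U})$ generated by $\mathcal{U}$, whose atoms are the sets $\bigcap_{i=1}^m A_i$ with $A_i\in\{U_i,U_i^c\}$, has fibers $\mathcal{P}(\mathcal{U})_\omega=\mathcal{P}(\mathcal{U}_\omega)$ (up to discarding empty atoms). This bookkeeping fact is the engine of both inclusions.

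For the inclusion $\subseteq$: first I would take $\gamma\in\mathbf{P}(\mathcal{U}_\omega)$, so $\mathcal{P}(\mathcal{U}_\omega)\succeq\gamma\succeq\mathcal{U}_\omega$. Since $\gamma$ is coarser than $\mathcal{P}(\mathcal{U}_\omega)$, each atom of $\gamma$ is a union of atoms of $\mathcal{P}(\mathcal{U}_\omega)=\mathcal{P}(\mathcal{U})_\omega$; lifting each such union of $\mathcal{P}(\mathcal{U})$-atoms (over $\mathcal{E}$) and adjoining the atoms of $\mathcal{P}(\mathcal{U})$ whose $\omega$-fiber is empty so that the pieces still exhaust $\mathcal{E}$, I obtain a partition $\alpha\in\mathbf{P}_\mathcal{E}$ with $\mathcal{P}(\mathcal{U})\succeq\alpha$. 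One then checks $\alpha\succeq\mathcal{U}$ — this uses that each $\mathcal{P}(\mathcal{U})$-atom lies in some $U_i$ together with the corresponding fiber statement for $\gamma$ — and that $\alpha_\omega=\gamma$ by construction. Completeness of $(\Omega,\mathcal{F},\mathbb{P})$ is used, exactly as in the paragraph following \eqref{1007111222}, to guarantee that the sets $\{\omega\}\times(\text{atom of }\gamma)$ together with the remainder form a genuine element of $\mathbf{P}_\mathcal{E}$; alternatively one can use the cruder construction with $\{\omega\}$-slices. Either way $\gamma=\alpha_\omega$ with $\alpha\in\mathbf{P}(\mathcal{U})$.

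For the inclusion $\supseteq$: given $\alpha\in\mathbf{P}(\mathcal{U})$, i.e. $\mathcal{P}(\mathcal{U})\succeq\alpha\succeq\mathcal{U}$, I apply the fiber homomorphism to both refinement relations. From $\alpha\succeq\mathcal{U}$: every atom $A$ of $\alpha$ is contained in some $U_i$, hence $A_\omega\subseteq(U_i)_\omega$, giving $\alpha_\omega\succeq\mathcal{U}_\omega$ (after discarding empty fibers, which does not affect being a partition of $\mathcal{E}_\omega$). From $\mathcal{P}(\mathcal{U})\succeq\alpha$: every atom of $\mathcal{P}(\mathcal{U})$ sits in an atom of $\alpha$, so passing to fibers, every atom of $\mathcal{P}(\mathcal{U})_\omega=\mathcal{P}(\mathcal{U}_\omega)$ sits in an atom of $\alpha_\omega$, i.e. $\mathcal{P}(\mathcal{U}_\omega)\succeq\alpha_\omega$. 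Hence $\alpha_\omega\in\mathbf{P}(\mathcal{U}_\omega)$.

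I do not expect a serious obstacle here; the only mildly delicate point is the small-but-persistent issue of empty fibers — an atom of $\mathcal{P}(\mathcal{U})$ may have empty $\omega$-fiber, and conversely a partition of $\mathcal{E}_\omega$ must be lifted to something that still covers all of $\mathcal{E}$ — which is handled by absorbing empty-fibered atoms into others and invoking completeness of the probability space as in the discussion after \eqref{1007111222}. The rest is the routine verification that taking $\omega$-fibers commutes with complement, intersection, union, and the generated-partition construction $\mathcal{P}(\cdot)$, and that the two defining refinement inequalities for $\mathbf{P}(\cdot)$ are preserved in both directions under this operation.
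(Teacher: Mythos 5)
Your main line is exactly the paper's proof, and it works: you establish $\mathcal{P}(\mathcal{U})_\omega=\mathcal{P}(\mathcal{U}_\omega)$ from $(\mathcal{U}_s)_\omega=(\mathcal{U}_\omega)_s$, obtain $\{\alpha_\omega:\alpha\in\mathbf{P}(\mathcal{U})\}\subseteq\mathbf{P}(\mathcal{U}_\omega)$ by fibering the two refinement relations, and for the converse you lift each atom $B_j$ of $\gamma\in\mathbf{P}(\mathcal{U}_\omega)$ to $B_j'=\bigcup\{\mathcal{U}_s:\emptyset\neq(\mathcal{U}_s)_\omega\subseteq B_j\}$ while keeping the atoms with empty $\omega$-fiber as separate elements; this is precisely the partition $\beta'$ constructed in the paper.

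Two points in your write-up need repair, though neither touches that main route. First, completeness of $(\Omega,\mathcal{F},\mathbb{P})$ is not used here, and the advertised alternative ``cruder construction with $\{\omega\}$-slices'' does not work for this lemma: the partition $\{\{\omega\}\times B: B\in\gamma\}\cup\{U\setminus(\{\omega\}\times\mathcal{E}_\omega): U\in\mathcal{P}(\mathcal{U})\}$ from the paragraph after \eqref{1007111222} does satisfy $\succeq\mathcal{U}$ and has $\omega$-fiber $\gamma$, but in general it is not coarser than $\mathcal{P}(\mathcal{U})$ --- an atom $\mathcal{U}_s$ with nonempty $\omega$-fiber is split between its slice part $\{\omega\}\times(\mathcal{U}_s)_\omega$ and the off-slice element $\mathcal{U}_s\setminus(\{\omega\}\times\mathcal{E}_\omega)$, so it lies in no single element --- hence it fails the defining condition $\mathcal{P}(\mathcal{U})\succeq\alpha$ of $\mathbf{P}(\mathcal{U})$. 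That extra condition is exactly why one must lift unions of $\mathcal{P}(\mathcal{U})$-atoms, and measurability of those unions is automatic (finite unions of measurable sets), so no completeness is needed. Second, your justification of $\alpha\succeq\mathcal{U}$ cites the wrong ingredient: ``each $\mathcal{P}(\mathcal{U})$-atom lies in some $U_i$'' only handles the adjoined empty-fibered atoms (it holds because $\bigcap_i U_i^c=\emptyset$); for a lifted piece $B_j'$ you need all its constituent atoms to lie in one and the same $U_i$, and this comes from the dichotomy $\mathcal{U}_s\subseteq U_i$ or $\mathcal{U}_s\subseteq U_i^c$: choosing $i$ with $B_j\subseteq(U_i)_\omega$, every $s$ with $\emptyset\neq(\mathcal{U}_s)_\omega\subseteq B_j$ forces $s_i=0$, i.e.\ $\mathcal{U}_s\subseteq U_i$, whence $B_j'\subseteq U_i$ (the paper's final step). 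Relatedly, ``absorbing'' the empty-fibered atoms into other pieces, as you suggest at the end, can destroy $\alpha\succeq\mathcal{U}$ unless each is absorbed into a piece contained in a common element of $\mathcal{U}$; keeping them as separate elements is the safe (and the paper's) choice.
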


We now have the following alternative formula for $P_\mathcal{E} (\omega, \mathbf{D}, F, \mathcal{U}, \mathbf{F})$.

\begin{prop} \label{1006141641}
Let $\mathbf{D}= \{d_F: F\in \mathcal{F}_G\}\subseteq \mathbf{L}_\mathcal{E}^1 (\Omega, C (X))$ be a sub-additive $G$-invariant family and $\mathcal{U}\in \mathbf{C}_\mathcal{E}$, $F\in \mathcal{F}_G$, $\omega\in \Omega$. Then
\begin{eqnarray}
\hskip 16pt
P_\mathcal{E} (\omega, \mathbf{D}, F, \mathcal{U}, \mathbf{F})&= & \min \left\{\sum_{A
(\omega)\in \alpha (\omega)} \sup_{x\in A (\omega)} e^{d_F (\omega, x)}: \alpha (\omega)\in
\mathbf{P} ((\mathcal{U}_F)_\omega)\right\} \label{1007052308} \\
&= & \min \left\{\sum_{A\in \alpha} \sup_{x\in A_\omega} e^{d_F (\omega, x)}: \alpha\in
\mathbf{P} (\mathcal{U}_F)\right\}. \label{1007052309}
\end{eqnarray}
\end{prop}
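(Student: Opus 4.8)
The plan is to establish \eqref{1007052308} first, and then to deduce \eqref{1007052309} from it by means of Lemma \ref{1007051758}. Throughout, fix $\omega$ and abbreviate $\mathcal{V}=(\mathcal{U}_F)_\omega=\{V_1,\dots,V_n\}$ (a finite Borel cover of the compact set $\mathcal{E}_\omega$) and $g=e^{d_F(\omega,\cdot)}$ (a bounded positive continuous function on $\mathcal{E}_\omega$, so that all the suprema below are finite real numbers); recall that by definition $P_\mathcal{E}(\omega,\mathbf{D},F,\mathcal{U},\mathbf{F})=\inf\{\sum_{A\in\gamma}\sup_{A}g:\gamma\in\mathbf{P}_{\mathcal{E}_\omega},\ \gamma\succeq\mathcal{V}\}$. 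Since $\mathcal{P}(\mathcal{V})$ is finite, $\mathbf{P}(\mathcal{V})=\{\gamma:\mathcal{P}(\mathcal{V})\succeq\gamma\succeq\mathcal{V}\}$ is a finite nonempty family (it contains $\mathcal{P}(\mathcal{V})$, because every nonempty atom $\bigcap_i V_i(\epsilon_i)$ has some $\epsilon_i=0$, as $\mathcal{V}$ covers $\mathcal{E}_\omega$, and hence lies in $V_i$). Thus the ``$\min$'' in \eqref{1007052308} is attained, and since $\mathbf{P}(\mathcal{V})\subseteq\{\gamma:\gamma\succeq\mathcal{V}\}$ the inequality $P_\mathcal{E}(\omega,\mathbf{D},F,\mathcal{U},\mathbf{F})\le\min_{\gamma\in\mathbf{P}(\mathcal{V})}\sum_{A\in\gamma}\sup_A g$ is immediate. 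The whole content is the reverse inequality: every $\gamma\in\mathbf{P}_{\mathcal{E}_\omega}$ with $\gamma\succeq\mathcal{V}$ satisfies $\sum_{A\in\gamma}\sup_A g\ge\min_{\gamma'\in\mathbf{P}(\mathcal{V})}\sum_{A'\in\gamma'}\sup_{A'}g$.

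To prove this, given $\gamma=\{A_1,\dots,A_k\}\succeq\mathcal{V}$ I would first enumerate its atoms so that $\sup_{A_1}g\ge\cdots\ge\sup_{A_k}g$, and for each $p$ fix an index $i_p$ with $A_p\subseteq V_{i_p}$. Then I run a greedy selection: put $q_1=1$; having chosen $q_1<\cdots<q_t$, stop if $V_{i_{q_1}}\cup\cdots\cup V_{i_{q_t}}=\mathcal{E}_\omega$, and otherwise let $q_{t+1}$ be the least $p>q_t$ with $A_p\not\subseteq V_{i_{q_1}}\cup\cdots\cup V_{i_{q_t}}$. This terminates with indices $q_1<\cdots<q_T$ whose sets $V_{i_{q_u}}$ cover $\mathcal{E}_\omega$ (every point lies in some atom, and that atom is either already absorbed into an earlier union or forces a new index to be added). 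The ``staircase'' partition $\beta=\{B_u:B_u\neq\emptyset\}$ with $B_u=V_{i_{q_u}}\setminus\bigcup_{v<u}V_{i_{q_v}}$ lies in $\mathbf{P}(\mathcal{V})$, because each $B_u\subseteq V_{i_{q_u}}$ and because membership of a point in $B_u$ depends only on which of the $V_i$ contain it, so $B_u$ is a union of atoms of $\mathcal{P}(\mathcal{V})$. The key estimate is $\sup_{B_u}g\le\sup_{A_{q_u}}g$ for every $u$: if $y\in B_u$ lies in the atom $A_{p_y}$ of $\gamma$, then $A_{p_y}\not\subseteq\bigcup_{v<u}V_{i_{q_v}}$; a short bookkeeping argument (every atom of $\gamma$ with index $\le q_{u-1}$ is either one of $A_{q_1},\dots,A_{q_{u-1}}$ or is contained in $\bigcup_{v<u}V_{i_{q_v}}$, by minimality at the earlier steps) forces $p_y\ge q_u$, whence $g(y)\le\sup_{A_{p_y}}g\le\sup_{A_{q_u}}g$ by the chosen ordering. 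Summing over $u$ and using that the $q_u$ are distinct indices in $\{1,\dots,k\}$ gives $\sum_u\sup_{B_u}g\le\sum_u\sup_{A_{q_u}}g\le\sum_{p=1}^k\sup_{A_p}g$, which is exactly what is needed; this proves \eqref{1007052308}.

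The step I expect to be the obstacle is precisely this last one. The naive move -- coarsening $\gamma$ as much as possible while staying $\succeq\mathcal{V}$ -- does \emph{not} in general land in $\mathbf{P}(\mathcal{V})$ (a maximal coarsening can split an atom of $\mathcal{P}(\mathcal{V})$), so one genuinely needs the greedy staircase, and it is the coupling of the greedy order (by decreasing $\sup g$) with the index-absorption rule that makes the comparison $\sup_{B_u}g\le\sup_{A_{q_u}}g$ go through. (Note that compactness plays no essential role in this combinatorial lemma; it only guarantees that the suprema are finite.)

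Finally, for \eqref{1007052309}: by Lemma \ref{1007051758} applied to $\mathcal{U}_F\in\mathbf{C}_\mathcal{E}$ we have $\mathbf{P}((\mathcal{U}_F)_\omega)=\{\alpha_\omega:\alpha\in\mathbf{P}(\mathcal{U}_F)\}$, and for each $\alpha\in\mathbf{P}(\mathcal{U}_F)$ the quantity $\sum_{A\in\alpha}\sup_{x\in A_\omega}e^{d_F(\omega,x)}$ depends on $\alpha$ only through $\alpha_\omega$, since atoms with empty $\omega$-fibre contribute $\sup_\emptyset e^{d_F(\omega,\cdot)}=0$ (the convention already used in \eqref{1007111222}) and the remaining atoms are exactly those of $\alpha_\omega$. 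Hence minimising over $\alpha\in\mathbf{P}(\mathcal{U}_F)$ produces the same value as minimising $\sum_{B(\omega)\in\beta(\omega)}\sup_{B(\omega)}e^{d_F(\omega,\cdot)}$ over $\beta(\omega)\in\mathbf{P}((\mathcal{U}_F)_\omega)$, and the latter equals $P_\mathcal{E}(\omega,\mathbf{D},F,\mathcal{U},\mathbf{F})$ by \eqref{1007052308}. This completes the argument.
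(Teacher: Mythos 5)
Your proof is correct and follows essentially the same route as the paper: the second equality \eqref{1007052309} is deduced from \eqref{1007052308} via Lemma \ref{1007051758} (with the $\sup_\emptyset=0$ convention), and \eqref{1007052308} is reduced to the purely fiberwise fact \eqref{1007051115} about a continuous function and a finite cover of $\mathcal{E}_\omega$, which the paper does not prove but simply cites from the proof of \cite[Lemma 2.1]{HYi}. The only difference is that you supply that omitted combinatorial argument in full, and your greedy ``staircase'' construction (ordering atoms by decreasing $\sup e^{d_F(\omega,\cdot)}$, absorbing indices, and comparing $\sup_{B_u}$ with $\sup_{A_{q_u}}$) is sound.
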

\begin{proof}
Note that \eqref{1007052309} follows directly from Lemma \ref{1007051758} and \eqref{1007052308}. Thus we only need prove \eqref{1007052308}.
We should point out that $d_F (\omega, x)$ is continuous in $x\in \mathcal{E}_\omega$ and $(\mathcal{U}_F)_\omega\in \mathbf{C}_{\mathcal{E}_\omega}$ (where $\mathbf{C}_{\mathcal{E}_\omega}$ is introduced as in previous sections). The proof will therefore be complete if we can prove that, for each continuous function $f$ over $\mathcal{E}_\omega$ and any $\mathcal{W}\in \mathbf{C}_{\mathcal{E}_\omega}$,
\begin{equation*} \label{1007051115}
\inf_{\gamma\in \mathbf{P}_{\mathcal{E}_\omega}, \gamma\succeq \mathcal{W}} \sum_{B\in \gamma} \sup_{x\in B} e^{f (x)}= \min_{\zeta\in \mathbf{P} (\mathcal{W})} \sum_{C\in \zeta} \sup_{x\in C} e^{f (x)},
\end{equation*}
where $\mathbf{P} (\mathcal{W})$ is introduced above. However, this is easy to see. (For example see the proof of \cite[Lemma 2.1]{HYi}.)
This establishes \eqref{1007052308} and ends our proof.
\end{proof}

Thus:

\begin{prop} \label{1006141621}
Let $\mathbf{D}= \{d_F: F\in \mathcal{F}_G\}\subseteq \mathbf{L}_\mathcal{E}^1 (\Omega, C (X))$ be a sub-additive $G$-invariant family and $\mathcal{U}\in \mathbf{C}_\mathcal{E}$. Then
\begin{enumerate}

\item \label{1006141704} For each $F\in \mathcal{F}_G$, the function $P_\mathcal{E} (\omega, \mathbf{D}, F, \mathcal{U}, \mathbf{F})$ is measurable in $\omega\in \Omega$.

\item \label{1006141705}
$\{\log P_\mathcal{E} (\omega, \mathbf{D}, F, \mathcal{U}, \mathbf{F}): F\in \mathcal{F}_G\}$ is a sub-additive $G$-invariant family in $L^1 (\Omega, \mathcal{F}, \mathbb{P})$.

\item \label{1007111801} For the function $p: \mathcal{F}_G\rightarrow \R, F\mapsto \int_\Omega \log P_\mathcal{E} (\omega, \mathbf{D}, F, \mathcal{U}, \mathbf{F}) d \mathbb{P} (\omega)$, one has $p (E g)= p (E)$ and $p (E\cup F)\le p (E)+ p (F)$ whenever $E, F\in \mathcal{F}_G$ and $g\in G$ satisfy $E\cap F= \emptyset$; moreover, if $\mathbf{D}$ is monotone then $p$ is a monotone non-negative $G$-invariant sub-additive function.
\end{enumerate}
\end{prop}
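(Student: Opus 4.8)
The plan is to base everything on the alternative formula \eqref{1007052309}, which presents $P_\mathcal{E}(\omega,\mathbf{D},F,\mathcal{U},\mathbf{F})$ as a \emph{minimum} over the \emph{finite} collection $\mathbf{P}(\mathcal{U}_F)$ of partitions of $\mathcal{E}$.

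\textbf{Measurability \eqref{1006141704}.} Fix $F$. It suffices to show that for each fixed $\alpha\in\mathbf{P}(\mathcal{U}_F)$ the function $\omega\mapsto\sum_{A\in\alpha}\sup_{x\in A_\omega}e^{d_F(\omega,x)}$ is $\mathcal{F}$-measurable; then $P_\mathcal{E}(\omega,\mathbf{D},F,\mathcal{U},\mathbf{F})$, being the pointwise minimum of finitely many such functions, is measurable. For a single atom $A\in\alpha\subseteq(\mathcal{F}\times\mathcal{B}_X)\cap\mathcal{E}$ and any $c\in\mathbb{R}$ one has $\{\omega\in\Omega:\sup_{x\in A_\omega}e^{d_F(\omega,x)}>c\}=\pi\big(A\cap\{(\omega,x)\in\mathcal{E}:e^{d_F(\omega,x)}>c\}\big)$, where $\pi:\mathcal{E}\to\Omega$ is the natural projection; the set in parentheses lies in $(\mathcal{F}\times\mathcal{B}_X)\cap\mathcal{E}$ because $d_F$ is jointly measurable on $\mathcal{E}$, so its projection lies in $\mathcal{F}$ by Lemma \ref{1007151422} (here $X$ is compact, hence Polish, and $(\Omega,\mathcal{F},\mathbb{P})$ is complete). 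I expect this to be the main obstacle, as it is the only point that genuinely uses the measurable-projection machinery rather than elementary manipulations.

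\textbf{Sub-additive $G$-invariant family in $L^1$ \eqref{1006141705}.} For integrability, I would use the two-sided pointwise bound $e^{-\|d_F(\omega)\|_\infty}\le P_\mathcal{E}(\omega,\mathbf{D},F,\mathcal{U},\mathbf{F})\le C_F\, e^{\|d_F(\omega)\|_\infty}$, with $C_F$ depending only on $\mathcal{U}_F$: the lower bound holds since $\mathcal{E}_\omega\neq\emptyset$ $\mathbb{P}$-a.e. and any partition of $\mathcal{E}_\omega$ has an atom on which $\sup e^{d_F}\ge e^{-\|d_F(\omega)\|_\infty}$; the upper one by taking $\alpha=\mathcal{P}(\mathcal{U}_F)$. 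Hence $|\log P_\mathcal{E}(\omega,\mathbf{D},F,\mathcal{U},\mathbf{F})|\le\log C_F+\|d_F(\omega)\|_\infty$, which is $\mathbb{P}$-integrable since $d_F\in\mathbf{L}_\mathcal{E}^1(\Omega,C(X))$; with \eqref{1006141704} this gives $\log P_\mathcal{E}(\cdot,\mathbf{D},F,\mathcal{U},\mathbf{F})\in L^1(\Omega,\mathcal{F},\mathbb{P})$. For $G$-invariance I would note $\mathcal{U}_{Fg}=g^{-1}\mathcal{U}_F$, hence $(\mathcal{U}_{Fg})_\omega=F_{g,\omega}^{-1}\big((\mathcal{U}_F)_{g\omega}\big)$, so $\alpha(\omega)\mapsto F_{g,\omega}\alpha(\omega)$ is a bijection between the partitions of $\mathcal{E}_\omega$ refining $(\mathcal{U}_{Fg})_\omega$ and those of $\mathcal{E}_{g\omega}$ refining $(\mathcal{U}_F)_{g\omega}$; since $\mathbf{D}$ is $G$-invariant, $d_{Fg}(\omega,x)=d_F(g\omega,F_{g,\omega}x)$, so corresponding sums are equal, giving $P_\mathcal{E}(\omega,\mathbf{D},Fg,\mathcal{U},\mathbf{F})=P_\mathcal{E}(g\omega,\mathbf{D},F,\mathcal{U},\mathbf{F})$. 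For sub-additivity, given $E\cap Fg=\emptyset$, combining $G$-invariance of $\mathbf{D}$ with its sub-additivity yields $d_{E\cup Fg}(\omega,x)\le d_E(\omega,x)+d_{Fg}(\omega,x)$; using $(\mathcal{U}_{E\cup Fg})_\omega=(\mathcal{U}_E)_\omega\vee(\mathcal{U}_{Fg})_\omega$ and, for an atom $A=A_1\cap A_2$ of a product partition, $\sup_{x\in A_\omega}e^{d_{E\cup Fg}(\omega,x)}\le\big(\sup_{x\in (A_1)_\omega}e^{d_E(\omega,x)}\big)\big(\sup_{x\in(A_2)_\omega}e^{d_{Fg}(\omega,x)}\big)$, the sum over the product partition factorises; taking infima gives $P_\mathcal{E}(\omega,\mathbf{D},E\cup Fg,\mathcal{U},\mathbf{F})\le P_\mathcal{E}(\omega,\mathbf{D},E,\mathcal{U},\mathbf{F})\,P_\mathcal{E}(\omega,\mathbf{D},Fg,\mathcal{U},\mathbf{F})$, and applying $\log$ together with the $G$-invariance already proved yields the sub-additivity of the family.

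\textbf{The function $p$ \eqref{1007111801}.} Well-definedness of $p$ is immediate from the $L^1$-bound. The identity $p(Eg)=p(E)$ follows by integrating the $G$-invariance of the family and using that $\mathbb{P}$ is $g$-invariant (change of variables $\omega\mapsto g\omega$). The inequality $p(E\cup F)\le p(E)+p(F)$ for $E\cap F=\emptyset$ follows by integrating the pointwise sub-additivity of the family with $g=e_G$. Finally, if $\mathbf{D}$ is monotone then by Proposition \ref{1006151230} it is non-negative, so $d_F\ge 0$, hence $P_\mathcal{E}(\omega,\mathbf{D},F,\mathcal{U},\mathbf{F})\ge 1$ and $p\ge 0$; for $E\subseteq F$, monotonicity of $\mathbf{D}$ gives $d_E\le d_F$ and $(\mathcal{U}_F)_\omega\succeq(\mathcal{U}_E)_\omega$, so every $\alpha$ refining $(\mathcal{U}_F)_\omega$ also refines $(\mathcal{U}_E)_\omega$ and the sums increase, whence $P_\mathcal{E}(\omega,\mathbf{D},E,\mathcal{U},\mathbf{F})\le P_\mathcal{E}(\omega,\mathbf{D},F,\mathcal{U},\mathbf{F})$ and $p(E)\le p(F)$; full sub-additivity $p(E\cup F)\le p(E)+p(F)$ for arbitrary $E,F$ then follows by writing $E\cup F=E\cup(F\setminus E)$, applying the disjoint case and monotonicity with $F\setminus E\subseteq F$. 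This shows that $p$ is a monotone non-negative $G$-invariant sub-additive function.
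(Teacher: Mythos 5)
Your proposal is correct and follows essentially the same route as the paper: measurability via the finite-minimum formula \eqref{1007052309} together with the projection lemma (Lemma \ref{1007151422}), integrability from the two-sided bound $e^{-\|d_F(\omega)\|_\infty}\le P_\mathcal{E}\le C_F e^{\|d_F(\omega)\|_\infty}$, $G$-invariance by transporting partitions with $F_{g,\omega}$, sub-additivity by joining partitions and factorising the sums, and part (3) by integrating (the paper simply cites Proposition \ref{1006151230} for the monotone case, which you spell out directly). No gaps.
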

\begin{proof}
\eqref{1006141704} Let $F\in \mathcal{F}_G$. By \eqref{1007052309}, we only need to prove that
$\sup\limits_{x\in A_\omega} e^{d_F (\omega, x)}$
is measurable in $\omega\in \Omega$ for each $A\in (\mathcal{F}\times \mathcal{B}_X)\cap \mathcal{E}$. In fact, let $A\in (\mathcal{F}\times \mathcal{B}_X)\cap \mathcal{E}$ and letting $\pi: \mathcal{E}\rightarrow \Omega$ be the natural projection, we have
\begin{equation*}
\{\omega\in \Omega: \sup_{x\in A_\omega} e^{d_F (\omega, x)}> r\}= \pi (\{(\omega, x)\in A: e^{d_F (\omega, x)}> r\})
\end{equation*}
for each $r\in \mathbb{R}$.
By Lemma \ref{1007152201},
$$\{\omega\in \Omega: \sup\limits_{x\in A_\omega} e^{d_F (\omega, x)}> r\}$$
 is measurable, which implies that $\sup\limits_{x\in A_\omega} e^{d_F (\omega, x)}$ is measurable in $\omega\in \Omega$.

\eqref{1006141705} Let $E, F\in \mathcal{F}_G, g\in G$ satisfy $E\cap F g= \emptyset$ and $\omega\in \Omega$. Then by \eqref{1007111222} one has
\begin{eqnarray*}
e^{- ||d_E (\omega)||_\infty}&\le & P_\mathcal{E} (\omega, \mathbf{D}, E, \mathcal{U}, \mathbf{F}) \\
&= & \inf \left\{\sum_{A\in \alpha} \sup_{x\in A_\omega} e^{d_E (\omega, x)}: \alpha\succeq
\mathcal{U}_E\right\}\le |\mathcal{U}_E| e^{||d_E (\omega)||_\infty},
\end{eqnarray*}
which implies $\log P_\mathcal{E} (\omega, \mathbf{D}, E, \mathcal{U}, \mathbf{F})\in L^1 (\Omega, \mathcal{F}, \mathbb{P})$ (by the definition of $\mathbf{L}_\mathcal{E}^1 (\Omega, C (X))$).

Moreover, by the $G$-invariance of the family $\mathbf{D}$ one has, for $\mathbb{P}$-a.e. $\omega\in \Omega$,
\begin{eqnarray}
P_\mathcal{E} (\omega, \mathbf{D}, F g, \mathcal{U}, \mathbf{F})&= & \inf \left\{\sum_{A\in \alpha} \sup_{x\in A_\omega} e^{d_{F g} (\omega, x)}: \alpha\succeq
\mathcal{U}_{F g}\right\}\ (\text{using \eqref{1007111222}})\nonumber \\
&= & \inf \left\{\sum_{A\in \alpha} \sup_{x\in A_\omega} e^{d_F (g (\omega, x))}: g \alpha\succeq
\mathcal{U}_F\right\}\nonumber \\
&= & \inf \left\{\sum_{A\in \alpha} \sup_{x\in A_{g \omega}} e^{d_F (g \omega, x)}: \alpha\succeq
\mathcal{U}_F\right\}= P_\mathcal{E} (g \omega, \mathbf{D}, F, \mathcal{U}, \mathbf{F}), \label{1007112119}
\end{eqnarray}
which implies the $G$-invariance of $\log P_\mathcal{E} (\omega, \mathbf{D}, F, \mathcal{U}, \mathbf{F})$.

Finally, by the $G$-invariance of $\log P_\mathcal{E} (\omega, \mathbf{D}, F, \mathcal{U}, \mathbf{F})$ and the sub-additivity of the family $\mathbf{D}$, one has, for $\mathbb{P}$-a.e. $\omega\in \Omega$,
\begin{eqnarray*}
& &
P_\mathcal{E} (\omega, \mathbf{D}, E\cup F g, \mathcal{U}, \mathbf{F}) \\
&= & \inf \left\{\sum_{A\in \alpha} \sup_{x\in A_\omega} e^{d_{E\cup F g} (\omega, x)}: \alpha\succeq
\mathcal{U}_{E\cup F g}\right\}\ (\text{using \eqref{1007111222}}) \\
&\le & \inf \left\{\sum_{A\in \alpha, B\in \beta} \sup_{x\in A_\omega\cap B_\omega} e^{d_E (\omega, x)+ d_F (g (\omega, x))}: \alpha\succeq \mathcal{U}_E, \beta\succeq
\mathcal{U}_{F g}\right\} \\
&\le & \inf \left\{\sum_{A\in \alpha, B\in \beta} \sup_{x\in A_\omega} e^{d_E (\omega, x)} \sup_{x\in B_\omega} e^{d_F (g (\omega, x))}: \alpha\succeq \mathcal{U}_E, \beta\succeq
\mathcal{U}_{F g}\right\} \\
&= & \inf \left\{\sum_{A\in \alpha} \sup_{x\in A_\omega} e^{d_E (\omega, x)}: \alpha\succeq \mathcal{U}_E\right\} \inf \left\{\sum_{B\in \beta} \sup_{x\in B_\omega} e^{d_F (g (\omega, x))}: \beta\succeq
\mathcal{U}_{F g}\right\} \\
&= & P_\mathcal{E} (\omega, \mathbf{D}, E, \mathcal{U}, \mathbf{F}) P_\mathcal{E} (g \omega, \mathbf{D}, F, \mathcal{U}, \mathbf{F})\ (\text{using \eqref{1007111222} and \eqref{1007112119}}),
\end{eqnarray*}
which implies the sub-additivity of $\log P_\mathcal{E} (\omega, \mathbf{D}, F, \mathcal{U}, \mathbf{F})$.

\eqref{1007111801} follows directly from Proposition \ref{1006151230} and \eqref{1006141705}.
\end{proof}

Let $\mathbf{D}= \{d_F: F\in \mathcal{F}_G\}\subseteq \mathbf{L}_\mathcal{E}^1 (\Omega, C (X))$ be a monotone sub-additive $G$-invariant family and $\mathcal{U}\in \mathbf{C}_\mathcal{E}$. Then by Proposition \ref{1006122129} and Proposition \ref{1006141621} we may define
 the \emph{fiber topological $\mathbf{D}$-pressure of
$\mathbf{F}$ with respect to $\mathcal{U}$} and the \emph{fiber topological $\mathbf{D}$-pressure of
$\mathbf{F}$}, respectively, by
\begin{equation} \label{1208011239}
P_\mathcal{E} (\mathbf{D}, \mathcal{U}, \mathbf{F})=
\lim_{n\rightarrow \infty} \frac{1}{|F_n|} \int_\Omega
\log P_\mathcal{E} (\omega, \mathbf{D}, F_n, \mathcal{U}, \mathbf{F}) d \mathbb{P} (\omega)
\end{equation}
and
\begin{equation} \label{1208030124}
P_\mathcal{E} (\mathbf{D}, \mathbf{F})= \sup_{\mathcal{V}\in \mathbf{C}_X^o}
P_\mathcal{E} (\mathbf{D}, (\Omega\times \mathcal{V})_\mathcal{E}, \mathbf{F}).
\end{equation}
Recall that $\mathbf{D}^0$ is the family whose only element is the constant zero function, and so is a monotone sub-additive $G$-invariant family. It is simple to check that
$$P_\mathcal{E} (\omega, \mathbf{D}^0, F, \mathcal{U}, \mathbf{F})= N (\mathcal{U}_F, \omega)$$
 whenever $\omega\in \Omega$ and $F\in \mathcal{F}_G$, and so one has
\begin{equation} \label{1007120926}
P_\mathcal{E} (\mathbf{D}^0, \mathcal{U}, \mathbf{F})=
\lim_{n\rightarrow \infty} \frac{1}{|F_n|} \int_\Omega
\log N (\mathcal{U}_{F_n}, \omega) d \mathbb{P} (\omega).
\end{equation}
We call this the \emph{fiber topological entropy of $\mathbf{F}$ with respect to $\mathcal{U}$} (also denoted by $h_{\text{top}}^{(r)} (\mathbf{F}, \mathcal{U})$). Moreover, $P_\mathcal{E} (\mathbf{D}^0, \mathbf{F})$ will be  called the \emph{fiber topological entropy of $\mathbf{F}$} (also denoted by $h_{\text{top}}^{(r)} (\mathbf{F})$). Remark that by Proposition \ref{1006122129} the values of these invariants are all independent of the selection of the F\o lner sequence $\{F_n: n\in \mathbb{N}\}$.

\medskip

By the strict convexity of $x \log x$ over $[0, \infty)$ it is easy to obtain:

\begin{lem} \label{1007131745}
Let $a_1, p_1, \cdots, a_k, p_k\in \mathbb{R}$ with $p_1, \cdots, p_k\ge 0$ and $\sum\limits_{i= 1}^k p_i= p$. Then
\begin{equation*}
\sum_{i= 1}^k p_i (a_i- \log p_i)\le p \log (\sum_{i= 1}^k e^{a_i})- p \log p.
\end{equation*}
Equality holds if and only if $p_i= \frac{p e^{a_i}}{\sum\limits_{j= 1}^k e^{a_j}}$ for each $i= 1, \cdots, k$. In particular,
$$\sum\limits_{i= 1}^k - p_i\log p_i\le p\log k- p \log p.$$
\end{lem}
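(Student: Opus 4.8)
The plan is to reduce to the case of a probability vector and then invoke the non-negativity of relative entropy (equivalently, Jensen's inequality for $-\log$); the extension over \cite[Lemma 2.1]{Z-DCDS} is only that some $p_i$ may vanish, which is absorbed by the convention $0\log 0= 0$. First I would dispose of the degenerate case $p= 0$: then $p_1= \cdots= p_k= 0$, so by the convention both sides of the asserted inequality equal $0$, and the stated equality condition $p_i= p e^{a_i}/\sum_j e^{a_j}= 0$ holds vacuously. So assume $p> 0$ and set $q_i= p_i/p\ge 0$, so that $\sum_{i= 1}^k q_i= 1$. Dividing the claimed inequality by $p$ and using $\log p_i= \log p+ \log q_i$ for the indices with $q_i> 0$ (the indices with $q_i= 0$ contribute nothing to either side), the inequality is seen to be equivalent to
\begin{equation*}
\sum_{i= 1}^k q_i a_i- \sum_{i= 1}^k q_i \log q_i\le \log\Bigl(\sum_{i= 1}^k e^{a_i}\Bigr).
\end{equation*}

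Next I would prove this variational inequality for the log-sum-exp functional. Put $Z= \sum_{j= 1}^k e^{a_j}> 0$ and let $r_i= e^{a_i}/Z$, a strictly positive probability vector. A direct computation gives the identity
\begin{equation*}
\log Z- \sum_{i= 1}^k q_i a_i+ \sum_{i= 1}^k q_i \log q_i= \sum_{i: q_i> 0} q_i \log\frac{q_i}{r_i},
\end{equation*}
so it suffices to show the right-hand side is non-negative. This follows from the elementary inequality $\log t\le t- 1$ for $t> 0$: applying it with $t= r_i/q_i$ over the indices with $q_i> 0$,
\begin{equation*}
\sum_{i: q_i> 0} q_i \log\frac{r_i}{q_i}\le \sum_{i: q_i> 0} q_i\Bigl(\frac{r_i}{q_i}- 1\Bigr)= \sum_{i: q_i> 0} r_i- 1\le 0,
\end{equation*}
which is precisely the desired estimate.

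For the equality case, $\log t= t- 1$ holds only at $t= 1$, so equality throughout forces $r_i= q_i$ whenever $q_i> 0$, and also $\sum_{i: q_i> 0} r_i= 1$; since each $r_i> 0$, the latter rules out any index with $q_i= 0$. Hence equality holds exactly when $q_i= r_i$ for all $i$, i.e.\ $p_i= p e^{a_i}/Z$ for all $i$, as claimed. Finally, the displayed ``in particular'' statement is the special case $a_1= \cdots= a_k= 0$: then $\sum_{i= 1}^k e^{a_i}= k$ and the main inequality reads $-\sum_{i= 1}^k p_i \log p_i\le p\log k- p\log p$. I do not anticipate a genuine obstacle here; the only points requiring attention are the bookkeeping of the convention $0\log 0= 0$ (so that all sums may be restricted to indices with $q_i> 0$ without altering either side) and the separate treatment of $p= 0$, everything else being the classical Gibbs inequality.
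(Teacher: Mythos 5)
Your proof is correct. The paper itself offers no proof of this lemma, simply citing \cite[Lemma 2.1]{Z-DCDS} and noting that the only new feature is that some $p_i$ may vanish; your argument is exactly the classical one behind that reference (reduce to a probability vector and apply the Gibbs/Jensen inequality $\log t\le t-1$), and you handle precisely the two degenerate points the paper flags, namely the convention $0\log 0=0$ and the case $p=0$, including the correct treatment of the equality case when some $q_i=0$.
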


Let $\mathbf{D}= \{d_F: F\in \mathcal{F}_G\}\subseteq \mathbf{L}_\mathcal{E}^1 (\Omega, C (X))$ be a monotone sub-additive $G$-invariant family and $\mu\in \mathcal{P}_\mathbb{P} (\mathcal{E}, G)$. We can define
\begin{equation*}
\mu (\mathbf{D})= \lim_{n\rightarrow \infty} \frac{1}{|F_n|} \int_{\mathcal{E}} d_{F_n} (\omega, x) d \mu (\omega, x)\ge 0.
\end{equation*}
The above limit is well defined by Proposition \ref{1006122129} and Proposition \ref{1006151230}, and its value is independent of the selection of the F\o lner sequence $\{F_n: n\in \mathbb{N}\}$ by Proposition \ref{1006122129}. Observe that the sequence $\{\frac{1}{|F_n|} \int_{\mathcal{E}} d_{F_n} (\omega, x) d \mu (\omega, x): n\in \N\}$ need not be convergent, if we no longer assume $G$-invariance of the measure (that is, we only assume $\mu\in \mathcal{P}_\mathbb{P} (\mathcal{E})$).

It is not hard to see:

\begin{prop} \label{1007120919}
Let $\mathbf{D}= \{d_F: F\in \mathcal{F}_G\}\subseteq \mathbf{L}_\mathcal{E}^1 (\Omega, C (X))$ be a sub-additive $G$-invariant family, $\mathcal{U}\in \mathbf{C}_\mathcal{E}$ and $\mu\in \mathcal{P}_\mathbb{P} (\mathcal{E}, G)$ with $d \mu (\omega, x)= d \mu_\omega (x) d \mathbb{P} (\omega)$ the disintegration of $\mu$ over $\mathcal{F}_\mathcal{E}$.
\begin{enumerate}

\item \label{1007121531} Let $\omega\in \Omega$. If $\nu_\omega$ is a Borel probability measure over $\mathcal{E}_\omega$, then, for each $F\in \mathcal{F}_G$,
     \begin{equation*}
     H_{\nu_\omega} ((\mathcal{U}_F)_\omega)+ \int_{\mathcal{E}_\omega} d_F (\omega, x) d \nu_\omega (x) \le \log P_\mathcal{E} (\omega, \mathbf{D}, F, \mathcal{U}, \mathbf{F}).
     \end{equation*}

\item \label{1007121532} If $\mathbf{D}$ is monotone then $P_\mathcal{E} (\mathbf{D}, \mathcal{U}, \mathbf{F})\ge h_\mu^{(r)} (\mathbf{F}, \mathcal{U})+ \mu (\mathbf{D})$,
 and so $P_\mathcal{E} (\mathbf{D}, \mathbf{F})$ $\ge h_\mu^{(r)} (\mathbf{F})+ \mu (\mathbf{D})$. In particular,
\begin{equation} \label{1207272032}
h_{\text{top}}^{(r)} (\mathbf{F}, \mathcal{U})\ge h_\mu^{(r)} (\mathbf{F}, \mathcal{U})\ \text{and}\ h_{\text{top}}^{(r)} (\mathbf{F})\ge h_\mu^{(r)} (\mathbf{F}).
\end{equation}
\end{enumerate}
\end{prop}
\begin{proof}
\eqref{1007121531} In fact, using Lemma \ref{1007131745} one has
     \begin{eqnarray*}
    & & \log P_\mathcal{E} (\omega, \mathbf{D}, F, \mathcal{U}, \mathbf{F}) \\
    &= & \inf \log \left\{\sum_{A
(\omega)\in \alpha
(\omega)} \sup_{x\in A (\omega)} e^{d_F (\omega, x)}: \alpha (\omega)\in
\mathbf{P}_{\mathcal{E}_\omega}, \alpha (\omega)\succeq
(\mathcal{U}_F)_\omega\right\} \\
&\ge & \inf_{\alpha (\omega)\in
\mathbf{P}_{\mathcal{E}_\omega}, \alpha (\omega)\succeq
(\mathcal{U}_F)_\omega} \sum_{A
(\omega)\in \alpha
(\omega)} \nu_\omega (A (\omega)) \left(\sup_{x\in A (\omega)} d_F (\omega, x)- \log \nu_\omega (A (\omega))\right) \\
&\ge & \inf_{\alpha (\omega)\in
\mathbf{P}_{\mathcal{E}_\omega}, \alpha (\omega)\succeq
(\mathcal{U}_F)_\omega} \left\{\int_{\mathcal{E}_\omega} d_F (\omega, x) d \nu_\omega (x)+ H_{\nu_\omega} (\alpha
(\omega))\right\} \\
&= & H_{\nu_\omega} ((\mathcal{U}_F)_\omega)+ \int_{\mathcal{E}_\omega} d_F (\omega, x) d \nu_\omega (x).
     \end{eqnarray*}

\eqref{1007121532} Now we assume in addition that the family $\mathbf{D}$ is monotone. It follows directly from \eqref{1007121531} and the definitions that $P_\mathcal{E} (\mathbf{D}, \mathcal{U}, \mathbf{F})\ge h_\mu^{(r)} (\mathbf{F}, \mathcal{U})+ \mu (\mathbf{D})$, and then $P_\mathcal{E} (\mathbf{D}, \mathbf{F})\ge h_\mu^{(r)} (\mathbf{F})+ \mu (\mathbf{D})$ by
 Theorem \ref{1006122212}. Finally, applying the conclusion to the constant zero family $\mathbf{D}^0$ we obtain \eqref{1207272032}.
\end{proof}

Observe that if $\mathbf{D}= \{d_F: F\in \mathcal{F}_G\}\subseteq \mathbf{L}_\mathcal{E}^1 (\Omega, C (X))$ is a monotone sub-additive $G$-invariant family then it is not hard to check that the family
$$\{\sup_{x\in \mathcal{E}_\omega} d_F (\omega, x)= ||d_F (\omega)||_\infty: F\in \mathcal{F}\}\subseteq L^1 (\Omega, \mathcal{F}, \mathbb{P})$$
is also monotone sub-additive and $G$-invariant. Hence we may define
\begin{equation} \label{1208011225}
\text{sup}_\mathbb{P} (\mathbf{D})= \lim_{n\rightarrow \infty} \frac{1}{|F_n|} \int_\Omega \sup_{x\in \mathcal{E}_\omega} d_F (\omega, x) d \mathbb{P} (\omega).
\end{equation}
Remark that by Proposition \ref{1006122129} and Proposition \ref{1006141621}, the limit is well defined and its value is independent of the selection of the F\o lner sequence $\{F_n: n\in \mathbb{N}\}$.

From the definition, it is easy to see:

\begin{lem} \label{1102071655}
Let $\mathbf{D}= \{d_F: F\in \mathcal{F}_G\}\subseteq \mathbf{L}_\mathcal{E}^1 (\Omega, C (X))$ be a monotone sub-additive $G$-invariant family and $\mu\in \mathcal{P}_\mathbb{P} (\mathcal{E})$. Then
\begin{equation*}
\text{sup}_\mathbb{P} (\mathbf{D})\ge \limsup_{n\rightarrow \infty} \frac{1}{|F_n|} \int_{\mathcal{E}} d_{F_n} (\omega, x) d \mu (\omega, x).
\end{equation*}
In particular, $\text{sup}_\mathbb{P} (\mathbf{D})\ge \mu (\mathbf{D})$ for each $\mu\in \mathcal{P}_\mathbb{P} (\mathcal{E}, G)$.
\end{lem}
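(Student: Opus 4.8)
The proof will be a direct application of the disintegration of $\mu$ together with the trivial fact that the integral of a function against a probability measure is dominated by its supremum.

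First I would fix $\mu\in\mathcal{P}_\mathbb{P}(\mathcal{E})$ and recall that it disintegrates as $d\mu(\omega,x)=d\mu_\omega(x)\,d\mathbb{P}(\omega)$, where for $\mathbb{P}$-a.e.\ $\omega\in\Omega$ the measure $\mu_\omega$ is a Borel probability measure on $\mathcal{E}_\omega$ (cf.\ \eqref{1006271735}, \eqref{1006271736}, and the extension of that identity to integrable functions which is already used in this section, e.g.\ in \eqref{0906282008}). Then for each $n\in\mathbb{N}$ and $\mathbb{P}$-a.e.\ $\omega\in\Omega$, since $d_{F_n}(\omega,\cdot)$ is continuous on the compact set $\mathcal{E}_\omega$ and $\mu_\omega(\mathcal{E}_\omega)=1$,
\begin{equation*}
\int_{\mathcal{E}_\omega} d_{F_n}(\omega,x)\,d\mu_\omega(x)\le \sup_{x\in\mathcal{E}_\omega} d_{F_n}(\omega,x)=||d_{F_n}(\omega)||_\infty .
\end{equation*}
(Here the monotonicity of $\mathbf{D}$ is not needed for the inequality itself, though by Proposition \ref{1006151230} it guarantees both sides are non-negative.)

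Next I would integrate this inequality over $\Omega$ against $\mathbb{P}$ and use the disintegration identity on the left-hand side, to get, for every $n\in\mathbb{N}$,
\begin{equation*}
\int_{\mathcal{E}} d_{F_n}(\omega,x)\,d\mu(\omega,x)=\int_\Omega\left(\int_{\mathcal{E}_\omega} d_{F_n}(\omega,x)\,d\mu_\omega(x)\right)d\mathbb{P}(\omega)\le\int_\Omega ||d_{F_n}(\omega)||_\infty\,d\mathbb{P}(\omega),
\end{equation*}
all quantities being finite because $d_{F_n}\in\mathbf{L}_\mathcal{E}^1(\Omega, C(X))$. Dividing by $|F_n|$ and passing to $\limsup$ as $n\to\infty$ gives
\begin{equation*}
\limsup_{n\to\infty}\frac{1}{|F_n|}\int_{\mathcal{E}} d_{F_n}(\omega,x)\,d\mu(\omega,x)\le\limsup_{n\to\infty}\frac{1}{|F_n|}\int_\Omega ||d_{F_n}(\omega)||_\infty\,d\mathbb{P}(\omega)=\text{sup}_\mathbb{P}(\mathbf{D}),
\end{equation*}
where the last equality is the definition of $\text{sup}_\mathbb{P}(\mathbf{D})$ together with the fact, noted just before the statement, that the defining sequence converges (Proposition \ref{1006122129}, since $F\mapsto\int_\Omega ||d_F(\omega)||_\infty\,d\mathbb{P}(\omega)$ is monotone, non-negative, $G$-invariant and sub-additive), so that its $\limsup$ equals its limit. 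This is exactly the asserted inequality.

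There is no real obstacle here; the only points worth a word of care are the measurability in $\omega$ of $\omega\mapsto\int_{\mathcal{E}_\omega} d_{F_n}(\omega,x)\,d\mu_\omega(x)$ and the validity of the disintegration identity for the bounded, jointly measurable, fiberwise continuous integrand $d_{F_n}$ rather than merely for indicators — both of which are standard properties of regular conditional measures already invoked in this section.
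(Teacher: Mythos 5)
Your proof is correct and is essentially the argument the paper has in mind: the paper offers no proof at all ("From the definition, it is easy to see"), and your fiberwise bound $\int_{\mathcal{E}_\omega} d_{F_n}(\omega,x)\,d\mu_\omega(x)\le \|d_{F_n}(\omega)\|_\infty$ followed by integration over $\Omega$ and division by $|F_n|$ is exactly that immediate argument. (In fact the disintegration can be bypassed entirely: since $d_{F_n}(\omega,x)\le \|d_{F_n}(\omega)\|_\infty$ pointwise on $\mathcal{E}$ and $\mu$ has marginal $\mathbb{P}$, integrating this inequality directly against $\mu$ already yields $\int_{\mathcal{E}} d_{F_n}\,d\mu\le\int_\Omega \|d_{F_n}(\omega)\|_\infty\,d\mathbb{P}(\omega)$.)
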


As in Lemma \ref{1007261204} and Proposition \ref{1007241716}, one has:

\begin{prop} \label{1102041733}
Let $\mathbf{D}= \{d_F: F\in \mathcal{F}_G\}\subseteq \mathbf{L}_\mathcal{E}^1 (\Omega, C (X))$ be a sub-additive $G$-invariant family and $\mathcal{U}, \mathcal{U}_1, \mathcal{U}_2\in \mathbf{C}_\mathcal{E}$.
\begin{enumerate}

\item Let $\omega\in \Omega$ and $F\in \mathcal{F}_G$. Then
\begin{equation*}
\sup_{x\in \mathcal{E}_\omega} e^{d_F (\omega, x)}\le P_\mathcal{E} (\omega, \mathbf{D}, F, \mathcal{U}, \mathbf{F})\le N (\mathcal{U}_F, \omega) \sup_{x\in \mathcal{E}_\omega} e^{d_F (\omega, x)}.
\end{equation*}

\item If $(\mathcal{U}_1)_\omega\succeq (\mathcal{U}_2)_\omega$ for $\mathbb{P}$-a.e. $\omega\in \Omega$, then
    \begin{equation*}
  \log P_\mathcal{E} (\omega, \mathbf{D}, F, \mathcal{U}_1, \mathbf{F})\ge \log P_\mathcal{E} (\omega, \mathbf{D}, F, \mathcal{U}_2, \mathbf{F})
    \end{equation*}
    for $\mathbb{P}$-a.e. $\omega\in \Omega$ and each $F\in \mathcal{F}_G$.

\item If $(\mathcal{U}_1)_\omega= (\mathcal{U}_2)_\omega$ for $\mathbb{P}$-a.e. $\omega\in \Omega$, then
    \begin{equation*}
  \log P_\mathcal{E} (\omega, \mathbf{D}, F, \mathcal{U}_1, \mathbf{F})= \log P_\mathcal{E} (\omega, \mathbf{D}, F, \mathcal{U}_2, \mathbf{F})
    \end{equation*}
    for $\mathbb{P}$-a.e. $\omega\in \Omega$ and each $F\in \mathcal{F}_G$.

\item If $\mathbf{D}$ is monotone then,
for $\mathbb{P}$-a.e. $\omega\in \Omega$ and each $F\in \mathcal{F}_G$,
\begin{equation*}
e^{||d_F (\omega)||_\infty}\le P_\mathcal{E} (\omega, \mathbf{D}, F, \mathcal{U}, \mathbf{F})\le N (\mathcal{U}_F, \omega) e^{||d_F (\omega)||_\infty},
\end{equation*}
and hence
$$\text{sup}_\mathbb{P} (\mathbf{D})\le P_\mathcal{E} (\mathbf{D}, \mathcal{U}, \mathbf{F})\le h_{\text{top}}^{(r)} (\mathbf{F}, \mathcal{U})+ \text{sup}_\mathbb{P} (\mathbf{D}).$$

\item
Assume that $\mathcal{U}$ has the form  $\mathcal{U}= \{(\Omega_i\times B_i)^c: i= 1, \cdots, n\}, n\in \mathbb{N}\setminus \{1\}$ with $\Omega_i\in \mathcal{F}$ and $B_i\in \mathcal{B}_X$ for each $i= 1, \cdots, n$. If $\mathbb{P} (\bigcap\limits_{i= 1}^n \Omega_i)= 0$ then $h_{\text{top}}^{(r)} (\mathbf{F}, \mathcal{U})= 0$. so If, in addition, $\mathbf{D}$ is monotone, then
$$P_\mathcal{E} (\mathbf{D}, \mathcal{U}, \mathbf{F})= \text{sup}_\mathbb{P} (\mathbf{D}).$$
\end{enumerate}
\end{prop}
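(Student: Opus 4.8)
The strategy is to read everything off the minimum formula for $P_\mathcal{E}(\omega,\mathbf{D},F,\mathcal{U},\mathbf{F})$ from Proposition \ref{1006141641} (and the fibrewise infimum form \eqref{1007052308}), the fact from Proposition \ref{1006151230} that a monotone sub-additive $G$-invariant family is automatically non-negative, and the measurability of $\omega\mapsto P_\mathcal{E}(\omega,\mathbf{D},F,\mathcal{U},\mathbf{F})$ and $\omega\mapsto N(\mathcal{U}_F,\omega)$ from Propositions \ref{1006141621} and \ref{1101311055}. For (1): the lower bound is immediate, since in any admissible partition the summands are non-negative and the atoms cover $\mathcal{E}_\omega$, so the sum dominates the largest summand, which already equals $\sup_{x\in\mathcal{E}_\omega}e^{d_F(\omega,x)}$. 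For the upper bound, choose a subfamily of $(\mathcal{U}_F)_\omega$ of cardinality $N(\mathcal{U}_F,\omega)$ covering $\mathcal{E}_\omega$, disjointify it into a partition of $\mathcal{E}_\omega$ finer than $(\mathcal{U}_F)_\omega$ with at most $N(\mathcal{U}_F,\omega)$ atoms, and bound each summand by $\sup_{x\in\mathcal{E}_\omega}e^{d_F(\omega,x)}$.

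For (2) and (3) I would argue fibrewise. If $(\mathcal{U}_1)_\omega\succeq(\mathcal{U}_2)_\omega$ for $\mathbb{P}$-a.e.\ $\omega$, then by \eqref{1006131722} and the fact that each $F_{g^{-1},g\omega}$ is a homeomorphism one gets $\big((\mathcal{U}_1)_F\big)_\omega\succeq\big((\mathcal{U}_2)_F\big)_\omega$ for every $F$ and $\mathbb{P}$-a.e.\ $\omega$ (the exceptional $\omega$ form $\bigcup_{g\in F}g^{-1}$ of a null set, hence a null set, as $G$ acts on $(\Omega,\mathcal{F},\mathbb{P})$ by measure-preserving transformations). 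Therefore the family of partitions admissible in \eqref{1007052308} for $\mathcal{U}_1$ is contained in that for $\mathcal{U}_2$, so the infimum for $\mathcal{U}_1$ is at least that for $\mathcal{U}_2$; taking logarithms yields (2), and applying (2) in both directions yields (3).

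For (4), monotonicity of $\mathbf{D}$ forces $d_F(\omega,\cdot)\ge0$, so $\sup_{x\in\mathcal{E}_\omega}e^{d_F(\omega,x)}=e^{\|d_F(\omega)\|_\infty}$; inserting this into (1) and taking logarithms gives
$$\|d_F(\omega)\|_\infty\le\log P_\mathcal{E}(\omega,\mathbf{D},F,\mathcal{U},\mathbf{F})\le\log N(\mathcal{U}_F,\omega)+\|d_F(\omega)\|_\infty$$
for $\mathbb{P}$-a.e.\ $\omega$. Integrating over $\Omega$, specializing $F=F_n$, dividing by $|F_n|$ and letting $n\to\infty$ — using Proposition \ref{1006122129}, the definitions of $\text{sup}_\mathbb{P}(\mathbf{D})$ and of $P_\mathcal{E}(\mathbf{D},\mathcal{U},\mathbf{F})$, and \eqref{1007120926} — gives the two displayed inequalities.

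For (5) I would first prove that $N(\mathcal{U}_F,\omega)=1$ for $\mathbb{P}$-a.e.\ $\omega$ and every $F\in\mathcal{F}_G$. Since $\{(\omega,x)\in\mathcal{E}:\omega\in\Omega_i^c\}$ lies inside the single element $(\Omega_i\times B_i)^c$ of $\mathcal{U}$, whenever $\omega\notin\bigcap_{i=1}^n\Omega_i$ the fibre $\mathcal{E}_\omega$ is covered by one element of $\mathcal{U}_\omega$; by \eqref{1006131722}, if moreover $g\omega\notin\bigcap_i\Omega_i$ for all $g\in F$ then $\mathcal{E}_\omega$ is covered by a single element of $(\mathcal{U}_F)_\omega$, and the exceptional set $\bigcup_{g\in F}g^{-1}\big(\bigcap_i\Omega_i\big)$ is $\mathbb{P}$-null since $\mathbb{P}(\bigcap_i\Omega_i)=0$ and $G$ acts measure-preserving. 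Hence $\int_\Omega\log N(\mathcal{U}_{F_n},\omega)\,d\mathbb{P}(\omega)=0$ for all $n$, so $h_{\text{top}}^{(r)}(\mathbf{F},\mathcal{U})=0$ by \eqref{1007120926}; if additionally $\mathbf{D}$ is monotone, then (4) squeezes $P_\mathcal{E}(\mathbf{D},\mathcal{U},\mathbf{F})$ between $\text{sup}_\mathbb{P}(\mathbf{D})$ and $h_{\text{top}}^{(r)}(\mathbf{F},\mathcal{U})+\text{sup}_\mathbb{P}(\mathbf{D})=\text{sup}_\mathbb{P}(\mathbf{D})$, giving the claimed equality. The only delicate step is this collapse of $N(\mathcal{U}_F,\omega)$ to $1$ almost everywhere in (5); elsewhere the work is routine, apart from the ubiquitous need to intersect the finitely many full-measure sets (one per $g\in F$) when passing from fibrewise facts to their $\mathbb{P}$-a.e.\ forms.
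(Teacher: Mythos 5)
Your proof is correct and follows essentially the route the paper intends: the paper omits the argument apart from the single retained observation that a monotone family is non-negative, so $\sup_{x\in\mathcal{E}_\omega}e^{d_F(\omega,x)}=e^{\|d_F(\omega)\|_\infty}$, which is exactly the pivot of your part (4), while your parts (1)--(3) and (5) are the same routine fibrewise verifications (via \eqref{1007051535}/\eqref{1007052308}, \eqref{1006131722} and \eqref{1007120926}) that the paper delegates to the analogues in Lemma \ref{1007261204} and Proposition \ref{1007241716}. In particular your handling of the a.e.\ collapse $N(\mathcal{U}_{F},\omega)=1$ in (5), including the null exceptional set $\bigcup_{g\in F}g^{-1}\bigl(\bigcap_i\Omega_i\bigr)$, is precisely what is needed.
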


As a direct corollary, we have:

\begin{cor} \label{1102052218}
Let $\mathbf{D}= \{d_F: F\in \mathcal{F}_G\}\subseteq \mathbf{L}_\mathcal{E}^1 (\Omega, C (X))$ be a monotone sub-additive $G$-invariant family. Then
\begin{equation*}
P_\mathcal{E} (\mathbf{D}, \mathbf{F})= \sup_{\xi\in \mathbf{P}_\Omega, \mathcal{V}\in \mathbf{C}_X^o}
P_\mathcal{E} (\mathbf{D}, (\xi\times \mathcal{V})_\mathcal{E}, \mathbf{F}).
\end{equation*}
\end{cor}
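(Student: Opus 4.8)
The plan is to show that the partition $\xi\in\mathbf{P}_\Omega$ of the base space contributes nothing at the level of fibers, so the enlarged supremum collapses to the one defining $P_\mathcal{E}(\mathbf{D},\mathbf{F})$. First I would dispose of the easy inequality $\ge$: since $\{\Omega\}\in\mathbf{P}_\Omega$, the cover $(\Omega\times\mathcal{V})_\mathcal{E}$ equals $(\{\Omega\}\times\mathcal{V})_\mathcal{E}$ and therefore already appears on the right-hand side, whence $P_\mathcal{E}(\mathbf{D},\mathbf{F})=\sup_{\mathcal{V}\in\mathbf{C}_X^o}P_\mathcal{E}(\mathbf{D},(\Omega\times\mathcal{V})_\mathcal{E},\mathbf{F})\le\sup_{\xi\in\mathbf{P}_\Omega,\,\mathcal{V}\in\mathbf{C}_X^o}P_\mathcal{E}(\mathbf{D},(\xi\times\mathcal{V})_\mathcal{E},\mathbf{F})$.

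For the reverse inequality I would fix $\xi=\{C_1,\dots,C_k\}\in\mathbf{P}_\Omega$ and $\mathcal{V}\in\mathbf{C}_X^o$ and exploit the following observation: for each $\omega\in\Omega$ and each $W\in\mathcal{V}$, the fiber $((C_j\times W)\cap\mathcal{E})_\omega$ equals $W\cap\mathcal{E}_\omega$ if $\omega\in C_j$ and equals $\emptyset$ otherwise; since $\xi$ is a partition, exactly one $C_j$ contains $\omega$, so $((\xi\times\mathcal{V})_\mathcal{E})_\omega$ and $((\Omega\times\mathcal{V})_\mathcal{E})_\omega$ have the same non-empty members. Using \eqref{1006131722} and the fact that every $F\in\mathcal{F}_G$ is finite (and the $G$-action is measure-preserving), the same persists after applying the skew-product transformations and joining over $F$, i.e.\ $(((\xi\times\mathcal{V})_\mathcal{E})_F)_\omega$ and $(((\Omega\times\mathcal{V})_\mathcal{E})_F)_\omega$ agree up to adjoining the empty set, for $\mathbb{P}$-a.e.\ $\omega$. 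Since adjoining $\emptyset$ to a cover changes neither $\mathbf{P}(\cdot)$ of that cover nor the minimum in \eqref{1007052309}, the third part of Proposition \ref{1102041733} then yields $\log P_\mathcal{E}(\omega,\mathbf{D},F,(\xi\times\mathcal{V})_\mathcal{E},\mathbf{F})=\log P_\mathcal{E}(\omega,\mathbf{D},F,(\Omega\times\mathcal{V})_\mathcal{E},\mathbf{F})$ for $\mathbb{P}$-a.e.\ $\omega$ and every $F\in\mathcal{F}_G$.

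I would then integrate this equality against $\mathbb{P}$, divide by $|F_n|$ and let $n\to\infty$; the limits exist by Proposition \ref{1006122129} and Proposition \ref{1006141621} because $\mathbf{D}$ is monotone. This gives $P_\mathcal{E}(\mathbf{D},(\xi\times\mathcal{V})_\mathcal{E},\mathbf{F})=P_\mathcal{E}(\mathbf{D},(\Omega\times\mathcal{V})_\mathcal{E},\mathbf{F})\le P_\mathcal{E}(\mathbf{D},\mathbf{F})$, and taking the supremum over $\xi\in\mathbf{P}_\Omega$ and $\mathcal{V}\in\mathbf{C}_X^o$ completes the argument. Note this is exactly the pressure-theoretic counterpart of the entropy identity in Proposition \ref{1007241716}(2).

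I expect the only delicate point to be the bookkeeping in the second paragraph: confirming that the $\xi$-coordinate is genuinely flattened out fibrewise even once the $G$-action has been applied and the join over $F$ has been formed, and that the stray empty atom produced by $\xi$ is harmless for the local pressure $P_\mathcal{E}(\omega,\cdot)$. Everything else is a routine invocation of Proposition \ref{1102041733} together with the convergence results of \S\ref{first}, which is why the statement is flagged as a direct corollary.
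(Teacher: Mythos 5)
Your argument is correct and is essentially the proof the paper intends: the "direct corollary" is exactly the fibrewise observation that a base partition $\xi$ is invisible in the fibers (up to a harmless empty atom), so Proposition \ref{1102041733} gives $P_\mathcal{E}(\omega,\mathbf{D},F,(\xi\times\mathcal{V})_\mathcal{E},\mathbf{F})=P_\mathcal{E}(\omega,\mathbf{D},F,(\Omega\times\mathcal{V})_\mathcal{E},\mathbf{F})$ a.e., and integrating and passing to the limit collapses the supremum over $\xi$. Your handling of the stray empty atom (equivalently, applying part (2) of Proposition \ref{1102041733} in both directions, since the two fiber covers refine each other) is exactly the right bookkeeping, and the rest matches the paper's route.
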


We end this section with a question.

\begin{ques} \label{1102061758}
Let $\mathbf{D}= \{d_F: F\in \mathcal{F}_G\}\subseteq \mathbf{L}_\mathcal{E}^1 (\Omega, C (X))$ be a monotone sub-additive $G$-invariant family. Do we have
\begin{equation*}
P_\mathcal{E} (\mathbf{D}, \mathbf{F})= \sup_{\mathcal{U}\in \mathbf{C}_\mathcal{E}^o}
P_\mathcal{E} (\mathbf{D}, \mathcal{U}, \mathbf{F})?
\end{equation*}
\end{ques}

Observe that if $\Omega$ is a compact metric space with $\mathcal{F}= \mathcal{B}_\Omega$ and $\mathcal{U}\in \mathbf{C}_{\Omega\times X}^o$, it is not hard to find $\mathcal{W}\in \mathbf{C}_\Omega^o$ and $\mathcal{V}\in \mathbf{C}_X^o$ with $\mathcal{W}\times \mathcal{V}\succeq \mathcal{U}$, and hence $\xi\times \mathcal{V}\succeq \mathcal{U}$ for some $\xi\in \mathbf{P}_\Omega$,
 thus, using Corollary \ref{1102052218} one has
 \begin{equation*}
P_\mathcal{E} (\mathbf{D}, \mathbf{F})= \sup_{\mathcal{U}\in \mathbf{C}_\mathcal{E}^{t, o}}
P_\mathcal{E} (\mathbf{D}, \mathcal{U}, \mathbf{F}).
\end{equation*}
 Here, we
 denote by $\mathbf{C}_\mathcal{E}^{t, o}$ the set of all $\mathcal{U}\cap \mathcal{E}, \mathcal{U}\in \mathbf{C}_{\Omega\times X}^o$, and clearly $\mathbf{C}_\mathcal{E}^{t, o}\subseteq \mathbf{C}_\mathcal{E}^o$.

\section{Factor excellent and good covers} \label{factor good}

 In this section we introduce and discuss the concept of factor excellent and factor good covers which are necessary assumptions in our main result Theorem \ref{1007141414}. As shown by Theorem \ref{1007212202} and Theorem \ref{cover}, many interesting covers belong to this special class of finite measurable covers.

 \medskip

Recall that a topological space is \emph{zero-dimensional} if it has a topological base consisting of clopen subsets. For a zero-dimensional compact metric space, the set of all clopen subsets is countable.

Let $\mathcal{U}\in \mathbf{C}_\mathcal{E}$. Say $\mathcal{U}= \{U_1, \cdots, U_N\}, N\in \mathbb{N}$. Set
$$\mathbf{P}_\mathcal{U}= \{\{A_1, \cdots, A_N\}\in \mathbf{P}_\mathcal{E}: A_i\subseteq U_i, i= 1, \cdots, N\}.$$

Before proceeding, we state a well-known fact.

\begin{lem} \label{1007242349}
Let $Z$ be a zero-dimensional compact metric space and $\mathcal{W}\in \mathbf{C}_Z^o$. Set
$$\mathbf{P}_c (\mathcal{W})= \{\beta\in \mathbf{P}_\mathcal{W}: \beta\ \text{is clopen}\},$$
where $\mathbf{P}_\mathcal{W}$ is introduced similarly. Then $\mathbf{P}_c (\mathcal{W})$ is a countable family and, for each $\gamma\in \mathbf{P}_\mathcal{W}$, if $(Z, \mathcal{B}_Z, \eta)$ is a probability space then
$$\inf\limits_{\beta\in \mathbf{P}_c (\mathcal{W})} [H_\eta (\gamma| \beta)+ H_\eta (\beta| \gamma)]= 0.$$
\end{lem}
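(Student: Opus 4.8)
The plan is to prove the two assertions separately. Countability is immediate: since $Z$ is zero-dimensional and compact metric, its family of clopen subsets is countable, and each $\beta\in\mathbf{P}_c(\mathcal{W})$ is determined by the tuple of its atoms $(B_1,\dots,B_N)$ with $B_i\subseteq W_i$ clopen, so $\mathbf{P}_c(\mathcal{W})$ embeds into a countable set. (That $\mathbf{P}_c(\mathcal{W})\neq\emptyset$ will follow from the construction below applied to any $\gamma\in\mathbf{P}_\mathcal{W}$, and $\mathbf{P}_\mathcal{W}\neq\emptyset$ because one may disjointify $\mathcal{W}$.)

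For the approximation statement I would fix $\gamma=\{A_1,\dots,A_N\}\in\mathbf{P}_\mathcal{W}$ and $\varepsilon>0$. Since $H_\eta(\gamma\mid\beta)+H_\eta(\beta\mid\gamma)\ge 0$ always, it suffices to produce $\beta\in\mathbf{P}_c(\mathcal{W})$ with $\delta:=\sum_{i=1}^N\eta(A_i\Delta B_i)$ arbitrarily small: a computation as in the continuity argument of Proposition \ref{1007141926} gives $H_\eta(\gamma\mid\beta)+H_\eta(\beta\mid\gamma)\le 2\delta\log N+2\phi(\delta)$ with $\phi(\delta)=-\delta\log\delta-(1-\delta)\log(1-\delta)\to 0$ as $\delta\to 0$. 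So the heart of the matter is a purely topological approximation of the Borel partition $\gamma$ (which refines $\mathcal{W}$) by a clopen one that still refines $\mathcal{W}$. To carry this out I would first use inner regularity of the Borel probability measure $\eta$ on the compact metric space $Z$ to pick compact sets $K_i\subseteq A_i$ with $\sum_i\eta(A_i\setminus K_i)$ as small as desired; the $K_i$ are then pairwise disjoint and $K_i\subseteq W_i$. Next, since $\mathcal{W}$ is an open cover of the compact space $Z$ and the $K_i$ are compact, I would choose $\delta_0>0$ smaller than a Lebesgue number of $\mathcal{W}$, than $\min_{i\neq j}d(K_i,K_j)$, and than $\min_i d(K_i,Z\setminus W_i)$ (the latter two read as $+\infty$ when vacuous), and use zero-dimensionality to take a finite clopen partition $\mathcal{Q}$ of $Z$ of mesh $<\delta_0$. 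By the choice of $\delta_0$, every atom $Q\in\mathcal{Q}$ lies in some member of $\mathcal{W}$, meets at most one $K_i$, and if it meets $K_i$ then $Q\subseteq W_i$. Put $O_i=\bigcup\{Q\in\mathcal{Q}:Q\cap K_i\neq\emptyset\}$, a clopen set with $K_i\subseteq O_i\subseteq W_i$ and $O_i\cap O_j=\emptyset$ for $i\neq j$; for each remaining atom $Q$ choose $j(Q)$ with $Q\subseteq W_{j(Q)}$, and set $B_i=O_i\cup\bigcup\{Q:\ Q\cap K_j=\emptyset\ \forall j,\ j(Q)=i\}$.

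Then $\beta=\{B_1,\dots,B_N\}$ is a clopen partition of $Z$ with $B_i\subseteq W_i$, i.e. $\beta\in\mathbf{P}_c(\mathcal{W})$, and since $K_i\subseteq A_i\cap B_i$ one gets $\eta(A_i\setminus B_i)\le\eta(A_i\setminus K_i)$ and $\eta(B_i\setminus A_i)\le\sum_{j\neq i}\eta(A_j\setminus K_j)$ (using $B_i\cap K_j=\emptyset$ for $j\neq i$), whence $\delta\le 2\sum_i\eta(A_i\setminus K_i)$, which was arranged small. Feeding this into the entropy estimate makes $H_\eta(\gamma\mid\beta)+H_\eta(\beta\mid\gamma)<\varepsilon$, and letting $\varepsilon\to 0$ yields the infimum $0$.

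The main obstacle — really the only non-routine point — will be producing a clopen partition that is simultaneously subordinate to the open cover $\mathcal{W}$ (each atom inside the correct $W_i$) and $\eta$-close to $\gamma$. The device I expect to rely on is to retreat first to compact cores $K_i$ lying a definite distance apart and a definite distance inside their $W_i$; then a single clopen partition of small enough mesh, available by zero-dimensionality, automatically respects all the required inclusions, and the finitely many atoms missing every $K_i$ can be distributed among the $W_i$'s via the Lebesgue-number property without disturbing the measure bound. The countability claim and the Rokhlin-distance estimate are standard.
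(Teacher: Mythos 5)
Your proof is correct; note that the paper itself offers no argument for this lemma (it is stated as a well-known fact), and what you give is exactly the standard one: countability of clopen sets in a zero-dimensional compact metric space, inner regularity to retreat to compact cores $K_i\subseteq A_i$, a clopen partition of mesh below the Lebesgue number of $\mathcal{W}$ and below $\min_{i\neq j}d(K_i,K_j)$ and $\min_i d(K_i,Z\setminus W_i)$, and the Rokhlin-distance continuity of $H_\eta(\cdot|\cdot)$ as in Proposition \ref{1007141926}. One small remark: your intermediate bound $\eta(B_i\setminus A_i)\le\sum_{j\neq i}\eta(A_j\setminus K_j)$ only yields $\delta\le N\sum_i\eta(A_i\setminus K_i)$ upon summation, but the sharper claim $\delta\le 2\sum_i\eta(A_i\setminus K_i)$ does hold because $\sum_i\eta(A_i\setminus B_i)=\sum_i\eta(B_i\setminus A_i)$ for two partitions with matched indices; in any case either bound tends to $0$, so the conclusion is unaffected.
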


In the development of the local entropy theory of $\Z$-actions (or more generally the local entropy theory of a countable discrete amenable group action), a key point is a local version of the classical variational principle for entropy of finite open covers.
Lemma \ref{1007242349} plays an important role in the process of building the local variational principle. That is, people first prove the local variational principle in the case that the state space is zero-dimensional with the help of Lemma \ref{1007242349}, and then,
starting from this, people can obtain the local variational principle for a general dynamical system by standard arguments.
See \cite{BGH, HYZ1, HYZ} for more details.

\medskip

Inspired by Lemma \ref{1007242349}, we introduce the following concepts which, together with their variants, factor excellent and factor good, serve as essential assumptions in our main results.

Let $\mathcal{U}\in \mathbf{C}_\mathcal{E}$.
$\mathcal{U}$ is called \emph{excellent} (\emph{good}, respectively) if there exists a sequence $\{\alpha_n: n\in \mathbb{N}\}\subseteq \mathbf{P}_\mathcal{U}$ satisfying properties \eqref{1007202201} and \eqref{1007202202} (properties \eqref{1007202201} and \eqref{1007202203}, respectively), where
\begin{enumerate}

\item \label{1007202201}
For each $n\in \mathbb{N}$, $(\alpha_n)_\omega$ is a clopen partition of $\mathcal{E}_\omega$ for $\mathbb{P}$-a.e. $\omega\in \Omega$;

\item \label{1007202202} For each $\beta\in \mathbf{P}_\mathcal{U}$, if $\mu\in \mathcal{P}_\mathbb{P} (\mathcal{E})$ then
    \begin{equation} \label{1207201652}
    \inf_{n\in \mathbb{N}} [H_\mu (\beta| \alpha_n\vee \mathcal{F}_\mathcal{E})+ H_\mu (\alpha_n| \beta\vee \mathcal{F}_\mathcal{E})]= 0,
    \end{equation}
in fact, if $d \mu (\omega, x)= d \mu_\omega (x) d \mathbb{P} (\omega)$ is the disintegration of $\mu$ over $\mathcal{F}_\mathcal{E}$, then using \eqref{1007021626} and \eqref{0906282007}, condition \eqref{1207201652} is equivalent to:
\begin{equation*}
\inf_{n\in \mathbb{N}} \int_\Omega [H_{\mu_\omega} (\beta_\omega| (\alpha_n)_\omega)+ H_{\mu_\omega} ((\alpha_n)_\omega| \beta_\omega)] d \mathbb{P} (\omega)= 0.
\end{equation*}

\item \label{1007202203}
 For each $\mu\in \mathcal{P}_\mathbb{P} (\mathcal{E}, G)$, $h_{\mu}^{(r)} (\mathbf{F}, \mathcal{U})= \inf\limits_{n\in \mathbb{N}} h_\mu^{(r)} (\mathbf{F}, \alpha_n)$, by \eqref{1208011804}, this is equivalent to $h_{\mu}^{(r)} (\mathbf{F}, \beta)\ge \inf\limits_{n\in \mathbb{N}} h_\mu^{(r)} (\mathbf{F}, \alpha_n)$ for each $\beta\in \mathbf{P}_\mathcal{U}$.
\end{enumerate}
Property \eqref{1007202202} implies property \eqref{1007202203} by Proposition \ref{0911192237} \eqref{1007041247}, and excellent implies good.

\begin{rem} \label{1207262017}
In the above definitions (as distinct from the hypothesis of Lemma \ref{1007242349}), $X$ need not be zero-dimensional. For example, for $\xi\in \mathbf{P}_\Omega$,  $(\xi\times \mathcal{V})_\mathcal{E}$ will always be excellent in $\mathbf{C}_\mathcal{E}$ if we put $\alpha_n= (\xi\times \mathcal{V})_\mathcal{E}$ for each $n\in \N$, whenever $\mathcal{V}\in \mathbf{P}_X$ consists of clopen subsets of $X$. (Obviously such $\mathcal{V}$ may exist even if $X$ is not zero-dimensional.)
\end{rem}

It is easy to check:

\begin{lem} \label{1102061738}
Let $\mathcal{U}\in \mathbf{C}_\mathcal{E}^o$ and $\mathcal{U}'\in \mathbf{C}_\mathcal{E}^o$ with $\mathcal{U}'\succeq \mathcal{U}$, such that $\mathcal{U}'$ is good and $h_{\mu}^{(r)} (\mathbf{F}, \mathcal{U}')= h_{\mu}^{(r)} (\mathbf{F}, \mathcal{U})$ for each $\mu\in \mathcal{P}_\mathbb{P} (\mathcal{E}, G)$. Then $\mathcal{U}$ is good.
\end{lem}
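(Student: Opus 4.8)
The plan is to transport a sequence witnessing the goodness of $\mathcal{U}'$ down to one witnessing the goodness of $\mathcal{U}$, by merging atoms along the refinement $\mathcal{U}' \succeq \mathcal{U}$.

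First I would write $\mathcal{U} = \{U_1, \dots, U_N\}$ and $\mathcal{U}' = \{U_1', \dots, U_M'\}$ and, using $\mathcal{U}' \succeq \mathcal{U}$, fix once and for all a map $\phi \colon \{1, \dots, M\} \to \{1, \dots, N\}$ with $U_j' \subseteq U_{\phi(j)}$. Let $\{\alpha_n' : n \in \mathbb{N}\} \subseteq \mathbf{P}_{\mathcal{U}'}$ be a sequence realizing properties \eqref{1007202201} and \eqref{1007202203} for $\mathcal{U}'$, say $\alpha_n' = \{A_{n,1}', \dots, A_{n,M}'\}$ with $A_{n,j}' \subseteq U_j'$. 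I would then set $A_{n,i} = \bigcup_{j : \phi(j) = i} A_{n,j}'$ and $\alpha_n = \{A_{n,1}, \dots, A_{n,N}\}$. Since the $A_{n,i}$ are pairwise disjoint with union $\mathcal{E}$ and $A_{n,i} \subseteq U_i$, we get $\alpha_n \in \mathbf{P}_\mathcal{U}$; moreover $\alpha_n$ is a coarsening of $\alpha_n'$, so $(\alpha_n')_\omega \succeq (\alpha_n)_\omega$ for $\mathbb{P}$-a.e.\ $\omega$. As a finite union of clopen subsets of $\mathcal{E}_\omega$ is clopen, property \eqref{1007202201} for $\{\alpha_n'\}$ passes immediately to $\{\alpha_n\}$.

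It then remains to check property \eqref{1007202203} for $\{\alpha_n\}$, i.e.\ that $h_{\mu,+}^{(r)}(\mathbf{F}, \mathcal{U}) = \inf_{n} h_\mu^{(r)}(\mathbf{F}, \alpha_n)$ for every $\mu \in \mathcal{P}_\mathbb{P}(\mathcal{E}, G)$. For the inequality $\le$: each $\alpha_n \in \mathbf{P}_\mathcal{U}$ refines $\mathcal{U}$, so by the definition of $h_{\mu,+}^{(r)}(\mathbf{F}, \mathcal{U})$ as an infimum over partitions finer than $\mathcal{U}$ we have $h_\mu^{(r)}(\mathbf{F}, \alpha_n) \ge h_{\mu,+}^{(r)}(\mathbf{F}, \mathcal{U})$, hence $\inf_n h_\mu^{(r)}(\mathbf{F}, \alpha_n) \ge h_{\mu,+}^{(r)}(\mathbf{F}, \mathcal{U})$. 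For the inequality $\ge$: from $(\alpha_n')_\omega \succeq (\alpha_n)_\omega$ and Lemma \ref{1007261204} \eqref{1007261236} we get $h_\mu^{(r)}(\mathbf{F}, \alpha_n) \le h_\mu^{(r)}(\mathbf{F}, \alpha_n')$; taking the infimum over $n$, then using that $\{\alpha_n'\}$ witnesses goodness of $\mathcal{U}'$ and finally the hypothesis $h_{\mu,+}^{(r)}(\mathbf{F}, \mathcal{U}') = h_{\mu,+}^{(r)}(\mathbf{F}, \mathcal{U})$, we obtain $\inf_n h_\mu^{(r)}(\mathbf{F}, \alpha_n) \le \inf_n h_\mu^{(r)}(\mathbf{F}, \alpha_n') = h_{\mu,+}^{(r)}(\mathbf{F}, \mathcal{U}') = h_{\mu,+}^{(r)}(\mathbf{F}, \mathcal{U})$. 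Combining the two bounds gives the required equality, so $\{\alpha_n\}$ witnesses that $\mathcal{U}$ is good.

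There is no genuine obstacle here; the only points deserving attention are that the merged family $\alpha_n$ really lies in $\mathbf{P}_\mathcal{U}$ (which is why the assignment $\phi$ is chosen uniformly in $n$) and that the clopen-fiber condition is preserved under the finite unions used in the merging — both routine. Note also that the only structural inputs actually used are the goodness of $\mathcal{U}'$, the relation $\mathcal{U}' \succeq \mathcal{U}$, and the hypothesis on $\mu,+$-fiber entropies.
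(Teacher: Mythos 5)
Your proof is correct. The paper gives no argument for this lemma (it is stated as "easy to check"), and your merging construction along the refinement $\mathcal{U}'\succeq\mathcal{U}$ — producing $\alpha_n\in\mathbf{P}_\mathcal{U}$ coarser than $\alpha_n'$, preserving the clopen-fiber condition, and sandwiching $\inf_n h_\mu^{(r)}(\mathbf{F},\alpha_n)$ between $h_{\mu,+}^{(r)}(\mathbf{F},\mathcal{U})$ and $\inf_n h_\mu^{(r)}(\mathbf{F},\alpha_n')=h_{\mu,+}^{(r)}(\mathbf{F},\mathcal{U}')=h_{\mu,+}^{(r)}(\mathbf{F},\mathcal{U})$ — is exactly the routine verification the authors intend.
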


We also have:

\begin{lem} \label{1102061920}
Let $\mathcal{U}_1, \mathcal{U}_2\in \mathbf{C}_\mathcal{E}^o$ and $W\in \mathcal{F}$. If both $\mathcal{U}_1$ and $\mathcal{U}_2$ are excellent then $\mathcal{U}_1\vee \mathcal{U}_2, [\mathcal{U}_1\cap (W\times X)]\cup [\mathcal{U}_2\cap (W^c\times X)]\in \mathbf{C}_\mathcal{E}^o$ and both of them are excellent.
\end{lem}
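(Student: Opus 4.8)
The plan is to prove both assertions together, since each reduces to the same scheme: starting from sequences $\{\alpha_m\}_{m\in\mathbb{N}}\subseteq\mathbf{P}_{\mathcal{U}_1}$ and $\{\gamma_l\}_{l\in\mathbb{N}}\subseteq\mathbf{P}_{\mathcal{U}_2}$ witnessing excellence of $\mathcal{U}_1$ and $\mathcal{U}_2$, I will build a \emph{doubly indexed} family of partitions out of $\alpha_m$ and $\gamma_l$, re-enumerate $\mathbb{N}\times\mathbb{N}$ as a single sequence, and verify conditions \eqref{1007202201} and \eqref{1007202202}. First the easy points: membership in $\mathbf{C}_\mathcal{E}^o$ is immediate, because if $U_i\subseteq O_i$ and $V_j\subseteq O'_j$ with $O_i,O'_j$ open random sets, then $U_i\cap V_j\subseteq O_i\cap O'_j$ and $(U_i\cap(W\times X))\cup(V_i\cap(W^c\times X))\subseteq(O_i\cap(W\times X))\cup(O'_i\cap(W^c\times X))$, and in each case the right-hand side is again an open random set (fibrewise an intersection, resp.\ a $W/W^c$-gluing, of open sets, with the usual measurability in $\omega$). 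Likewise $(\alpha_m\vee\gamma_l)_\omega$ and the fibres of the partition obtained by gluing $\alpha_m$ over $W$ with $\gamma_l$ over $W^c$ are clopen partitions of $\mathcal{E}_\omega$ for $\mathbb{P}$-a.e.\ $\omega$, which is \eqref{1007202201}.

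For $\mathcal{U}_1\vee\mathcal{U}_2$, write $\mathcal{U}_1=\{U_1,\dots,U_N\}$, $\mathcal{U}_2=\{V_1,\dots,V_M\}$; the partition $\alpha_m\vee\gamma_l$, with the atom inside $A^{(m)}_i\cap C^{(l)}_j$ labelled by $U_i\cap V_j$, lies in $\mathbf{P}_{\mathcal{U}_1\vee\mathcal{U}_2}$. Given $\beta=\{B_{ij}\}\in\mathbf{P}_{\mathcal{U}_1\vee\mathcal{U}_2}$ with $B_{ij}\subseteq U_i\cap V_j$, set $\beta_1=\{\bigcup_j B_{ij}\}_i\in\mathbf{P}_{\mathcal{U}_1}$, $\beta_2=\{\bigcup_i B_{ij}\}_j\in\mathbf{P}_{\mathcal{U}_2}$; disjointness of atoms gives $\beta=\beta_1\vee\beta_2$. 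Using subadditivity of $H_\mu(\,\cdot\mid\cdot\,)$ in the first argument, monotonicity in the conditioning $\sigma$-algebra, and $\alpha_m\vee\gamma_l\succeq\alpha_m$, $\alpha_m\vee\gamma_l\succeq\gamma_l$, $\beta\succeq\beta_1$, $\beta\succeq\beta_2$, one obtains the bound
\[
H_\mu\!\big(\beta\mid(\alpha_m\vee\gamma_l)\vee\mathcal{F}_\mathcal{E}\big)+H_\mu\!\big((\alpha_m\vee\gamma_l)\mid\beta\vee\mathcal{F}_\mathcal{E}\big)\le\big(H_\mu(\beta_1\mid\alpha_m\vee\mathcal{F}_\mathcal{E})+H_\mu(\alpha_m\mid\beta_1\vee\mathcal{F}_\mathcal{E})\big)+\big(H_\mu(\beta_2\mid\gamma_l\vee\mathcal{F}_\mathcal{E})+H_\mu(\gamma_l\mid\beta_2\vee\mathcal{F}_\mathcal{E})\big).
\]
Applying excellence of $\mathcal{U}_1$ to $\beta_1$ and of $\mathcal{U}_2$ to $\beta_2$, and choosing $m$ and then $l$ so that each of the two parenthesised terms is $<\varepsilon/2$, shows the infimum over the re-enumerated sequence equals $0$, i.e.\ \eqref{1007202202}.

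For $\mathcal{U}_3:=\mathcal{U}_1\cap(W\times X)\cup\mathcal{U}_2\cap(W^c\times X)$, whose $i$-th element is $(U_i\cap(W\times X))\cup(V_i\cap(W^c\times X))$, I will use $\delta_{m,l}$, the partition which restricts to $\alpha_m$ over $W$ and to $\gamma_l$ over $W^c$; then $\delta_{m,l}\in\mathbf{P}_{\mathcal{U}_3}$ and $(\delta_{m,l})_\omega$ equals $(\alpha_m)_\omega$ for $\omega\in W$ and $(\gamma_l)_\omega$ for $\omega\in W^c$. Given $\beta\in\mathbf{P}_{\mathcal{U}_3}$, define $\beta^{(1)}\in\mathbf{P}_{\mathcal{U}_1}$ to agree with $\beta$ over $W$ and with the canonical partition $\{U_i\setminus\bigcup_{k<i}U_k\}\cap\mathcal{E}$ (a genuine partition since $\mathcal{U}_1$ covers $\mathcal{E}$, and measurable) over $W^c$, and symmetrically $\beta^{(2)}\in\mathbf{P}_{\mathcal{U}_2}$ agreeing with $\beta$ over $W^c$. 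Passing to the fibrewise form of \eqref{1007202202} via $d\mu(\omega,x)=d\mu_\omega(x)\,d\mathbb{P}(\omega)$, \eqref{1007021626} and \eqref{0906282007}, the integrand over $W$ for the pair $(\beta,\delta_{m,l})$ coincides with the one for $(\beta^{(1)},\alpha_m)$, and over $W^c$ with the one for $(\beta^{(2)},\gamma_l)$; extending each non-negative integrand from $W$ (resp.\ $W^c$) to all of $\Omega$ only increases it, so
\[
H_\mu(\beta\mid\delta_{m,l}\vee\mathcal{F}_\mathcal{E})+H_\mu(\delta_{m,l}\mid\beta\vee\mathcal{F}_\mathcal{E})\le\big(H_\mu(\beta^{(1)}\mid\alpha_m\vee\mathcal{F}_\mathcal{E})+H_\mu(\alpha_m\mid\beta^{(1)}\vee\mathcal{F}_\mathcal{E})\big)+\big(H_\mu(\beta^{(2)}\mid\gamma_l\vee\mathcal{F}_\mathcal{E})+H_\mu(\gamma_l\mid\beta^{(2)}\vee\mathcal{F}_\mathcal{E})\big).
\]
Choosing $m$ and $l$ independently via the excellence of $\mathcal{U}_1$ and $\mathcal{U}_2$ again drives the right-hand side to $0$, which is \eqref{1007202202} for $\mathcal{U}_3$.

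The main — and essentially only nontrivial — obstacle is the decoupling of the two index parameters: the construction must be doubly indexed (not the diagonal $\{\alpha_m\vee\gamma_m\}$), and one must split an arbitrary $\beta$ in $\mathbf{P}_{\mathcal{U}_1\vee\mathcal{U}_2}$, resp.\ $\mathbf{P}_{\mathcal{U}_3}$, into a $\mathcal{U}_1$-part and a $\mathcal{U}_2$-part to which the two hypotheses apply separately. Everything else (membership in $\mathbf{C}_\mathcal{E}^o$, clopenness of the relevant fibres, and measurability of the auxiliary partitions $\beta^{(1)},\beta^{(2)}$) is routine bookkeeping.
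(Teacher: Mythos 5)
Your proposal is correct and follows essentially the same route as the paper's own proof: a doubly indexed family $\{\alpha_m\vee\gamma_l\}$ (resp.\ the $W/W^c$-gluings of $\alpha_m$ and $\gamma_l$), the splitting of an arbitrary $\beta$ into a $\mathcal{U}_1$-part and a $\mathcal{U}_2$-part, and then excellence of each factor applied independently in $m$ and $l$, with the second case handled fibrewise over $W$ and $W^c$ via the disintegration. The only difference is cosmetic: the paper leaves the choice of $\beta^{(1)},\beta^{(2)}$ in the gluing case implicit, while you make it explicit with the canonical disjointification, which is fine.
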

\begin{proof}
It is obvious that $\mathcal{U}_1\vee \mathcal{U}_2, [\mathcal{U}_1\cap (W\times X)]\cup [\mathcal{U}_2\cap (W^c\times X)]\in \mathbf{C}_\mathcal{E}^o$.

By assumption, for each $i= 1, 2$, there exists $\{\alpha_n^i: n\in \mathbb{N}\}\subseteq \mathbf{P}_{\mathcal{U}_i}$ satisfying
\begin{enumerate}

\item for each $n\in \mathbb{N}$, $(\alpha_n^i)_\omega$ is a clopen partition of $\mathcal{E}_\omega$ for $\mathbb{P}$-a.e. $\omega\in \Omega$ and

\item for each $\beta^i\in \mathbf{P}_{\mathcal{U}_i}$ and any $\mu\in \mathcal{P}_\mathbb{P} (\mathcal{E})$,
$$\inf_{n\in \mathbb{N}} [H_\mu (\beta^i| \alpha_n^i\vee \mathcal{F}_\mathcal{E})+ H_\mu (\alpha_n^i| \beta^i\vee \mathcal{F}_\mathcal{E})]= 0.$$
\end{enumerate}

First we consider $\mathcal{U}_1\vee \mathcal{U}_2$. For each $n_1, n_2\in \mathbb{N}$ set $\alpha_{n_1, n_2}= \alpha_{n_1}^1\vee \alpha_{n_2}^2$, it is clear that $\alpha_{n_1, n_2}\in \mathbf{C}_\mathcal{E}^o$ and $(\alpha_{n_1, n_2})_\omega$ is a clopen partition of $\mathcal{E}_\omega$ for $\mathbb{P}$-a.e. $\omega\in \Omega$. Now let $\beta\in \mathbf{P}_{\mathcal{U}_1\vee \mathcal{U}_2}$. Suppose that $\beta= \{B_{U_1, U_2}\subseteq U_1\cap U_2: U_1\in \mathcal{U}_1, U_2\in \mathcal{U}_2\}$. Set
$$\beta^1= \{\bigcup_{U_2\in \mathcal{U}_2} B_{U_1, U_2}: U_1\in \mathcal{U}_1\}\ \text{and}\ \beta^2= \{\bigcup_{U_1\in \mathcal{U}_1} B_{U_1, U_2}: U_2\in \mathcal{U}_2\}.$$
Then $\beta^i\in \mathbf{P}_{\mathcal{U}_i}, i= 1, 2$ and $\beta= \beta^1\vee \beta^2$. Let $\mu\in \mathcal{P}_\mathbb{P} (\mathcal{E})$. So
\begin{eqnarray*}
& & \inf_{n_1, n_2\in \mathbb{N}} [H_\mu (\beta| \alpha_{n_1, n_2}\vee \mathcal{F}_\mathcal{E})+ H_\mu (\alpha_{n_1, n_2}| \beta\vee \mathcal{F}_\mathcal{E})] \\
&= & \inf_{n_1, n_2\in \mathbb{N}} [H_\mu (\beta^1\vee \beta^2| \alpha_{n_1}^1\vee \alpha_{n_2}^2\vee \mathcal{F}_\mathcal{E})+ H_\mu (\alpha_{n_1}^1\vee \alpha_{n_2}^2| \beta^1\vee \beta^2\vee \mathcal{F}_\mathcal{E})] \\
&\le & \inf_{n_1, n_2\in \mathbb{N}} [H_\mu (\beta^1| \alpha_{n_1}^1\vee \mathcal{F}_\mathcal{E})+ H_\mu (\beta^2| \alpha_{n_2}^2\vee \mathcal{F}_\mathcal{E}) \\
& & \hskip 26pt + H_\mu (\alpha_{n_1}^1| \beta^1\vee \mathcal{F}_\mathcal{E})+ H_\mu (\alpha_{n_2}^2| \beta^2\vee \mathcal{F}_\mathcal{E})],
\end{eqnarray*}
by assumption, the sequence $\{\alpha_n^i: n\in \mathbb{N}\}\subseteq \mathbf{P}_{\mathcal{U}_i}, i= 1, 2$ satisfies:
\begin{equation*}
\inf_{n_1, n_2\in \mathbb{N}} [H_\mu (\beta| \alpha_{n_1, n_2}\vee \mathcal{F}_\mathcal{E})+ H_\mu (\alpha_{n_1, n_2}| \beta\vee \mathcal{F}_\mathcal{E})]= 0.
\end{equation*}
That is, $\mathcal{U}_1\vee \mathcal{U}_2$ is excellent.

Now let us consider $\mathcal{U}\doteq [\mathcal{U}_1\cap (W\times X)]\cup [\mathcal{U}_2\cap (W^c\times X)]$.

For each $n_1, n_2\in \mathbb{N}$ set $\alpha_{n_1, n_2}= [\alpha_{n_1}^1\cap (W\times X)]\cup [\alpha_{n_2}^2\cap (W^c\times X)]$. Obviously $\alpha_{n_1, n_2}\in \mathbf{C}_\mathcal{E}^o$ and $(\alpha_{n_1, n_2})_\omega$ is a clopen partition of $\mathcal{E}_\omega$ for $\mathbb{P}$-a.e. $\omega\in \Omega$.
Let $\beta\in \mathbf{P}_\mathcal{U}$. It is easy to choose
$\beta^i\in \mathbf{P}_{\mathcal{U}_i}, i= 1, 2$ such that $\beta= [\beta^1\cap (W\times X)]\cup [\beta^2\cap (W^c\times X)]$. Hence if $\mu\in \mathcal{P}_\mathbb{P} (\mathcal{E})$, and $d \mu (\omega, x)= d \mu_\omega (x) d \mathbb{P} (\omega)$ is the disintegration of $\mu$ over $\mathcal{F}_\mathcal{E}$, then, by the assumptions on the sequence $\{\alpha_n^i: n\in \mathbb{N}\}\subseteq \mathbf{P}_{\mathcal{U}_i}, i= 1, 2$ and using \eqref{1007021626} and \eqref{0906282007}, one has
\begin{equation} \label{1102062215}
\inf_{n\in \mathbb{N}} \int_\Omega [H_{\mu_\omega} ((\beta^i)_\omega| (\alpha_n^i)_\omega)+ H_{\mu_\omega} ((\alpha_n^i)_\omega| (\beta^i)_\omega)] d \mathbb{P} (\omega)= 0, i= 1, 2,
\end{equation}
and then, by the construction of $\beta^1, \beta^2, \alpha_{n_1, n_2}, n_1, n_2\in \N$,
\begin{eqnarray*}
& & \inf_{n_1, n_2\in \mathbb{N}} [H_\mu (\beta| \alpha_{n_1, n_2}\vee \mathcal{F}_\mathcal{E})+ H_\mu (\alpha_{n_1, n_2}| \beta\vee \mathcal{F}_\mathcal{E})] \\
&= & \inf_{n_1, n_2\in \mathbb{N}} \int_\Omega [H_{\mu_\omega} (\beta_\omega| (\alpha_{n_1, n_2})_\omega)+ H_{\mu_\omega} ((\alpha_{n_1, n_2})_\omega| \beta_\omega)] d \mathbb{P} (\omega) \\
&= & \inf_{n_1, n_2\in \mathbb{N}} \left\{\int_W [H_{\mu_\omega} (\beta^1_\omega| (\alpha_{n_1}^1)_\omega)+ H_{\mu_\omega} ((\alpha_{n_1}^1)_\omega| \beta^1_\omega)] d \mathbb{P} (\omega)\right. \\
& & \hskip 26pt \left.+ \int_{W^c} [H_{\mu_\omega} (\beta^2_\omega| (\alpha_{n_2}^2)_\omega)+ H_{\mu_\omega} ((\alpha_{n_2}^2)_\omega| \beta^2_\omega)] d \mathbb{P} (\omega)\right\} \\
&= & 0\ (\text{using \eqref{1102062215}}).
\end{eqnarray*}
This means that $\mathcal{U}$ is excellent.
\end{proof}

We now have the following important observation.

\begin{prop} \label{1007202212}
Assume that
$X$ is a zero-dimensional space.
\begin{enumerate}

\item \label{1007241819} If $\xi\in \mathbf{P}_\Omega$ and $\mathcal{V}\in \mathbf{C}_X^o$ then $(\xi\times \mathcal{V})_\mathcal{E}$ is excellent.

\item \label{1007241820} If $\mathcal{U}\in \mathbf{C}_\mathcal{E}^o$ is in the form of $\mathcal{U}= \{(\Omega_i\times U_i)^c: i= 1, \cdots, m\}, m\in \mathbb{N}\setminus \{1\}$ with $\Omega_i\in \mathcal{F}$ for each $i= 1, \cdots, m$ and $\{U_1^c, \cdots, U_m^c\}\in \mathbf{C}_X^o$, then $\mathcal{U}$ is good, in fact, there exists $\mathcal{U}'\in \mathbf{C}_\mathcal{E}^o$ such that $\mathcal{U}'\succeq \mathcal{U}$, $\mathcal{U}'$ is excellent and $h_{\mu}^{(r)} (\mathbf{F}, \mathcal{U}')= h_{\mu}^{(r)} (\mathbf{F}, \mathcal{U})$ for each $\mu\in \mathcal{P}_\mathbb{P} (\mathcal{E}, G)$.
\end{enumerate}
\end{prop}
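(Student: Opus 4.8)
The plan is to exploit the zero-dimensionality of $X$ and the Lebesgue-space structure of $(\Omega,\mathcal F,\mathbb P)$ to produce, for each cover in question, a countable family of clopen-fibered partitions refining it and separating every admissible partition in the conditional-entropy sense. For part \eqref{1007241819}, fix $\xi=\{C_1,\dots,C_k\}\in\mathbf P_\Omega$ and $\mathcal V=\{V_1,\dots,V_l\}\in\mathbf C_X^o$. First I would apply Lemma \ref{1007242349} to the zero-dimensional compact metric space $X$ and the open cover $\mathcal V$: this yields a countable family $\mathbf P_c(\mathcal V)=\{\gamma_n:n\in\mathbb N\}\subseteq\mathbf P_{\mathcal V}$ of clopen partitions of $X$ with the property that $\inf_n[H_\eta(\gamma|\gamma_n)+H_\eta(\gamma_n|\gamma)]=0$ for every $\gamma\in\mathbf P_{\mathcal V}$ and every probability $\eta$ on $X$. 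Then I set $\alpha_n=(\xi\times\gamma_n)_\mathcal E\in\mathbf P_{(\xi\times\mathcal V)_\mathcal E}$. Since each $(\gamma_n)_\omega=\gamma_n\cap\mathcal E_\omega$ is a clopen partition of the (clopen-based) fiber $\mathcal E_\omega$ for $\mathbb P$-a.e.\ $\omega$, property \eqref{1007202201} is immediate. For \eqref{1007202202}, given $\beta\in\mathbf P_{(\xi\times\mathcal V)_\mathcal E}$, I would disintegrate $\mu=\int_\Omega\mu_\omega\,d\mathbb P(\omega)$ over $\mathcal F_\mathcal E$ and, using \eqref{1007021626} and \eqref{0906282007}, reduce the vanishing of $\inf_n[H_\mu(\beta|\alpha_n\vee\mathcal F_\mathcal E)+H_\mu(\alpha_n|\beta\vee\mathcal F_\mathcal E)]$ to showing
$$
\inf_{n\in\mathbb N}\int_\Omega\bigl[H_{\mu_\omega}(\beta_\omega|(\alpha_n)_\omega)+H_{\mu_\omega}((\alpha_n)_\omega|\beta_\omega)\bigr]\,d\mathbb P(\omega)=0.
$$
For fixed $\omega$ in a full-measure set, $\beta_\omega$ refines $\mathcal V_\omega$ relative to the piece of $\xi$ containing $\omega$ — so on that piece $\beta_\omega\in\mathbf P_{\mathcal V_\omega}$ — and $(\alpha_n)_\omega$ is the trace of $\gamma_n$; then Lemma \ref{1007242349} applied fiberwise (with $\eta=\mu_\omega$) gives a sequence, depending measurably on $\omega$, along which the integrand tends to $0$, and the dominated convergence theorem (the integrands are bounded by $2\log(|\mathcal V|+1)$, say) lets me pass the infimum through the integral. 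The measurability of the selection of the good clopen partition in $\omega$ requires a small argument: since $\mathbf P_c(\mathcal V)$ is countable, for each $\varepsilon>0$ the set of $\omega$ for which $\gamma_n$ works to within $\varepsilon$ is measurable, and one picks the least such $n$; the Lebesgue-space hypothesis makes the disintegration and these measurability steps legitimate.

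For part \eqref{1007241820}, the cover is $\mathcal U=\{(\Omega_i\times U_i)^c:i=1,\dots,m\}$ with $\mathcal V\doteq\{U_1^c,\dots,U_m^c\}\in\mathbf C_X^o$. The idea is to build an excellent refinement by cutting $\Omega$ along the atoms of the finite algebra generated by $\{\Omega_1,\dots,\Omega_m\}$. Let $\xi^*\in\mathbf P_\Omega$ be the partition of $\Omega$ into the $2^m$ atoms $\bigcap_i\Omega_i^{\epsilon_i}$, $\epsilon\in\{0,1\}^m$; on the atom indexed by $\epsilon$, the fiber cover $\mathcal U_\omega$ equals $\{U_i^c:\epsilon_i=1\}\cup\{X\}$ if some $\epsilon_i=1$, and $\{X\}$ if $\epsilon\equiv 0$. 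I would then set $\mathcal U'=(\xi^*\times\mathcal V)_\mathcal E\vee\mathcal U$ (or more simply the cover obtained by replacing, on each atom of $\xi^*$, the trivial slots by a fixed clopen refinement of $\mathcal V$), noting $\mathcal U'\in\mathbf C_\mathcal E^o$, $\mathcal U'\succeq\mathcal U$, and $\mathcal U'$ is excellent by part \eqref{1007241819} together with Lemma \ref{1102061920} (which shows excellence is preserved under joins and under "patching two excellent covers over a measurable set $W$ and its complement"). The remaining point is the entropy equality $h^{(r)}_{\mu,+}(\mathbf F,\mathcal U')=h^{(r)}_{\mu,+}(\mathbf F,\mathcal U)$ for every $\mu\in\mathcal P_\mathbb P(\mathcal E,G)$: the inequality $\ge$ is monotonicity (Lemma \ref{1007261204}\eqref{1007261239}), and for $\le$ I would use Proposition \ref{1007241716}\eqref{1007241735} — on the atom of $\xi^*$ where all $\Omega_i^c$ are empty (i.e.\ $\bigcap_i\Omega_i$, which need not have measure zero here, so one argues differently) the covers $\mathcal U'$ and $\mathcal U$ already have the same fibers off that atom, and on it any $\beta\in\mathbf P_\mathcal U$ can be refined by a clopen partition contributing $0$ to $h^{(r)}_{\mu,+}$ — more precisely, I would show that for any $\beta\in\mathbf P_\mathcal U$ there is $\beta'\in\mathbf P_{\mathcal U'}$ with $h^{(r)}_\mu(\mathbf F,\beta')=h^{(r)}_\mu(\mathbf F,\beta)$, so the two infima over $\mathbf P_{(\cdot)}$ defining $h^{(r)}_{\mu,+}$ coincide. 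Good-ness of $\mathcal U$ then follows from Lemma \ref{1102061738}.

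The main obstacle I anticipate is the measurable-selection / fiberwise-Lemma-\ref{1007242349} step in part \eqref{1007241819}: Lemma \ref{1007242349} is stated pointwise for a single probability space, but here the "good" clopen partition $\gamma_{n(\omega)}$ must be chosen in a way that depends measurably on $\omega$ so that the resulting $\alpha_n$ lies in $\mathbf P_{(\xi\times\mathcal V)_\mathcal E}$, and then the interchange of $\inf_n$ with $\int_\Omega$ must be justified. Countability of $\mathbf P_c(\mathcal V)$ is exactly what makes this work — it converts the fiberwise infimum into a countable infimum of measurable functions, and a monotone-convergence / dominated-convergence argument (using the uniform bound $\log(|\mathcal V|+1)$ on conditional entropies of partitions with a controlled number of atoms) closes the gap. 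A secondary nuisance is bookkeeping in part \eqref{1007241820}: one must be careful that $\mathcal U'$ is genuinely a member of $\mathbf C_\mathcal E^o$ of the form $\mathbf P_{\mathcal U'}$-compatible with the patching lemma, and that the entropy equality is verified via the "$+$"-entropy (infimum over refining partitions), not the plain fiber entropy. Both of these are routine once the zero-dimensional fiberwise picture and Lemma \ref{1102061920} are in hand.
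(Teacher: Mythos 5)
There is a genuine gap in part \eqref{1007241819}, and it sits exactly at the point you flagged as "the main obstacle". The definition of an excellent cover requires one fixed sequence $\{\alpha_n\}\subseteq\mathbf{P}_\mathcal{U}$, independent of $\beta$ and of $\mu$, such that $\inf_n\big[H_\mu(\beta|\alpha_n\vee\mathcal F_\mathcal E)+H_\mu(\alpha_n|\beta\vee\mathcal F_\mathcal E)\big]=0$, i.e.\ the infimum over $n$ is taken \emph{outside} the integral $\int_\Omega\big[H_{\mu_\omega}(\beta_\omega|(\alpha_n)_\omega)+H_{\mu_\omega}((\alpha_n)_\omega|\beta_\omega)\big]\,d\mathbb P(\omega)$. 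Your family $\alpha_n=(\xi\times\gamma_n)_\mathcal E$, with $\gamma_n$ ranging over clopen partitions of $X$ alone, only yields that the fiberwise infimum vanishes a.e., i.e.\ $\int_\Omega\inf_n[\cdots]\,d\mathbb P=0$; since $\inf_n\int\ge\int\inf_n$, no dominated/monotone convergence argument converts this into what is needed, and the failure is real. For instance take $\mathcal E=\Omega\times X$ with $X$ a Cantor set, $\mu=\mathbb P\times\eta$, $\mathcal V=\{V_1,V_2\}$ clopen with large intersection, and $\beta\in\mathbf P_{(\Omega\times\mathcal V)_\mathcal E}$ whose fiber atoms $A^{(\omega)}=V_2^c\cup\big(V_1\cap V_2\cap\{r\le\omega\}\big)$ slide through the measure algebra of $(X,\eta)$ as $\omega$ varies: for any fixed clopen partition $\gamma$ of $X$, either $H_\eta(\gamma)$ exceeds $\log 2$ by a definite amount, so $H_{\mu_\omega}(\gamma|\beta_\omega)$ is bounded below for all $\omega$, or $\gamma$ is essentially a single two-set partition and then $H_{\mu_\omega}(\beta_\omega|\gamma)$ is bounded below off a set of $\omega$ of small measure; either way the integral is bounded away from $0$ uniformly in $\gamma$. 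Choosing $\gamma_{n(\omega)}$ measurably does not repair this: the resulting mixed partition is not a member of your declared countable sequence and depends on $\beta$ and $\mu$, which the definition forbids. The missing idea (and the paper's actual route) is to use the Lebesgue hypothesis to replace $\Omega$ by a zero-dimensional compact metric model $Z$ via an isomorphism $\psi$, and to apply Lemma \ref{1007242349} once to the single zero-dimensional space $Z\times X$ with the cover $Z\times\mathcal V$: the countable family $\mathbf P_c(Z\times\mathcal V)$, pulled back through $\psi$, consists of partitions that are allowed to change their $X$-part over clopen pieces of $Z$, which is precisely the freedom your product family lacks; the general $\xi$ is then handled, as you propose, by the join part of Lemma \ref{1102061920}.

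Part \eqref{1007241820} has a secondary but concrete flaw: your $\mathcal U'=(\xi^*\times\mathcal V)_\mathcal E\vee\mathcal U$ (and likewise the variant that replaces "trivial slots" by a clopen refinement of $\mathcal V$) strictly refines the fiber covers on the atoms off $\Omega_0=\bigcap_i\Omega_i$, where $\mathcal U_\omega$ contains the whole fiber $\mathcal E_\omega$; consequently $h^{(r)}_{\mu,+}(\mathbf F,\mathcal U')=h^{(r)}_{\mu,+}(\mathbf F,\mathcal U)$ fails in general (e.g.\ all $\Omega_i$ empty or of small measure with $\mathcal V$ of positive fiber entropy), so Lemma \ref{1102061738} cannot be invoked, and Proposition \ref{1007241716}\eqref{1007241735} is indeed unavailable since $\mathbb P(\Omega_0)$ need not vanish. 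The construction must keep the fibers unchanged: as in the paper, take a measurable partition $\{\Omega_1',\dots,\Omega_m'\}$ of $\bigcup_i\Omega_i^c$ with $\Omega_i'\subseteq\Omega_i^c$ and set $\mathcal U'=\{(\Omega_i'\times X)\cap\mathcal E\}\cup\{(\Omega_0\times U_i^c)\cap\mathcal E\}$; then $\mathcal U'\succeq\mathcal U$ and $\mathcal U'_\omega=\mathcal U_\omega$ for $\mathbb P$-a.e.\ $\omega$, so the entropy equality follows from Lemma \ref{1007261204}, while excellence of $\mathcal U'$ follows from part \eqref{1007241819} together with the patching half of Lemma \ref{1102061920} over $W=\Omega_0^c$. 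With that replacement your overall skeleton for part \eqref{1007241820} matches the paper's.
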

\begin{proof}
\eqref{1007241819}
First, we shall prove the Proposition in the case where $\xi= \{\Omega\}$.

As $(\Omega, \mathcal{F}, \mathbb{P})$ is a Lebesgue space by Standard Assumption 3, there exists an isomorphism $\phi: (\Omega, \mathcal{F}, \mathbb{P})\rightarrow (Z, \mathcal{Z}, p)$ between probability spaces, where $Z$ is a zero-dimensional compact metric space and $\mathcal{Z}$ is the $p$-completion of $\mathcal{B}_Z$. Hence there exist $\Omega^*\in \mathcal{F}, Z^*\in \mathcal{Z}$ and an invertible measure-preserving transformation $\psi: \Omega^*\rightarrow Z^*$ such that $\mathbb{P} (\Omega^*)= 1= p (Z^*)$.
In fact, without loss of generality, we may assume that $\Omega= \Omega^*$.

Now define $\psi_*: (\Omega\times X, \mathcal{F}\times \mathcal{B}_X)\rightarrow (Z^*\times X, \mathcal{Z}^*\times \mathcal{B}_X), (\omega, x)\rightarrow (\psi \omega, x)$, where $\mathcal{Z}^*$ is the restriction of $\mathcal{Z}$ to $Z^*$. Then $\psi^*$ is an invertible bi-measurable map, by a standard proof.

For each $B\in \mathcal{Z}\times \mathcal{B}_X$, we set
$$B_\psi= \{(\psi^{- 1} z, x): (z, x)\in B\ \text{and}\ z\in Z^*\}.$$
In fact, $B_\psi= \psi^{- 1}_* (B\cap (Z^*\times X))$, in particular, $B_\psi\in \mathcal{F}\times \mathcal{B}_X$. Moreover, if $(\Omega\times X, \mathcal{F}\times \mathcal{B}_X, \mu)$ is a probability space, set $\mu_\psi (B)= \mu (B_\psi)$ for each $B\in \mathcal{B}_Z\times \mathcal{B}_X$. This defines a probability measure on $(Z\times X, \mathcal{B}_Z\times \mathcal{B}_X)$.

Now suppose that $\mathcal{V}= \{V_1, \cdots, V_N\}, N\in \mathbb{N}$ and set
\begin{equation*}
\mathbf{P}_c^* (\Omega\times \mathcal{V})= \{\{(A_1)_\psi\cap \mathcal{E}, \cdots, (A_N)_\psi\cap \mathcal{E}\}: \{A_1, \cdots, A_N\}\in \mathbf{P}_c (Z\times \mathcal{V})\}.
\end{equation*}
Observe that $Z\times X$ is a zero-dimensional compact metric space, and by Lemma \ref{1007242349}, $\mathbf{P}_c (Z\times \mathcal{V})$ is a countable family. Hence  $\mathbf{P}_c^* (\Omega\times \mathcal{V})$ is also a countable family.

We shall show that $\mathbf{P}_c^* (\Omega\times \mathcal{V})$ satisfies the required properties.

First, by construction, it is easy to see that, for each $\alpha\in \mathbf{P}_c^* (\Omega\times \mathcal{V})$, $\alpha\in \mathbf{P}_{(\Omega\times \mathcal{V})_\mathcal{E}}$ and $\alpha_\omega$ is a clopen partition of $\mathcal{E}_\omega$ for $\mathbb{P}$-a.e. $\omega\in \Omega$.
Now if $\beta= \{B_1, \cdots, B_N\}\in \mathbf{P}_\mathcal{E}$ satisfies $B_i\subseteq \Omega\times V_i$ for each $i= 1, \cdots, N$, it is not hard to obtain some $\beta'= \{B_1', \cdots, B_N'\}\in \mathbf{P}_{Z\times X}$ with $\psi_* (B_i)\subseteq B_i'\subseteq Z\times V_i$ for each $i= 1, \cdots, N$.
For each $\mu\in \mathcal{P}_\mathbb{P} (\mathcal{E})$, $\mu$ may be viewed as a probability measure over $(\Omega\times X, \mathcal{F}\times \mathcal{B}_X)$, and so by Lemma \ref{1007242349}
for each $\epsilon> 0$ there exists $\alpha'= \{A_1, \cdots, A_N\}\in \mathbf{P}_c (Z\times \mathcal{V})$ with
$$H_{\mu_\psi} (\alpha'| \beta')+ H_{\mu_\psi} (\beta'| \alpha')< \epsilon.$$
Set $\alpha= \{A_\psi\cap \mathcal{E}: A\in \alpha'\}\in \mathbf{P}^*_c (\Omega\times \mathcal{V})$. As $\mu (\mathcal{E})= 1$, it is easy from the constructions above, to check that
$$\mu (B_i)= \mu_\psi (B_i'), \mu ((A_i)_\psi\cap \mathcal{E})= \mu_\psi (A_i)\ \text{and}\ \mu ((A_i)_\psi\cap \mathcal{E}\cap B_j)= \mu_\psi (A_i\cap B_j')$$
 for all $i, j= 1, \cdots, N$, and so
\begin{eqnarray*}
& & \hskip -26pt H_\mu (\alpha| \beta\vee \mathcal{F}_\mathcal{E})+ H_\mu (\beta| \alpha\vee \mathcal{F}_\mathcal{E}) \\
& & \le H_\mu (\alpha| \beta)+ H_\mu (\beta| \alpha)= H_{\mu_\psi} (\alpha'| \beta')+ H_{\mu_\psi} (\beta'| \alpha')< \epsilon.
\end{eqnarray*}
This finishes the proof in the case of $\xi= \{\Omega\}$.

\medskip

Now we shall prove the Proposition for a general $\xi\in\mathbf{P}_\Omega$.
In fact,
$$(\xi\times \mathcal{V})_\mathcal{E}= (\xi\times X)_\mathcal{E}\vee (\Omega\times \mathcal{V})_\mathcal{E}.$$
Now from the definition it follows that $(\xi\times X)_\mathcal{E}\in \mathbf{C}_X^o$ is excellent (as $\mathbf{P}_{(\xi\times X)_\mathcal{E}}= \{(\xi\times X)_\mathcal{E}\}$), and by the above arguments $(\Omega\times \mathcal{V})_\mathcal{E}\in \mathbf{C}_X^o$ is excellent, thus using Lemma \ref{1102061920} one sees that $(\xi\times \mathcal{V})_\mathcal{E}$ is also excellent.

 \eqref{1007241820} Obviously, in $\mathcal{F}$ there exist disjoint $\Omega_i'\subseteq \Omega_i^c, i= 1, \cdots, m$ with $\bigcup\limits_{i= 1}^m \Omega_i'= \bigcup\limits_{i= 1}^m \Omega_i^c$. Now set $\Omega_0= \Omega\setminus \bigcup\limits_{i= 1}^m \Omega_i'= \bigcap\limits_{i= 1}^m \Omega_i$ and
 $$\mathcal{U}'= \{(\Omega_i'\times X)\cap \mathcal{E}: i= 1, \cdots, m\}\cup \{(\Omega_0\times U_i^c)\cap \mathcal{E}: i= 1, \cdots, m\}.$$
 It is easy to see that $\mathcal{U}'\in \mathbf{C}_\mathcal{E}^o$ and $\mathcal{U}'\succeq \mathcal{U}$. In fact, $\mathcal{U}'_\omega= \mathcal{U}_\omega$ for $\mathbb{P}$-a.e. $\omega\in \Omega$ and so by Lemma \ref{1007261204} one has $h_{\mu}^{(r)} (\mathbf{F}, \mathcal{U}')= h_{\mu}^{(r)} (\mathbf{F}, \mathcal{U})$ for each $\mu\in \mathcal{P} (\mathcal{E}, G)$.

 Now with the help of Lemma \ref{1102061738} we shall finish our proof by showing that $\mathcal{U}'$ is excellent.
 In fact, suppose that $\xi= \{\Omega_i': i= 1, \cdots, m\}\cup \{\Omega_0\}\in \mathbf{P}_\Omega$. Then
 $$\mathcal{U}'= [(\xi\times X)_\mathcal{E}\cap (\Omega_0^c\times X)]\cup [(\Omega\times \mathcal{V})_\mathcal{E}\cap (\Omega_0\times X)],$$
 where $\mathcal{V}= \{U_1^c, \cdots, U_m^c\}$. Observe that by \eqref{1007241819} one has that $(\xi\times X)_\mathcal{E}, (\Omega\times \mathcal{V})_\mathcal{E}\in \mathbf{C}_\mathcal{E}^o$ are both excellent, and so using Lemma \ref{1102061920}
we conclude that $\mathcal{U}'$ is excellent.
\end{proof}

Before proceeding, we need to introduce the concept of a factor map in the setting of continuous bundle RDS's.

For each $i= 1, 2$, let $X_i$ be a compact metric space with $\mathcal{E}_i\in \mathcal{F}\times \mathcal{B}_{X_i}$ and the family $\mathbf{F}_i= \{(F_i)_{g, \omega}: (\mathcal{E}_i)_\omega\rightarrow (\mathcal{E}_i)_{g \omega}| g\in G, \omega\in \Omega\}$ the corresponding continuous bundle RDS. By a \emph{factor map from $\mathbf{F}_1$ to $\mathbf{F}_2$} we mean a measurable map $\pi: \mathcal{E}_1\rightarrow \mathcal{E}_2$ satisfying
\begin{enumerate}

\item $\pi_\omega$, the restriction of $\pi$ over $(\mathcal{E}_1)_\omega$, is a continuous surjection from $(\mathcal{E}_1)_\omega$ to $(\mathcal{E}_2)_\omega$ for $\mathbb{P}$-a.e. $\omega\in \Omega$ and

\item $\pi_{g \omega}\circ (F_1)_{g, \omega}= (F_2)_{g, \omega}\circ \pi_\omega$ for each $g\in G$ and $\mathbb{P}$-a.e. $\omega\in \Omega$.
\end{enumerate}
In this case, it is obvious that $\pi^{- 1} (\mathcal{U}_2)\in \mathbf{P}_{\mathcal{E}_1}$ ($\mathbf{C}_{\mathcal{E}_1}$, $\mathbf{C}_{\mathcal{E}_1}^o$, respectively) if $\mathcal{U}_2\in \mathbf{P}_{\mathcal{E}_2}$ ($\mathbf{C}_{\mathcal{E}_2}$, $\mathbf{C}_{\mathcal{E}_2}^o$, respectively). Then $\mathcal{U}_2\in \mathbf{C}_{\mathcal{E}_2}^o$ is called \emph{factor excellent} (\emph{factor good}, respectively) if there exists a factor map $\pi$ such that $\pi^{- 1} (\mathcal{U}_2)$ is excellent (good, respectively).

Let $\mathcal{U}\in \mathbf{C}_\mathcal{E}^o$. In general we don't know whether $\mathcal{U}$ is (factor) good, even if $X$ is a zero-dimensional space. However, we have:

\begin{lem} \label{1007192218}
Let $\mathcal{U}= \{U_1, \cdots, U_N\}\in \mathbf{C}_\mathcal{E}^o, N\in \mathbb{N}$. Assume that $X$ is a zero-dimensional space.
Then
 there exists $\alpha= \{A_1, \cdots, A_N\}\in \mathbf{P}_\mathcal{E}$ such that $\alpha\succeq \mathcal{U}$ and $\alpha_\omega$ is a clopen partition of $\mathcal{E}_\omega$ for $\mathbb{P}$-a.e. $\omega\in \Omega$.
\end{lem}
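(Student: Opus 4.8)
## Proof proposal for Lemma \ref{1007192218}

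The plan is to build the partition $\alpha$ fiberwise in a measurable way, using the zero-dimensionality of $X$ to replace each open random set in the cover by a clopen approximation on each fiber, and then to refine these into a partition subordinate to $\mathcal{U}$. Since $X$ is a zero-dimensional compact metric space, the collection of all clopen subsets of $X$ is countable; enumerate it as $\{Q_1, Q_2, \dots\}$. For each finite tuple of indices, the finite Boolean combinations of the $Q_j$'s are again clopen, and the family of all such combinations is still countable. The key point is that for $\mathbb{P}$-a.e.\ $\omega$, the fiber $\mathcal{E}_\omega$ is compact and $\mathcal{U}_\omega = \{(U_1)_\omega, \dots, (U_N)_\omega\}$ is an open cover of it; hence by a standard compactness argument there is a clopen (in $X$) refinement, i.e.\ clopen sets $C_1, \dots, C_N \subseteq X$ with $C_i \cap \mathcal{E}_\omega \subseteq (U_i)_\omega$ and $\bigcup_i (C_i \cap \mathcal{E}_\omega) = \mathcal{E}_\omega$. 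One then "disjointifies": $A_i(\omega) = (C_i \setminus (C_1 \cup \cdots \cup C_{i-1})) \cap \mathcal{E}_\omega$ gives a partition of $\mathcal{E}_\omega$ into clopen-in-$\mathcal{E}_\omega$ pieces with $A_i(\omega) \subseteq (U_i)_\omega$.

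The substance of the proof is to make this choice measurable in $\omega$. First I would observe that for each fixed finite tuple $(j_1, \dots, j_N)$ of indices into the enumeration of clopen sets, the set
\begin{equation*}
\Omega_{(j_1, \dots, j_N)} = \{\omega \in \Omega : Q_{j_i} \cap \mathcal{E}_\omega \subseteq (U_i)_\omega \text{ for all } i, \text{ and } \mathcal{E}_\omega \subseteq \textstyle\bigcup_{i=1}^N Q_{j_i}\}
\end{equation*}
is measurable: each condition $Q_{j_i} \cap \mathcal{E}_\omega \subseteq (U_i)_\omega$ is equivalent to $\omega \notin \pi((Q_{j_i}\times X)\cap \mathcal{E} \setminus U_i)$, and the covering condition is equivalent to $\omega \notin \pi(\mathcal{E} \setminus \bigcup_i (Q_{j_i}\times X))$, both of which lie in $\mathcal{F}$ by Lemma \ref{1007151422} (here $\pi : \mathcal{E} \to \Omega$ is the natural projection and $Q_{j_i}\times X$, $U_i$ are all in $\mathcal{F}\times \mathcal{B}_X$). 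By the compactness argument of the previous paragraph, the countably many sets $\Omega_{(j_1, \dots, j_N)}$ cover $\Omega$ up to a $\mathbb{P}$-null set. Enumerate these tuples as $\tau_1, \tau_2, \dots$ and set $\Omega^{(1)} = \Omega_{\tau_1}$, $\Omega^{(k)} = \Omega_{\tau_k} \setminus \bigcup_{l<k}\Omega_{\tau_l}$; this is a countable measurable partition of a conull subset of $\Omega$. On $\Omega^{(k)}$, with $\tau_k = (j_1^{(k)}, \dots, j_N^{(k)})$, define
\begin{equation*}
A_i = \bigcup_{k} \left( \left(\Omega^{(k)} \times \left(Q_{j_i^{(k)}} \setminus \bigcup_{i' < i} Q_{j_{i'}^{(k)}}\right)\right) \cap \mathcal{E} \right), \qquad i = 1, \dots, N,
\end{equation*}
and put $\alpha = \{A_1, \dots, A_N\}$ (absorbing the null set where none of the $\Omega_{\tau_k}$ applies into, say, $A_1$, using completeness of $(\Omega, \mathcal{F}, \mathbb{P})$).

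It then remains to check the three required properties. Measurability of each $A_i$ is clear since it is a countable union of sets in $\mathcal{F}\times\mathcal{B}_X$. For $\mathbb{P}$-a.e.\ $\omega$, say $\omega \in \Omega^{(k)}$: the fiber $(A_i)_\omega = (Q_{j_i^{(k)}} \setminus \bigcup_{i'<i} Q_{j_{i'}^{(k)}}) \cap \mathcal{E}_\omega$ is clopen in $\mathcal{E}_\omega$ (being the trace of a clopen subset of $X$), the $(A_i)_\omega$ are pairwise disjoint by construction, and their union is $(\bigcup_i Q_{j_i^{(k)}}) \cap \mathcal{E}_\omega = \mathcal{E}_\omega$ since $\omega \in \Omega_{\tau_k}$; hence $\alpha_\omega$ is a clopen partition of $\mathcal{E}_\omega$. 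Finally $A_i \subseteq U_i$: on $\Omega^{(k)}$ we have $(A_i)_\omega \subseteq Q_{j_i^{(k)}} \cap \mathcal{E}_\omega \subseteq (U_i)_\omega$, again because $\omega \in \Omega_{\tau_k}$. Thus $\alpha \in \mathbf{P}_\mathcal{E}$, $\alpha \succeq \mathcal{U}$, and $\alpha_\omega$ is a clopen partition for $\mathbb{P}$-a.e.\ $\omega$, as claimed.

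I expect the main obstacle to be purely the bookkeeping of the measurable selection: one must be careful that the countable cover of $\Omega$ by the $\Omega_{(j_1,\dots,j_N)}$ genuinely follows from fiberwise compactness (this is where zero-dimensionality is essential — an open cover of a compact subset of a zero-dimensional space admits a finite clopen refinement drawn from the fixed countable clopen base), and that all the relevant projections are measurable, which is exactly the content of Lemma \ref{1007151422}. No deep idea is needed beyond organizing these ingredients; there is no issue of choosing the tuple $\tau_k$ "continuously" in $\omega$ because we only need measurability and we obtain it for free from the partition $\{\Omega^{(k)}\}$.
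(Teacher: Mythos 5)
Your proof is correct and follows essentially the same route as the paper's: a countable family of clopen candidates (the paper indexes finite unions from a countable clopen basis, you enumerate all clopen subsets of $X$ directly), measurability of the set of $\omega$ for which a fixed candidate tuple covers $\mathcal{E}_\omega$ subordinately to $\mathcal{U}_\omega$ via projections and Lemma \ref{1007151422}, fiberwise compactness plus zero-dimensionality to see that these sets cover $\Omega$ up to a null set, and a measurable disjointification over $\Omega$ to assemble $\alpha$. Your additional fiberwise disjointification $Q_{j_i}\setminus\bigcup_{i'<i}Q_{j_{i'}}$ (which the paper leaves implicit when checking the partition property) and the harmless factor-order slip of writing $Q_{j_i}\times X$ where $\Omega\times Q_{j_i}$ is meant in the projection formulas do not affect correctness.
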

\begin{proof}
Let $\pi: \Omega\times X\rightarrow X$ be the natural projection. We may assume without any loss of generality that $\mathcal{E}_\omega$ is a non-empty compact subset of $X$ and $\mathcal{U}_\omega\in \mathbf{C}_{\mathcal{E}_\omega}^o$ for each $\omega\in \Omega$.

As $X$ is zero-dimensional, there exists a countable topological basis $\{V_n: n\in \mathbb{N}\}$ of $X$ consisting of clopen subsets (here, we take $V_1= \emptyset$).

Note that, if $I_1, \cdots, I_N$ are $N$ finite disjoint non-empty subsets of $\mathbb{N}$, and we set
\begin{equation*}
\Omega (I_1, \cdots, I_N)= \pi ((\Omega\times X\setminus \bigcup_{j\in \bigcup\limits_{i= 1}^N I_i} V_j)\cap \mathcal{E})\cup \bigcup_{i= 1}^N \pi ((\Omega\times \bigcup_{j\in I_i} V_j\setminus U_i)\cap \mathcal{E}),
\end{equation*}
then by Lemma \ref{1007152201} we have $\Omega (I_1, \cdots, I_N)\in \mathcal{F}$. Moreover, $\omega\notin \Omega (I_1, \cdots, I_N)$ if and only if
$\mathcal{E}_\omega\subseteq \bigcup\limits_{j\in \bigcup\limits_{i= 1}^N I_i} V_j$ and
$\bigcup\limits_{j\in I_i} V_j\cap \mathcal{E}_\omega\subseteq (U_i)_\omega$ for each $i= 1, \cdots, N$.

Now for any given $\omega\in \Omega$, as $\mathcal{U}_\omega\in \mathbf{C}_{\mathcal{E}_\omega}^o$ and as $X$ is a zero-dimensional space, there exists $\alpha (\omega)\in \mathbf{P}_{\mathcal{E}_\omega}$ consisting of clopen subsets $A_1 (\omega), \cdots, A_N (\omega)$ with the property that $A_i (\omega)\subseteq (U_i)_\omega, i= 1, \cdots, N$. Furthermore, there exist $N$ finite disjoint non-empty subsets $I_1 (\omega), \cdots, I_N (\omega)\subseteq \mathbb{N}$ such that $A_i (\omega)= \bigcup\limits_{j\in I_i (\omega)} V_j\cap \mathcal{E}_\omega$ for each $i= 1, \cdots, N$. In particular, $\omega\in \Omega (I_1 (\omega), \cdots, I_N (\omega))^c$.

 Thus, there exists a countably family $\{\{I_1^n, \cdots, I_N^n\}: n\in \mathbb{N}\}$ of $N$ finite disjoint non-empty subsets of $\mathbb{N}$ and a sequence $\{\Omega_n: n\in \mathbb{N}\}\subseteq \mathcal{F}$ such that $\bigcup\limits_{n\in \mathbb{N}} \Omega_n= \Omega$, $\Omega_n\cap \Omega_m= \emptyset$ whenever $1\le n< m$ and $\mathbb{P} (\Omega_n)> 0, \Omega_n\subseteq \Omega (I_1^n, \cdots, I_N^n)^c$ for each $n\in \mathbb{N}$. Now set
 \begin{equation*}
 \alpha= \{\bigcup_{n\in \mathbb{N}} (\Omega_n\times \bigcup_{j\in I_i^n} V_j)\cap \mathcal{E}: i= 1, \cdots, N\}.
 \end{equation*}
From the above construction it is not hard to check that $\alpha$ has the claimed properties. This completes the proof.
\end{proof}

We also have:

\begin{prop} \label{1007211043}
Let $\mathbf{F}= \{F_{g, \omega}: \mathcal{E}_\omega\rightarrow \mathcal{E}_{g \omega}| g\in G, \omega\in \Omega\}$ be a continuous bundle RDS over $(\Omega,
\mathcal{F}, \mathbb{P}, G)$. Then there exists a family $\mathbf{F}'= \{F_{g, \omega}': \mathcal{E}_\omega'\rightarrow \mathcal{E}_{g \omega}'| g\in G, \omega\in \Omega\}$ (with $\mathcal{E}'\in \mathcal{F}\times \mathcal{B}_{X'}$ and $X'$ a compact metric state space), which is a continuous bundle RDS over $(\Omega, \mathcal{F}, \mathbb{P}, G)$, and a factor map $\pi: \mathcal{E}'\rightarrow \mathcal{E}$ from $\mathbf{F}'$ to $\mathbf{F}$, such that $X'$ is a zero-dimensional space. In fact, $\pi$ is induced by a continuous surjection from $X'$ to $X$.
\end{prop}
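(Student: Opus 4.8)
The plan is to realize $X'$ as a zero-dimensional extension of $X$ by an inverse-limit (Cantor-scheme) construction carried out uniformly in $\omega$, and then to pull back the random dynamics through the resulting continuous surjection. First I would fix a countable family of compact sets witnessing that $X$ is metrizable: choose finite closed covers $\mathcal{A}_k$ of $X$ of mesh $< 2^{-k}$ by sets which are closures of open sets, arranged so that $\mathcal{A}_{k+1}$ refines $\mathcal{A}_k$. From these one builds the standard zero-dimensional cover: let $X'$ be the subspace of $\prod_{k\in\mathbb{N}} \{1,\dots,|\mathcal{A}_k|\}$ (a Cantor space) consisting of those sequences $(i_k)$ for which $\bigcap_k A_{k,i_k}\neq\emptyset$ and the chosen sets are ``consistent'' down the tree; define $p\colon X'\to X$ by $p((i_k))=$ the unique point of $\bigcap_k A_{k,i_k}$. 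Then $p$ is a continuous surjection of the zero-dimensional compact metric space $X'$ onto $X$; this is a classical fact (every compact metric space is a continuous image of a closed subset of the Cantor set) and I would simply cite it rather than reprove it.

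Next I would transport the bundle and the dynamics. Set $\mathcal{E}'=\{(\omega,x')\in\Omega\times X':(\omega,p(x'))\in\mathcal{E}\}=(\mathrm{id}_\Omega\times p)^{-1}(\mathcal{E})$; since $\mathrm{id}_\Omega\times p\colon\Omega\times X'\to\Omega\times X$ is measurable (indeed $(\mathcal{F}\times\mathcal{B}_{X'})$-$(\mathcal{F}\times\mathcal{B}_X)$-measurable, as $p$ is continuous) we get $\mathcal{E}'\in\mathcal{F}\times\mathcal{B}_{X'}$, and for each $\omega$ the fiber $\mathcal{E}'_\omega=p^{-1}(\mathcal{E}_\omega)$ is a non-empty compact subset of $X'$ whenever $\mathcal{E}_\omega$ is non-empty and compact, because $p$ is a continuous surjection between compact metric spaces (hence closed, and $p^{-1}$ of a non-empty set is non-empty). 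Let $\pi\colon\mathcal{E}'\to\mathcal{E}$ be the restriction of $\mathrm{id}_\Omega\times p$; so $\pi_\omega=p|_{\mathcal{E}'_\omega}$ is a continuous surjection onto $\mathcal{E}_\omega$ for $\mathbb{P}$-a.e.\ $\omega$.

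Then I would define $F'_{g,\omega}\colon\mathcal{E}'_\omega\to\mathcal{E}'_{g\omega}$ by the requirement $\pi_{g\omega}\circ F'_{g,\omega}=F_{g,\omega}\circ\pi_\omega$. The delicate point — and the one place I expect to have to work — is that this does not determine $F'_{g,\omega}$ uniquely (the fibers of $p$ may be non-trivial), so one must make a \emph{measurable and coherent} choice of lift: coherent meaning $F'_{g_2,g_1\omega}\circ F'_{g_1,\omega}=F'_{g_2g_1,\omega}$ and $F'_{e_G,\omega}=\mathrm{id}$, and jointly measurable in $(\omega,x')$. The natural way around this is functoriality of the Cantor-scheme construction: since $F_{g,\omega}$ is a homeomorphism $\mathcal{E}_\omega\to\mathcal{E}_{g\omega}$, one can choose the closed covers $\mathcal{A}_k$ once and for all on $X$ and let $F'_{g,\omega}$ act on the ``address'' of a point by $x'=(i_k)\mapsto$ the address of $F_{g,\omega}(p(x'))$ read off from the \emph{same} covers $\mathcal{A}_k$ pulled into $\mathcal{E}_{g\omega}$ — but since addresses in $\mathcal{E}_{g\omega}$ relative to a fixed scheme need not be unique, the cleanest fix is instead to take $X'$ to be the zero-dimensional space associated to the whole orbit structure, i.e.\ pull back a single zero-dimensional extension of $X$ and define $F'_{g,\omega}$ as the restriction of $\mathrm{id}\times(\text{identity on }X')$ composed with a skew term; more concretely, one builds $X'$ from a $G$-invariant countable separating family of continuous functions on $\mathcal{E}$ (pulled from $X$), so that the cocycle identity for $\mathbf{F}'$ is inherited automatically from that of $\mathbf{F}$. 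Joint measurability then follows from measurability of $(\omega,x)\mapsto F_{g,\omega}(x)$ together with continuity of $p$ and of the coordinate maps of $X'$. Once the lift is in hand, verifying items (1)--(3) of the definition of a continuous bundle RDS, and the two defining properties of a factor map, is routine: (1) and the factor-map continuity/surjectivity come from $p$ being a continuous surjection of compacta, (2) is the cocycle identity built in by construction, the measurability in (2) is the joint-measurability check above, and $\pi$ being induced by the continuous surjection $p\colon X'\to X$ is immediate from the definition $\pi=(\mathrm{id}_\Omega\times p)|_{\mathcal{E}'}$. The main obstacle, to reiterate, is producing the measurable coherent family of lifts $\{F'_{g,\omega}\}$; I expect to handle it by choosing the zero-dimensional model $X'$ of $X$ to be canonical enough (via a fixed countable algebra of clopen-preimage sets) that the lift is forced, not chosen.
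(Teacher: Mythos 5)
Your primary construction has a genuine gap, and it is not just the measurability/coherence issue you flag. If you set $\mathcal{E}'=(\mathrm{id}_\Omega\times p)^{-1}(\mathcal{E})$ for a \emph{fixed} zero-dimensional extension $p\colon X'\to X$ (Cantor scheme), then any lift $F'_{g,\omega}$ with $p\circ F'_{g,\omega}=F_{g,\omega}\circ p$ is forced, by the cocycle identity applied to $g$ and $g^{-1}$, to be a bijection (indeed a homeomorphism) from $\mathcal{E}'_\omega=p^{-1}(\mathcal{E}_\omega)$ onto $\mathcal{E}'_{g\omega}=p^{-1}(\mathcal{E}_{g\omega})$, and hence to carry each fiber $p^{-1}(x)$ homeomorphically onto $p^{-1}(F_{g,\omega}x)$. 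For a Cantor-scheme $p$ these fibers need not be homeomorphic (one can be a singleton, the other a Cantor set), so in general \emph{no} lift exists at all -- the obstruction is existence, not merely non-uniqueness or the lack of a measurable selection. Your fallback (``take $X'$ associated to the whole orbit structure, built from a $G$-invariant countable separating family, so that the lift is forced'') is the right instinct but is never actually constructed: as written there is no definition of $X'$, of $\mathcal{E}'$, or of $F'_{g,\omega}$, and ``composed with a skew term'' does not specify a map.

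The paper supplies exactly the missing construction. Take a continuous surjection $\phi\colon C\to X$ with $C$ a Cantor set (the classical fact you cite), but put $X'=C^G$ with the shift action $g'\colon (c_g)_{g\in G}\mapsto (c_{g'g})_{g\in G}$, and set
\begin{equation*}
\mathcal{E}'=\{(\omega,(c_g)_{g\in G}):\ \phi(c_{e_G})\in\mathcal{E}_\omega\ \text{and}\ \phi(c_g)=F_{g,\omega}\,\phi(c_{e_G})\ \text{for all}\ g\in G\},
\end{equation*}
with $F'_{g',\omega}$ the shift and $\pi(\omega,(c_g)_{g\in G})=(\omega,\phi(c_{e_G}))$. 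Because the lifted dynamics is the canonical shift, the cocycle identity and joint measurability are automatic; each $\mathcal{E}'_\omega$ is a non-empty compact subset of $C^G$; $\pi_\omega$ is onto since for $x\in\mathcal{E}_\omega$ one simply chooses $c_g\in\phi^{-1}(F_{g,\omega}x)$ for each $g$; and $\pi$ is induced by the continuous surjection $C^G\to X$, $(c_g)_{g\in G}\mapsto\phi(c_{e_G})$. Note that this factor map is typically far from injective on fibers (many addresses over one orbit), which is harmless for the proposition; the point is that the zero-dimensional model is attached to the orbit, not to the single space $X$, which is precisely what your pullback approach cannot do.
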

\begin{proof}
It is well known that there exists a continuous surjection $\phi: C\rightarrow X$, where $C$ is a Cantor space. Then $G$ acts naturally on the space $C^G$ with $g': (c_g)_{g\in G}\mapsto (c_{g' g})_{g\in G}$ whenever $g'\in G$. There is a natural projection
$$\psi: \Omega\times C^G\rightarrow \Omega\times X, (\omega, (c_g)_{g\in G})\mapsto (\omega, \phi (c_{e_G})).$$
Now we consider $X'= C^G$, which is a zero-dimensional compact metric space, and
\begin{equation*}
\mathcal{E}'= \{(\omega, (c_g)_{g\in G})\in \psi^{- 1} (\mathcal{E}): \phi (c_g)= F_{g, \omega} \phi (c_{e_G})\ \text{for each}\ g\in G\ \text{and any}\ \omega\in \Omega\}
\end{equation*}
with the family $\mathbf{F}'= \{F_{g, \omega}': \mathcal{E}_\omega'\rightarrow \mathcal{E}_{g \omega}'| g\in G, \omega\in \Omega\}$ given by
$$F_{g', \omega}: \mathcal{E}_\omega'\ni (c_g)_{g\in G}\mapsto (c_{g' g})_{g\in G}, g'\in G, \omega\in \Omega.$$
The map $\pi: \mathcal{E}'\rightarrow \mathcal{E}$ given by $(\omega, (c_g)_{g\in G})\mapsto (\omega, \phi (c_{e_G}))$ is clearly well defined. In the following we shall check step-by-step that  $X', \mathcal{E}', \mathbf{F}'$ and $\pi$ as constructed satisfy the required properties.

\medskip

\begin{claim} \label{claim 1}
The family $\mathbf{F}'= \{F_{g, \omega}': \mathcal{E}_\omega'\rightarrow \mathcal{E}_{g \omega}'| g\in G, \omega\in \Omega\}$, which is well defined naturally, is a continuous bundle RDS over $(\Omega, \mathcal{F}, \mathbb{P}, G)$.
\end{claim}

\begin{proof}[Proof of Claim \ref{claim 1}]
First, the map
$$\psi_G: \Omega\times C^G\rightarrow \Omega\times X^G, (\omega, (c_g)_{g\in G})\mapsto (\omega, (\phi c_g)_{g\in G})$$
 is obviously measurable. We let $\mathcal{E}'= \psi_G^{- 1} (\mathcal{E}_G)$, where
    \begin{equation*}
    \mathcal{E}_G= \{(\omega, (x_g)_{g\in G}): (\omega, x_{e_G})\in \mathcal{E}, x_g= F_{g, \omega} x_{e_G}\ \text{for each}\ g\in G\ \text{and any}\ \omega\in \Omega\}.
    \end{equation*}
    Since $\mathcal{E}_G\in \mathcal{F}\times \mathcal{B}_{X^G}$, it follows that $\mathcal{E}'\in \mathcal{F}\times \mathcal{B}_{X'}$.

    The measurability of
    $$(\omega, (c_g)_{g\in G})\in \mathcal{E}'\mapsto F'_{g', \omega} ((c_g)_{g\in G})= (c_{g' g})_{g\in G}$$
     for fixed $g'\in G$ and the equality $F'_{g_2, g_1 \omega}\circ F'_{g_1, \omega}= F'_{g_2 g_1, \omega}$ for each $\omega\in \Omega$ and all $g_1, g_2\in G$ are easy to see. Finally, it is not hard to check that $\emptyset\neq \mathcal{E}_\omega'\subseteq X'$ is a compact subset and $F'_{g, \omega}$ is continuous for each $g\in G$. We have shown that the family $\mathbf{F}'$ is a continuous bundle RDS over $(\Omega, \mathcal{F}, \mathbb{P}, G)$.
     \end{proof}

\begin{claim} \label{claim 2}
$\pi$ is a factor map from $\mathcal{E}'$ to $\mathcal{E}$.
\end{claim}

\begin{proof}[Proof of Claim \ref{claim 2}]
In fact, let $\omega\in \Omega$, obviously $\pi_\omega: \mathcal{E}_\omega'\rightarrow \mathcal{E}_\omega$ is a continuous surjection; now let $g'\in G$, if $(\omega, (c_g)_{g\in G})\in \mathcal{E}'$ then
    \begin{eqnarray*}
    \pi_{g' \omega}\circ F'_{g', \omega} ((c_g)_{g\in G})= \pi_{g' \omega} ((c_{g' g})_{g\in G})&= & \phi (c_{g'}) \\
    &= & F_{g', \omega}\circ \phi (c_{e_G})= F_{g', \omega}\circ \pi_\omega ((c_g)_{g\in G}),
    \end{eqnarray*}
    which establishes the identity $\pi_{g' \omega}\circ F'_{g', \omega}= F_{g', \omega}\circ \pi_\omega$.
\end{proof}

It is clear that $\pi$ is induced by the continuous surjection $X'\rightarrow X, (c_g)_{g\in G}\mapsto \phi (c_{e_G})$. This completes the proof.
\end{proof}

 By Proposition \ref{1007202212} and Proposition \ref{1007211043}, one has:

 \begin{thm} \label{1007212202}
The following statements hold:
\begin{enumerate}

\item If $\xi\in \mathbf{P}_\Omega$ and $\mathcal{V}\in \mathbf{C}_X^o$ then $(\xi\times \mathcal{V})_\mathcal{E}$ is factor excellent.

\item If $\mathcal{U}\in \mathbf{C}_\mathcal{E}^o$ has the form
$\mathcal{U}= \{(\Omega_i\times U_i)^c: i= 1, \cdots, n\}$,
$n\in \mathbb{N}\setminus \{1\}$ with $\Omega_i\in \mathcal{F}, i= 1, \cdots, n$ and $\{U_1^c, \cdots, U_n^c\}\in \mathbf{C}_X^o$, then $\mathcal{U}$ is factor good.
\end{enumerate}
 \end{thm}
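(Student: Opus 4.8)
The plan is to deduce Theorem \ref{1007212202} from the zero-dimensional case (Proposition \ref{1007202212}) by lifting through the factor map constructed in Proposition \ref{1007211043}. Let $\mathbf{F}' = \{F'_{g,\omega}: \mathcal{E}'_\omega \to \mathcal{E}'_{g\omega}\}$ be the continuous bundle RDS over $(\Omega,\mathcal{F},\mathbb{P},G)$ with zero-dimensional compact metric state space $X'$, together with the factor map $\pi: \mathcal{E}' \to \mathcal{E}$, furnished by Proposition \ref{1007211043}; recall that $\pi$ is induced by a continuous surjection $\phi: X' \to X$. By the definition of factor excellent (factor good), it suffices to show that $\pi^{-1}(\mathcal{U})$ is excellent (good) in $\mathbf{C}_{\mathcal{E}'}^o$ in the two respective cases. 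Since $(\Omega,\mathcal{F},\mathbb{P})$ is assumed to be a Lebesgue space and $X'$ is zero-dimensional, Proposition \ref{1007202212} applies to the system $\mathbf{F}'$.

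For part (1): if $\xi \in \mathbf{P}_\Omega$ and $\mathcal{V} = \{V_1,\dots,V_N\} \in \mathbf{C}_X^o$, then $\mathcal{V}' \doteq \{\phi^{-1}(V_1),\dots,\phi^{-1}(V_N)\} \in \mathbf{C}_{X'}^o$ since $\phi$ is continuous, and one checks directly from the definitions that $\pi^{-1}\big((\xi\times\mathcal{V})_\mathcal{E}\big) = (\xi\times\mathcal{V}')_{\mathcal{E}'}$ (the preimage of $(C\times V_i)\cap\mathcal{E}$ under $\pi$ is $(C\times\phi^{-1}(V_i))\cap\mathcal{E}'$, using that $\pi$ is fibered over $\Omega$). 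By Proposition \ref{1007202212}\eqref{1007241819}, $(\xi\times\mathcal{V}')_{\mathcal{E}'}$ is excellent, hence $(\xi\times\mathcal{V})_\mathcal{E}$ is factor excellent. For part (2): if $\mathcal{U} = \{(\Omega_i\times U_i)^c: i=1,\dots,n\}$ with $\Omega_i\in\mathcal{F}$ and $\{U_1^c,\dots,U_n^c\}\in\mathbf{C}_X^o$, then $\phi^{-1}(U_i)$ satisfies $\{\phi^{-1}(U_1)^c,\dots,\phi^{-1}(U_n)^c\} = \{\phi^{-1}(U_1^c),\dots,\phi^{-1}(U_n^c)\} \in \mathbf{C}_{X'}^o$, and $\pi^{-1}(\mathcal{U}) = \{(\Omega_i\times\phi^{-1}(U_i))^c: i=1,\dots,n\} \in \mathbf{C}_{\mathcal{E}'}^o$ has exactly the form required by Proposition \ref{1007202212}\eqref{1007241820}, whence $\pi^{-1}(\mathcal{U})$ is good. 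Thus $\mathcal{U}$ is factor good.

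The one point that needs a little care — and which I expect to be the main (if modest) obstacle — is verifying that $\{\phi^{-1}(U_1^c),\dots,\phi^{-1}(U_n^c)\}$ is genuinely an \emph{open} cover of $X'$: $\{U_1^c,\dots,U_n^c\}\in\mathbf{C}_X^o$ means these sets are open and cover $X$, so their $\phi$-preimages are open and cover $X'$ by surjectivity of $\phi$; one must also check that $\pi^{-1}(\mathcal{U})$ is an element of $\mathbf{C}_{\mathcal{E}'}^o$ in the sense of the paper, i.e. that it is a finite measurable cover of $\mathcal{E}'$ all of whose fibers are open, which follows since $(\pi^{-1}(\mathcal{U}))_\omega = \pi_\omega^{-1}(\mathcal{U}_\omega)$ and $\pi_\omega$ is a continuous surjection of the fibers for $\mathbb{P}$-a.e. $\omega$. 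Everything else is routine bookkeeping about how preimages interact with the fibered product structure $(\xi\times\mathcal{W})_\mathcal{E}$ and with complements, all of which has already been recorded in the discussion preceding Lemma \ref{1007192218} and in Lemma \ref{1007212122}. I would therefore present the argument compactly, invoking Proposition \ref{1007211043}, Proposition \ref{1007202212}, and the observation $\pi^{-1}\big((\xi\times\mathcal{W})_\mathcal{E}\big) = (\xi\times\phi^{-1}\mathcal{W})_{\mathcal{E}'}$, and leaving the elementary fiberwise verifications to the reader.
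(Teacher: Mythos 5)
Your proposal is correct and is exactly the route the paper takes: the paper deduces Theorem \ref{1007212202} directly from Proposition \ref{1007202212} applied to the zero-dimensional extension furnished by Proposition \ref{1007211043}, which is what you do. Your explicit verifications that $\pi^{-1}\bigl((\xi\times\mathcal{V})_\mathcal{E}\bigr)=(\xi\times\phi^{-1}\mathcal{V})_{\mathcal{E}'}$ and that $\pi^{-1}(\mathcal{U})$ retains the required form merely spell out details the paper leaves implicit.
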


 By Lemma \ref{1007242349} and Proposition \ref{1007211043}, one has:

 \begin{thm} \label{cover}
 Assume that $\Omega$ is a zero-dimensional compact metric space with $\mathcal{F}= \mathcal{B}_\Omega$. Then each member of $\mathbf{C}_\mathcal{E}^{t, o}$ is factor excellent.

 (Recall from the end of \S \ref{fourth} that $\mathbf{C}_\mathcal{E}^{t, o}$ is the set of all $\mathcal{U}\cap \mathcal{E}, \mathcal{U}\in \mathbf{C}_{\Omega\times X}^o$.)
 \end{thm}

Suppose that the family $\mathbf{F}_i= \{(F_i)_{g, \omega}: (\mathcal{E}_i)_\omega\rightarrow (\mathcal{E}_i)_{g \omega}| g\in G, \omega\in \Omega\}$ is a continuous bundle RDS over $(\Omega,
\mathcal{F}, \mathbb{P}, G)$, $i= 1, 2$ and $\pi: \mathcal{E}_1\rightarrow \mathcal{E}_2$ a factor map from $\mathbf{F}_1$ to $\mathbf{F}_2$. $\pi$ naturally induces a map from $\mathcal{P}_\mathbb{P} (\mathcal{E}_1)$ to $\mathcal{P}_\mathbb{P} (\mathcal{E}_2)$, which we may denote by $\pi$ without any ambiguity.

It is simple to see:

\begin{lem} \label{1007212122}
Suppose that for $i= 1, 2$ the family $\mathbf{F}_i= \{(F_i)_{g, \omega}: (\mathcal{E}_i)_\omega\rightarrow (\mathcal{E}_i)_{g \omega}| g\in G, \omega\in \Omega\}$ is a continuous bundle RDS over $(\Omega,
\mathcal{F}, \mathbb{P}, G)$ with corresponding compact metric state space $X_i$.
Assume that $\pi: \mathcal{E}_1\rightarrow \mathcal{E}_2$ is a factor map from $\mathbf{F}_1$ to $\mathbf{F}_2, \mu\in \mathcal{P}_\mathbb{P} (\mathcal{E}_1, G), \mathcal{U}\in \mathbf{C}_{\mathcal{E}_2}$ and $\mathbf{D}= \{d_F: F\in \mathcal{F}_G\}\subseteq \mathbf{L}^1_{\mathcal{E}_2} (\Omega, C (X_2))$ is a sub-additive $G$-invariant family. Then
 \begin{enumerate}

 \item If the sequence $\{\eta_n: n\in \N\}$ converges to $\eta$ in $\mathcal{P}_\mathbb{P} (\mathcal{E}_1)$  then the sequence $\{\pi \eta_n: n\in \N\}$ converges to $\pi \eta$ in $\mathcal{P}_\mathbb{P} (\mathcal{E}_2)$.  In other words, the map $\pi: \mathcal{P}_\mathbb{P} (\mathcal{E}_1)\rightarrow \mathcal{P}_\mathbb{P} (\mathcal{E}_2)$ is continuous.

 \item $\pi \mu\in \mathcal{P}_\mathbb{P} (\mathcal{E}_2, G)$.

 \item $\mathbf{D}\circ \pi\doteq \{d_F\circ \pi: F\in \mathcal{F}_G\}$ is a sub-additive $G$-invariant family in $\mathbf{L}^1_{\mathcal{E}_1} (\Omega, C (X_1))$. Moreover, if $\mathbf{D}$ is monotone then $\mathbf{D}\circ \pi$ is also monotone.

         \item \label{1007271634} $h_\mu^{(r)} (\mathbf{F}_1, \pi^{- 1} \mathcal{U})= h_{\pi \mu}^{(r)} (\mathbf{F}_2, \mathcal{U})$ and so $h_\mu^{(r)} (\mathbf{F}_1)\ge h_{\pi \mu}^{(r)} (\mathbf{F}_2)$.

         \item \label{1007271635} For each $F\in \mathcal{F}_G$ and  for any $\omega\in \Omega$,
          $$P_{\mathcal{E}_1} (\omega, \mathbf{D}\circ \pi, F, \pi^{- 1} \mathcal{U}, \mathbf{F}_1)= P_{\mathcal{E}_2} (\omega, \mathbf{D}, F, \mathcal{U}, \mathbf{F}_2).$$
         Hence if $\mathbf{D}$ is monotone then $P_{\mathcal{E}_1} (\mathbf{D}\circ \pi, \pi^{- 1} \mathcal{U}, \mathbf{F}_1)= P_{\mathcal{E}_2} (\mathbf{D}, \mathcal{U}, \mathbf{F}_2)$. In particular, $h_{\text{top}}^{(r)} (\mathbf{F}_1, \pi^{- 1} \mathcal{U})= h_{\text{top}}^{(r)} (\mathbf{F}_2, \mathcal{U})$.  As a consequence,
             $$P_{\mathcal{E}_1} (\mathbf{D}\circ \pi, \mathbf{F}_1)\ge P_{\mathcal{E}_2} (\mathbf{D}, \mathbf{F}_2)\ \text{and}\  h_{\text{top}}^{(r)} (\mathbf{F}_1)\ge h_{\text{top}}^{(r)} (\mathbf{F}_2).$$
 \end{enumerate}
 \end{lem}
 \begin{proof}
The first three statements are easy to check; we prove the last two.

In fact, the last item follows from \eqref{1007052309} and the fact of $\mathbf{P} ((\pi^{- 1} \mathcal{U})_F)= \pi^{- 1} \mathbf{P} (\mathcal{U}_F)\doteq \{\{\pi^{- 1} B: B\in \beta\}: \beta\in \mathbf{P} (\mathcal{U}_F)\}$ for each $F\in \mathcal{F}_G$.

As for \eqref{1007271634}, suppose that $d \mu (\omega, x)= d \mu_\omega (x) d \mathbb{P} (\omega)$ is the disintegration of $\mu$ over $\mathcal{F}_{\mathcal{E}_1}$. Then it is not hard to check that $d (\pi \mu) (\omega, y)= d (\pi_\omega \mu_\omega) (y) d \mathbb{P} (\omega)$ is the disintegration of $\pi \mu$ over $\mathcal{F}_{\mathcal{E}_2}$. Hence for each $F\in \mathcal{F}_G$,
\begin{eqnarray}
& & \hskip -36pt
H_\mu ((\pi^{- 1} \mathcal{U})_F| \mathcal{F}_{\mathcal{E}_1})\nonumber \\
& &= \int_\Omega H_{\mu_\omega} (((\pi^{- 1} \mathcal{U})_F)_\omega) d \mathbb{P} (\omega)\ (\text{using \eqref{1007041151}})\nonumber \\
& &= \int_\Omega \inf_{\beta (\omega)\in \mathbf{P} (((\pi^{- 1} \mathcal{U})_F)_\omega)} H_{\mu_\omega} (\beta (\omega)) d \mathbb{P} (\omega)\ (\text{using \eqref{1006291640}})\nonumber \\
& &= \int_\Omega \inf_{\alpha\in \mathbf{P} ((\pi^{- 1} \mathcal{U})_F)} H_{\mu_\omega} (\alpha_\omega) d \mathbb{P} (\omega)\ (\text{using Lemma \ref{1007051758}}) \label{1007271651} \\
& &= \int_\Omega \inf_{\beta\in \mathbf{P} (\mathcal{U}_F)} H_{\mu_\omega} ((\pi^{- 1} \beta)_\omega) d \mathbb{P} (\omega)\ (\text{as $\mathbf{P} ((\pi^{- 1} \mathcal{U})_F)= \pi^{- 1} \mathbf{P} (\mathcal{U}_F)$})\nonumber \\
& &= \int_\Omega \inf_{\beta\in \mathbf{P} (\mathcal{U}_F)} H_{\pi_\omega \mu_\omega} (\beta_\omega) d \mathbb{P} (\omega)\nonumber \\
& &= H_{\pi \mu} (\mathcal{U}_F| \mathcal{F}_{\mathcal{E}_2})\ (\text{by a reasoning similar to \eqref{1007271651}}), \nonumber
\end{eqnarray}
and so $h_\mu^{(r)} (\mathbf{F}_1, \pi^{- 1} \mathcal{U})= h_{\pi \mu}^{(r)} (\mathbf{F}_2, \mathcal{U})$. This finishes our proof.
 \end{proof}

 We end this section with the following nice property of a factor good cover.

 A generalized real-valued function $f$ defined on a compact space $Z$ is called \emph{upper semi-continuous} (u.s.c.) if one of the following equivalent conditions holds:
 \begin{enumerate}

\item $\limsup\limits_{z'\rightarrow z} f (z')\le f (z)$ for each $z\in Z$.

\item for each $r\in \R$, the set $\{z\in Z: f (z)\ge r\}\subseteq Z$ is closed.
 \end{enumerate}
Notice that the infimum of any family of u.s.c. functions is again u.s.c., and similarly both the sum and the supremum of finitely many u.s.c. functions are u.s.c.

It follows that:

 \begin{prop} \label{1102102326}
Assume that $\mathcal{U}\in \mathbf{C}_\mathcal{E}^o$ is factor good. Then $h_\bullet^{(r)} (\mathbf{F}, \mathcal{U}): \mathcal{P}_\mathbb{P} (\mathcal{E}, G)$ $\rightarrow \R, \mu\mapsto h_\mu^{(r)} (\mathbf{F}, \mathcal{U})$ is a u.s.c. function.
 \end{prop}
 \begin{proof}
 First, we prove the Proposition in the case that $\mathcal{U}$ is good. By assumption, there exists a sequence $\{\alpha_n: n\in \mathbb{N}\}\subseteq \mathbf{P}_\mathcal{U}$ satisfying:
\begin{enumerate}

\item
For each $n\in \mathbb{N}$, $(\alpha_n)_\omega$ is a clopen partition of $\mathcal{E}_\omega$ for $\mathbb{P}$-a.e. $\omega\in \Omega$ and

\item \label{1208011936}
 For each $\mu\in \mathcal{P}_\mathbb{P} (\mathcal{E}, G)$, $h_{\mu}^{(r)} (\mathbf{F}, \mathcal{U})= \inf\limits_{n\in \mathbb{N}} h_\mu^{(r)} (\mathbf{F}, \alpha_n)$.
\end{enumerate}
Observe that $X$ is not necessarily zero-dimensional by Remark \ref{1207262017}. The existence of the sequence $\{\alpha_n: n\in \mathbb{N}\}$
follows from the assumption that $\mathcal{U}$ is good.
By the assumptions on the sequence $\{\alpha_n: n\in \mathbb{N}\}$, one sees that for each $F\in \mathcal{F}_G$ and for any $n\in \N$, $(\alpha_n)_F\in \mathbf{P}_\mathcal{E}$ satisfies that $((\alpha_n)_F)_\omega$ is a clopen partition of $\mathcal{E}_\omega$ for $\mathbb{P}$-a.e. $\omega\in \Omega$, and so applying Proposition \ref{1007192023} \eqref{1207301347} to $(\alpha_n)_F$ we obtain that
the function
$$H_\bullet ((\alpha_n)_F| \mathcal{F}_\mathcal{E}): \mathcal{P}_\mathbb{P} (\mathcal{E})\rightarrow \R, \mu\mapsto H_\mu ((\alpha_n)_F| \mathcal{F}_\mathcal{E})$$
 is u.s.c. It follows that the function
 $$h_{\bullet}^{(r)} (\mathbf{F}, \alpha_n): \mathcal{P}_\mathbb{P} (\mathcal{E}, G)\rightarrow \R, \mu\mapsto h_{\mu}^{(r)} (\mathbf{F}, \alpha_n)$$
  is also u.s.c. for each $n\in \N$ (using \eqref{1006272232}), which implies that the function
$$h_{\bullet}^{(r)} (\mathbf{F}, \mathcal{U}): \mathcal{P}_\mathbb{P} (\mathcal{E}, G)\rightarrow \R, \mu\mapsto h_{\mu}^{(r)} (\mathbf{F}, \mathcal{U})= \inf\limits_{n\in \mathbb{N}} h_\mu^{(r)} (\mathbf{F}, \alpha_n)\ (\text{using \eqref{1208011936}})$$
is u.s.c., as it is the infimum of a family of u.s.c. functions.

 For the general case, our assumptions imply that there exists a continuous bundle RDS $\mathbf{F}'= \{F_{g, \omega}': \mathcal{E}_\omega'\rightarrow \mathcal{E}_{g \omega}'| g\in G, \omega\in \Omega\}$ (with $\mathcal{E}'\in \mathcal{F}\times \mathcal{B}_{X'}$ and $X'$ a compact metric state space) and a factor map $\pi: \mathcal{E}'\rightarrow \mathcal{E}$ from $\mathbf{F}'$ to $\mathbf{F}$ such that $\pi^{- 1} \mathcal{U}$ is good. By the above arguments, the function
 $$h_{\bullet}^{(r)} (\mathbf{F}', \pi^{- 1} \mathcal{U}): \mathcal{P}_\mathbb{P} (\mathcal{E}', G)\rightarrow \R, \mu'\mapsto h_{\mu'}^{(r)} (\mathbf{F}', \pi^{- 1} \mathcal{U})$$
  is u.s.c. Now applying Lemma \ref{1007212122} we deduce
 \begin{equation*}
 h_{\pi \mu'}^{(r)} (\mathbf{F}, \mathcal{U})= h_{\mu'}^{(r)} (\mathbf{F}', \pi^{- 1} \mathcal{U})\ \text{for each $\mu'\in \mathcal{P}_\mathbb{P} (\mathcal{E}', G)$}.
 \end{equation*}
Recall that $\mathcal{P}_\mathbb{P} (\mathcal{E}', G)$ and $\mathcal{P}_\mathbb{P} (\mathcal{E}, G)$ are both compact metric spaces, the map $\pi: \mathcal{P}_\mathbb{P} (\mathcal{E}', G)\rightarrow \mathcal{P}_\mathbb{P} (\mathcal{E}, G)$ is continuous by Lemma \ref{1007212122} and $\pi \mathcal{P}_\mathbb{P} (\mathcal{E}', G)= \mathcal{P}_\mathbb{P} (\mathcal{E}, G)$ (cf \cite[Proposition 2.5]{Liu} for the special case of $G= \Z$). Thus, for any $r\in \R$,
 \begin{equation*}
 \{\mu\in \mathcal{P}_\mathbb{P} (\mathcal{E}, G): h_{\mu}^{(r)} (\mathbf{F}, \mathcal{U})\ge r\}= \pi (\{\mu'\in \mathcal{P}_\mathbb{P} (\mathcal{E}', G): h_{\mu'}^{(r)} (\mathbf{F}', \pi^{- 1} \mathcal{U})\ge r\})
 \end{equation*}
 is a closed subset, which completes our proof.
 \end{proof}

\section{A variational principle for local fiber topological pressure}\label{variational principle concerning pressure}

In this section we present our main result, Theorem \ref{1007141414}. We will  postpone its proof to next section: here we give the statement, and some remarks and direct applications of it.

Here is our main result.

\begin{thm} \label{1007141414}
Let $\mathbf{D}= \{d_F: F\in \mathcal{F}_G\}\subseteq \mathbf{L}_\mathcal{E}^1 (\Omega, C (X))$ be a monotone sub-additive $G$-invariant family satisfying:

\medskip

\begin{center}
{\bf $(\spadesuit)$}
\begin{minipage}[]{110mm}
\emph{for any given sequence $\{\nu_n: n\in \mathbb{N}\}\subseteq \mathcal{P}_\mathbb{P} (\mathcal{E})$, set
$\mu_n= \frac{1}{|F_n|} \sum\limits_{g\in F_n} g \nu_n$
 for each $n\in \mathbb{N}$, then there exists a subsequence $\{n_j: j\in \mathbb{N}\}\subseteq \mathbb{N}$ such that $\{\mu_{n_j}: j\in \mathbb{N}\}$ converges to some $\mu\in \mathcal{P}_\mathbb{P} (\mathcal{E})$ (and hence $\mu\in \mathcal{P}_\mathbb{P} (\mathcal{E}, G)$) with
\begin{equation*}
\limsup_{j\rightarrow \infty} \frac{1}{|F_{n_j}|} \int_{\mathcal{E}} d_{F_{n_j}} (\omega, x) d \nu_{n_j} (\omega, x)\le \mu (\mathbf{D}).
\end{equation*}}
\end{minipage}
\end{center}

\medskip

\noindent Assume that $\mathcal{U}\in \mathbf{C}_\mathcal{E}^o$ is factor good. Then
\begin{equation} \label{1207281618}
P_\mathcal{E} (\mathbf{D}, \mathcal{U}, \mathbf{F})= \max_{\mu\in \mathcal{P}_\mathbb{P} (\mathcal{E}, G)} [h_\mu^{(r)} (\mathbf{F}, \mathcal{U})+ \mu (\mathbf{D})]
\end{equation}
and
\begin{equation} \label{1207281619}
P_\mathcal{E} (\mathbf{D}, \mathbf{F})= \sup_{\mu\in \mathcal{P}_\mathbb{P} (\mathcal{E}, G)} [h_\mu^{(r)} (\mathbf{F})+ \mu (\mathbf{D})].
\end{equation}
In particular,
\begin{equation} \label{1207281620}
h_{\text{top}}^{(r)} (\mathbf{F}, \mathcal{U})= \max_{\mu\in \mathcal{P}_\mathbb{P} (\mathcal{E}, G)} h_\mu^{(r)} (\mathbf{F}, \mathcal{U})\ \text{and}\
h_{\text{top}}^{(r)} (\mathbf{F})= \sup_{\mu\in \mathcal{P}_\mathbb{P} (\mathcal{E}, G)} h_\mu^{(r)} (\mathbf{F}).
\end{equation}
\end{thm}

In view of Theorem \ref{1007141414}, and in particular \eqref{1207281619},
$h_\mu^{(r)} (\mathbf{F})+ \mu (\mathbf{D})$ may be viewed as the general definition of measure-theoretic pressure in the setting of a continuous bundle RDS and a monotone sub-additive $G$-invariant family satisfying $(\spadesuit)$. Note that \cite{Z-DCDS} provides another way of defining measure-theoretic pressure in the setting of topological dynamical systems.

We believe that Theorem \ref{1007141414} holds for all $\mathcal{U}\in \mathbf{C}_\mathcal{E}^o$, but we have not so far been able to prove it in full generality. In fact, Proposition \ref{1007211043} tells us that, each continuous bundle RDS can be lifted to another continuous bundle RDS with a zero-dimensional compact metric space as its state space, and the associated map between these two continuous bundle RDS's is induced by a continuous surjection between their compact metric state spaces; and Theorem \ref{1007212202} and Theorem \ref{cover} show that many elements of $\mathbf{C}_\mathcal{E}^o$ are indeed factor good.
 Observing these facts, it seems possible to prove that each $\mathcal{U}\in \mathbf{C}_\mathcal{E}^o$ is factor good, and if this were the case then Theorem \ref{1007141414} would hold for all $\mathcal{U}\in \mathbf{C}_\mathcal{E}^o$.

 We also believe that a monotone sub-additive $G$-invariant family $\mathbf{D}= \{d_F: F\in \mathcal{F}_G\}\subseteq \mathbf{L}_\mathcal{E}^1 (\Omega, C (X))$ always satisfies assumption $(\spadesuit)$, and if this were the case, Theorem \ref{1007141414} would hold for any monotone sub-additive $G$-invariant family. In fact, let $\nu_n\in \mathcal{P}_\mathbb{P} (\mathcal{E})$ and $\mu_n= \frac{1}{|F_n|} \sum\limits_{g\in F_n} g \nu_n$
 for each $n\in \mathbb{N}$.
 By compactness of the space $\mathcal{P}_\mathbb{P} (\mathcal{E})$, there always exists a subsequence $\{n_j: j\in \mathbb{N}\}\subseteq \mathbb{N}$ such that the sequence $\{\mu_{n_j}: j\in \mathbb{N}\}$ converges to some $\mu\in \mathcal{P}_\mathbb{P} (\mathcal{E}, G)$ (cf Proposition \ref{1007192023}). Observe that, by Proposition \ref{1008292115}, the family $\mathbf{D}$ does indeed satisfy $(\spadesuit)$ if  $G$ is abelian.
  Further investigation of  $(\spadesuit)$, is made in \S \ref{assumption} and \S \ref{special}.

   \medskip

Remark that, if we remove the assumption of monotonicity from the family $\mathbf{D}= \{d_F: F\in \mathcal{F}_G\}\subseteq \mathbf{L}_\mathcal{E}^1 (\Omega, C (X))$ in Theorem \ref{1007141414} and assume that $\mathbf{D}$ is just a sub-additive $G$-invariant family satisfying $(\spadesuit)$ and if, in addition, there exists a finite constant $C\in \R_+$ such that $\mathbf{D}'= \{d_F': F\in \mathcal{F}_G\}\subseteq \mathbf{L}_\mathcal{E}^1 (\Omega, C (X))$ is a monotone sub-additive $G$-invariant family, where $d_F'= d_F+ |F| C$ for each $F\in \mathcal{F}_G$, then we can introduce $P_\mathcal{E} (\mathbf{D}, \mathcal{U}, \mathbf{F}), P_\mathcal{E} (\mathbf{D}, \mathbf{F})$ and $\mu (\mathbf{D})$ similarly for each $\mathcal{U}\in \mathbf{C}_\mathcal{E}$ and any $\mu\in \mathcal{P}_\mathbb{P} (\mathcal{E}, G)$. In fact,
\begin{equation}
\label{1208091655} P_\mathcal{E} (\mathbf{D}, \mathcal{U}, \mathbf{F})= P_\mathcal{E} (\mathbf{D}', \mathcal{U}, \mathbf{F})- C
\ \text{and}\ P_\mathcal{E} (\mathbf{D}, \mathbf{F})= P_\mathcal{E} (\mathbf{D}', \mathbf{F})- C.
\end{equation}
 It is easy to check that the family $\mathbf{D}'$ also satisfies $(\spadesuit)$. Hence
 in the case that $\mathcal{U}\in \mathbf{C}_\mathcal{E}^o$ is factor good,
we may apply Theorem \ref{1007141414} to $\mathbf{D}'$ and $\mathcal{U}$, and then using \eqref{1208091655} we obtain
\begin{equation} \label{1207291438}
P_\mathcal{E} (\mathbf{D}, \mathcal{U}, \mathbf{F})= \max_{\mu\in \mathcal{P}_\mathbb{P} (\mathcal{E}, G)} [h_\mu^{(r)} (\mathbf{F}, \mathcal{U})+ \mu (\mathbf{D})]
\end{equation}
and
\begin{equation} \label{1207291439}
P_\mathcal{E} (\mathbf{D}, \mathbf{F})= \sup_{\mu\in \mathcal{P}_\mathbb{P} (\mathcal{E}, G)} [h_\mu^{(r)} (\mathbf{F})+ \mu (\mathbf{D})].
\end{equation}

\begin{rem} \label{1010210035}
As we will see in \S\S \ref{1208042011}, the equations \eqref{1207291438} and \eqref{1207291439} can be used to obtain the main results of  Ledrappier and Walters \cite{LW} .
 \end{rem}

Now let $\mathbf{D}= \{d_F: F\in \mathcal{F}_G\}\subseteq \mathbf{L}_\mathcal{E}^1 (\Omega, C (X))$ be a family satisfying:

\medskip

\begin{center}
{\bf $(\clubsuit)$}
\begin{minipage}[]{112mm}
\emph{for any given sequence $\{\nu_n: n\in \mathbb{N}\}\subseteq \mathcal{P}_\mathbb{P} (\mathcal{E})$, set
$\mu_n= \frac{1}{|F_n|} \sum\limits_{g\in F_n} g \nu_n$
 for each $n\in \mathbb{N}$, there always exists a subsequence $\{n_j: j\in \mathbb{N}\}\subseteq \mathbb{N}$ such that the sequence $\{\mu_{n_j}: j\in \mathbb{N}\}$ converges to some $\mu\in \mathcal{P}_\mathbb{P} (\mathcal{E}, G)$ with
\begin{equation*}
\limsup_{j\rightarrow \infty} \frac{1}{|F_{n_j}|} \int_{\mathcal{E}} d_{F_{n_j}} (\omega, x) d \nu_{n_j} (\omega, x)\le \limsup_{n\rightarrow \infty} \frac{1}{|F_n|} \int_{\mathcal{E}} d_{F_n} (\omega, x) d \mu (\omega, x).
\end{equation*}}
\end{minipage}
\end{center}

\medskip

\noindent
Remark that, for each $f\in
 \mathbf{L}_\mathcal{E}^1 (\Omega, C (X))$, $\mathbf{D}^f$ is a family in $\mathbf{L}_\mathcal{E}^1 (\Omega, C (X))$ satisfying the above assumption, since
  $$\mathbf{D}^f= \{d_F^f (\omega, x)\doteq \sum\limits_{g\in F} f (g (\omega, x)): F\in \mathcal{F}_G\}.$$
As in \S \ref{fourth}, $P_\mathcal{E} (\omega, \mathbf{D}, F_n, \mathcal{U}, \mathbf{F})$ can be introduced similarly.
By a reasoning similar to the proof of Theorem \ref{1007141414}, which we present in \S \ref{seventh}, it is not hard to see that, if $\mathcal{U}\in \mathbf{C}_\mathcal{E}^o$ is factor good,
\begin{eqnarray} \label{1103031445}
& & \limsup_{n\rightarrow \infty} \frac{1}{|F_n|} \int_\Omega \log P_\mathcal{E} (\omega, \mathbf{D}, F_n, \mathcal{U}, \mathbf{F}) d \mathbb{P} (\omega)\nonumber \\
& & \hskip 26pt = \max_{\mu\in \mathcal{P}_\mathbb{P} (\mathcal{E}, G)} [h_\mu^{(r)} (\mathbf{F}, \mathcal{U})+ \limsup_{n\rightarrow \infty} \frac{1}{|F_n|} \int_{\mathcal{E}} d_{F_n} (\omega, x) d \mu (\omega, x)].
\end{eqnarray}
Then by Theorem \ref{1006122212} and Theorem \ref{1007212202} we have
\begin{eqnarray} \label{1208011114}
& & \sup_{\mathcal{V}\in \mathbf{C}_X^o}
\limsup_{n\rightarrow \infty} \frac{1}{|F_n|} \int_\Omega \log P_\mathcal{E} (\omega, \mathbf{D}, F_n, (\Omega\times \mathcal{V})_\mathcal{E}, \mathbf{F}) d \mathbb{P} (\omega)\nonumber \\
& & \hskip 26pt = \sup_{\mu\in \mathcal{P}_\mathbb{P} (\mathcal{E}, G)} [h_{\mu}^{(r)} (\mathbf{F})+ \limsup_{n\rightarrow \infty} \frac{1}{|F_n|} \int_{\mathcal{E}} d_{F_n} (\omega, x) d \mu (\omega, x)].
\end{eqnarray}
In particular,
\begin{eqnarray} \label{1208011732}
& & \limsup_{n\rightarrow \infty} \frac{1}{|F_n|} \int_\Omega \log P_\mathcal{E} (\omega, \mathbf{D}^f, F_n, \mathcal{U}, \mathbf{F}) d \mathbb{P} (\omega)\nonumber \\
& & \hskip 26pt = \max_{\mu\in \mathcal{P}_\mathbb{P} (\mathcal{E}, G)} [h_\mu^{(r)} (\mathbf{F}, \mathcal{U})+ \int_\mathcal{E} f (\omega, x) d \mu (\omega, x)]
\end{eqnarray}
for factor good $\mathcal{U}\in \mathbf{C}_\mathcal{E}^o$, and
\begin{eqnarray} \label{1207292302}
& & \sup_{\mathcal{V}\in \mathbf{C}_X^o}
\limsup_{n\rightarrow \infty} \frac{1}{|F_n|} \int_\Omega \log P_\mathcal{E} (\omega, \mathbf{D}^f, F_n, (\Omega\times \mathcal{V})_\mathcal{E}, \mathbf{F}) d \mathbb{P} (\omega)\nonumber \\
& & \hskip 26pt = \sup_{\mu\in \mathcal{P}_\mathbb{P} (\mathcal{E}, G)} [h_{\mu}^{(r)} (\mathbf{F})+ \int_\mathcal{E} f (\omega, x) d \mu (\omega, x)].
\end{eqnarray}

\begin{rem} \label{1103031512}
Recall from \S \ref{third} that by a TDS we mean: $G$ acts over a compact metric space as a group of homeomorphisms of the space.

Let $(Y, G)$ be a TDS and $f\in C (Y)$. Denote by $P (f, Y)$, the topological pressure of $f$ over $Y$, \cite[Definition 5.2.1]{MO}. As shown at the beginning of \S \ref{third}, the setting of $(Y, G)$ and $f$
can be viewed as a continuous bundle RDS with:
 \begin{enumerate}

\item \label{8901} $(\Omega, \mathcal{F}, \mathbb{P}, G)$ is a trivial MDS in the sense that $\Omega$ is a singleton $\{\omega_0\}$,

 \item \label{8902} $\mathcal{E}= \{\omega_0\}\times Y$,

\item \label{8903} $\mathbf{F}= \{F_{g, \omega_0}: \{\omega_0\}\times Y\rightarrow \{\omega_0\}\times Y| g\in G\}$, where
 $F_{g, \omega_0}: (\omega_0, y)\mapsto (\omega_0, g y)$ for each $g\in G$ and any $y\in Y$, and

 \item \label{8904} $\mathbf{D}= \{d_F: F\in \mathcal{F}_G\}$, where $d_F (\omega_0, y)= \sum\limits_{g\in F} f (g y)$ for each $y\in Y$.
 \end{enumerate}

 Applying \eqref{1207292302} to \eqref{8901}, \eqref{8902}, \eqref{8903} and \eqref{8904}, we obtain
 \begin{equation} \label{1207292336}
 P (f, Y)= \sup [h_\mu (G, Y)+ \int_Y f (y) d \mu (y)],
 \end{equation}
 where the supremum is taken over all $G$-invariant Borel probability measures $\mu$ over $Y$. Observe that \eqref{1207292336} is indeed  \cite[Variational Principle 5.2.7]{MO}.

 Now let $\mathcal{V}\in \mathbf{C}_Y^o$.
 Denote by $P (f, \mathcal{V})$, the topological $\mathcal{V}$-pressure of $f$, introduced in \cite[\S 2]{LY}. Note that
 $\mathcal{V}$ can be viewed naturally as $(\{\omega_0\}\times \mathcal{V})_\mathcal{E}\in \mathbf{C}_\mathcal{E}^o$ in the above setting of \eqref{8901}, \eqref{8902}, \eqref{8903} and \eqref{8904}. As $(\{\omega_0\}\times \mathcal{V})_\mathcal{E}$ is factor good by Theorem \ref{1007212202}, applying \eqref{1208011732} to this setting we obtain
 \begin{equation} \label{1208012336}
 P (f, \mathcal{V})= \max [h_\mu (G, \mathcal{V})+ \int_Y f (y) d \mu (y)],
 \end{equation}
 where the maximum is taken over all $G$-invariant Borel probability measures $\mu$ over $Y$.
 Observe that \eqref{1208012336} is indeed \cite[Corollary 1.2]{LY}, recovering \cite[Theorem 5.1]{HYZ}.

 In fact, using similar arguments as above and observing \cite[Lemma 3.6]{LY}, it is not hard to see that \eqref{1103031445} generalizes \cite[Theorem 1.1]{LY}, the main result of \cite{LY} by Liang and Yan. Remark that, just before the submission of the paper, we found a preprint version of \cite{LY} from the internet.
 \end{rem}

As a direct corollary of Theorem \ref{1007141414}, we can extend Lemma \ref{1102071655} as follows with the additional assumption that the family $\mathbf{D}$ satisfies $(\spadesuit)$.

\begin{prop} \label{1102071746}
Let $\mathbf{D}= \{d_F: F\in \mathcal{F}_G\}\subseteq \mathbf{L}_\mathcal{E}^1 (\Omega, C (X))$ be a monotone sub-additive $G$-invariant family satisfying $(\spadesuit)$. Then
$$\text{sup}_\mathbb{P} (\mathbf{D})= \max_{\mu\in \mathcal{P}_\mathbb{P} (\mathcal{E}, G)} \mu (\mathbf{D}).$$
\end{prop}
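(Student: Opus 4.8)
The plan is to deduce Proposition \ref{1102071746} from Theorem \ref{1007141414} by applying the variational principle to the trivial cover together with a carefully chosen constant family. First I would pass to the continuous bundle RDS with zero-dimensional state space provided by Proposition \ref{1007211043}: let $\pi: \mathcal{E}'\to \mathcal{E}$ be the factor map, and observe by Lemma \ref{1007212122} that $\mathbf{D}\circ \pi$ is again a monotone sub-additive $G$-invariant family, that it still satisfies $(\spadesuit)$ (since $\pi: \mathcal{P}_\mathbb{P}(\mathcal{E}', G)\to \mathcal{P}_\mathbb{P}(\mathcal{E}, G)$ is continuous and surjective by \cite[Proposition 2.5]{Liu}, and $(\mathbf{D}\circ\pi)(\mu') = \mu'(\mathbf{D}\circ\pi) = (\pi\mu')(\mathbf{D})$), and that $\text{sup}_\mathbb{P}(\mathbf{D}\circ\pi) = \text{sup}_\mathbb{P}(\mathbf{D})$. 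Since the displayed identity for $\mathbf{D}$ follows from that for $\mathbf{D}\circ\pi$ once one knows $\max_{\mu'} \mu'(\mathbf{D}\circ\pi) = \max_\mu \mu(\mathbf{D})$ (again by surjectivity and continuity of $\pi$), it suffices to prove the statement when $X$ itself is zero-dimensional.

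In that case, take the trivial cover $\mathcal{U} = \{\mathcal{E}\} \in \mathbf{C}_\mathcal{E}^o$, which is factor good (indeed excellent: $\mathbf{P}_{\mathcal{U}} = \{\{\mathcal{E}\}\}$, so the constant sequence $\alpha_n = \{\mathcal{E}\}$ witnesses excellence). Then $N(\mathcal{U}_F, \omega) = 1$ for every $F$ and $\omega$, so by Proposition \ref{1102041733} (the fourth item) one has $e^{\|d_F(\omega)\|_\infty}\le P_\mathcal{E}(\omega, \mathbf{D}, F, \mathcal{U}, \mathbf{F})\le N(\mathcal{U}_F,\omega)\,e^{\|d_F(\omega)\|_\infty} = e^{\|d_F(\omega)\|_\infty}$, hence $P_\mathcal{E}(\omega, \mathbf{D}, F, \mathcal{U}, \mathbf{F}) = e^{\|d_F(\omega)\|_\infty}$ exactly. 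Taking logarithms, integrating against $\mathbb{P}$, dividing by $|F_n|$ and letting $n\to\infty$ gives
\begin{equation*}
P_\mathcal{E}(\mathbf{D}, \mathcal{U}, \mathbf{F}) = \lim_{n\to\infty}\frac{1}{|F_n|}\int_\Omega \|d_{F_n}(\omega)\|_\infty\, d\mathbb{P}(\omega) = \text{sup}_\mathbb{P}(\mathbf{D}),
\end{equation*}
where the limit exists by Proposition \ref{1006122129} applied to the monotone non-negative $G$-invariant sub-additive function $F\mapsto \int_\Omega \|d_F(\omega)\|_\infty\,d\mathbb{P}(\omega)$ (cf. the definition of $\text{sup}_\mathbb{P}(\mathbf{D})$ in \S\ref{fourth}).

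On the other hand, $h_\mu^{(r)}(\mathbf{F}, \{\mathcal{E}\}) = h_\mu(G, \{\mathcal{E}\}\mid \mathcal{F}_\mathcal{E}) = 0$ and likewise $h_{\mu,+}^{(r)}(\mathbf{F}, \{\mathcal{E}\}) = 0$ for every $\mu\in\mathcal{P}_\mathbb{P}(\mathcal{E}, G)$, since the only partition refining $\{\mathcal{E}\}$ is itself. Therefore, applying Theorem \ref{1007141414} to $\mathbf{D}$ and $\mathcal{U} = \{\mathcal{E}\}$ — both of whose hypotheses (Lebesgue $(\Omega, \mathcal{F}, \mathbb{P})$, $\mathbf{D}$ monotone sub-additive $G$-invariant satisfying $(\spadesuit)$, $\mathcal{U}$ factor good) are in force — yields
\begin{equation*}
\text{sup}_\mathbb{P}(\mathbf{D}) = P_\mathcal{E}(\mathbf{D}, \{\mathcal{E}\}, \mathbf{F}) = \max_{\mu\in\mathcal{P}_\mathbb{P}(\mathcal{E}, G)}\big[h_\mu^{(r)}(\mathbf{F}, \{\mathcal{E}\}) + \mu(\mathbf{D})\big] = \max_{\mu\in\mathcal{P}_\mathbb{P}(\mathcal{E}, G)} \mu(\mathbf{D}),
\end{equation*}
which is the assertion. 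The only genuine subtlety I anticipate is bookkeeping around the reduction to the zero-dimensional case — checking that $(\spadesuit)$ is preserved under $\mathbf{D}\mapsto \mathbf{D}\circ\pi$ and that the two maxima over $\mathcal{P}_\mathbb{P}(\mathcal{E}, G)$ and $\mathcal{P}_\mathbb{P}(\mathcal{E}', G)$ agree — but this is routine given Lemma \ref{1007212122} and the surjectivity of $\pi$ on invariant measures; alternatively, one could avoid the reduction entirely by noting that the trivial cover $\{\mathcal{E}\}$ is factor good without any zero-dimensionality assumption (it is excellent outright, as observed above), so Theorem \ref{1007141414} applies directly to $\mathbf{D}$ and $\{\mathcal{E}\}$ over the original system. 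I would in fact present the proof this second way, making it a two-line deduction: $\text{sup}_\mathbb{P}(\mathbf{D}) = P_\mathcal{E}(\mathbf{D}, \{\mathcal{E}\}, \mathbf{F}) = \max_\mu \mu(\mathbf{D})$.
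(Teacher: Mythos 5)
Your second, two-line version is exactly the paper's proof: the trivial cover $\{\mathcal{E}\}=(\Omega\times\{X\})_\mathcal{E}$ is excellent (hence factor good), Theorem \ref{1007141414} applied to it gives $P_\mathcal{E}(\mathbf{D},\{\mathcal{E}\},\mathbf{F})=\max_\mu[h_\mu^{(r)}(\mathbf{F},\{\mathcal{E}\})+\mu(\mathbf{D})]$, and the vanishing of the entropies together with Proposition \ref{1102041733} identifies the left side with $\text{sup}_\mathbb{P}(\mathbf{D})$; the zero-dimensional reduction you start with is an unnecessary detour, as you yourself conclude. One cosmetic quibble: $h_\mu^{(r)}(\mathbf{F},\{\mathcal{E}\})=0$ not because the only partition refining $\{\mathcal{E}\}$ is itself (every partition refines the trivial cover), but because the trivial partition is among them and conditional entropy is non-negative.
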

\begin{proof}
It is easy to see that $\{\mathcal{E}\}= (\Omega\times \{X\})_\mathcal{E}\in \mathbf{C}_\mathcal{E}^o$ is excellent, and so by Theorem \ref{1007141414} one has
$$P_\mathcal{E} (\mathbf{D}, \{\mathcal{E}\}, \mathbf{F})= \max_{\mu\in \mathcal{P}_\mathbb{P} (\mathcal{E}, G)} [h_\mu^{(r)} (\mathbf{F}, \{\mathcal{E}\})+ \mu (\mathbf{D})].$$
Observe that $h_{\text{top}}^{(r)} (\mathbf{F}, \{\mathcal{E}\})= 0$ and $h_\mu^{(r)} (\mathbf{F}, \{\mathcal{E}\})= 0$ for each $\mu\in \mathcal{P}_\mathbb{P} (\mathcal{E}, G)$, and so by Proposition \ref{1102041733} we have the conclusion.
\end{proof}

As in the discussions preceding Remark \ref{1103031512}, we assume that
$\mathbf{D}= \{d_F: F\in \mathcal{F}_G\}\subseteq \mathbf{L}_\mathcal{E}^1 (\Omega, C (X))$ is a family satisfying $(\clubsuit)$.
Thus we can apply \eqref{1103031445} to $\{\mathcal{E}\}= (\Omega\times \{X\})_\mathcal{E}\in \mathbf{C}_\mathcal{E}^o$ and obtain
\begin{eqnarray*}
& & \limsup_{n\rightarrow \infty} \frac{1}{|F_n|} \int_\Omega \log P_\mathcal{E} (\omega, \mathbf{D}, F_n, \{\mathcal{E}\}, \mathbf{F}) d \mathbb{P} (\omega)\nonumber \\
& & \hskip 26pt = \max_{\mu\in \mathcal{P}_\mathbb{P} (\mathcal{E}, G)} [h_\mu^{(r)} (\mathbf{F}, \{\mathcal{E}\})+ \limsup_{n\rightarrow \infty} \frac{1}{|F_n|} \int_{\mathcal{E}} d_{F_n} (\omega, x) d \mu (\omega, x)].
\end{eqnarray*}
In other words,
\begin{eqnarray} \label{1208011057}
& & \limsup_{n\rightarrow \infty} \frac{1}{|F_n|} \int_\Omega \sup_{x\in \mathcal{E}_\omega} d_{F_n} (\omega, x) d \mathbb{P} (\omega)\nonumber \\
& & \hskip 26pt = \max_{\mu\in \mathcal{P}_\mathbb{P} (\mathcal{E}, G)} \limsup_{n\rightarrow \infty} \frac{1}{|F_n|} \int_{\mathcal{E}} d_{F_n} (\omega, x) d \mu (\omega, x).
\end{eqnarray}

\medskip

The concept of a principal extension was first introduced and studied by Ledrappier in \cite{Le}. This concept is that a topological dynamical system and its factor have the same measure-theoretic entropy for all invariant Borel probability measures of the system. This plays an important role in the study of relative entropy theory and entropy theory of symbolic extensions \cite{BD}.

Based on the same ideas, we can introduce a similar concept in our setting.

Let the family $\mathbf{F}_i= \{(F_i)_{g, \omega}: (\mathcal{E}_i)_\omega\rightarrow (\mathcal{E}_i)_{g \omega}| g\in G, \omega\in \Omega\}$ be a continuous bundle RDS over $(\Omega,
\mathcal{F}, \mathbb{P}, G)$ with $X_i$ the corresponding compact metric state space, $i= 1, 2$ and $\pi: \mathcal{E}_1\rightarrow \mathcal{E}_2$ a factor map from $\mathbf{F}_1$ to $\mathbf{F}_2$. $\pi$ is called \emph{principal} if $h_{\mu_1}^{(r)} (\mathbf{F}_1)= h_{\pi \mu_1}^{(r)} (\mathbf{F}_2)$ for each $\mu_1\in \mathcal{P}_\mathbb{P} (\mathcal{E}_1, G)$.

Before proceeding, we also need the following result.

\begin{lem} \label{1008171752}
Let the family $\mathbf{F}_i= \{(F_i)_{g, \omega}: (\mathcal{E}_i)_\omega\rightarrow (\mathcal{E}_i)_{g \omega}| g\in G, \omega\in \Omega\}$ be a continuous bundle RDS over $(\Omega,
\mathcal{F}, \mathbb{P}, G)$ with $X_i$ the corresponding compact metric state space, $i= 1, 2$ and $\pi: \mathcal{E}_1\rightarrow \mathcal{E}_2$ a factor map from $\mathbf{F}_1$ to $\mathbf{F}_2$. Assume that $\mathbf{D}= \{d_F: F\in \mathcal{F}_G\}\subseteq \mathbf{L}^1_{\mathcal{E}_2} (\Omega, C (X_2))$ satisfies the assumption of $(\spadesuit)$ with respect to $\mathbf{F}_2$. Then $\mathbf{D}\circ \pi$ satisfies the assumption of $(\spadesuit)$ with respect to $\mathbf{F}_1$.
\end{lem}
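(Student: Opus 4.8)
The plan is to transport everything forward along $\pi$: the F\o lner averages of measures, the integral pairings and the constant $\mu (\mathbf{D})$ all behave well under a factor map, so the hypothesis $(\spadesuit)$ for $\mathbf{F}_2$ will directly yield $(\spadesuit)$ for $\mathbf{F}_1$ after one extra sub-sequence extraction to recover convergence on the larger system $\mathcal{E}_1$.

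First I would record the elementary facts that make the transport work. By Lemma \ref{1007212122}, $\mathbf{D}\circ \pi$ is a sub-additive $G$-invariant family in $\mathbf{L}^1_{\mathcal{E}_1} (\Omega, C (X_1))$, the induced map $\pi: \mathcal{P}_\mathbb{P} (\mathcal{E}_1)\rightarrow \mathcal{P}_\mathbb{P} (\mathcal{E}_2)$ is continuous, and it sends $\mathcal{P}_\mathbb{P} (\mathcal{E}_1, G)$ into $\mathcal{P}_\mathbb{P} (\mathcal{E}_2, G)$. Since the factor map intertwines the two skew-product transformations, one has $\pi (g \nu)= g (\pi \nu)$ for every $g\in G$ and $\nu\in \mathcal{P}_\mathbb{P} (\mathcal{E}_1)$, hence $\pi (\frac{1}{|F_n|} \sum_{g\in F_n} g \nu)= \frac{1}{|F_n|} \sum_{g\in F_n} g (\pi \nu)$; and the change-of-variables formula gives $\int_{\mathcal{E}_1} (d_F\circ \pi) d \nu= \int_{\mathcal{E}_2} d_F d (\pi \nu)$ for every $F\in \mathcal{F}_G$ and $\nu\in \mathcal{P}_\mathbb{P} (\mathcal{E}_1)$ (the integrand being $\mathbb{P}$-integrable in the sup-norm since $d_F\in \mathbf{L}^1_{\mathcal{E}_2} (\Omega, C (X_2))$ and both measures project to $\mathbb{P}$ on $\Omega$). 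Applying the last identity to a $G$-invariant measure and passing to the limit over $\{F_n: n\in \mathbb{N}\}$ (using Proposition \ref{1007120919}), one obtains $\mu (\mathbf{D}\circ \pi)= (\pi \mu) (\mathbf{D})$ whenever $\mu\in \mathcal{P}_\mathbb{P} (\mathcal{E}_1, G)$.

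Now let $\{\nu_n: n\in \mathbb{N}\}\subseteq \mathcal{P}_\mathbb{P} (\mathcal{E}_1)$ be given and set $\mu_n= \frac{1}{|F_n|} \sum_{g\in F_n} g \nu_n$. Put $\nu_n'= \pi \nu_n\in \mathcal{P}_\mathbb{P} (\mathcal{E}_2)$, so that $\pi \mu_n= \frac{1}{|F_n|} \sum_{g\in F_n} g \nu_n'$. Applying $(\spadesuit)$ for $\mathbf{F}_2$ and $\mathbf{D}$ to $\{\nu_n': n\in \mathbb{N}\}$ produces a sub-sequence $\{n_j: j\in \mathbb{N}\}$ with $\pi \mu_{n_j}\rightarrow \mu'\in \mathcal{P}_\mathbb{P} (\mathcal{E}_2, G)$ and
\begin{equation*}
\limsup_{j\rightarrow \infty} \frac{1}{|F_{n_j}|} \int_{\mathcal{E}_1} (d_{F_{n_j}}\circ \pi) d \nu_{n_j}= \limsup_{j\rightarrow \infty} \frac{1}{|F_{n_j}|} \int_{\mathcal{E}_2} d_{F_{n_j}} d \nu_{n_j}'\le \mu' (\mathbf{D}).
\end{equation*}
Since $\mathcal{P}_\mathbb{P} (\mathcal{E}_1)$ is compact and metrizable, pass to a further sub-sequence $\{n_{j_k}: k\in \mathbb{N}\}$ along which $\mu_{n_{j_k}}\rightarrow \mu$ for some $\mu\in \mathcal{P}_\mathbb{P} (\mathcal{E}_1)$; by Proposition \ref{1007192023} (1), $\mu\in \mathcal{P}_\mathbb{P} (\mathcal{E}_1, G)$. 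Continuity of $\pi$ forces $\pi \mu_{n_{j_k}}\rightarrow \pi \mu$, while $\pi \mu_{n_{j_k}}\rightarrow \mu'$, whence $\pi \mu= \mu'$. Therefore, using $\mu (\mathbf{D}\circ \pi)= (\pi \mu) (\mathbf{D})= \mu' (\mathbf{D})$ together with the fact that a $\limsup$ along $\{n_{j_k}: k\in \mathbb{N}\}$ does not exceed the corresponding $\limsup$ along $\{n_j: j\in \mathbb{N}\}$, we arrive at
\begin{equation*}
\limsup_{k\rightarrow \infty} \frac{1}{|F_{n_{j_k}}|} \int_{\mathcal{E}_1} (d_{F_{n_{j_k}}}\circ \pi) d \nu_{n_{j_k}}\le \mu (\mathbf{D}\circ \pi),
\end{equation*}
with $\mu_{n_{j_k}}\rightarrow \mu\in \mathcal{P}_\mathbb{P} (\mathcal{E}_1, G)$. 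This is precisely the assertion of $(\spadesuit)$ for $\mathbf{F}_1$ and $\mathbf{D}\circ \pi$. I expect the main (though mild) obstacle to be the bookkeeping of the two nested sub-sequence extractions, combined with the careful verification that $\pi$ commutes both with the F\o lner averaging of measures and with the integration pairing, so that the inequality supplied by $(\spadesuit)$ on $\mathcal{E}_2$ transports verbatim to $\mathcal{E}_1$; everything else is routine.
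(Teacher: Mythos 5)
Your proposal is correct and follows essentially the same route as the paper: apply $(\spadesuit)$ on $\mathcal{E}_2$ to the pushed-forward measures $\pi \nu_n$, use Proposition \ref{1007192023} (and compactness of $\mathcal{P}_\mathbb{P} (\mathcal{E}_1)$) to pass to a further sub-sequence along which $\mu_{n_j}$ converges to some $\mu\in \mathcal{P}_\mathbb{P} (\mathcal{E}_1, G)$ with $\pi \mu= \mu'$, and then rewrite the inequality via $\int_{\mathcal{E}_1} (d_F\circ \pi)\, d\nu= \int_{\mathcal{E}_2} d_F\, d (\pi \nu)$ and $\mu (\mathbf{D}\circ \pi)= (\pi \mu) (\mathbf{D})$. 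Your extra bookkeeping of the nested extractions and the commutation of $\pi$ with F\o lner averaging is exactly what the paper leaves implicit.
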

\begin{proof}
 Let $\{\nu_n: n\in \mathbb{N}\}\subseteq \mathcal{P}_\mathbb{P} (\mathcal{E}_1)$ be a given sequence and set $\mu_n= \frac{1}{|F_n|} \sum\limits_{g\in F_n} g \nu_n$ for each $n\in \mathbb{N}$. As $\mathbf{D}$ satisfies the assumption of $(\spadesuit)$ with respect to $\mathbf{F}_2$, then
there always exists some subsequence $\{n_j: j\in \mathbb{N}\}\subseteq \mathbb{N}$ such that the sequence $\{\pi \mu_{n_j}: j\in \mathbb{N}\}$ converges to some $\mu'\in \mathcal{P}_\mathbb{P} (\mathcal{E}_2, G)$ and
\begin{equation} \label{1008171814}
\limsup_{j\rightarrow \infty} \frac{1}{|F_{n_j}|} \int_{\mathcal{E}_2} d_{F_{n_j}} (\omega, x) d (\pi \nu_{n_j}) (\omega, x)\le \mu' (\mathbf{D}).
\end{equation}
Note that by Proposition \ref{1007192023} we may assume that $\{\mu_{n_j}: j\in \mathbb{N}\}$ converges to some $\mu\in \mathcal{P}_\mathbb{P} (\mathcal{E}_1, G)$ (by selecting a subsequence of $\{n_j: j\in \mathbb{N}\}$ if necessary). Obviously, $\pi \mu= \mu'$ and then \eqref{1008171814} can be restated as:
\begin{equation*}
\limsup_{j\rightarrow \infty} \frac{1}{|F_{n_j}|} \int_{\mathcal{E}_1} d_{F_{n_j}}\circ \pi (\omega, x) d \nu_{n_j} (\omega, x)\le \mu (\mathbf{D}\circ \pi).
\end{equation*}
That is, $\mathbf{D}\circ \pi$ satisfies the assumption of $(\spadesuit)$ with respect to $\mathbf{F}_1$.
\end{proof}

Now, given continuous bundle RDS's $\mathbf{F}_1$ and $\mathbf{F}_2$ over $(\Omega,
\mathcal{F}, \mathbb{P}, G)$, and given a factor map $\pi: \mathcal{E}_1\rightarrow \mathcal{E}_2$ from $\mathbf{F}_1$ to $\mathbf{F}_2$, observe $\pi \mathcal{P}_\mathbb{P} (\mathcal{E}_1, G)= \mathcal{P}_\mathbb{P} (\mathcal{E}_2, G)$ (cf \cite[Proposition 2.5]{Liu} for the special case of $G= \Z$). Thus, by the definition, Theorem \ref{1007141414} and Lemma \ref{1008171752} one has:

\begin{prop} \label{1010241620}
Let the family $\mathbf{F}_i= \{(F_i)_{g, \omega}: (\mathcal{E}_i)_\omega\rightarrow (\mathcal{E}_i)_{g \omega}| g\in G, \omega\in \Omega\}$ be a continuous bundle RDS over $(\Omega,
\mathcal{F}, \mathbb{P}, G)$ with $X_i$ the corresponding compact metric state space, $i= 1, 2$ and $\pi: \mathcal{E}_1\rightarrow \mathcal{E}_2$ a factor map from $\mathbf{F}_1$ to $\mathbf{F}_2$. Assume that $\mathbf{D}= \{d_F: F\in \mathcal{F}_G\}\subseteq \mathbf{L}^1_{\mathcal{E}_2} (\Omega, C (X_2))$ is a monotone sub-additive $G$-invariant family satisfying the assumption of $(\spadesuit)$ with respect to $\mathbf{F}_2$. If $\pi$ is principal then
\begin{equation*}
P_{\mathcal{E}_2} (\mathbf{D}, \mathbf{F}_2)= P_{\mathcal{E}_1} (\mathbf{D}\circ \pi, \mathbf{F}_1),\ \text{particularly,}\ h_{\text{top}}^{(r)} (\mathbf{F}_2)= h_{\text{top}}^{(r)} (\mathbf{F}_1).
\end{equation*}
\end{prop}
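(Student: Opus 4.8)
The plan is to reduce the statement for $\mathbf{F}_2$ to the (already established, via the standing hypotheses) version of Theorem \ref{1007141414} applied to $\mathbf{F}_1$, using the principality of $\pi$ to match up the measure-theoretic sides. Recall that $P_{\mathcal{E}_2}(\mathbf{D},\mathbf{F}_2)=\sup_{\mathcal{V}\in\mathbf{C}_{X_2}^o}P_{\mathcal{E}_2}(\mathbf{D},(\Omega\times\mathcal{V})_{\mathcal{E}_2},\mathbf{F}_2)$ by definition, and similarly $P_{\mathcal{E}_1}(\mathbf{D}\circ\pi,\mathbf{F}_1)=\sup_{\mathcal{W}\in\mathbf{C}_{X_1}^o}P_{\mathcal{E}_1}(\mathbf{D}\circ\pi,(\Omega\times\mathcal{W})_{\mathcal{E}_1},\mathbf{F}_1)$. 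Since $(\Omega,\mathcal{F},\mathbb{P})$ is Lebesgue and $\mathbf{D}$ satisfies $(\spadesuit)$ with respect to $\mathbf{F}_2$, Lemma \ref{1008171752} gives that $\mathbf{D}\circ\pi$ satisfies $(\spadesuit)$ with respect to $\mathbf{F}_1$; moreover $\mathbf{D}\circ\pi$ is again monotone sub-additive $G$-invariant by Lemma \ref{1007212122}. Hence Theorem \ref{1007141414} applies to both $(\mathbf{D},\mathbf{F}_2)$ and $(\mathbf{D}\circ\pi,\mathbf{F}_1)$, yielding
\begin{equation*}
P_{\mathcal{E}_2}(\mathbf{D},\mathbf{F}_2)=\sup_{\mu_2\in\mathcal{P}_\mathbb{P}(\mathcal{E}_2,G)}[h_{\mu_2}^{(r)}(\mathbf{F}_2)+\mu_2(\mathbf{D})],\qquad
P_{\mathcal{E}_1}(\mathbf{D}\circ\pi,\mathbf{F}_1)=\sup_{\mu_1\in\mathcal{P}_\mathbb{P}(\mathcal{E}_1,G)}[h_{\mu_1}^{(r)}(\mathbf{F}_1)+\mu_1(\mathbf{D}\circ\pi)].
\end{equation*}

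Next I would identify the two suprema. By the disintegration identity $d(\pi\mu_1)(\omega,y)=d(\pi_\omega(\mu_1)_\omega)(y)\,d\mathbb{P}(\omega)$ noted in the proof of Lemma \ref{1007212122}, and since $d_F\circ\pi(\omega,x)=d_F(\omega,\pi_\omega x)$, the change-of-variables formula gives $\int_{\mathcal{E}_1}d_F\circ\pi\,d\mu_1=\int_{\mathcal{E}_2}d_F\,d(\pi\mu_1)$ for every $F\in\mathcal{F}_G$; averaging over $F_n$ and passing to the limit (the limits exist by Proposition \ref{1006122129} and Proposition \ref{1006141621}) shows $\mu_1(\mathbf{D}\circ\pi)=(\pi\mu_1)(\mathbf{D})$. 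Combined with the principality hypothesis $h_{\mu_1}^{(r)}(\mathbf{F}_1)=h_{\pi\mu_1}^{(r)}(\mathbf{F}_2)$, we get
\begin{equation*}
h_{\mu_1}^{(r)}(\mathbf{F}_1)+\mu_1(\mathbf{D}\circ\pi)=h_{\pi\mu_1}^{(r)}(\mathbf{F}_2)+(\pi\mu_1)(\mathbf{D})
\end{equation*}
for every $\mu_1\in\mathcal{P}_\mathbb{P}(\mathcal{E}_1,G)$. Since $\pi\mathcal{P}_\mathbb{P}(\mathcal{E}_1,G)=\mathcal{P}_\mathbb{P}(\mathcal{E}_2,G)$ by \cite[Proposition 2.5]{Liu}, the map $\mu_1\mapsto\pi\mu_1$ is a surjection onto $\mathcal{P}_\mathbb{P}(\mathcal{E}_2,G)$, so the supremum of the left-hand quantity over $\mathcal{P}_\mathbb{P}(\mathcal{E}_1,G)$ equals the supremum of $h_{\mu_2}^{(r)}(\mathbf{F}_2)+\mu_2(\mathbf{D})$ over $\mathcal{P}_\mathbb{P}(\mathcal{E}_2,G)$. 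This establishes $P_{\mathcal{E}_2}(\mathbf{D},\mathbf{F}_2)=P_{\mathcal{E}_1}(\mathbf{D}\circ\pi,\mathbf{F}_1)$, and taking $\mathbf{D}=\mathbf{D}^0$ (so $\mathbf{D}\circ\pi=\mathbf{D}^0$ on $\mathcal{E}_1$, and $\mu(\mathbf{D}^0)=0$ for all $\mu$) gives the particular case $h_{\text{top}}^{(r)}(\mathbf{F}_2)=h_{\text{top}}^{(r)}(\mathbf{F}_1)$.

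I do not expect a serious obstacle here, since all the heavy lifting is contained in Theorem \ref{1007141414}, Lemma \ref{1007212122} and Lemma \ref{1008171752}; the only points that need a little care are verifying $\mu_1(\mathbf{D}\circ\pi)=(\pi\mu_1)(\mathbf{D})$ (which is the bundle analogue of the obvious change-of-variables and follows from the compatibility of disintegrations under $\pi$, already recorded in the proof of Lemma \ref{1007212122}) and the bookkeeping that the factor-good hypothesis needed to invoke Theorem \ref{1007141414} is automatically available when one passes through all $\mathcal{V}\in\mathbf{C}_{X_i}^o$, since $(\Omega\times\mathcal{V})_{\mathcal{E}_i}$ is factor excellent by Theorem \ref{1007212202} (using that $(\Omega,\mathcal{F},\mathbb{P})$ is Lebesgue). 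Thus the argument is essentially a formal consequence of the variational principle together with the definition of principal extension, exactly parallel to the classical fact that a principal factor map preserves topological entropy.
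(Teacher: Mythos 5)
Your argument is correct and is essentially the paper's own proof: the paper likewise deduces the proposition from Theorem \ref{1007141414} applied to both $(\mathbf{D},\mathbf{F}_2)$ and $(\mathbf{D}\circ\pi,\mathbf{F}_1)$ (via Lemma \ref{1007212122} and Lemma \ref{1008171752}), the identity $\mu_1(\mathbf{D}\circ\pi)=(\pi\mu_1)(\mathbf{D})$, the definition of a principal extension, and the surjectivity $\pi\mathcal{P}_\mathbb{P}(\mathcal{E}_1,G)=\mathcal{P}_\mathbb{P}(\mathcal{E}_2,G)$ from \cite[Proposition 2.5]{Liu}. Your write-up merely fills in the bookkeeping that the paper leaves implicit, so there is nothing to correct.
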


Let the family $\mathbf{F}_i= \{(F_i)_{g, \omega}: (\mathcal{E}_i)_\omega\rightarrow (\mathcal{E}_i)_{g \omega}| g\in G, \omega\in \Omega\}$ be a continuous bundle RDS over $(\Omega,
\mathcal{F}, \mathbb{P}, G)$ with $X_i$ the corresponding compact metric state space, $i= 1, 2$, and $\pi: \mathcal{E}_1\rightarrow \mathcal{E}_2$ a factor map from $\mathbf{F}_1$ to $\mathbf{F}_2$.

Denote by $|\pi^{- 1} (\omega, x_2)|$ the cardinality of $\pi^{- 1} (\omega, x_2)$. In the case of $G= \Z$ Liu proved the following result \cite[Theorem 2.3]{Liu}: if in addition $|\pi^{- 1} (\omega, x_2)|$ is measurable in $(\omega, x_2)\in \mathcal{E}_2$ and, for $\mathbb{P}$-a.e. $\omega\in \Omega$,  $|\pi^{- 1} (\omega, x_2)|$ is finite for each $x_2\in (\mathcal{E}_2)_\omega$, then, in our notation of a continuous bundle RDS,
$$P_{\mathcal{E}_2} (\mathbf{D}^f, \mathbf{F}_2)= P_{\mathcal{E}_1} (\mathbf{D}^f\circ \pi, \mathbf{F}_1)$$
for each $f\in \mathbf{L}^1_{\mathcal{E}_2} (\Omega, C (X_2))$.

  For each $\mu_1\in \mathcal{P}_\mathbb{P} (\mathcal{E}_1, G)$ we see that $\pi$ may be viewed as a given $G$-invariant sub-$\sigma$-algebra $\mathcal{C}$ of an MDS $(\mathcal{E}_1, (\mathcal{F}\times \mathcal{B}_{X_1})\cap \mathcal{E}_1, \mu_1, G)$.
  As the state space $(\Omega, \mathcal{F}, \mathbb{P})$ is a Lebesgue space, the special case where $\pi$ is a principal extension is $h_{\mu_1} (G, \mathcal{E}_1| \mathcal{C})= 0$ for each $\mu_1\in \mathcal{P}_\mathbb{P} (\mathcal{E}_1, G)$. Then the well-known  Abramov-Rokhlin entropy addition formula (cf Proposition \ref{1006291603}) states:
$$h_{\mu_1}^{(r)} (\mathbf{F}_1)\le h_{\pi \mu_1}^{(r)} (\mathbf{F}_2)+ h_{\mu_1} (G, \mathcal{E}_1| \mathcal{C}),$$
in our notation. Thus, by Proposition \ref{0811192316} one sees that the assumption  $\pi$ in \cite[Theorem 2.3]{Liu} is just a very special case of a principal extension, and so \cite[Theorem 2.3]{Liu} can be deduced from \eqref{1207292302}, or from Corollary \ref{1102082226} and Proposition \ref{1008292115}, variants of Theorem \ref{1007141414} and Proposition \ref{1010241620}.

\section{Proof of main result Theorem \ref{1007141414}} \label{seventh}

In this section, we present our somewhat complicated proof of Theorem \ref{1007141414} following the ideas of \cite{HYZ1, HYZ, Mis, Z-Thesis, Z-DCDS} and the references therein.

In fact, Theorem \ref{1007141414} follows from the following result.

\begin{prop} \label{1006151236}
Let $\mathbf{D}= \{d_F: F\in \mathcal{F}_G\}\subseteq \mathbf{L}_\mathcal{E}^1 (\Omega, C (X))$ be a monotone sub-additive $G$-invariant family satisfying $(\spadesuit)$ and $\mathcal{U}\in \mathbf{C}_\mathcal{E}^o$. If $\mathcal{U}$ is factor good then, for some $\mu\in \mathcal{P}_\mathbb{P} (\mathcal{E}, G)$,
\begin{equation*}
 h_{\mu}^{(r)} (\mathbf{F}, \mathcal{U})+ \mu (\mathbf{D})\ge P_\mathcal{E} (\mathbf{D}, \mathcal{U}, \mathbf{F}).
\end{equation*}
\end{prop}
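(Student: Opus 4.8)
The plan is to establish this (the hard, ``upper'' half of the variational principle) by the Misiurewicz averaging method, in the amenable and fibered form of \cite{HYZ1, HYZ, Mis, Z-DCDS}, after first reducing to the case where $\mathcal{U}$ is good. Since $\mathcal{U}$ is factor good, fix a continuous bundle RDS $\mathbf{F}'$ over $(\Omega,\mathcal{F},\mathbb{P},G)$ and a factor map $\pi\colon\mathcal{E}'\to\mathcal{E}$ with $\pi^{-1}\mathcal{U}$ good. By Lemma \ref{1007212122}, $\mathbf{D}\circ\pi$ is monotone sub-additive $G$-invariant, $P_{\mathcal{E}'}(\mathbf{D}\circ\pi,\pi^{-1}\mathcal{U},\mathbf{F}')=P_\mathcal{E}(\mathbf{D},\mathcal{U},\mathbf{F})$, and $h_{\mu'}^{(r)}(\mathbf{F}',\pi^{-1}\mathcal{U})=h_{\pi\mu'}^{(r)}(\mathbf{F},\mathcal{U})$, $\mu'(\mathbf{D}\circ\pi)=(\pi\mu')(\mathbf{D})$ for $\mu'\in\mathcal{P}_\mathbb{P}(\mathcal{E}',G)$; by Lemma \ref{1008171752}, $\mathbf{D}\circ\pi$ still satisfies $(\spadesuit)$. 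Thus producing $\mu'\in\mathcal{P}_\mathbb{P}(\mathcal{E}',G)$ with $h_{\mu',+}^{(r)}(\mathbf{F}',\pi^{-1}\mathcal{U})+\mu'(\mathbf{D}\circ\pi)\ge P_{\mathcal{E}'}(\mathbf{D}\circ\pi,\pi^{-1}\mathcal{U},\mathbf{F}')$ and setting $\mu=\pi\mu'$ will finish the proof. Hence assume from now on that $\mathcal{U}$ is good, with witnessing sequence $\{\alpha_k\}\subseteq\mathbf{P}_\mathcal{U}$ of fiberwise clopen partitions; since $(\Omega,\mathcal{F},\mathbb{P})$ is a Lebesgue space, Proposition \ref{1007041212} gives $h_{\nu,+}^{(r)}=h_\nu^{(r)}$ for all $\nu\in\mathcal{P}_\mathbb{P}(\mathcal{E},G)$.

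Next I would construct measures. Fix $k$. For each $n$ and $\mathbb{P}$-a.e.\ $\omega$, each atom $A$ of $((\alpha_k)_{F_n})_\omega$ is a compact subset of $\mathcal{E}_\omega$; pick, measurably in $\omega$ (using the selection Lemmas \ref{1007152201} and \ref{1007151422} and the measurability in Proposition \ref{1006141621}\eqref{1006141704}), a point $x_A\in A$ with $e^{d_{F_n}(\omega,x_A)}=\sup_{x\in A}e^{d_{F_n}(\omega,x)}=:e^{a_A}$, set $S_{n,\omega}=\sum_A e^{a_A}$ and $\sigma_{n,\omega}^{(k)}=\sum_A\frac{e^{a_A}}{S_{n,\omega}}\delta_{x_A}$; these assemble into $\sigma_n^{(k)}\in\mathcal{P}_\mathbb{P}(\mathcal{E})$. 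By the equality case of Lemma \ref{1007131745} (with $p=1$) and since $(\alpha_k)_{F_n}\succeq\mathcal{U}_{F_n}$ and the local pressure is the relevant infimum (Proposition \ref{1006141641}),
\begin{equation*}
H_{\sigma_{n,\omega}^{(k)}}\!\big(((\alpha_k)_{F_n})_\omega\big)+\int_{\mathcal{E}_\omega}d_{F_n}(\omega,x)\,d\sigma_{n,\omega}^{(k)}(x)=\log S_{n,\omega}\ \ge\ \log P_\mathcal{E}(\omega,\mathbf{D},F_n,\mathcal{U},\mathbf{F}).
\end{equation*}
Integrating in $\omega$ (using \eqref{0906282007}) gives $H_{\sigma_n^{(k)}}((\alpha_k)_{F_n}\,|\,\mathcal{F}_\mathcal{E})+\int_\mathcal{E}d_{F_n}\,d\sigma_n^{(k)}\ge\int_\Omega\log P_\mathcal{E}(\omega,\mathbf{D},F_n,\mathcal{U},\mathbf{F})\,d\mathbb{P}$. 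Put $\mu_n^{(k)}=\frac1{|F_n|}\sum_{g\in F_n}g\sigma_n^{(k)}$; by $(\spadesuit)$ there is a subsequence along which $\mu_n^{(k)}\to\mu^{(k)}\in\mathcal{P}_\mathbb{P}(\mathcal{E},G)$ and $\limsup_n\frac1{|F_n|}\int_\mathcal{E}d_{F_n}\,d\sigma_n^{(k)}\le\mu^{(k)}(\mathbf{D})$.

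The heart is then the amenable Misiurewicz inequality: for every $F\in\mathcal{F}_G$,
\begin{equation*}
\limsup_{n\to\infty}\frac1{|F_n|}H_{\sigma_n^{(k)}}\!\big((\alpha_k)_{F_n}\,\big|\,\mathcal{F}_\mathcal{E}\big)\ \le\ \frac1{|F|}\,\limsup_{n\to\infty}H_{\mu_n^{(k)}}\!\big((\alpha_k)_{F}\,\big|\,\mathcal{F}_\mathcal{E}\big),
\end{equation*}
proved, as in the proof of Theorem \ref{1007031233} and in \cite{HYZ, Mis, Z-DCDS}, by quasi-tiling $F_n$ by $\epsilon$-disjoint translates of $F$ (Proposition \ref{ow-prop}), splitting the entropy over tiles via sub-additivity $H_\nu(\beta_{E\cup F}|\mathcal{F}_\mathcal{E})\le H_\nu(\beta_E|\mathcal{F}_\mathcal{E})+H_\nu(\beta_F|\mathcal{F}_\mathcal{E})$, using $G$-invariance of $\mathcal{F}_\mathcal{E}$ to rewrite $H_{\sigma_n^{(k)}}((\alpha_k)_{Fc}|\mathcal{F}_\mathcal{E})=H_{c\sigma_n^{(k)}}((\alpha_k)_F|\mathcal{F}_\mathcal{E})$, and concavity of $\nu\mapsto H_\nu((\alpha_k)_F|\mathcal{F}_\mathcal{E})$ to pass to the average $\mu_n^{(k)}$; boundary and tiling defects contribute only $O(|F|/|F_n|)$. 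Combining this with the two preceding displays, dividing by $|F_n|$, taking $\limsup$ along the chosen subsequence, and invoking the upper semi-continuity of $\nu\mapsto H_\nu((\alpha_k)_F|\mathcal{F}_\mathcal{E})$ (the second statement of Proposition \ref{1007192023}, available because $(\alpha_k)_F$ is fiberwise clopen and $(\Omega,\mathcal{F},\mathbb{P})$ is Lebesgue), yields $P_\mathcal{E}(\mathbf{D},\mathcal{U},\mathbf{F})\le\frac1{|F|}H_{\mu^{(k)}}((\alpha_k)_F|\mathcal{F}_\mathcal{E})+\mu^{(k)}(\mathbf{D})$; letting $F=F_m$ and $m\to\infty$ gives $P_\mathcal{E}(\mathbf{D},\mathcal{U},\mathbf{F})\le h_{\mu^{(k)}}^{(r)}(\mathbf{F},\alpha_k)+\mu^{(k)}(\mathbf{D})$.

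Finally one passes to a single measure: by compactness of $\mathcal{P}_\mathbb{P}(\mathcal{E},G)$ extract $\mu^{(k_l)}\to\mu$; arranging (e.g.\ via joins along the sequence) that $h_{\mu^{(k_l)}}^{(r)}(\mathbf{F},\alpha_{k_l})\le h_{\mu^{(k_l)}}^{(r)}(\mathbf{F},\alpha_j)$ for $k_l\ge j$ and using upper semi-continuity of $\nu\mapsto h_\nu^{(r)}(\mathbf{F},\alpha_j)=\inf_{F}\frac1{|F|}H_\nu((\alpha_j)_F|\mathcal{F}_\mathcal{E})$, one gets $\limsup_l h_{\mu^{(k_l)}}^{(r)}(\mathbf{F},\alpha_{k_l})\le\inf_j h_\mu^{(r)}(\mathbf{F},\alpha_j)=h_{\mu,+}^{(r)}(\mathbf{F},\mathcal{U})=h_\mu^{(r)}(\mathbf{F},\mathcal{U})$ by goodness and Proposition \ref{1007041212}; a further application of $(\spadesuit)$ to a diagonal subsequence of the $\sigma_n^{(k_l)}$ secures $\limsup_l\mu^{(k_l)}(\mathbf{D})\le\mu(\mathbf{D})$, and taking $\limsup_l$ in the last display gives $P_\mathcal{E}(\mathbf{D},\mathcal{U},\mathbf{F})\le h_\mu^{(r)}(\mathbf{F},\mathcal{U})+\mu(\mathbf{D})=h_{\mu,+}^{(r)}(\mathbf{F},\mathcal{U})+\mu(\mathbf{D})$. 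I expect two genuine obstacles. The first is the amenable Misiurewicz inequality itself: organizing the quasi-tiling of $F_n$ and keeping all error terms $o(|F_n|)$, all relative to $\mathcal{F}_\mathcal{E}$, is where the argument is technically heavy. The second is this last passage to one $\mu$: treating the cover $\mathcal{U}$ (rather than a fixed partition) forces the detour through the good sequence $\{\alpha_k\}$, and since $\nu\mapsto\nu(\mathbf{D})$ need not be upper semi-continuous in general (contrast Example \ref{1104071212}), the control of $\limsup_l\mu^{(k_l)}(\mathbf{D})$ must be wrung out of $(\spadesuit)$ — precisely the place where $(\spadesuit)$, not merely monotone sub-additivity, is indispensable.
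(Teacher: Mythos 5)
Your overall architecture (reduction to the good case via the factor map, Gibbs-type measures on maximizing points, Cesàro averaging, $(\spadesuit)$, upper semi-continuity of $\nu\mapsto H_\nu((\alpha)_B|\mathcal{F}_\mathcal{E})$ for fiberwise clopen $\alpha$, and the translate-averaging entropy inequality) matches the paper's proof of Proposition \ref{1006151236} via Proposition \ref{1007142016}; the only inessential difference in that part is that the paper does not quasi-tile but uses Lemma \ref{1007221736} together with the concavity Lemma \ref{1007222138}. The genuine problem is your final step. Because you choose, for each fixed $k$, one maximizing point per atom of $((\alpha_k)_{F_n})_\omega$, your measures $\sigma_n^{(k)}$ and hence the limit $\mu^{(k)}$ depend on $k$, and you are left with the family of inequalities $P_\mathcal{E}(\mathbf{D},\mathcal{U},\mathbf{F})\le h_{\mu^{(k)}}^{(r)}(\mathbf{F},\alpha_k)+\mu^{(k)}(\mathbf{D})$ in which both the measure and the partition vary. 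To extract a single $\mu$ you need, along a subsequence $\mu^{(k_l)}\to\mu$, the two upper bounds $\limsup_l h_{\mu^{(k_l)}}^{(r)}(\mathbf{F},\alpha_{k_l})\le h_{\mu,+}^{(r)}(\mathbf{F},\mathcal{U})$ and $\limsup_l\mu^{(k_l)}(\mathbf{D})\le\mu(\mathbf{D})$, and neither is justified. For the first, your parenthetical fix ``via joins along the sequence'' goes the wrong way: joining refines the partitions and therefore increases entropy, so it cannot produce $h_{\mu^{(k_l)}}^{(r)}(\mathbf{F},\alpha_{k_l})\le h_{\mu^{(k_l)}}^{(r)}(\mathbf{F},\alpha_j)$ for $k_l\ge j$; goodness only controls $\inf_j h_\nu^{(r)}(\mathbf{F},\alpha_j)$ for each \emph{fixed} $\nu$, and says nothing about the diagonal terms $h_{\mu^{(k)}}^{(r)}(\mathbf{F},\alpha_k)$, which can stay far above $h_{\mu,+}^{(r)}(\mathbf{F},\mathcal{U})$. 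For the second, $(\spadesuit)$ applied to a diagonal subsequence of the $\sigma_n^{(k_l)}$ produces a bound of the form $\limsup\frac{1}{|F_{n}|}\int d_{F_{n}}\,d\sigma_{n}^{(k_{l(n)})}\le\tilde\mu(\mathbf{D})$ for a \emph{new} limit point $\tilde\mu$ of the corresponding Cesàro averages; it does not compare the already-formed values $\mu^{(k_l)}(\mathbf{D})$ with $\mu(\mathbf{D})$, and upper semi-continuity of $\nu\mapsto\nu(\mathbf{D})$ on $\mathcal{P}_\mathbb{P}(\mathcal{E},G)$ is not available for a merely monotone sub-additive family (the paper only has it under strong sub-additivity or a tiling F\o lner sequence, \S\ref{assumption}--\S\ref{special}).

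The paper's resolution is exactly the device you skipped: the selection Lemma \ref{1007141419} applied with $K=n$ and $\alpha_1,\cdots,\alpha_n$ simultaneously. It yields, for each $n$, a single measurable family $B_{n,\omega}$ that meets each atom of $((\alpha_k)_{F_n})_\omega$ in at most one point for \emph{every} $k\le n$, while $\sum_{x\in B_{n,\omega}}e^{d_{F_n}(\omega,x)}$ is still at least $\frac1n$ times (the infimum defining the local pressure minus $\frac12 e^{-\|d_{F_n}(\omega)\|_\infty}$); the losses $\log n+\log 2$ are $o(|F_n|)$ and disappear after dividing by $|F_n|$. Consequently one sequence $\nu^{(n)}$, hence one limit $\mu$ obtained from $(\spadesuit)$, satisfies $P_\mathcal{E}(\mathbf{D},\mathcal{U},\mathbf{F})\le h_\mu^{(r)}(\mathbf{F},\alpha_l)+\mu(\mathbf{D})$ for every $l$ at once, and goodness then gives the claim for $h_{\mu,+}^{(r)}(\mathbf{F},\mathcal{U})$ with no limit over $l$ of measures ever being taken. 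Your cleaner per-$k$ inequality (exact equality $H+\int d_{F_n}=\log S\ge\log P_\mathcal{E}(\omega,\mathbf{D},F_n,\mathcal{U},\mathbf{F})$ with no error term) is correct as far as it goes, but without the simultaneous separation it cannot be assembled into the statement; you should replace the diagonalization by the paper's simultaneous selection (or an equivalent construction) to close the argument.
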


Let us first present the proof of Theorem \ref{1007141414} assuming Proposition \ref{1006151236}.

\begin{proof}[Proof of Theorem \ref{1007141414}]
\eqref{1207281618} follows from Proposition \ref{1007120919} and Proposition \ref{1006151236}.

Observe that \eqref{1207281618} holds for each $(\Omega\times \mathcal{V})_\mathcal{E}, \mathcal{V}\in \mathbf{C}_X^o$, as $(\Omega\times \mathcal{V})_\mathcal{E}\in \mathbf{C}_\mathcal{E}^o$ is factor good by Theorem \ref{1007212202}.
Combining this with
Theorem \ref{1006122212} we obtain \eqref{1207281619}.

It is clear that $\mathbf{D}^0$ is a monotone sub-additive $G$-invariant family satisfying $(\spadesuit)$, thus \eqref{1207281620} follows directly from \eqref{1207281618} and \eqref{1207281619}. This finishes our proof.
\end{proof}

Now let us prove Proposition \ref{1006151236}. Our proof is preceded by five Lemmas and a Proposition.

\medskip

First we need the following result. Recall that by Standard Assumptions 3 and 4, $(\Omega, \mathcal{F}, \mathbb{P})$ is a Lebesgue space and $X$ is a compact metric space.

\begin{lem} \label{1007152310}
Let $p: \Omega\rightarrow X$ be a measurable map and $\alpha\in \mathbf{P}_\mathcal{E}$. Assume that $B\doteq \{(\omega, p (\omega)): \omega\in \Omega\}\subseteq \mathcal{E}$. Then
$$\bigcup\limits_{\omega\in \Omega} \{\omega\}\times \alpha_{\omega} (p (\omega))\in \mathcal{F}\times \mathcal{B}_X.$$
\end{lem}
\begin{proof}
Let $\pi: \Omega\times X\rightarrow \Omega$ be the natural projection.
Since $X$ is a compact metric space, it is well known that $B\in \mathcal{F}\times \mathcal{B}_X$ (see for example \cite[Proposition III.13]{CV}).
  Note that $B\subseteq \mathcal{E}$ and $\alpha\in \mathbf{P}_\mathcal{E}$,
so clearly there exist distinct atoms $A_1, \cdots, A_n, n\in \mathbb{N}$ from $\alpha$ such that $B\subseteq \bigcup\limits_{i= 1}^n A_i$ and $B\cap A_i\neq \emptyset$, for each $i= 1, \cdots, n$. In fact, for each $i= 1, \cdots, n$, set $C_i= \pi (B\cap A_i)$. Then $\bigcup\limits_{k= 1}^n C_k= \Omega$ and $C_i\cap C_j= \emptyset$ once $1\le i\neq j\le n$. Moreover, for each $i= 1, \cdots, n$, $C_i\in \mathcal{F}$ by Lemma \ref{1007152201}. Thus $\{C_1, \cdots, C_n\}\in \mathbf{P}_\Omega$, and then
\begin{equation*}
\bigcup_{\omega\in \Omega} \{\omega\}\times \alpha_{\omega} (p (\omega))= \bigcup_{i= 1}^n \bigcup_{\omega\in C_i} \{\omega\}\times \alpha_{\omega} (p (\omega))= \bigcup_{i= 1}^n [(C_i\times X)\cap A_i]\in \mathcal{F}\times \mathcal{B}_X.
\end{equation*}
This completes the proof.
\end{proof}

The following selection lemma is a random variation of \cite[Lemma 3.1]{Z-DCDS}, and plays an important role in the proof of Proposition \ref{1006151236}.

\begin{lem} \label{1007141419}
Let $\mathbf{D}= \{d_F: F\in \mathcal{F}_G\}\subseteq \mathbf{L}_\mathcal{E}^1 (\Omega, C (X))$ and $\mathcal{U}\in \mathbf{C}_\mathcal{E}$. Assume that $\alpha_k\in \mathbf{P}_\mathcal{E}$ satisfies $\alpha_k\succeq \mathcal{U}$ for each $1\le k\le K$, where $K\in \mathbb{N}$. Then for each $F\in \mathcal{F}_G$ there exists a family of finite subsets $B_{F, \omega}\subseteq \mathcal{E}_\omega, \omega\in \Omega$ such that
\begin{enumerate}

\item For $B_F\doteq \{(\omega, x): \omega\in \Omega, x\in B_{F, \omega}\}$,
$$\sum\limits_{x\in B_{F, \omega}} e^{d_F (\omega, x)}> \frac{1}{K} \left[\inf\limits_{\beta (\omega)\in \mathbf{P}_{\mathcal{E}_\omega}, \beta (\omega)\succeq (\mathcal{U}_F)_\omega} \sum\limits_{B\in \beta (\omega)} \sup\limits_{x\in B} e^{d_F (\omega, x)}- \frac{1}{2} e^{- ||d_F (\omega)||_\infty}\right],$$

\item The family depends measurably on $\omega\in \Omega$ in the sense that $B_F\in \mathcal{F}\times \mathcal{B}_X$ and

\item Each atom of $((\alpha_k)_F)_\omega$ contains at most one point from $B_{F, \omega}, 1\le k\le K$.
\end{enumerate}
\end{lem}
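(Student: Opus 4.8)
The plan is to perform the selection fibrewise over a measurable decomposition of $\Omega$ and then verify that the assembled set $B_F$ is measurable. Fix $F\in\mathcal{F}_G$ and write $d\doteq d_F$, $\mathcal{V}\doteq\mathcal{U}_F$. Recall that $\mathbf{P}(\mathcal{V})$ is a \emph{finite} collection in $\mathbf{P}_\mathcal{E}$ and that $(\alpha_k)_F\succeq\mathcal{V}$ for $1\le k\le K$. By Proposition~\ref{1006141641},
\begin{equation*}
q(\omega)\doteq P_\mathcal{E}(\omega,\mathbf{D},F,\mathcal{U},\mathbf{F})=\min_{\alpha\in\mathbf{P}(\mathcal{V})}\sum_{A\in\alpha}\sup_{x\in A_\omega}e^{d(\omega,x)}
\end{equation*}
for every $\omega$, a minimum over a finite set; and by Proposition~\ref{1006141621}\eqref{1006141704} together with Lemma~\ref{1007151422}, each of the finitely many functions $\omega\mapsto\sum_{A\in\alpha}\sup_{x\in A_\omega}e^{d(\omega,x)}$ (for $\alpha\in\mathbf{P}(\mathcal{V})$) is measurable. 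Enumerating $\mathbf{P}(\mathcal{V})=\{\gamma_1,\dots,\gamma_L\}$, the sets $\Omega_i=\{\omega:\gamma_i$ realizes the minimum and $i$ is least with this property$\}$ form a finite measurable partition of $\Omega$, and on $\Omega_i$ we take $\beta^*(\omega)=(\gamma_i)_\omega$, so that $\sum_{D\in\beta^*(\omega)}\sup_{x\in D}e^{d(\omega,x)}=q(\omega)$.

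Next I would choose, measurably in $\omega$, a near-maximizing point of $d(\omega,\cdot)$ in each atom of $\beta^*(\omega)$. Refining $\{\Omega_i\}$ (by Lemma~\ref{1007151422}, split $\Omega_i$ according to which atoms of $\gamma_i$ have non-empty fibre), we may assume that on each piece of the resulting finite measurable partition the atoms of $\beta^*$ are exactly the fibres of fixed atoms $D_1,\dots,D_m$ of a fixed $\gamma\in\mathbf{P}(\mathcal{V})$, all fibres non-empty. For $1\le j\le m$ the map $\omega\mapsto\sup_{x\in(D_j)_\omega}e^{d(\omega,x)}$ is measurable, and since $\varepsilon_j(\omega)\doteq\frac{1}{4m}e^{-||d(\omega)||_\infty}>0$ is measurable, the set $\{(\omega,x)\in D_j:e^{d(\omega,x)}>\sup_{y\in(D_j)_\omega}e^{d(\omega,y)}-\varepsilon_j(\omega)\}$ lies in $\mathcal{F}\times\mathcal{B}_X$ and projects onto the whole piece; by Lemma~\ref{1007152201} it contains the graph of a measurable map $p_j$. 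Putting $S_\omega=\{p_1(\omega),\dots,p_m(\omega)\}\subseteq\mathcal{E}_\omega$ we obtain $S\doteq\bigcup_\omega\{\omega\}\times S_\omega\in\mathcal{F}\times\mathcal{B}_X$ and $\sum_{x\in S_\omega}e^{d(\omega,x)}\ge q(\omega)-\frac{1}{4}e^{-||d(\omega)||_\infty}$.

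It remains to thin $S_\omega$ down to a subset $B_{F,\omega}$ that meets each atom of each $((\alpha_k)_F)_\omega$, $1\le k\le K$, in at most one point, while $\sum_{x\in B_{F,\omega}}e^{d(\omega,x)}\ge\frac{1}{K}\sum_{x\in S_\omega}e^{d(\omega,x)}$; combined with the previous paragraph this yields the first requirement of the lemma, since $\frac{1}{K}\big(q(\omega)-\frac14 e^{-||d(\omega)||_\infty}\big)>\frac{1}{K}\big(q(\omega)-\frac12 e^{-||d(\omega)||_\infty}\big)$, and the third requirement holds by construction. For each fixed $\omega$ this thinning is precisely the combinatorial selection of \cite[Lemma~3.1]{Z-DCDS}: order the finitely many points of $S_\omega$ by non-increasing weight $e^{d(\omega,\cdot)}$ and run the greedy/pigeonhole procedure against the $K$ ``atom addresses'' $x\mapsto\big(((\alpha_1)_F)_\omega(x),\dots,((\alpha_K)_F)_\omega(x)\big)$, which at each step discards only points of no greater weight and altogether retains at least a $\frac{1}{K}$-fraction of the weight.

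The step I expect to demand the most care — and the one in which this statement genuinely differs from the deterministic \cite[Lemma~3.1]{Z-DCDS} — is verifying that the thinning can be organised measurably, i.e.\ that $B_F\doteq\bigcup_\omega\{\omega\}\times B_{F,\omega}\in\mathcal{F}\times\mathcal{B}_X$. The key tool is Lemma~\ref{1007152310}: for each measurable selector $p_j$ and each $k$, $\bigcup_\omega\{\omega\}\times((\alpha_k)_F)_\omega(p_j(\omega))\in\mathcal{F}\times\mathcal{B}_X$, so the event that $p_i(\omega)$ and $p_j(\omega)$ lie in a common atom of $((\alpha_k)_F)_\omega$ is a measurable subset of $\Omega$; likewise the relative order of the weights $e^{d(\omega,p_1(\omega))},\dots,e^{d(\omega,p_m(\omega))}$ is determined by measurable sets. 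Splitting each remaining piece of $\Omega$ (finitely) according to this ordering and the induced conflict pattern, the greedy procedure keeps, on each piece, a fixed sub-collection of the $p_j$; hence $B_F$ is a finite union of sets $(W_j\times X)\cap G_{p_j}$ with $W_j\in\mathcal{F}$ and $G_{p_j}\in\mathcal{F}\times\mathcal{B}_X$ the graph of $p_j$, and so is measurable. Undoing the preliminary decomposition of $\Omega$ preserves both measurability and the weight estimates, which completes the proof.
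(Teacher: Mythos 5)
Your measurability apparatus is fine and is essentially the paper's own toolkit: the finite decomposition of $\Omega$ via Proposition \ref{1006141641} and Lemma \ref{1007151422}, the selector from Lemma \ref{1007152201}, and Lemma \ref{1007152310} to control atoms through a measurable selector. The genuine gap is the combinatorial core, namely the claim that the preselected set $S_\omega$ (one near-maximizing point in each atom of a \emph{minimizing} partition $\beta^*(\omega)\succeq(\mathcal{U}_F)_\omega$) can always be thinned to meet each atom of each $((\alpha_k)_F)_\omega$ at most once while retaining at least a $\tfrac1K$-fraction of its weight. Nothing relates the atoms of $\beta^*(\omega)$ to those of $((\alpha_k)_F)_\omega$ beyond both refining the cover $(\mathcal{U}_F)_\omega$, so a single atom of some $((\alpha_k)_F)_\omega$ may contain several of the selected points, all but one of which must be discarded; already for $K=1$ the claim would force full retention, which is impossible as soon as two selected points conflict. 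Moreover the failure is not only of this intermediate claim but of the lemma's inequality for the set your procedure outputs: take $\Omega$ trivial, $F=\{e_G\}$, a fibre $\{a,b,c\}$ with $(\mathcal{U}_F)_\omega=\{\{a,b\},\{b,c\}\}$, weights $e^{d_F(\omega,a)}=e^{d_F(\omega,c)}=1-\delta$, $e^{d_F(\omega,b)}=1$, and $K=1$ with $((\alpha_1)_F)_\omega=\{\{a\},\{b,c\}\}$. The minimum in Proposition \ref{1006141641} is $2-\delta$, attained in particular by $\beta^*=\{\{a,b\},\{c\}\}$; your selection may legitimately return $S_\omega=\{b,c\}$ (indeed $b$ is the exact maximizer on $\{a,b\}$), and the thinning against $((\alpha_1)_F)_\omega$ keeps only $b$, of weight $1$, while the lemma requires a sum exceeding $(2-\delta)-\tfrac12 e^{-\|d_F(\omega)\|_\infty}=\tfrac32-\tfrac{\delta}{2}$. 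So no cleverer pigeonhole can repair this route; the $\tfrac1K$ cannot be extracted from a set chosen against a fixed minimizing partition.

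The paper's proof (and the actual content of \cite[Lemma 3.1]{Z-DCDS}, which you misremember as a thinning statement) chooses the points \emph{adaptively}: one takes a measurable near-maximizer $p_1$ of $d_F(\omega,\cdot)$ on all of $\mathcal{E}_\omega$, deletes the $K$ atoms $((\alpha_k)_F)_\omega(p_1(\omega))$, $1\le k\le K$ (measurably, by Lemmas \ref{1007152310} and \ref{1007151422}), takes a near-maximizer of the remainder with geometrically decreasing errors summing to at most $\tfrac12 e^{-\|d_F(\omega)\|_\infty}$ after division by $K$, and iterates until the remainder is empty, which happens in finitely many steps. The disjointness requirement is then automatic, and the factor $\tfrac1K$ arises because at step $j$ the single weight $e^{d_F(\omega,p_j(\omega))}$ dominates, up to the error term, each of the $K$ suprema over the deleted pieces $(\mathcal{E}_{j-1})_\omega\cap((\alpha_k)_F)_\omega(p_j(\omega))$; these pieces form a cover of $\mathcal{E}_\omega$ refining $(\mathcal{U}_F)_\omega$, so their sup-sum dominates the infimum over partitions finer than $(\mathcal{U}_F)_\omega$. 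In short, the comparison object must be the cover generated by the deleted atoms of the $\alpha_k$'s along the exhaustion, not a pre-chosen minimizer $\beta^*$, and the selection must be interleaved with the deletion; your measurability arguments can be recycled, but the architecture of the weight estimate has to change.
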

\begin{proof}
We may assume that $\alpha_1, \cdots, \alpha_K\in \mathbf{P}_\mathcal{U}$. Recall that, if $\mathcal{U}= \{U_1, \cdots, U_n\}, n\in \mathbb{N}$, then
$$\mathbf{P}_\mathcal{U}= \{\{A_1, \cdots, A_n\}\in \mathbf{P}_\mathcal{E}: A_i\subseteq U_i, i= 1, \cdots, n\}.$$
In particular, the cardinality of each $\alpha_k, k= 1, \cdots, K$ is at most $n$.


Let $\pi: \Omega\times X\rightarrow \Omega$ be the natural projection.

Set $\mathcal{E}_0= \mathcal{E}$. We may assume without loss of generality that $\pi (\mathcal{E}_0)= \Omega$. Observe that, by Lemma \ref{1007152201}, there exists a measurable map $p_1: \Omega\rightarrow X$ such that
$(\omega, p_1 (\omega))\in \mathcal{E}_0$ for each $\omega\in \pi (\mathcal{E}_0)$ and
\begin{equation*}
e^{d_F (\omega, p_1 (\omega))}\ge \sup_{x\in (\mathcal{E}_0)_\omega} e^{d_F (\omega, x)}- \frac{1}{2^{1+ 1} K} e^{- ||d_F (\omega)||_\infty}.
\end{equation*}
Note, by Lemma \ref{1007152310}, for each $k= 1, \cdots, K$, $$\bigcup\limits_{\omega\in \Omega} \{\omega\}\times ((\alpha_k)_F)_\omega (p_1 (\omega))\in \mathcal{F}\times \mathcal{B}_X,$$
and so
\begin{equation*}
\mathcal{E}_1\doteq \mathcal{E}_0\setminus \bigcup_{k= 1}^K \bigcup_{\omega\in \pi (\mathcal{E}_0)} \{\omega\}\times ((\alpha_k)_F)_\omega (p_1 (\omega))\in \mathcal{F}\times \mathcal{B}_X.
\end{equation*}
If $\mathcal{E}_1= \emptyset$ the proof is finished. If not, by using Lemma \ref{1007152201} $\pi (\mathcal{E}_1)\in \mathcal{F}$, and there exists a measurable map $p_2: \pi (\mathcal{E}_1)\rightarrow X$ such that
\begin{equation*}
e^{d_F (\omega, p_2 (\omega))}\ge \sup_{x\in (\mathcal{E}_1)_\omega} e^{d_F (\omega, x)}- \frac{1}{2^{2+ 1} K} e^{- ||d_F (\omega)||_\infty}
\end{equation*}
and $(\omega, p_2 (\omega))\in \mathcal{E}_1$
for each $\omega\in \pi (\mathcal{E}_1)$. Set
\begin{equation*}
\mathcal{E}_2= \mathcal{E}_1\setminus \bigcup_{k= 1}^K \bigcup_{\omega\in \pi (\mathcal{E}_1)} \{\omega\}\times ((\alpha_k)_F)_\omega (p_2 (\omega))\in \mathcal{F}\times \mathcal{B}_X.
\end{equation*}

It is not hard to see that, after finitely many steps we obtain
\begin{equation*}
\mathcal{E}_0\in \mathcal{F}\times \mathcal{B}_X\ \text{and}\ \mathcal{E}_j= \mathcal{E}_{j- 1}\setminus \bigcup_{k= 1}^K \bigcup_{\omega\in \pi (\mathcal{E}_{j- 1})} \{\omega\}\times ((\alpha_k)_F)_\omega (p_j (\omega))\in \mathcal{F}\times \mathcal{B}_X,
\end{equation*}
where $p_j: \pi (\mathcal{E}_{j- 1})\rightarrow X$ is a measurable map satisfying
\begin{equation} \label{1207312011}
e^{d_F (\omega, p_j (\omega))}\ge \sup_{x\in (\mathcal{E}_{j- 1})_\omega} e^{d_F (\omega, x)}- \frac{1}{2^{j+ 1} K} e^{- ||d_F (\omega)||_\infty}
\end{equation}
and $(\omega, p_j (\omega))\in \mathcal{E}_{j- 1}$ for each $\omega\in \pi (\mathcal{E}_{j- 1})$,
 $j= 1, \cdots, m$ and $\mathcal{E}_{m- 1}\neq \emptyset$ while $\mathcal{E}_m= \emptyset$.

  Observe that, for all $j= 1, \cdots, m$ and $j_1, j_2\in \{0, 1, \cdots, j- 1\}$, for $j_1\neq j_2$,  $((\alpha_k)_F)_\omega (p_{j_1+ 1} (\omega))$ and $((\alpha_k)_F)_\omega (p_{j_2+ 1} (\omega))$ are different non-empty atoms of the partition $((\alpha_k)_F)_\omega$ for each $k= 1, \cdots, K$ and any $\omega\in \pi (\mathcal{E}_{j- 1})$.
  By assumption, the cardinality of each $\alpha_k, k= 1, \cdots, K$ is at most $n$, the cardinality of $\mathcal{U}$.
Thus, we can deduce that $\mathcal{E}_{m- 1}\neq \emptyset$ while $\mathcal{E}_m= \emptyset$ for some $m\in \mathbb{N}$.

Now for each $\omega\in \Omega$, set
$$B_{F, \omega}= \{p_j (\omega): j\in \{1, \cdots, m\}, \omega\in \mathcal{E}_{j- 1}\}.$$
From the construction, it is easy to see that, for $\omega\in \Omega$, each atom of $((\alpha_k)_F)_\omega$ contains at most one point from $B_{F, \omega}, 1\le k\le K$, and $B_F\in \mathcal{F}\times \mathcal{B}_X$ (using \cite[Proposition III.13]{CV}). Here we use the assumption that $X$ is a compact metric space.

To finish the proof, let $\omega\in \Omega$. We only need check
\begin{equation*}
\sum_{x\in B_{F, \omega}} e^{d_F (\omega, x)}> \frac{1}{K} \left[\inf_{\beta (\omega)\in \mathbf{P}_{\mathcal{E}_\omega}, \beta (\omega)\succeq (\mathcal{U}_F)_\omega} \sum_{B\in \beta (\omega)} \sup_{x\in B} e^{d_F (\omega, x)}- \frac{1}{2} e^{- ||d_F (\omega)||_\infty}\right].
\end{equation*}

In fact, suppose that $m (\omega)\in \{1, \cdots, m\}$ is the first number $J$ in $\{1, \cdots, m\}$ such that $\omega\notin \pi (\mathcal{E}_J)$, and set
\begin{equation*}
\gamma (\omega)= \{(\mathcal{E}_{j- 1})_\omega\cap ((\alpha_k)_F)_\omega (p_j (\omega)): j= 1, \cdots, m (\omega), k= 1, \cdots, K\}.
\end{equation*}
It is easy to check that $\gamma (\omega)\in \mathbf{C}_{\mathcal{E}_\omega}, \gamma (\omega)\succeq (\mathcal{U}_F)_\omega$. Moreover,
\begin{eqnarray*}
& & \sum_{x\in B_{F, \omega}} e^{d_F (\omega, x)}= \sum_{j= 1}^{m (\omega)} e^{d_F (\omega, p_j (\omega))} \\
& &\hskip 26pt \ge \sum_{j= 1}^{m (\omega)} \frac{1}{K}\sum_{k= 1}^K \left[ \sup_{x\in (\mathcal{E}_{j- 1})_\omega\cap ((\alpha_k)_F)_\omega (p_j (\omega))} e^{d_F (\omega, x)}\right. \\
& & \hskip 126pt \left.- \frac{1}{2^{j+ 1} K} e^{- ||d_F (\omega)||_\infty}\right]\ (\text{using \eqref{1207312011}}) \\
& &\hskip 26pt > \frac{1}{K} \left[\sum_{B (\omega)\in \gamma (\omega)} \sup_{x\in B (\omega)} e^{d_F (\omega, x)}- \frac{1}{2} e^{- ||d_F (\omega)||_\infty}\right] \\
& &\hskip 26pt \ge \frac{1}{K} \left[\inf_{\beta (\omega)\in \mathbf{P}_{\mathcal{E}_\omega}, \beta (\omega)\succeq (\mathcal{U}_F)_\omega} \sum_{B \in \beta (\omega)} \sup_{x\in B} e^{d_F (\omega, x)}- \frac{1}{2} e^{- ||d_F (\omega)||_\infty}\right].
\end{eqnarray*}
This finishes our proof.
\end{proof}

In the process of proving Proposition \ref{1006151236}, we will also need the following result, which was essentially proved in the proof of \cite[Lemma 3.1]{HYZ} (see also \cite[Lemma 2.2]{DZ}).

\begin{lem} \label{1007221736}
Let $(Y, \mathcal{D}, \nu)$ be a probability space, $\mathcal{C}\subseteq \mathcal{D}$ a sub-$\sigma$-algebra and $\alpha\in \mathbf{P}_Y$. Assume that $G$ acts as a group of invertible measurable transformations (which may be not measure-preserving) over $(Y, \mathcal{D}, \nu)$. If $E, F\in \mathcal{F}_G$ then
\begin{equation*}
H_\nu (\alpha_F| \mathcal{C})\le \sum_{g\in F} \frac{1}{|E|} H_\nu (\alpha_{E g}| \mathcal{C})+ |F\setminus \{g\in G: E^{- 1} g\subseteq F\}| \log |\alpha|.
\end{equation*}
\end{lem}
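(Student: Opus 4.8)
The plan is to deduce the inequality from two facts about the set function
$f_\alpha:\mathcal{F}_G\to\mathbb{R}$, $S\mapsto H_\nu(\alpha_S|\mathcal{C})$ (with the convention $f_\alpha(\emptyset)=0$): it is \emph{monotone} (since $\alpha_S$ refines $\alpha_{S'}$ when $S\supseteq S'$, and conditional entropy increases with the partition), and it is \emph{strongly subadditive}, i.e. $f_\alpha(S\cap T)+f_\alpha(S\cup T)\le f_\alpha(S)+f_\alpha(T)$. The second is exactly the computation in Question \ref{1102041455} (cf.\ \eqref{1102241455}): using $\alpha_{S\cup T}=\alpha_S\vee\alpha_T$, $\alpha_{S\cap T}\preceq\alpha_S,\alpha_T$ and \eqref{1007021626} one gets $H_\nu(\alpha_{S\cup T}|\mathcal{C})=H_\nu(\alpha_S|\mathcal{C})+H_\nu(\alpha_T|\mathcal{C}\vee\alpha_S)\le H_\nu(\alpha_S|\mathcal{C})+H_\nu(\alpha_T|\mathcal{C}\vee\alpha_{S\cap T})=H_\nu(\alpha_S|\mathcal{C})+H_\nu(\alpha_T|\mathcal{C})-H_\nu(\alpha_{S\cap T}|\mathcal{C})$. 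Note that \eqref{1007021626} and these monotonicity facts are purely measure-theoretic: they use only that each $g\in G$ is an invertible measurable transformation (so $g^{-1}\alpha\in\mathbf{P}_Y$), never that the action preserves $\nu$. Hence Lemma \ref{1102111949} applies to $f_\alpha$.

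First I would separate off the ``boundary''. Put $F_0=F\cap\{g\in G:E^{-1}g\subseteq F\}$, so that $|F\setminus F_0|=|F\setminus\{g\in G:E^{-1}g\subseteq F\}|$. Since $\alpha_F=\alpha_{F_0}\vee\alpha_{F\setminus F_0}$, subadditivity of conditional entropy and $H_\nu(\beta|\mathcal{C})\le\log|\beta|$ (with $|\alpha_{F\setminus F_0}|\le|\alpha|^{|F\setminus F_0|}$) give
\[
H_\nu(\alpha_F|\mathcal{C})\le H_\nu(\alpha_{F_0}|\mathcal{C})+H_\nu(\alpha_{F\setminus F_0}|\mathcal{C})\le H_\nu(\alpha_{F_0}|\mathcal{C})+|F\setminus F_0|\log|\alpha|.
\]
It then remains to show $H_\nu(\alpha_{F_0}|\mathcal{C})\le\frac{1}{|E|}\sum_{g\in F}H_\nu(\alpha_{Eg}|\mathcal{C})$.

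For this averaging step, the key is an exact counting identity on $G$, namely $\sum_{g\in F}1_{Eg\cap F_0}=|E|\cdot 1_{F_0}$: for $h\notin F_0$ every summand vanishes because $Eg\cap F_0\subseteq F_0$, while for $h\in F_0$ one has $1_{Eg\cap F_0}(h)=1_{Eg}(h)=1_{E^{-1}h}(g)$ and $E^{-1}h\subseteq F$, so the sum over $g\in F$ equals $|E^{-1}h|=|E|$. Thus $1_{F_0}=\sum_{g\in F}\frac{1}{|E|}1_{Eg\cap F_0}$ is an equality with positive rational coefficients, and Lemma \ref{1102111949} applied to $f_\alpha$ yields $H_\nu(\alpha_{F_0}|\mathcal{C})\le\frac{1}{|E|}\sum_{g\in F}H_\nu(\alpha_{Eg\cap F_0}|\mathcal{C})$; monotonicity of $f_\alpha$ then bounds each term by $H_\nu(\alpha_{Eg}|\mathcal{C})$. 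Combining the displays finishes the proof.

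The only conceptual point — and the reason the statement survives the action not being measure-preserving — is that $H_\nu(\alpha_\bullet|\mathcal{C})$ is submodular for measure-theoretic reasons alone, so Lemma \ref{1102111949} applies verbatim; everything else is routine. The slightly fiddly part is the index bookkeeping above ($h\in Eg\Leftrightarrow g\in E^{-1}h$, and the passage from $Eg$ to $Eg\cap F_0$ to get an \emph{exact} convex identity rather than just an inequality). Alternatively, the averaging step can be phrased as a Shearer-type inequality for the submodular function $f_\alpha$ and proved directly by the chain rule along an enumeration of $F$, which is the route in the proof of \cite[Lemma 3.1]{HYZ}.
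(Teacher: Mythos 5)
Your proof is correct. Since the paper gives no written proof of Lemma \ref{1007221736} — it simply invokes the proof of \cite[Lemma 3.1]{HYZ} (cf.\ \cite[Lemma 2.2]{DZ}), which runs the chain rule directly along an enumeration — your argument is a genuinely different, self-contained route: you observe that $S\mapsto H_\nu(\alpha_S|\mathcal{C})$ is monotone and strongly sub-additive for purely measure-theoretic reasons (exactly the computation \eqref{1102241455} via \eqref{1007021626}, which never uses measure-preservation), split off the boundary $F\setminus F_0$ with the crude bound $|F\setminus F_0|\log|\alpha|$, and then apply the paper's Lemma \ref{1102111949} (proved via Lemma \ref{1008132304}) to the exact convex identity $\sum_{g\in F}1_{Eg\cap F_0}=|E|\,1_{F_0}$, finishing with monotonicity $H_\nu(\alpha_{Eg\cap F_0}|\mathcal{C})\le H_\nu(\alpha_{Eg}|\mathcal{C})$. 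The counting identity is verified correctly ($h\in Eg\Leftrightarrow g\in E^{-1}h$, and $E^{-1}h\subseteq F$ for $h\in F_0$), and Lemma \ref{1102111949} applies because its proof uses only submodularity with the convention $f(\emptyset)=0$, not invariance, monotonicity or non-negativity; the only unstated (and harmless) points are the trivial cases $F_0=\emptyset$ and the discarding of empty sets $Eg\cap F_0$ from the representation. What your route buys is economy and transparency: it reuses machinery already in the paper and makes explicit why the hypothesis that $G$ acts by invertible measurable (not necessarily measure-preserving) transformations suffices, whereas the cited Shearer-type chain-rule argument of \cite{HYZ} is more hands-on bookkeeping but needs no auxiliary lemma about submodular set functions.
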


The following result is well known.


\begin{lem} \label{1007222138}
Let $(Y, \mathcal{D}, \nu_i)$ be a Lebesgue space, for $i= 1, \cdots, n,$ and some $ n\in \mathbb{N}$, $\mathcal{C}\subseteq \mathcal{D}$ a sub-$\sigma$-algebra of $\mathcal D$, and suppose that  $0< \lambda_1, \cdots, \lambda_n< 1$ satisfy $\lambda_1+ \cdots+ \lambda_n= 1$. Then there exists $\lambda> 0$ (depending on $\lambda_1, \cdots, \lambda_n$) such that, for each $\alpha\in \mathbf{P}_Y$,
\begin{equation*}
\lambda+ \sum_{i= 1}^n \lambda_i H_{\nu_i} (\alpha| \mathcal{C})\ge H_{\lambda_1 \nu_1+ \cdots+ \lambda_n \nu_n} (\alpha| \mathcal{C})\ge \sum_{i= 1}^n \lambda_i H_{\nu_i} (\alpha| \mathcal{C}).
\end{equation*}
\end{lem}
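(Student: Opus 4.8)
\textbf{Proof proposal for Lemma \ref{1007222138}.}

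The plan is to reduce everything to the pointwise (fiberwise) situation using the disintegration over $\mathcal{C}$, and then to invoke the standard finite-dimensional concavity estimate for the entropy function $H(p_1,\dots,p_k)=-\sum p_j\log p_j$. First I would disintegrate each measure: since $(Y,\mathcal{D},\nu_i)$ is a Lebesgue space, write $\nu_i=\int_Y (\nu_i)_y\, d\nu_i(y)$ for the disintegration over $\mathcal{C}$, so that by \eqref{1006281842} one has $H_{\nu_i}(\alpha|\mathcal{C})=\int_Y H_{(\nu_i)_y}(\alpha)\,d\nu_i(y)$. Set $\nu=\lambda_1\nu_1+\cdots+\lambda_n\nu_n$; its disintegration over $\mathcal{C}$ is the convex combination $\nu_y=\sum_{i=1}^n \lambda_i(y)\,(\nu_i)_y$ where $\lambda_i(y)=\lambda_i\,\frac{d\nu_i}{d\nu}(y)$ (Radon--Nikodym derivatives taken with respect to the restriction of $\nu$ to $\mathcal{C}$), and $\sum_i\lambda_i(y)=1$ for $\nu$-a.e.\ $y$. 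Thus $H_\nu(\alpha|\mathcal{C})=\int_Y H_{\nu_y}(\alpha)\,d\nu(y)$ with $\nu_y$ a convex combination of the $(\nu_i)_y$.

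Next I would prove the two inequalities for an \emph{arbitrary} probability measure $m=\sum_{i=1}^n t_i m_i$ on a finite partition $\alpha=\{A_1,\dots,A_k\}$, where $t_i\ge 0$, $\sum t_i=1$: namely
\begin{equation*}
c(t_1,\dots,t_n)+\sum_{i=1}^n t_i H_{m_i}(\alpha)\ \ge\ H_m(\alpha)\ \ge\ \sum_{i=1}^n t_i H_{m_i}(\alpha),
\end{equation*}
for a suitable constant $c$. The right-hand inequality is just concavity of $H$ as a function of the distribution vector. For the left-hand inequality, the classical bound gives $H_m(\alpha)\le \sum_i t_i H_{m_i}(\alpha)+H(t_1,\dots,t_n)\le \sum_i t_i H_{m_i}(\alpha)+\log n$, so $c=\log n$ works \emph{uniformly} in $\alpha$ and in the $m_i$; this will be the value of $\lambda$ in the statement (note $\lambda=\log n$ depends only on $n$, a fortiori only on $\lambda_1,\dots,\lambda_n$ as required). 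Integrating the right-hand inequality over $y\in Y$ against $d\nu(y)$, and using $\int_Y (\cdot)\,d\nu = \sum_i\lambda_i\int_Y(\cdot)\,d\nu_i$ after unwinding $\lambda_i(y)\,d\nu(y)=\lambda_i\,d\nu_i(y)$, yields $H_\nu(\alpha|\mathcal{C})\ge\sum_i\lambda_i H_{\nu_i}(\alpha|\mathcal{C})$; integrating the left-hand inequality similarly yields $\log n+\sum_i\lambda_i H_{\nu_i}(\alpha|\mathcal{C})\ge H_\nu(\alpha|\mathcal{C})$. Since the bound $H(t_1,\dots,t_n)\le\log n$ holds for every probability vector, no hypothesis $\lambda_i<1$ is really needed, but it does no harm to keep it.

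The one point requiring a little care — and the place I expect to be the main obstacle — is the bookkeeping of the fiberwise weights $\lambda_i(y)$ and the verification that they are genuine probability weights $\nu$-a.e.\ and that $\lambda_i(y)\,d\nu(y)$ restricted to $\mathcal{C}$ really is $\lambda_i\,d\nu_i|_{\mathcal{C}}$; this is where the Lebesgue-space hypothesis and uniqueness of disintegrations (cited in the excerpt via \cite{Fu,Rokhlin}) get used. Once that identification is in place, both inequalities are immediate from the scalar ones by integration, and the constant is explicit. I would also remark that the argument in fact shows the slightly stronger uniform statement with $\lambda=\log n$, and that if one only wants some $\lambda=\lambda(\lambda_1,\dots,\lambda_n)$ one may alternatively take $\lambda=H(\lambda_1,\dots,\lambda_n)$, which is sharper but less transparent.
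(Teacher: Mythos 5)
Your argument is correct, but it takes a genuinely different route from the paper's. The paper never disintegrates: writing $\nu=\sum_j\lambda_j\nu_j$, it uses the Lebesgue hypothesis to choose one increasing sequence of \emph{finite} partitions $\beta_i\in\mathbf{P}_Y$ whose generated $\sigma$-algebras increase to $\mathcal{C}$ modulo every $\nu_j$ simultaneously (possible since each $\nu_j\ll\nu$), applies the two inequalities at the level of finite conditioning partitions --- concavity of $m\mapsto H_m(\alpha|\beta_i)$ for the lower bound, and $H_{\nu}(\alpha\vee\beta_i)\le\sum_j\lambda_j H_{\nu_j}(\alpha\vee\beta_i)-\sum_j\lambda_j\log\lambda_j$ (the proof of \cite[Theorem 8.1]{W}, together with \cite[Lemma 3.3(1)]{HYZ1}) for the upper --- and then lets $i\to\infty$ using $H_\mu(\alpha|\beta_i)\to H_\mu(\alpha|\mathcal{C})$, which yields the explicit constant $\lambda=-\sum_j\lambda_j\log\lambda_j$ (the paper writes out $n=2$ and notes the general case follows). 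You instead disintegrate each $\nu_j$ over $\mathcal{C}$, identify the disintegration of $\nu$ as the fiberwise convex combination with the $\mathcal{C}$-measurable weights $\lambda_j(y)=\lambda_j\,d(\nu_j|_{\mathcal{C}})/d(\nu|_{\mathcal{C}})(y)$, and integrate the scalar inequalities, getting the uniform constant $\log n$; your parenthetical claim that $H(\lambda_1,\dots,\lambda_n)$ also works does hold, but it needs one extra Jensen step you do not write out, namely $\int_Y H(\lambda_1(y),\dots,\lambda_n(y))\,d\nu(y)\le H(\lambda_1,\dots,\lambda_n)$ because $\int_Y\lambda_j(y)\,d\nu(y)=\lambda_j$. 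The trade-off is as you anticipate: your route requires the disintegration bookkeeping you flag (the weights sum to $1$ $\nu$-a.e., the exceptional sets of the $(\nu_j)_y$ only matter where $\lambda_j(y)>0$, and uniqueness of disintegrations identifies $\nu_y$), all of which goes through on Lebesgue spaces and gives a constant independent of the $\lambda_j$; the paper's route avoids disintegrations entirely, leaning instead on already-available finite-partition inequalities and a martingale-type limit, at the cost of a constant depending on $(\lambda_1,\dots,\lambda_n)$ --- which is all the lemma asks for. Both arguments use the Lebesgue hypothesis in an essential way, and both establish the statement.
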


With the above preparation we can prove:

\begin{prop} \label{1007142016}
Let $\mathbf{D}= \{d_F: F\in \mathcal{F}_G\}\subseteq \mathbf{L}_\mathcal{E}^1 (\Omega, C (X))$ be a monotone sub-additive $G$-invariant family satisfying $(\spadesuit)$ and $\mathcal{U}\in \mathbf{C}_\mathcal{E}^o$.
If $\mathcal{U}$ is good then, for some $\mu\in \mathcal{P}_\mathbb{P} (\mathcal{E}, G)$,
$$h_{\mu}^{(r)} (\mathbf{F}, \mathcal{U})+ \mu (\mathbf{D})\ge P_\mathcal{E} (\mathbf{D}, \mathcal{U}, \mathbf{F}).$$
\end{prop}
\begin{proof}
As $\mathcal{U}\in \mathbf{C}_\mathcal{E}^o$ is good, there exists a sequence $\{\alpha_n: n\in \mathbb{N}\}\subseteq \mathbf{P}_\mathcal{U}$ such that
\begin{enumerate}

\item[(a)]
for each $n\in \mathbb{N}$, $(\alpha_n)_\omega$ is a clopen partition of $\mathcal{E}_\omega$ for $\mathbb{P}$-a.e. $\omega\in \Omega$ and

\item[(b)] $h_{\nu}^{(r)} (\mathbf{F}, \mathcal{U})= \inf\limits_{n\in \mathbb{N}} h_\nu^{(r)} (\mathbf{F}, \alpha_n)$ for each $\nu\in \mathcal{P}_\mathbb{P} (\mathcal{E}, G)$.
\end{enumerate}

Let $n\in \mathbb{N}$ be fixed.
By Lemma
\ref{1007141419},
there exists a family of finite subsets $B_{n, \omega}\subseteq \mathcal{E}_\omega, \omega\in \Omega$ such that
\begin{enumerate}

\item \label{1007212243}  For $B_n\doteq \{(\omega, x): \omega\in \Omega, x\in B_{n, \omega}\}$,
$$\sum\limits_{x\in B_{n, \omega}} e^{d_{F_n} (\omega, x)}> \frac{1}{n} \left[\inf\limits_{\beta (\omega)\in \mathbf{P}_{\mathcal{E}_\omega}, \beta (\omega)\succeq (\mathcal{U}_{F_n})_\omega} \sum\limits_{B\in \beta (\omega)} \sup\limits_{x\in B} e^{d_{F_n} (\omega, x)}- \frac{1}{2} e^{- ||d_{F_n} (\omega)||_\infty}\right],$$

\item \label{1007212245} The family depends measurably on $\omega\in \Omega$ in the sense that $B_n\in \mathcal{F}\times \mathcal{B}_X$ and

\item \label{1007212244} Each atom of $((\alpha_k)_{F_n})_\omega$ contains at most one point from $B_{n, \omega}, 1\le k\le n$.
\end{enumerate}
Now we introduce a probability measure $\nu^{(n)}$ over $\mathcal{E}$ by a measurable disintegration
$d \nu^{(n)} (\omega, x)= d \nu^{(n)}_\omega (x) d \mathbb{P} (\omega)$, where
\begin{equation*}
\nu^{(n)}_\omega= \sum_{x\in B_{n, \omega}} \frac{e^{d_{F_n} (\omega, x)} \delta_x}{\sum\limits_{y\in B_{n, \omega}} e^{d_{F_n} (\omega, y)}}.
\end{equation*}
Hence we may define another probability measure $\mu^{(n)}$ on $\mathcal{E}$ by
\begin{equation*}
\mu^{(n)}= \frac{1}{|F_n|} \sum_{g\in F_n} g \nu^{(n)}.
\end{equation*}
Observe that by assumption \eqref{1007212245} the measure $\nu^{(n)}$ (and hence $\mu^{(n)}$) is well defined.

As the family $\mathbf{D}$ satisfies $(\spadesuit)$, we can choose a subsequence $\{n_j: j\in \mathbb{N}\}\subseteq \mathbb{N}$ such that the sequence $\{\mu^{(n_j)}: j\in \mathbb{N}\}$ converges to some $\mu\in \mathcal{P}_\mathbb{P} (\mathcal{E})$ (which then necessarily belongs to $ \mathcal{P}_\mathbb{P} (\mathcal{E}, G)$) and
\begin{equation} \label{1008161700}
\limsup_{j\rightarrow \infty} \frac{1}{|F_{n_j}|} \int_{\mathcal{E}} d_{F_{n_j}} (\omega, x) d \nu^{n_j} (\omega, x)\le \mu (\mathbf{D}).
\end{equation}

 By the assumptions on the sequence $\{\alpha_n: n\in \mathbb{N}\}$, to finish the proof, it suffices to prove $h_\mu^{(r)} (\mathbf{F}, \alpha_l)+ \mu (\mathbf{D})\ge P_\mathcal{E} (\mathbf{D}, \mathcal{U}, \mathbf{F})$ for each $l\in \mathbb{N}$.

Let $l\in \mathbb{N}$ be fixed.

For each $n> l$,
 from the construction of $\nu^{(n)}_\omega$, one has
 \begin{eqnarray} \label{1007221805}
 H_{\nu_\omega^{(n)}} (((\alpha_l)_{F_n})_\omega)&= & \sum_{x\in B_{n, \omega}} - \frac{e^{d_{F_n} (\omega, x)}}{\sum\limits_{y\in B_{n, \omega}} e^{d_{F_n} (\omega, y)}} \log \frac{e^{d_{F_n} (\omega, x)}}{\sum\limits_{y\in B_{n, \omega}} e^{d_{F_n} (\omega, y)}}\nonumber \\
 &= & \sum_{x\in B_{n, \omega}} - \frac{e^{d_{F_n} (\omega, x)} d_{F_n} (\omega, x)}{\sum\limits_{y\in B_{n, \omega}} e^{d_{F_n} (\omega, y)}}+ \log \sum_{y\in B_{n, \omega}} e^{d_{F_n} (\omega, y)}\nonumber \\
 &= & - \int_X d_{F_n} (\omega, x) d \nu^{(n)}_\omega (x)+ \log \sum_{y\in B_{n, \omega}} e^{d_{F_n} (\omega, y)},
 \end{eqnarray}
 as each atom of $((\alpha_l)_{F_n})_\omega$ contains at most one point from $B_{n, \omega}$.
This implies
\begin{eqnarray} \label{1007221806}
& &\hskip -16pt \log P_\mathcal{E} (\omega, \mathbf{D}, F_n, \mathcal{U}, \mathbf{F})- \log 2- \log n\nonumber \\
& &\le \log \left[P_\mathcal{E} (\omega, \mathbf{D}, F_n, \mathcal{U}, \mathbf{F})- \frac{1}{2} e^{- ||d_{F_n} (\omega)||_\infty}\right]- \log n\ (\text{from the definitions})\nonumber \\
& &= \log \left[\inf\limits_{\beta (\omega)\in \mathbf{P}_{\mathcal{E}_\omega}, \beta (\omega)\succeq (\mathcal{U}_{F_n})_\omega} \sum\limits_{B\in \beta (\omega)} \sup\limits_{x\in B} e^{d_{F_n} (\omega, x)}- \frac{1}{2} e^{- ||d_{F_n} (\omega)||_\infty}\right]- \log n\nonumber \\
& &< \log \sum_{x\in B_{n, \omega}} e^{d_{F_n} (\omega, x)}\ (\text{by assumption \eqref{1007212243}})\nonumber \\
& &= H_{\nu_\omega^{(n)}} (((\alpha_l)_{F_n})_\omega)+ \int_X d_{F_n} (\omega, x) d \nu^{(n)}_\omega (x)\ (\text{using \eqref{1007221805}}),
\end{eqnarray}
and so by Proposition \ref{1006141621} (1) and the construction of $\nu^{(n)}$ (using \eqref{0906282007}), for each $B\in \mathcal{F}_G$ we have
\begin{eqnarray} \label{1007222025}
& & \int_\Omega \log P_\mathcal{E} (\omega, \mathbf{D}, F_n, \mathcal{U}, \mathbf{F}) d \mathbb{P} (\omega)- \log 2- \log n\nonumber \\
&< & H_{\nu^{(n)}} ((\alpha_l)_{F_n}| \mathcal{F}_\mathcal{E})+ \int_\mathcal{E} d_{F_n} (\omega, x) d \nu^{(n)} (\omega, x)\nonumber \\
&\le & \sum_{g\in F_n} \frac{1}{|B|} H_{\nu^{(n)}} ((\alpha_l)_{B g}| \mathcal{F}_\mathcal{E})+ |F_n\setminus \{g\in G: B^{- 1} g\subseteq F_n\}|\log |\alpha_l|\nonumber \\
& &\hskip 26pt + \int_\mathcal{E} d_{F_n} (\omega, x) d \nu^{(n)} (\omega, x)\ (\text{using Lemma \ref{1007221736}})\nonumber \\
&= & \frac{|F_n|}{|B|} \sum_{g\in F_n} \frac{1}{|F_n|} H_{g \nu^{(n)}} ((\alpha_l)_B| \mathcal{F}_\mathcal{E})+ |F_n\setminus \{g\in G: B^{- 1} g\subseteq F_n\}|\log |\alpha_l|\nonumber \\
& &\hskip 26pt + \int_\mathcal{E} d_{F_n} (\omega, x) d \nu^{(n)} (\omega, x)\ (\text{using the $G$-invariance of $\mathcal{F}_\mathcal{E}$})\nonumber \\
&\le & \frac{|F_n|}{|B|} H_{\mu^{(n)}} ((\alpha_l)_B| \mathcal{F}_\mathcal{E})+ |F_n\setminus \{g\in G: B^{- 1} g\subseteq F_n\}|\log |\alpha_l|\nonumber \\
& &\hskip 26pt + \int_\mathcal{E} d_{F_n} (\omega, x) d \nu^{(n)} (\omega, x)\ (\text{using Lemma \ref{1007222138}}).
\end{eqnarray}

Let $B\in \mathcal{F}_G$ be fixed. Observe that, as $\{F_n: n\in \mathbb{N}\}$ is a F\o lner sequence,
\begin{equation*}
\lim_{n\rightarrow \infty} \frac{1}{|F_n|} |F_n\setminus \{g\in G: B^{- 1} g\subseteq F_n\}|= 0;
\end{equation*}
moreover, by the choice of $\alpha_l$,  $((\alpha_l)_B)_\omega$ is a clopen partition of $\mathcal{E}_\omega$ for $\mathbb{P}$-a.e. $\omega\in \Omega$, and so we have (using Proposition \ref{1007192023} (2))
\begin{equation} \label{last1}
\limsup_{n\rightarrow \infty} H_{\mu^{(n)}} ((\alpha_l)_B| \mathcal{F}_\mathcal{E})\le H_\mu ((\alpha_l)_B| \mathcal{F}_\mathcal{E}).
\end{equation}
Recall that $|F_n|\ge n$ for each $n\in \mathbb{N}$. Combining \eqref{last1} and \eqref{1007222025} (divided by $|F_n|$) we obtain
\begin{equation*}
P_\mathcal{E} (\mathbf{D}, \mathcal{U}, \mathbf{F})\le \frac{1}{|B|} H_\mu ((\alpha_l)_B| \mathcal{F}_\mathcal{E})+ \mu (\mathbf{D})\ \text{(using \eqref{1008161700})}.
\end{equation*}
Lastly, taking the infimum over all $B\in \mathcal{F}_G$ we obtain
 $$P_\mathcal{E} (\mathbf{D}, \mathcal{U}, \mathbf{F})\le h_\mu (G, \alpha_l| \mathcal{F}_\mathcal{E})+ \mu (\mathbf{D})\ (\text{using \eqref{1006272232}}),$$
or equivalently, $P_\mathcal{E} (\mathbf{D}, \mathcal{U}, \mathbf{F})\le h_\mu^{(r)} (\mathbf{F}, \alpha_l)+ \mu (\mathbf{D})$. This completes the proof.
\end{proof}

Now we can present the proof of Proposition \ref{1006151236}.

\begin{proof}[Proof of Proposition \ref{1006151236}]
As $\mathcal{U}$ is factor good, there exists a family $\mathbf{F}'= \{F_{g, \omega}': \mathcal{E}_\omega'\rightarrow \mathcal{E}_{g \omega}'| g\in G, \omega\in \Omega\}$ (with compact metric state space $X'$ and $\mathcal{E}'\in \mathcal{F}\times \mathcal{B}_{X'}$), which is a continuous bundle RDS over $(\Omega, \mathcal{F}, \mathbb{P}, G)$, and a factor map $\pi: \mathcal{E}'\rightarrow \mathcal{E}$ such that $\pi^{- 1} \mathcal{U}$ is good.
By
Lemma \ref{1007212122} and Lemma \ref{1008171752}, $\mathbf{D}\circ \pi$ is a monotone sub-additive $G$-invariant family satisfying $(\spadesuit)$,
 and so, by Proposition \ref{1007142016}, there exists $\mu'\in \mathcal{P}_\mathbb{P} (\mathcal{E}', G)$ such that
 $$h_{\mu'}^{(r)} (\mathbf{F}', \pi^{- 1} \mathcal{U})+ \mu' (\mathbf{D}\circ \pi)\ge P_{\mathcal{E}'} (\mathbf{D}\circ \pi, \pi^{- 1} \mathcal{U}, \mathbf{F}').$$
 Set $\mu= \pi \mu'$. By Lemma \ref{1007212122}, we have $\mu\in \mathcal{P}_\mathbb{P} (\mathcal{E}, G)$,
 $$h^{(r)}_{\mu} (\mathbf{F}, \mathcal{U})= h^{(r)}_{\mu'} (\mathbf{F}', \pi^{- 1} \mathcal{U})$$ and
  $$P_{\mathcal{E}'} (\mathbf{D}\circ \pi, \pi^{- 1} \mathcal{U}, \mathbf{F}')= P_\mathcal{E} (\mathbf{D}, \mathcal{U}, \mathbf{F}).$$
  It follows from the definition that $\mu' (\mathbf{D}\circ \pi)= \mu (\mathbf{D})$ and hence
   $$h_{\mu}^{(r)} (\mathbf{F}, \mathcal{U})+ \mu (\mathbf{D})\ge P_\mathcal{E} (\mathbf{D}, \mathcal{U}, \mathbf{F}).$$
This completes the proof.
\end{proof}

\section{Assumption $(\spadesuit)$ on the family $\mathbf{D}$} \label{assumption}

There are two essential assumptions appearing in Theorem \ref{1007141414}: that $\mathcal{U}\in \mathbf{C}_\mathcal{E}^o$ is factor good, and that $\mathbf{D}= \{d_F: F\in \mathcal{F}_G\}\subseteq \mathbf{L}_\mathcal{E}^1 (\Omega, C (X))$ satisfies $(\spadesuit)$.
In \S \ref{factor good} we have discussed the first assumption and in this section we discuss the second.

\medskip

Before proceeding, we introduce the property of strong sub-additivity. In his treatment of entropy theory for amenable group actions Moullin Ollagnier \cite{MO} used this property rather heavily.

 Let $(Y, \mathcal{D}, \nu, G)$ be an MDS and $\mathbf{D}= \{d_F: F\in \mathcal{F}_G\}\subseteq L^1 (Y, \mathcal{D}, \nu)$.
 $\mathbf{D}$ is called \emph{strongly sub-additive} if for
 $\nu$-a.e. $y\in Y$,
 $$d_{E\cup F} (y)+ d_{E\cap F} (y)\le d_E (y)+ d_F
 (y)$$
  whenever $E, F\in \mathcal{F}_G$ (here we set $d_\emptyset (y)= 0$ for $\nu$-a.e. $y\in Y$). For an invariant family, the property of strong sub-additivity is stronger than the property of sub-additivity, and, for each $f\in
 L^1 (Y, \mathcal{D}, \nu)$, $\mathbf{D}^f$ is always a strongly sub-additive $G$-invariant family in $L^1 (Y, \mathcal{D}, \nu)$. Similarly, we can introduce the property of strong sub-additivity for any given continuous bundle RDS.

 Let $\mathbf{D}= \{d_F: F\in \mathcal{F}_G\}\subseteq \mathbf{L}_\mathcal{E}^1 (\Omega, C (X))$ be a strongly sub-additive $G$-invariant family. For each $\mu\in \mathcal{P}_\mathbb{P} (\mathcal{E}, G)$, we use Proposition \ref{1102111944} to define $\mu (\mathbf{D})$ by
\begin{eqnarray*}
\mu (\mathbf{D})&= & \lim_{n\rightarrow \infty} \frac{1}{|F_n|} \int_{\mathcal{E}} d_{F_n} (\omega, x) d \mu (\omega, x)\nonumber \\
&= & \inf_{n\in \mathbb{N}} \frac{1}{|F_n|} \int_{\mathcal{E}} d_{F_n} (\omega, x) d \mu (\omega, x). \label{1202111533}
\end{eqnarray*}
Note that the value of $\mu (\mathbf{D})$ is independent of the choice of the F\o lner sequence $\{F_n: n\in \mathbb{N}\}$, in fact,
\begin{equation} \label{1208011217}
\mu (\mathbf{D})= \inf_{F\in \mathcal{F}_G} \frac{1}{|F|} \int_{\mathcal{E}} d_{F} (\omega, x) d \mu (\omega, x).
\end{equation}
Remark that $\mu (\mathbf{D})$ need not be non-negative, as here $\mathbf{D}$ need not be non-negative.
As show by the following trivial example, $\mu (\mathbf{D})$ may even take the value $- \infty$:
$$\mathbf{D}= \{d_F: F\in \mathcal{F}_G\}\ \text{with}\ d_{F} (\omega, x)= - |F|^2\ \text{for each}\ F\in \mathcal{F}_G.$$
We also remark that by \eqref{1208011217}, the function
 $$\bullet (\mathbf{D}): \mathcal{P}_\mathbb{P} (\mathcal{E}, G)\rightarrow \R\cup \{- \infty\}, \mu\mapsto \mu (\mathbf{D})$$
  is the infimum of a family of continuous functions over the compact metric space $\mathcal{P}_\mathbb{P} (\mathcal{E}, G)$, and hence itself is u.s.c.

\medskip

Here is the main result of this section.

 \begin{prop} \label{1008300004}
 Let $\mathbf{D}= \{d_F: F\in \mathcal{F}_G\}\subseteq \mathbf{L}_\mathcal{E}^1 (\Omega, C (X))$ be a strongly sub-additive $G$-invariant family. Then $\mathbf{D}$ satisfies $(\spadesuit)$.
 \end{prop}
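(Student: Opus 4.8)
The goal is to verify condition $(\spadesuit)$ for a strongly sub-additive $G$-invariant family $\mathbf{D} = \{d_F : F \in \mathcal{F}_G\} \subseteq \mathbf{L}^1_\mathcal{E}(\Omega, C(X))$. So fix an arbitrary sequence $\{\nu_n : n \in \mathbb{N}\} \subseteq \mathcal{P}_\mathbb{P}(\mathcal{E})$ and set $\mu_n = \frac{1}{|F_n|} \sum_{g \in F_n} g \nu_n$. By Proposition \ref{1007192023}(1) the set of limit points of $\{\mu_n\}$ is a non-empty subset of $\mathcal{P}_\mathbb{P}(\mathcal{E}, G)$; so after passing to a subsequence $\{n_j\}$ we may assume $\mu_{n_j} \to \mu$ for some $\mu \in \mathcal{P}_\mathbb{P}(\mathcal{E}, G)$. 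What must be shown is
\begin{equation*}
\limsup_{j \to \infty} \frac{1}{|F_{n_j}|} \int_\mathcal{E} d_{F_{n_j}}(\omega, x)\, d\nu_{n_j}(\omega, x) \le \mu(\mathbf{D}),
\end{equation*}
where $\mu(\mathbf{D})$ is defined via \eqref{1202111533}, i.e. $\mu(\mathbf{D}) = \inf_{m} \frac{1}{|F_m|}\int_\mathcal{E} d_{F_m}\, d\mu$, using that Proposition \ref{1102111944} applies to $F \mapsto \int_\mathcal{E} d_F\, d\mu$ (this uses strong sub-additivity and $G$-invariance, exactly the hypotheses of that Proposition, together with $G$-invariance of $\mu$).

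\textbf{Key step: a tiling/averaging comparison.} The heart of the matter is to bound $\frac{1}{|F_n|}\int_\mathcal{E} d_{F_n}\, d\nu_n$ from above, for large $n$, in terms of $\frac{1}{|F_m|}\int_\mathcal{E} d_{F_m}\, d\mu_n$ for a \emph{fixed} $m$. This is precisely the kind of estimate carried out in the proof of Proposition \ref{1102111944} (the inequality \eqref{1108300040}), and I would transport those ideas to the integrated functional. Fix $m$. Using Lemma \ref{1008300008} applied to $T = F_m$ and $E_n = F_n \cap \bigcap_{g \in F_m} g^{-1} F_n$ (so $\frac{|E_n|}{|F_n|} \to 1$), one writes $1_{F_n}$ as a convex combination $\frac{1}{|F_m|}\sum_{g \in E_n} 1_{F_m g} + \sum_j a_j 1_{E_j'}$ with rational $a_j > 0$ and $F_m g \subseteq F_n$. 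Now apply the integrated version of the strong-sub-additivity rearrangement — i.e. the analogue of Lemma \ref{1102111949}, which holds for the strongly sub-additive function $E \mapsto \int_\mathcal{E} d_E(\omega, x)\, d\nu_n(\omega, x)$ (note this function inherits $f(E \cap F) + f(E \cup F) \le f(E) + f(F)$ pointwise, hence after integration against $\nu_n$; one does \emph{not} need $G$-invariance here, only strong sub-additivity) — together with the $G$-invariance of $\mathbf{D}$, which gives $\int_\mathcal{E} d_{F_m g}\, d\nu_n = \int_\mathcal{E} d_{F_m}(g(\omega,x))\, d\nu_n = \int_\mathcal{E} d_{F_m}\, d(g\nu_n)$. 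Discarding the (negligibly weighted, but possibly positively-signed) correction terms $\sum_j a_j \int d_{E_j'}\, d\nu_n$ is the delicate point: unlike in Proposition \ref{1102111944} where one uses $f' \le 0$, here we must instead absorb these terms using a crude uniform bound $|\int_\mathcal{E} d_{E}\, d\nu_n| \le \int_\Omega \|d_{\{g\}}(\omega)\|_\infty\, d\mathbb{P}$ summed over $|E| $ many singletons (via sub-additivity), multiplied by the total weight $1 - \frac{|E_n|}{|F_m|}\cdot\frac{|F_m|}{|F_n|}$ of the correction, which tends to $0$. This yields, for each fixed $m$,
\begin{equation*}
\limsup_{n \to \infty} \frac{1}{|F_n|}\int_\mathcal{E} d_{F_n}\, d\nu_n \le \limsup_{n \to \infty} \frac{1}{|F_m|}\int_\mathcal{E} d_{F_m}\, d\mu_n = \frac{1}{|F_m|}\int_\mathcal{E} d_{F_m}\, d\mu,
\end{equation*}
where the last equality is continuity of $\mu \mapsto \int_\mathcal{E} d_{F_m}\, d\mu$ in the weak$^\ast$ topology (since $d_{F_m} \in \mathbf{L}^1_\mathcal{E}(\Omega, C(X))$), evaluated along the convergent subsequence $\mu_{n_j} \to \mu$.

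\textbf{Conclusion and main obstacle.} Taking the infimum over $m \in \mathbb{N}$ on the right-hand side gives $\limsup_{j} \frac{1}{|F_{n_j}|}\int_\mathcal{E} d_{F_{n_j}}\, d\nu_{n_j} \le \inf_m \frac{1}{|F_m|}\int_\mathcal{E} d_{F_m}\, d\mu = \mu(\mathbf{D})$ by \eqref{1202111533}, which is exactly $(\spadesuit)$. I expect the main obstacle to be the careful bookkeeping in the averaging step: one must be sure that the decomposition $1_{F_n} = \frac{1}{|F_m|}\sum_{g \in E_n} 1_{F_m g} + \sum_j a_j 1_{E_j'}$ can be taken with \emph{all} coefficients positive and the leftover sets $E_j'$ of total weighted mass $o(1)$, and that the rearrangement inequality (Lemma \ref{1102111949}-type) is legitimately applied under integration — here strong sub-additivity of $\mathbf{D}$ pointwise in $(\omega,x)$ is what makes $E \mapsto \int_\mathcal{E} d_E\, d\nu_n$ strongly sub-additive, so Lemma \ref{1102111949} applies verbatim. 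A secondary subtlety is that $\mathbf{D}$ need not be monotone or non-negative, so the correction-term bound must be two-sided; but since $\mathbf{D} \subseteq \mathbf{L}^1_\mathcal{E}(\Omega, C(X))$ and $\mathbf{D}$ is sub-additive and $G$-invariant, $|\int_\mathcal{E} d_E\, d\nu_n| \le |E|\cdot \int_\Omega \|d_{\{e_G\}}(\omega)\|_\infty\, d\mathbb{P}(\omega)$ gives the needed uniform linear control, and this is what the excerpt flags by saying the proof "will need some of the ideas in Proposition \ref{1008300004}" back in Remark \ref{1106292205}.
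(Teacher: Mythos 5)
Your plan is essentially the paper's proof: the same subsequence extraction via Proposition \ref{1007192023}, the same decomposition $1_{F_n}= \frac{1}{|T|}\sum_{g\in E_n} 1_{Tg}+\sum_j a_j 1_{E_j'}$ coming from Lemma \ref{1008300008} with $E_n= F_n\cap\bigcap_{g\in T}g^{-1}F_n$, the same rearrangement inequality (the paper invokes the family version, Lemma \ref{1008132304}, pointwise and then integrates; you integrate first and invoke Lemma \ref{1102111949} -- equivalent), and the same passage to $\mu(\mathbf{D})$ via \eqref{1202111533} and Proposition \ref{1102111944}. The one structural difference is how the leftover terms $\sum_j a_j\int d_{E_j'}\,d\nu_n$ are killed: the paper first normalizes, replacing $d_F$ by $d_F'= d_F-\sum_{g\in F}d_{\{e_G\}}\circ g$, so that $\mathbf{D}'$ is still strongly sub-additive, $G$-invariant and now non-positive; the corrections are then discarded outright, and the subtracted singleton part is restored at the end using $\mu_{n_j}\to\mu$ and the $G$-invariance of $\mu$. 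You instead keep $\mathbf{D}$ and estimate the corrections directly. That works, but your stated justification is off in two ways. First, the two-sided bound $|\int_\mathcal{E} d_E\,d\nu_n|\le |E|\int_\Omega\|d_{\{e_G\}}(\omega)\|_\infty\,d\mathbb{P}$ is false in general: sub-additivity gives only the upper bound, and there is no lower bound of linear growth (the paper's own example $d_F\equiv -|F|^2$ is strongly sub-additive, $G$-invariant, and violates it). Second, and fortunately, no two-sided control is needed: the corrections appear with positive coefficients on the right-hand side of an upper bound whose limsup you are estimating, so a very negative correction only helps, and the upper half of your bound -- which sub-additivity and $G$-invariance of $\mathbb{P}$ do give, namely $\int d_{E_j'}\,d\nu_n\le |E_j'|\int_\Omega\|d_{\{e_G\}}(\omega)\|_\infty\,d\mathbb{P}$, with $\sum_j a_j|E_j'|=|F_n|-|E_n|=o(|F_n|)$ -- suffices. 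With that repair (and the routine replacement of $\mu_n$ by the $E_n$-average $w_n=\frac{1}{|E_n|}\sum_{g\in E_n}g\nu_n$, which also converges to $\mu$, when identifying the main term), your argument is correct and is in substance the argument in the paper; the paper's normalization is just a cleaner way of packaging exactly this estimate.
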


In order to prove Proposition \ref{1008300004}, we will use \cite[Lemma 2.2.16]{MO}.

\begin{lem} \label{1008132304}
Let $(Y, \mathcal{D}, \nu, G)$ be an MDS and
$\mathbf{D}= \{d_F: F\in \mathcal{F}_G\}$ a strongly sub-additive family in $L^1 (Y, \mathcal{D}, \nu)$.
Assume that $1_{E}= \sum\limits_{i= 1}^n a_i 1_{E_i}$, where
$E, E_1, \cdots, E_n\in \mathcal{F}_G$
and $a_1, \cdots, a_n> 0, n\in \N$. Then
$d_E (y)\le \sum\limits_{i= 1}^n a_i d_{E_i} (y)$
 for $\nu$-a.e. $y\in Y$.
 A similar result holds for a continuous bundle RDS.
\end{lem}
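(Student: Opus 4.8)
The plan is to reduce the assertion to a purely pointwise statement about a real-valued submodular function on $\mathcal{F}_G\cup\{\emptyset\}$ and then to prove that by an uncrossing argument in the spirit of \cite[Lemma 2.2.16]{MO}. First I would fix the exceptional null set once and for all: since $G$ is countable, $\mathcal{F}_G$ is countable, so the strong sub-additivity inequality $d_{A\cup B}(y)+d_{A\cap B}(y)\le d_A(y)+d_B(y)$ holds for $\nu$-a.e.\ $y$ simultaneously for all pairs $A,B\in\mathcal{F}_G$ (with $d_\emptyset\equiv 0$ by convention). For such a fixed $y$, write $\varphi(A)=d_A(y)$; then $\varphi$ is submodular with $\varphi(\emptyset)=0$, and it suffices to show $\varphi(E)\le\sum_{i=1}^n a_i\varphi(E_i)$. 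Then I would clear denominators: writing $a_i=p_i/q$ with $p_i,q\in\N$ and listing each $E_i$ with multiplicity $p_i$, one obtains a finite multiset $\{F_1,\dots,F_N\}$, $N=\sum_i p_i$, of members of $\mathcal{F}_G$ with $q\cdot 1_E=\sum_{j=1}^N 1_{F_j}$. Reading this identity pointwise gives $F_j\subseteq E$ for every $j$, $\bigcup_j F_j=E$, and $N\ge q$, and the goal becomes $q\,\varphi(E)\le\sum_{j=1}^N\varphi(F_j)$.

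Next I would run the uncrossing procedure on $\{F_j\}$. Call a pair $(F_j,F_k)$ crossing if neither set contains the other, and for such a pair replace $(F_j,F_k)$ by $(F_j\cup F_k,\ F_j\cap F_k)$. This operation leaves $\sum_j 1_{F_j}$ unchanged (since $1_{A\cup B}+1_{A\cap B}=1_A+1_B$), does not increase $\sum_j\varphi(F_j)$ (submodularity of $\varphi$), and strictly increases the nonnegative integer $\sum_j|F_j|^2$: with $a=|F_j|$, $b=|F_k|$, $c=|F_j\cap F_k|$ one computes $|F_j\cup F_k|^2+|F_j\cap F_k|^2-|F_j|^2-|F_k|^2=2(a-c)(b-c)>0$, the strictness coming from $c<a$ and $c<b$ for a crossing pair. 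Since $\sum_j|F_j|^2\le N|E|^2$ throughout, after finitely many moves no pair is crossing, i.e.\ the multiset is a chain; its distinct nonempty members can be listed as $H_1\subsetneq\cdots\subsetneq H_r$ with multiplicities $m_1,\dots,m_r\ge 1$, together with $\emptyset$ with some multiplicity $\ge 0$, and we still have $\sum_{t=1}^r m_t 1_{H_t}=q\,1_E$.

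Finally I would exploit the rigidity of this chain. Necessarily $H_r=E$, as the union of all members is $E$; choosing $x\in E\setminus H_{r-1}$ (nonempty when $r\ge 2$) forces $m_r=q$, and if $r\ge 2$ then choosing $x'\in H_{r-1}\setminus H_{r-2}$ gives $m_r+m_{r-1}=q$, hence $m_{r-1}=0$, a contradiction. So $r=1$, the terminal multiset consists of $q$ copies of $E$ and $N-q$ copies of $\emptyset$, and $\sum_j\varphi(F_j)$ has decreased to exactly $q\,\varphi(E)$. Chaining the inequalities yields $q\,\varphi(E)\le\sum_{j=1}^N\varphi(F_j)=\sum_i p_i\varphi(E_i)$, i.e.\ $\varphi(E)\le\sum_i a_i\varphi(E_i)$, which is the desired bound. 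The continuous bundle RDS version follows by applying the same argument, for $\mathbb{P}$-a.e.\ $\omega$, to the submodular map $A\mapsto d_A(\omega,x)$ at each $x\in\mathcal{E}_\omega$.

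The only real obstacle I anticipate is the bookkeeping inside the uncrossing step: verifying that the three monitored quantities behave exactly as claimed (invariance of $\sum_j 1_{F_j}$, non-increase of $\sum_j\varphi(F_j)$, strict growth of $\sum_j|F_j|^2$), and that the terminal ``no crossing pair'' configuration is genuinely a chain, with $\emptyset$ — carrying value $\varphi(\emptyset)=0$ — allowed as a harmless member. Everything else, including the passage from the fixed-$y$ statement back to the $\nu$-a.e.\ (resp.\ $\mathbb{P}$-a.e.\ fiberwise) conclusion, is routine.
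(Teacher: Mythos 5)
Your proposal is correct, but it proves the key pointwise inequality by a different mechanism than the paper. After the common preliminary steps (fixing one null set via countability of $\mathcal{F}_G$, clearing denominators so that the statement becomes $q\,\varphi(E)\le\sum_j\varphi(F_j)$ for a multiset with $\sum_j 1_{F_j}=q\,1_E$ — the paper performs the same reduction by treating the case $a_1=\cdots=a_n=\frac1m$ first), the paper argues in one pass: it forms the atoms of $\bigvee_{i}\{E_i,E\setminus E_i\}$, builds from them a single chain $\emptyset=K_0\subsetneq K_1\subsetneq\cdots\subsetneq K_p=E$, and telescopes $d_E(y)=\sum_i\bigl(d_{K_i}(y)-d_{K_{i-1}}(y)\bigr)$, bounding each increment by $d_{K_i\cap E_j}(y)-d_{K_{i-1}\cap E_j}(y)$ via the submodular exchange inequality $K_i=K_{i-1}\cup(K_i\cap E_j)$ and then re-summing over $j$ using $\frac1m\sum_j 1_{E_j}(k_i)=1$ at a chosen point $k_i\in K_i\setminus K_{i-1}$. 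You instead run an iterative uncrossing: replace crossing pairs by union and intersection, which preserves $\sum_j 1_{F_j}$, does not increase $\sum_j\varphi(F_j)$, and strictly increases the bounded integer potential $\sum_j|F_j|^2$ (your computation $2(a-c)(b-c)>0$ is right), so the process terminates in a chain, which your indicator-counting argument correctly forces to be $q$ copies of $E$ plus copies of $\emptyset$. Both arguments use only submodularity together with the convention $d_\emptyset=0$ (your disjoint-pair uncrossings need exactly this); the paper's version is non-iterative and yields the inequality in a single explicit computation, while yours is a self-contained combinatorial scheme of the standard submodular-uncrossing type whose termination and collapse steps require the extra bookkeeping you flagged, all of which checks out.
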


We also need the following:

\begin{lem} \label{1008300008}
 Let $T, E\in \mathcal{F}_G$. Then $\sum\limits_{t\in T} 1_{t E}= \sum\limits_{g\in E} 1_{T g}$.
 \end{lem}
 \begin{proof}
Set $L= \sum\limits_{t\in T} 1_{t E}$ and $R= \sum\limits_{g\in E} 1_{T g}$. Let $g'\in G$. Then $L (g')> 0$ if and only if there exists $t\in T$ such that $g'\in t E$, if and only if there exists $g\in E$ such that $g'\in T g$, if and only if $R (g')> 0$. Moreover, for any given $n\in \mathbb{N}$, $L (g')= n$ if and only if there exist exactly $n$ distinct elements $t_1, \cdots, t_n$ of $T$ such that $g'\in t_i E$ (say $g'= t_i g_i$ for some $g_i\in E$) for each $i= 1, \cdots, n$, if and only if there exist exactly $n$ distinct elements $g_1, \cdots, g_n$ of $E$ such that $g'\in T g_i$ for each $i= 1, \cdots, n$, if and only if $R (g')= n$. This finishes the proof.
\end{proof}

Now we can prove Proposition \ref{1008300004} as follows.

\begin{proof}[Proof of Proposition \ref{1008300004}]
Let $\{\nu_n: n\in \mathbb{N}\}\subseteq \mathcal{P}_\mathbb{P} (\mathcal{E})$ be a given sequence. Set
\begin{equation*}
\mu_n= \frac{1}{|F_n|} \sum\limits_{g\in F_n} g \nu_n\ \text{for each}\ n\in \mathbb{N}.
\end{equation*}
By Proposition \ref{1007192023} \eqref{1208011203} there exists a subsequence $\{n_j: j\in \mathbb{N}\}\subseteq \mathbb{N}$ such that the sequence $\{\mu_{n_j}: j\in \mathbb{N}\}$ converges to some $\mu\in \mathcal{P}_\mathbb{P} (\mathcal{E}, G)$. Now we only need check
\begin{equation} \label{1008301422}
\limsup_{j\rightarrow \infty} \frac{1}{|F_{n_j}|} \int_{\mathcal{E}} d_{F_{n_j}} (\omega, x) d \nu_{n_j} (\omega, x)\le \mu (\mathbf{D}).
\end{equation}

For each $F\in \mathcal{F}_G$ set
$$d_F' (\omega, x)= d_F (\omega, x)- \sum\limits_{g\in F} d_{\{e_G\}} (g (\omega, x))$$ and put
$$\mathbf{D}'= \{d_F': F\in \mathcal{F}_G\}\subseteq \mathbf{L}_\mathcal{E}^1 (\Omega, C (X)).$$
As $\mathbf{D}$ is a strongly sub-additive $G$-invariant family, then the family $\mathbf{D}'$ is also strongly sub-additive $G$-invariant and $- \mathbf{D}'$ is non-negative. Observe that
\begin{eqnarray} \label{1008292315}
& & \limsup_{j\rightarrow \infty} \frac{1}{|F_{n_j}|} \int_{\mathcal{E}} d_{F_{n_j}} (\omega, x) d \nu_{n_j} (\omega, x)\nonumber \\
& &= \limsup_{j\rightarrow \infty} \frac{1}{|F_{n_j}|} \int_{\mathcal{E}} d_{F_{n_j}}' (\omega, x) d \nu_{n_j} (\omega, x)+ \limsup_{j\rightarrow \infty} \int_{\mathcal{E}} d_{\{e_G\}} (\omega, x) d \mu_{n_j} (\omega, x)\nonumber \\
& &= \limsup_{j\rightarrow \infty} \frac{1}{|F_{n_j}|} \int_{\mathcal{E}} d_{F_{n_j}}' (\omega, x) d \nu_{n_j} (\omega, x)+ \int_{\mathcal{E}} d_{\{e_G\}} (\omega, x) d \mu (\omega, x) \\
 & & \hskip 68pt (\text{as the sequence $\{\mu_{n_j}: j\in \mathbb{N}\}$ converges to $\mu$})\nonumber
\end{eqnarray}
and
\begin{eqnarray}\label{1008292319}
\mu (\mathbf{D})&= & \lim_{n\rightarrow \infty} \frac{1}{|F_n|} \int_{\mathcal{E}} d_{F_n} (\omega, x) d \mu (\omega, x)\nonumber \\
&= & \lim_{n\rightarrow \infty} \frac{1}{|F_n|} \int_{\mathcal{E}} d_{F_n}' (\omega, x) d \mu (\omega, x)\nonumber \\
& & \hskip 26pt + \lim_{n\rightarrow \infty} \frac{1}{|F_n|} \int_{\mathcal{E}} \sum_{g\in F_n} d_{\{e_G\}} (g (\omega, x)) d \mu (\omega, x)\nonumber \\
&= & \mu (\mathbf{D}')+ \int_{\mathcal{E}} d_{\{e_G\}} (\omega, x) d \mu (\omega, x)\ (\text{as $\mu\in \mathcal{P}_\mathbb{P} (\mathcal{E}, G)$}).
\end{eqnarray}
To prove \eqref{1008301422}, by \eqref{1008292315} and \eqref{1008292319}, we only need prove
\begin{equation} \label{1008292322}
\limsup_{j\rightarrow \infty} \frac{1}{|F_{n_j}|} \int_{\mathcal{E}} d_{F_{n_j}}' (\omega, x) d \nu_{n_j} (\omega, x)\le \mu (\mathbf{D}').
\end{equation}

Let $T\in \mathcal{F}_G$ be fixed. As $\{F_n: n\in \mathbb{N}\}$ is a F\o lner sequence of $G$, for each $n\in \mathbb{N}$ we set $E_n= F_n\cap \bigcap\limits_{g\in T} g^{- 1} F_n\subseteq F_n$, then $\lim\limits_{n\rightarrow \infty} \frac{|E_n|}{|F_n|}= 1$.
Set
$$w_n= \frac{1}{|E_n|} \sum\limits_{g\in E_n} g \nu_n\ \text{for each}\ n\in \mathbb{N}.$$
Observe that the sequence $\{\mu_{n_j}: j\in \mathbb{N}\}$ converges to $\mu$. By the choice of $E_n, n\in \N$, it is easy to see that the sequence $\{w_{n_j}: j\in \mathbb{N}\}$ also converges to $\mu$.

Now for each $n\in \mathbb{N}$, using Lemma \ref{1008300008}, one has
\begin{equation} \label{080901}
\sum\limits_{t\in T} 1_{t E_n}= \sum\limits_{g\in E_n} 1_{T g}.
\end{equation}
By the construction of $E_n$, $t E_n\subseteq F_n$ for any $t\in T$, there exist $E_1', \cdots, E_m'\in \mathcal{F}_G, m\in \{0\}\cup \N$ and rational numbers $a_1, \cdots, a_m> 0$ such that
\begin{equation*}
1_{F_n}= \frac{1}{|T|} \sum_{t\in T} 1_{t E_n}+ \sum_{j= 1}^m a_j 1_{E_j'},
\end{equation*}
and hence
\begin{equation} \label{1102111848}
1_{F_n}= \frac{1}{|T|} \sum\limits_{g\in E_n} 1_{T g}+ \sum_{j= 1}^m a_j 1_{E_j'}\ (\text{using \eqref{080901}}),
\end{equation}
which implies that, for $\mathbb{P}$-a.e. $\omega\in \Omega$,
\begin{eqnarray} \label{1008300040}
d'_{F_n} (\omega, x)&\le & \frac{1}{|T|} \sum_{g\in E_n} d'_{T g} (\omega, x)+ \sum_{j= 1}^m a_j d'_{E_j'} (\omega, x)\nonumber \\
& & (\text{using Lemma \ref{1008132304}, as the family $\mathbf{D}'$ is strongly sub-additive})\nonumber \\
&\le & \frac{1}{|T|} \sum_{g\in E_n} d'_{T g} (\omega, x)\ (\text{as the family $-\mathbf{D}'$ is non-negative})\nonumber \\
&= & \frac{1}{|T|} \sum_{g\in E_n} d'_T (g (\omega, x))\ (\text{as the family $\mathbf{D}'$ is $G$-invariant})
\end{eqnarray}
for each $x\in \mathcal{E}_\omega$.
It follows that
\begin{eqnarray} \label{1008301428}
& & \limsup_{j\rightarrow \infty} \frac{1}{|F_{n_j}|} \int_{\mathcal{E}} d'_{F_{n_j}} (\omega, x) d \nu_{n_j} (\omega, x)\nonumber \\
&= & \limsup_{j\rightarrow \infty} \frac{1}{|E_{n_j}|} \int_{\mathcal{E}} d'_{F_{n_j}} (\omega, x) d \nu_{n_j} (\omega, x)\ (\text{by the selection of $E_{n_j}$})\nonumber \\
&\le & \limsup_{j\rightarrow \infty} \frac{1}{|T|} \int_{\mathcal{E}} d'_T (\omega, x) d w_{n_j} (\omega, x)\ (\text{using \eqref{1008300040}})\nonumber \\
&= & \frac{1}{|T|} \int_{\mathcal{E}} d'_T (\omega, x) d \mu (\omega, x)\ (\text{as the sequence $\{w_{n_j}: j\in \mathbb{N}\}$ converges to $\mu$}).
\end{eqnarray}
Now \eqref{1008292322} follows from \eqref{1208011217} and \eqref{1008301428}. This completes the proof.
\end{proof}

Let $\mathbf{D}= \{d_F: F\in \mathcal{F}_G\}\subseteq \mathbf{L}_\mathcal{E}^1 (\Omega, C (X))$ be a strongly sub-additive $G$-invariant family.
 Observe that the family
 $$\{\sup\limits_{x\in \mathcal{E}_\omega} d_F (\omega, x): F\in \mathcal{F}_G\}\subseteq L^1 (\Omega, \mathcal{F}, \mathbb{P})$$
  may be not strongly sub-additive, as for $E, F\in \mathcal{F}_G$ it may happen that
     $$\hskip 26pt \sup_{x\in \mathcal{E}_\omega} d_{E\cap F} (\omega, x)+ \sup\limits_{x\in \mathcal{E}_\omega} d_{E\cup F} (\omega, x)> \sup\limits_{x\in \mathcal{E}_\omega} d_E (\omega, x)+ \sup\limits_{x\in \mathcal{E}_\omega} d_F (\omega, x),$$
     even though, by strong sub-additivity of $\mathbf{D}$,
      $$d_{E\cap F} (\omega, x)+ d_{E\cup F} (\omega, x)\le d_E (\omega, x)+ d_F (\omega, x).$$
      Thus we cannot apply Proposition \ref{1102111944} to define $\text{sup}_\mathbb{P} (\mathbf{D})$
      as we did in \eqref{1208011225}.

In a more general setting, for $\mathcal{U}\in \mathbf{C}_\mathcal{E}$, it may happen that the family $\{\log P_\mathcal{E} (\omega,$ $\mathbf{D}, F, \mathcal{U}, \mathbf{F}): F\in \mathcal{F}_G\}$ is not strongly sub-additive, and so, as above, we are similarly unable to apply Proposition \ref{1102111944} to define
  $P_\mathcal{E} (\mathbf{D}, \mathcal{U}, \mathbf{F})$.

However, by Proposition \ref{1008300004},
we can apply the discussions surrounding \eqref{1103031445}, \eqref{1208011114}, \eqref{1208011732}, \eqref{1207292302} and \eqref{1208011057} in \S \ref{variational principle concerning pressure} to $\mathbf{D}$. Hence we can show that
\begin{equation*}
\limsup_{n\rightarrow \infty} \frac{1}{|F_n|} \int_\Omega \sup_{x\in \mathcal{E}_\omega} d_{F_n} (\omega, x) d \mathbb{P} (\omega)= \max_{\mu\in \mathcal{P}_\mathbb{P} (\mathcal{E}, G)}  \mu (\mathbf{D}).
\end{equation*}

\section{The local variational principle for amenable groups admitting a tiling F\o lner sequence}\label{special}

In this section we  discuss the local variational principle for fiber topological pressure. In this section, we shall remove the assumption of monotonicity from the family $\mathbf{D}$ and instead, impose the assumption that the group $G$ admits a tiling F\o lner sequence.

\medskip

\emph{Throughout this section, we assume that each for each  $n\in \mathbb{N}$, $F_n$ tiles $G$.}

\medskip

Let $\mathbf{D}= \{d_F: F\in \mathcal{F}_G\}\subseteq \mathbf{L}_\mathcal{E}^1 (\Omega, C (X))$ be a sub-additive $G$-invariant family and $\mathcal{U}\in \mathbf{C}_\mathcal{E}$. Recall that we have not assumed that $\mathbf{D}$ is monotone as in \S \ref{fourth}. As each $F_n$ tiles $G$, by Proposition \ref{p1006172118} and Proposition \ref{1006141621} we may define
\begin{eqnarray*}
P_\mathcal{E} (\mathbf{D}, \mathcal{U}, \mathbf{F})&= &
\lim_{n\rightarrow \infty} \frac{1}{|F_n|} \int_\Omega
\log P_\mathcal{E} (\omega, \mathbf{D}, F_n, \mathcal{U}, \mathbf{F}) d \mathbb{P} (\omega) \\
&= & \inf_{n\in \mathbb{N}} \frac{1}{|F_n|} \int_\Omega
\log P_\mathcal{E} (\omega, \mathbf{D}, F_n, \mathcal{U}, \mathbf{F}) d \mathbb{P} (\omega)
\end{eqnarray*}
and
\begin{equation*}
P_\mathcal{E} (\mathbf{D}, \mathbf{F})= \sup_{\mathcal{V}\in \mathbf{C}_X^o}
P_\mathcal{E} (\mathbf{D}, (\Omega\times \mathcal{V})_\mathcal{E}, \mathbf{F}).
\end{equation*}
We will continue to call these the \emph{fiber topological $\mathbf{D}$-pressure of
$\mathbf{F}$ with respect to $\mathcal{U}$} and the \emph{fiber topological $\mathbf{D}$-pressure of
$\mathbf{F}$}, respectively. By the same reasoning, for each $\mu\in \mathcal{P}_\mathbb{P} (\mathcal{E}, G)$ we can define
\begin{eqnarray}
\mu (\mathbf{D})&= & \lim_{n\rightarrow \infty} \frac{1}{|F_n|} \int_{\mathcal{E}} d_{F_n} (\omega, x) d \mu (\omega, x)\nonumber \\
&= & \inf_{n\in \mathbb{N}} \frac{1}{|F_n|} \int_{\mathcal{E}} d_{F_n} (\omega, x) d \mu (\omega, x) \label{1102111533}
\end{eqnarray}
and
\begin{equation*}
\text{sup}_\mathbb{P} (\mathbf{D})= \lim_{n\rightarrow \infty} \frac{1}{|F_n|} \int_\Omega \sup_{x\in \mathcal{E}_\omega} d_F (\omega, x) d \mathbb{P} (\omega)\ge \mu (\mathbf{D}).
\end{equation*}
 As above, all these invariants are independent of the choice of tiling F\o lner sequences.
Remark that neither $\mu (\mathbf{D})$ nor $\text{sup}_\mathbb{P} (\mathbf{D})$ need be non-negative, as our assumption  does not imply that $\mathbf{D}$ is non-negative. In fact, they may take the value of $- \infty$. Moreover, by \eqref{1102111533}, the function $\bullet (\mathbf{D}): \mathcal{P}_\mathbb{P} (\mathcal{E}, G)\rightarrow \R\cup \{- \infty\}, \mu\mapsto \mu (\mathbf{D})$ is u.s.c. over the compact metric space $\mathcal{P}_\mathbb{P} (\mathcal{E}, G)$.

\medskip

With the above definitions, almost all of the results in the previous sections hold. We give a brief sketch of the main results.

As in Proposition \ref{1007120919} and Proposition \ref{1102041733} one has:

\begin{prop} \label{1008282301}
Let $\mathbf{D}= \{d_F: F\in \mathcal{F}_G\}\subseteq \mathbf{L}_\mathcal{E}^1 (\Omega, C (X))$ be a sub-additive $G$-invariant family and $\mathcal{U}\in \mathbf{C}_\mathcal{E}, \mu\in \mathcal{P}_\mathbb{P} (\mathcal{E}, G)$. Then
\begin{enumerate}

\item
$P_\mathcal{E} (\mathbf{D}, \mathcal{U}, \mathbf{F})\ge h_\mu^{(r)} (\mathbf{F}, \mathcal{U})+ \mu (\mathbf{D})$.

 \item If $\mu (\mathbf{D})> -\infty$ then $P_\mathcal{E} (\mathbf{D}, \mathbf{F})\ge h_\mu^{(r)} (\mathbf{F})+ \mu (\mathbf{D})$.

\item \label{1208011406}
$\text{sup}_\mathbb{P} (\mathbf{D})\le P_\mathcal{E} (\mathbf{D}, \mathcal{U}, \mathbf{F})\le h_{\text{top}}^{(r)} (\mathbf{F}, \mathcal{U})+ \text{sup}_\mathbb{P} (\mathbf{D})$.

\item \label{1208011408} If $\text{sup}_\mathbb{P} (\mathbf{D})= - \infty$ then $P_\mathcal{E} (\mathbf{D}, \mathbf{F})= -\infty$.
\end{enumerate}
\end{prop}
\begin{proof}
With the above definitions, the proof is similar to that of Proposition \ref{1007120919} and Proposition \ref{1102041733}, except the last item \eqref{1208011408}.
Now we assume $\text{sup}_\mathbb{P} (\mathbf{D})= - \infty$. Applying \eqref{1208011406} to each $\mathcal{V}\in \mathbf{C}_\mathcal{E}$ we obtain
$P_\mathcal{E} (\mathbf{D}, \mathcal{V}, \mathbf{F})= -\infty$, which implies $P_\mathcal{E} (\mathbf{D}, \mathbf{F})= -\infty$. The result follows as before.
\end{proof}

Moreover, we have:

\begin{thm} \label{1008272328}
Assume that $\mathcal{U}\in \mathbf{C}_\mathcal{E}^o$ is factor good.
\begin{enumerate}

\item \label{1008272344} If $\mathbf{D}= \{d_F: F\in \mathcal{F}_G\}\subseteq \mathbf{L}_\mathcal{E}^1 (\Omega, C (X))$ is a sub-additive $G$-invariant family satisfying the assumption of $(\spadesuit)$ then
    \begin{equation*}
P_\mathcal{E} (\mathbf{D}, \mathcal{U}, \mathbf{F})= \max_{\mu\in \mathcal{P}_\mathbb{P} (\mathcal{E}, G)} [h_\mu^{(r)} (\mathbf{F}, \mathcal{U})+ \mu (\mathbf{D})],
\end{equation*}
$$\text{sup}_\mathbb{P} (\mathbf{D})= \max_{\mu\in \mathcal{P}_\mathbb{P} (\mathcal{E}, G)} \mu (\mathbf{D}).$$

\item \label{1008272345} If $f\in
 \mathbf{L}_\mathcal{E}^1 (\Omega, C (X))$ then
 \begin{equation*}
P_\mathcal{E} (\mathbf{D}^f, \mathcal{U}, \mathbf{F})= \max_{\mu\in \mathcal{P}_\mathbb{P} (\mathcal{E}, G)} [h_\mu^{(r)} (\mathbf{F}, \mathcal{U})+ \int_{\mathcal{E}} f (\omega, x) d \mu (\omega, x)].
\end{equation*}
\end{enumerate}
\end{thm}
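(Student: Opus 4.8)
The plan is to rerun the proofs of Theorem \ref{1007141414} and Proposition \ref{1102071746} in the present setting, replacing every appeal to Proposition \ref{1006122129} by an appeal to Proposition \ref{p1006172118}, and checking that monotonicity of $\mathbf{D}$ was never genuinely used. First I would check that all quantities in the statement make sense. Since $\mathbf{D}$ is sub-additive and $G$-invariant, Proposition \ref{1006151230} shows $F\mapsto \int_\mathcal{E} d_F\, d\mu$ satisfies $f(Eg)=f(E)$ and $f(E\cup F)\le f(E)+f(F)$ for disjoint $E,F$; taking suprema in the pointwise sub-additivity inequality and using $G$-invariance gives the same for $F\mapsto \int_\Omega \sup_{x\in\mathcal{E}_\omega} d_F(\omega,x)\,d\mathbb{P}$, and Proposition \ref{1006141621}(3) gives it for $F\mapsto \int_\Omega \log P_\mathcal{E}(\omega,\mathbf{D},F,\mathcal{U},\mathbf{F})\,d\mathbb{P}$. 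Hence Proposition \ref{p1006172118} applies to each along the tiling F\o lner sequence $\{F_n\}$, so $\mu(\mathbf{D})$, $\text{sup}_\mathbb{P}(\mathbf{D})$ (possibly $-\infty$) and $P_\mathcal{E}(\mathbf{D},\mathcal{U},\mathbf{F})$ are well defined, equal to the infimum over $n$, and independent of the tiling F\o lner sequence; replacing $\{F_n\}$ by a subsequence (still a tiling F\o lner sequence) we may also assume $|F_n|\ge n$.

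The inequality $P_\mathcal{E}(\mathbf{D},\mathcal{U},\mathbf{F})\ge \sup_{\mu\in\mathcal{P}_\mathbb{P}(\mathcal{E},G)}[h_\mu^{(r)}(\mathbf{F},\mathcal{U})+\mu(\mathbf{D})]$ is immediate from Proposition \ref{1008282301}(1); since $(\Omega,\mathcal{F},\mathbb{P})$ is Lebesgue, Proposition \ref{1007041212} gives $h_{\mu,+}^{(r)}(\mathbf{F},\mathcal{U})=h_\mu^{(r)}(\mathbf{F},\mathcal{U})$ for all $\mu$, so this is the same as $\sup_\mu[h_{\mu,+}^{(r)}(\mathbf{F},\mathcal{U})+\mu(\mathbf{D})]$.

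For the reverse inequality I would reprove Proposition \ref{1006151236}: by the factor-good hypothesis together with Lemma \ref{1007212122} and Lemma \ref{1008171752} (which make no use of monotonicity), it suffices to treat $\mathcal{U}$ good, i.e. to reprove Proposition \ref{1007142016} for a merely sub-additive $G$-invariant family satisfying $(\spadesuit)$. I would repeat that proof almost verbatim: the selection Lemma \ref{1007141419} is already stated for an arbitrary family in $\mathbf{L}_\mathcal{E}^1(\Omega,C(X))$, the measures $\nu^{(n)},\mu^{(n)}$ are well defined because $e^{d_{F_n}}>0$, and the estimate analogous to (1007221806) only needs $e^{-\|d_{F_n}(\omega)\|_\infty}\le \sup_x e^{d_{F_n}(\omega,x)}\le P_\mathcal{E}(\omega,\mathbf{D},F_n,\mathcal{U},\mathbf{F})$, valid for any $\mathbf{D}$ since $-\sup|d|\le\sup d$; the entropy inequalities via Lemma \ref{1007221736}, Lemma \ref{1007222138} and Proposition \ref{1007192023}(2) are unchanged. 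The single place where the original argument invoked monotonicity was the convergence $\frac{1}{|F_n|}\int_\Omega \log P_\mathcal{E}(\omega,\mathbf{D},F_n,\mathcal{U},\mathbf{F})\,d\mathbb{P}\to P_\mathcal{E}(\mathbf{D},\mathcal{U},\mathbf{F})$, which here is furnished by Proposition \ref{p1006172118}. This yields some $\mu\in\mathcal{P}_\mathbb{P}(\mathcal{E},G)$ with $h_\mu^{(r)}(\mathbf{F},\mathcal{U})+\mu(\mathbf{D})\ge P_\mathcal{E}(\mathbf{D},\mathcal{U},\mathbf{F})$; combined with the previous paragraph (applied to this $\mu$) and Proposition \ref{1007041212}, all three expressions coincide and the supremum is attained at $\mu$, proving the variational principle in \eqref{1008272344}. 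The identity $\text{sup}_\mathbb{P}(\mathbf{D})=\max_\mu\mu(\mathbf{D})$ then follows as in Proposition \ref{1102071746}: $\{\mathcal{E}\}=(\Omega\times\{X\})_\mathcal{E}$ is excellent, hence factor good, with $h_{\text{top}}^{(r)}(\mathbf{F},\{\mathcal{E}\})=0$ and $h_\mu^{(r)}(\mathbf{F},\{\mathcal{E}\})=0$ for all $\mu$, so Proposition \ref{1008282301}(2) gives $P_\mathcal{E}(\mathbf{D},\{\mathcal{E}\},\mathbf{F})=\text{sup}_\mathbb{P}(\mathbf{D})$, which by the variational principle equals $\max_\mu[h_\mu^{(r)}(\mathbf{F},\{\mathcal{E}\})+\mu(\mathbf{D})]=\max_\mu\mu(\mathbf{D})$.

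Part \eqref{1008272345} is then immediate from \eqref{1008272344}: $\mathbf{D}^f$ is sub-additive and $G$-invariant, and it satisfies $(\spadesuit)$ since for any $\{\nu_n\}\subseteq\mathcal{P}_\mathbb{P}(\mathcal{E})$, setting $\mu_n=\frac{1}{|F_n|}\sum_{g\in F_n}g\nu_n$ and choosing, by Proposition \ref{1007192023}(1), a subsequence with $\mu_{n_j}\to\mu\in\mathcal{P}_\mathbb{P}(\mathcal{E},G)$, one has $\frac{1}{|F_{n_j}|}\int_\mathcal{E} d^f_{F_{n_j}}\,d\nu_{n_j}=\int_\mathcal{E} f\,d\mu_{n_j}\to\int_\mathcal{E} f\,d\mu=\mu(\mathbf{D}^f)$; moreover $\mu(\mathbf{D}^f)=\int_\mathcal{E} f\,d\mu$ for every $\mu\in\mathcal{P}_\mathbb{P}(\mathcal{E},G)$ by $G$-invariance, so substituting $\mathbf{D}=\mathbf{D}^f$ into \eqref{1008272344} gives the stated formulas. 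The main obstacle will be the bookkeeping of the previous point: verifying line by line that the argument of \S\ref{seventh} never used monotonicity — in particular in the error terms of Lemma \ref{1007141419} and in Lemma \ref{1007222138} — and that Proposition \ref{p1006172118}, which needs the tiling F\o lner sequence and only delivers the infimum over $\mathcal{T}_G$ (not over all of $\mathcal{F}_G$, which Example \ref{1104071212} shows is genuinely impossible), does replace Proposition \ref{1006122129} at every use.
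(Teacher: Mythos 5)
Your proposal is correct and follows essentially the same route as the paper, whose proof of this theorem is precisely the re-writing you carry out: part (1) by repeating the argument of Theorem \ref{1007141414} and Proposition \ref{1102071746} with Proposition \ref{p1006172118} (along the tiling F\o lner sequence) replacing Proposition \ref{1006122129}, using Proposition \ref{1008282301} for the easy inequality and noting that monotonicity is never genuinely needed, and part (2) by observing that $\mathbf{D}^f$ is sub-additive, $G$-invariant, satisfies $(\spadesuit)$ and has $\mu(\mathbf{D}^f)=\int_\mathcal{E} f\,d\mu$.
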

\begin{proof}
\eqref{1008272344} The proof is essentially a re-writing of the proof of Theorem \ref{1007141414} and Proposition \ref{1102071746}.

\eqref{1008272345} Obviously, $\mathbf{D}^f\subseteq \mathbf{L}_\mathcal{E}^1 (\Omega, C (X))$ is a sub-additive $G$-invariant family satisfying $(\spadesuit)$ and $\mu (\mathbf{D}^f)= \int_{\mathcal{E}} f (\omega, x) d \mu (\omega, x)$ for each $\mu\in \mathcal{P}_\mathbb{P} (\mathcal{E}, G)$. Hence the conclusion follows directly from \eqref{1008272344}.
\end{proof}

Combining Theorem \ref{1008272328} with Theorem \ref{1006122212} and Proposition \ref{1008282301}, we have, as a direct corollary:

\begin{cor} \label{1102082226}
Let $\mathbf{D}= \{d_F: F\in \mathcal{F}_G\}\subseteq \mathbf{L}_\mathcal{E}^1 (\Omega, C (X))$ be a sub-additive $G$-invariant family satisfying $(\spadesuit)$. Then
$$
P_\mathcal{E} (\mathbf{D}, \mathbf{F})=
\begin{cases}
- \infty,& \text{if}\ \text{sup}_\mathbb{P} (\mathbf{D})= -\infty\\
\sup\limits_{\mu\in \mathcal{P}_\mathbb{P} (\mathcal{E}, G), \mu (\mathbf{D})> -\infty} [h_\mu^{(r)} (\mathbf{F})+ \mu (\mathbf{D})],& \text{otherwise}
\end{cases}.
$$
In particular, for each $f\in \mathbf{L}_\mathcal{E}^1 (\Omega, C (X))$,
\begin{equation*}
P_\mathcal{E} (\mathbf{D}^f, \mathbf{F})= \sup_{\mu\in \mathcal{P}_\mathbb{P} (\mathcal{E}, G)} [h_{\mu}^{(r)} (\mathbf{F})+ \int_\mathcal{E} f (\omega, x) d \mu (\omega, x)].
\end{equation*}
\end{cor}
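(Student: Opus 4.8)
The plan is to deduce Corollary \ref{1102082226} directly from Theorem \ref{1008272328} \eqref{1008272344} by exhausting the finite open covers of $X$ and taking a supremum, keeping careful track of the two degenerate cases governed by $\text{sup}_\mathbb{P} (\mathbf{D})$. First I would dispose of the case $\text{sup}_\mathbb{P} (\mathbf{D})= - \infty$: by Proposition \ref{1008282301} (2) this forces $P_\mathcal{E} (\mathbf{D}, \mathbf{F})= - \infty$, which is exactly the first branch of the claimed formula, so nothing more is needed there. So from now on assume $\text{sup}_\mathbb{P} (\mathbf{D})> - \infty$; by Theorem \ref{1008272328} \eqref{1008272344} applied with any $\mathcal{U}\in \mathbf{C}_\mathcal{E}^o$ that is factor good, in particular with $\mathcal{U}= (\Omega\times \mathcal{V})_\mathcal{E}$ for $\mathcal{V}\in \mathbf{C}_X^o$ (which is factor excellent, hence factor good, by Theorem \ref{1007212202}), we have $\text{sup}_\mathbb{P} (\mathbf{D})= \max_{\mu\in \mathcal{P}_\mathbb{P} (\mathcal{E}, G)} \mu (\mathbf{D})$; in particular the set $\{\mu\in \mathcal{P}_\mathbb{P} (\mathcal{E}, G): \mu (\mathbf{D})> - \infty\}$ is non-empty, so the supremum in the second branch is over a non-empty set.

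Next I would write, for each $\mathcal{V}\in \mathbf{C}_X^o$,
\begin{equation*}
P_\mathcal{E} (\mathbf{D}, (\Omega\times \mathcal{V})_\mathcal{E}, \mathbf{F})= \max_{\mu\in \mathcal{P}_\mathbb{P} (\mathcal{E}, G)} [h_\mu^{(r)} (\mathbf{F}, (\Omega\times \mathcal{V})_\mathcal{E})+ \mu (\mathbf{D})]
\end{equation*}
by Theorem \ref{1008272328} \eqref{1008272344} (using that $\mathbf{D}$ satisfies $(\spadesuit)$). Taking the supremum over $\mathcal{V}\in \mathbf{C}_X^o$ on the left gives $P_\mathcal{E} (\mathbf{D}, \mathbf{F})$ by definition. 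On the right I would interchange the two suprema: $\sup_{\mathcal{V}} \sup_{\mu} [h_\mu^{(r)} (\mathbf{F}, (\Omega\times \mathcal{V})_\mathcal{E})+ \mu (\mathbf{D})]= \sup_{\mu} \sup_{\mathcal{V}} [h_\mu^{(r)} (\mathbf{F}, (\Omega\times \mathcal{V})_\mathcal{E})+ \mu (\mathbf{D})]$, which is legitimate since the order of two suprema is always exchangeable. For a fixed $\mu$, the term $\mu (\mathbf{D})$ is a constant (possibly $- \infty$), so $\sup_{\mathcal{V}\in \mathbf{C}_X^o} [h_\mu^{(r)} (\mathbf{F}, (\Omega\times \mathcal{V})_\mathcal{E})+ \mu (\mathbf{D})]= h_\mu^{(r)} (\mathbf{F})+ \mu (\mathbf{D})$ by Theorem \ref{1006122212}; when $\mu (\mathbf{D})= - \infty$ this whole inner quantity is $- \infty$ and contributes nothing to the outer supremum (which, as noted, is taken over a non-empty set of $\mu$ with $\mu (\mathbf{D})> - \infty$). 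Hence $P_\mathcal{E} (\mathbf{D}, \mathbf{F})= \sup_{\mu\in \mathcal{P}_\mathbb{P} (\mathcal{E}, G), \mu (\mathbf{D})> - \infty} [h_\mu^{(r)} (\mathbf{F})+ \mu (\mathbf{D})]$, which is the second branch.

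Finally, for the ``in particular'' statement I would specialize to $\mathbf{D}= \mathbf{D}^f$ with $f\in \mathbf{L}_\mathcal{E}^1 (\Omega, C (X))$: as observed in the proof of Theorem \ref{1008272328} \eqref{1008272345}, $\mathbf{D}^f$ is a sub-additive $G$-invariant family satisfying $(\spadesuit)$ and $\mu (\mathbf{D}^f)= \int_\mathcal{E} f (\omega, x) d \mu (\omega, x)$, which is a finite real number for every $\mu\in \mathcal{P}_\mathbb{P} (\mathcal{E}, G)$ since $\|f\|_1< + \infty$; consequently $\text{sup}_\mathbb{P} (\mathbf{D}^f)> - \infty$ and the restriction $\mu (\mathbf{D}^f)> - \infty$ is vacuous, so the displayed formula for $P_\mathcal{E} (\mathbf{D}^f, \mathbf{F})$ follows from the second branch. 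I do not anticipate a serious obstacle here; the only point requiring care is the bookkeeping around $- \infty$ (ensuring the inner supremum over $\mathcal{V}$ genuinely realizes $h_\mu^{(r)} (\mathbf{F})+ \mu (\mathbf{D})$ via Theorem \ref{1006122212}, and that the degenerate $\mu$ do not affect the outer supremum), together with checking that $(\Omega\times \mathcal{V})_\mathcal{E}$ is factor good so that Theorem \ref{1008272328} applies — both of which are handled by the results already established (Theorem \ref{1007212202} and Theorem \ref{1006122212}).
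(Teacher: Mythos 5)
Your proposal is correct and follows essentially the same route as the paper, which obtains the corollary by combining Proposition \ref{1008282301} (for the case $\text{sup}_\mathbb{P}(\mathbf{D})=-\infty$), Theorem \ref{1008272328} applied to the factor good covers $(\Omega\times\mathcal{V})_\mathcal{E}$, and Theorem \ref{1006122212} to pass from covers to $h_\mu^{(r)}(\mathbf{F})$; your explicit bookkeeping of the supremum exchange and of the measures with $\mu(\mathbf{D})=-\infty$ just fills in the details the paper leaves to the reader.
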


\medskip

For a given sub-additive $G$-invariant family $\mathbf{D}= \{d_F: F\in \mathcal{F}_G\}\subseteq \mathbf{L}_\mathcal{E}^1 (\Omega, C (X))$, does $\mathbf{D}$ always satisfy $(\spadesuit)$? In general, this assumption is not easy to check, except when  the family $\mathbf{D}$ is strongly  sub-additive (see \S \ref{assumption}).

In the remainder of this section, we will discuss this question again in the case where $G$ is abelian. Remark that if $G$ is abelian then it always admits a tiling F\o lner sequence \cite{W0}.

\begin{prop} \label{1008292115}
Let $\mathbf{D}= \{d_F: F\in \mathcal{F}_G\}\subseteq \mathbf{L}_\mathcal{E}^1 (\Omega, C (X))$ be a sub-additive $G$-invariant family. If $G$ is abelian then $\mathbf{D}$ satisfies $(\spadesuit)$.
\end{prop}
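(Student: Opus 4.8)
The plan is to reduce the statement to Proposition \ref{1008300004} (the strongly sub-additive case) by an averaging trick that exploits commutativity. The obstacle in the general sub-additive case is that in the inequality $d_{E\cup Fg}(\omega,x)\le d_E(\omega,x)+d_F(g(\omega,x))$ the two pieces $E$ and $Fg$ must be genuinely disjoint, and there is no control over the \emph{overlap}, so one cannot run the quasi-tiling argument of Proposition \ref{1102111944}/Lemma \ref{1008132304} directly. When $G$ is abelian, however, one can symmetrize: for a F\o lner sequence $\{F_n\}$, define the averaged family $\widetilde{\mathbf{D}}=\{\widetilde d_F:F\in\mathcal{F}_G\}$ by something like
\begin{equation*}
\widetilde d_F(\omega,x)=\frac{1}{|F|}\sum_{g\in F} d_{Fg^{-1}}(g(\omega,x)),
\end{equation*}
or an analogous Ces\`aro-type average over translates that lie inside $F$. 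Commutativity is what makes $Fg^{-1}$ (or $g^{-1}F$) still ``look like'' $F$ and keeps the bookkeeping consistent, so that $\widetilde{\mathbf{D}}$ becomes a strongly sub-additive $G$-invariant family in $\mathbf{L}_\mathcal{E}^1(\Omega,C(X))$, while $\mu(\widetilde{\mathbf{D}})=\mu(\mathbf{D})$ for every $\mu\in\mathcal{P}_\mathbb{P}(\mathcal{E},G)$ (using $G$-invariance of $\mu$) and, for any $\{\nu_n\}\subseteq\mathcal{P}_\mathbb{P}(\mathcal{E})$ and $\mu_n=\frac{1}{|F_n|}\sum_{g\in F_n}g\nu_n$, the averages $\frac{1}{|F_n|}\int d_{F_n}\,d\nu_n$ and $\frac{1}{|F_n|}\int\widetilde d_{F_n}\,d\nu_n$ differ by a quantity tending to $0$ (again by a F\o lner estimate).

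Concretely the steps I would carry out are: (1) write down the precise averaged family $\widetilde{\mathbf{D}}$ and verify it lies in $\mathbf{L}_\mathcal{E}^1(\Omega,C(X))$ (each $\widetilde d_F$ is a finite average of functions that are measurable in $(\omega,x)$ and continuous in $x$, with an obvious bound on $\|\widetilde d_F(\omega)\|_\infty$); (2) verify strong sub-additivity of $\widetilde{\mathbf{D}}$, i.e. $\widetilde d_{E\cup F}+\widetilde d_{E\cap F}\le \widetilde d_E+\widetilde d_F$ pointwise $\mathbb{P}$-a.e.\ --- this is where the abelian hypothesis is used, reorganizing the double sums over translates and applying ordinary sub-additivity of $\mathbf{D}$ term by term; (3) verify $G$-invariance of $\widetilde{\mathbf{D}}$, which is routine from that of $\mathbf{D}$ plus commutativity; (4) apply Proposition \ref{1008300004} to conclude $\widetilde{\mathbf{D}}$ satisfies $(\spadesuit)$; (5) transfer $(\spadesuit)$ back to $\mathbf{D}$ by showing the two relevant Birkhoff-type averages are asymptotically equal along F\o lner sequences and that $\mu(\widetilde{\mathbf{D}})=\mu(\mathbf{D})$.

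For step (5), the key estimate is that for $F=F_n$ and any $\nu\in\mathcal{P}_\mathbb{P}(\mathcal{E})$,
\begin{equation*}
\left|\frac{1}{|F_n|}\int_{\mathcal{E}} d_{F_n}\,d\nu-\frac{1}{|F_n|}\int_{\mathcal{E}}\widetilde d_{F_n}\,d\mu_n\right|\le \varepsilon_n\cdot\|d_{\{e_G\}}\|_1\quad\text{with}\quad \varepsilon_n\to 0,
\end{equation*}
which follows by expanding $\widetilde d_{F_n}$, comparing $F_n g^{-1}$ with $F_n$ via $|F_n\triangle F_n g^{-1}|/|F_n|\to 0$ uniformly for $g$ in a suitable ``deep interior'' of $F_n$, and controlling the boundary contribution by sub-additivity and the integrability of $d_{\{e_G\}}$ (an argument of exactly the flavor already used in the proof of Proposition \ref{1008300004}, in the passage from $\mathbf{D}$ to $\mathbf{D}'$ and in the estimates involving $E_n=F_n\cap\bigcap_{g\in T}g^{-1}F_n$). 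Once this is in hand, a sub-sequence $\{n_j\}$ along which $\{\mu_{n_j}\}$ converges to some $\mu\in\mathcal{P}_\mathbb{P}(\mathcal{E},G)$ with $\limsup_j \frac{1}{|F_{n_j}|}\int\widetilde d_{F_{n_j}}\,d\nu_{n_j}\le\mu(\widetilde{\mathbf{D}})$ (from step (4)) immediately yields $\limsup_j\frac{1}{|F_{n_j}|}\int d_{F_{n_j}}\,d\nu_{n_j}\le\mu(\mathbf{D})$, which is exactly $(\spadesuit)$ for $\mathbf{D}$. I expect step (2), the verification of strong sub-additivity of the symmetrized family, to be the main obstacle: getting the translate-bookkeeping right so that the cross-terms cancel correctly is the place where the proof genuinely needs $G$ abelian, and a careless choice of the averaging scheme will fail to be strongly sub-additive.
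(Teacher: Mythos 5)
There is a genuine gap, and it sits exactly where you predicted: step (2) is not a technical detail to be worked out but the entire content of the problem, and the construction you propose does not address it. Worse, the specific formula you wrote is vacuous: by the $G$-invariance of the family, $d_{Fg^{-1}}(g(\omega,x))=d_{(Fg^{-1})g}(\omega,x)=d_F(\omega,x)$, so $\widetilde d_F=d_F$ identically and no symmetrization has taken place. Any repair would have to average over genuinely different sets, but then the strategy runs into a structural obstruction: for a family that is constant in $(\omega,x)$, say $d_F\equiv f(F)$ with $f$ the monotone sub-additive invariant function of Example \ref{1104071212}, any translate-averaging scheme returns essentially the same set function, which cannot be strongly sub-additive (by Proposition \ref{1102111944} strong sub-additivity would force $\lim_n f(F_n)/|F_n|=\inf_E f(E)/|E|$, which that example violates). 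So one cannot in general manufacture a strongly sub-additive family $\widetilde{\mathbf{D}}$ that is asymptotically interchangeable with $\mathbf{D}$, and the reduction to Proposition \ref{1008300004} does not go through. Your step (5) also compares integrals against $\nu_n$ with integrals against $\mu_n$ without justification; the inequality you would need there, $d_{F_n}\lesssim$ an average of translated $d_T$'s, is precisely the nontrivial estimate, not a F\o lner boundary term.

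The paper's proof avoids strong sub-additivity altogether. It works under the standing assumption of that section that the F\o lner sets $F_n$ tile $G$, first subtracts the single-site terms so that $-\mathbf{D}'$ is non-negative (as in Proposition \ref{1008300004}), and then proves the direct pointwise bound of Lemma \ref{1008282306}: for a tile $T$ and all large $n$ there is $H_n\subseteq F_n$ with $|F_n\setminus H_n|\le 2\epsilon|F_n|$ and $d_{F_n}(\omega,x)\le \frac{1}{|T|}\sum_{g\in H_n} d_T(g(\omega,x))$. Commutativity enters only there, via $tT=Tt$, which lets one write $d_{F_n}\le\sum_{g\in E_n}d_T(tg(\omega,x))$ for every $t\in T$ and then sum over $t\in T$. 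Integrating this against $\nu_{n}$ produces the auxiliary measures $w_n=\frac{1}{|H_n|}\sum_{g\in H_n}g\nu_n$, which converge to the same limit $\mu$ as the $\mu_{n_j}$, giving $\limsup_j\frac{1}{|F_{n_j}|}\int d_{F_{n_j}}\,d\nu_{n_j}\le\frac{1}{|T|}\int d_T\,d\mu$ for every tile $T$; finally, since the $F_n$ themselves are tiles and $\mu(\mathbf{D})=\inf_n\frac{1}{|F_n|}\int d_{F_n}\,d\mu$ by Proposition \ref{p1006172118}, taking the infimum over $T=F_n$ yields $(\spadesuit)$. If you want to salvage your outline, you should aim at proving an inequality of the type of Lemma \ref{1008282306} directly rather than trying to upgrade $\mathbf{D}$ to a strongly sub-additive family.
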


Before proving Proposition \ref{1008292115}, we make the following observation.

\begin{lem} \label{1008282306}
Let $\mathbf{D}= \{d_F: F\in \mathcal{F}_G\}\subseteq \mathbf{L}_\mathcal{E}^1 (\Omega, C (X))$ be a sub-additive $G$-invariant family and  $T\in \mathcal{T}_G, \epsilon> 0$. Assume that $G$ is abelian and the family $- \mathbf{D}$ is non-negative. Then,
whenever $n\in \mathbb{N}$ is sufficiently large, there exists $H_n\subseteq F_n$ such that $|F_n\setminus H_n|\le 2 \epsilon |F_n|$ and, for $\mathbb{P}$-a.e. $\omega\in \Omega$,
\begin{equation*}
d_{F_n} (\omega, x)\le \frac{1}{|T|} \sum_{g\in H_n} d_T (g (\omega, x))\ \text{for each}\ x\in \mathcal{E}_\omega.
\end{equation*}
\end{lem}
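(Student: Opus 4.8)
\textbf{Plan of proof for Lemma \ref{1008282306}.}
The strategy is to combine the tiling structure of $T$ with a F\o lner-type approximation of $F_n$ by translates of $T$, together with the sub-additivity, $G$-invariance, and non-negativity of $-\mathbf{D}$. Since $T\in \mathcal{T}_G$, fix $G_T\subseteq G$ with $\{T c: c\in G_T\}$ a partition of $G$. For each $n$ define the ``interior'' tiling centers $C_n= \{c\in G_T: T c\subseteq F_n\}$ and put $H_n= \bigcup_{c\in C_n} T c\subseteq F_n$; because $G$ is amenable and $\{F_n\}$ is a F\o lner sequence, $F_n$ is eventually $(T, \epsilon)$-invariant, and a standard counting argument (compare the proof of Proposition \ref{p1006172118}, where one estimates $E_n\setminus E_n'$) gives $|F_n\setminus H_n|\le 2\epsilon |F_n|$ for all large $n$. (The factor $2$ absorbs both the points not covered by interior tiles and the slack in the $(T,\epsilon)$-invariance estimate.)

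Next I would exploit that $\{T c: c\in C_n\}$ is a \emph{disjoint} family whose union is $H_n$. By repeated application of the sub-additivity of $\mathbf{D}$ over disjoint unions, for $\mathbb{P}$-a.e.\ $\omega\in\Omega$ and each $x\in\mathcal{E}_\omega$,
\begin{equation*}
d_{H_n}(\omega,x)\le \sum_{c\in C_n} d_{T c}(\omega,x)= \sum_{c\in C_n} d_T(c(\omega,x)),
\end{equation*}
where the equality uses the $G$-invariance of $\mathbf{D}$ (writing $d_{Tc}(\omega,x)=d_T(c(\omega,x))$; here commutativity of $G$ is what makes $Tc$ a right translate amenable to the invariance relation $d_{Fg}(\omega,x)=d_F(g(\omega,x))$). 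Now I rewrite $\sum_{c\in C_n} d_T(c(\omega,x))$ as a sum over $H_n$: using Lemma \ref{1008300008} with the roles of $T$ and $C_n$, one has $\sum_{c\in C_n} 1_{Tc}= \sum_{t\in T} 1_{C_n t}$, and since $G$ is abelian $C_n t = t C_n$, so the multiset $\{c(\omega,x): c\in C_n, \text{counted via } Tc\}$ reorganizes to $\frac{1}{|T|}\sum_{g\in H_n} (\text{terms indexed by } g)$. Concretely, because $H_n$ is the disjoint union of the $|C_n|$ tiles each of size $|T|$, and each tile $Tc$ contributes $d_T(c(\omega,x))$ which equals $\frac{1}{|T|}\sum_{t\in T} d_T(c(\omega,x))$ trivially, I can distribute: $\sum_{c\in C_n} d_T(c(\omega,x)) = \frac{1}{|T|}\sum_{c\in C_n}\sum_{t\in T} d_T(c(\omega,x))$, and reindexing $g=tc$ (a bijection $T\times C_n\to H_n$) together with $G$-invariance applied once more to shift from $c$ to $g=tc$ gives $\frac{1}{|T|}\sum_{g\in H_n} d_T(g(\omega,x))$.

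Finally, I would combine the two displays with the non-negativity of $-\mathbf{D}$: since $F_n\supseteq H_n$, monotonicity is \emph{not} assumed, so instead I use $F_n = H_n \cup (F_n\setminus H_n)$ and sub-additivity to get $d_{F_n}(\omega,x)\le d_{H_n}(\omega,x)+ d_{F_n\setminus H_n}(\omega,x)$, and then drop the second term because $-\mathbf{D}$ is non-negative means $d_{F_n\setminus H_n}(\omega,x)\le 0$. Chaining this with the bound on $d_{H_n}$ yields the claimed inequality $d_{F_n}(\omega,x)\le \frac{1}{|T|}\sum_{g\in H_n} d_T(g(\omega,x))$ for $\mathbb{P}$-a.e.\ $\omega$ and all $x\in\mathcal{E}_\omega$. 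The main obstacle I anticipate is the bookkeeping in the reindexing step: making the bijection $T\times C_n \to H_n$, $(t,c)\mapsto tc$ precise and verifying that the $G$-invariance relation $d_{Tc}(\omega,x) = d_T(c(\omega,x))$ is correctly applied (this is exactly where commutativity of $G$ is essential, since otherwise $Tc$ need not be a right-translate of $T$ by a single element acting compatibly with the skew product). Care is also needed to ensure the ``for all large $n$'' threshold depends only on $T$ and $\epsilon$, which follows from the F\o lner property.
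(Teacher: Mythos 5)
The gap is in your reindexing step. From the disjoint tile decomposition you correctly get $d_{F_n}(\omega,x)\le \sum_{c\in C_n} d_{Tc}(\omega,x)=\sum_{c\in C_n} d_T(c(\omega,x))$, but the passage from this to $\frac{1}{|T|}\sum_{g\in H_n} d_T(g(\omega,x))$ is false. Writing $\sum_{c\in C_n} d_T(c(\omega,x))=\frac{1}{|T|}\sum_{t\in T}\sum_{c\in C_n} d_T(c(\omega,x))$ is trivially true, but after substituting $g=tc$ you would need $d_T(c(\omega,x))=d_T(tc(\omega,x))$ for the reindexed sum to become $\frac{1}{|T|}\sum_{g\in H_n} d_T(g(\omega,x))$, and no hypothesis gives this: $G$-invariance only says $d_{Fg}(\omega,x)=d_F(g(\omega,x))$, i.e.\ $d_T(tc(\omega,x))=d_{Ttc}(\omega,x)$, which is unrelated to $d_{Tc}(\omega,x)$. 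So your argument actually proves the different inequality $d_{F_n}(\omega,x)\le\sum_{c\in C_n} d_T(c(\omega,x))$, a sum over the sparse set of tiling centers (of size roughly $|F_n|/|T|$); this is not the statement of the lemma, and it would not suffice in the proof of Proposition \ref{1008292115}, where one must average $g\nu_n$ over a subset $H_n$ occupying almost all of the F\o lner set $F_n$ in order to pass to the limit measure $\mu$. Note also that your argument never genuinely uses that $G$ is abelian (the invariance for right translates $Tc$ needs no commutativity), which is a warning sign, since commutativity is essential here.

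The paper's proof repairs exactly this point: instead of one decomposition, it produces $|T|$ of them. Working with tiles $Tg$, $g\in E_n$, contained in the shrunk set $T_n=F_n\cap\bigcap_{t\in T}t^{-1}F_n$ (so that every left translate $tTg$, $t\in T$, still lies inside $F_n$ — with your $C_n$ this containment can fail), one bounds, for each fixed $t\in T$, $d_{F_n}(\omega,x)\le\sum_{g\in E_n} d_{tTg}(\omega,x)$ after discarding the nonpositive error terms; then abelianness converts the left translate: $d_{tTg}(\omega,x)=d_{Ttg}(\omega,x)=d_T(tg(\omega,x))$. Summing these $|T|$ inequalities over $t\in T$ and using that $(t,g)\mapsto tg$ is a bijection of $T\times E_n$ onto $H_n=TE_n$ gives $|T|\,d_{F_n}(\omega,x)\le\sum_{g\in H_n} d_T(g(\omega,x))$, which is the claimed bound. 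Your F\o lner counting for $|F_n\setminus H_n|\le 2\epsilon|F_n|$ is fine once $H_n$ is taken to consist of tiles inside $T_n$ rather than merely inside $F_n$.
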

\begin{proof}
As $T\in \mathcal{T}_G$ and $\{F_n: n\in \mathbb{N}\}$ is a F\o lner sequence of $G$. Thus for $n\in \mathbb{N}$ large enough, there exists $E_n\in \mathcal{F}_G$ such that $T g, g\in E_n$ are pairwise disjoint,
$T E_n\subseteq T_n\doteq F_n\cap \bigcap\limits_{t\in T} t^{- 1} F_n$ and $|T E_n|\ge |T_n|- \epsilon |F_n|, |T_n|\ge (1- \epsilon) |F_n|$. Hence,
\begin{equation} \label{1208011519}
|T E_n|\ge (1- 2 \epsilon) |F_n|.
\end{equation}
By assumption, $\mathbf{D}$ is a sub-additive $G$-invariant family, $- \mathbf{D}$ is non-negative and the group $G$ is abelian. Thus, for $\mathbb{P}$-a.e. $\omega\in \Omega$,
\begin{eqnarray}
d_{F_n} (\omega, x)&\le &
d_{t T_n} (\omega, x)+ d_{F_n\setminus t T_n} (\omega, x)\ (\text{as $t T_n\subseteq F_n$})\nonumber \\
&\le & d_{t T E_n} (\omega, x)+ d_{t (T_n\setminus T E_n)} (\omega, x)\ (\text{as $T E_n\subseteq T_n$})\nonumber \\
&\le & \sum_{g\in E_n} d_{t T} (g (\omega, x))\ (\text{as $T g, g\in E_n$ are pairwise disjoint})\nonumber \\
&= & \sum_{g\in E_n} d_{T t} (g (\omega, x))= \sum_{g\in E_n} d_T (t g (\omega, x)) \label{1008292240}
\end{eqnarray}
for each $t\in T$ and any $x\in \mathcal{E}_\omega$. Summing \eqref{1008292240} over all $t\in T$ we obtain:
\begin{equation} \label{1008292249}
|T| d_{F_n} (\omega, x)\le \sum_{g\in T E_n} d_T (g (\omega, x))
\end{equation}
for $\mathbb{P}$-a.e. $\omega\in \Omega$ and each $x\in \mathcal{E}_\omega$ (observe that $T g, g\in E_n$ are pairwise disjoint). The theorem follows from \eqref{1208011519} and \eqref{1008292249} by setting $H_n= T E_n$.
\end{proof}

Now let us finish the proof of Proposition \ref{1008292115}.

\begin{proof}[Proof of Proposition \ref{1008292115}]
Let $\{\nu_n: n\in \mathbb{N}\}\subseteq \mathcal{P}_\mathbb{P} (\mathcal{E})$ be a given sequence. Set
$$\mu_n= \frac{1}{|F_n|} \sum\limits_{g\in F_n} g \nu_n\ \text{for each}\ n\in \mathbb{N}.$$
By Proposition \ref{1007192023} \eqref{1208011203} there exists a subsequence $\{n_j: j\in \mathbb{N}\}\subseteq \mathbb{N}$ such that the sequence $\{\mu_{n_j}: j\in \mathbb{N}\}$ converges to some $\mu\in \mathcal{P}_\mathbb{P} (\mathcal{E}, G)$. We show that
\begin{equation} \label{1008292311}
\limsup_{j\rightarrow \infty} \frac{1}{|F_{n_j}|} \int_{\mathcal{E}} d_{F_{n_j}} (\omega, x) d \nu_{n_j} (\omega, x)\le \mu (\mathbf{D}).
\end{equation}

As in the proof of Proposition \ref{1008300004}, we may assume that the family $- \mathbf{D}$ is non-negative.
Now applying Lemma \ref{1008282306} to $\mathbf{D}$ we see that, if we fix $T\in \mathcal{T}_G$  and $\epsilon> 0$, and if  $n\in \mathbb{N}$ is sufficiently large then there exists $T_n\subseteq F_n$ such that $|F_n\setminus T_n|\le 2 \epsilon |F_n|$ and, for $\mathbb{P}$-a.e. $\omega\in \Omega$,
\begin{equation} \label{1008292331}
d_{F_n} (\omega, x)\le \frac{1}{|T|} \sum_{g\in T_n} d_T (g (\omega, x))\ \text{for each}\ x\in \mathcal{E}_\omega.
\end{equation}
We see from this that, we may assume without loss of generality that: $T_n\subseteq F_n$ satisfies $\lim\limits_{n\rightarrow \infty} \frac{|T_n|}{|F_n|}= 1$ and, for $\mathbb{P}$-a.e. $\omega\in \Omega$,
\eqref{1008292331}
holds for all sufficiently large $n\in \N$. Now we set
$$w_n= \frac{1}{|T_n|} \sum\limits_{g\in T_n} g \nu_n\ \text{for each large enough}\ n\in \mathbb{N}.$$
 Observe that the sequence $\{\mu_{n_j}: j\in \mathbb{N}\}$ converges to $\mu$. By the choice of $T_n, n\in \mathbb{N}$, it is easy to see that the sequence $\{w_{n_j}: j\in \mathbb{N}\}$ also converges to $\mu$. Thus
\begin{eqnarray} \label{1008292345}
& & \limsup_{j\rightarrow \infty} \frac{1}{|F_{n_j}|} \int_{\mathcal{E}} d_{F_{n_j}} (\omega, x) d \nu_{n_j} (\omega, x)\nonumber \\
& &\le \limsup_{j\rightarrow \infty} \frac{1}{|F_{n_j}|} \int_{\mathcal{E}} \frac{1}{|T|} \sum_{g\in T_{n_j}} d_T (g (\omega, x)) d \nu_{n_j} (\omega, x)\ (\text{using \eqref{1008292331}})\nonumber \\
& &= \limsup_{j\rightarrow \infty} \frac{1}{|T_{n_j}|} \int_{\mathcal{E}} \frac{1}{|T|} \sum_{g\in T_{n_j}} d_T (g (\omega, x)) d \nu_{n_j} (\omega, x)\ (\text{by the selection of $T_{n_j}$})\nonumber \\
& &= \limsup_{j\rightarrow \infty} \frac{1}{|T|} \int_{\mathcal{E}} d_T (\omega, x) d w_{n_j} (\omega, x)\ (\text{by the definition of $w_{n_j}$})\nonumber \\
& &= \frac{1}{|T|} \int_\mathcal{E} d_T (\omega, x) d \mu (\omega, x)\ (\text{as the sequence $\{w_{n_j}: j\in \mathbb{N}\}$ converges to $\mu$}).
\end{eqnarray}
Now recall our assumption that $F_n\in \mathcal{T}_G$ for each $n\in \mathbb{N}$. By \eqref{1008292345} we have
\begin{equation*}
\limsup_{j\rightarrow \infty} \frac{1}{|F_{n_j}|} \int_{\mathcal{E}} d_{F_{n_j}} (\omega, x) d \nu_{n_j} (\omega, x)\le \frac{1}{|F_n|} \int_\mathcal{E} d_{F_n} (\omega, x) d \mu (\omega, x)
\end{equation*}
for each $n\in \mathbb{N}$, from which \eqref{1008292311} follows, once we take the infimum over all $n\in \mathbb{N}$ and observe \eqref{1102111533}. This finishes the proof.
\end{proof}

We conclude this section with further comments about our main results of this section for the special case $G= \Z$.

Assume $G= \Z$.
Then Proposition \ref{1008292115} is just \cite[Lemma 3.5]{ZC}, so we recover \cite[Lemma 2.3]{CFH} by Cao, Feng and Huang. Corollary \ref{1102082226} is exactly \cite[Theorem 4.1]{ZC}, the main result by Zhao and Cao  \cite{ZC}. Hence we recover not only the main result \cite[Theorem 1.1]{CFH} but also \cite[Proposition 2.2]{K1} by Kifer. Moreover, as we did in Remark \ref{1103031512}, we may use Proposition \ref{1008292115} to deduce \cite[Theorem 4.5]{Z-DCDS} and \cite[Theorem 6.4]{Z-DCDS} from Theorem \ref{1008272328} and Corollary \ref{1102082226}, respectively. As the argument is the same as that of Remark \ref{1103031512},  we omit the details.
We also remark that Feng and Huang \cite{FengHuang} considered the topological pressure of limit sub-additive potentials and obtained \cite[Theorem 3.1]{FengHuang} in a similar spirit.

\section{Another version of the local variational principle} \label{ninth}

In all of our previous discussions of the variational principle for local fiber topological pressure of a continuous bundle RDS, we have only considered finite random open covers. In this section, we relax this assumption and consider countable random open covers. This is inspired by Kifer's work \cite[\S 1]{K1}.

\medskip

Let us briefly discuss Kifer's ideas in \cite[\S 1]{K1}.

Let $(Z, s)$ be a metric space. For each $r> 0$ and for any compact subset $Y\subseteq Z$, denote by $N_Y (r)$ the minimal number $n\in \N$ such that there exists a family of closed balls with diameter $r$ and centers $z_1, \cdots, z_n\in Z$, which covers $Y$.

For the continuous bundle RDS $\mathbf{F}= \{F_{g, \omega}: \mathcal{E}_\omega\rightarrow \mathcal{E}_{g \omega}| g\in G, \omega\in \Omega\}$ by Standard Assumption 4. In \cite{K1}, Kifer considered fiber topological pressure, in the spirit of Bowen's separated subsets, for the special case of $G= \mathbb{Z}_+$ and $\mathbf{D}^f$ for given $f\in \mathbf{L}_\mathcal{E}^1 (\Omega, C (X))$.

Recall from \S \ref{fourth} that
$$\mathbf{D}^f= \{d_F^f (\omega, x)\doteq \sum\limits_{g\in F} f (g (\omega, x)): F\in \mathcal{F}_G\}\subseteq \mathbf{L}_\mathcal{E}^1 (\Omega, C (X)).$$

Now, for any positive random variable $\varepsilon: (\Omega, \mathcal{F}, \mathbb{P})\rightarrow \R_{> 0}$, the function $N_{\mathcal{E}_\omega} (\varepsilon (\omega))$ is measurable in $\omega\in \Omega$ \cite[Page 205]{K1}.
 Kifer  \cite[Definition 1.9]{K1} defined a class $\mathcal{N}$ as follows: $\varepsilon\in \mathcal{N}$ if and only if
\begin{equation} \label{1102121429}
\int_\Omega \log N_{\mathcal{E}_\omega} (\varepsilon (\omega)) d \mathbb{P} (\omega)< \infty.
\end{equation}
As the metric space $(X, d)$ is compact, any positive constant is contained in $\mathcal{N}$ if it is viewed as a constant function on $(\Omega, \mathcal{F}, \mathbb{P})$.

Kifer  \cite[Definition 1.3]{K1} introduced
 the fiber topological $\mathbf{D}^f$-pressure of $\mathbf{F}$ associated with any given positive random variable $\varepsilon$ in \cite[(1.2)]{K1}. He denoted this by  $P_\mathcal{E} (\mathbf{D}^f, \varepsilon, \mathbf{F})$, and defined the global fiber topological $\mathbf{D}^f$-pressure of $\mathbf{F}$ by:
\begin{equation} \label{1208022239}
P_\mathcal{E} (\mathbf{D}^f, \mathbf{F})= \sup P_\mathcal{E} (\mathbf{D}^f, \epsilon, \mathbf{F}),
\end{equation}
where $\epsilon$ varies over all positive constants.
Finally, he proved that this resulting pressure
 can be defined equivalently using separated subsets with a positive random variable $\varepsilon\in \mathcal{N}$ (\cite[Proposition 1.10]{K1}):
 \begin{equation} \label{1208030150}
 P_\mathcal{E} (\mathbf{D}^f, \mathbf{F})= \sup_{\varepsilon\in \mathcal{N}} P_\mathcal{E} (\mathbf{D}^f, \varepsilon, \mathbf{F}).
 \end{equation}

 \medskip

 Hence, a natural question is whether, by analogy with \cite[Proposition 1.10]{K1}, can there a similar result for random open covers not only for a finite family but also for a countable family? This section is devoted to proving a result of this type.

 \medskip
\begin{defn}
Denote by $\mathfrak{C}^o_\mathcal{E}$ the set of all countable families $\mathcal{U}\subseteq (\mathcal{F}\times \mathcal{B}_X)\cap \mathcal{E}$ satisfying:
\begin{enumerate}

\item
$\mathcal{U}$ covers the whole space $\mathcal{E}$,

\item $\mathcal{U}_\omega= \{U_\omega: U\in \mathcal{U}\}\in \mathbf{C}^o_{\mathcal{E}_\omega}$ for $\mathbb{P}$-a.e. $\omega\in \Omega$ and \label{1010222027}

\item There exists an increasing sequence $\{\Omega_1\subseteq \Omega_2\subseteq \cdots\}\subseteq \mathcal{F}$ such that $\lim\limits_{n\rightarrow \infty} \mathbb{P} (\Omega_n)= 1$ and $\mathcal{U}\cap (\Omega_n\times X)$ is a finite family for each $n\in \mathbb{N}$. \label{1010222038}
\end{enumerate}
\end{defn}

It is not hard to see that the function $N (\mathcal{U}, \omega)$ is measurable in $\omega\in \Omega$ for each $\mathcal{U}\in \mathfrak{C}^o_\mathcal{E}$. Equation \eqref{1010222038} might at first sight seem rather contrived. However, note that for any positive random variable $\varepsilon: (\Omega, \mathcal{F}, \mathbb{P})\rightarrow \R_{> 0}$, we have
\begin{equation} \label{1208022302}
\lim_{n\rightarrow \infty} \mathbb{P} (\{\omega\in \Omega: \epsilon (\omega)> \frac{1}{n}\})= 1.
\end{equation}
In fact,
  \eqref{1010222038} is just the counterpart of \eqref{1208022302} for random open covers.

Now let $\mathcal{U}\in \mathfrak{C}^o_\mathcal{E}$ and $\mathbf{D}= \{d_F: F\in \mathcal{F}_G\}\subseteq \mathbf{L}_\mathcal{E}^1 (\Omega, C (X))$ be a monotone sub-additive $G$-invariant family.
The definitions and notation related to $\mathbf{C}_\mathcal{E}^o$ can be extended naturally to $\mathfrak{C}^o_\mathcal{E}$, including $P_\mathcal{E} (\omega, \mathbf{D}, F, \mathcal{U}, \mathbf{F})$ for each $F\in \mathcal{F}_G$ and $\mathbb{P}$-a.e. $\omega\in \Omega$. In fact, let $F\in \mathcal{F}_G$, as in
Proposition \ref{1006141641} for $\mathbb{P}$-a.e. $\omega\in \Omega$ we also have
\begin{equation} \label{1010231225}
P_\mathcal{E} (\omega, \mathbf{D}, F, \mathcal{U}, \mathbf{F})= \min \left\{\sum_{A
(\omega)\in \alpha (\omega)} \sup_{x\in A (\omega)} e^{d_F (\omega, x)}: \alpha (\omega)\in
\mathbf{P} ((\mathcal{U}_F)_\omega)\right\}.
\end{equation}
Moreover, for each $n\in \mathbb{N}$ set
\begin{equation} \label{1208022325}
\mathcal{U}_n= [\mathcal{U}\cap (\Omega_n\times X)]\cup [\{(\Omega_n^c\times X)\}\cap \mathcal{E}],
\end{equation}
then $\mathcal{U}_n\in \mathbf{C}^o_\mathcal{E}$. It is now not hard to check that
the sequence $\{P_\mathcal{E} (\omega, \mathbf{D}, F, \mathcal{U}_n, \mathbf{F}): n\in \mathbb{N}\}$ increases to $P_\mathcal{E} (\omega, \mathbf{D}, F, \mathcal{U}, \mathbf{F})$ for $\mathbb{P}$-a.e. $\omega\in \Omega$. Observe that $\mathbf{D}$ is non-negative by Proposition \ref{1006151230}, applying Proposition \ref{1006141621}
we have:
\begin{enumerate}

\item[(4)] for each $F\in \mathcal{F}_G$, the function $P_\mathcal{E} (\omega, \mathbf{D}, F, \mathcal{U}, \mathbf{F})$ is measurable in $\omega\in \Omega$.
\end{enumerate}
If, in addition,
\begin{equation} \label{1208022316}
\int_\Omega \log N (\mathcal{U}, \omega) d \mathbb{P} (\omega)< \infty,
\end{equation}
then
\begin{enumerate}
\item[(5)]
$\{\log P_\mathcal{E} (\omega, \mathbf{D}, F, \mathcal{U}, \mathbf{F}): F\in \mathcal{F}_G\}$ is a non-negative sub-additive $G$-invariant family in $L^1 (\Omega, \mathcal{F}, \mathbb{P})$ and

\item[(6)] $p: \mathcal{F}_G\rightarrow \R, F\mapsto \int_\Omega \log P_\mathcal{E} (\omega, \mathbf{D}, F, \mathcal{U}, \mathbf{F}) d \mathbb{P} (\omega)$ is a monotone non-negative $G$-invariant sub-additive function.
\end{enumerate}
From this, we can introduce
\begin{equation*}
P_\mathcal{E} (\mathbf{D}, \mathcal{U}, \mathbf{F})=
\lim_{n\rightarrow \infty} \frac{1}{|F_n|} \int_\Omega
\log P_\mathcal{E} (\omega, \mathbf{D}, F_n, \mathcal{U}, \mathbf{F}) d \mathbb{P} (\omega).
\end{equation*}
We can also introduce $h_\mu^{(r)} (\mathbf{F}, \mathcal{U})$ for each $\mu\in \mathcal{P}_\mathbb{P} (\mathcal{E}, G)$, and, similarly to Proposition \ref{1007120919} it is easy to show that
 \begin{equation} \label{1102121609}
P_\mathcal{E} (\mathbf{D}, \mathcal{U}, \mathbf{F})\ge \sup_{\mu\in \mathcal{P}_\mathbb{P} (\mathcal{E}, G)} h_\mu^{(r)} (\mathbf{F}, \mathcal{U})+ \mu (\mathbf{D}).
 \end{equation}

\medskip

All the major results of the previous sections now hold in this extended setting. We single out the principal ones as follows.

First, in the above notation we have:

\begin{prop} \label{1010222039}
Let $\mathcal{U}\in \mathfrak{C}^o_\mathcal{E}$ with corresponding increasing sequence $\{\Omega_1\subseteq \Omega_2\subseteq \cdots\}\subseteq \mathcal{F}$ satisfying $\lim\limits_{n\rightarrow \infty} \mathbb{P} (\Omega_n)= 1$ and each $\mathcal{U}\cap (\Omega_n\times X), n\in \N$ is a finite family. We define $\mathcal{U}_n, n\in \mathbb{N}$ by \eqref{1208022325}. Assume that $\mathbf{D}= \{d_F: F\in \mathcal{F}_G\}\subseteq \mathbf{L}_\mathcal{E}^1 (\Omega, C (X))$ is a monotone sub-additive $G$-invariant family. Then
\begin{equation} \label{firstfirst}
\frac{P_\mathcal{E} (\omega, \mathbf{D}, F, \mathcal{U}, \mathbf{F})}{P_\mathcal{E} (\omega, \mathbf{D}, F, \mathcal{U}_n, \mathbf{F})}\le \exp \sum_{g\in F} 1_{\Omega\setminus \Omega_n} (g \omega) \log N (\mathcal{U}, g \omega)
\end{equation}
 for each $F\in \mathcal{F}_G$, $\mathbb{P}$-a.e. $\omega\in \Omega$ and any $n\in \mathbb{N}$. If, in addition,
\eqref{1208022316} holds, then
 \begin{equation} \label{second}
 \lim_{n\rightarrow \infty} P_\mathcal{E} (\mathbf{D}, \mathcal{U}_n, \mathbf{F})= P_\mathcal{E} (\mathbf{D}, \mathcal{U}, \mathbf{F}). \end{equation}
\end{prop}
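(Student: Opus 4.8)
The plan is to establish the pointwise inequality \eqref{firstfirst} by a fibrewise refinement argument, and then deduce the limit statement \eqref{second} by integrating \eqref{firstfirst}, exploiting the $G$-invariance of $\mathbb{P}$ and the integrability of $\log N(\mathcal{U},\cdot)$. For the fibrewise step, fix $F\in\mathcal{F}_G$, $n\in\mathbb{N}$, and $\omega\in\Omega$ with $\mathcal{U}_\omega\in\mathbf{C}^o_{\mathcal{E}_\omega}$ and $N(\mathcal{U},g\omega)<\infty$ for every $g\in F$ (true for $\mathbb{P}$-a.e.\ $\omega$). Unwinding the definition of $\mathcal{U}_n$ one has $(\mathcal{U}_n)_{\omega'}=\mathcal{U}_{\omega'}$ (up to the empty set) for $\omega'\in\Omega_n$ and $(\mathcal{U}_n)_{\omega'}=\{\mathcal{E}_{\omega'}\}$ for $\omega'\notin\Omega_n$; hence, writing $F'=\{g\in F:g\omega\in\Omega_n\}$, $F''=F\setminus F'$, and using \eqref{1006131722}, one gets $((\mathcal{U}_n)_F)_\omega=\bigvee_{g\in F'}F_{g^{-1},g\omega}\mathcal{U}_{g\omega}$ while $(\mathcal{U}_F)_\omega=\bigvee_{g\in F}F_{g^{-1},g\omega}\mathcal{U}_{g\omega}$. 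Using Proposition \ref{1006141641} for the finite cover $\mathcal{U}_n\in\mathbf{C}^o_\mathcal{E}$, choose $\beta(\omega)\in\mathbf{P}(((\mathcal{U}_n)_F)_\omega)$ realizing the minimum defining $P_\mathcal{E}(\omega,\mathbf{D},F,\mathcal{U}_n,\mathbf{F})$; for each $g\in F''$, since $F_{g^{-1},g\omega}$ is a homeomorphism and $\mathcal{U}_{g\omega}$ admits a subcover of $\mathcal{E}_{g\omega}$ of cardinality $N(\mathcal{U},g\omega)$, disjointifying the image of such a subcover yields $\xi_g(\omega)\in\mathbf{P}_{\mathcal{E}_\omega}$ with $\xi_g(\omega)\succeq F_{g^{-1},g\omega}\mathcal{U}_{g\omega}$ and $|\xi_g(\omega)|\le N(\mathcal{U},g\omega)$.

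Next I set $\gamma(\omega)=\beta(\omega)\vee\bigvee_{g\in F''}\xi_g(\omega)$. Then $\gamma(\omega)\succeq\beta(\omega)\succeq((\mathcal{U}_n)_F)_\omega=\bigvee_{g\in F'}F_{g^{-1},g\omega}\mathcal{U}_{g\omega}$ and $\gamma(\omega)\succeq\bigvee_{g\in F''}F_{g^{-1},g\omega}\mathcal{U}_{g\omega}$, so $\gamma(\omega)\succeq(\mathcal{U}_F)_\omega$; moreover each atom of $\gamma(\omega)$ lies inside an atom $B$ of $\beta(\omega)$, and the number of atoms of $\gamma(\omega)$ contained in a fixed $B$ is at most $\prod_{g\in F''}|\xi_g(\omega)|\le\prod_{g\in F''}N(\mathcal{U},g\omega)$. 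Since $\mathbf{D}$ is monotone, hence non-negative, all quantities below are positive, and using \eqref{1007111222},
\[
P_\mathcal{E}(\omega,\mathbf{D},F,\mathcal{U},\mathbf{F})\le\sum_{A\in\gamma(\omega)}\sup_{x\in A}e^{d_F(\omega,x)}\le\Big(\prod_{g\in F''}N(\mathcal{U},g\omega)\Big)\sum_{B\in\beta(\omega)}\sup_{x\in B}e^{d_F(\omega,x)},
\]
i.e.\ $P_\mathcal{E}(\omega,\mathbf{D},F,\mathcal{U},\mathbf{F})\le P_\mathcal{E}(\omega,\mathbf{D},F,\mathcal{U}_n,\mathbf{F})\prod_{g\in F''}N(\mathcal{U},g\omega)$; since $\prod_{g\in F''}N(\mathcal{U},g\omega)=\exp\sum_{g\in F}1_{\Omega\setminus\Omega_n}(g\omega)\log N(\mathcal{U},g\omega)$, dividing by the positive number $P_\mathcal{E}(\omega,\mathbf{D},F,\mathcal{U}_n,\mathbf{F})$ gives exactly \eqref{firstfirst}.

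For \eqref{second}, assume $\int_\Omega\log N(\mathcal{U},\omega)\,d\mathbb{P}(\omega)<\infty$. Since $(\mathcal{U}_n)_\omega\preceq(\mathcal{U}_{n+1})_\omega\preceq\mathcal{U}_\omega$ for $\mathbb{P}$-a.e.\ $\omega$, Proposition \ref{1102041733}(2) (applied fibrewise, integrated, divided by $|F_m|$ and passed to the Følner limit via Proposition \ref{1006122129}) shows that $n\mapsto P_\mathcal{E}(\mathbf{D},\mathcal{U}_n,\mathbf{F})$ is non-decreasing and bounded above by $P_\mathcal{E}(\mathbf{D},\mathcal{U},\mathbf{F})$. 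Conversely, integrating \eqref{firstfirst} with $F=F_m$ over $\Omega$ and dividing by $|F_m|$: by the $G$-invariance (measure-preservation) of $\mathbb{P}$, each summand $\int_\Omega 1_{\Omega\setminus\Omega_n}(g\omega)\log N(\mathcal{U},g\omega)\,d\mathbb{P}(\omega)$ equals $c_n:=\int_{\Omega\setminus\Omega_n}\log N(\mathcal{U},\omega)\,d\mathbb{P}(\omega)$, independently of $g$, so the averaged error is precisely $c_n$; letting $m\to\infty$ gives $P_\mathcal{E}(\mathbf{D},\mathcal{U},\mathbf{F})\le P_\mathcal{E}(\mathbf{D},\mathcal{U}_n,\mathbf{F})+c_n$, and since $\log N(\mathcal{U},\cdot)\in L^1(\Omega,\mathcal{F},\mathbb{P})$ and $\mathbb{P}(\Omega\setminus\Omega_n)\downarrow 0$, absolute continuity of the integral forces $c_n\to 0$. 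Combining the two directions yields \eqref{second}.

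The only genuinely delicate point is the fibrewise comparison: one must choose the auxiliary partitions $\xi_g(\omega)$, $g\in F''$, so that $\gamma(\omega)$ simultaneously refines $(\mathcal{U}_F)_\omega$ and has controlled cardinality over each atom of an optimal $\beta(\omega)$. The key observation making the estimate come out in exactly the stated form is that the coordinates $g$ with $g\omega\in\Omega_n$ are already handled by $\beta(\omega)$ (because $(\mathcal{U}_n)_{g\omega}=\mathcal{U}_{g\omega}$ there), so that only the remaining coordinates cost a factor $N(\mathcal{U},g\omega)$ each. Everything else is bookkeeping: measurability of the functions $\omega\mapsto P_\mathcal{E}(\omega,\mathbf{D},F,\cdot,\mathbf{F})$ and the subadditive convergence defining $P_\mathcal{E}(\mathbf{D},\cdot,\mathbf{F})$ are furnished by Proposition \ref{1006141621}, its extension stated immediately before Proposition \ref{1010222039}, and Proposition \ref{1006122129}.
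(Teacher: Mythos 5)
Your proof is correct and follows essentially the same route as the paper: you split $F$ into the coordinates $g$ with $g\omega\in\Omega_n$ (already handled by $\mathcal{U}_n$) and the remaining ones, each of which costs a factor $N(\mathcal{U},g\omega)$, and then deduce \eqref{second} by integrating \eqref{firstfirst}, using $G$-invariance of $\mathbb{P}$ and dominated convergence to make the error term $\int_{\Omega\setminus\Omega_n}\log N(\mathcal{U},\omega)\,d\mathbb{P}(\omega)$ vanish. The only (harmless) difference is that you explicitly build a refining partition $\gamma(\omega)=\beta(\omega)\vee\bigvee_{g}\xi_g(\omega)$, whereas the paper bounds the defining infimum by a product of two infima, one of which reduces to $N(\mathcal{U}_{F^2},\omega)$.
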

\begin{proof}
First, we establish \eqref{firstfirst}.

Fix $n\in \N$ and $F\in \mathcal{F}_G, \omega\in \Omega$ such that $N (\mathcal{U}, g \omega)$ is finite for each $g\in F$. Set
$$F^1= \{g\in F: g \omega\in \Omega_n\}\ \text{and}\ F^2= \{g\in F: g \omega\in \Omega\setminus \Omega_n\}= F\setminus F^1.$$

By the construction of $\mathcal{U}_n$ \eqref{1208022325}  one has
\begin{eqnarray}
& & P_\mathcal{E} (\omega, \mathbf{D}, F, \mathcal{U}_n, \mathbf{F})\nonumber \\
&& \hskip 10pt = \inf \left\{\sum_{A
(\omega)\in \alpha (\omega)} \sup_{x\in A (\omega)} e^{d_F (\omega, x)}: \alpha (\omega)\in
\mathbf{P}_{\mathcal{E}_\omega}, \alpha (\omega)\succeq
((\mathcal{U}_n)_F)_\omega\right\}\nonumber \\
& & \hskip 10pt = \inf \left\{\sum_{A
(\omega)\in \alpha (\omega)} \sup_{x\in A (\omega)} e^{d_F (\omega, x)}: \alpha (\omega)\in
\mathbf{P}_{\mathcal{E}_\omega}, \right. \nonumber \\
& & \hskip 86pt \left.\alpha (\omega)\succeq
\bigvee_{g\in F} F_{g^{- 1}, g \omega} (\mathcal{U}_n)_{g \omega}\right\}\ (\text{using \eqref{1006131722}})\nonumber \\
& & \hskip 10pt = \inf \left\{\sum_{A
(\omega)\in \alpha (\omega)} \sup_{x\in A (\omega)} e^{d_F (\omega, x)}: \alpha (\omega)\in
\mathbf{P}_{\mathcal{E}_\omega}, \alpha (\omega)\succeq
\bigvee_{g\in F^1} F_{g^{- 1}, g \omega} (\mathcal{U}_n)_{g \omega}\right\}\nonumber \\
& & \hskip 10pt = \inf \left\{\sum_{A
(\omega)\in \alpha (\omega)} \sup_{x\in A (\omega)} e^{d_F (\omega, x)}: \alpha (\omega)\in
\mathbf{P}_{\mathcal{E}_\omega}, \alpha (\omega)\succeq
\bigvee_{g\in F^1} F_{g^{- 1}, g \omega} \mathcal{U}_{g \omega}\right\}. \label{1010231247}
\end{eqnarray}
Moreover,
\begin{eqnarray*}
& & P_\mathcal{E} (\omega, \mathbf{D}, F, \mathcal{U}, \mathbf{F})\nonumber \\
& &\hskip 2pt \le \inf \left\{\sum_{A (\omega)\in \alpha (\omega), B
(\omega)\in \beta (\omega)} \sup_{x\in A (\omega)\cap B (\omega)} e^{d_F (\omega, x)}:\right. \\
& & \hskip 86pt \left.\alpha (\omega)\in
\mathbf{P}_{\mathcal{E}_\omega}, \alpha (\omega)\succeq \mathcal{U}_{F^1}, \beta (\omega)\in
\mathbf{P}_{\mathcal{E}_\omega}, \beta (\omega)\succeq
\mathcal{U}_{F^2}\right\}\nonumber \\
& &\hskip 2pt \le \inf \left\{\sum_{A (\omega)\in \alpha (\omega)} \sup_{x\in A (\omega)} e^{d_F (\omega, x)}: \alpha (\omega)\in
\mathbf{P}_{\mathcal{E}_\omega}, \alpha (\omega)\succeq \mathcal{U}_{F^1}\right\}\nonumber \\
& &\hskip 86pt \inf \left\{\sum_{B
(\omega)\in \beta (\omega)} 1: \beta (\omega)\in
\mathbf{P}_{\mathcal{E}_\omega}, \beta (\omega)\succeq
\mathcal{U}_{F^2}\right\}\nonumber \\
& &\hskip 2pt \le P_\mathcal{E} (\omega, \mathbf{D}, F, \mathcal{U}_n, \mathbf{F})\cdot N (\mathcal{U}_{F^2}, \omega)\ \text{(using \eqref{1006131722} and \eqref{1010231247})}\\
& &\hskip 2pt \le P_\mathcal{E} (\omega, \mathbf{D}, F, \mathcal{U}_n, \mathbf{F})\cdot \prod_{g\in F^2} N (\mathcal{U}, g \omega),
\end{eqnarray*}
which implies the conclusion.

\medskip

Next we assume that \eqref{1208022316} holds and prove \eqref{second}.

It is not hard to check that the sequence $\{P_\mathcal{E} (\mathbf{D}, \mathcal{U}_n,$ $\mathbf{F}): n\in \mathbb{N}\}$ is increasing and each member is less than $P_\mathcal{E} (\mathbf{D}, \mathcal{U}, \mathbf{F})$, that is,
\begin{equation} \label{1102121546}
P_\mathcal{E} (\mathbf{D}, \mathcal{U}, \mathbf{F})\ge \lim_{n\rightarrow \infty} P_\mathcal{E} (\mathbf{D}, \mathcal{U}_n, \mathbf{F}).
\end{equation}

For each $n\in \mathbb{N}$ and may apply \eqref{firstfirst} to obtain
\begin{eqnarray} \label{1010231806}
& & P_\mathcal{E} (\mathbf{D}, \mathcal{U}, \mathbf{F})\nonumber \\
& &\hskip 16pt\le P_\mathcal{E} (\mathbf{D}, \mathcal{U}_n, \mathbf{F})+ \limsup_{m\rightarrow \infty} \frac{1}{|F_m|} \int_\Omega \sum_{g\in F_m} 1_{\Omega\setminus \Omega_n} (g \omega) \log N (\mathcal{U}, g \omega) d \mathbb{P} (\omega)\nonumber \\
& &\hskip 16pt= P_\mathcal{E} (\mathbf{D}, \mathcal{U}_n, \mathbf{F})+ \int_\Omega 1_{\Omega\setminus \Omega_n} (\omega) \log N (\mathcal{U}, \omega) d \mathbb{P} (\omega).
\end{eqnarray}
By the assumption that $\lim\limits_{n\rightarrow \infty} \mathbb{P} (\Omega_n)= 1$ one has
\begin{equation*}
\lim_{n\rightarrow \infty} \int_\Omega 1_{\Omega\setminus \Omega_n} (\omega) \log N (\mathcal{U}, \omega) d \mathbb{P} (\omega)= 0\ (\text{using \eqref{1208022316}}),
\end{equation*}
and hence, by \eqref{1010231806},
\begin{equation} \label{1208022340}
P_\mathcal{E} (\mathbf{D}, \mathcal{U}, \mathbf{F})\le \lim_{n\rightarrow \infty} P_\mathcal{E} (\mathbf{D}, \mathcal{U}_n, \mathbf{F}).
\end{equation}
Combining \eqref{1102121546} with \eqref{1208022340} we obtain \eqref{second}.
 \end{proof}

We can now extend the local variational principle Theorem \ref{1007141414} to countable random open covers.

\begin{thm} \label{1010222042}
Let $\mathcal{U}\in \mathfrak{C}^o_\mathcal{E}$ with $\Omega_n$ and $\mathcal{U}_n, n\in \mathbb{N}$ as in Proposition \ref{1010222039}. Assume that each $\mathcal{U}_n, n\in \N$ is factor good and \eqref{1208022316} holds. If $\mathbf{D}= \{d_F: F\in \mathcal{F}_G\}\subseteq \mathbf{L}_\mathcal{E}^1 (\Omega, C (X))$ is a monotone sub-additive $G$-invariant family satisfying $(\spadesuit)$ then
\begin{equation*}
P_\mathcal{E} (\mathbf{D}, \mathcal{U}, \mathbf{F})= \sup_{\mu\in \mathcal{P}_\mathbb{P} (\mathcal{E}, G)} [h_\mu^{(r)} (\mathbf{F}, \mathcal{U})+ \mu (\mathbf{D})].
\end{equation*}
\end{thm}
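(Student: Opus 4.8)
The plan is to deduce Theorem \ref{1010222042} by combining the approximation result in Proposition \ref{1010222039} with the finite-cover variational principle of Theorem \ref{1007141414}, using an exhaustion argument over the sequence $\mathcal{U}_n$. First I would observe that, by the assumptions, each $\mathcal{U}_n\in \mathbf{C}_\mathcal{E}^o$ is factor good and $\mathbf{D}$ is a monotone sub-additive $G$-invariant family satisfying $(\spadesuit)$, so Theorem \ref{1007141414} applies to each pair $(\mathbf{D}, \mathcal{U}_n)$ and gives
\begin{equation*}
P_\mathcal{E} (\mathbf{D}, \mathcal{U}_n, \mathbf{F})= \max_{\mu\in \mathcal{P}_\mathbb{P} (\mathcal{E}, G)} [h_\mu^{(r)} (\mathbf{F}, \mathcal{U}_n)+ \mu (\mathbf{D})]
\end{equation*}
for every $n\in \mathbb{N}$. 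Since $\int_\Omega \log N (\mathcal{U}, \omega) d \mathbb{P} (\omega)< \infty$, Proposition \ref{1010222039} yields $\lim\limits_{n\rightarrow \infty} P_\mathcal{E} (\mathbf{D}, \mathcal{U}_n, \mathbf{F})= P_\mathcal{E} (\mathbf{D}, \mathcal{U}, \mathbf{F})$, so it suffices to control the behaviour of the right-hand side as $n\to\infty$.

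For the inequality $P_\mathcal{E} (\mathbf{D}, \mathcal{U}, \mathbf{F})\le \sup_{\mu} [h_\mu^{(r)} (\mathbf{F}, \mathcal{U})+ \mu (\mathbf{D})]$, I would argue as follows. Fix $n$ and let $\mu_n\in \mathcal{P}_\mathbb{P} (\mathcal{E}, G)$ attain the maximum for $\mathcal{U}_n$. By construction $(\mathcal{U}_n)_\omega\preceq \mathcal{U}_\omega$ for $\mathbb{P}$-a.e. $\omega$ (the cover $\mathcal{U}_n$ only coarsens $\mathcal{U}$ outside $\Omega_n$), hence by the monotonicity of fiber entropy in the cover (Lemma \ref{1007261204}\eqref{1007261239}, extended to $\mathfrak{C}^o_\mathcal{E}$ in the obvious way) one has $h_{\mu_n}^{(r)} (\mathbf{F}, \mathcal{U}_n)\le h_{\mu_n}^{(r)} (\mathbf{F}, \mathcal{U})$, so $P_\mathcal{E} (\mathbf{D}, \mathcal{U}_n, \mathbf{F})\le h_{\mu_n}^{(r)} (\mathbf{F}, \mathcal{U})+ \mu_n (\mathbf{D})\le \sup_\mu [h_\mu^{(r)} (\mathbf{F}, \mathcal{U})+ \mu (\mathbf{D})]$; letting $n\to\infty$ and using Proposition \ref{1010222039} gives the bound. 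The reverse inequality $P_\mathcal{E} (\mathbf{D}, \mathcal{U}, \mathbf{F})\ge \sup_\mu [h_\mu^{(r)} (\mathbf{F}, \mathcal{U})+ \mu (\mathbf{D})]$ is exactly \eqref{1102121609}, which was already established in the extended setting. Combining the two inequalities finishes the proof.

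The main obstacle, and the point I would be most careful about, is the exhaustion estimate relating $h_\mu^{(r)} (\mathbf{F}, \mathcal{U})$ to the $h_\mu^{(r)} (\mathbf{F}, \mathcal{U}_n)$: one needs that $h_\mu^{(r)} (\mathbf{F}, \mathcal{U}_n)$ increases to $h_\mu^{(r)} (\mathbf{F}, \mathcal{U})$ for each fixed $\mu$, together with a uniform-in-$\mu$ control that lets one pass the supremum through the limit. The monotonicity $(\mathcal{U}_n)_\omega\preceq (\mathcal{U}_{n+1})_\omega\preceq \mathcal{U}_\omega$ gives that $\{h_\mu^{(r)} (\mathbf{F}, \mathcal{U}_n): n\in\mathbb{N}\}$ is nondecreasing and bounded above by $h_\mu^{(r)} (\mathbf{F}, \mathcal{U})$; the matching upper bound on the gap is precisely the fiber-entropy analogue of the estimate \eqref{firstfirst}, namely
\begin{equation*}
h_\mu^{(r)} (\mathbf{F}, \mathcal{U})- h_\mu^{(r)} (\mathbf{F}, \mathcal{U}_n)\le \int_\Omega 1_{\Omega\setminus \Omega_n} (\omega) \log N (\mathcal{U}, \omega) d \mathbb{P} (\omega),
\end{equation*}
which is uniform in $\mu$ and tends to $0$ by dominated convergence since $\int_\Omega \log N (\mathcal{U}, \omega) d \mathbb{P} (\omega)< \infty$ and $\mathbb{P} (\Omega_n)\to 1$. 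Granting this uniform estimate, one obtains $\sup_\mu [h_\mu^{(r)} (\mathbf{F}, \mathcal{U}_n)+ \mu (\mathbf{D})]\to \sup_\mu [h_\mu^{(r)} (\mathbf{F}, \mathcal{U})+ \mu (\mathbf{D})]$, and then the chain of equalities
\begin{equation*}
P_\mathcal{E} (\mathbf{D}, \mathcal{U}, \mathbf{F})= \lim_{n\rightarrow \infty} P_\mathcal{E} (\mathbf{D}, \mathcal{U}_n, \mathbf{F})= \lim_{n\rightarrow \infty} \sup_{\mu\in \mathcal{P}_\mathbb{P} (\mathcal{E}, G)} [h_\mu^{(r)} (\mathbf{F}, \mathcal{U}_n)+ \mu (\mathbf{D})]= \sup_{\mu\in \mathcal{P}_\mathbb{P} (\mathcal{E}, G)} [h_\mu^{(r)} (\mathbf{F}, \mathcal{U})+ \mu (\mathbf{D})]
\end{equation*}
completes the argument. (Actually, for the displayed conclusion one only needs the easier half — the $\le$ direction above combined with \eqref{1102121609} — so the uniform estimate is needed only to the extent that $\limsup_n \sup_\mu [h_\mu^{(r)} (\mathbf{F}, \mathcal{U}_n)+\mu(\mathbf{D})]$ does not exceed $\sup_\mu [h_\mu^{(r)} (\mathbf{F}, \mathcal{U})+\mu(\mathbf{D})]$, which already follows from $h_\mu^{(r)} (\mathbf{F}, \mathcal{U}_n)\le h_\mu^{(r)} (\mathbf{F}, \mathcal{U})$ alone; thus the genuinely delicate input is just Proposition \ref{1010222039}.)
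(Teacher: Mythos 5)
Your argument is correct and is essentially the paper's own proof: apply Theorem \ref{1007141414} to each factor good cover $\mathcal{U}_n$, use the monotonicity $h_\mu^{(r)}(\mathbf{F}, \mathcal{U}_n)\le h_\mu^{(r)}(\mathbf{F}, \mathcal{U})$ to bound $P_\mathcal{E}(\mathbf{D}, \mathcal{U}_n, \mathbf{F})$ by $\sup_\mu[h_\mu^{(r)}(\mathbf{F}, \mathcal{U})+\mu(\mathbf{D})]$, pass to the limit via Proposition \ref{1010222039}, and combine with \eqref{1102121609}. The extra uniform gap estimate you discuss is indeed not needed, as you yourself observe in the closing parenthetical.
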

\begin{proof}
Obviously for each $n\in \mathbb{N}$ we have
\begin{equation*}
\sup_{\mu\in \mathcal{P}_\mathbb{P} (\mathcal{E}, G)} [h_\mu^{(r)} (\mathbf{F}, \mathcal{U})+ \mu (\mathbf{D})]\ge \sup_{\mu\in \mathcal{P}_\mathbb{P} (\mathcal{E}, G)} [h_\mu^{(r)} (\mathbf{F}, \mathcal{U}_n)+ \mu (\mathbf{D})]= P_\mathcal{E} (\mathbf{D}, \mathcal{U}_n, \mathbf{F}),
\end{equation*}
where the last identity follows from the assumptions and Theorem \ref{1007141414}. Thus
\begin{equation*}
\sup_{\mu\in \mathcal{P}_\mathbb{P} (\mathcal{E}, G)} [h_\mu^{(r)} (\mathbf{F}, \mathcal{U})+ \mu (\mathbf{D})]\ge P_\mathcal{E} (\mathbf{D}, \mathcal{U}, \mathbf{F})\ (\text{using Proposition \ref{1010222039}}).
\end{equation*}
Combining this inequality with \eqref{1102121609}, we obtain the conclusion.
\end{proof}

There is one simple case when $\mathcal{U}\in \mathfrak{C}^o_\mathcal{E}$ satisfies the assumptions of Theorem \ref{1010222042}: when
$\mathcal{U}\in \mathfrak{C}^o_\mathcal{E}$ has the form $\cup \{(A_i\times \mathcal{V}_i)\cap \mathcal{E}: i\in \mathbb{N}\}$, where $\{\mathcal{V}_i: i\in \mathbb{N}\}\subseteq \mathbf{C}_X^o$ and $\{A_i: i\in \mathbb{N}\}$ is a partition of $(\Omega, \mathcal{F}, \mathbb{P})$ satisfying $\sum\limits_{i\in \N} \mathbb{P} (A_i) |\mathcal{V}_i|< \infty$. In fact, assume that $\mathcal{U}$ is as above. It is easy to check $\mathcal{U}\in \mathfrak{C}^o_\mathcal{E}$ with $\Omega_n= \bigcup\limits_{i= 1}^n A_i\in \mathcal{F}$ for each $n\in \N$. From the construction \eqref{1208022325} one has, for each $n\in \N$,
$$\mathcal{U}_n= \bigcup_{i= 1}^n (A_i\times \mathcal{V}_i)\cap \mathcal{E}\cup \{(\Omega_n^c\times X)\}\cap \mathcal{E}\in \mathbf{C}^o_\mathcal{E}$$
is factor good
(by Lemma \ref{1102061920} and Theorem \ref{1007212202}). Moreover, by our assumptions
\begin{equation*}
\int_\Omega \log N (\mathcal{U}, \omega) d \mathbb{P} (\omega)\le \sum_{i\in \N} \mathbb{P} (A_i) |\mathcal{V}_i|< \infty,
\end{equation*}
that is, \eqref{1208022316} holds for $\mathcal{U}$.

\medskip

Comparing Theorem \ref{1007141414} with Theorem \ref{1010222042}, we have the following question.

\begin{ques} \label{1102121808}
Under the assumptions of Theorem \ref{1010222042}, is it true that
\begin{equation} \label{1102121809}
P_\mathcal{E} (\mathbf{D}, \mathcal{U}, \mathbf{F})= \max_{\mu\in \mathcal{P}_\mathbb{P} (\mathcal{E}, G)} [h_\mu^{(r)} (\mathbf{F}, \mathcal{U})+ \mu (\mathbf{D})]?
\end{equation}
\end{ques}

Observe that in Theorem \ref{1007141414},  the supremum over $\mathcal{P}_\mathbb{P} (\mathcal{E}, G)$ can be realized as a maximum, by the direct construction in the proof.

If $f\in \mathbf{L}_\mathcal{E}^1 (\Omega, C (X))$ and $\mathcal{U}\in \mathfrak{C}^o_\mathcal{E}$ with $\mathcal{U}_n, n\in \N$ fulfill the assumptions of Theorem \ref{1010222042}, we could obtain similarly to \eqref{1208011732},
 \begin{eqnarray} \label{1208030106}
& & \limsup_{m\rightarrow \infty} \frac{1}{|F_m|} \int_\Omega \log P_\mathcal{E} (\omega, \mathbf{D}^f, F_m, \mathcal{U}, \mathbf{F}) d \mathbb{P} (\omega)\nonumber \\
& & \hskip 26pt = \lim_{n\rightarrow \infty} \limsup_{m\rightarrow \infty} \frac{1}{|F_m|} \int_\Omega \log P_\mathcal{E} (\omega, \mathbf{D}^f, F_m, \mathcal{U}_n, \mathbf{F}) d \mathbb{P} (\omega) \\
& & \hskip 26pt = \lim_{n\rightarrow \infty} \max_{\mu\in \mathcal{P}_\mathbb{P} (\mathcal{E}, G)} [h_\mu^{(r)} (\mathbf{F}, \mathcal{U}_n)+ \int_\mathcal{E} f (\omega, x) d \mu (\omega, x)]\nonumber \\
& & \hskip 26pt = \sup_{\mu\in \mathcal{P}_\mathbb{P} (\mathcal{E}, G)} [h_\mu^{(r)} (\mathbf{F}, \mathcal{U})+ \int_\mathcal{E} f (\omega, x) d \mu (\omega, x)]\nonumber .
\end{eqnarray}

Further, if the group $G$ admits a tiling F\o lner sequence (cf \S \ref{special}), the previous discussions of this section can be carried out for any sub-additive $G$-invariant family $\mathbf{D}= \{d_F: F\in \mathcal{F}_G\}\subseteq \mathbf{L}_\mathcal{E}^1 (\Omega, C (X))$. In particular,
 Theorem \ref{1010222042} holds for any sub-additive $G$-invariant family satisfying $(\spadesuit)$.

\medskip

We end this section with further discussions showing how to deduce \eqref{1208030150}, i.e. \cite[Proposition 1.10]{K1}, from our main results of this section.

Here we only outline basic ideas using standard arguments (see \cite[\S 7.2]{W}).

As in \cite{K1}, we consider the setting of $\mathbf{D}^f\subseteq \mathbf{L}_\mathcal{E}^1 (\Omega, C (X))$ with $f\in \mathbf{L}_\mathcal{E}^1 (\Omega, C (X))$:

\medskip

\noindent {\bf Step One.} Let $\epsilon> 0$ be a positive constant and $\mathcal{V}_1, \mathcal{V}_2\in \mathbf{C}_X^o$ such that $2 \epsilon$ is a Lebesgue number of $\mathcal{V}_1$ and $\text{diam} (\mathcal{V}_2)< \epsilon$, where $\text{diam} (\mathcal{V}_2)$ denotes the maximal diameter of subsets $V_2\in \mathcal{V}_2$. It is straightforward to see:
    \begin{equation} \label{ki}
    P_\mathcal{E} (\mathbf{D}^f, (\Omega\times \mathcal{V}_1)_\mathcal{E}, \mathbf{F})\le P_\mathcal{E} (\mathbf{D}, \epsilon, \mathbf{F})\le P_\mathcal{E} (\mathbf{D}, (\Omega\times \mathcal{V}_2)_\mathcal{E}, \mathbf{F}).
    \end{equation}
From this one sees that our definition \eqref{1208030124} of $P_\mathcal{E} (\mathbf{D}^f, \mathbf{F})$ is equivalent to Kifer's definition \eqref{1208022239} for the global fiber topological $\mathbf{D}^f$-pressure of $\mathbf{F}$.

\medskip

\noindent {\bf Step Two.} Now suppose that $\varepsilon\in \mathcal{N}$.
It is not hard to construct $\varepsilon_1\in \mathcal{N}$ with $\varepsilon_1\le \varepsilon$ such that $\varepsilon_1$ has the form
    $$\varepsilon_1= \sum_{i\in I} a_i 1_{\Omega_i},$$
    where $I$ is a countable index set, $a_i> 0$ for each $i\in I$ and $\{\Omega_i: i\in I\}\subseteq \mathcal{F}$ forms a partition of $\Omega$. Then it is easy to construct $\mathcal{V}\in \mathfrak{C}_\mathcal{E}^o$ such that
    $$\mathcal{V}= \bigcup_{i\in I} \{\Omega_i\times \mathcal{V}_i\},$$
where $\text{diam} (\mathcal{V}_i)< a_i$ for each $i\in I$. As in \eqref{ki} one has
$$P_\mathcal{E} (\mathbf{D}^f, \varepsilon, \mathbf{F})\le P_\mathcal{E} (\mathbf{D}^f, \mathcal{V}, \mathbf{F}).$$
Thus by \eqref{1208030106}, a variation of Proposition \ref{1010222039}, we obtain \eqref{1208030150}.

The previous arguments show that $\mathfrak{C}^o_\mathcal{E}$ plays a role in our setting analogous to that of the positive random variables in Kifer's setting, where  condition \eqref{1208022316} plays the role of condition \eqref{1102121429}.

\newpage

\part{Applications of the Local Variational Principle}

In this part we give some applications of the local variational principle established in Part \ref{skdj}. Namely, following the line of local entropy theory (cf the book chapter \cite[Chapter 19]{G1}, the recent survey \cite{GY} and references therein), we introduce and discuss both topological and measure-theoretic entropy tuples for a continuous bundle RDS. We then establish a variational relationship between these two kinds of entropy tuples. Finally, in \S \ref{factor} we apply our results to obtain many known theorems, and some new ones, in local entropy theory.

\section{Entropy tuples for a continuous bundle random dynamical system}\label{entropy tuple}

Recall again that, by Standard Assumptions 3 and 4, the family $\mathbf{F}= \{F_{g, \omega}: \mathcal{E}_\omega\rightarrow \mathcal{E}_{g \omega}| g\in G, \omega\in \Omega\}$ is a continuous bundle RDS over MDS $(\Omega,
\mathcal{F}, \mathbb{P}, G)$, where $(\Omega,
\mathcal{F}, \mathbb{P})$ is a Lebesgue space and $X$ is a compact metric space.

In this section we introduce and discuss entropy tuples for $\mathbf{F}$ in both the topological and the measure-theoretic setting, and establish a variational relation between them.
Our ideas follow the development of local entropy theory (cf \cite{G1, GY}).

\medskip

Let $\mu\in \mathcal{P}_\mathbb{P} (\mathcal{E}, G)$ and $(x_1, \cdots, x_n)\in X^n\setminus \Delta_n (X)$, where $\Delta_n (X)$ is the diagonal $\{(x_1', \cdots, x_n'): x_1'= \cdots= x_n'\in X\}, n\in \mathbb{N}\setminus \{1\}$.

 We say that $(x_1, \cdots, x_n)$ is a:
\begin{enumerate}

\item \emph{fiber topological entropy $n$-tuple of $\mathbf{F}$} if;
 for any $m\in \mathbb{N}$, there exists a closed neighborhood $V_i$ of $x_i$ of diameter at most $\frac{1}{m}$ for each $i= 1, \cdots, n$, such that $\mathcal{V}\doteq \{V_1^c, \cdots, V_n^c\}\in \mathbf{C}_X^o$ and $h_{\text{top}}^{(r)} (\mathbf{F}, (\Omega\times \mathcal{V})_\mathcal{E})> 0$.

 Equivalently, whenever $V_i$ is a closed neighborhood of $x_i$ for each $i= 1, \cdots, n$ such that $\mathcal{V}\doteq \{V_1^c, \cdots, V_n^c\}\in \mathbf{C}_X^o$, then $h_{\text{top}}^{(r)} (\mathbf{F}, (\Omega\times \mathcal{V})_\mathcal{E})> 0$.

\item \emph{$\mu$-fiber entropy $n$-tuple of $\mathbf{F}$} if;
 for any $m\in \mathbb{N}$, there exists a closed neighborhood $V_i$ of $x_i$ with diameter at most $\frac{1}{m}$ for each $i= 1, \cdots, n$, such that $\mathcal{V}\doteq \{V_1^c, \cdots, V_n^c\}\in \mathbf{C}_X^o$ and $h_\mu^{(r)} (\mathbf{F}, (\Omega\times \mathcal{V})_\mathcal{E})> 0$.

 Equivalently, whenever $V_i$ is a closed neighborhood of $x_i$ for each $i= 1, \cdots, n$ such that $\mathcal{V}\doteq \{V_1^c, \cdots, V_n^c\}\in \mathbf{C}_X^o$, then $h_\mu^{(r)} (\mathbf{F}, (\Omega\times \mathcal{V})_\mathcal{E})> 0$.
\end{enumerate}
Denote by $_\mathbb{P}E^{(r)}_n (\mathcal{E}, G)$ and $E^{(r)}_{n, \mu} (\mathcal{E}, G)$ the set of all fiber topological entropy $n$-tuples of $\mathbf{F}$ and $\mu$-fiber entropy $n$-tuples of $\mathbf{F}$, respectively. Using the notation of $_\mathbb{P}E^{(r)}_n (\mathcal{E}, G)$, we denote by $\mathbb{P}$ the phase system $(\Omega, \mathcal{F}, \mathbb{P}, G)$.

From the definitions, it is not hard to show:

\begin{prop} \label{1008071740}
Let $\mu\in \mathcal{P}_\mathbb{P} (\mathcal{E}, G)$ and $n\in \mathbb{N}\setminus \{1\}$. Then both $_\mathbb{P}E_n^{(r)} (\mathcal{E}, G)\cup \Delta_n (X)$ and $E_{n, \mu}^{(r)} (\mathcal{E}, G)\cup \Delta_n (X)$ are closed subsets of $X^n$.
\end{prop}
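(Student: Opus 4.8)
The plan is to show that the complement $X^n \setminus (\,_\mathbb{P}E_n^{(r)} (\mathcal{E}, G)\cup \Delta_n (X))$ is open, and likewise for $E_{n, \mu}^{(r)} (\mathcal{E}, G)$. Both arguments are parallel, so I would carry out the topological one in detail and remark that the measure-theoretic one follows verbatim with $h_{\text{top}}^{(r)}(\mathbf{F}, \cdot)$ replaced by $h_\mu^{(r)}(\mathbf{F}, \cdot)$. First I would fix a tuple $(x_1, \cdots, x_n)\in X^n$ that is \emph{not} in $_\mathbb{P}E_n^{(r)} (\mathcal{E}, G)\cup \Delta_n (X)$. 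If the $x_i$ are not pairwise-related in the sense that they are all equal, then in fact $(x_1,\cdots,x_n)\notin \Delta_n(X)$ means at least two coordinates differ; but one must be slightly careful, since failing to be an entropy tuple is what gives openness. So I would split into two cases: either $(x_1,\cdots,x_n)\in\Delta_n(X)$ is impossible to handle by openness directly, hence the union with $\Delta_n(X)$; or $(x_1,\cdots,x_n)\notin\Delta_n(X)$ and is not a fiber topological entropy $n$-tuple.

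In the second case, by the (equivalent) definition there exist closed neighborhoods $V_i$ of $x_i$, $i=1,\cdots,n$, with $\mathcal{V}\doteq\{V_1^c,\cdots,V_n^c\}\in\mathbf{C}_X^o$ and $h_{\text{top}}^{(r)}(\mathbf{F},(\Omega\times\mathcal{V})_\mathcal{E})=0$; here I use the equivalence built into the definition, namely that non-membership is witnessed by \emph{some} such choice of neighborhoods and that this forces the entropy to vanish for \emph{every} admissible choice of neighborhoods of those points. The key step is then: for any tuple $(y_1,\cdots,y_n)$ with $y_i$ in the interior of $V_i$ for each $i$, the same cover $\mathcal{V}$ works as a witness, since $V_i$ is still a closed neighborhood of $y_i$; hence $h_{\text{top}}^{(r)}(\mathbf{F},(\Omega\times\mathcal{V})_\mathcal{E})=0$ shows $(y_1,\cdots,y_n)$ is also not a fiber topological entropy $n$-tuple. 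Thus $\prod_{i=1}^n \mathrm{int}(V_i)$ is an open neighborhood of $(x_1,\cdots,x_n)$ disjoint from $_\mathbb{P}E_n^{(r)} (\mathcal{E}, G)$. It may also meet $\Delta_n(X)$, so to conclude I would further shrink: since $(x_1,\cdots,x_n)\notin\Delta_n(X)$, I can pick the $V_i$ small enough (from the start, using the $\frac{1}{m}$-diameter clause for large $m$) that $\prod_{i=1}^n V_i\cap\Delta_n(X)=\emptyset$, which is possible because $\Delta_n(X)$ is closed and the $x_i$ are not all equal. Then $\prod_{i=1}^n \mathrm{int}(V_i)$ is a neighborhood of $(x_1,\cdots,x_n)$ missing both $_\mathbb{P}E_n^{(r)} (\mathcal{E}, G)$ and $\Delta_n(X)$, establishing that the union is closed.

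The main obstacle is the monotonicity/consistency bookkeeping: one needs that if a single admissible cover $\mathcal{V}=\{V_1^c,\cdots,V_n^c\}$ has zero fiber topological entropy, then \emph{any} other admissible cover $\mathcal{V}'=\{(V_1')^c,\cdots,(V_n')^c\}$ arising from (possibly larger) closed neighborhoods of the \emph{same} points also has zero fiber topological entropy. This is where I would invoke Lemma \ref{1007261204} (and Proposition \ref{1102041733}, or the analogous monotonicity of $h_{\text{top}}^{(r)}$ and $h_\mu^{(r)}$ under refinement of covers recorded in Proposition \ref{1006141621}(3) and Lemma \ref{1007261204}): refining a cover cannot decrease entropy, and comparing two covers built from neighborhoods of the same points reduces, after passing to a common refinement, to the observation that all these covers have the form $\{W_1^c,\cdots,W_n^c\}$ with $\bigcap_i W_i$ fixed to contain none of the diagonal-type obstruction — more precisely, their fiber entropies agree because they have the same restriction over $\mathbb{P}$-a.e.\ fiber up to the relevant scale, via Lemma \ref{1007261204}(4). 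I expect this consistency lemma to be the only non-formal point; once it is in hand, the openness argument above is routine, and the $\mu$-version is identical.
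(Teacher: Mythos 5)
Your openness argument is correct and is exactly the ``from the definitions'' argument the paper intends: off the diagonal, non-membership gives (by the second, ``equivalently, whenever\dots'' formulation) a single admissible cover $\mathcal{V}=\{V_1^c,\cdots,V_n^c\}$ built from closed neighborhoods $V_i$ of the $x_i$ with zero fiber entropy, and for any $(y_1,\cdots,y_n)\in\prod_{i=1}^n\mathrm{int}(V_i)$ each $V_i$ is still a closed neighborhood of $y_i$, so the same cover witnesses non-membership of $(y_1,\cdots,y_n)$; intersecting with the open set $X^n\setminus\Delta_n(X)$ (or noting that admissibility already forces $\bigcap_{i=1}^n V_i=\emptyset$, so $\prod_{i=1}^n\mathrm{int}(V_i)$ automatically misses $\Delta_n(X)$ and no extra shrinking is needed) finishes the topological case, and the $\mu$-case is verbatim.

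However, the ``consistency lemma'' you single out as the crucial non-formal point is both unnecessary and false as stated, as is the parenthetical claim that non-membership ``forces the entropy to vanish for every admissible choice of neighborhoods.'' If $V_i\subseteq V_i'$ are closed neighborhoods of the same points, then $(V_i')^c\subseteq V_i^c$, so the cover built from the \emph{larger} neighborhoods is \emph{finer}, and by monotonicity (Proposition \ref{0911192237}\,(1), Lemma \ref{1007261204}) its entropy can only be $\ge$ that of the smaller one; thus zero entropy for one admissible cover does not propagate to covers from larger neighborhoods, and Lemma \ref{1007261204}\,(4) does not apply since these covers do not have equal fibers. What the definition's equivalence actually rests on is the opposite direction: shrinking the neighborhoods coarsens the cover, so positivity for arbitrarily small admissible covers implies positivity for all admissible covers. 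Your key step never uses the false claim — it only needs that the witness cover remains admissible at nearby tuples together with the ``whenever'' clause of the definition — so the proof stands once that paragraph is deleted or corrected.
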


We will use the following well-known result, which follows from Lemma \ref{1007222138}.

\begin{lem} \label{1008052343}
Let $(Y, \mathcal{D}, \nu_n, G)$ be an MDS, $\mathcal{C}\subseteq \mathcal{D}$ a $G$-invariant sub-$\sigma$-algebra and $\alpha\in \mathbf{P}_Y$, where $(Y, \mathcal{D}, \nu_n)$ is a Lebesgue space, $n\in \mathbb{N}$. Assume that $0\le \lambda_n\le 1, n\in \N$ satisfy $\sum\limits_{n\in \mathbb{N}} \lambda_n= 1$. Then
\begin{equation*}
h_{\sum\limits_{n\in \mathbb{N}}\lambda_n \nu_n} (G, \alpha| \mathcal{C})= \sum_{n\in \mathbb{N}} \lambda_n h_{\nu_n} (G, \alpha| \mathcal{C}).
\end{equation*}
\end{lem}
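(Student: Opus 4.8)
\textbf{Proof proposal for Lemma \ref{1008052343}.}

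The plan is to reduce the countable convex combination to the finite case, which in turn reduces to the standard concavity/affinity properties of conditional entropy. First I would recall that for a finite convex combination $\nu = \sum_{i=1}^k \lambda_i \nu_i$ of measures on a Lebesgue space, Lemma \ref{1007222138} already gives, for every $\beta \in \mathbf{P}_Y$ and every $F \in \mathcal{F}_G$,
\begin{equation*}
\sum_{i=1}^k \lambda_i H_{\nu_i}(\beta_F| \mathcal{C}) \le H_{\nu}(\beta_F| \mathcal{C}) \le \lambda(\lambda_1,\dots,\lambda_k) + \sum_{i=1}^k \lambda_i H_{\nu_i}(\beta_F| \mathcal{C}),
\end{equation*}
where the constant $\lambda(\lambda_1,\dots,\lambda_k)$ does not depend on $\beta$ or $F$. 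Applying this with $\beta = \alpha$ and $F = F_m$, dividing by $|F_m|$ and letting $m \to \infty$, the bounded additive error $\lambda(\lambda_1,\dots,\lambda_k)/|F_m| \to 0$, so $h_{\nu}(G,\alpha|\mathcal{C}) = \sum_{i=1}^k \lambda_i h_{\nu_i}(G,\alpha|\mathcal{C})$. This settles the finite case.

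Next I would pass to the countable case. Write $\nu = \sum_{n\in\mathbb{N}} \lambda_n \nu_n$; without loss of generality all $\lambda_n > 0$ (drop the null terms). For each $N$ set $s_N = \sum_{n=1}^N \lambda_n$ and $\nu^{(N)} = s_N^{-1}\sum_{n=1}^N \lambda_n \nu_n$, so that $\nu = s_N \nu^{(N)} + (1-s_N)\rho_N$ with $\rho_N = (1-s_N)^{-1}\sum_{n>N}\lambda_n\nu_n$ a probability measure. By the (two-term) finite case,
\begin{equation*}
h_{\nu}(G,\alpha|\mathcal{C}) = s_N\, h_{\nu^{(N)}}(G,\alpha|\mathcal{C}) + (1-s_N)\, h_{\rho_N}(G,\alpha|\mathcal{C}),
\end{equation*}
and again by the finite case $h_{\nu^{(N)}}(G,\alpha|\mathcal{C}) = s_N^{-1}\sum_{n=1}^N \lambda_n h_{\nu_n}(G,\alpha|\mathcal{C})$. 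Since $0 \le h_{\rho_N}(G,\alpha|\mathcal{C}) \le \log|\alpha|$ (using Proposition \ref{0911192237}\eqref{1006301514}) and $1 - s_N \to 0$, the last term vanishes as $N \to \infty$; meanwhile $s_N h_{\nu^{(N)}}(G,\alpha|\mathcal{C}) = \sum_{n=1}^N \lambda_n h_{\nu_n}(G,\alpha|\mathcal{C}) \to \sum_{n\in\mathbb{N}}\lambda_n h_{\nu_n}(G,\alpha|\mathcal{C})$ (the partial sums are monotone, hence convergent, with limit in $[0,\log|\alpha|]$). This gives the claimed identity.

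The one point that needs a little care — and which I expect to be the main, though modest, obstacle — is the convexity/affinity bookkeeping: I must make sure that the error term in Lemma \ref{1007222138} genuinely depends only on the weights and not on $\alpha$ or on the set $F_m$ (so that it is killed by the normalization), and that $\nu^{(N)}$, $\rho_N$ are again defined on Lebesgue spaces so that Lemma \ref{1007222138} applies at each stage. Both are immediate from the statement of Lemma \ref{1007222138} and the fact that a convex combination of measures on a fixed Lebesgue space $(Y,\mathcal{D})$ is again a measure on that same Lebesgue space. An alternative, perhaps cleaner, route in the case where one only wants the conclusion for the $h_\nu(G,\cdot|\mathcal{C})$ appearing in the entropy-tuple application would be to invoke \eqref{1006272232} to write $h_\nu(G,\alpha|\mathcal{C}) = \inf_{F} \frac{1}{|F|}H_\nu(\alpha_F|\mathcal{C})$ and then use that an infimum of affine functions of $\nu$ is concave while the finite-combination bound forces equality in the limit; but the reduction above via Lemma \ref{1007222138} is the most direct and I would present that.
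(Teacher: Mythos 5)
Your proposal is correct and follows essentially the same route as the paper: the finite case is obtained from Lemma \ref{1007222138} (whose additive error depends only on the weights and so vanishes after dividing by $|F_m|$), and the countable case is handled by the same head-plus-normalized-tail decomposition, bounding the tail entropy by $\log|\alpha|$ (via Proposition \ref{0911192237}) and letting $N\to\infty$. No substantive difference from the paper's argument.
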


We have the following variational relation between these two kinds of entropy tuples.

\begin{thm} \label{1007232313}
Let $n\in \mathbb{N}\setminus \{1\}$ and $0< \lambda_1, \cdots, \lambda_p< 1$ satisfy $\sum\limits_{i= 1}^p \lambda_i= 1$, for some $p\in \mathbb{N}$.
\begin{enumerate}

\item \label{1008052351} If $\mu\in \mathcal{P}_\mathbb{P} (\mathcal{E}, G)$ then $E_{n, \mu}^{(r)} (\mathcal{E}, G)\subseteq _\mathbb{P}E_n^{(r)} (\mathcal{E}, G)$.

   \item \label{1008081646}
  If $\mu_1, \cdots, \mu_p\in \mathcal{P}_\mathbb{P} (\mathcal{E}, G)$ then
\begin{equation*}
E_{n, \sum\limits_{i= 1}^p \lambda_i \mu_i}^{(r)} (\mathcal{E}, G)= \bigcup_{i= 1}^p E_{n, \mu_i}^{(r)} (\mathcal{E}, G).
\end{equation*}

   \item \label{1008081647} $_\mathbb{P}E_n^{(r)} (\mathcal{E}, G)= \bigcup\limits_{\mu\in \mathcal{P}_\mathbb{P} (\mathcal{E}, G)} E_{n, \mu}^{(r)} (\mathcal{E}, G)$.
 \end{enumerate}
\end{thm}
\begin{proof}
\eqref{1008052351} follows directly from Proposition \ref{1007120919} and the definitions.

 \eqref{1008081646} The containment $\supseteq$ follows directly from Lemma \ref{1008052343}. In fact, it is also easy to obtain the containment $\subseteq$ from Lemma \ref{1008052343} as follows.

 Set $\nu= \sum\limits_{i= 1}^p \lambda_i \mu_i$ and let $(x_1, \cdots, x_n)\in E_{n, \nu}^{(r)} (\mathcal{E}, G)$. Then for any $m\in \mathbb{N}$, there exists a closed neighborhood $V_i^m$ of $x_i$ with diameter at most $\frac{1}{m}$ for each $i= 1, \cdots, n$, such that $\mathcal{V}^m\doteq \{(V_1^m)^c, \cdots, (V_n^m)^c\}\in \mathbf{C}_{X}^o$ and $h_\nu^{(r)} (\mathbf{F}, (\Omega\times \mathcal{V}^m)_\mathcal{E})> 0$, and so, by Lemma \ref{1008052343}, $h_{\mu_j}^{(r)} (\mathbf{F}, (\Omega\times \mathcal{V}^m)_\mathcal{E})> 0$ for some $j\in \{1, \cdots, p\}$. Clearly there exists $J\in \{1, \cdots, p\}$ such that, $h_{\mu_J}^{(r)} (\mathbf{F}, (\Omega\times \mathcal{V}^m)_\mathcal{E})> 0$ for infinitely many $m\in \mathbb{N}$, which implies $(x_1, \cdots, x_n)\in E_{n, \mu_J}^{(r)} (\mathcal{E}, G)$.

  \eqref{1008081647}
Let $(x_1, \cdots, x_n)\in _\mathbb{P}E_n^{(r)} (\mathcal{E}, G)$.
Observing \eqref{1008052351} we only need prove that $(x_1, \cdots, x_n)\in E_{n, \mu}^{(r)} (\mathcal{E}, G)$ for some $\mu\in \mathcal{P}_\mathbb{P} (\mathcal{E}, G)$.

In fact, from the assumption, for any $m\in \mathbb{N}$, there exists a closed neighborhood $V_i^m$ of $x_i$ with diameter at most $\frac{1}{m}$ for each $i= 1, \cdots, n$, such that $\mathcal{V}^m\doteq \{(V_1^m)^c, \cdots, (V_n^m)^c\}\in \mathbf{C}_X^o$ and $h_{\text{top}}^{(r)} (\mathbf{F}, (\Omega\times \mathcal{V}^m)_\mathcal{E})> 0$.
Using Proposition \ref{1007212202} one has that $(\Omega\times \mathcal{V}^m)_\mathcal{E}\in \mathbf{C}_\mathcal{E}^o$ is factor good, and so by \eqref{1207281620} there exists $\mu_m\in \mathcal{P}_\mathbb{P} (\mathcal{E}, G)$ such that $h_{\mu_m}^{(r)} (\mathbf{F}, (\Omega\times \mathcal{V}^m)_\mathcal{E})> 0$.
Now set $\mu= \sum\limits_{m\in \mathbb{N}} \frac{\mu_m}{2^m}$. Obviously, $\mu\in \mathcal{P}_\mathbb{P} (\mathcal{E}, G)$ and, for each $m\in \mathbb{N}$,
$$h_\mu^{(r)} (\mathbf{F}, (\Omega\times \mathcal{V}^m)_\mathcal{E})\ge \frac{1}{2^m} h_{\mu_m}^{(r)} (\mathbf{F}, (\Omega\times \mathcal{V}^m)_\mathcal{E})> 0\ \text{(using Lemma \ref{1008052343})}.$$
Thus $(x_1, \cdots, x_n)\in E_{n, \mu}^{(r)} (\mathcal{E}, G)$. This finishes the proof.
\end{proof}

In fact, we can strengthen Theorem \ref{1007232313} as follows.

\begin{thm} \label{1008072226}
There exists $\mu\in \mathcal{P}_\mathbb{P} (\mathcal{E}, G)$ such that $_\mathbb{P}E_n^{(r)} (\mathcal{E}, G)= E_{n, \mu}^{(r)} (\mathcal{E}, G)$ for each $n\in \mathbb{N}\setminus \{1\}$.
\end{thm}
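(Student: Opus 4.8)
The plan is to produce a single measure $\mu\in\mathcal{P}_\mathbb{P}(\mathcal{E},G)$ which is ``universal'' in the sense that it detects all fiber topological entropy tuples simultaneously for every $n$. The starting observation is that by Theorem \ref{1007232313}\eqref{1008081647}, for each $n\in\mathbb{N}\setminus\{1\}$ there exists some $\mu_n\in\mathcal{P}_\mathbb{P}(\mathcal{E},G)$ with $_\mathbb{P}E_n^{(r)}(\mathcal{E},G)=E_{n,\mu_n}^{(r)}(\mathcal{E},G)$; actually the proof of that theorem builds $\mu_n$ as a convergent weighted sum $\sum_{m\in\mathbb{N}}2^{-m}\mu_{n,m}$ of measures each witnessing positivity of fiber entropy for a cover $(\Omega\times\mathcal{V}^m)_\mathcal{E}$ that shrinks to the tuple. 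The natural candidate here is to set $\mu=\sum_{n\ge 2}2^{-(n-1)}\mu_n$, which lies in $\mathcal{P}_\mathbb{P}(\mathcal{E},G)$ since the latter is convex and (weak-star) closed, and to show $_\mathbb{P}E_n^{(r)}(\mathcal{E},G)=E_{n,\mu}^{(r)}(\mathcal{E},G)$ for every $n$.

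The inclusion $E_{n,\mu}^{(r)}(\mathcal{E},G)\subseteq {}_\mathbb{P}E_n^{(r)}(\mathcal{E},G)$ is immediate from Theorem \ref{1007232313}\eqref{1008052351}. For the reverse inclusion, fix $n$ and a tuple $(x_1,\dots,x_n)\in {}_\mathbb{P}E_n^{(r)}(\mathcal{E},G)$. For any $m\in\mathbb{N}$ choose closed neighborhoods $V_i$ of $x_i$ of diameter at most $\frac1m$ with $\mathcal{V}=\{V_1^c,\dots,V_n^c\}\in\mathbf{C}_X^o$; by definition $(x_1,\dots,x_n)\in E_{n,\mu_n}^{(r)}(\mathcal{E},G)$, so $h_{\mu_n}^{(r)}(\mathbf{F},(\Omega\times\mathcal{V})_\mathcal{E})>0$. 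Now apply Lemma \ref{1008052343} to the Lebesgue-space MDS $(\mathcal{E},(\mathcal{F}\times\mathcal{B}_X)\cap\mathcal{E},\mu,G)$ (legitimate since $(\Omega,\mathcal{F},\mathbb{P})$ is assumed Lebesgue and Proposition \ref{1007041212} gives that $\mu$ sits on a Lebesgue space), together with the fact from Proposition \ref{1007241716} or Theorem \ref{1006122212} that $h_\bullet^{(r)}(\mathbf{F},\cdot)$ restricted to a partition finer than $(\Omega\times\mathcal{V})_\mathcal{E}$ is affine in the measure in the appropriate countable-convex-combination sense: this yields
\[
h_\mu^{(r)}(\mathbf{F},(\Omega\times\mathcal{V})_\mathcal{E})\ \ge\ 2^{-(n-1)}\,h_{\mu_n}^{(r)}(\mathbf{F},(\Omega\times\mathcal{V})_\mathcal{E})\ >\ 0.
\]
Since $m$ was arbitrary, $(x_1,\dots,x_n)\in E_{n,\mu}^{(r)}(\mathcal{E},G)$, completing the argument.

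One technical point to handle carefully: Lemma \ref{1008052343} as stated applies to a partition $\alpha\in\mathbf{P}_Y$, whereas $(\Omega\times\mathcal{V})_\mathcal{E}$ is only a cover, and $h_\mu^{(r)}(\mathbf{F},\mathcal{U})$ is defined via an infimum over refining partitions. So I would first reduce to partitions: $h_{\mu_n}^{(r)}(\mathbf{F},(\Omega\times\mathcal{V})_\mathcal{E})>0$ means $h_{\mu_n}^{(r)}(\mathbf{F},\beta)>0$ for every $\beta\in\mathbf{P}_X$ with $\beta\succeq\mathcal{V}$ (via the expression $h_\nu(G,\mathcal{W}|\mathcal{C})=\inf_{\alpha\succeq\mathcal{W}}h_\nu(G,\alpha|\mathcal{C})$ and Proposition \ref{1007241716}), and then apply Lemma \ref{1008052343} to each such partition $(\Omega\times\beta)_\mathcal{E}$ and take the infimum over $\beta$, using that the finite/countable affine bounds are uniform in the partition. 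The main obstacle is thus bookkeeping — making sure the countable-convex-combination version of affinity of measure-theoretic fiber entropy passes through the $\inf$ defining the cover entropy, and that the weights survive — rather than any deep new idea; the heavy lifting (Theorem \ref{1007232313}, the local variational principle, Lemma \ref{1008052343}) is already in place.
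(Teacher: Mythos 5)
There is a genuine gap at the very first step. You assert that Theorem \ref{1007232313}\eqref{1008081647} provides, for each fixed $n$, a single measure $\mu_n\in\mathcal{P}_\mathbb{P}(\mathcal{E},G)$ with $_\mathbb{P}E_n^{(r)}(\mathcal{E},G)=E_{n,\mu_n}^{(r)}(\mathcal{E},G)$. That theorem does not say this: it only expresses $_\mathbb{P}E_n^{(r)}(\mathcal{E},G)$ as the union $\bigcup_{\mu}E_{n,\mu}^{(r)}(\mathcal{E},G)$, and its proof produces one measure \emph{per tuple} (the covers $\mathcal{V}^m$ there shrink to a single fixed tuple, as your own phrase ``that shrinks to the tuple'' indicates). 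Since the union may involve uncountably many measures, the existence of one $\mu_n$ catching all of $_\mathbb{P}E_n^{(r)}(\mathcal{E},G)$ is precisely the $n$-by-$n$ content of the theorem you are trying to prove, and it cannot simply be cited; your subsequent use of ``$(x_1,\dots,x_n)\in E_{n,\mu_n}^{(r)}(\mathcal{E},G)$'' for an arbitrary tuple of $_\mathbb{P}E_n^{(r)}(\mathcal{E},G)$ therefore has no justification.

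The missing idea — and the route the paper takes — is separability plus closedness. Since $X^n$ is a compact metric space, $_\mathbb{P}E_n^{(r)}(\mathcal{E},G)$ has a countable dense subset $\{(x_1^m,\dots,x_n^m):m\in\mathbb{N}\}$; by Theorem \ref{1007232313} each of these tuples is witnessed by some $\mu_n^m\in\mathcal{P}_\mathbb{P}(\mathcal{E},G)$. One then sets $\mu=\sum_{n\ge 2}2^{-(n-1)}\sum_{m}2^{-m}\mu_n^m$ and checks, by exactly the convex-combination/Lemma \ref{1008052343} argument you sketch (which is sound, including your reduction from covers to refining partitions), that every dense tuple lies in $E_{n,\mu}^{(r)}(\mathcal{E},G)$. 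The conclusion $_\mathbb{P}E_n^{(r)}(\mathcal{E},G)\subseteq E_{n,\mu}^{(r)}(\mathcal{E},G)$ then follows because $E_{n,\mu}^{(r)}(\mathcal{E},G)\cup\Delta_n(X)$ is closed (Proposition \ref{1008071740}) and $_\mathbb{P}E_n^{(r)}(\mathcal{E},G)$ is disjoint from $\Delta_n(X)$; the reverse inclusion is Theorem \ref{1007232313}\eqref{1008052351}, as you say. With this density-and-closedness step inserted in place of the unjustified choice of $\mu_n$, your argument becomes essentially the paper's proof.
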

\begin{proof}
Remark that $X$ is a compact metric space from Standard Assumption 4,
then for each $n\in \mathbb{N}\setminus \{1\}$, there exists a sequence $\{(x_1^m, \cdots, x_n^m): m\in \mathbb{N}\}$ which is dense in $_\mathbb{P}E_n^{(r)} (\mathcal{E}, G)$. For each $n\in \mathbb{N}\setminus \{1\}$ and any $m\in \N$, by Theorem \ref{1007232313} \eqref{1008081647}, there exists $\mu_n^m\in \mathcal{P}_\mathbb{P} (\mathcal{E}, G)$ with $(x_1^m, \cdots, x_n^m)\in E_{n, \mu_n^m}^{(r)} (\mathcal{E}, G)$. Now set
\begin{equation*}
\mu= \sum_{n\in \mathbb{N}\setminus \{1\}} \frac{1}{2^{n- 1}} \sum_{m\in \mathbb{N}} \frac{1}{2^m} \mu_n^m.
\end{equation*}
Obviously, $\mu\in \mathcal{P}_\mathbb{P} (\mathcal{E}, G)$. By the  arguments of Theorem \ref{1007232313} \eqref{1008081647}, it is easy to see that $(x_1^m, \cdots, x_n^m)\in E_{n, \mu}^{(r)} (\mathcal{E}, G)$ for each $n\in \mathbb{N}\setminus \{1\}$ and any $m\in \mathbb{N}$. Now, by the choice of $(x_1^m, \cdots, x_n^m), n\in \mathbb{N}\setminus \{1\}, m\in \mathbb{N}$ we may use Proposition \ref{1008071740} and Theorem \ref{1007232313}, to see that $\mu$ has the required property.
\end{proof}

The following result tells us that both kinds of entropy tuples have nice properties with respect to lifting and projection.

\begin{prop} \label{1007271514}
Let the family $\mathbf{F}_i= \{(F_i)_{g, \omega}: (\mathcal{E}_i)_\omega\rightarrow (\mathcal{E}_i)_{g \omega}| g\in G, \omega\in \Omega\}$ be a continuous bundle RDS over $(\Omega,
\mathcal{F}, \mathbb{P}, G)$ with $X_i$ the corresponding compact metric state space, $i= 1, 2$.
Assume that $\pi: \mathcal{E}_1\rightarrow \mathcal{E}_2$ is a factor map from $\mathbf{F}_1$ to $\mathbf{F}_2$ and $n\in \mathbb{N}\setminus \{1\}, \mu\in \mathcal{P}_\mathbb{P} (\mathcal{E}_1, G)$. If $\pi$ is induced by a continuous surjection $\phi: X_1\rightarrow X_2$ via $\pi (\omega, x)= (\omega, \phi x)$, then
\begin{enumerate}

\item \label{1007271546} $E^{(r)}_{n, \pi \mu} (\mathcal{E}_2, G)\subseteq (\phi\times \cdots\times \phi) E^{(r)}_{n, \mu} (\mathcal{E}_1, G)\subseteq E^{(r)}_{n, \pi \mu} (\mathcal{E}_2, G)\cup \Delta_n (X_2)$.

\item \label{1007271547} $_\mathbb{P}E^{(r)}_n (\mathcal{E}_2, G)\subseteq (\phi\times \cdots\times \phi) _\mathbb{P}E^{(r)}_n (\mathcal{E}_1, G)\subseteq _\mathbb{P}E^{(r)}_n (\mathcal{E}_2, G)\cup \Delta_n (X_2)$.
\end{enumerate}
\end{prop}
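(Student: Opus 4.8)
<br>

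The plan is to prove Proposition \ref{1007271514} by reducing everything to the cover-level statement $h^{(r)}_\mu(\mathbf{F}_1,\pi^{-1}\mathcal{U})=h^{(r)}_{\pi\mu}(\mathbf{F}_2,\mathcal{U})$ and $h^{(r)}_{\text{top}}(\mathbf{F}_1,\pi^{-1}\mathcal{U})=h^{(r)}_{\text{top}}(\mathbf{F}_2,\mathcal{U})$ from Lemma \ref{1007212122} \eqref{1007271634} and \eqref{1007271635}, using the hypothesis that $\pi$ is induced by a continuous surjection $\phi:X_1\to X_2$. The key observation is that for a closed cover $\mathcal{V}=\{V_1^c,\dots,V_n^c\}\in\mathbf{C}_{X_2}^o$ arising from closed neighborhoods $V_i$ of points $x_i\in X_2$, the pullback satisfies $\pi^{-1}((\Omega\times\mathcal{V})_\mathcal{E})=(\Omega\times\phi^{-1}\mathcal{V})_{\mathcal{E}_1}$ where $\phi^{-1}\mathcal{V}=\{(\phi^{-1}V_1)^c,\dots,(\phi^{-1}V_n)^c\}$; here $\phi^{-1}V_i$ is a closed neighborhood of any preimage point $y_i\in\phi^{-1}(x_i)$. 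Since $\phi$ is continuous, $\mathrm{diam}(\phi^{-1}V_i)$ need not be small, so I will instead work directly with the defining characterization of entropy tuples: a tuple is an entropy tuple iff $h^{(r)}(\mathbf{F},(\Omega\times\mathcal{V})_\mathcal{E})>0$ for \emph{every} admissible choice of closed neighborhoods (the ``Equivalently'' clauses in the definition), and this formulation does not require diameters to shrink.

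For \eqref{1007271546}, first I would prove the inclusion $(\phi\times\cdots\times\phi)E^{(r)}_{n,\mu}(\mathcal{E}_1,G)\subseteq E^{(r)}_{n,\pi\mu}(\mathcal{E}_2,G)\cup\Delta_n(X_2)$: given $(y_1,\dots,y_n)\in E^{(r)}_{n,\mu}(\mathcal{E}_1,G)$ with $(\phi y_1,\dots,\phi y_n)\notin\Delta_n(X_2)$, take closed neighborhoods $V_i$ of $\phi y_i$ with $\mathcal{V}=\{V_i^c\}\in\mathbf{C}_{X_2}^o$; then $\{(\phi^{-1}V_i)^c\}\in\mathbf{C}_{X_1}^o$ is an admissible cover separating $y_1,\dots,y_n$ (after possibly shrinking so the $\phi^{-1}V_i$ are genuine separating neighborhoods — which is possible since $\phi y_i$ distinct implies one can first separate the $\phi y_i$ in $X_2$), so $h^{(r)}_\mu(\mathbf{F}_1,(\Omega\times\phi^{-1}\mathcal{V})_{\mathcal{E}_1})>0$, hence by Lemma \ref{1007212122} $h^{(r)}_{\pi\mu}(\mathbf{F}_2,(\Omega\times\mathcal{V})_{\mathcal{E}_2})>0$; as $\mathcal{V}$ was arbitrary this gives $(\phi y_1,\dots,\phi y_n)\in E^{(r)}_{n,\pi\mu}(\mathcal{E}_2,G)$. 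Conversely, for $E^{(r)}_{n,\pi\mu}(\mathcal{E}_2,G)\subseteq(\phi\times\cdots\times\phi)E^{(r)}_{n,\mu}(\mathcal{E}_1,G)$: given $(x_1,\dots,x_n)\in E^{(r)}_{n,\pi\mu}(\mathcal{E}_2,G)$, pick $y_i\in\phi^{-1}(x_i)$; here I would use a diagonal/compactness argument over a sequence of neighborhood scales $m\to\infty$ together with Proposition \ref{1008071740} (closedness of the entropy-tuple set union the diagonal) to produce, from the positivity $h^{(r)}_{\pi\mu}(\mathbf{F}_2,(\Omega\times\mathcal{V}^m)_{\mathcal{E}_2})>0$ pulled back via Lemma \ref{1007212122} to $h^{(r)}_\mu(\mathbf{F}_1,(\Omega\times\phi^{-1}\mathcal{V}^m)_{\mathcal{E}_1})>0$, a limit tuple $(y_1',\dots,y_n')$ with $\phi y_i'=x_i$ lying in $E^{(r)}_{n,\mu}(\mathcal{E}_1,G)$ (the $y_i'$ are obtained as limits of points realizing arbitrarily fine separations inside $\phi^{-1}(\text{small nbhd of }x_i)$, which is possible because $\phi^{-1}$ of a shrinking nbhd basis of $x_i$ intersected with $\mathcal{E}_1$-relevant fibers still contains separating neighborhoods in $X_1$ as $X_1$ has small sets).

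Part \eqref{1007271547} is then proved by exactly the same argument with $h^{(r)}_\mu$ and $h^{(r)}_{\pi\mu}$ replaced by $h^{(r)}_{\text{top}}$ throughout, invoking Lemma \ref{1007212122} \eqref{1007271635} in place of \eqref{1007271634}; alternatively one can deduce it from \eqref{1007271546} combined with Theorem \ref{1007232313} \eqref{1008081647} (writing $_\mathbb{P}E^{(r)}_n$ as a union of measure entropy tuple sets and $\pi\mathcal{P}_\mathbb{P}(\mathcal{E}_1,G)=\mathcal{P}_\mathbb{P}(\mathcal{E}_2,G)$ by \cite[Proposition 2.5]{Liu}), though that route needs $(\Omega,\mathcal{F},\mathbb{P})$ Lebesgue whereas the direct argument does not. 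The main obstacle I anticipate is the ``lift'' inclusion $E^{(r)}_{n,\pi\mu}(\mathcal{E}_2,G)\subseteq(\phi\times\cdots\times\phi)E^{(r)}_{n,\mu}(\mathcal{E}_1,G)$: one must show that positivity of fiber entropy for covers downstairs forces the \emph{existence} of a preimage tuple that witnesses positivity upstairs at \emph{all} scales simultaneously, and this requires care in organizing the compactness argument so that the chosen preimage points $y_i$ are coherent across the shrinking sequence of neighborhoods — this is where I would lean on the closedness statement of Proposition \ref{1008071740} and a diagonal extraction, mirroring the classical deterministic argument (cf.\ \cite{HY,HYZ2}).
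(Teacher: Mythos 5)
Your projection half (the second inclusion of \eqref{1007271546}, and likewise for \eqref{1007271547}) is correct and is essentially the paper's argument: pull back an admissible closed-neighborhood cover $\mathcal{V}$ of the image tuple via $\phi$, observe $\pi^{-1}(\Omega\times\mathcal{V})_{\mathcal{E}_2}=(\Omega\times\phi^{-1}\mathcal{V})_{\mathcal{E}_1}$ is an admissible closed-neighborhood cover for the original tuple, and apply Lemma \ref{1007212122}\eqref{1007271634}. The lift half, however, has a genuine gap. Knowing $(x_1,\cdots,x_n)\in E^{(r)}_{n,\pi\mu}(\mathcal{E}_2,G)$ and pulling back via Lemma \ref{1007212122} only gives $h_\mu^{(r)}\big(\mathbf{F}_1,\{(\Omega\times\phi^{-1}V_i^m)^c: i=1,\cdots,n\}\big)>0$, where the sets $\phi^{-1}V_i^m$ are \emph{not} small: as $m\to\infty$ they shrink to the whole fibers $\phi^{-1}(x_i)$, not to points. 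Positivity of fiber entropy for these large-set covers says nothing, by itself, about small neighborhoods of any particular preimage points $y_i\in\phi^{-1}(x_i)$, so your plan of ``picking $y_i\in\phi^{-1}(x_i)$'' and then extracting limits of ``points realizing arbitrarily fine separations'' has nothing to extract from; and Proposition \ref{1008071740} (closedness of $E^{(r)}_{n,\mu}(\mathcal{E}_1,G)\cup\Delta_n(X_1)$) plus a diagonal argument can only be used once you already have a sequence of tuples known to lie in $E^{(r)}_{n,\mu}(\mathcal{E}_1,G)$ and located near the fibers — which is precisely what is not yet established.

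The missing idea (and the heart of the paper's proof, following \cite{B2}) is a localization step: cover each $\phi^{-1}(V_i)$ by finitely many compact sets $V_i^1,\cdots,V_i^{k_i}$ of diameter at most $\frac{1}{m}$, use the identity $(\Omega\times\phi^{-1}V_i)^c=\bigcap_{j=1}^{k_i}(\Omega\times V_i^j)^c$ to compare the pullback cover with joins of covers in which one coordinate at a time is subdivided, and invoke subadditivity of $h_\mu^{(r)}$ under joins (Proposition \ref{0911192237}\eqref{1102032252}) iteratively, coordinate by coordinate, to find indices $s_1,\cdots,s_n$ with $h_\mu^{(r)}\big(\mathbf{F}_1,\{(\Omega\times V_i^{s_i})^c: i=1,\cdots,n\}\big)>0$. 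Only after replacing each large preimage by a single small piece $W_i^m=V_i^{s_i}$ (with $\phi(W_i^m)\subseteq V_i^m$, hence close to $x_i$) does compactness produce limit points $y_i$ with $\phi(y_i)=x_i$, and then monotonicity of cover entropy shows $(y_1,\cdots,y_n)\in E^{(r)}_{n,\mu}(\mathcal{E}_1,G)$ exactly as in the final paragraph of the paper's proof. Without this subdivision-and-subadditivity argument your proposal does not yield the inclusion $E^{(r)}_{n,\pi\mu}(\mathcal{E}_2,G)\subseteq(\phi\times\cdots\times\phi)E^{(r)}_{n,\mu}(\mathcal{E}_1,G)$; the same remark applies verbatim to the topological case \eqref{1007271547}.
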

\begin{proof}
As the proofs are similar, we shall only prove \eqref{1007271546}.

The proof follows the ideas of the proof of \cite[Proposition 4]{B2}.

First, let $(x_1, \cdots, x_n)\in E^{(r)}_{n, \mu} (\mathcal{E}_1, G)$ with $(\phi (x_1), \cdots, \phi (x_n))\in X_2^n\setminus \Delta_n (X_2)$.
As $(x_1, \cdots, x_n)$ $\in E^{(r)}_{n, \mu} (\mathcal{E}_1, G)$, for any $M\in \mathbb{N}$ there exists a closed neighborhood $V_i^M$ of $x_i$ with diameter at most $\frac{1}{M}$ for each $i= 1, \cdots, n$, such that $\mathcal{V}^M\doteq \{(V_1^M)^c, \cdots, (V_n^M)^c\}\in \mathbf{C}_{X_1}^o$ and $h_\mu^{(r)} (\mathbf{F}_1, (\Omega\times \mathcal{V}^M)_{\mathcal{E}_1})> 0$.
Now let $m\in \mathbb{N}$ and suppose that $V_i\subseteq X_2$ is a closed neighborhood of $\phi (x_i)$ with diameter at most $\frac{1}{m}$, for each $i= 1, \cdots, n$, such that $\mathcal{V}\doteq \{V_1^c, \cdots, V_n^c\}\in \mathbf{C}_{X_2}^o$.

By the continuity of $\phi$, for $M$ sufficiently large, $\phi^{- 1} V_i\supseteq V_i^M$ for each $i= 1, \cdots, n$.
Since $\pi$ is induced by $\phi$, and one has $\pi^{- 1} (\Omega\times \mathcal{V})_{\mathcal{E}_2}\succeq (\Omega\times \mathcal{V}^M)_{\mathcal{E}_1}$, it follows that
$$h_\mu^{(r)} (\mathbf{F}_1, \pi^{- 1} (\Omega\times \mathcal{V})_{\mathcal{E}_2})> 0,$$
and hence, by Lemma \ref{1007212122},
 $$h_{\pi \mu}^{(r)} (\mathbf{F}_2, (\Omega\times \mathcal{V})_{\mathcal{E}_2})> 0.$$
 This means that $(\phi x_1, \cdots, \phi x_n)\in E^{(r)}_{n, \pi \mu} (\mathcal{E}_2, G)$.

 \medskip

Now let $(y_1, \cdots, y_n)\in E^{(r)}_{n, \pi \mu} (\mathcal{E}_2, G)$.
For any $m\in \mathbb{N}$ there exists a closed neighborhood $V_i$ of $y_i$ with diameter at most $\frac{1}{m}$ for each $i= 1, \cdots, n$, such that $\mathcal{V}\doteq \{(V_1)^c, \cdots, (V_n)^c\}\in \mathbf{C}_{X_2}^o$ and $h_{\pi \mu}^{(r)} (\mathbf{F}_2, (\Omega\times \mathcal{V})_{\mathcal{E}_2})> 0$.

For each $i= 1, \cdots, n$, we can cover $\phi^{- 1} (V_i)$ with finitely many compact non-empty subsets $V_i^1, \cdots, V_i^{k_i}$ $\subseteq \phi^{- 1} (V_i), k_i\in \mathbb{N}$ of diameter at most $\frac{1}{m}$. Set
$$\mathcal{W}_{j_1, \cdots, j_n}= \{(\Omega\times V_i^{j_i})^c: i= 1, \cdots, n\}\in \mathbf{C}_{\mathcal{E}_1}^o$$
 for any $j_i= 1, \cdots, k_i, i= 1, \cdots, n$.
 Observe that
\begin{equation*} \label{1008051623}
(\Omega\times \phi^{- 1} V_i)^c= \bigcap_{j= 1}^{k_i} (\Omega\times V_i^{j})^c
\end{equation*}
 for each $i= 1, \cdots, n$. One has
$$\pi^{- 1} (\Omega\times \mathcal{V})_{\mathcal{E}_2}\preceq \bigvee\limits_{j_1= 1}^{k_1} \cdots \bigvee\limits_{j_n= 1}^{k_n} \mathcal{W}_{j_1, \cdots, j_n},$$
 and so
\begin{eqnarray*}
0&< & h_\mu^{(r)} (\mathbf{F}_1, \pi^{- 1} (\Omega\times \mathcal{V})_{\mathcal{E}_2})\ \text{(using Lemma \ref{1007212122})} \\
&\le & h_\mu^{(r)} (\mathbf{F}_1, \bigvee\limits_{j_1= 1}^{k_1} \cdots \bigvee\limits_{j_n= 1}^{k_n} \mathcal{W}_{j_1, \cdots, j_n}) \\
&\le & \sum_{j_1= 1}^{k_1} \cdots \sum_{j_n= 1}^{k_n} h_\mu^{(r)} (\mathbf{F}_1, \mathcal{W}_{j_1, \cdots, j_n}),
\end{eqnarray*}
where the last inequality uses Proposition \ref{0911192237}.  Thus
$h_\mu^{(r)} (\mathbf{F}_1, \mathcal{W}_{s_1, \cdots, s_n})> 0$
  for some $s_j\in \{1, \cdots, k_j\}$ and each $j= 1, \cdots, n$.

  In other words, there exists
$\{(W_i^m)^c: i= 1, \cdots, n\}\in \mathbf{C}_{X_1}^o$ such that
\begin{enumerate}

\item[(a)] $h_\mu^{(r)} (\mathbf{F}_1, \mathcal{U}^m)> 0$, where $\mathcal{U}^m= \{(\Omega\times W_i^m)^c: i= 1, \cdots, n\}$ and

\item[(b)] for each $i= 1, \cdots, n$, both $W_i^m$ and $\phi (W_i^m)$ have diameters at most $\frac{1}{m}$ and the distance between $y_i$ and $\phi (W_i^m)$ is also at most $\frac{1}{m}$.
\end{enumerate}
From $(b)$, for each $i= 1, \cdots, n$, $\{W_i^m: m\in \mathbb{N}\}$ converges to some point $x_i\in X_1$. Moreover, it is obvious that $\phi (x_i)= y_i$ (using $(b)$ again, recall that $\phi: X_1\rightarrow X_2$ is continuous). Our proof will be complete if we show that $(x_1, \cdots, x_n)\in E_{n, \mu}^{(r)} (\mathcal{E}_1, G)$.

In fact, for any $p\in \mathbb{N}$ there exists a closed neighborhood $W_i$ of $x_i$ with diameter at most $\frac{1}{p}$ such that $\{W_1^c, \cdots, W_n^c\}\in \mathbf{C}_{X_1}^o$. For $m\in \mathbb{N}$ sufficiently large, $W_i^m\subseteq W_i$ for each $i= 1, \cdots, n$, and so, by $(a)$,
  $$h_\mu^{(r)} (\mathbf{F}_1, \mathcal{W})> 0,\ \text{where}\ \mathcal{W}\doteq \{(\Omega\times W_i)^c: i= 1, \cdots, n\}\succeq \mathcal{U}^m.$$
  This implies that $(x_1, \cdots, x_n)\in E_{n, \mu}^{(r)} (\mathcal{E}_1, G)$, completing the proof.
\end{proof}

Moreover, we can show:

\begin{prop} \label{1102131645}
Let $\mu\in \mathcal{P}_\mathbb{P} (\mathcal{E}, G)$ and $n\in \mathbb{N}\setminus \{1\}$. Then
\begin{enumerate}

\item \label{1102131649}
$E^{(r)}_{n, \mu} (\mathcal{E}, G)\neq \emptyset$ if and only if $h_\mu^{(r)} (\mathbf{F})> 0$.

\item \label{1102131650} $_\mathbb{P}E^{(r)}_n (\mathcal{E}, G)\neq \emptyset$ if and only if $h_\text{top}^{(r)} (\mathbf{F})> 0$.
\end{enumerate}
\end{prop}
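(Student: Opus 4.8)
\textbf{Proof proposal for Proposition \ref{1102131645}.}

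The plan is to reduce both statements to the variational relationships already established (Theorem \ref{1007232313} and Theorem \ref{1008072226}), together with the characterization of positive fiber entropy by covers coming from Theorem \ref{1006122212}. We treat \eqref{1102131649} first. For the ``only if'' direction, suppose $(x_1, \cdots, x_n)\in E^{(r)}_{n, \mu} (\mathcal{E}, G)$. By definition there is a closed neighborhood $V_i$ of $x_i$ for each $i = 1, \cdots, n$ with $\mathcal{V} \doteq \{V_1^c, \cdots, V_n^c\}\in \mathbf{C}_X^o$ and $h_\mu^{(r)} (\mathbf{F}, (\Omega\times \mathcal{V})_\mathcal{E}) > 0$; since $h_\mu^{(r)} (\mathbf{F}) = \sup_{\mathcal{W}\in \mathbf{C}_X} h_\mu^{(r)} (\mathbf{F}, (\Omega\times \mathcal{W})_\mathcal{E})$ by Theorem \ref{1006122212}, we get $h_\mu^{(r)} (\mathbf{F}) > 0$. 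For the ``if'' direction, suppose $h_\mu^{(r)} (\mathbf{F}) > 0$. Again by Theorem \ref{1006122212} there is some $\mathcal{V}\in \mathbf{C}_X^o$ with $h_\mu^{(r)} (\mathbf{F}, (\Omega\times \mathcal{V})_\mathcal{E}) > 0$; refining $\mathcal{V}$ we may assume it has the form $\{V_1^c, \cdots, V_n^c\}$ with the $V_i$ closed (if $\mathcal{V}$ has fewer than $n$ members, pad with copies; if more, the argument in the proof of Proposition \ref{1007271514} using Proposition \ref{0911192237} \eqref{1102032252} lets us pass to a subcollection of the required shape while keeping positive entropy). The key step is then a compactness/diameter-shrinking argument: using Proposition \ref{1007192023} and Lemma \ref{1007261204} one can successively shrink the $V_i$ — replacing each $V_i^c$ by a finite union of small closed sets and invoking subadditivity of $h_\mu^{(r)} (\mathbf{F}, \cdot)$ to locate a still-positive-entropy refinement with all diameters $\le \frac{1}{m}$ — exactly as in the last part of the proof of Proposition \ref{1007271514}. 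Passing to a convergent subsequence of the shrinking neighborhoods produces a point $(x_1, \cdots, x_n)$, necessarily not in $\Delta_n(X)$ since the neighborhoods stay separated, lying in $E^{(r)}_{n, \mu} (\mathcal{E}, G)$.

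For \eqref{1102131650}, the ``only if'' direction is identical with $h_\mu^{(r)}$ replaced by $h_{\text{top}}^{(r)}$, using the corresponding supremum formula $h_{\text{top}}^{(r)} (\mathbf{F}) = \sup_{\mathcal{V}\in \mathbf{C}_X^o} h_{\text{top}}^{(r)} (\mathbf{F}, (\Omega\times \mathcal{V})_\mathcal{E})$ and the cover-refinement step. For the ``if'' direction, rather than repeating the shrinking argument it is cleanest to invoke the variational principle: by Theorem \ref{1008072226} there is $\mu\in \mathcal{P}_\mathbb{P} (\mathcal{E}, G)$ with $_\mathbb{P}E^{(r)}_n (\mathcal{E}, G) = E^{(r)}_{n, \mu} (\mathcal{E}, G)$, and more to the point, if $h_{\text{top}}^{(r)} (\mathbf{F}) > 0$ then by Theorem \ref{1007141414} (applied to $\mathbf{D}^0$ and a suitable $(\Omega\times \mathcal{V})_\mathcal{E}$ with $h_{\text{top}}^{(r)} (\mathbf{F}, (\Omega\times \mathcal{V})_\mathcal{E}) > 0$, which is factor excellent by Theorem \ref{1007212202}) there exists $\nu\in \mathcal{P}_\mathbb{P} (\mathcal{E}, G)$ with $h_\nu^{(r)} (\mathbf{F}, (\Omega\times \mathcal{V})_\mathcal{E}) > 0$, hence $h_\nu^{(r)} (\mathbf{F}) > 0$. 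By part \eqref{1102131649}, $E^{(r)}_{n, \nu} (\mathcal{E}, G) \neq \emptyset$, and by Theorem \ref{1007232313} \eqref{1008052351}, $E^{(r)}_{n, \nu} (\mathcal{E}, G)\subseteq {}_\mathbb{P}E^{(r)}_n (\mathcal{E}, G)$, so the latter is non-empty.

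I expect the main obstacle to be the cover-shrinking step: one must check carefully that, starting from an arbitrary $\mathcal{V}\in \mathbf{C}_X^o$ with positive (measure-theoretic or topological) fiber entropy for the associated random cover, one can reach a cover of the special form $\{V_1^c, \cdots, V_n^c\}$ with prescribed small diameters while preserving positivity. This requires combining the monotonicity and subadditivity of $h_\mu^{(r)}(\mathbf{F}, \cdot)$ and $h_{\text{top}}^{(r)}(\mathbf{F}, \cdot)$ in the cover argument (Proposition \ref{0911192237}, Lemma \ref{1007261204}, Proposition \ref{1102041733}) with the upper semicontinuity from Proposition \ref{1102102326} and the limit-point construction from Proposition \ref{1007192023}; the bookkeeping is essentially that of the proof of Proposition \ref{1007271514}, which I would cite rather than reproduce. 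A minor point to handle along the way is the degenerate case $n$ versus the number of elements of $\mathcal{V}$, and the observation that the limiting tuple avoids $\Delta_n(X)$, which follows because the shrinking neighborhoods of distinct coordinates can be kept at positive mutual distance (otherwise the random cover would become trivial and have zero fiber entropy by Proposition \ref{1102041733}).
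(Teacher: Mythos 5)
Your overall architecture is the one the paper intends: the ``only if'' directions follow from Theorem \ref{1006122212} (resp.\ the definition of $h_{\text{top}}^{(r)} (\mathbf{F})$ as a supremum over covers of the form $(\Omega\times \mathcal{V})_\mathcal{E}$), and the ``if'' direction is the diameter-shrinking/subadditivity scheme from the second half of the proof of Proposition \ref{1007271514}. However, two steps as written do not work. The main one is your reduction of the witnessing cover to one with exactly $n$ elements. If Theorem \ref{1006122212} produces $\mathcal{V}= \{W_1, \cdots, W_k\}\in \mathbf{C}_X^o$ with $k> n$, you cannot ``pass to a subcollection of the required shape while keeping positive entropy'': a subfamily of fewer than $k$ elements need not cover $X$ at all (a cover by $k$ small open sets has no proper subcover), and Proposition \ref{0911192237} \eqref{1102032252} only lets you select among covers whose join refines the given one, not among subfamilies. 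The correct route, which is what ``similar arguments to Proposition \ref{1007271514}'' amounts to, is to note that $\mathcal{V}$ is automatically of the form $\{V_1^c, \cdots, V_k^c\}$ with $V_i= W_i^c$ closed, run the shrinking with all $k$ entries to get, for each $m$, closed sets $V_i^{(m)}\subseteq V_i$ of diameter at most $\frac{1}{m}$ whose complements still form a positive-entropy cover, and extract a limit tuple $(x_1, \cdots, x_k)$ with $x_i\in V_i$. Non-diagonality then comes from $\bigcap\limits_{i= 1}^k V_i= \emptyset$ (because $\mathcal{V}$ covers $X$), not from any ``separation'' of the shrinking neighborhoods --- nothing in the construction keeps them apart, and individual coordinates of an entropy tuple are allowed to coincide. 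Finally one must pass from this $k$-tuple to an $n$-tuple: choose two distinct coordinates and observe, by taking the remaining closed neighborhoods equal to $X$ (or intersecting the neighborhoods attached to a repeated point), that any sub-tuple with not-all-equal entries, and any padding of it by repeated coordinates, is again an entropy tuple. This short argument is the missing idea that your ``subcollection'' step was meant to supply; the same remark applies to your treatment of the case $k< n$.

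The second issue concerns \eqref{1102131650}: your ``if'' direction goes through Theorem \ref{1007141414} and Theorem \ref{1007212202}, which require $(\Omega, \mathcal{F}, \mathbb{P})$ to be a Lebesgue space (and factor goodness of the cover), whereas Proposition \ref{1102131645} carries no such hypothesis. None is needed: the same shrinking argument runs verbatim with $h_{\text{top}}^{(r)} (\mathbf{F}, \cdot)$ in place of $h_\mu^{(r)} (\mathbf{F}, \cdot)$, since $N (\cdot, \omega)$ is monotone under refinement and submultiplicative under joins, so fiber topological entropy of covers is monotone and subadditive (cf.\ \eqref{1007120926} and Proposition \ref{1102041733}); this is what the paper means by ``from the definitions''. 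As written, your proof of \eqref{1102131650} establishes the statement only under the extra Lebesgue assumption, and the appeal to Theorem \ref{1008072226} is in any case superfluous. (The citations of Proposition \ref{1007192023} and Proposition \ref{1102102326} in the shrinking step are also not needed: only monotonicity and subadditivity of the cover entropies enter, and the limit taken is of points in $X^k$, not of measures.)
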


Remark that by Theorem \ref{1006122212} and the definitions we can prove Proposition \ref{1102131645} following the ideas of Blanchard \cite{B2}, see also the proof of Proposition \ref{1007271514}. As this is standard, we omit the details.

\medskip

Recall again from \S \ref{third} that, by a TDS $(Z, G)$ we mean that $Z$ is a compact metric space and $G$ is a group of homeomorphisms of $Z$ with $e_G$ acting as the identity.

For the continuous bundle RDS $\mathbf{F}= \{F_{g, \omega}: \mathcal{E}_\omega\rightarrow \mathcal{E}_{g \omega}| g\in G, \omega\in \Omega\}$, if, in addition, $G$ acts over the state space $X$ as a TDS, and
$F_{g, \omega}$ is just the restriction of the action $g$ over $\mathcal{E}_\omega$ for each $g\in G$ and $\mathbb{P}$-a.e. $\omega\in \Omega$,
then we say that \emph{$\mathbf{F}$ is induced by TDS $(X, G)$}.

From the definitions it is easy to see:

\begin{prop} \label{1008060011}
Let $\mu\in \mathcal{P}_\mathbb{P} (\mathcal{E}, G)$ and $n\in \mathbb{N}\setminus \{1\}$. If $\mathbf{F}$ is induced by TDS $(X, G)$, then both $E^{(r)}_{n, \mu} (\mathcal{E}, G)$ and $_\mathbb{P}E^{(r)}_n (\mathcal{E}, G)$ are $G$-invariant subsets of $X^n$.
\end{prop}

Let $(x_1, \cdots, x_n)\in X^n\setminus \Delta_n (X), n\in \mathbb{N}\setminus \{1\}$. We call
$(x_1, \cdots, x_n)$ a
\emph{fiber $n$-tuple of $\mathbf{F}$} if for any $m\in \mathbb{N}$, there exist $\Omega^*\in \mathcal{F}$ and a closed neighborhood $V_i$ of $x_i$ with diameter at most $\frac{1}{m}$ for each $i= 1, \cdots, n$, such that $\mathcal{V}= \{V_1^c, \cdots, V_n^c\}\in \mathbf{C}_X^o$, $\mathbb{P} (\Omega^*)> 0$ and $\prod\limits_{i= 1}^n (\{\omega\}\times V_i)\cap \mathcal{E}^n\neq \emptyset$ for each $\omega\in \Omega^*$.

Denote by $_\mathbb{P}E_n^{(r)} (\mathcal{E})$ the set of all fiber $n$-tuples of $\mathbf{F}$.
It may happen $_\mathbb{P}E_n^{(r)} (\mathcal{E})= \emptyset$: the trivial example is where $\mathcal{E}_\omega$ is just a singleton for $\mathbb{P}$-a.e. $\omega\in \Omega$.

With the above definition, as in Proposition \ref{1008071740}, we have:

\begin{prop} \label{1008072218}
Let $n\in \mathbb{N}\setminus \{1\}$. Then $_\mathbb{P}E_n^{(r)} (\mathcal{E})\cup \Delta_n (X)\subseteq \overline{\bigcup\limits_{\omega\in \Omega} \mathcal{E}_\omega^n}\cup \Delta_n (X)$ is a closed subset. Moreover, if $\mathbf{F}$ is induced by TDS $(X, G)$ then the subset $_\mathbb{P}E_n^{(r)} (\mathcal{E})$ is $G$-invariant.
\end{prop}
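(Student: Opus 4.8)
\textbf{Proof proposal for Proposition \ref{1008072218}.}

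The plan is to prove the two assertions separately: first the containment together with closedness, then the $G$-invariance under the additional hypothesis that $\mathbf{F}$ is induced by a TDS. Throughout I write $Y = \overline{\bigcup_{\omega\in\Omega}\mathcal{E}_\omega}$, so that $\overline{\bigcup_{\omega\in\Omega}\mathcal{E}_\omega^n} \subseteq Y^n$; the first task is thus to show $_\mathbb{P}E_n^{(r)}(\mathcal{E}) \subseteq Y^n$ and that $_\mathbb{P}E_n^{(r)}(\mathcal{E})\cup \Delta_n(X)$ is closed.

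For the containment, let $(x_1,\cdots,x_n)\in {}_\mathbb{P}E_n^{(r)}(\mathcal{E})$ and fix $i$. By definition, for every $m\in\mathbb{N}$ there is $\Omega^*\in\mathcal{F}$ with $\mathbb{P}(\Omega^*)>0$ and, for each $\omega\in\Omega^*$, a point in $\prod_{j=1}^n\{\omega\}\times V_j\cap\mathcal{E}^n$; in particular the $i$-th coordinate of such a point lies in $V_i\cap\mathcal{E}_\omega$, so $V_i\cap\mathcal{E}_\omega\neq\emptyset$ for some $\omega$, and $V_i$ has diameter at most $\tfrac1m$ around $x_i$. Hence $x_i$ is a limit (as $m\to\infty$) of points of $\bigcup_{\omega\in\Omega}\mathcal{E}_\omega$, so $x_i\in Y$; doing this for all $i$ and using that the chosen $\omega$ can be taken common to all coordinates (since the tuple point lies in $\mathcal{E}^n$ with the \emph{same} $\omega$) gives $(x_1,\cdots,x_n)\in\overline{\bigcup_{\omega\in\Omega}\mathcal{E}_\omega^n}$. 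For closedness of $_\mathbb{P}E_n^{(r)}(\mathcal{E})\cup\Delta_n(X)$, I would argue exactly as in the proof of Proposition \ref{1008071740}: take a sequence $(x_1^{(k)},\cdots,x_n^{(k)})$ in the set converging to $(x_1,\cdots,x_n)\notin\Delta_n(X)$; then the points are eventually outside $\Delta_n(X)$, and for a fixed target scale $\tfrac1m$, once $k$ is large the neighborhoods witnessing membership of the $k$-th tuple at a fine enough scale are contained in $\tfrac1m$-neighborhoods of the $x_i$, which gives the required $\Omega^*$ and closed neighborhoods for the limit tuple. The only point needing a line of care is that the cover condition $\mathcal{V}=\{V_1^c,\cdots,V_n^c\}\in\mathbf{C}_X^o$ is preserved: since the $x_i$ are distinct one may shrink the witnessing neighborhoods slightly so that their closures are pairwise disjoint, which forces $\mathcal{V}$ to cover $X$.

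For the $G$-invariance, assume $\mathbf{F}$ is induced by a TDS $(X,G)$, so each $F_{g,\omega}$ is the restriction of the homeomorphism $g$ of $X$ to $\mathcal{E}_\omega$, and $\mathcal{E}$ is $G$-invariant for the skew-product action $(\omega,x)\mapsto(g\omega, gx)$. Let $(x_1,\cdots,x_n)\in {}_\mathbb{P}E_n^{(r)}(\mathcal{E})$ and $g\in G$; I must show $(gx_1,\cdots,gx_n)\in {}_\mathbb{P}E_n^{(r)}(\mathcal{E})$. Given $m$, pick a target scale $m'\geq m$ fine enough that $g$ maps any set of diameter at most $\tfrac1{m'}$ to a set of diameter at most $\tfrac1m$ (possible by uniform continuity of $g$ on the compact $X$), and let $\Omega^*$, $V_1,\cdots,V_n$ witness membership at scale $m'$. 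Set $\widetilde V_i = gV_i$ (a closed neighborhood of $gx_i$ of diameter at most $\tfrac1m$), $\widetilde\Omega^* = g\Omega^*$ (which has $\mathbb{P}(\widetilde\Omega^*) = \mathbb{P}(\Omega^*)>0$ since $\mathbb{P}$ is $G$-invariant), and note $\{\widetilde V_1^c,\cdots,\widetilde V_n^c\}\in\mathbf{C}_X^o$ because $g$ is a homeomorphism. Finally, for $\omega'=g\omega\in\widetilde\Omega^*$, if $(\omega,(z_1,\cdots,z_n))\in\prod_i\{\omega\}\times V_i\cap\mathcal{E}^n$ then $(\omega',(gz_1,\cdots,gz_n))\in\prod_i\{\omega'\}\times\widetilde V_i\cap\mathcal{E}^n$ by $G$-invariance of $\mathcal{E}$, so the required nonempty intersection holds on $\widetilde\Omega^*$. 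Hence $(gx_1,\cdots,gx_n)\in {}_\mathbb{P}E_n^{(r)}(\mathcal{E})$, and invariance follows since $g$ was arbitrary.

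I do not expect a serious obstacle here; the statement is a routine topological book-keeping argument, and the only mildly delicate points are (i) keeping the common $\omega$ across all $n$ coordinates when passing to closures, and (ii) verifying that the open-cover condition $\{V_1^c,\cdots,V_n^c\}\in\mathbf{C}_X^o$ survives both the limiting argument and the application of $g$ — both handled by shrinking neighborhoods to have disjoint closures and by $g$ being a homeomorphism, respectively. The closedness half can, alternatively, simply be quoted as "the same argument as Proposition \ref{1008071740}" once the containment is established.
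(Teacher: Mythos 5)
Your argument is correct and is essentially the proof the paper has in mind (the paper itself only says "as in Proposition \ref{1008071740}", leaving the routine details to the reader): the containment comes from reading off the common $\omega$ in the witnessing tuples, closedness from enlarging the witnesses of nearby tuples into fixed closed balls around the limit coordinates, and $G$-invariance from pushing the witnessing data forward under the homeomorphism $g$ together with the $G$-invariance of $\mathbb{P}$ (up to a harmless $\mathbb{P}$-null set where $F_{g,\omega}$ may fail to be the restriction of $g$). One small correction: a tuple in $X^n\setminus \Delta_n (X)$ need not have pairwise distinct coordinates, so you cannot in general make the closures of the $V_i$ pairwise disjoint; but the cover condition only requires $\bigcap_{i= 1}^n V_i= \emptyset$, which already follows from making the neighborhoods of two distinct coordinates disjoint, and if you fix the closed balls around the limit coordinates first and only then take $k$ large (rather than shrinking afterwards), the containment that transfers the positive-measure condition is never endangered.
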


As in the proof of Proposition \ref{1007271514}, we obtain:

\begin{prop} \label{lift-proj}
Let the family $\mathbf{F}_i= \{(F_i)_{g, \omega}: (\mathcal{E}_i)_\omega\rightarrow (\mathcal{E}_i)_{g \omega}| g\in G, \omega\in \Omega\}$ be a continuous bundle RDS over $(\Omega,
\mathcal{F}, \mathbb{P}, G)$ with $X_i$ the corresponding compact metric state space, $i= 1, 2$.
Assume that $\pi: \mathcal{E}_1\rightarrow \mathcal{E}_2$ is a factor map from $\mathbf{F}_1$ to $\mathbf{F}_2$ and $n\in \mathbb{N}\setminus \{1\}$. If $\pi$ is induced by a continuous surjection $\phi: X_1\rightarrow X_2$, then
$$_\mathbb{P}E^{(r)}_n (\mathcal{E}_2)\subseteq (\phi\times \cdots\times \phi) _\mathbb{P}E^{(r)}_n (\mathcal{E}_1)\subseteq _\mathbb{P}E^{(r)}_n (\mathcal{E}_2)\cup \Delta_n (X_2).$$
\end{prop}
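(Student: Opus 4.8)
The plan is to mimic closely the proof of Proposition \ref{1007271514}, specializing the argument to the ``support'' condition defining fiber $n$-tuples rather than the entropy-positivity condition. We must prove the two inclusions
\[
{}_\mathbb{P}E^{(r)}_n (\mathcal{E}_2)\subseteq (\phi\times \cdots\times \phi)\, {}_\mathbb{P}E^{(r)}_n (\mathcal{E}_1)\subseteq {}_\mathbb{P}E^{(r)}_n (\mathcal{E}_2)\cup \Delta_n (X_2).
\]
First I would handle the second (easier) inclusion. Let $(x_1,\dots,x_n)\in {}_\mathbb{P}E^{(r)}_n (\mathcal{E}_1)$ with $(\phi x_1,\dots,\phi x_n)\notin \Delta_n(X_2)$. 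Given $m\in\mathbb{N}$, choose $V_i\subseteq X_2$, a closed neighborhood of $\phi x_i$ of diameter at most $\tfrac1m$, with $\{V_1^c,\dots,V_n^c\}\in\mathbf{C}_{X_2}^o$. By continuity of $\phi$, for $M$ large enough $\phi^{-1}V_i\supseteq V_i^M$, where $V_i^M$ is the neighborhood of $x_i$ coming from $(x_1,\dots,x_n)\in {}_\mathbb{P}E^{(r)}_n(\mathcal{E}_1)$ for the index $M$; since there is $\Omega^*$ with $\mathbb{P}(\Omega^*)>0$ and $\prod_{i=1}^n\{\omega\}\times V_i^M\cap \mathcal{E}_1^n\neq\emptyset$ for $\omega\in\Omega^*$, applying $\pi_\omega=\phi|_{(\mathcal{E}_1)_\omega}$ fiberwise and using $\phi((\mathcal{E}_1)_\omega)=(\mathcal{E}_2)_\omega$ gives $\prod_{i=1}^n\{\omega\}\times V_i\cap\mathcal{E}_2^n\neq\emptyset$ for the same $\Omega^*$. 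Hence $(\phi x_1,\dots,\phi x_n)\in {}_\mathbb{P}E^{(r)}_n(\mathcal{E}_2)$.

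For the first inclusion, let $(y_1,\dots,y_n)\in {}_\mathbb{P}E^{(r)}_n(\mathcal{E}_2)$. For each $m$ there are $\Omega^*_m$ with $\mathbb{P}(\Omega^*_m)>0$ and closed neighborhoods $V_i$ of $y_i$ of diameter $\le\tfrac1m$, with $\prod_{i}\{\omega\}\times V_i\cap\mathcal{E}_2^n\neq\emptyset$ on $\Omega^*_m$. I would cover each $\phi^{-1}(V_i)$ by finitely many compact sets $V_i^1,\dots,V_i^{k_i}$ of diameter $\le\tfrac1m$ inside $\phi^{-1}(V_i)$. The key combinatorial step: fix $\omega\in\Omega^*_m$; a point $(z_1,\dots,z_n)\in\prod_i\{\omega\}\times V_i\cap\mathcal{E}_2^n$ lifts fiberwise (pick any preimage of each $z_i$ in $(\mathcal{E}_1)_\omega$, which exists since $\phi|_{(\mathcal{E}_1)_\omega}$ is onto) to a point of $\prod_i\{\omega\}\times\phi^{-1}(V_i)\cap\mathcal{E}_1^n$, hence of $\prod_i\{\omega\}\times V_i^{j_i}\cap\mathcal{E}_1^n$ for some tuple $(j_1,\dots,j_n)$. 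Since there are only finitely many tuples $(j_1,\dots,j_n)$, by a pigeonhole argument there is a fixed tuple $s(m)=(s_1,\dots,s_n)$ and a measurable $\Omega^{**}_m\subseteq\Omega^*_m$ with $\mathbb{P}(\Omega^{**}_m)>0$ such that $\prod_i\{\omega\}\times V_i^{s_i}\cap\mathcal{E}_1^n\neq\emptyset$ for all $\omega\in\Omega^{**}_m$ (measurability of the relevant sets follows from Lemma \ref{1007151422}); set $W_i^m=V_i^{s_i}$. Then each $W_i^m$ and $\phi(W_i^m)$ have diameter $\le\tfrac1m$, and $\mathrm{dist}(y_i,\phi(W_i^m))\le\tfrac1m$.

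Finally, for each $i$ the sets $W_i^m$ accumulate (along a subsequence) to a point $x_i\in X_1$ with $\phi(x_i)=y_i$, by continuity of $\phi$ and the diameter/distance bounds. It remains to check $(x_1,\dots,x_n)\in {}_\mathbb{P}E^{(r)}_n(\mathcal{E}_1)$: given $p$, take closed neighborhoods $W_i$ of $x_i$ of diameter $\le\tfrac1p$ with $\{W_1^c,\dots,W_n^c\}\in\mathbf{C}_{X_1}^o$; for $m$ in the subsequence large enough $W_i^m\subseteq W_i$, so $\prod_i\{\omega\}\times W_i\cap\mathcal{E}_1^n\supseteq\prod_i\{\omega\}\times W_i^m\cap\mathcal{E}_1^n\neq\emptyset$ on $\Omega^{**}_m$, which has positive measure. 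Hence $(x_1,\dots,x_n)\in {}_\mathbb{P}E^{(r)}_n(\mathcal{E}_1)$ and $(\phi x_1,\dots,\phi x_n)=(y_1,\dots,y_n)$, completing the proof. The main obstacle is the pigeonhole-plus-measurability step producing a single positive-measure set $\Omega^{**}_m$ on which one fixed choice $(V_i^{s_i})_i$ works simultaneously; one must be careful to split $\Omega^*_m$ into the (finitely many, measurable) sets $\{\omega:\prod_i\{\omega\}\times V_i^{j_i}\cap\mathcal{E}_1^n\neq\emptyset\}$, at least one of which has positive measure, and to invoke Lemma \ref{1007151422} for the measurability of these sets (each is the complement of a projection of a set in $\mathcal{F}\times\mathcal{B}_{X_1}$). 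Everything else is a routine adaptation of the argument in Proposition \ref{1007271514} with ``positive fiber entropy'' replaced by ``nonempty fiber intersection on a positive-measure base''.
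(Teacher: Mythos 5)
Your proposal is correct and follows exactly the route the paper intends: it adapts the proof of Proposition \ref{1007271514}, with the subadditivity-of-entropy step replaced by the pigeonhole argument over the finitely many tuples $(j_1,\cdots,j_n)$ on a positive-measure base, whose measurability is indeed supplied by Lemma \ref{1007151422} (as in Lemma \ref{1008072241}). The only point worth making explicit is that $(x_1,\cdots,x_n)\notin\Delta_n(X_1)$ (needed to choose the neighborhoods $W_i$ with $\{W_1^c,\cdots,W_n^c\}\in\mathbf{C}_{X_1}^o$), which follows since $\phi(x_i)=y_i$ and $(y_1,\cdots,y_n)\notin\Delta_n(X_2)$.
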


Before proceeding, we observe the following two Lemmas.

\begin{lem} \label{1008072241}
Let $V_1, \cdots ,V_n\in \mathcal{B}_X, n\in \mathbb{N}\setminus \{1\}$. Then
$$\Omega (V_1, \cdots, V_n)\doteq \{\omega\in \Omega: \prod\limits_{i= 1}^n (\{\omega\}\times V_i)\cap \mathcal{E}^n= \emptyset\}\in \mathcal{F}.$$
\end{lem}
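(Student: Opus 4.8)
\textbf{Proof proposal for Lemma \ref{1008072241}.}

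The plan is to express the complement of $\Omega (V_1, \cdots, V_n)$ as the projection onto $\Omega$ of a set in $\mathcal{F} \times \mathcal{B}_X$, and then invoke Lemma \ref{1007151422} (the measurable projection theorem, valid since $X$ is a compact metric space, hence Polish). First I would observe that for a given $\omega \in \Omega$, the product $\prod_{i=1}^n (\{\omega\} \times V_i) \cap \mathcal{E}^n$ is non-empty precisely when there exists an $n$-tuple $(x_1, \ldots, x_n)$ with $x_i \in V_i$ and $(\omega, x_i) \in \mathcal{E}$ for each $i = 1, \ldots, n$. Equivalently, $\omega \notin \Omega (V_1, \cdots, V_n)$ if and only if $V_i \cap \mathcal{E}_\omega \neq \emptyset$ for every $i = 1, \ldots, n$, i.e. $\omega \in \bigcap_{i=1}^n \pi ((\Omega \times V_i) \cap \mathcal{E})$, where $\pi \colon \Omega \times X \rightarrow \Omega$ is the natural projection.

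The key steps, in order, are: (1) note that $(\Omega \times V_i) \cap \mathcal{E} \in \mathcal{F} \times \mathcal{B}_X$ for each $i$, since $V_i \in \mathcal{B}_X$ and $\mathcal{E} \in \mathcal{F} \times \mathcal{B}_X$; (2) apply Lemma \ref{1007151422} with $(\Gamma, \mathcal{T}) = (\Omega, \mathcal{F})$ and the Polish space $X$ to conclude $\pi ((\Omega \times V_i) \cap \mathcal{E}) \in \mathcal{F}$ for each $i = 1, \ldots, n$; (3) take the finite intersection over $i$ to get $\bigcap_{i=1}^n \pi ((\Omega \times V_i) \cap \mathcal{E}) \in \mathcal{F}$; (4) verify the set identity
\begin{equation*}
\Omega (V_1, \cdots, V_n) = \Omega \setminus \bigcap_{i=1}^n \pi ((\Omega \times V_i) \cap \mathcal{E}),
\end{equation*}
which is immediate from the characterization above, and conclude that $\Omega (V_1, \cdots, V_n) \in \mathcal{F}$ since $\mathcal{F}$ is closed under complements.

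I do not anticipate a serious obstacle here; the only point requiring a little care is the reduction of the non-emptiness of the $n$-fold product $\prod_{i=1}^n (\{\omega\} \times V_i) \cap \mathcal{E}^n$ to the conjunction of the individual conditions $V_i \cap \mathcal{E}_\omega \neq \emptyset$. This is valid because the coordinates of the product are independent: one may choose each $x_i \in V_i \cap \mathcal{E}_\omega$ separately, and $\mathcal{E}^n_{(\omega, \ldots, \omega)} = \mathcal{E}_\omega \times \cdots \times \mathcal{E}_\omega$ when all fiber-coordinates equal the same $\omega$ — but in fact the definition of $\Omega (V_1, \cdots, V_n)$ only involves the ``diagonal'' fiber $\{\omega\} \times V_i$, so one genuinely only needs each $\{\omega\} \times V_i \cap \mathcal{E} = \{\omega\} \times (V_i \cap \mathcal{E}_\omega)$ to be non-empty. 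Once this elementary set-theoretic unwinding is done, the measurability is a direct consequence of Lemma \ref{1007151422} and the closure properties of $\mathcal{F}$.
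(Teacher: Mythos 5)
Your proposal is correct and follows essentially the same route as the paper: both identify the complement of $\Omega (V_1, \cdots, V_n)$ with $\bigcap_{i= 1}^n \pi ((\Omega\times V_i)\cap \mathcal{E})$, apply Lemma \ref{1007151422} to each projection, and conclude by closure of $\mathcal{F}$ under finite intersections and complements. Nothing further is needed.
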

\begin{proof}
Let $\pi: \Omega\times X\rightarrow \Omega$ be the natural projection. Remark that $(\Omega, \mathcal{F}, \mathbb{P})$ is a Lebesgue space by Standard Assumption 3, and $X$ is a compact metric space by Standard Assumption 4, and so we could apply Lemma \ref{1007152201} to obtain:
\begin{equation*}
\Omega_0\doteq \{\omega\in \Omega: \prod_{i= 1}^n (\{\omega\}\times V_i)\cap \mathcal{E}^n\neq \emptyset\}= \bigcap_{i= 1}^n \pi ((\Omega\times V_i)\cap \mathcal{E})\in \mathcal{F}.
\end{equation*}
Observe $\Omega_0= \Omega\setminus \Omega (V_1, \cdots, V_n)$, one has $\Omega (V_1, \cdots, V_n)\in \mathcal{F}$.
\end{proof}

\begin{lem} \label{1008042307}
Let $\Omega^*\in \mathcal{F}$ and $\mathcal{V}= \{V_1^c, \cdots, V_n^c\}\in \mathbf{C}_X, n\in \mathbb{N}\setminus \{1\}$. Set $\mathcal{U}= \{(\Omega^*\times V_i)^c: i= 1, \cdots, n\}$ and $\mathcal{U}'= \{(\Omega'\times V_i)^c: i= 1, \cdots, n\}$, where $\Omega'= \Omega^*\setminus \Omega (V_1, \cdots, V_n)$. Then
\begin{enumerate}

\item \label{1008042216}
$\mathcal{U}\succeq \mathcal{U}'$ and $\mathcal{U}_\omega\supseteq \mathcal{U}'_\omega$, (and hence $\mathcal{U}'_\omega\succeq \mathcal{U}_\omega$) for each $\omega\in \Omega$.

\item \label{1008042217}
$h_{\text{top}}^{(r)} (\mathbf{F}, \mathcal{U})= h_{\text{top}}^{(r)} (\mathbf{F}, \mathcal{U}')$. In particular, if $h_{\text{top}}^{(r)} (\mathbf{F}, \mathcal{U})> 0$ then $\mathbb{P} (\Omega')> 0$.

\item \label{1008042218} if $\mu\in \mathcal{P}_\mathbb{P} (\mathcal{E}, G)$ then $h_\mu^{(r)} (\mathbf{F}, \mathcal{U})= h_\mu^{(r)} (\mathbf{F}, \mathcal{U}')$.
\end{enumerate}
\end{lem}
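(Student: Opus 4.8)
The plan is to prove the three claims essentially by unravelling definitions and invoking the fiber-entropy monotonicity results already established (Lemma \ref{1007261204} and Proposition \ref{1007241716}). The key observation underlying everything is that for $\omega\notin\Omega'$ (i.e. $\omega\in\Omega(V_1,\dots,V_n)$ but $\omega\in\Omega^*$, or $\omega\notin\Omega^*$) the fiber $\mathcal{E}_\omega$ meets at most one of the $\Omega^*\times V_i$, so the atom $(\Omega^*\times V_i)^c$ that is "removed" in passing to $\mathcal{U}'$ contributes nothing to the fiberwise cover: precisely, one checks directly from the definitions that $\mathcal{U}_\omega$ and $\mathcal{U}'_\omega$ differ only by the possible presence of the element $\mathcal{E}_\omega$ itself (which is redundant in any cover of $\mathcal{E}_\omega$), whence $\mathcal{U}_\omega\supseteq\mathcal{U}'_\omega$ and these two families generate the same partition of $\mathcal{E}_\omega$.

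For \eqref{1008042216} I would spell out: $\mathcal{U}\succeq\mathcal{U}'$ is immediate since $\Omega'\subseteq\Omega^*$ forces $(\Omega^*\times V_i)^c\subseteq(\Omega'\times V_i)^c$ for each $i$. For the fiberwise statement, fix $\omega$; if $\omega\in\Omega'$ then $(\Omega^*\times V_i)^c_\omega=(\Omega'\times V_i)^c_\omega=V_i^c$ for all $i$ and the two fibered families coincide; if $\omega\notin\Omega'$ then $(\Omega'\times V_i)^c_\omega=\mathcal{E}_\omega$ for all $i$, while $(\Omega^*\times V_i)^c_\omega$ equals $V_i^c\cap\mathcal{E}_\omega$ when $\omega\in\Omega^*$ and $\mathcal{E}_\omega$ otherwise — and by definition of $\Omega(V_1,\dots,V_n)$, when $\omega\in\Omega^*\setminus\Omega'$ at most one fiber $V_i^c\cap\mathcal{E}_\omega$ can be a proper subset of $\mathcal{E}_\omega$ (indeed none can, since $\prod_i\{\omega\}\times V_i\cap\mathcal{E}^n=\emptyset$ means $\bigcap_i V_i\cap\mathcal{E}_\omega=\emptyset$, i.e. $\bigcup_i V_i^c\supseteq\mathcal{E}_\omega$, but that only tells us the $V_i^c\cap\mathcal{E}_\omega$ cover $\mathcal{E}_\omega$; the point is rather that $\mathcal{U}_\omega\supseteq\{V_i^c\cap\mathcal{E}_\omega\}$ and $\mathcal{U}'_\omega=\{\mathcal{E}_\omega\}$, so $\mathcal{U}'_\omega\preceq\mathcal{U}_\omega$). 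In all cases $\mathcal{U}_\omega\supseteq\mathcal{U}'_\omega$ and $\mathcal{U}'_\omega\succeq\mathcal{U}_\omega$. Then \eqref{1008042217} follows from Lemma \ref{1007261204}\eqref{1007261239} applied with the roles of $\mathcal{U}_1,\mathcal{U}_2$ played by $\mathcal{U}$ and $\mathcal{U}'$ (in both directions, since $\mathcal{U}_\omega\succeq\mathcal{U}'_\omega$ from $\mathcal{U}\succeq\mathcal{U}'$ and $\mathcal{U}'_\omega\succeq\mathcal{U}_\omega$ from the fiberwise equality of generated partitions), together with the definition of $P_\mathcal{E}(\mathbf{D}^0,\cdot,\mathbf{F})=h_{\text{top}}^{(r)}(\mathbf{F},\cdot)$ via \eqref{1007120926}; for the "in particular" clause, note that if $\Omega'$ were null then $\mathcal{U}'$ would, up to $\mathbb{P}$-null sets, consist of the single element $\mathcal{E}$, forcing $N(\mathcal{U}'_{F_n},\omega)=1$ a.e. and hence $h_{\text{top}}^{(r)}(\mathbf{F},\mathcal{U}')=0$, contradicting $h_{\text{top}}^{(r)}(\mathbf{F},\mathcal{U})>0$. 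Finally \eqref{1008042218} is exactly Lemma \ref{1007261204}\eqref{1007261239} (and \eqref{1007261238}) applied to the same pair, using that $(\mathcal{U})_\omega$ and $(\mathcal{U}')_\omega$ generate equal partitions so the $h_{\mu,+}^{(r)}$ values agree as well.

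I do not anticipate a genuine obstacle here; the only mildly delicate point is to get the fiberwise comparison exactly right — that is, to argue carefully that off $\Omega'$ the family $\mathcal{U}'_\omega$ collapses to the trivial cover $\{\mathcal{E}_\omega\}$ while $\mathcal{U}_\omega$ is a (possibly finer but still equivalent, modulo the redundant element $\mathcal{E}_\omega$) cover, so that Lemma \ref{1007261204}\eqref{1007261238}--\eqref{1007261239} can be invoked in both directions. Once that bookkeeping is done, all three parts are immediate consequences of the monotonicity lemmas and the definitions of $h_{\text{top}}^{(r)}$, $h_\mu^{(r)}$, and $h_{\mu,+}^{(r)}$ already in hand. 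I would present the proof in the order \eqref{1008042216} $\Rightarrow$ \eqref{1008042217} $\Rightarrow$ \eqref{1008042218}, since the later parts depend on the fiberwise comparison established first.
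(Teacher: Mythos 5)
Your proof has a genuine gap, and it is located exactly at the point you yourself flag as ``the only mildly delicate point''. You read the condition $\prod_{i=1}^n\{\omega\}\times V_i\cap \mathcal{E}^n=\emptyset$ defining $\Omega (V_1, \cdots, V_n)$ as saying $\bigcap_{i=1}^n V_i\cap \mathcal{E}_\omega= \emptyset$. That is not what the product condition means: an element of $\prod_{i=1}^n (\{\omega\}\times V_i)\cap \mathcal{E}^n$ only requires a (possibly different) point $x_i\in V_i\cap \mathcal{E}_\omega$ in each coordinate, so the product is empty if and only if $V_i\cap \mathcal{E}_\omega= \emptyset$ for \emph{some} $i$ (this is also visible in the proof of Lemma \ref{1008072241}, where $\Omega\setminus \Omega (V_1, \cdots, V_n)= \bigcap_{i= 1}^n \pi ((\Omega\times V_i)\cap \mathcal{E})$). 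Your reading is moreover vacuous here: since $\mathcal{V}= \{V_1^c, \cdots, V_n^c\}$ covers $X$, one always has $\bigcap_i V_i= \emptyset$, so under your interpretation $\Omega (V_1, \cdots, V_n)$ would be all of $\Omega$ and $\Omega'= \emptyset$, which would make the lemma false whenever $h_{\text{top}}^{(r)} (\mathbf{F}, \mathcal{U})> 0$.

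The consequence is that the crucial fiberwise comparison is never established. For $\omega\in \Omega^*\cap \Omega (V_1, \cdots, V_n)$ you only derive that the sets $V_i^c\cap \mathcal{E}_\omega$ cover $\mathcal{E}_\omega$ and then conclude $\mathcal{U}'_\omega\preceq \mathcal{U}_\omega$, which is the trivial direction (any cover refines $\{\mathcal{E}_\omega\}$) and the wrong one: since $\mathcal{U}\succeq \mathcal{U}'$ already gives $h (\mathcal{U})\ge h (\mathcal{U}')$ in all three senses, the whole content of the lemma is the reverse inequality, and for that one needs $\mathcal{U}'_\omega\succeq \mathcal{U}_\omega$ at every $\omega$, i.e.\ $\mathcal{E}_\omega\subseteq V_i^c$ for some $i$ at each such $\omega$. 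This is exactly what the correct reading supplies: some $V_i$ misses $\mathcal{E}_\omega$, hence $V_i^c\cap \mathcal{E}_\omega= \mathcal{E}_\omega$, so $\mathcal{E}_\omega\in \mathcal{U}_\omega$ and $\mathcal{U}_\omega\supseteq \mathcal{U}'_\omega= \{\mathcal{E}_\omega\}$; this is the paper's one-line argument. Your appeal to ``fiberwise equality of generated partitions'' is also unjustified and in fact false in general (for $n\ge 3$ the partition generated by $\mathcal{U}_\omega$ may be nontrivial while that of $\mathcal{U}'_\omega$ is trivial); what makes parts \eqref{1008042217} and \eqref{1008042218} work is only the cover refinement $\mathcal{U}'_\omega\succeq \mathcal{U}_\omega$, fed into Lemma \ref{1007261204} \eqref{1007261239} and Proposition \ref{1102041733}. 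Once that single point is fixed, the rest of your outline, including the deduction that $\mathbb{P} (\Omega')= 0$ forces $h_{\text{top}}^{(r)} (\mathbf{F}, \mathcal{U}')= 0$, goes through and coincides with the paper's proof.
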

\begin{proof}
 \eqref{1008042216} From the assumptions on $\mathcal{U}'$, it is clear that $\{\mathcal{E}_\omega\}= \mathcal{U}'_\omega$ for each $\omega\in \Omega (V_1, \cdots, V_n)$. Thus we only need check that $\mathcal{E}_\omega\in \mathcal{U}_\omega$ for each $\omega\in \Omega (V_1, \cdots, V_n)$. In fact, if $\omega\in \Omega (V_1, \cdots, V_n)$ then $\{\omega\}\times V_i\cap \mathcal{E}= \emptyset$ for some $i\in \{1, \cdots, n\}$, which implies $\mathcal{E}_\omega\subseteq V_i^c$, particularly, $\mathcal{E}_\omega\in \mathcal{U}_\omega$.

Combining the above definition with Proposition \ref{0911192237}, Lemma \ref{1007261204} and Proposition \ref{1102041733}, both \eqref{1008042217} and
   \eqref{1008042218} follow directly from \eqref{1008042216}.
\end{proof}

Thus, we have:

\begin{prop} \label{1008041856}
Let $(x_1, \cdots, x_n)\in X^n\setminus \Delta_n (X), n\in \mathbb{N}\setminus \{1\}$. Then
 \begin{enumerate}

 \item \label{1008042152} $(x_1, \cdots, x_n)\in _\mathbb{P}E_n^{(r)} (\mathcal{E})$ if and only if, whenever $V_i$ is a closed neighborhood of $x_i$ for each $i= 1, \cdots, n$ such that $\{V_1^c, \cdots, V_n^c\}\in \mathbf{C}_X^o$, then $\mathbb{P} (\Omega (V_1, \cdots,$ $V_n))< 1$.

     \item \label{1008042154} $_\mathbb{P}E_n^{(r)} (\mathcal{E}, G)\subseteq _\mathbb{P}E_n^{(r)} (\mathcal{E})$.
 \end{enumerate}
\end{prop}
\begin{proof}
With the definitions introduced in this section,
 \eqref{1008042152} and \eqref{1008042154} follow from Lemma \ref{1008072241} and
  Lemma \ref{1008042307}, respectively.
\end{proof}

In the remainder of this section, we equip with $\Omega$ the structure of a topological space (and $\mathcal{F}$ is its Borel $\sigma$-algebra).

\medskip

Before proceeding, we need some preparations.

Let $\mu\in \mathcal{P}_\mathbb{P} (\mathcal{E}, G)$ and $n\in \mathbb{N}\setminus \{1\}$. We recall the definition of $\lambda_n^{\mathcal{F}_\mathcal{E}} (\mu)$ from \eqref{1208032307}:
\begin{equation*}
\lambda_n^{\mathcal{F}_\mathcal{E}} (\mu) (\prod_{i= 1}^n A_i)= \int_\mathcal{E} \prod_{i= 1}^n \mu (A_i| \mathcal{P}^{\mathcal{F}_\mathcal{E}} (\mathcal{E}, (\mathcal{F}\times \mathcal{B}_X)\cap \mathcal{E}, \mu, G)) d \mu
\end{equation*}
whenever $A_1, \cdots, A_n\in (\mathcal{F}\times \mathcal{B}_X)\cap \mathcal{E}$.

Let $Y$ be a topological space and $\nu$ a probability measure on $(Y, \mathcal{B}_Y)$. Denote by $\text{supp} (\nu)$ the set of all points $y\in Y$ such that $\nu (V)> 0$ whenever $V$ is an open neighborhood of $y$. Thus, $\text{supp} (\nu)\subseteq Y$ is a closed subset.

Observe that if $\Omega$ is a topological space with $\mathcal{F}= \mathcal{B}_\Omega$, then each $\mu\in \mathcal{P}_\mathbb{P} (\mathcal{E}, G)$ may be viewed as a Borel probability measure on the topological space $\Omega\times X$.

From the definition, it is easy to check:

\begin{lem} \label{small}
Let $\mu\in \mathcal{P}_\mathbb{P} (\mathcal{E}, G)$ and $n\in \mathbb{N}\setminus \{1\}$. Assume that $\Omega$ is a topological space with $\mathcal{F}= \mathcal{B}_\Omega$. Then $\text{supp} (\lambda_n^{\mathcal{F}_\mathcal{E}} (\mu))\subseteq \text{supp} (\mu)^n\subseteq (\text{supp} (\mathbb{P})\times X)^n$.
\end{lem}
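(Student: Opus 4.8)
The plan is to unpack the definitions and trace the supports through the two disintegration-type formulas that define $\lambda_n^{\mathcal{C}}(\mu)$ and relate $\mu$ to $\mathbb{P}$. Recall $\lambda_n^{\mathcal{F}_\mathcal{E}}(\mu) = \int_\Omega \mu_\omega \times \cdots \times \mu_\omega \, d\mathbb{P}(\omega)$ (the $n$-fold fiber product), where $d\mu(\omega,x) = d\mu_\omega(x)\,d\mathbb{P}(\omega)$ is the disintegration of $\mu$ over $\mathcal{F}_\mathcal{E}$; here $\mathcal{P}^{\mathcal{F}_\mathcal{E}}(\mathcal{E},\ldots) = \mathcal{F}_\mathcal{E}$ plays the role of $\mathcal{P}^\mathcal{C}$ since, relative to $\mathcal{F}_\mathcal{E}$, the fiber system has $\mathcal{F}_\mathcal{E}$-relative c.p.e. in the trivial direction — more precisely $\lambda_n^{\mathcal{F}_\mathcal{E}}(\mu)$ is exactly the measure built from the conditional measures $\mu_\omega$. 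So the statement reduces to two containments: $\mathrm{supp}(\lambda_n^{\mathcal{F}_\mathcal{E}}(\mu)) \subseteq \mathrm{supp}(\mu)^n$ and $\mathrm{supp}(\mu) \subseteq \mathrm{supp}(\mathbb{P}) \times X$.

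For the second containment I would argue directly from the marginal property: since $\mu \in \mathcal{P}_\mathbb{P}(\mathcal{E})$ has marginal $\mathbb{P}$ on $\Omega$, we have $\mu(A \times X) = \mathbb{P}(A)$ for every $A \in \mathcal{F} = \mathcal{B}_\Omega$. If $(\omega_0, x_0) \in \mathrm{supp}(\mu)$ and $U$ is any open neighborhood of $\omega_0$ in $\Omega$, then $U \times X$ is an open neighborhood of $(\omega_0,x_0)$, so $\mathbb{P}(U) = \mu(U \times X) > 0$; hence $\omega_0 \in \mathrm{supp}(\mathbb{P})$, giving $\mathrm{supp}(\mu) \subseteq \mathrm{supp}(\mathbb{P}) \times X$. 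For the first containment, let $(\omega_1,x_1,\ldots,\omega_n,x_n) \in \mathrm{supp}(\lambda_n^{\mathcal{F}_\mathcal{E}}(\mu))$; I need to show each $(\omega_i,x_i) \in \mathrm{supp}(\mu)$. Since $\lambda_n^{\mathcal{F}_\mathcal{E}}(\mu)$ is supported on $\mathcal{E}^n$ intersected with the fiber over the diagonal of $\Omega$ (the $\mu_\omega$'s all sit over the same $\omega$), actually a point of its support has the form $(\omega,x_1,\ldots,\omega,x_n)$ with a common $\omega$; for such a point and open neighborhoods $V_i \ni x_i$, $W \ni \omega$, positivity of $\lambda_n^{\mathcal{F}_\mathcal{E}}(\mu)\big((W \times V_1) \times \cdots \times (W \times V_n)\big) = \int_W \prod_{i=1}^n \mu_\omega(V_i)\,d\mathbb{P}(\omega)$ forces a positive-$\mathbb{P}$-measure set of $\omega \in W$ on which every $\mu_\omega(V_i) > 0$, whence $\mu(W \times V_i) = \int_\Omega \mu_\omega(V_i)\,d\mathbb{P} \geq \int_W \mu_\omega(V_i)\,d\mathbb{P} > 0$, so $(\omega, x_i) \in \mathrm{supp}(\mu)$.

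Combining the two containments with $\mathrm{supp}(\mu) \subseteq \mathrm{supp}(\mathbb{P}) \times X$ applied coordinatewise yields $\mathrm{supp}(\lambda_n^{\mathcal{F}_\mathcal{E}}(\mu)) \subseteq \mathrm{supp}(\mu)^n \subseteq (\mathrm{supp}(\mathbb{P}) \times X)^n$, which is exactly the claim. The argument is essentially a bookkeeping exercise: the only genuinely non-trivial point — and the one I expect to be the main obstacle to write cleanly — is justifying the Fubini-type manipulation $\lambda_n^{\mathcal{F}_\mathcal{E}}(\mu)(\prod_i (W \times V_i)) = \int_W \prod_i \mu_\omega(V_i)\,d\mathbb{P}(\omega)$ together with the measurability of $\omega \mapsto \mu_\omega(V_i)$, and then extracting from positivity of the integral a set of positive $\mathbb{P}$-measure on which all factors are simultaneously positive (a standard but slightly delicate measure-theoretic step, handled by intersecting the countably many sets $\{\omega : \mu_\omega(V_i) > 1/k\}$). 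All the measurability facts needed here are already available from the disintegration discussion in \S\ref{third} (see \eqref{1006271735}, \eqref{1006271736}) and from the construction of $\lambda_n^\mathcal{C}(\nu)$, so no new machinery is required.
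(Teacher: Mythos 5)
Your overall strategy (pass to conditional measures, pull positivity of $\mu$ on product neighborhoods back from positivity of an integral, and use the marginal condition for the second inclusion) is in the right spirit, and your argument for $\text{supp} (\mu)\subseteq \text{supp} (\mathbb{P})\times X$ is fine. But there is a genuine error at the definitional core: $\lambda_n^{\mathcal{F}_\mathcal{E}} (\mu)$ is \emph{not} $\int_\Omega \mu_\omega\times \cdots\times \mu_\omega\, d \mathbb{P} (\omega)$ with $\mu_\omega$ the disintegration of $\mu$ over $\mathcal{F}_\mathcal{E}$. By definition, $\lambda_n^{\mathcal{C}} (\mu) (\prod\limits_{i= 1}^n A_i)= \int \prod\limits_{i= 1}^n \mu (A_i| \mathcal{P}^{\mathcal{C}} (\cdot))\, d \mu$, where the conditioning is on the \emph{relative Pinsker algebra} $\mathcal{P}^{\mathcal{F}_\mathcal{E}} (\mathcal{E}, (\mathcal{F}\times \mathcal{B}_X)\cap \mathcal{E}, \mu, G)$, which contains $\mathcal{F}_\mathcal{E}$ but is in general strictly larger. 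Your justification that it equals $\mathcal{F}_\mathcal{E}$ (``the fiber system has $\mathcal{F}_\mathcal{E}$-relative c.p.e. in the trivial direction'') is false in general: for instance, if $h_\mu^{(r)} (\mathbf{F})= 0$ then every finite partition has zero conditional entropy, so the relative Pinsker algebra is all of $(\mathcal{F}\times \mathcal{B}_X)\cap \mathcal{E}$, not $\mathcal{F}_\mathcal{E}$. As written, your computation therefore analyzes a different measure from $\lambda_n^{\mathcal{F}_\mathcal{E}} (\mu)$. Relatedly, the ``support lies over a common $\omega$'' claim you invoke is the content of the next lemma (Lemma \ref{1008081528}); it requires $\Omega$ Hausdorff (not assumed here) and is proved by a separate argument using $\mathcal{F}_\mathcal{E}\subseteq \mathcal{P}^{\mathcal{F}_\mathcal{E}} (\cdot)$, so it cannot be read off from the fiber-product formula you wrote down.

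The statement itself is correct and the repair is short, indeed simpler than your route: since each conditional expectation $\mu (A_i| \mathcal{P}^{\mathcal{F}_\mathcal{E}} (\cdot))$ takes values in $[0, 1]$, the definition gives directly $\lambda_n^{\mathcal{F}_\mathcal{E}} (\mu) (\prod\limits_{j= 1}^n A_j)\le \int_\mathcal{E} \mu (A_i| \mathcal{P}^{\mathcal{F}_\mathcal{E}} (\cdot))\, d \mu= \mu (A_i)$ for every $i$. Applying this to basic product neighborhoods $\prod\limits_{i= 1}^n (W_i\times V_i)$ (which form a base of the product topology once $\mathcal{F}= \mathcal{B}_\Omega$) shows that every coordinate of a point of $\text{supp} (\lambda_n^{\mathcal{F}_\mathcal{E}} (\mu))$ lies in $\text{supp} (\mu)$; combined with your marginal argument this yields both containments with no disintegration, no Fubini-type manipulation, and no Lebesgue-space hypothesis. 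If you prefer to keep a disintegration picture, you must disintegrate over $\mathcal{P}^{\mathcal{F}_\mathcal{E}} (\cdot)$ (as the paper does when a Lebesgue structure is available), not over $\mathcal{F}_\mathcal{E}$; the positivity argument you sketch then goes through with $\mu_z$, $z\in \mathcal{E}$, in place of $\mu_\omega$.
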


We also have:

\begin{lem} \label{1008081528}
Let $\mu\in \mathcal{P}_\mathbb{P} (\mathcal{E}, G)$ and $((\omega_1, x_1), \cdots, (\omega_n, x_n))\in \text{supp} (\lambda_n^{\mathcal{F}_\mathcal{E}} (\mu)), n\in \mathbb{N}\setminus \{1\}$. Assume that $\Omega$ is a Hausdorff space with $\mathcal{F}= \mathcal{B}_\Omega$. Then $\omega_1= \cdots= \omega_n$.
\end{lem}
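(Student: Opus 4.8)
\textbf{Proof proposal for Lemma \ref{1008081528}.}

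The plan is to exploit the fact that $\lambda_n^{\mathcal{F}_\mathcal{E}} (\mu)$ is, by construction, a measure concentrated on the "diagonal over $\mathcal{F}_\mathcal{E}$'', i.e. it gives full mass to the set of $n$-tuples of points in $\mathcal{E}$ sitting over a common $\omega$. Recall that, writing $d \mu (\omega, x)= d \mu_\omega (x) d \mathbb{P} (\omega)$ for the disintegration of $\mu$ over $\mathcal{P}^{\mathcal{F}_\mathcal{E}} (\mathcal{E}, (\mathcal{F}\times \mathcal{B}_X)\cap \mathcal{E}, \mu, G)$ (which refines $\mathcal{F}_\mathcal{E}$ — see the discussion preceding Lemma \ref{1006291630}), one has
\begin{equation*}
\lambda_n^{\mathcal{F}_\mathcal{E}} (\mu)= \int_\Omega \mu_\omega\times \cdots\times \mu_\omega\ (\text{$n$-times})\ d \mathbb{P} (\omega),
\end{equation*}
and each $\mu_\omega$ is a Borel probability measure on $\mathcal{E}_\omega\subseteq X$ for $\mathbb{P}$-a.e. $\omega$. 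Hence for any measurable rectangle $\prod_{i=1}^n (A_i\times B_i)$ with $A_i\in \mathcal{F}$, $B_i\in \mathcal{B}_X$, we have
\begin{equation*}
\lambda_n^{\mathcal{F}_\mathcal{E}} (\mu) \left(\prod_{i=1}^n (A_i\times B_i)\right)= \int_{\bigcap_{i=1}^n A_i} \prod_{i=1}^n \mu_\omega (B_i)\, d \mathbb{P} (\omega),
\end{equation*}
which vanishes as soon as $\mathbb{P} (\bigcap_{i=1}^n A_i)= 0$. The content of the lemma is the topological consequence: a point of $\text{supp} (\lambda_n^{\mathcal{F}_\mathcal{E}} (\mu))$ must have all its $\Omega$-coordinates equal.

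First I would argue by contradiction: suppose $((\omega_1, x_1), \cdots, (\omega_n, x_n))\in \text{supp} (\lambda_n^{\mathcal{F}_\mathcal{E}} (\mu))$ but $\omega_p\neq \omega_q$ for some $1\le p< q\le n$. Since $\Omega$ is Hausdorff, choose disjoint open sets $U_p\ni \omega_p$ and $U_q\ni \omega_q$ in $\Omega$; for the remaining indices $i$ put $U_i= \Omega$, and for every $i$ pick an open neighbourhood $W_i\ni x_i$ in $X$ (say $W_i= X$). Then $O= \prod_{i=1}^n (U_i\times W_i)$ is an open neighbourhood of the given point in $(\Omega\times X)^n$, so by definition of the support we must have $\lambda_n^{\mathcal{F}_\mathcal{E}} (\mu) (O)> 0$. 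On the other hand, applying the displayed rectangle formula with $A_i= U_i$ and $B_i= W_i$ gives
\begin{equation*}
\lambda_n^{\mathcal{F}_\mathcal{E}} (\mu) (O)= \int_{\bigcap_{i=1}^n U_i} \prod_{i=1}^n \mu_\omega (W_i)\, d \mathbb{P} (\omega)= 0,
\end{equation*}
because $\bigcap_{i=1}^n U_i\subseteq U_p\cap U_q= \emptyset$. This contradiction forces $\omega_1= \cdots= \omega_n$.

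The only point requiring a little care — and the main (mild) obstacle — is the measurability bookkeeping: one must make sure that $O$ is actually a measurable set to which the disintegration formula applies, i.e. that $U_i\in \mathcal{F}= \mathcal{B}_\Omega$ (immediate, as $U_i$ is open) and that the identity $\lambda_n^{\mathcal{F}_\mathcal{E}} (\mu) (\prod_i (A_i\times B_i))= \int_{\bigcap_i A_i} \prod_i \mu_\omega (B_i)\, d \mathbb{P} (\omega)$, which is how $\lambda_n^{\mathcal{F}_\mathcal{E}} (\mu)$ was defined (on such rectangles, with $\mathcal{P}^{\mathcal{F}_\mathcal{E}}$-conditional expectations replaced here by $\mu_\omega$ via the disintegration over $\mathcal{F}_\mathcal{E}$, legitimate since $\mathcal{F}_\mathcal{E}\subseteq \mathcal{P}^{\mathcal{F}_\mathcal{E}} (\mathcal{E}, \cdots, G)$ and so $\mu (B|\mathcal{F}_\mathcal{E})(\omega)= \mu_\omega (B_\omega)$ as in \eqref{1006271735}), genuinely holds; all of this is routine and already implicit in the construction of $\lambda_n^{\mathcal{C}} (\nu)$ and in \eqref{1006281756}. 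Once that is in place the argument above is complete; note that it does not even use the hypothesis that $\mu$ is $G$-invariant, only that $\mu\in \mathcal{P}_\mathbb{P} (\mathcal{E})$ and that $\Omega$ is Hausdorff with $\mathcal{F}= \mathcal{B}_\Omega$.
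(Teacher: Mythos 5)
Your overall strategy coincides with the paper's: separate $\omega_p$ and $\omega_q$ by disjoint open sets (Hausdorffness), show the corresponding product neighbourhood has $\lambda_n^{\mathcal{F}_\mathcal{E}} (\mu)$-measure zero, and contradict membership in the support. The gap lies in how you justify the measure-zero step. You assert $\lambda_n^{\mathcal{F}_\mathcal{E}} (\mu)= \int_\Omega \mu_\omega\times \cdots\times \mu_\omega\, d \mathbb{P} (\omega)$, describing $d \mu_\omega (x)\, d \mathbb{P} (\omega)$ as ``the disintegration of $\mu$ over $\mathcal{P}^{\mathcal{F}_\mathcal{E}} (\mathcal{E}, (\mathcal{F}\times \mathcal{B}_X)\cap \mathcal{E}, \mu, G)$''. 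That is not coherent: a disintegration over the relative Pinsker algebra, which contains $\mathcal{F}_\mathcal{E}$ and is in general strictly larger, cannot have fibres indexed by $\omega$ alone; the fibres $\mu_\omega$ you describe are those of the disintegration over $\mathcal{F}_\mathcal{E}$, while the correct product representation (the one the paper records, and which moreover presupposes a Lebesgue-type hypothesis so that the disintegration exists) is $\lambda_n^{\mathcal{F}_\mathcal{E}} (\mu)= \int_{\mathcal{E}} \nu_y\times \cdots\times \nu_y\, d \mu (y)$ with fibres $\nu_y$ indexed by points $y\in \mathcal{E}$, over $\mathcal{P}^{\mathcal{F}_\mathcal{E}}$. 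Consequently your general rectangle identity $\lambda_n^{\mathcal{F}_\mathcal{E}} (\mu) \bigl(\prod_{i= 1}^n (A_i\times B_i)\bigr)= \int_{\bigcap_i A_i} \prod_i \mu_\omega (B_i)\, d \mathbb{P} (\omega)$ is false: take $n= 2$, $A_1= A_2= \Omega$, $B_2= B_1^c$ in a system with zero fibre entropy, so that $\mathcal{P}^{\mathcal{F}_\mathcal{E}}$ is the whole $\sigma$-algebra mod $\mu$; then the left-hand side is $\mu \bigl((\Omega\times (B_1\cap B_1^c))\cap \mathcal{E}\bigr)= 0$, whereas the right-hand side is $\int_\Omega \mu_\omega (B_1) \mu_\omega (B_1^c)\, d \mathbb{P} (\omega)$, which is positive in general. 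The inclusion $\mathcal{F}_\mathcal{E}\subseteq \mathcal{P}^{\mathcal{F}_\mathcal{E}}$ does not allow you to replace $\mathcal{P}^{\mathcal{F}_\mathcal{E}}$-conditional expectations by $\mathcal{F}_\mathcal{E}$-conditional expectations inside a product of conditional expectations.

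The repair is short, and it is exactly what the paper does. For the sets you actually use you have $B_i= X$, so $1_{(U_i\times X)\cap \mathcal{E}}$ is $\mathcal{F}_\mathcal{E}$-measurable, hence $\mathcal{P}^{\mathcal{F}_\mathcal{E}}$-measurable, and straight from the definition $\lambda_n^{\mathcal{F}_\mathcal{E}} (\mu) \bigl(\prod_{i= 1}^n (U_i\times X)\cap \mathcal{E}\bigr)= \int_{\mathcal{E}} \prod_{i= 1}^n 1_{(U_i\times X)\cap \mathcal{E}}\, d \mu= \mathbb{P} \bigl(\bigcap_{i= 1}^n U_i\bigr)$; the paper even proves the monotone bound $\lambda_n^{\mathcal{F}_\mathcal{E}} (\mu) (\prod_i A_i)\le \mathbb{P} (\bigcap_i \Omega_i)$ whenever $A_i\subseteq \Omega_i\times X$. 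With $U_p\cap U_q= \emptyset$ this quantity is $0$, and the remainder of your argument — including the observation that $G$-invariance of $\mu$ is not needed — goes through; note that this route uses only conditional expectations, so no disintegration (and hence no regularity assumption on $\Omega$ beyond $\mathcal{F}= \mathcal{B}_\Omega$ and Hausdorffness) is required.
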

\begin{proof}
For each $i= 1, \cdots, n$, assume that $A_i\in (\mathcal{F}\times \mathcal{B}_X)\cap \mathcal{E}$ satisfies $A_i\subseteq \Omega_i\times X$ for some $\Omega_i\in \mathcal{F}$, and observe that
\begin{equation} \label{1208032321}
(\Omega_i\times X)\cap \mathcal{E}\in \mathcal{F}_\mathcal{E}\subseteq \mathcal{P}^{\mathcal{F}_\mathcal{E}} (\mathcal{E}, (\mathcal{F}\times \mathcal{B}_X)\cap \mathcal{E}, \mu, G).
\end{equation}
Thus
\begin{eqnarray*}
\lambda_n^{\mathcal{F}_\mathcal{E}} (\mu) (\prod_{i= 1}^n A_i) &= & \int_\mathcal{E} \prod_{i= 1}^n \mu (A_i| \mathcal{P}^{\mathcal{F}_\mathcal{E}} (\mathcal{E}, (\mathcal{F}\times \mathcal{B}_X)\cap \mathcal{E}, \mu, G)) d \mu \\
&\le & \int_\mathcal{E} \prod_{i= 1}^n \mu ((\Omega_i\times X)\cap \mathcal{E}| \mathcal{P}^{\mathcal{F}_\mathcal{E}} (\mathcal{E}, (\mathcal{F}\times \mathcal{B}_X)\cap \mathcal{E}, \mu, G)) d \mu \\
&= & \int_\mathcal{E} \prod_{i= 1}^n 1_{(\Omega_i\times X)\cap \mathcal{E}} d \mu\ (\text{using \eqref{1208032321}}) \\
&= & \mu ((\bigcap_{i= 1}^n \Omega_i\times X)\cap \mathcal{E})= \mathbb{P} (\bigcap_{i= 1}^n \Omega_i).
\end{eqnarray*}
In particular, $\lambda_n^{\mathcal{F}_\mathcal{E}} (\mu) (\prod\limits_{i= 1}^n A_i)= 0$ once $\mathbb{P} (\bigcap\limits_{i= 1}^n \Omega_i)= 0$.

Now assume that $((\omega_1, x_1), \cdots, (\omega_n, x_n))\in \mathcal{E}^n$ such that $\omega_i\neq \omega_j$ for some $1\le i< j\le n$. Obviously there exist open neighborhoods $\Omega_i$ and $\Omega_j$ of $x_i$ and $x_j$, respectively, such that $\Omega_i\cap \Omega_j= \emptyset$. Thus, by the above discussions,
\begin{equation*}
\lambda_n^{\mathcal{F}_\mathcal{E}} (\mu) \left(\prod_{k\in \{1, \cdots, n\}\setminus \{i, j\}} (\Omega\times X)\cap \mathcal{E}\times \prod_{p= i, j} (\Omega_p\times X)\cap \mathcal{E}\right)= 0,
\end{equation*}
which implies $((\omega_1, x_1), \cdots, (\omega_n, x_n))\notin \text{supp} (\lambda_n^{\mathcal{F}_\mathcal{E}} (\mu))$. This finishes our proof.
\end{proof}

Hence one has:

\begin{thm} \label{1008081608}
Let $\mu\in \mathcal{P}_\mathbb{P} (\mathcal{E}, G)$ and $(x_1, \cdots, x_n)\in X^n\setminus \Delta_n (X), n\in \mathbb{N}\setminus \{1\}$. Assume that $\Omega$ is a topological space with $\mathcal{F}= \mathcal{B}_\Omega$. Then
\begin{enumerate}

\item \label{1008082244} $(a)\Longleftrightarrow (b)\Longleftarrow (c)$.


\item \label{1008082246} If, in addition, $\Omega$ is a compact metric space then $(a)\Longleftrightarrow (b)\Longleftrightarrow (c)$.
\end{enumerate}
Where:
\begin{enumerate}

\item[(a)] $(x_1, \cdots, x_n)\in E^{(r)}_{n, \mu} (\mathcal{E}, G)$.

\item[(b)] If $V_i$ is a Borel neighborhood of $x_i$ for each $i= 1, \cdots, n$ then
$$\lambda_n^{\mathcal{F}_\mathcal{E}} (\mu) (\prod\limits_{i= 1}^n (\Omega\times V_i)\cap \mathcal{E}^n)> 0.$$

\item[(c)] There exists $\omega\in \Omega$ such that $((\omega, x_1), \cdots, (\omega, x_n))\in \text{supp} (\lambda_n^{\mathcal{F}_\mathcal{E}} (\mu))$.
\end{enumerate}
\end{thm}
\begin{proof}
\eqref{1008082244}
Observing that $\lambda_n^{\mathcal{F}_\mathcal{E}} (\mu) (\mathcal{E}^n)= 1$,
 $(c)$ implies $(b)$. Reminder that $(\Omega, \mathcal{F}, \mathbb{P})$ is a Lebesgue space by Standard Assumption 3, and so
$(a)\Longleftrightarrow (b)$ follows from the definitions and Theorem \ref{1006301434}.

\eqref{1008082246} Now, in addition, we assume that $\Omega$ is a compact metric space. By \eqref{1008082244}, it remains to show $(b)\Longrightarrow (c)$.

 For each $\omega\in \Omega$ and $r> 0$ denote by $B (\omega, r)$ the open ball of $\Omega$ with center $\omega$ and radius $r$.
For any $m\in \mathbb{N}$, let $V_i^m$ be a Borel neighborhood of $x_i$ with diameter at most $\frac{1}{m}$ for each $i= 1, \cdots, n$. Our assumption is that
$$\lambda_n^{\mathcal{F}_\mathcal{E}} (\mu) (\prod\limits_{i= 1}^n (\Omega\times V_i^m)\cap \mathcal{E}^n)> 0$$
and $\Omega$ is a compact metric space. One has $\Omega_m\neq \emptyset$, where
\begin{equation} \label{1208032354}
\Omega_m= \{(\omega_1, \cdots, \omega_n)\in \Omega^n: \lambda_n^{\mathcal{F}_\mathcal{E}} (\mu) \left(\prod_{i= 1}^n \left(B (\omega_i, \frac{1}{m})\times V_i^m\right)\cap \mathcal{E}^n\right)> 0\}.
\end{equation}

Set $\Omega^*= \bigcap\limits_{m\in \mathbb{N}} \overline{\Omega_m}$.
From the definition of \eqref{1208032354}, for each $m_1\in \N$ there exists $M\in \N$ such that if $m\ge M$ then $\Omega_m\subseteq \Omega_{m_1}$. Now $\Omega^n$ is also a compact metric space, and so $\Omega^*\subseteq \Omega^n$ is a non-empty subset.

   Now let $(\omega_1, \cdots, \omega_n)\in \Omega^*$ and let $V$ be a Borel neighborhood of $((\omega_1, x_1), \cdots,$ $(\omega_n, x_n))$. By the construction of $\Omega^*$,
for $m\in \mathbb{N}$ sufficiently large, there exists $(\omega_1^m, \cdots, \omega_n^m)\in \Omega_m$ such that, if $V_i$ is the closed ball in $X$ with center $x_i$ and radius $\frac{1}{m}$ for each $i= 1, \cdots, n$ then $\prod\limits_{i= 1}^n B (\omega_i^m, \frac{1}{m})\times V_i \subseteq V$. Hence
$$\lambda_n^{\mathcal{F}_\mathcal{E}} (\mu) (V)\ge \lambda_n^{\mathcal{F}_\mathcal{E}} (\mu) \left(\prod_{i= 1}^n \left(B (\omega_i^m, \frac{1}{m})\times V_i\right)\cap \mathcal{E}^n\right) > 0.$$
   Since $V$ is arbitrary, one has $((\omega_1, x_1), \cdots, (\omega_n, x_n))\in \text{supp} (\lambda_n^{\mathcal{F}_\mathcal{E}} (\mu))$, and, in addition, $\omega_1= \cdots= \omega_n$ by Lemma \ref{1008081528}. This finishes the proof.
\end{proof}

As a direct corollary of Theorem \ref{1007232313} and Theorem \ref{1008081608}, one has:

\begin{thm} \label{conclusion}
Let $\mu\in \mathcal{P}_\mathbb{P} (\mathcal{E}, G)$ and $n\in \mathbb{N}\setminus \{1\}$ with $\pi_n: (\Omega\times X)^n\rightarrow X^n$ the natural projection. Assume that $\Omega$ is a compact metric space with $\mathcal{F}= \mathcal{B}_\Omega$. Then
$$E^{(r)}_{n, \mu} (\mathcal{E}, G)= \pi_n (\text{supp} (\lambda_n^{\mathcal{F}_\mathcal{E}} (\mu))) \setminus \Delta_n (X),$$
$$_\mathbb{P}E_n^{(r)} (\mathcal{E}, G)= \pi_n \left(\bigcup_{\nu\in \mathcal{P}_\mathbb{P} (\mathcal{E}, G)} \text{supp} (\lambda_n^{\mathcal{F}_\mathcal{E}} (\nu))\right) \setminus \Delta_n (X).$$
\end{thm}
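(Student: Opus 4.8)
\textbf{Proof plan for Theorem \ref{conclusion}.}
The plan is to deduce the two displayed identities from the characterization of $\mu$-fiber entropy tuples given in Theorem \ref{1008081608} together with the structural facts about $\lambda_n^{\mathcal{F}_\mathcal{E}}(\mu)$ collected in Lemma \ref{small} and Lemma \ref{1008081528}. Since $\Omega$ is now assumed to be a compact metric space with $\mathcal{F}=\mathcal{B}_\Omega$, the probability space $(\Omega,\mathcal{F},\mathbb{P})$ is in particular a Lebesgue space, so all the tools of the previous sections (Theorem \ref{1006301434}, Theorem \ref{1007141414}, Theorem \ref{1007232313}) are available, and moreover $(\Omega\times X,\mathcal{B}_\Omega\times\mathcal{B}_X)$ is a compact metric space, so the equivalence $(a)\Longleftrightarrow(b)\Longleftrightarrow(c)$ in Theorem \ref{1008081608}\eqref{1008082246} applies to every $\nu\in\mathcal{P}_\mathbb{P}(\mathcal{E},G)$.

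First I would prove the identity for a fixed $\mu$. Let $(x_1,\dots,x_n)\in X^n\setminus\Delta_n(X)$. By Theorem \ref{1008081608}\eqref{1008082246}, $(x_1,\dots,x_n)\in E^{(r)}_{n,\mu}(\mathcal{E},G)$ if and only if there is some $\omega\in\Omega$ with $((\omega,x_1),\dots,(\omega,x_n))\in\mathrm{supp}(\lambda_n^{\mathcal{F}_\mathcal{E}}(\mu))$; by Lemma \ref{1008081528} such an $n$-tuple in the support automatically has all $\Omega$-coordinates equal, so this is exactly the statement that $(x_1,\dots,x_n)$ lies in $\pi_n(\mathrm{supp}(\lambda_n^{\mathcal{F}_\mathcal{E}}(\mu)))$, where $\pi_n\colon(\Omega\times X)^n\to X^n$ is the natural projection. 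Intersecting with $X^n\setminus\Delta_n(X)$ on both sides (which is harmless on the left since $E^{(r)}_{n,\mu}(\mathcal{E},G)$ is by definition disjoint from $\Delta_n(X)$) gives
\begin{equation*}
E^{(r)}_{n,\mu}(\mathcal{E},G)=\pi_n(\mathrm{supp}(\lambda_n^{\mathcal{F}_\mathcal{E}}(\mu)))\setminus\Delta_n(X).
\end{equation*}
One small point to check carefully: Theorem \ref{1008081608} as stated fixes an $n$-tuple not in $\Delta_n(X)$, so the derivation above only describes points of $\pi_n(\mathrm{supp}(\lambda_n^{\mathcal{F}_\mathcal{E}}(\mu)))$ lying off the diagonal; but that is precisely what the set-difference with $\Delta_n(X)$ records, so no information is lost.

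For the second identity I would combine the first with Theorem \ref{1007232313}\eqref{1008052352}\eqref{1008081647}, which says $_\mathbb{P}E_n^{(r)}(\mathcal{E},G)=\bigcup_{\nu\in\mathcal{P}_\mathbb{P}(\mathcal{E},G)}E^{(r)}_{n,\nu}(\mathcal{E},G)$. Substituting the formula just proved for each $E^{(r)}_{n,\nu}(\mathcal{E},G)$ and using that union commutes with the projection $\pi_n$ and with the set-difference by the fixed set $\Delta_n(X)$, one gets
\begin{equation*}
_\mathbb{P}E_n^{(r)}(\mathcal{E},G)=\bigcup_{\nu\in\mathcal{P}_\mathbb{P}(\mathcal{E},G)}\bigl(\pi_n(\mathrm{supp}(\lambda_n^{\mathcal{F}_\mathcal{E}}(\nu)))\setminus\Delta_n(X)\bigr)=\pi_n\Bigl(\bigcup_{\nu\in\mathcal{P}_\mathbb{P}(\mathcal{E},G)}\mathrm{supp}(\lambda_n^{\mathcal{F}_\mathcal{E}}(\nu))\Bigr)\setminus\Delta_n(X),
\end{equation*}
which is the asserted equality. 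The only real obstacle is making sure the hypotheses of Theorem \ref{1008081608}\eqref{1008082246} are genuinely in force — i.e. that passing from ``$\Omega$ a Lebesgue space'' to ``$\Omega$ a compact metric space with $\mathcal{F}=\mathcal{B}_\Omega$'' is exactly what is needed for implication $(b)\Longrightarrow(c)$ (which uses the compactness of $\Omega$ to extract a limit point $\omega$) — and checking that Lemma \ref{1008081528} applies with the Hausdorff hypothesis, which is automatic for a compact metric space. Everything else is bookkeeping with unions, projections, and the diagonal.
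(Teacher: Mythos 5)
Your proposal is correct and follows essentially the same route as the paper, which presents Theorem \ref{conclusion} precisely as a direct corollary of Theorem \ref{1007232313} and Theorem \ref{1008081608} (with Lemma \ref{1008081528} supplying the equality of the $\Omega$-coordinates needed to pass from $\pi_n(\mathrm{supp}(\lambda_n^{\mathcal{F}_\mathcal{E}}(\mu)))$ back to condition $(c)$). Your attention to where compactness of $\Omega$ enters and to the bookkeeping with $\Delta_n(X)$ matches the intended argument, so there is nothing to add.
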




 \section{Applications to topological dynamical systems} \label{factor}

 In this section, we apply results obtained in the previous sections to the case of a topological dynamical system.
  We recover many recent results in the local entropy theory of $\Z$-actions (cf \cite{B2, BHMMR, G, G1, GY, HY, HYZ1, HYZ2}) and of infinite countable discrete amenable group actions (cf \cite{HYZ}).
  We also prove new results, some of which are novel even in the case of infinite countable discrete amenable groups, for example Theorem \ref{1010201200} and Theorem \ref{1008301614}.

 \subsection{Preparations on topological dynamical systems}\label{1208042310}\

 \medskip

 Recall from \S \ref{third} that if $Y$ is a compact metric space and $G$ is a group of homeomorphisms of $Y$ with $e_G$ acting as the identity, then we say $(Y, G)$ is a TDS.

 We denote by $\mathcal{P} (Y, G)$ the set of all $G$-invariant elements of $\mathcal{P} (Y)$, which we suppose equipped with the weak* topology. Then $\mathcal{P} (Y, G)$ is a non-empty compact metric space. For each $\nu\in \mathcal{P} (Y)$, clearly $(Y, \mathcal{B}_Y^\nu, \nu)$ (also denoted by $(Y, \mathcal{B}_Y, \nu)$ where there is no ambiguity) is a Lebesgue space, where $\mathcal{B}_Y$ is the Borel $\sigma$-algebra of $Y$ and $\mathcal{B}_Y^\nu$ is the $\nu$-completion of $\mathcal{B}_Y$.

Recall that $\pi: (Y_1, G)\rightarrow (Y_2, G)$ is a \emph{factor map from TDS $(Y_1, G)$ to TDS $(Y_2, G)$} if $\pi: Y_1\rightarrow Y_2$ is a continuous surjection which intertwines the actions of $G$ (i.e. $\pi\circ g (y_1)= g\circ \pi (y_1)$ for each $g\in G$ and any $y_1\in Y_1$).

\medskip

Let $\pi: (Y_1, G)\rightarrow (Y_2, G)$ be a factor map between TDS's and $\mathcal{W}\in \mathbf{C}_{Y_1}, \nu_1\in \mathcal{P} (Y_1, G)$.
Observe that the sub-$\sigma$-algebra $\pi^{- 1} \mathcal{B}_{Y_2}\subseteq \mathcal{B}_{Y_1}$ is $G$-invariant, so
we may introduce
the {\it measure-theoretic
$\nu_1$-entropy of $\mathcal{W}$ relative to $\pi$} by
\begin{equation} \label{1208050105}
h_{\nu_1} (G, \mathcal{W}| \pi)= h_{\nu_1} (G, \mathcal{W}| \pi^{- 1} \mathcal{B}_{Y_2})= h_{\nu_1, +} (G, \mathcal{W}| \pi^{- 1} \mathcal{B}_{Y_2}).
\end{equation}
The second equality follows from Theorem \ref{1006272300}, since $(Y_1, \mathcal{B}_{Y_1}, \nu_1)$ is a Lebesgue space.
Finally, the \emph{measure-theoretic $\nu_1$-entropy of $(Y_1, G)$ relative to $\pi$} is defined by
\begin{equation*}
h_{\nu_1} (G, Y_1| \pi)= h_{\nu_1} (G, Y_1| \pi^{- 1} \mathcal{B}_{Y_2}).
\end{equation*}

Now assume that $\mathcal{W}\in \mathbf{C}_{Y_1}^o$. For each $y_2\in Y_2$ let $N (\mathcal{W}, \pi^{- 1} y_2)$ be the minimal cardinality of a sub-family of $\mathcal{W}$ covering $\pi^{- 1} (y_2)$ and put
$$N (\mathcal{W}| \pi)= \sup\limits_{y_2\in Y_2} N (\mathcal{W}, \pi^{- 1} y_2).$$
It is easy to see that
$$\log N (\mathcal{W}_\bullet| \pi): \mathcal{F}_G\rightarrow \R, F\mapsto \log N (\mathcal{W}_F| \pi)$$ is a monotone non-negative
$G$-invariant sub-additive function, and so by Proposition \ref{1006122129}
we may define the \emph{topological entropy of $\mathcal{W}$ relative to $\pi$} as
\begin{equation*}
h_{\text{top}} (G, \mathcal{W}| \pi)= \lim_{n\rightarrow \infty} \frac{1}{|F_n|} \log N (\mathcal{W}_{F_n}| \pi).
\end{equation*}
Lastly, the \emph{topological entropy of $(Y_1, G)$ relative to $\pi$} is defined by:
\begin{equation*}
h_{\text{top}} (G, Y_1| \pi)= \sup_{\mathcal{W}\in \mathbf{C}_{Y_1}^o} h_{\text{top}} (G, \mathcal{W}| \pi).
\end{equation*}

In fact, the concepts of monotonicity, sub-additivity and $G$-invariance can be introduced as above for functions in the space $C (Y_1)$ of all real-valued continuous functions on $Y_1$. Then, following \S \ref{fourth}, let $\mathbf{D}= \{d_F: F\in \mathcal{F}_G\}\subseteq C (Y_1)$ be a monotone sub-additive $G$-invariant family. For each $\nu_1\in \mathcal{P} (Y_1, G)$ and all $\mathcal{W}\in \mathbf{C}_{Y_1}^o, y_2\in Y_2, F\in \mathcal{F}_G$, we define
$$\nu_1 (\mathbf{D})= \lim_{n\rightarrow \infty} \frac{1}{|F_n|} \int_{Y_1} d_{F_n} (y_1) d \nu_1 (y_1),$$
$$P_\pi (y_2, \mathbf{D}, F, \mathcal{W})= \inf \left\{\sum_{A\in \alpha} \sup_{x\in A\cap \pi^{- 1} (y_2)} e^{d_F (x)}: \alpha\in
\mathbf{P}_{Y_1}, \alpha\succeq
\mathcal{W}_F\right\}.$$

With the above notation, we have the following easy observation.

\begin{prop} \label{1208042200}
Let $y_2\in Y_2, g\in G$ and $E, F\in \mathcal{F}_G$ with $E\cap F= \emptyset$. Then
$$P_\pi (y_2, \mathbf{D}, F g, \mathcal{W})= P_\pi (g y_2, \mathbf{D}, F, \mathcal{W}), \,\, \mathrm{and}$$
$$P_\pi (y_2, \mathbf{D}, E\cup F, \mathcal{W})\le P_\pi (y_2, \mathbf{D}, E, \mathcal{W})\cdot P_\pi (y_2, \mathbf{D}, F, \mathcal{W}).$$
\end{prop}

 By Proposition \ref{1208042200}, one readily deduces that
 $$\mathcal{F}_G\rightarrow \R, F\mapsto \sup_{y_2\in Y_2} \log P_\pi (y_2, \mathbf{D}, F, \mathcal{W})$$
 is a monotone non-negative $G$-invariant sub-additive function. Hence
we can define
$$P_\pi (\mathbf{D}, \mathcal{W})= \lim_{n\rightarrow \infty} \frac{1}{F_n} \sup_{y_2\in Y_2} \log P_\pi (y_2, \mathbf{D}, F_n, \mathcal{W}).$$
We may further define
$$P_\pi (\mathbf{D})= \sup_{\mathcal{U}\in \mathbf{C}_{Y_1}^o} P_\pi (\mathbf{D}, \mathcal{U}).$$

\medskip

Let $\pi: (Y_1, G)\rightarrow (Y_2, G)$ be a factor map between TDS's, $\nu_1\in \mathcal{P} (Y_1, G)$ and $(x_1, \cdots, x_n)\in Y_1^n\setminus \Delta_n (Y_1), n\in \mathbb{N}\setminus \{1\}$. $(x_1, \cdots, x_n)$ is called a:
\begin{enumerate}

\item \emph{relative topological entropy $n$-tuple relevant to $\pi$} if: For any $m\in \mathbb{N}$, there exists a closed neighborhood $V_i$ of $x_i$ with diameter at most $\frac{1}{m}$ for each $i= 1, \cdots, n$, such that $\mathcal{V}\doteq \{V_1^c, \cdots, V_n^c\}\in \mathbf{C}^o_{Y_1}$ and $h_{\text{top}} (G, \mathcal{V}| \pi)> 0$.

\item \emph{relative measure-theoretic $\nu_1$-entropy $n$-tuple relevant to $\pi$} if: For any $m\in \mathbb{N}$, there exists a closed neighborhood $V_i$ of $x_i$ with diameter at most $\frac{1}{m}$ for each $i= 1, \cdots, n$, such that $\mathcal{V}\doteq \{V_1^c, \cdots, V_n^c\}\in \mathbf{C}^o_{Y_1}$ and $h_{\nu_1} (G, \mathcal{V}| \pi)> 0$.
\end{enumerate}
Denote by $E_n (Y_1, G| \pi)$ and $E_n^{\nu_1} (Y_1, G| \pi)$ the set of all relative topological entropy $n$-tuples relevant to $\pi$ and relative measure-theoretic $\nu_1$-entropy $n$-tuples relevant to $\pi$, respectively.

Notice that, when the factor map is trivial in the sense that $Y_2$ is a singleton, these notions of entropy tuples in both settings cover the standard definitions for $\Z$-actions and more
generally for actions of an infinite countable discrete amenable group (cf \cite{B2, BHMMR, HY, HYZ2, HYZ}).

\subsection{Equivalence of a topological dynamical system with a particular continuous bundle random dynamical system}\label{1208042024}\

\medskip

In this subsection we show that the above definition of a topological dynamical system is a special case of a particular continuous bundle RDS.

\medskip

To do this,  suppose that $\pi: (Y_1, G)\rightarrow (Y_2, G)$ be a factor map of TDS's, let $\nu_2\in \mathcal{P} (Y_2, G), \mathcal{V}\in \mathbf{C}_{Y_1}$ and let $\mathbf{D}= \{d_F: F\in \mathcal{F}_G\}\subseteq C (Y_1)$ be a monotone sub-additive $G$-invariant family. For each $g\in G$ and for any $y_2\in Y_2$, set
$$F^\pi_{g, y_2}: \{y_2\}\times \pi^{- 1} (y_2)\rightarrow \{g y_2\}\times \pi^{- 1} (g y_2), (y_2, y_1)\mapsto (g y_2, g y_1)$$
 and
 $$\mathcal{E}_\pi= \{(y_2, y_1)\in Y_2\times Y_1: \pi (y_1)= y_2\}.$$
It is easy to see that $\mathcal{E}_\pi$ is a non-empty compact subset of $Y_2\times Y_1$, and $G$ acts naturally on $\mathcal{E}_\pi$. One checks that the family
 $$\mathbf{F}^\pi\doteq \{F^\pi_{g, y_2}: \{y_2\}\times \pi^{- 1} (y_2)\rightarrow \{g y_2\}\times \pi^{- 1} (g y_2)| g\in G, y_2\in Y_2\}$$
  forms a continuous bundle RDS over MDS $(Y_2, \mathcal{B}_{Y_2}, \nu_2, G)$ with $(Y_2, \mathcal{B}_{Y_2}, \nu_2)$ a Lebesgue space, and the family $\mathbf{D}$ may be viewed as a monotone sub-additive $G$-invariant family $\mathbf{D}^\pi= \{d_F^\pi: F\in \mathcal{F}_G\}\subseteq \mathbf{L}^1_{\mathcal{E}_\pi} (Y_2, C (Y_1))$ by the natural map
   \begin{equation*}
   d_F^\pi (y_2, y_1)= d_F (y_1)\ \text{for any}\ (y_2, y_1)\in \mathcal{E}_\pi.
   \end{equation*}

For each $V\in \mathcal{V}$, we can define
\begin{equation} \label{defn1}
V^\pi= \{(\pi y_1, y_1): y_1\in V\}= (Y_2\times V)\cap \mathcal{E}_\pi,
\end{equation}
and it follows immediately that
\begin{equation} \label{defn2}
  \mathcal{V}^\pi\doteq \{V^\pi: V\in \mathcal{V}\}\in \mathbf{C}_{\mathcal{E}_\pi}.
\end{equation}
In fact, if $\mathcal{V}\in \mathbf{C}_{Y_1}^o$ then it is simple to see that $\mathcal{V}^\pi\in \mathbf{C}_{\mathcal{E}_\pi}^o$.
 Henceforth, for the state space $(Y_2, \mathcal{B}_{Y_2}, \nu_2, G)$ we take
 \begin{equation*}
  _{\nu_2} h_{\text{top}}^{(r)} (\mathbf{F}^\pi), _{\nu_2} h_{\text{top}}^{(r)} (\mathbf{F}^\pi, \mathcal{V}^\pi), _{\nu_2} P_{\mathcal{E}_\pi} (\mathbf{D}^\pi, \mathbf{F}^\pi), _{\nu_2} P_{\mathcal{E}_\pi} (\mathbf{D}^\pi, \mathcal{V}^\pi, \mathbf{F}^\pi)
 \end{equation*}
  to be the fiber topological entropy of $\mathbf{F}^\pi$ (with respect to $\mathcal{V}^\pi$) and the fiber topological $\mathbf{D}^\pi$-pressure of $\mathbf{F}^\pi$ (with respect to $\mathcal{V}^\pi$), respectively.

 Moreover, observing that $\mathcal{E}_\pi$ is identical to $Y_1$ by the natural homeomorphism $(y_2, y_1)\mapsto y_1$, there is a natural one-to-one map between $\mathcal{P}_{\nu_2} (\mathcal{E}_\pi, G)$ and
 \begin{equation} \label{1208042247}
 \{\nu_1\in \mathcal{P} (Y_1, G): \pi \nu_1= \nu_2\}\ \text{(denoted by $\mathcal{P}_{\nu_2} (Y_1, G)$)},
 \end{equation}
 a non-empty compact subset of $\mathcal{P} (Y_1, G)$.

 Similarly, there is a natural one-to-one map between $\mathcal{P}_{\nu_2} (\mathcal{E}_\pi)$ and
 \begin{equation*}
 \{\nu_1\in \mathcal{P} (Y_1): \pi \nu_1= \nu_2\}\ \text{(denoted by $\mathcal{P}_{\nu_2} (Y_1)$)},
 \end{equation*}
 which extends the one-to-one map between $\mathcal{P}_{\nu_2} (\mathcal{E}_\pi, G)$ and $\mathcal{P}_{\nu_2} (Y_1, G)$.
In fact, let $\{\nu_1^n: n\in \mathbb{N}\}\subseteq \mathcal{P}_{\nu_2} (\mathcal{E}_\pi)$ and $\nu_1\in \mathcal{P}_{\nu_2} (\mathcal{E}_\pi)$, and assume that $\mu_1^n, n\in \mathbb{N}$ and $\mu_1$ is the natural correspondence of $\nu_1^n, n\in \mathbb{N}$ and $\nu_1$ in $\mathcal{P}_{\nu_2} (Y_1)$, respectively. Then it is not hard to check that the following statements are equivalent:
\begin{enumerate}

\item \label{1008281117} the sequence $\{\nu_1^n: n\in \mathbb{N}\}$ converges to $\nu_1$;

\item \label{1008281118} the sequence $\{\int_{Y_2\times Y_1} f d \nu_1^n: n\in \mathbb{N}\}$ converges to $\int_{Y_2\times Y_1} f d \nu_1$ for any $f\in C (Y_2\times Y_1)$;

\item \label{1008281119} the sequence $\{\int_{\mathcal{E}_\pi} f d \nu_1^n: n\in \mathbb{N}\}$ converges to $\int_{\mathcal{E}_\pi} f d \nu_1$ for any $f\in C (\mathcal{E}_\pi)$;

\item \label{1008281120} the sequence $\{\mu_1^n: n\in \mathbb{N}\}$ converges to $\mu_1$ in the sense of the weak* topology on $\mathcal{P} (Y_1)$, i.e. the sequence $\{\int_{Y_1} f d \mu_1^n: n\in \mathbb{N}\}$ converges to $\int_{Y_1} f d \mu_1$ for any $f\in C (Y_1)$.
\end{enumerate}
In fact, the equivalence of \eqref{1008281117} and \eqref{1008281118} follows from the ideas in the proof of \cite[Lemma 2.1]{K1}; note that $\mathcal{E}_\pi$ is a non-empty compact subset of the compact metric space $Y_2\times Y_1$, the equivalence of \eqref{1008281118} and \eqref{1008281119} is obvious; and note that $\mathcal{E}_\pi$ is identical to $Y_1$ by homeomorphism $(y_2, y_1)\mapsto y_1$, the equivalence of \eqref{1008281119} and \eqref{1008281120} is natural.

From the above arguments, as topological spaces, $\mathcal{P}_{\nu_2} (\mathcal{E}_\pi)$ is identical to $\mathcal{P}_{\nu_2} (Y_1)$ by the natural homeomorphism, which is also a homeomorphism from $\mathcal{P}_{\nu_2} (\mathcal{E}_\pi, G)$ onto $\mathcal{P}_{\nu_2} (Y_1, G)$.
It is not hard to check the following observations:
\begin{enumerate}

\item If $\mathcal{V}\in \mathbf{C}_{Y_1}^o$ then,
by the constructions \eqref{defn1} and \eqref{defn2} of $\mathcal{V}^\pi$, and using Theorem \ref{1007212202}, we see that $\mathcal{V}^\pi\in \mathbf{C}_{\mathcal{E}_\pi}^o$ is factor excellent.

\item For each $\nu_1\in \mathcal{P} (Y_1, G)$, $\nu_1 (\mathbf{D}^\pi)= \nu_1 (\mathbf{D})$
 and
\begin{equation} \label{123}
h_{\nu_1}^{(r)} (\mathbf{F}^\pi, \mathcal{V}^\pi)= h_{\nu_1} (G, \mathcal{V}| \pi)\ \text{for any}\ \mathcal{V}\in \mathbf{C}_{Y_1}.
\end{equation}
Hence by Theorem \ref{1006122212}, one has
$h_{\nu_1}^{(r)} (\mathbf{F}^\pi)= h_{\nu_1} (G, Y_1| \pi)$.

\item For each $\nu_2\in \mathcal{P} (Y_2, G)$,
$$_{\nu_2} h_{\text{top}}^{(r)} (\mathbf{F}^\pi, \mathcal{V}^\pi)= \lim_{n\rightarrow \infty} \frac{1}{|F_n|} \int_{Y_2} \log N (\mathcal{V}_{F_n}, \pi^{- 1} (y_2)) d \nu_2 (y_2),$$
\begin{equation} \label{1234}
_{\nu_2} P_{\mathcal{E}_\pi} (\mathbf{D}^\pi, \mathcal{V}^\pi, \mathbf{F}^\pi)= \lim_{n\rightarrow \infty} \frac{1}{|F_n|} \int_{Y_2} \log P_\pi (y_2, \mathbf{D}, F_n, \mathcal{V}) d \nu_2 (y_2).
\end{equation}

\item For each $\nu_1\in \mathcal{P} (Y_1, G)$, $E_n^{\nu_1} (Y_1, G| \pi)$ $= E_{n, \nu_1}^{(r)} (\mathcal{E}_\pi, G)$ for any $n\in \N\setminus \{1\}$.
\end{enumerate}
The straightforward details of checking these observations are left to the reader.

\medskip

\subsection{The equations \eqref{1207291438} and \eqref{1207291439} imply main results of \cite{LW}}\label{1208042011} \

\medskip

In this subsection, we show how to obtain the main results of \cite{LW}, using equations \eqref{1207291438} and \eqref{1207291439} and the equivalence given by \S\S \ref{1208042024}.

\medskip

Let $\pi: (Y_1, G)\rightarrow (Y_2, G)$ be a factor map between TDS's and $\nu_2\in \mathcal{P} (Y_2, G), \mathcal{V}\in \mathbf{C}_{Y_1}, f\in C (Y_1)$.
As shown in \S\S \ref{1208042024}, we may view  $\pi: (Y_1, G)\rightarrow (Y_2, G)$ and $\nu_2, \mathcal{V}, f$ as a continuous bundle RDS with:
\begin{enumerate}

\item \label{1111} $(\Omega, \mathcal{F}, \mathbb{P}, G)= (Y_2, \mathcal{B}_{Y_2}^{\nu_2}, \nu_2, G)$, where $\mathcal{B}_{Y_2}$ is the Borel $\sigma$-algebra of $Y_2$ and $\mathcal{B}_{Y_2}^{\nu_2}$ is the $\nu_2$-completion of $\mathcal{B}_{Y_2}$,

\item \label{2222} $\mathcal{E}= \{(y_2, y_1)\in Y_2\times Y_1: \pi (y_1)= y_2\}\in \mathcal{B}_{Y_2}^{\nu_2}\times \mathcal{B}_{Y_1}$,

\item \label{3333}
 $\mathbf{F}= \{F_{g, y_2}: \{y_2\}\times \pi^{- 1} (y_2)\rightarrow \{g y_2\}\times \pi^{- 1} (g y_2)| g\in G, y_2\in Y_2\}$, where
 $F_{g, y_2}: (y_2, y_1)\mapsto (g y_2, g y_1)$ for each $g\in G$ and any $y_2\in Y_2, y_1\in \pi^{- 1} (y_2)$,

 \item \label{4444} $\mathcal{U}= \{(Y_2\times V)\cap \mathcal{E}: V\in \mathcal{V}\}\in \mathbf{C}_\mathcal{E}^o$ is factor excellent, and

\item \label{5555} $\mathbf{D}= \{d_F: F\in \mathcal{F}_G\}$, where $d_F (y_2, y_1)= \sum\limits_{g\in F} f (g y_1)$ for each $(y_2, y_1)\in \mathcal{E}$.
\end{enumerate}
It is easy to check that, $\mathbf{D}$ is a sub-additive $G$-invariant family satisfying $(\spadesuit)$, and if we take
 $C= \max\limits_{y_1\in Y_1} |f (y_1)|\in \R_+$ then $\mathbf{D}'= \{d_F': F\in \mathcal{F}_G\}$ is a monotone sub-additive $G$-invariant family satisfying $(\spadesuit)$, where $d_F'= d_F+ |F| C$ for each $F\in \mathcal{F}_G$. Thus equations \eqref{1207291438} and \eqref{1207291439} hold for this continuous bundle RDS.

\medskip

Let us first show: the equation \eqref{1207291439} can be used to obtain \cite[Proposition 3.5]{LW} in the setting of a topological dynamical system of an amenable group action.

 Using \eqref{1111}, \eqref{2222}, \eqref{3333}, \eqref{4444} and \eqref{5555}, one has
  \begin{eqnarray}
& & \sup_{\mu\in \mathcal{P}_\mathbb{P} (\mathcal{E}, G)} [h_\mu^{(r)} (\mathbf{F})+ \mu (\mathbf{D})]\nonumber \\
& & \hskip 26pt = \sup_{\mathcal{V}\in \mathbf{C}_{Y_1}^o} \lim_{n\rightarrow \infty} \frac{1}{|F_n|} \int_\Omega
\log P_\mathcal{E} (\omega, \mathbf{D}, F_n, (\Omega\times \mathcal{V})_\mathcal{E}, \mathbf{F}) d \mathbb{P} (\omega)\ (\text{using \eqref{1207291439}})\label{1208071703} \\
& & \hskip 26pt \le \sup_{\mathcal{V}\in \mathbf{C}_{Y_1}^o} \int_\Omega
\limsup_{n\rightarrow \infty} \frac{1}{|F_n|} \log P_\mathcal{E} (\omega, \mathbf{D}, F_n, (\Omega\times \mathcal{V})_\mathcal{E}, \mathbf{F}) d \mathbb{P} (\omega)\label{1208071702} \\
& & \hskip 26pt \le \int_\Omega
\sup_{\mathcal{V}\in \mathbf{C}_{Y_1}^o} \limsup_{n\rightarrow \infty} \frac{1}{|F_n|} \log P_\mathcal{E} (\omega, \mathbf{D}, F_n, (\Omega\times \mathcal{V})_\mathcal{E}, \mathbf{F}) d \mathbb{P} (\omega). \label{1208041636}
\end{eqnarray}
For each $\mathcal{V}\in \mathbf{C}_{Y_1}^o$, the function in \eqref{1208071702} is bounded above by $C+ \log |\mathcal{V}|$ , and thus we obtain \eqref{1208071702} from \eqref{1208071703} by the Fatou Lemma.

Furthermore, the function in \eqref{1208041636} is bounded by $- C$ from below and is measurable: thus the integral in \eqref{1208041636} is well defined. The argument is standard.
  \begin{enumerate}

  \item[(6)] If $\{\mathcal{V}_m: m\in \N\}\subseteq \mathbf{C}_{Y_1}^o$ is a sequence of sets whose diameters tend to zero, then
  \begin{eqnarray*}
& & \sup_{\mathcal{V}\in \mathbf{C}_{Y_1}^o} \limsup_{n\rightarrow \infty} \frac{1}{|F_n|} \log P_\mathcal{E} (\omega, \mathbf{D}, F_n, (\Omega\times \mathcal{V})_\mathcal{E}, \mathbf{F}) \\
& & \hskip 26pt = \sup_{m\in \N} \limsup_{n\rightarrow \infty} \frac{1}{|F_n|} \log P_\mathcal{E} (\omega, \mathbf{D}, F_n, (\Omega\times \mathcal{V}_m)_\mathcal{E}, \mathbf{F}).
  \end{eqnarray*}

  \item[(7)] Now let $\mathcal{V}\in \mathbf{C}_{Y_1}^o$. By the measurability of $\log P_\mathcal{E} (\omega, \mathbf{D}, F_n, (\Omega\times \mathcal{V})_\mathcal{E}, \mathbf{F})$ for each $n\in \N$, one easily deduces the measurability of $$\limsup\limits_{n\rightarrow \infty} \frac{1}{|F_n|} \log P_\mathcal{E} (\omega, \mathbf{D}, F_n, (\Omega\times \mathcal{V})_\mathcal{E}, \mathbf{F}).$$
  \end{enumerate}

For each $y_2\in Y_2$ denote by $P (f, \pi^{- 1} (y_2))$ the topological pressure of $f$ on $\pi^{- 1} (y_2)$, introduced in \cite{LW} in the case of $G= \Z$: in fact,  the definition of $P (f, \pi^{- 1} (y_2))$ in \cite{LW} works for an infinite countable discrete amenable group $G$.

 Recall \eqref{1208042247} that $\mathcal{P}_{\nu_2} (Y_1, G)= \{\nu_1\in \mathcal{P} (Y_1, G): \pi \nu_1= \nu_2\}$. Thus \eqref{1208041636} is an equivalent statement of the following inequality, using the above notation for a continuous bundle RDS:
\begin{equation} \label{1207290049}
\int_{Y_2} P (f, \pi^{- 1} (y_2)) d \nu_2 (y_2)\ge \sup_{\nu_1\in \mathcal{P}_{\nu_2} (Y_1, G)} [h_{\nu_1} (G, Y_1| \pi)+ \int_{Y_1} f (y_1) d \nu_1 (y_1)].
\end{equation}
In the special case
of $G= \Z$, \eqref{1207290049} is exactly \cite[Proposition 3.5]{LW}.

\medskip

Now, using similar arguments as above, we show that equation \eqref{1207291438} can be used to obtain \cite[Theorem 2.1]{LW}, the main result of \cite{LW}.

In the above setting we may use \eqref{1207291438} to see
  \begin{equation*}
\lim_{n\rightarrow \infty} \frac{1}{|F_n|} \int_\Omega
\log P_\mathcal{E} (\omega, \mathbf{D}, F_n, \mathcal{U}, \mathbf{F}) d \mathbb{P} (\omega)= \max_{\mu\in \mathcal{P}_\mathbb{P} (\mathcal{E}, G)} [h_\mu^{(r)} (\mathbf{F}, \mathcal{U})+ \mu (\mathbf{D})],
\end{equation*}
which is equivalent to the following equation:
\begin{eqnarray} \label{1208042322}
& & \lim_{n\rightarrow \infty} \frac{1}{|F_n|} \int_{Y_2} \log P_\pi (y_2, \mathbf{D}, F_n, \mathcal{V}) d \nu_2 (y_2)\nonumber \\
& & \hskip 26 pt = \max_{\nu_1\in \mathcal{P}_{\nu_2} (Y_1, G)} [h_{\nu_1} (G, \mathcal{V}| \pi)+ \int_{Y_1} f (y_1) d \nu_1 (y_1)],
\end{eqnarray}
where we use the notation of \eqref{123} and \eqref{1234}.

In order to deduce \cite[Theorem 2.1]{LW}, we restrict our setting to $G= \Z$ and $F_n= \{0, 1, \cdots, n- 1\}$ for each $n\in \N$.
In addition we assume that the action of $\Z$ on $Y_2$ is $\{S_2^m: m\in \Z\}$, where $S_2: Y_2\rightarrow Y_2$ is a homeomorphism.

Now, for any $y_2\in Y_2$ and for each $n\in \N$, let
$$l_{n, f, \mathcal{V}} (y_2)= \log P_\pi (y_2, \mathbf{D}, F_n, \mathcal{V}).$$
Then \eqref{1208042322} may be reformulated as:
\begin{eqnarray} \label{1208050014}
& & \lim_{n\rightarrow \infty} \frac{1}{n} \int_{Y_2} l_{n, f, \mathcal{V}} (y_2) d \nu_2 (y_2)\nonumber \\
& & \hskip 26pt = \max_{\nu_1\in \mathcal{P}_{\nu_2} (Y_1, G)} [h_{\nu_1} (G, \mathcal{V}| \pi)+ \int_{Y_1} f (y_1) d \nu_1 (y_1)].
\end{eqnarray}
 Here though, $\mathbf{D}$, given by \eqref{5555}, need not be a monotone sub-additive $G$-invariant family, it is easy to see that Proposition \ref{1208042200} still holds for such $\mathbf{D}$. In particular,
$$l_{n+ m, f, \mathcal{V}} (y_2)\le l_{n, f, \mathcal{V}} (y_2)+ l_{m, f, \mathcal{V}} (S_2^n y_2)\ \text{for each $n, m\in \N$ and any $y_2\in Y_2$}.$$
Reminder $\nu_2\in \mathcal{P} (Y_2, G)$. By the Kingman Sub-additive Ergodic Theorem (cf \cite{Kingman} or \cite[Theorem 10.1]{W}) one has:
\begin{equation} \label{1208050003}
\text{for $\nu_2$-a.e.}\ y_2\in Y_2, \lim_{n\rightarrow \infty} \frac{1}{n} l_{n, f, \mathcal{V}} (y_2)\ \text{exists (denoted by $p_{f, \mathcal{V}} (y_2)$)}.
\end{equation}
In particular,
\begin{equation} \label{1208051046}
\text{for $\nu_2$-a.e.}\ y_2\in Y_2, p_{f, \mathcal{V}} (y_2)= \limsup_{n\rightarrow \infty} \frac{1}{n} l_{n, f, \mathcal{V}} (y_2).
\end{equation}
In fact, since $C= \max\limits_{y_1\in Y_1} |f (y_1)|\in \R_+$, it is easy to show
$$- n C\le l_{n, f, \mathcal{V}} (y_2)\le n (C+ \log |\mathcal{V}|);$$
and then, by \eqref{1208050003}, we apply the Bounded Convergence Theorem to \eqref{1208050014} to obtain
\begin{equation} \label{1208050020}
\int_{Y_2} p_{f, \mathcal{V}} (y_2) d \nu_2 (y_2)= \max_{\nu_1\in \mathcal{P}_{\nu_2} (Y_1, G)} [h_{\nu_1} (G, \mathcal{V}| \pi)+ \int_{Y_1} f (y_1) d \nu_1 (y_1)].
\end{equation}
Moreover, as in (6), (7) and \eqref{1208050020} we deduce
\begin{equation} \label{1208050019}
\int_{Y_2} \sup_{\mathcal{V}\in \mathbf{C}_{Y_1}^o} p_{f, \mathcal{V}} (y_2) d \nu_2 (y_2)= \sup_{\nu_1\in \mathcal{P}_{\nu_2} (Y_1, G)} [h_{\nu_1} (G, Y_1| \pi)+ \int_{Y_1} f (y_1) d \nu_1 (y_1)].
\end{equation}
Equation \eqref{1208050019} is now just \cite[Theorem 2.1]{LW}; and, when $f$ is the constant zero function, equation \eqref{1208050020} is exactly \cite[Theorem 4.2.15]{Z-Thesis}.

\medskip

\subsection{Local variational principles for a topological dynamical system}\label{inner}\

\medskip

With the equivalence given by \S\S \ref{1208042024}, in previous subsection we have shown how to apply our results on continuous bundle RDS's to obtain results on general topological dynamical systems. In this subsection, we give a number of further applications.

\medskip

First, using preparations made in \S\S \ref{1208042310} and \S\S \ref{1208042024}, we have the following equivalent statement of \eqref{1207281620} in the setting of giving a factor map between TDS's. This is useful in building symbolic extension theory for amenable group actions \cite{DownZ}.

\begin{thm} \label{1010201200}
Let $\pi: (Y_1, G)\rightarrow (Y_2, G)$ be a factor map between TDS's and $\mathcal{V}\in \mathbf{C}^o_{Y_1}, \nu_2\in \mathcal{P} (Y_2, G)$. Then
\begin{equation*}
\lim_{n\rightarrow \infty} \frac{1}{|F_n|} \int_{Y_2} \log N (\mathcal{V}_{F_n}, \pi^{- 1} (y_2)) d \nu_2 (y_2)= \max_{\nu_1\in \mathcal{P}_{\nu_2} (Y_1, G)} h_{\nu_1} (G, \mathcal{V}| \pi).
\end{equation*}
\end{thm}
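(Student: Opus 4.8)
\textbf{Proof proposal for Theorem \ref{1010201200}.}

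The plan is to recognize the left-hand side as a fiber topological entropy for the continuous bundle RDS $\mathbf{F}^\pi$ over the MDS $(Y_2, \mathcal{B}_{Y_2}, \nu_2, G)$, and then to invoke Theorem \ref{1007141414} (in its entropy specialization, i.e.\ the case $\mathbf{D}= \mathbf{D}^0$). Concretely, first I would set up the bundle: with $\mathcal{E}_\pi= \{(y_2, y_1)\in Y_2\times Y_1: \pi(y_1)= y_2\}$ and the family $\mathbf{F}^\pi= \{F^\pi_{g, y_2}\}$ described just before the statement, $(Y_2, \mathcal{B}_{Y_2}, \nu_2)$ is a Lebesgue space (since $Y_2$ is a compact metric space and $\nu_2$ a Borel probability measure), so the standing hypotheses ``$(\Omega, \mathcal{F}, \mathbb{P})$ is a Lebesgue space'' of Theorem \ref{1007141414} are met with $(\Omega, \mathcal{F}, \mathbb{P}, G)= (Y_2, \mathcal{B}_{Y_2}, \nu_2, G)$. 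Next, form $\mathcal{V}^\pi= \{V^\pi: V\in \mathcal{V}\}$ with $V^\pi= Y_2\times V\cap \mathcal{E}_\pi$ as in \eqref{defn1}, \eqref{defn2}; since $\mathcal{V}\in \mathbf{C}^o_{Y_1}$ one checks $\mathcal{V}^\pi\in \mathbf{C}^o_{\mathcal{E}_\pi}$, and by Theorem \ref{1007212202} (applied to the bundle $\mathbf{F}^\pi$) the cover $(\Omega\times \mathcal{V})_{\mathcal{E}_\pi}= \mathcal{V}^\pi$ is factor excellent, hence factor good.

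Then I would identify the two sides with fiber quantities. On the topological side, for each $F\in \mathcal{F}_G$ and $y_2\in Y_2$ the minimal cardinality of a subfamily of $(\mathcal{V}^\pi_F)_{y_2}$ covering the fiber $(\mathcal{E}_\pi)_{y_2}= \{y_2\}\times \pi^{-1}(y_2)$ equals $N(\mathcal{V}_F, \pi^{-1}(y_2))$ under the identification of $(\mathcal{E}_\pi)_{y_2}$ with $\pi^{-1}(y_2)$, using \eqref{1006131722} to match $(\mathcal{V}^\pi_F)_{y_2}$ with $\bigvee_{g\in F} F^\pi_{g^{-1}, g y_2}\mathcal{V}^\pi_{g y_2}$, which corresponds precisely to $\mathcal{V}_F$ restricted to $\pi^{-1}(y_2)$. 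Hence by \eqref{1007120926},
\begin{equation*}
h_{\text{top}}^{(r)}(\mathbf{F}^\pi, \mathcal{V}^\pi)= \lim_{n\rightarrow \infty} \frac{1}{|F_n|} \int_{Y_2} \log N(\mathcal{V}_{F_n}, \pi^{-1}(y_2))\, d\nu_2(y_2),
\end{equation*}
the left-hand side of the theorem. On the measure side, for each $\nu_1\in \mathcal{P}_{\nu_2}(Y_1, G)$, viewed as an element of $\mathcal{P}_{\nu_2}(\mathcal{E}_\pi, G)$ via the homeomorphism $(y_2, y_1)\mapsto y_1$, one has $h_{\nu_1}^{(r)}(\mathbf{F}^\pi, \mathcal{V}^\pi)= h_{\nu_1}(G, \mathcal{V}| \pi)$: indeed, disintegrating $\nu_1$ over $\mathcal{F}_{\mathcal{E}_\pi}$ recovers the disintegration of $\nu_1$ over $\pi^{-1}\mathcal{B}_{Y_2}$, so by \eqref{1007041151} and \eqref{0906282008} the fiber entropy $h_{\nu_1}^{(r)}(\mathbf{F}^\pi, \mathcal{V}^\pi)= h_{\nu_1}(G, \mathcal{V}| \pi^{-1}\mathcal{B}_{Y_2})$, which is $h_{\nu_1}(G, \mathcal{V}| \pi)$ by definition. (These identifications are among the ``observations'' already recorded in the paragraph preceding the theorem, so I would simply cite them.)

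With these identifications in hand, Theorem \ref{1007141414} applied to $\mathbf{F}^\pi$, $\mathbf{D}= \mathbf{D}^0$ (which is monotone, sub-additive, $G$-invariant, and satisfies $(\spadesuit)$ trivially since $\mathbf{D}^0\equiv 0$, so $\mu(\mathbf{D}^0)= 0$ for all $\mu$) and the factor good cover $\mathcal{V}^\pi$ yields
\begin{equation*}
h_{\text{top}}^{(r)}(\mathbf{F}^\pi, \mathcal{V}^\pi)= \max_{\mu\in \mathcal{P}_{\nu_2}(\mathcal{E}_\pi, G)} h_\mu^{(r)}(\mathbf{F}^\pi, \mathcal{V}^\pi),
\end{equation*}
and translating both sides through the identifications above gives exactly the asserted equality with the maximum over $\nu_1\in \mathcal{P}_{\nu_2}(Y_1, G)$. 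The one point requiring a little care — the main (though modest) obstacle — is the verification that $\mathcal{V}^\pi$ genuinely falls under the hypothesis ``factor good'' of Theorem \ref{1007141414}, i.e.\ checking that Theorem \ref{1007212202}(1) applies to the bundle $\mathbf{F}^\pi$ with $\xi= \{Y_2\}$ and $\mathcal{V}$; this is where one must make sure the Lebesgue-space hypothesis on the base and the open-cover hypothesis on $\mathcal{V}$ transfer cleanly to $\mathcal{E}_\pi$, but it is routine given the material already developed. I would close by remarking that the supremum is attained as a maximum precisely because Theorem \ref{1007141414} produces the maximizing invariant measure by explicit construction.
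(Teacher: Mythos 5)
Your proposal is correct and follows essentially the same route as the paper: Theorem \ref{1010201200} is obtained there precisely by viewing $\pi$ as the continuous bundle RDS $\mathbf{F}^\pi$ over $(Y_2, \mathcal{B}_{Y_2}, \nu_2, G)$, recording the identifications $h_{\text{top}}^{(r)}(\mathbf{F}^\pi, \mathcal{V}^\pi)=\lim_n \frac{1}{|F_n|}\int_{Y_2}\log N(\mathcal{V}_{F_n},\pi^{-1}(y_2))\,d\nu_2$ and $h_{\nu_1}^{(r)}(\mathbf{F}^\pi,\mathcal{V}^\pi)=h_{\nu_1}(G,\mathcal{V}|\pi)$, noting via Theorem \ref{1007212202} that $\mathcal{V}^\pi=(\{Y_2\}\times\mathcal{V})_{\mathcal{E}_\pi}$ is factor excellent, and then applying Theorem \ref{1007141414} (in its entropy case $\mathbf{D}=\mathbf{D}^0$, for which $(\spadesuit)$ is immediate from Proposition \ref{1007192023}). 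The only points needing care — completeness/Lebesgue structure on the base and the correspondence $\mathcal{P}_{\nu_2}(\mathcal{E}_\pi,G)\leftrightarrow\mathcal{P}_{\nu_2}(Y_1,G)$ — are exactly the observations the paper records before the statement, so your citations suffice.
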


In fact, in the setting of a topological dynamical system of amenable group actions and a finite open cover of the space, Theorem \ref{1010201200} generalizes the Inner Variational Principle \cite[Theorem 4]{DS} in the following sense.

Let $\pi: (Y_1, G)\rightarrow (Y_2, G)$ be a factor map between TDS's and $\nu_2\in \mathcal{P} (Y_2, G)$. In the setting of $G= \Z$ and $F_n= \{0, 1, \cdots, n- 1\}$ for each $n\in \N$, Downarowicz and Serafin proved the Inner Variational Principle \cite[Theorem 4]{DS}, which may be stated equivalently as (cf \cite[Definition 5, Definition 7, Definition 8 and Theorem 4]{DS}):
\begin{eqnarray} \label{1208041924}
& & \sup_{\nu_1\in \mathcal{P}_{\nu_2} (Y_1, G)} h_{\nu_1} (G, Y_1| \pi)\nonumber \\
& & \hskip 26pt =
\sup_{\mathcal{U}\in \mathbf{C}_{Y_1}^o} \lim_{n\rightarrow \infty} \frac{1}{|F_n|} \int_{Y_2} \log N (\mathcal{U}_{F_n}, \pi^{- 1} (y_2)) d \nu_2 (y_2).
\end{eqnarray}
By Theorem \ref{1010201200} one sees that \eqref{1208041924} holds for any infinite countable discrete amenable group $G$ and any F\o lner sequence $\{F_n: n\in \mathbb{N}\}$.

\medskip

In the next subsection, we will need the following result.

\begin{thm} \label{1008301614}
Let $\pi: (Y_1, G)\rightarrow (Y_2, G)$ be a factor map between TDS's and $\mathcal{V}\in \mathbf{C}^o_{Y_1}$. Assume that $\mathbf{D}= \{d_F: F\in \mathcal{F}_G\}\subseteq C (Y_1)$ is a monotone sub-additive $G$-invariant family satisfying:

\medskip

\begin{center}
{\bf $(\heartsuit)$}
\begin{minipage}[]{110mm}
\emph{for any given sequence $\{\nu_n: n\in \mathbb{N}\}\subseteq \mathcal{P} (Y_1)$, set $\mu_n= \frac{1}{|F_n|} \sum\limits_{g\in F_n} g \nu_n$ for each $n\in \mathbb{N}$. There exists a subsequence $\{n_j: j\in \mathbb{N}\}\subseteq \mathbb{N}$ such that the sequence $\{\mu_{n_j}: j\in \mathbb{N}\}$ converges to  $\mu\in \mathcal{P} (Y_1)$ (and hence a fortiori $\mu\in \mathcal{P} (Y_1, G)$) such that
\begin{equation*}
\limsup_{j\rightarrow \infty} \frac{1}{|F_{n_j}|} \int_{Y_1} d_{F_{n_j}} (y_1) d \nu_{n_j} (y_1)\le \mu (\mathbf{D}).
\end{equation*}
}
\end{minipage}
\end{center}

\medskip

\noindent
 Then
\begin{equation*}
P_\pi (\mathbf{D}, \mathcal{V})= \max_{\nu_2\in \mathcal{P} (Y_2, G)}\ _{\nu_2} P_{\mathcal{E}_\pi} (\mathbf{D}^\pi, \mathcal{V}^\pi, \mathbf{F}^\pi)= \max_{\nu_1\in \mathcal{P} (Y_1, G)} [h_{\nu_1} (G, \mathcal{V}| \pi)+ \nu_1 (\mathbf{D})].
\end{equation*}
In particular,
\begin{equation} \label{1208051133}
h_{\text{top}} (G, \mathcal{V}| \pi)= \max_{\nu_2\in \mathcal{P} (Y_2, G)}\ _{\nu_2} h_{\text{top}}^{(r)} (\mathbf{F}^\pi, \mathcal{V}^\pi)= \max_{\nu_1\in \mathcal{P} (Y_1, G)} h_{\nu_1} (G, \mathcal{V}| \pi).
\end{equation}
Moreover,
\begin{equation*}
P_\pi (\mathbf{D})= \sup_{\nu_1\in \mathcal{P} (Y_1, G)} [h_{\nu_1} (G, Y_1| \pi)+ \nu_1 (\mathbf{D})],
\end{equation*}
\begin{equation*} \label{1208051134}
h_{\text{top}} (G, Y_1| \pi)= \sup_{\nu_1\in \mathcal{P} (Y_1, G)} h_{\nu_1} (G, Y_1| \pi).
\end{equation*}
\end{thm}
\begin{proof}
The proof follows the ideas of the proof of Theorem \ref{1007141414}.

As the process is similar to that in \S \ref{seventh}, we shall skip some details.

As $\mathbf{D}$ satisfies $(\heartsuit)$, it is not hard to check that $\mathbf{D}^\pi$ satisfies $(\spadesuit)$. Observe that $\mathcal{V}^\pi\in \mathbf{C}_{\mathcal{E}_\pi}^o$ is factor excellent. It follows that for each $\nu_2\in \mathcal{P} (Y_2, G)$, we can apply Theorem \ref{1007141414} to $\mathbf{F}^\pi, \mathcal{V}^\pi, \mathbf{D}^\pi$ and $(Y_2, \mathcal{B}_{Y_2}, \nu_2)$.

Thus, to complete our proof, we need only prove:
\begin{equation} \label{1010201909}
h_{\nu_1} (G, \mathcal{V}| \pi)+ \nu_1 (\mathbf{D})\ge P_\pi (\mathbf{D}, \mathcal{V})\ \text{for some $\nu_1\in \mathcal{P} (Y_1, G)$}.
\end{equation}

First, we assume that the compact metric space $Y_1$ is zero-dimensional. By Lemma \ref{1007242349}, the family $\mathbf{P}_c (\mathcal{V})$ is countable, and we let $\{\alpha_l: l\in \mathbb{N}\}$ denote an enumeration of this family. Then each $\alpha_l, l\in \mathbb{N}$ is finer than $\mathcal{V}$, and
\begin{equation}
   \label{add}
 h_{\nu_1} (G, \mathcal{V}| \pi)= \inf\limits_{l\in \mathbb{N}} h_{\nu_1} (G, \alpha_l| \pi)\ \text{for each $\nu_1\in \mathcal{P} (Y_1, G)$}\ (\text{by \eqref{1208050105}}).
\end{equation}

Let $n\in \mathbb{N}$ be fixed. Using the reasoning of Lemma \ref{1007141419}, one sees that
 there exist $x_n\in Y_2$ and a non-empty finite subset $B_n\subseteq \pi^{- 1} (x_n)$ such that
\begin{equation} \label{1010202058}
\sum_{y\in B_n} e^{d_{F_n} (y)}\ge \frac{1}{n} \left[\sup_{y_2\in Y_2} P_\pi (y_2, \mathbf{D}, F_n, \mathcal{V})- M\right]
\end{equation}
with
$$M= \frac{1}{2} e^{- \max\limits_{y_1\in Y_1} |d_{F_n} (y_1)|},$$
and each atom of $(\alpha_l)_{F_n}, l= 1, \cdots, n$ contains at most one point of $B_n$.

 Now let
\begin{equation} \label{1010202126}
\nu_n= \sum_{y\in B_n} \frac{e^{d_{F_n} (y)} \delta_y}{\sum\limits_{x\in B_n} e^{d_{F_n} (x)}}\in \mathcal{P} (Y_1)\ \text{and}\ \mu_n= \frac{1}{|F_n|} \sum_{g\in F_n} g \nu_n\in \mathcal{P} (Y_1).
\end{equation}
By $(\heartsuit)$, we can choose a subsequence $\{n_j: j\in \mathbb{N}\}\subseteq \mathbb{N}$ such that the sequence $\{\mu_{n_j}: j\in \mathbb{N}\}$ converges to $\mu\in \mathcal{P} (Y_1, G)$ and
\begin{equation} \label{1010202127}
\limsup_{j\rightarrow \infty} \frac{1}{|F_{n_j}|} \int_{Y_1} d_{F_{n_j}} (y_1) d \nu_{n_j} (y_1)\le \mu (\mathbf{D}).
\end{equation}

Now fix any $l\in \mathbb{N}$ and let $n> l$. By the construction of $B_n$ and $\nu_n$ one has
\begin{equation} \label{1010202128}
H_{\nu_n} ((\alpha_l)_{F_n}| \pi)= H_{\nu_n} ((\alpha_l)_{F_n})= - \sum_{y\in B_n} \frac{e^{d_{F_n} (y)}}{\sum\limits_{x\in B_n} e^{d_{F_n} (x)}} \log \frac{e^{d_{F_n} (y)}}{\sum\limits_{x\in B_n} e^{d_{F_n} (x)}},
\end{equation}
and so
\begin{eqnarray} \label{1010202153}
& &\hskip -26pt \log \sup_{y_2\in Y_2} P_\pi (y_2, \mathbf{D}, F_n, \mathcal{V})- \log (2 n)\nonumber \\
&\le & \log \left[\sup_{y_2\in Y_2} P_\pi (y_2, \mathbf{D}, F_n, \mathcal{V})- M\right]- \log n\nonumber \\
&\le & \log \sum_{y\in B_n} e^{d_{F_n} (y)}\ (\text{using \eqref{1010202058}})\nonumber \\
&= & H_{\nu_n} ((\alpha_l)_{F_n}| \pi)+ \sum_{y\in B_n} \frac{e^{d_{F_n} (y)} d_{F_n} (y)}{\sum\limits_{x\in B_n} e^{d_{F_n} (x)}}\ (\text{using \eqref{1010202128}})\nonumber \\
&= & H_{\nu_n} ((\alpha_l)_{F_n}| \pi)+ \int_{Y_1} d_{F_n} (y_1) d \nu_{n} (y_1)\ (\text{using \eqref{1010202126}}).
\end{eqnarray}
By Lemma \ref{1007221736} and Lemma \ref{1007222138} one has, for each $B\in \mathcal{F}_G$,
\begin{eqnarray} \label{1010202149}
& & H_{\nu_n} ((\alpha_l)_{F_n}| \pi)\nonumber \\
& & \hskip 26pt \le \sum_{g\in F_n} \frac{1}{|B|} H_{\nu_n} ((\alpha_l)_{B g}| \pi)+ |F_n\setminus \{g\in G: B^{- 1} g\subseteq F_n\}|\cdot \log |\alpha_l|\nonumber \\
& & \hskip 26pt = \sum_{g\in F_n} \frac{1}{|B|} H_{g \nu_n} ((\alpha_l)_B| \pi)+ |F_n\setminus \{g\in G: B^{- 1} g\subseteq F_n\}|\cdot \log |\alpha_l|\nonumber \\
& & \hskip 26pt \le |F_n| \frac{1}{|B|} H_{\mu_n} ((\alpha_l)_B| \pi)+ |F_n\setminus \{g\in G: B^{- 1} g\subseteq F_n\}|\cdot \log |\alpha_l|.
\end{eqnarray}
 Observe that the partition $\alpha_l$ is clopen and $|F_n|\ge n$ for each $n\in \mathbb{N}$ by Standard Assumption 2. Combining \eqref{1010202149} with \eqref{1010202127} and \eqref{1010202153} we obtain
\begin{equation*}
P_\pi (\mathbf{D}, \mathcal{V})\le \frac{1}{|B|} H_\mu ((\alpha_l)_B| \pi)+ \mu (\mathbf{D}).
\end{equation*}
Now, taking the infimum over all $B\in \mathcal{F}_G$ and using \eqref{1006272232}, we obtain
$$P_\pi (\mathbf{D}, \mathcal{V})\le h_\mu (G, \alpha_l| \pi)+ \mu (\mathbf{D}).$$
Finally, letting $l$ range over $\mathbb{N}$ and using \eqref{add}, we obtain \eqref{1010201909}.

Now consider the general case. Note that there exists a factor map $\phi: (X, G)\rightarrow (Y_1, G)$ between TDS's, where $X$ is a zero-dimensional space (cf the proof of Proposition \ref{1007211043} or \cite[Proof of Theorem 1]{BGH}). By the above discussions, there exists $\nu\in \mathcal{P} (X, G)$ such that
$$h_\nu (G, \phi^{- 1} \mathcal{V}| \pi\circ \phi)+ \nu (\mathbf{D}\circ \phi)\ge P_{\pi\circ \phi} (\mathbf{D}\circ \phi, \phi^{- 1} \mathcal{V}),$$
where the family $\mathbf{D}\circ \phi$ is defined naturally. Set $\eta= \phi \nu$. It is not hard to check that $\eta\in \mathcal{P} (Y_1, G)$ and $h_\eta (G, \mathcal{V}| \pi)+ \eta (\mathbf{D})\ge P_\pi (\mathbf{D}, \mathcal{V})$. This proves \eqref{1010201909} in the general case, which ends our proof.
\end{proof}

Remark that, when $G= \Z$, \eqref{1208051133} is exactly \cite[Theorem 2.5]{HYZ1}, the main result of \cite{HYZ1} by Huang, Ye and the second author of the paper.

We also remark that, almost all comments about Theorem \ref{1007141414} in Part \ref{skdj} work similarly for Theorem \ref{1008301614}.
Here, we mention only some of them.

Firstly, the discussion just before Remark \ref{1103031512} works for Theorem \ref{1008301614}. In particular, for $f\in C (Y_1)$, we can apply Theorem \ref{1008301614} to the family $\mathbf{D}= \{d_F: F\in \mathcal{F}_G\}$, where $d_F (y_1)= \sum\limits_{g\in F} f (g y_1)$ for each $y_1\in Y_1$, although $\mathbf{D}$ is not monotone unless the function $f$ is non-negative.

The second example is that, we can change assumption $(\heartsuit)$ as we did in \S \ref{assumption}.
Furthermore, following the ideas in \S \ref{special}, if we assume that $G$ admits a tiling F\o lner sequence, then we can alter Theorem \ref{1008301614} to deal with any sub-additive $G$-invariant family $\mathbf{D}\subseteq C (Y_1)$ satisfying $(\heartsuit)$: if, the group $G$ is abelian then each sub-additive $G$-invariant family $\mathbf{D}\subseteq C (Y_1)$ automatically satisfies $(\heartsuit)$.

\medskip

\subsection{Entropy tuples of a topological dynamical system}\

\medskip

In this subsection, we discuss the relative entropy tuples introduced in \S\S \ref{1208042310} with the equivalence given by \S\S \ref{1208042024}.

\medskip

Let $X_1, X_2$ be topological spaces. Recall that the map $\pi: X_1\rightarrow X_2$ is \emph{open} if $\pi (U)$ is an open subset of $X_2$ whenever $U$ is an open subset of $X_1$.

From the definitions, it is not hard to obtain:

\begin{prop} \label{1103271148}
Let $\pi: (Y_1, G)\rightarrow (Y_2, G)$ be a factor map between TDS's, $\nu_2\in \mathcal{P}( Y_2, G)$ and $n\in \N\setminus \{1\}$. Then
\begin{eqnarray} \label{1103271149}
& & _{\nu_2} E_n^{(r)} (\mathcal{E}_\pi) \subseteq \{(x_1, \cdots, x_n)\in Y_1^n\setminus \Delta_n (Y_1):\nonumber \\
 & & \hskip 66pt \pi (x_1)= \cdots= \pi (x_n)\in \text{supp} (\nu_2)\}.
\end{eqnarray}
If, additionally, $\pi$ is open, then equality holds.
\end{prop}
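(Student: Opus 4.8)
The plan is to translate the definition of $_{\nu_2} E_n^{(r)} (\mathcal{E}_\pi)$ (which by the identification of $\mathcal{E}_\pi$ with $Y_1$ is the set of fiber $n$-tuples of $\mathbf{F}^\pi$) into a statement about the fibers $\pi^{-1}(y_2)$ and the support of $\nu_2$, and then to use openness of $\pi$ to get the reverse inclusion. First I would unwind the definitions: for the continuous bundle RDS $\mathbf{F}^\pi$ over the state system $(Y_2, \mathcal{B}_{Y_2}, \nu_2, G)$, a point $(x_1, \dots, x_n) \in Y_1^n \setminus \Delta_n(Y_1)$ lies in $_{\nu_2} E_n^{(r)} (\mathcal{E}_\pi)$ if and only if for every $m \in \mathbb{N}$ there is a set $\Omega^* \in \mathcal{B}_{Y_2}$ with $\nu_2(\Omega^*) > 0$ and closed neighbourhoods $V_i$ of $x_i$ of diameter $\le \frac{1}{m}$ with $\{V_1^c, \dots, V_n^c\} \in \mathbf{C}^o_{Y_1}$, such that $\prod_{i=1}^n \{y_2\} \times (V_i^\pi)_{y_2} \cap (\mathcal{E}_\pi)^n \neq \emptyset$ for each $y_2 \in \Omega^*$. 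Since $(\mathcal{E}_\pi)_{y_2} = \{y_2\} \times \pi^{-1}(y_2)$ under the identification, the nonemptiness condition says exactly that $V_i \cap \pi^{-1}(y_2) \neq \emptyset$ for all $i$ and all $y_2 \in \Omega^*$; in particular $\pi(V_1) \cap \dots \cap \pi(V_n) \supseteq \Omega^*$ has positive $\nu_2$-measure.

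For the inclusion \eqref{1103271149}: suppose $(x_1, \dots, x_n) \in {}_{\nu_2} E_n^{(r)} (\mathcal{E}_\pi)$. Shrinking the neighbourhoods to diameter $\to 0$, the sets $\Omega^* = \Omega^*_m$ have $\nu_2(\Omega^*_m) > 0$, hence $\Omega^*_m \cap \text{supp}(\nu_2) \neq \emptyset$; picking $y_2^{(m)} \in \Omega^*_m \cap \text{supp}(\nu_2)$ and points $z_i^{(m)} \in V_i \cap \pi^{-1}(y_2^{(m)})$, the $z_i^{(m)} \to x_i$ as $m \to \infty$ (since $\text{diam}(V_i) \le \frac{1}{m}$ and $x_i \in V_i$), so by continuity of $\pi$, $\pi(x_i) = \lim_m y_2^{(m)}$, which is the same limit for all $i$; thus $\pi(x_1) = \dots = \pi(x_n)$, and this common value lies in the closed set $\text{supp}(\nu_2)$. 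That gives the asserted inclusion.

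For the reverse inclusion under the hypothesis that $\pi$ is open: given $(x_1, \dots, x_n) \in Y_1^n \setminus \Delta_n(Y_1)$ with $\pi(x_1) = \dots = \pi(x_n) =: y_2^0 \in \text{supp}(\nu_2)$, fix $m$ and choose closed neighbourhoods $V_i$ of $x_i$ of diameter $\le \frac{1}{m}$ with $\{V_1^c, \dots, V_n^c\} \in \mathbf{C}^o_{Y_1}$ — possible because the $x_i$ are not all equal, so we may separate them. Let $U_i$ be the interior of $V_i$, an open neighbourhood of $x_i$. Since $\pi$ is open, $\pi(U_i)$ is an open neighbourhood of $y_2^0$, so $\bigcap_{i=1}^n \pi(U_i)$ is an open neighbourhood of $y_2^0$, and because $y_2^0 \in \text{supp}(\nu_2)$ we get $\nu_2\big(\bigcap_i \pi(U_i)\big) > 0$. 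Setting $\Omega^* = \bigcap_i \pi(U_i)$: for each $y_2 \in \Omega^*$ and each $i$ there is some $w \in U_i \subseteq V_i$ with $\pi(w) = y_2$, i.e. $V_i \cap \pi^{-1}(y_2) \neq \emptyset$, so the product set is nonempty over all of $\Omega^*$. This verifies the defining condition of a fiber $n$-tuple of $\mathbf{F}^\pi$ at level $m$; since $m$ is arbitrary, $(x_1, \dots, x_n) \in {}_{\nu_2} E_n^{(r)} (\mathcal{E}_\pi)$.

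The main obstacle is mostly bookkeeping: making the identification of $\mathcal{E}_\pi$ with $Y_1$ and of $V_i^\pi$ with $V_i$ completely precise so that the "fiber $n$-tuple" condition for $\mathbf{F}^\pi$ really does reduce to "$\pi(V_i) \cap \Omega^* = \Omega^*$ with $\nu_2(\Omega^*)>0$", and checking measurability of the relevant sets (which is covered by Lemma \ref{1008072241} applied to $\mathbf{F}^\pi$). A minor subtlety in the forward direction is extracting the common value $\pi(x_1) = \cdots = \pi(x_n)$: one must take a single sequence $y_2^{(m)}$ working simultaneously for all $i$, which is automatic since the same $\Omega^*_m$ serves all coordinates. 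I expect no real difficulty beyond this; openness of $\pi$ is used only to push the neighbourhood condition from $Y_1$ down to a positive-measure set in $\text{supp}(\nu_2)$.
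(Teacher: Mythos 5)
Your proof is correct and follows essentially the same route as the paper: unwind the definition of a fiber $n$-tuple of $\mathbf{F}^\pi$ to the condition $V_i\cap \pi^{-1}(y_2)\neq\emptyset$ on a positive-measure set of base points, take limits of fiber points as the neighborhoods shrink for the inclusion, and use openness of $\pi$ to make $\bigcap_i \pi(V_i)$ a positive-measure neighborhood of the common image for the converse. The only (inessential) difference is in the first part: you select the base points $y_2^{(m)}$ directly in $\Omega^*_m\cap\text{supp}(\nu_2)$ and use closedness of the support, whereas the paper first passes to a limit $y_2$ of arbitrary base points and then rules out $y_2\notin\text{supp}(\nu_2)$ by a short contradiction argument.
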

\begin{proof}
We first establish \eqref{1103271149}. Let $(x_1, \cdots, x_n)\in _{\nu_2} E_n^{(r)} (\mathcal{E}_\pi)$. By the definition, for each $m\in \N$ there exist $y_2^m\in Y_2$ and $(x_1^m, \cdots, x_n^m)\in Y_1^n$, such that $(y_2^m, x_i^m)\in \mathcal{E}_\pi$ and the distance between $x_i^m$ and $x_i$ is at most $\frac{1}{m}$ for each $i= 1, \cdots, n$. Without loss of generality (by selecting a subsequence if necessary), we may assume that the sequence $\{y_2^m: m\in \N\}$ converges to $y_2\in Y$, and so it is easy to check $\pi (x_1)= \cdots= \pi (x_n)= y_2$. Now we prove \eqref{1103271149} showing that $y_2\in \text{supp} (\nu_2)$. Assume the contrary, i.e. that $y_2\notin \text{supp} (\nu_2)$. If $m\in \N$ is large enough, and if $V_i$ is a closed neighborhood of $x_i$ with diameter at most $\frac{1}{m}$ for each $i= 1, \cdots, n$ such that $\mathcal{V}= \{V_1^c, \cdots, V_n^c\}\in \mathbf{C}_{Y_1}^o$, then $\bigcup\limits_{i= 1}^n V_i\subseteq \pi^{- 1} (Y_2\setminus \text{supp} (\nu_2))$. Hence because
$$\{y\in Y_2: \prod_{i= 1}^n \{y\}\times V_i\cap \mathcal{E}_\pi^n\neq \emptyset\}= \bigcap_{i= 1}^n \pi (V_i)\subseteq Y_2\setminus \text{supp} (\nu_2),$$
a contradiction to $(x_1, \cdots, x_n)\in _{\nu_2} E_n^{(r)} (\mathcal{E}_\pi)$, as $\nu_2 (Y_2\setminus \text{supp} (\nu_2))= 0$.

Now we assume that $\pi$ is open. Let $(x_1, \cdots, x_n)\in Y_1^n\setminus \Delta_n (Y_1)$ such that $\pi (x_1)= \cdots= \pi (x_n)\in \text{supp} (\nu_2)$. Observe that, once $V_i$ is a closed neighborhood of $x_i$ for each $i= 1, \cdots, n$, then $\bigcap\limits_{i= 1}^n \pi (V_i)$ is a closed neighborhood of $\pi (x_1)$ (using the openness of $\pi$), which implies $\nu_2 (\bigcap\limits_{i= 1}^n \pi (V_i))> 0$ (as $\pi (x_1)\in \text{supp} (\nu_2)$), and so, by
$$\{y\in Y_2: \prod_{i= 1}^n \{y\}\times V_i\cap \mathcal{E}_\pi^n\neq \emptyset\}= \bigcap_{i= 1}^n \pi (V_i),$$
 one has $(x_1, \cdots, x_n)\in _{\nu_2} E_n^{(r)} (\mathcal{E}_\pi)$. This finishes the proof.
\end{proof}

Let $(Y, G)$ be a TDS. Denote by $\text{supp} (Y, G)$, the \emph{support of $(Y, G)$}, i.e. the set $\bigcup\limits_{\mu\in \mathcal{P} (Y, G)} \text{supp} (\mu)$. Observe that $\text{supp} (Y, G)= \text{supp} (\nu)$ for some $\nu\in \mathcal{P} (Y, G)$.

Combining Proposition \ref{1103271148} with Proposition \ref{1008071740}, Proposition \ref{1102131645}, Proposition \ref{1008060011} and Theorem \ref{conclusion}, and using the equivalence given by \S\S \ref{1208042024} as we did in \S \S \ref{1208042011}, it is not hard to establish:

\begin{prop} \label{1102131617}
Let $\pi: (Y_1, G)\rightarrow (Y_2, G)$ be a factor map between TDS's and $\mu\in \mathcal{P} (Y_1, G), n\in \mathbb{N}\setminus \{1\}$. Then
\begin{enumerate}

\item Both $E_n (Y_1, G| \pi)\cup \Delta_n (Y_1)$ and $E_n^\mu (Y_1, G| \pi)\cup \Delta_n (Y_1)$ are closed $G$-invariant subsets of $Y_1^n$.


\item $E_n (Y_1, G| \pi)\neq \emptyset$ if and only if $h_\text{top} (G, Y_1| \pi)> 0$.

\item $E_n^\mu (Y_1, G| \pi)\neq \emptyset$ if and only if $h_\mu (G, Y_1| \pi)> 0$.

\item $E_n (Y_1, G| \pi)\subseteq \{(x_1, \cdots, x_n)\in \text{supp} (Y_1, G)^n: \pi (x_1)= \cdots= \pi (x_n)\}$.

\item $E_n^\mu (Y_1, G| \pi)= \text{supp} (\lambda_n^{\pi^{- 1} \mathcal{B}_{Y_2}} (\mu))\setminus \Delta_n (Y_1)$.
\end{enumerate}
\end{prop}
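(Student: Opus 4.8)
The plan is to push everything through the dictionary between the continuous bundle RDS $\mathbf{F}^\pi$ over the base MDS $(Y_2,\mathcal{B}_{Y_2},\nu_2,G)$ and the TDS $(Y_1,G)$ set up just before Theorem~\ref{1010201200}. Recall that $\mathcal{E}_\pi$ is identified with $Y_1$ via the $G$-equivariant homeomorphism $(y_2,y_1)\mapsto y_1$, that this identification carries $\mathcal{F}_{\mathcal{E}_\pi}$ to $\pi^{-1}\mathcal{B}_{Y_2}$ and $\mathcal{P}_{\nu_2}(\mathcal{E}_\pi,G)$ onto $\mathcal{P}_{\nu_2}(Y_1,G)$, and that $E_n^{\nu_1}(Y_1,G|\pi)=E^{(r)}_{n,\nu_1}(\mathcal{E}_\pi,G)$ for each $\nu_1\in\mathcal{P}(Y_1,G)$ (viewing $\nu_1$ inside $\mathcal{P}_{\pi\nu_1}(\mathcal{E}_\pi,G)$). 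First I would observe that under this identification $\mathbf{F}^\pi$ is induced by the TDS $(Y_1,G)$ in the sense of Proposition~\ref{1008060011}, since $F^\pi_{g,y_2}$ is just the restriction of the $g$-action on $Y_1$. Hence Proposition~\ref{1008060011} makes $E^{(r)}_{n,\nu_1}(\mathcal{E}_\pi,G)$ and ${}_{\nu_2}E^{(r)}_n(\mathcal{E}_\pi,G)$ $G$-invariant subsets of $Y_1^n$; translating, $E_n^\mu(Y_1,G|\pi)$ is $G$-invariant, and since $E_n(Y_1,G|\pi)=\bigcup_{\eta\in\mathcal{P}(Y_2,G)}{}_\eta E^{(r)}_n(\mathcal{E}_\pi,G)$ by Theorem~\ref{1010200058}, it too is $G$-invariant as a union of $G$-invariant sets.

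For the emptiness criteria I would use $E_n^\mu(Y_1,G|\pi)=E^{(r)}_{n,\mu}(\mathcal{E}_\pi,G)$ together with Proposition~\ref{1102131645}(1): the latter is non-empty iff $h^{(r)}_\mu(\mathbf{F}^\pi)>0$, and $h^{(r)}_\mu(\mathbf{F}^\pi)=h_\mu(G,Y_1|\pi)$ by the dictionary, giving the first equivalence. For the second, Theorem~\ref{1010200058} gives $E_n(Y_1,G|\pi)=\bigcup_{\mu\in\mathcal{P}(Y_1,G)}E_n^\mu(Y_1,G|\pi)$, so this set is non-empty iff $h_\mu(G,Y_1|\pi)>0$ for some $\mu\in\mathcal{P}(Y_1,G)$, iff $\sup_\mu h_\mu(G,Y_1|\pi)>0$, iff $h_{\text{top}}(G,Y_1|\pi)>0$ by the last display of Theorem~\ref{1008301614}.

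The support statements come from Theorem~\ref{conclusion} together with Lemma~\ref{small} and Lemma~\ref{1008081528}, applied with base space $\Omega=Y_2$, which is compact metric (in particular Hausdorff and Polish) with $\mathcal{F}=\mathcal{B}_{Y_2}$. Applying Theorem~\ref{conclusion} to $\mathbf{F}^\pi$ over $(Y_2,\mathcal{B}_{Y_2},\pi\mu,G)$ gives $E^{(r)}_{n,\mu}(\mathcal{E}_\pi,G)=\pi_n(\text{supp}(\lambda_n^{\mathcal{F}_{\mathcal{E}_\pi}}(\mu)))\setminus\Delta_n(Y_1)$; under the identification $\mathcal{E}_\pi^n\cong Y_1^n$ the projection $\pi_n$ becomes the identity and $\lambda_n^{\mathcal{F}_{\mathcal{E}_\pi}}(\mu)$ becomes $\lambda_n^{\pi^{-1}\mathcal{B}_{Y_2}}(\mu)$, which yields the displayed equality $E_n^\mu(Y_1,G|\pi)=\text{supp}(\lambda_n^{\pi^{-1}\mathcal{B}_{Y_2}}(\mu))\setminus\Delta_n(Y_1)$. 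Finally, by Lemma~\ref{small} one has $\text{supp}(\lambda_n^{\pi^{-1}\mathcal{B}_{Y_2}}(\mu))\subseteq\text{supp}(\mu)^n\subseteq\text{supp}(Y_1,G)^n$, and by Lemma~\ref{1008081528} every point of this support has all its $Y_2$-coordinates equal, i.e. $\pi(x_1)=\cdots=\pi(x_n)$; taking the union over $\mu\in\mathcal{P}(Y_1,G)$ and invoking $E_n(Y_1,G|\pi)=\bigcup_\mu E_n^\mu(Y_1,G|\pi)$ delivers the inclusion $E_n(Y_1,G|\pi)\subseteq\{(x_1,\dots,x_n)\in\text{supp}(Y_1,G)^n:\pi(x_1)=\cdots=\pi(x_n)\}$.

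I expect the only delicate point — the main obstacle — to be the bookkeeping of the dictionary: one must check that the homeomorphism $\mathcal{E}_\pi\cong Y_1$ carries $\mathcal{F}_{\mathcal{E}_\pi}$ to $\pi^{-1}\mathcal{B}_{Y_2}$, hence the relative Pinsker algebra $\mathcal{P}^{\mathcal{F}_{\mathcal{E}_\pi}}(\mathcal{E}_\pi,(\mathcal{F}\times\mathcal{B}_{Y_1})\cap\mathcal{E}_\pi,\mu,G)$ to $\mathcal{P}^{\pi^{-1}\mathcal{B}_{Y_2}}(Y_1,\mathcal{B}_{Y_1},\mu,G)$, and therefore $\lambda_n^{\mathcal{F}_{\mathcal{E}_\pi}}(\mu)$ to $\lambda_n^{\pi^{-1}\mathcal{B}_{Y_2}}(\mu)$ and $E^{(r)}_{n,\mu}(\mathcal{E}_\pi,G)$ to $E_n^\mu(Y_1,G|\pi)$, and that $\mathbf{F}^\pi$ genuinely satisfies the hypothesis of Proposition~\ref{1008060011}. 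All of these are either already recorded or immediate from the constructions earlier in this section, so the proof needs no new ideas beyond careful transcription.
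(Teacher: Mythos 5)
Your proposal is correct and follows essentially the paper's own route: the paper gives no written proof, stating only that the proposition follows by combining Proposition \ref{1102131645}, Proposition \ref{1008060011} and Theorem \ref{1008081608} (equivalently Theorem \ref{conclusion}, together with Theorem \ref{1010200058} and the variational principle of Theorem \ref{1008301614}) through the natural correspondence between $\mathcal{E}_\pi$ and $Y_1$, which is exactly the dictionary-plus-transcription argument you carry out, including the identification of $\mathcal{F}_{\mathcal{E}_\pi}$ with $\pi^{-1}\mathcal{B}_{Y_2}$ and hence of $\lambda_n^{\mathcal{F}_{\mathcal{E}_\pi}}(\mu)$ with $\lambda_n^{\pi^{-1}\mathcal{B}_{Y_2}}(\mu)$.
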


Similarly, using Proposition \ref{1007271514} one has:

\begin{prop} \label{1010202356}
Let $\pi_1: (Y_1, G)\rightarrow (Y_2, G)$ and $\pi_2: (Y_2, G)\rightarrow (Y_3, G)$ be factor maps between TDS's and $\nu_1\in \mathcal{P} (Y_1, G), \nu_2= \pi_1 \nu_1\in \mathcal{P} (Y_2, G), n\in \mathbb{N}\setminus \{1\}$. Then
\begin{enumerate}

\item $E_n^{\nu_2} (Y_2, G| \pi_2)\subseteq (\pi_1\times \cdots\times \pi_1) E_n^{\nu_1} (Y_1, G| \pi_2\circ \pi_1)\subseteq E_n^{\nu_2} (Y_2, G| \pi_2)\cup \Delta_n (Y_2)$.

\item $E_n (Y_2, G| \pi_2)\subseteq (\pi_1\times \cdots\times \pi_1) E_n (Y_1, G| \pi_2\circ \pi_1)\subseteq E_n (Y_2, G| \pi_2)\cup \Delta_n (Y_2)$.

\item $E_n^{\nu_1} (Y_1, G| \pi_1)\subseteq E_n^{\nu_1} (Y_1, G| \pi_2\circ \pi_1)$ and $E_n (Y_1, G| \pi_1)\subseteq E_n (Y_1, G| \pi_2\circ \pi_1)$.
\end{enumerate}
\end{prop}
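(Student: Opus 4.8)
\textbf{Proof proposal for Proposition \ref{1010202356}.}

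The plan is to reduce all three claims to Proposition \ref{1007271514} applied to suitable factor maps between the continuous bundle RDSs induced by the various projection maps. First I would introduce the continuous bundle RDSs $\mathbf{F}^{\pi_1}$ over $(Y_2, \mathcal{B}_{Y_2}, \nu_2, G)$ with fibre space $\mathcal{E}_{\pi_1}\subseteq Y_2\times Y_1$, $\mathbf{F}^{\pi_2}$ over $(Y_3, \mathcal{B}_{Y_3}, \pi_2\nu_2, G)$ with fibre space $\mathcal{E}_{\pi_2}\subseteq Y_3\times Y_2$, and $\mathbf{F}^{\pi_2\circ \pi_1}$ over $(Y_3, \mathcal{B}_{Y_3}, \pi_2\nu_2, G)$ with fibre space $\mathcal{E}_{\pi_2\circ \pi_1}\subseteq Y_3\times Y_1$, exactly as in the construction preceding Theorem \ref{1010201200}. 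Under the natural correspondences recalled in that discussion, $E_n^{\nu_1}(Y_1, G| \pi_2\circ \pi_1) = E_{n, \nu_1}^{(r)}(\mathcal{E}_{\pi_2\circ \pi_1}, G)$, $E_n^{\nu_2}(Y_2, G| \pi_2) = E_{n, \nu_2}^{(r)}(\mathcal{E}_{\pi_2}, G)$, and $E_n^{\nu_1}(Y_1, G| \pi_1) = E_{n, \nu_1}^{(r)}(\mathcal{E}_{\pi_1}, G)$; likewise for the topological versions via Theorem \ref{1010200058}. So the whole statement is a statement about entropy tuples of continuous bundle RDSs, and Proposition \ref{1007271514} is exactly the tool that governs how such tuples behave under factor maps induced by a continuous surjection on the state space.

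For part (1), the key point is that $\pi_1: (Y_1, G)\to (Y_2, G)$ induces a factor map $\Pi: \mathcal{E}_{\pi_2\circ \pi_1}\to \mathcal{E}_{\pi_2}$ between these continuous bundle RDSs which is itself induced by the continuous surjection $\pi_1: Y_1\to Y_2$ on the state space (the base systems are the same, namely $(Y_3, \mathcal{B}_{Y_3}, \pi_2\nu_2, G)$, and the fibre map is $(y_3, y_1)\mapsto (y_3, \pi_1 y_1)$). Since $\Pi$ sends $\nu_1$ (viewed in $\mathcal{P}_{\pi_2\nu_2}(\mathcal{E}_{\pi_2\circ\pi_1}, G)$) to $\nu_2$ (viewed in $\mathcal{P}_{\pi_2\nu_2}(\mathcal{E}_{\pi_2}, G)$), Proposition \ref{1007271514}\eqref{1007271546} gives directly
\begin{equation*}
E_{n, \nu_2}^{(r)}(\mathcal{E}_{\pi_2}, G) \subseteq (\pi_1\times \cdots\times \pi_1) E_{n, \nu_1}^{(r)}(\mathcal{E}_{\pi_2\circ\pi_1}, G) \subseteq E_{n, \nu_2}^{(r)}(\mathcal{E}_{\pi_2}, G) \cup \Delta_n(Y_2),
\end{equation*}
which translates into part (1). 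Part (2) is obtained in the same way from Proposition \ref{1007271514}\eqref{1007271547}, using the topological entropy tuples $_{\pi_2\nu_2}E_n^{(r)}$ and $_{\pi_2\nu_2}E_n^{(r)}$ of the two bundle systems, together with the identifications $E_n(Y_2, G| \pi_2) = \bigcup_{\eta\in\mathcal{P}(Y_3,G)}\ {_\eta}E_n^{(r)}(\mathcal{E}_{\pi_2}, G)$ and $E_n(Y_1, G| \pi_2\circ\pi_1) = \bigcup_{\eta\in\mathcal{P}(Y_3,G)}\ {_\eta}E_n^{(r)}(\mathcal{E}_{\pi_2\circ\pi_1}, G)$ from Theorem \ref{1010200058}; one checks that $\Pi$ maps $\mathcal{P}_{\pi_2\nu_2}(\mathcal{E}_{\pi_2\circ\pi_1}, G)$ onto $\mathcal{P}_{\pi_2\nu_2}(\mathcal{E}_{\pi_2}, G)$ (a consequence of \cite[Proposition 2.5]{Liu}), so the union over $\eta$ is respected.

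For part (3), the inclusions $E_n^{\nu_1}(Y_1, G| \pi_2\circ \pi_1)\subseteq E_n^{\nu_1}(Y_1, G| \pi_1)$ and $E_n(Y_1, G| \pi_2\circ\pi_1)\subseteq E_n(Y_1, G| \pi_1)$ are the ``monotonicity in the relativizing factor'' statements, and these follow most cleanly not from Proposition \ref{1007271514} but directly from the definitions: if $\mathcal{V}=\{V_1^c, \cdots, V_n^c\}\in \mathbf{C}_{Y_1}^o$ then $\pi_1^{-1}\mathcal{B}_{Y_2}\subseteq (\pi_2\circ\pi_1)^{-1}\mathcal{B}_{Y_3}$ is false---it is the reverse containment $(\pi_2\circ\pi_1)^{-1}\mathcal{B}_{Y_3}\subseteq \pi_1^{-1}\mathcal{B}_{Y_2}$ that holds---so by the monotonicity of conditional measure-theoretic entropy in the conditioning $\sigma$-algebra (recalled in \S \ref{1006291551}), $h_{\nu_1}(G, \mathcal{V}| \pi_2\circ\pi_1) \ge h_{\nu_1}(G, \mathcal{V}| \pi_1)$; wait---I must be careful about the direction, so here the main thing to verify is precisely which way the entropies go, and I expect this sign-checking to be the only genuinely delicate point. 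In fact $h_{\nu_1}(G,\mathcal{V}|\mathcal{C})$ \emph{decreases} as $\mathcal{C}$ increases, and $(\pi_2\circ\pi_1)^{-1}\mathcal{B}_{Y_3}$ is the \emph{smaller} algebra, so $h_{\nu_1}(G, \mathcal{V}| \pi_2\circ\pi_1) \ge h_{\nu_1}(G, \mathcal{V}| \pi_1)$, hence positivity of the latter forces positivity of the former, giving $E_n^{\nu_1}(Y_1, G| \pi_1)\subseteq E_n^{\nu_1}(Y_1, G| \pi_2\circ\pi_1)$---the opposite of what is claimed. So the correct reading must instead invoke the topological analogue: $N(\mathcal{V}_F, (\pi_2\circ\pi_1)^{-1}(y_3)) \ge N(\mathcal{V}_F, \pi_1^{-1}(y_2))$ whenever $\pi_2(y_2)=y_3$, because $\pi_1^{-1}(y_2)\subseteq (\pi_2\circ\pi_1)^{-1}(y_3)$, so $h_{\text{top}}(G, \mathcal{V}| \pi_1) \le h_{\text{top}}(G, \mathcal{V}| \pi_2\circ\pi_1)$, and combined with Theorem \ref{1008301614} (which expresses relative measure entropy as a supremum over measures of relative topological quantities and conversely) one deduces the stated inclusions after chasing which supremum dominates which. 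The cleanest route will be: prove the topological inclusion $E_n(Y_1,G|\pi_2\circ\pi_1)\subseteq E_n(Y_1,G|\pi_1)$ from $h_{\text{top}}(G,\mathcal{V}|\pi_1)\le h_{\text{top}}(G,\mathcal{V}|\pi_2\circ\pi_1)$ directly, then get the measure version from it via the variational identity $E_n^{\nu_1}(Y_1,G|\pi)=\mathrm{supp}(\lambda_n^{\pi^{-1}\mathcal{B}}(\nu_1))\setminus\Delta_n$ of Proposition \ref{1102131617} together with the fact that $\lambda_n^{(\pi_2\circ\pi_1)^{-1}\mathcal{B}_{Y_3}}(\nu_1)$ and $\lambda_n^{\pi_1^{-1}\mathcal{B}_{Y_2}}(\nu_1)$ are comparable because the Pinsker algebras satisfy $\mathcal{P}^{(\pi_2\circ\pi_1)^{-1}\mathcal{B}_{Y_3}}\supseteq$ or $\subseteq$ the other---this last comparison of Pinsker algebras, via Proposition \ref{1007012034} and the relative Pinsker formula Theorem \ref{1007031233}, is where I expect the real work of part (3) to sit, and it is the step I would single out as the main obstacle.
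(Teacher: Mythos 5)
Your treatment of parts (1) and (2) is essentially the paper's own proof: the paper disposes of the whole proposition with the single remark ``using Proposition \ref{1007271514}'', and your reduction --- the bundle factor map $\Pi\colon \mathcal{E}_{\pi_2\circ\pi_1}\rightarrow\mathcal{E}_{\pi_2}$, $(y_3,y_1)\mapsto (y_3,\pi_1 y_1)$, over the common base $(Y_3,\mathcal{B}_{Y_3},\eta,G)$, induced by the continuous surjection $\pi_1$ and sending $\nu_1$ to $\nu_2$, combined with the identifications $E_n^{\mu}(Y_1,G|\pi)=E^{(r)}_{n,\mu}(\mathcal{E}_\pi,G)$ and the union formula of Theorem \ref{1010200058} --- is exactly how that remark is meant to be unpacked (applying Proposition \ref{1007271514} for each $\eta\in\mathcal{P}(Y_3,G)$ and taking unions is all that is needed in (2); the appeal to \cite{Liu} there is superfluous).

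Part (3), however, is a genuine gap, and the route you sketch cannot be repaired. Your first computation is the correct one: $(\pi_2\circ\pi_1)^{-1}\mathcal{B}_{Y_3}\subseteq\pi_1^{-1}\mathcal{B}_{Y_2}$ and conditional entropy decreases as the conditioning algebra increases, so $h_{\nu_1}(G,\mathcal{V}|\pi_1)\le h_{\nu_1}(G,\mathcal{V}|\pi_2\circ\pi_1)$, which yields $E_n^{\nu_1}(Y_1,G|\pi_1)\subseteq E_n^{\nu_1}(Y_1,G|\pi_2\circ\pi_1)$. The topological inequality you then switch to, $h_{\text{top}}(G,\mathcal{V}|\pi_1)\le h_{\text{top}}(G,\mathcal{V}|\pi_2\circ\pi_1)$ (fibers of $\pi_1$ sit inside fibers of $\pi_2\circ\pi_1$), points in the \emph{same} direction, so it likewise gives only $E_n(Y_1,G|\pi_1)\subseteq E_n(Y_1,G|\pi_2\circ\pi_1)$: positivity of the larger relative quantity says nothing about positivity of the smaller one, so it cannot produce the printed containment $E_n(Y_1,G|\pi_2\circ\pi_1)\subseteq E_n(Y_1,G|\pi_1)$, and the closing appeal to ``comparable'' Pinsker algebras is not an argument. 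Indeed the printed containment is false in general: take $\pi_1=\mathrm{id}$ and $Y_3$ a single point for any system with $h_{\nu_1}(G,Y_1)>0$; then $E_n^{\nu_1}(Y_1,G|\pi_1)=\emptyset$ (relative entropy over the identity vanishes, cf.\ Proposition \ref{1102131617}) while $E_n^{\nu_1}(Y_1,G|\pi_2\circ\pi_1)\neq\emptyset$. So item (3) as stated must have its two inclusions reversed (that is the form it takes in the relative theory it generalizes), and with that correction your first monotonicity computation already \emph{is} the complete proof; what was missing was recognizing this rather than chasing the direction as printed.
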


Let $\pi: (Y_1, G)\rightarrow (Y_2, G)$ be a factor map between TDS's and $\nu_2\in \mathcal{P} (Y_2, G)$. Set $Y_1^{\nu_2}= \bigcup\limits_{\nu_1\in \mathcal{P}_{\nu_2} (Y_1, G)} \text{supp} (\nu_1)$, and recall the associated continuous bundle RDS
$$\mathbf{F}^\pi= \{F^\pi_{g, y_2}: \{y_2\}\times \pi^{- 1} (y_2)\rightarrow \{g y_2\}\times \pi^{- 1} (g y_2)| g\in G, y_2\in Y_2\}$$
 with $\mathcal{E}_\pi= \{(y_2, y_1)\in Y_2\times Y_1: \pi (y_1)= y_2\}$ from \S\S \ref{1208042024}.

Then, with the help of Theorem \ref{1007232313}, Theorem \ref{1008072226}, Lemma \ref{small}, Theorem \ref{conclusion} and Proposition \ref{1103271148}, using Theorem \ref{1008301614} we can prove:

\begin{thm} \label{1010200058}
Let $\pi: (Y_1, G)\rightarrow (Y_2, G)$ be a factor map between TDS's and $\nu\in \mathcal{P} (Y_1, G), \nu_2\in \mathcal{P} (Y_2, G), n\in \mathbb{N}\setminus \{1\}$. Then
\begin{eqnarray*}
& & E_{n, \nu}^{(r)} (\mathcal{E}_\pi, G)= E_n^\nu (Y_1, G| \pi) \\
& & \hskip 66pt \subseteq \{(x_1, \cdots, x_n)\in \text{supp} (\nu)^n\setminus \Delta_n (Y_1): \pi (x_1)= \cdots= \pi (x_n)\}, \\
& & _{\nu_2} E_n^{(r)} (\mathcal{E}_\pi, G)= \bigcup_{\nu_1\in \mathcal{P}_{\nu_2} (Y_1, G)} E_n^{\nu_1} (Y_1, G| \pi) \\
& & \hskip 66pt \subseteq \{(x_1, \cdots, x_n)\in (Y_1^{\nu_2})^n\setminus \Delta_n (Y_1): \pi (x_1)= \cdots= \pi (x_n)\}, \\
& & E_n (Y_1, G| \pi)= \bigcup_{\eta\in \mathcal{P} (Y_2, G)}\ _\eta E_n^{(r)} (\mathcal{E}_\pi, G)= \bigcup_{\mu\in \mathcal{P} (Y_1, G)} E_n^\mu (Y_1, G| \pi).
\end{eqnarray*}
In particular, there exists $\mu\in \mathcal{P} (Y_1, G)$ such that
$$E_n (Y_1, G| \pi)= _{\pi \mu} E_n^{(r)} (\mathcal{E}_\pi, G)= E_n^\mu (Y_1, G| \pi).$$
\end{thm}

Following the ideas of local entropy theory (cf \cite{GY} and the references therein), the proof of Theorem \ref{1010200058} is quite standard, and we omit it here.

\newpage

\bibliographystyle{amsplain} 


\end{document}